\newcounter{item}[section]
\newcounter{kirshr}
\newcounter{kirsha}
\newcounter{kirshb}
\newenvironment{enumroman}{\setcounter{kirshr}{1}
\begin{list}{(\roman{kirshr})}{\usecounter{kirshr}} }{\end{list}}
\newenvironment{enumarab}{\setcounter{kirshb}{1}
\begin{list}{(\arabic{kirshb})}{\usecounter{kirshb}} }{\end{list}}
\newenvironment{athm}[1]{\vskip3mm\par\noindent
{\bf #1 }. \slshape }
{\upshape\par\vskip10pt minus3pt}
\newtheorem{theorem}{Theorem}[section]
\newtheorem{lemma}[theorem]{Lemma}
\newtheorem{corollary}[theorem]{Corollary}
\newenvironment{demo}[1]{\noindent{\bf #1.}\upshape\mdseries}
{\nopagebreak{\hfill\rule{2mm}{2mm}\nopagebreak}\par\normalfont}
\theoremstyle{definition}
\newtheorem{definition}[theorem]{Definition}
\def\R{\mathbb{R}}
\def\C{{\mathfrak{C}}}
\def\Fm{{\mathfrak{Fm}}}
\def\At{{\bf At}}
\def\Nr{{\mathfrak{Nr}}}
\def\Fr{{\mathfrak{Fr}}}
\def\Sg{{\mathfrak{Sg}}}
\def\Fm{{\mathfrak{Fm}}}
\def\A{{\mathfrak{A}}}
\def\B{{\mathfrak{B}}}
\def\C{{\mathfrak{C}}}
\def\D{{\mathfrak{D}}}
\def\M{{\mathfrak{M}}}
\def\N{{\mathfrak{N}}}
\def\Sn{{\mathfrak{Sn}}}
\def\CA{{\bf CA}}
\def\QA{{\bf QA}}
\def\QEA{{\bf QEA}}
\def\Df{{\bf Df}}
\def\Lf{{\bf Lf}}
\def\PA{{\bf PA}}
\def\PEA{{\bf PEA}}
\def\K{{\bf K}}
\def\K{{\bf K}}
\def\RCA{{\bf RCA}}
\def\Rd{{\ Rd}}
\def\(R)RA{{\bf (R)RA}}
\def\RA{{\bf RA}}
\def\R{\mathbb{R}}
\def\Sc{{\bf Sc}}
\def\tr{{\sf tr}}
\def\c #1{{\cal #1}}
 \def\CA{{\sf CA}}
\def\B{{\sf B}}
\def\G{{\sf G}}
\def\w{{\sf w}}
\def\y{{\sf y}}
\def\g{{\sf g}}
\def\r{{\sf r}}
\def\K{{\sf K}}
\def\tp{{\sf tp}}
 \def\Cm{{\mathfrak{Cm}}}
\def\Nr{{\mathfrak{Nr}}}
\def\SNr{{\bf S}{\mathfrak{Nr}}}
\def\restr #1{{\restriction_{#1}}}
\def\cyl#1{{\sf c}_{#1}}
\def\diag#1#2{{\sf d}_{#1#2}}
\def\sub#1#2{{\sf s}^{#1}_{#2}}
\def\Ra{{\mathfrak{Ra}}}
\def\Ca{{\mathfrak{Ca}}}
\def\set#1{\{#1\} }
\def\Ra{{\mathfrak{Ra}}}
\def\Nr{{\mathfrak{Nr}}}
\def\Tm{{\mathfrak{Tm}}}
\def\A{{\mathfrak{A}}}
\def\B{{\mathfrak{B}}}
\def\C{{\mathfrak{C}}}
\def\D{{\mathfrak{D}}}
\def\E{{\mathfrak{E}}}
\def\A{{\mathfrak{A}}}
\def\B{{\mathfrak{B}}}
\def\C{{\mathfrak{C}}}
\def\D{{\mathfrak{D}}}
\def\E{{\mathfrak{E}}}
\def\GG{{\mathfrak{GG}}}
\def\L{{\mathfrak{L}}}
\def\Rd{{\mathfrak{Rd}}}
\def\At{{\mathfrak{At}}}
\def\L{{\mathfrak{L}}}
\def\CA{{\bf CA}}
\def\RA{{\bf RA}}
\def\RCA{{\bf RCA}}
\def\G{{\bf G}}
\def\F{{\mathfrak{F}}}
\def\At{{\sf{At}}}
\def\N{\mathbb{N}}
\def\R{\mathfrak{R}}
\def\Cs{{\bf Cs}}
\def\L{{\mathfrak L}}
\def\sub#1#2{{\sf s}^{#1}_{#2}}
\def\cyl#1{{\sf c}_{#1}}
\def\diag#1#2{{\sf d}_{#1#2}}
\def\c #1{{\cal #1}}
\def\pa{$\forall$}
\def\pe{$\exists$}
\def\ef{Ehren\-feucht--Fra\"\i ss\'e}
\def\nodes{{\sf nodes}}
\def\restr #1{{\restriction_{#1}}}
\def\Ra{{\mathfrak{Ra}}}
\def\Nr{{\mathfrak{Nr}}}
\def\Z{{\cal Z}}
\def\CA{{\bf CA}}
\def\RCA{{\bf RCA}}
\def\c#1{{\mathcal #1}}
\def\set#1{ \{#1\}}
\def\Ca{{\mathfrak Ca}}
\def\pe{$\exists$}
\def\pa{$\forall$}
\def\Cm{{\mathfrak Cm}}
\def\Sg{{\mathfrak Sg}}
\def\P{{\mathfrak P}}
\def\Rl{{\mathfrak Rl}}
\def\N{{\cal N}}
\def\d{Dedekind-MacNeille}
\def\ls { L\"owenheim--Skolem}
\def\At{{\sf At}}
\def\Ig{{\sf Ig}}
\def\rng{{\sf rng}}
\def\dom{{\sf dom}}
\def\w{{\sf w}}
\def\g{{\sf g}}
\def\y{{\sf y}}
\def\r{{\sf r}}
\def\QEA{{\sf QEA}}
\def\RCA{\sf RCA}
\def\tp{{\sf tp}}
\def\cyl#1{{\sf c}_{#1}}
\def\sub#1#2{{\sf s}^{#1}_{#2}}
\def\diag#1#2{{\sf d}_{#1#2}}
\def\ws{winning strategy}
\def\ef{Ehren\-feucht--Fra\"\i ss\'e}
\def\Mat{{\sf Mat}}
\def\y{{\sf y}}
\def\g{{\sf g}}
\def\r{{\sf r}}
\def\w{{\sf w}}
\def\Uf{{\sf Uf}}
\def\QPEA{{\sf QPEA}}
\def\y{{\sf y}}
\def\g{{\sf g}}
\def\r{{\sf r}}
\def\w{{\sf w}}
\def\RA{{\sf RA}}
\def\CA{{\sf CA}}
\def\CA{{\sf CA}}
\def\Ra{\mathfrak{Ra}}
\def\Lf{{\sf Lf}}
\def\RCA{\sf RCA}
\def\G{\sf G}
\def\EF{\sf EF}
\def\PEA{\sf PEA}
\def\N{\mathbb N}
\def\Z{\mathbb Z}
\def\G{{\bold G}}
\def\Sc{{\sf Sc}}
\def\Df{{\sf Df}}
\def\PA{{\sf PA}}
\def\Id{{\sf Id}}
\def\A{{\cal{A}}}
\def\B{{\mathfrak{B}}}
\def\C{{\mathfrak{C}}}
\def\D{{\mathfrak{D}}}
\def\P{{\mathfrak{P}}}
\def\Ra{{\mathfrak{Ra}}}
\def\Nr{{\mathfrak{Nr}}}
\def\F{{\mathfrak{F}}}
\def\CA{{\bf CA}}
\def\RCA{{\bf RCA}}
\def\c#1{{\mathcal #1}}
\def\Ca{{\mathfrak Ca}}
\def\pe{$\exists$}
\def\pa{$\forall$}
\def\Cm{{\mathfrak Cm}}
\def\Sg{{\mathfrak Sg}}
\def\ls { L\"owenheim--Skolem}
\def\At{{\sf At}}
\def\rng{{\sf rng}}
\def\dom{{\sf dom}}
\def\dim{{\sf dim}}
\def\ls { L\"owenheim--Skolem}
\def\w{{\sf w}}
\def\g{{\sf g}}
\def\y{{\sf y}}
\def\r{{\sf r}}
\def\cyl#1{{\sf c}_{#1}}
\def\sub#1#2{{\sf s}_{[{#1}/ {#2}]}}
\def\diag#1#2{{\sf d}_{#1#2}}
\def\swap#1#2{{\sf s}_{[#1, #2]}}
\def\ef{Ehren\-feucht--Fra\"\i ss\'e}
\def\y{{\sf y}}
\def\g{{\sf g}}
\def\r{{\sf r}}
\def\w{{\sf w}}
\def\K{{\sf K}}
\def\nodes{{\sf nodes}}
\def\edges{{\sf edges}}
\def\PTA{{\sf PTA}}
\def\TA{{\sf TA}}
\def\G{{\bold G}}
\def\Sc{{\sf Sc}}
\def\Df{{\sf Df}}
\def\PA{{\sf PA}}
\def\Id{{\sf Id}}
\def\QEA{{\sf QEA}}
\def\s{{\sf s}}
\def\QPEA{{\sf QPEA}}
\def\CA{{\sf CA}}
\def\K{{\sf K}}
\def\QA{{\sf QA}}
\def\RCA{{\sf RCA}}
\def\A{{\mathfrak{A}}}
\def\Cs{{\sf Cs}}
\def\t{{\sf t}}
\def\si{{i_0, i_1, \cdots, i_k}}
\def\sj{{j_0, j_1, \cdots, j_k}}
\def\tr{{[i_0|j_0]|[i_1|j_1]|\cdots |[i_k|j_k]}}
\def\tt{{(\t^{j_0}_{i_0}\cdots \t^{j_k}_{i_k})^{(\A)}}}
\def\ttr{{(\t^{j_0}_{i_0}\cdots \t^{j_k}_{i_k})^{(\mathfrak{Rd}_{pt}\A)}}}
\def\PTA{{\sf PTA}}
\title{Atom-canonicity,
relativized representations and omitting types in clique guarded semantics and guarded logics}
\author{Tarek Sayed Ahmed \\
Department of Mathematics, Faculty of Science,\\
Cairo University, Giza, Egypt.
  }
\begin{document}
\maketitle

\begin{abstract} We show that several varieties of cylindric-like algebras whose members 
have a neat  embedding property, meaning that they embed into neat reducts of algebras 
in higher dimensions, possibly only finite, are not atom-canonical.
Algebras in such varieties posses relativized representations, and we thereby
obtain many negative results on the omitting types theorem for finite variable fragments of first order 
logic even when we consider  only clique guarded semantics.

As a sample we show that there is a consistent atomic countable theory $T$ using $n$ variables, $n$ finite  $n>2$, 
such that the non-principal
type consisting of co-atoms cannot be omitted even in {\it $n+4$ square} models, where cylindrifiers can find witnesses 
only on $<n+4$ cliques (to be defined). Such models may not also reflect commutativity of quantifiers.
We also show that for every $k\geq 1$, there is
a consistent atomic  such  theory
using only $n$ variables, $n$ finite $n>2$, and a type $\Gamma$ realized in every {\it $n+k+1$ smooth} model,  but $\Gamma$ 
cannot be  isolated by formulas using $n+k$ variables. 

Conversely, we show that when we have quantifier elimination, then countably 
many non-isolated types can be omitted in standard models, 
and if the types happen to be maximal, there could be uncountably many of them, omitted in standard 
models. We obtain positive omitting types theorems 
for finite variable weaker (syntactically) 
fragments of $n$ variable first order logic, when semantics are altered; more precisely, 
when models are relativized differently. This can be viewed as an omitting types theorem for guarded fragments 
of first order logic.

We show that several classes of algebras having a neat embedding
property are not elementary, 
and that the class of strongly representable atom structure of  many cylindic like algebras is not
elementary, too. The latter result is known but we propose a different proof.

We prove several completeness theorems relating the syntactical notion of algebras having a (complete)
neat embedding property,  to the semantical one of having (complete) relativized representations, which are the cylindric counterpart 
of results obtained for relation algebras by Hirsch, Hodkinson and Maddux.

As a sample, we show that for finite $n>2$, the class $S_c\Nr_n\CA_{n+k},$ $k\geq 3$, which coincides with the class of algebras having
complete $n+k$ smooth representations, is not elementary. Here $S_c$ is the operation of forming complete subalgebras. 
We also show, that for $n$ as above, and $k\geq 1$, then the class of algebras having $n+k$ smooth (square) representations 
is a variety,  that the variety of algebras having  $n+k+1$ smooth 
(square) relativized representations is not
finitely axiomatizable over that of having $n+k$ smooth (square) representations, and that for $2<m<n$ the class of algebras having $n$ 
smooth or $n$ flat representations is not finitely axiomatizable over the class
having $n$ square representations.

On neat embeddings for infinite dimension, we show that for any 
infinite ordinal $\alpha$, finite $k\geq 1,$ 
and $r\in \omega$, there exists an atomic algebra 
$\A^r\in  S\Nr_{\alpha}\CA_{\alpha+k+1}\sim S\Nr_{\alpha}\CA_{\alpha+k}$
such that $\Pi_r \A_r/F\in \sf RCA_{\alpha}$ 
for any non-principl ultrafilter $F$ on $\omega$.
This is strictly stronger than an analogous  result recently proved by Robin Hirsch and the author \cite{t} addressing 
other cylindric like algebras,  too, when restricted to $\CA$; 
for here the ultraproduct is {\it representable} which is not the case with the result 
in {\it op.cit}.

Several connections between the notions of neat embeddings, complete representability, and 
strong representability are investigated in some depth and elaborated upon from the view point of the common 
thread  of hitherto established 
representability results.
For example, we show that for any $n>0$, if $\A\in S_c\Nr_n\CA_{\omega}$, 
then $\A$ is strongly representable, in the sense that its \d\ completion is representable; furthermore it has a complete
$\omega$ relativized representation.  We give an example of a necessarily 
uncountable algebra $\A\in \Nr_n\CA_{\omega}$ 
that has no  complete representation. 
\end{abstract}

\section{Introduction}

We follow the notation of \cite{1} which is in conformity with that of \cite{tarski}.
Assume that we have a class of Boolean algebras with operators for which we have a semantical notion of representability
(like Boolean set algebras or cylindric set algebras).
A weakly representable atom structure is an atom structure such that at least one atomic algebra based on it is representable.
It is strongly representable  if all atomic algebras having this atom structure are representable.
The former is equivalent to that the term algebra, that is, the algebra generated by the atoms,
in the complex algebra is representable, while the latter is equivalent  to that the complex algebra is representable.

Could an atom structure be possibly weakly representable but {\it not} strongly representable?
Ian Hodkinson \cite{Hodkinson}, showed that this can indeed happen for both cylindric  algebras of finite dimension $\geq 3$, and relation algebras,
in the context of showing that the class of representable algebras, in both cases,  is not closed under \d\ completions.
In fact, he showed that this can be witnessed on an atomic algebras, so that
the variety of representable relation algebras algebras and cylindric algebras of finite dimension $>2$
are not atom-canonical. (The complex algebra of an atom structure is
the completion of the term algebra.)
This construction is somewhat complicated using a rainbow atom structure. It has the striking consequence
that there are two atomic algebras sharing the same
atom structure, one is representable the other is not.

This construction was simplified and streamlined,
by many authors, including the author \cite{weak}, but Hodkinson's  construction,
as we indicate below, has the supreme advantage that it has a huge potential to prove analogous  
theorems on \d\ completions, and atom-canonicity
for several varieties of algebras
including properly the variety of representable cylindric-like algebras, whose members have a neat embedding property, 
such as polyadic algebras with and without equality and Pinter's substitution algebras.
In fact, in such cases atomic {\it representable countable algebras} 
will be constructed so that their \d\ completions are outside such varieties.

More precisely, we show, 
that for $n>2$ finite, there is is a polyadic equality representable atomic algebra $\A$ such that
$\Rd_{sc}\Cm\At\A\notin \SNr_n\Sc_{n+4}$ inferring that the varieties
$S\Nr_n\K_{n+k}$, for $n>2$ finite for any $k\geq 4,$ 
and for any $\K\in \{\Sc, \QA, \CA, \PEA\}$ are
not closed under \d\ completions.
Such results, as illustrated below 
will have non-trivial (to say the least)  repercussions on omtting types for finite variable fragments.

Our first result shows the existence of weakly representable atom structures that are not strongly representable
to many cylindric-like algebras of relations; the proof
uses three different constructions based on three different graphs, and in the process highlighting 
an explicit situation when rainbows and Monk-like algebras do the same thing.
(There is an unproven meta-theorem to the effect that what can be done with Monk like algebras can be done with rainbows but not the 
other way round, witness theorems \ref{completerepresentation}, \ref{squarerepresentation}.)

The first is due to Hodkinson,
the second was given in \cite{weak} and the third in \cite{k}. The three constructions
presented herein model-theoretically, in the spirit of Hodkinson's rainbow construction,
gives a polyadic atomic representable equality algebra of finite dimension $n>2$
such that  the diagonal free reduct
of its completion is not
representable.

Now that we have two distinct classes,
namely, the class of weakly atom structures and that of strongly atom structures; the most pressing need is to try to
classify them.
Venema proved (in a more general setting) that the former is elementary,
while Hirsch and Hodkinson show that the latter is {\it not} elementary. Their proof is amazing
depending on an ultraproduct of Erdos probabilistic graphs, witness theorem \ref{el}.

We know that there is a sequence of strongly representable atom structures whose ultraproduct is {\it only } weakly representable,
it is {\it not } strongly representable. This gives that the class $\K=\{\A\in \CA_n: \text { $\A$ is atomic and }
\Cm\At\A\in \sf RCA_n\}$ is not elementary, as well.

Here we extend Hirsch and Hodkinson's result to many cylindric-like algebras, answering a question of Hodkinson's for $\PA$ and $\PEA$.
The proof is based on the algebras constructed in \cite{hirsh} by noting that these algebras can be endowed with
polyadic operations (in an obvious way) and that they are generated by elements
whose dimension sets $<n$. The latter implies that an algebra is representable if and only if
its diagonal free reduct
is representable (One side is trivial, the other does not hold in general).

Lately, it has become fashionable in algebraic logic to
study representations of abstract algebras that has a complete representation \cite{Sayed} for an extensive overview.
A representation of $\A$ is roughly an injective homomorphism from $f:\A\to \wp(V)$
where $V$ is a set of $n$-ary sequences; $n$ is the dimension of $\A$, and the operations on $\wp(V)$
are concrete  and set theoretically
defined, like the Boolean intersection and cylindrifiers or projections.
A complete representation is one that preserves  arbitrary disjuncts carrying
them to set theoretic unions.
If $f:\A\to \wp(V)$ is such a representation, then $\A$ is necessarily
atomic and $\bigcup_{x\in \At\A}f(x)=V$.

Let us focus on cylindric algebras for some time to come.
It is known that there are countable atomic $\RCA_n$s when $n>2$,
that have no complete representations;
in fact, the class of completely representable $\CA_n$s when $n>2$, is not even elementary \cite[corollary 3.7.1]{HHbook2}.

Such a phenomena is also closely
related to the algebraic notion of {\it atom-canonicity}, as indicated, which is an important persistence property in modal logic
and to the metalogical property of  omitting types in finite variable fragments of first order logic
\cite[theorems 3.1.1-2, p.211, theorems 3.2.8, 9, 10]{Sayed}.
Recall that a variety $V$ of Boolean algebras with operators is atom-canonical,
if whenever $\A\in V$, and $\A$ is atomic, then the complex algebra of its atom structure, $\Cm\At\A$ for short, is also
in $V$.

If $\At$ is a weakly representable but
not strongly  representable, then
$\Cm\At$ is not representable; this gives that $\RCA_n$ for $n>2$ $n$ finite, is {\it not} atom-canonical.
Also $\Cm\At\A$  is the \d\ completion of $\A$, and so obviously $\RCA_n$ is not closed under \d\ completions.

On the other hand, $\A$
cannot be completely  representable for, it can be shown without much ado,  that
a complete representation of $\A$ induces a representation  of $\Cm\At\A$ \cite[definition 3.5.1, and p.74]{HHbook2}.

Finally, if $\A$ is countable, atomic and has no complete representation
then the set of co-atoms (a co-atom is the complement of an atom), viewed in the corresponding Tarski-Lindenbaum algebra,
$\Fm_T$, as a set of formulas, is a non principal-type that cannot be omitted in any model of $T$; here $T$ is consistent
if $|A|>1$. This last connection  was first established by the  author leading up to \cite{ANT} and more, see e.g \cite{HHbook2}.

The reference \cite{Sayed} contains an extensive discussion of such notions.

In this paper we also introduce a plathora of deep concepts that are $\CA$ analogues of equally 
deep concepts in relations algebras, introduced by 
Hirsch, Hodkinson and Maddux.

We define the cylindric analogues of $\sf RA_n$ (relation algebras that embed in complete and atomic 
relation algebras having $n$ dimensional basis)
and several notions of $n$ 
{\it relativized} representability for a given $\CA_m$ where $n>m$. Such local representations
include $n$ square, $n$ flat and $n$ smooth, notions introduced by Hirsch and Hodkinson for relation algebras. 
(Exact definitions will be provided below). 

We show that such algebras have $n$ square representations,
and that this class, that turns out a canonical conjugated Sahlqvist
variety, can be characterized by a simple $n$ pebbled game atomic $\omega$ rounded game, 
where \pa\ has only one move, namely, a cylindrifier move. These algebras also have $n$ dimensional basis
that are like Maddux's relational basis but formulated for cylindric algebras.
Such basis can be treated as a saturated set of mosaics that gives the relativized representation.

When we impose more conditions on the game making it more difficult for \pe\  to handle, permitting 
\pa\ to play amalgamation moves, too, a \ws\ for \pe\ 
in the resulting $n$ pebbled $\omega$ rounded new game, 
is equivalent to that the algebra has $n$ flat representations; this is a better approximation to genuine representations
for in such representations {\it commutativity of cylindrifiers} are witnessed only on $m$ cliques; where $m<n $ is the dimension.
An $m$ clique is, roughly, a hypergraph having $n$ nodes coming from the base of the representation 
all of whose $m$ hyperedges are labelled by the top element.

The notion $n$ smooth is yet another apparently closer to genuine representations on $m$ cliques.
(It is stronger in case of complete representations not ordinary ones).
Such representations can be characterized by `hyperbasis games'. A hyperbasis consists of 
hypernetworks - which are generalization of Maddux's $n$ dimensional matrices, allowing labelled hyperedges -
whose hyperedges have labels coming from a possibly infinite set; these can also be treated as a saturated set of mosaics 
that can be patched to give $n$ smooth representations.
Syntactically all such classes have a neat embedding property; they embed into the $m$ neat reduct of
$n$ dimensional algebras.

The idea here is that the properties of genuine representations manifest itself 
only locally, that is on $m$- cliques
so we have a notion of clique guarded  semantics, where witnesses of cylindrifiers are only found if 
we zoom in adequately
on the representation by a `movable window'. We may also require that cylindrifier commute on this localized 
level; this is the case with
$n$ flat and $n$ smooth representations.

We also relate algebras having such different neat embedding properties, embedding in a special way in the $m$  neat reduct
of an $n$ dimensionl algebra - possibly satisfying weaker axioms -  
to algebras having (complete) different types of 
$n$  relativized representations (that preseve arbitrary conjuncts) 
from the viewpoint of finite axiomatizability and first order definability. 

We will discover that  basically
varieties whose $m$ dimensional algebras, with $m$ finite $>2$, that have $n>m$ relativized representations 
are not finitely axiomatizable, while those that have $n$ complete relativized representations, when $n\geq m+3$, 
are not elementary. In other words, non-finite axiomatizability results on equational
axiomatizations of the variety of representable
algebras and the non-first order definability of the class of completely repesentable
ones do not survive severely localized relativizations. 

Rainbows and Monk-like algebras will be used. 
In the presence of amalgamation moves, Monk-like algebras are more handy, and
in the absence of them,  rainbows offer solace and they seem to be the only choice. 

The rainbow construction in algebraic logic is invented by Hirsch and Hodkinson \cite{HHbook}.
This ingenious construction reduces finding subtle differences between seemingly
related notions or concepts using a very simple \ef\ forth pebble game between two players \pa\ and \pe\ on two very simple structures.
From those structures a relation or cylindric algebra can be constructed and a \ws\ for either player lifts to a \ws\ on the atom structure
of the algebra, though the number of pebbles and rounds increases in the algebra.

In the case of relation algebras, the atoms are coloured, so that the games are played on colours. For cylindric algebras,
matters are a little bit more complicated
because games are played on so-called coloured graphs, which are models of the rainbow signature coded in an
$L_{\omega_1,\omega}$ theory. The atom structure consists of finite
coloured graphs rather than colours. 

Nevertheless, the essence of the two construction is very similar, because in the cylindric
algebra constructed from $A$ and $B$, the relation algebra atom structure based also on $A$ and $B$
is coded in the cylindric atom structure, but the latter has additional shades of yellow
that are used to label $n-1$ hyperedges coding the cylindric information.
The colours used can slightly vary because of the presence of the shades of yellow, for example in \cite{HH}
a black was used for relation algebras, there were no blacks in the cylindric case.
The moves using a black by \pe\ in the relation algebra game 
were handled by the shades of yellow in the corresponding cylindric `graph game'.

The strategy for \pe\ in a rainbow game is try while, if it doesn't work try black, and finally if it doesn't work
try red. In the latter case she is kind of cornered, so it is the hardest part in the game.
She never uses green.

In the cylindric algebra case, the most difficult part
for \pe\ is to label the edge between apexes of two cones (a cone is a special coloured graph) having
a common base, when she is also forced a red.

In both cases  the choice of a red, when she is forced one,
is the most difficult part for \pa\ and if she succeeds in every round then she wins.

Indeed,  it is always the case that \pa\ wins on a red clique,
using his greens to force \pa\  play an inconsistent triple of reds.
In case of cylindric algebra \pa\ bombards \pe\ with cones having green tints, 
based on the same base.

For Monk-like algebras, this number (the number of pebbles) is a Ramsey large uncontrollable number;
though it can be sometimes controlled, in the availability  of amalgamation moves, like in the case
of the proof used to solve the famous neat embedding problem, initiated by Monk and formulated
as  problem 2.12 in \cite{tarski}, witness theorem \ref{thm:cmnr}.
So as stated earlier, but not in so many words, 
there is this `semi rational, but gounded, feeling in the air' among algebraic logicians that what can be done
with a Monk-like algebra can also be done by a rainbow algebra, but there is no meta-theorem yet, at least to 
the best of our our knowledge, 
to this effect; only a lot of evidence supporting this partially rational feeling and no evidence for
its contrary.

Rundown of our main  results.

Let $n>2$ be finite.
\begin{enumarab}

\item Showing that for any class $\K$ between $\Df_n$ and $\PEA_n,$
there is a weakly representable $\K$ atom structure that is not strongly representable.
In fact, we show that there is a relation algebra $\R$, such that the set of all $n$ basic matrices ${\sf Mat}_n\At\R$
forms a cylindric bases,
and $\Tm{\sf Mat_n}\At\R,$ the term $n$ dimensional cylindric algebra algebra based on it is representable,
but  $\Rd_{df}\Cm{\sf Mat_n}\At\R$, the diagonal free reduct of the complex algebra of 
$\Mat_n\At\R$, is not representable.

This gives the same result for $\R$
cf. theorem \ref{hodkinson}. This result is known, but here we give one
model theoretic construction that addresses simultaneously Monk-like and rainbow algebras.

\item Introducing, for each $n>m>2$, $m$ finite, 
a new class of $m$ dimensional cylindric algebras denoted by ${\sf CB}_{m,n}$.
This class is a strict  approximation to the class $\RCA_m$, when $n$ is finite, and it the $\CA$ analogue of relational
algebras embedding into complete and atomic relation algebras having $n$  dimensional
relational bases introduced by Maddux, cf. definition \ref{basis}. 
In particular, $\bigcap_{k}\sf CB_{m,m+k}=\RCA_m$.

We show that ${\sf CB}_{m,n}$ is a canonical variety for all $n>m$, cf. theorem \ref{can2}, 
that is not atom-canonical, hence not closed under \d\ completions, nor Sahlqvist 
axiomatizable, when $n\geq m+4$ and $m$ finite $>3$,  witness theorem \ref{blowupandblur}.

\item We show that for $\K\in\{\PEA, \CA\},$ the class $S\Nr_n\K_{n+k}$ is not atom-canonical for $k\geq 4$, theorems \ref{can}, \ref{smooth}
\ref{blowupandblur}, extending
a result in \cite{can}. We give two entirely different proofs. This answers a question in \cite[problem 12, p.627]{HHbook}
which  was raised again in \cite[problem 1, p.131]{Sayedneat}. This result gives an entirely new proof of a recent result of the author's and
it also widens the range of algebras. The second proof, uses a rainbow construction conjuncted with a blow up and blur argument in the sense of 
\cite{ANT}  and it works for any class between $\Sc$ and $\PEA$. The special case of only cylindric algebras 
is proved in \cite{can} and reviewed in \cite{recent}.

\item Showing that the class of strongly representable atom structures  for any $\K$ between $\Df_n$ and $\PEA_n$ is not elementary,
theorem \ref{el}. In particular, it follows that for $\K\in \{\PA, \PEA\},$ 
the class $\{\A\in \K: \text {$\A$ is atomic }: \Cm\At\A\in \sf RK_n\}$ is not elementary,
witness corollary \ref{Hodkinson}. This result is due to Hirsch and Hodkinson. We propose a different proof that gives
the result for relation and cylindric algebras in one go.
Here $\sf RK_n$ denotes the class of representable $\K_n$s. 
Our result for $\sf PA$ and $\sf PEA$, answers a question of Hodkinson's \cite[p.284]{AU}, though admittedly he proposed the line of 
the argument which we use.

\item We show that  the omitting types theorem fails for finite variable fragments of first order logic, as long as we have more than $2$
variables,  even if we consider clique guarded
semantics, theorem  \ref{OTT}. This is stronger, in a way,  than the result reported in \cite{Sayed} and proved in detail in \cite{ANT}
using a blow up and blur construction.
The latter result is also approached, and it is first strengthened conditionally, cf. theorem \ref{blurs}.
The condition is the existence of certain finite relation
algebras. Such relation algebras are provided later. They are Monk-like algebras constructed by Hirsch and Hodkinson.

When we exclude witnesses isolating types realized in every relativized model, 
using only $n+k$ variables, $n>2$ and 
$k\geq 1$,  we use Monk-like 
algebras having $n+k$ dimensional hyperbasis, but not $n+k+1$ dimensional 
hyperbasis. Similar but not identical results are obtained in \cite{recent}.

Such a result is a variation on a theme concerning results on 
omitting types proved recently by the author \cite{recent}, but in the last reference the sufficient condition provided
remained conditional.

\item We show that the omitting types theorem fails for finite first order definable 
extensions of $n$ variable fragments of first order logic for  finite $n>2$ $(L_n)$.
We approach the problem of omitting possibly uncountably many types  $< {}^{\omega}2$ 
for $L_n$ theories ($n>2$ finite),   delineating the edges of an independence proof, giving a proof in $ZFC$ of
a statement that on first sight would be suspected of being independent.
Proofs of this kind is often extremely subtle and surprising;  a 
very similar statement is proved here to be independent 
and it is hard to detect the underlying difference, witness theorems, \ref{Shelah1}, \ref{Shelah2}, \ref{independence}.

A challenging oddity here is that the consistency of the independent statement appeals blatantly 
to the Baire category theorem, its independence has affinity to Martin's axiom restricted to countable partially ordered set,
but the proved (in $ZFC$) similar statement 
has no obvious topological counterpart, at least to us. The proof we adopt is a restriction of a proof of Shelah's
on omitting uncountably many types, namely $< 2^{\aleph_0}$ 
in countable first order theories, but restricted to $L_n$.

In contrast to clique guarded semantics, we obtain positive omitting types theorems, when we weaken commutativity of quantifiers in 
our countable $L_n$ theories; consequently our models are relativized differently. This can be viewed as 
an omitting types theorem for certain guarded finite variable fragments
of first order logic.  

\item We prove several completeness theorems relating the syntactical notion of algebras having a (complete)
neat embedding property,  to the semantical one of having (complete) relativized representations, and investigate 
model theoretic problems about them. 

As a sample, we show:

\begin{itemize}

\item the class $S_c\Nr_n\CA_{n+k},$ for finite $n>2$ and $k\geq 3$, which coincides with the class of algebras having
complete $n+k$ smooth representations, is not elementary. We give two different proofs. 
Here $S_c$ is the operation of forming complete subalgebras.

\item the class of $\CA_n$s having $n+k$ square representations (for any $k$)
is a variety, and that or finite $n>2$,  the variety of algebras having  $n+k+1$ square
relativized representations is not
finitely axiomatizable over that of having $n+k$ square representations, for any finite
$k\geq 1$. 

\end{itemize}

For the  first item we use both a rainbow construction, theorem \ref{nofinite} 
and a Monk-like construction, theorem \ref{completerepresentation}.

In the second acse, we use rainbow constructions, theorem \ref{squarerepresentation}
which work best when the games involved are basic
offering \pa\ only {\it one move}, namely,  a cylindrifier move. 
No amalgamation moves are involved in the game. 
This result is the exact $\CA$ analogue of the result of Hirsch and Hodkinson that ${\sf RA}_{n+1}$ is not finitely
axiomatizable over
${\sf RA}_n$ for $n\geq 5$, but now 
presented semantically.

In case {\it we do have} amalgamation moves, their presence renders the use 
of Monk-like algebras highly probable for the problem at hand, witness theorems, \ref{thm:cmnr} and \ref{2.12}.

\item Showing that for a finite algebra in $\CA_3$ and $\PEA_3$,
it is undecidable whether $\A\in S\Nr_3\CA_{3+k}$ or not, for $k\geq 3$, theorem \ref{decidability}.
It follows that there are finite algebras $S\Nr_3\CA_{3+k}$s, with {\it no finite} relativized $3+k$ smooth 
representations, though they necessarily have infinite $3+k$ relativized representations, by our 
`$3+k$ completeness
theorem.' 

The underlying idea here is that  even over a finite algebra 
there are infinitely many $3+k$  dimensional hypernetworks ($k\geq 3$), 
hyperlabels are not restricted to come from a 
finite set. This, however, 
does not occur in the cases of $n$ square representations, 
because here, in the corresponding basis, we do not have a hyperlabels. 
In particular, such varieties have the finite algebra on finite 
base property, witness theorem \ref{smoothcompleteness2}.

In contrast, we show that, in {\it all cases}, when
we restrict taking subneat reducts to finite $n$ dimensional algebras, then for any $2\leq m<n$,
every finite algebra has a finite relativized $n$ 
representation, witness theorem \ref{smoothcompleteness1}. 

\item We show that for $2<m<n$, the sub-variety of $\CA_m$s 
having $n$ smooth representations is not finitely axiomatizable over 
the class 
that having $n$ square representations, by showing that $S\Nr_m{\sf CA}_n$ is not finitely 
axiomatizable over $\sf CB_{m,n}$, witness theorem \ref{squarerepresentation}. 
An analogous result holds for the class of $m$ dimensional algebras 
having $n$ flat representations; such a variety
is not finitely axiomatizable over $\sf CB_{m,n}$. 

We learn from such results that commutativity of cylindrifiers 
adds a lot. On the other hand,
$n$ smoothness does not  add to $n$ flatness in case of ordinary representations but it adds to it
in the case of complete ones. 
An algebra having a complete $n$ flat representation may not have a complete one. 

Both notions add to squareness; making a legitimate projection to relation 
algebras, in so much Hirsch and Hodkinson's  hyperbasis add to Maddux's 
relational basis. This distinction appears blatantly in decision procedures, not finite axiomatizability.
For example though the equational theory  of the class ${\sf CB}_{m,n}$ for $2<m<n<\omega$ 
is not finitely axiomatizable, it is decidable; in fact its universal theory is decidable, 
witness theorem \ref{smoothcompleteness2}.

In contrast, we will show below the equational theory of $\CA_3$s that 
have  relativized $n$ smooth representations, when $n\geq 6$, 
is undecidable, and does not have the finite 
algebra finite base property. But if we require 
that dilations are finite, then the equational theory of the resulting class {\it has} the finite algebra finite base property, as pointed
out above, but it {\it remains} 
undecidable, witness theorem \ref{undecidability}.

\item We show that for infinite $\alpha$, $r\in \omega$ and $k\geq 1$, 
there is an algebra  $\B^r\in S\Nr_{\alpha}\CA_{\alpha+k+1}\sim S\Nr_{\alpha}\CA_{\alpha+k}$ 
such that $\Pi_r \B_r\in \RCA_{\alpha}.$ In particular, the former class is not finitely axiomatizable over the latter
and $\RCA_{\alpha}$ is not finitely axiomatizable over $S\Nr_{\alpha}\CA_{\alpha+k}.$ 
This lifts results obtained by Hirsch and Hodkinson for finite dimensions, 
and generalizes a recent of Robin Hirsch and the author \cite{t}, but only,  for cylindric and quasi-polyadic equality
algebras of infinite dimension. 

The reference \cite{t} proves a weaker result but with the bonus that it works for many diagonal free reducts
of cylindric and quasi-polyadic equality algebras. It is highly likely that the stronger result established here works for the 
diagonal free algebras addressed in \cite{t}, 
but further research is needed.  Such a result implies a strong incompleteness theorem for the so-called finitary logics of infinitary relations.
We will compare both results, discussing possibilities. 

The striking difference between the two constructions in the finite dimensional case,
is that the algebras constructed to cover also the diagonal free case are {\it finite}; this is not the case
with the original proof of Hirsch and Hodkinson for $\CA$s.

Another very significant distinction  is that in \cite{t}  {\it three} dimensional 
hypernetworks (and not the usual two dimensional case) are dealt with.
The parameter defining the algebras between two succesive dimensions, call it $r$ (which varies over $\omega$) is not just a number, it has a 
linear order.  This order induces a third dimension. 

In this case the ultraproduct of the constructed 
$m$ dimensional algebras, where $m<n$, over $r$,
is only in $\Nr_m\K_{n+1}$ $(\K$ between $\Sc$ and $\PEA$),  and it is not clear that it is representable like in the $\CA$ case, 
when we have diagonal elements.
In other words, the ultraproduct could make it only to one extra dimension and {\it not}  to $\omega$ many, 
witness theorem \ref{thm:cmnr}.

When $\A\subseteq \Nr_{\alpha}\B$, and $\B\in \CA_{\alpha+k}$, call $\B$ is a {\it $k$ dilation of $\A$.}
If moreover $\A$ generates $\B$ (using all operations of $\B$), then $\B$ is called a 
{\it minimal $k$  dilation of $\A$.} Addressing {\it definability issues} in such logics, 
we show there could be two non-isomorphic algebras in $\RCA_{\alpha}$ 
that generate the same $\omega$ dilation, and dually, there could be an algebra $\A\in \RCA_{\alpha}$
that generate two non-isomorphic $\omega$ minimal dilations
if we restrict our attention to  isomorphisms that fixes this algebra pointwise. This is proved in a categorial framework in \cite{conference}.
\end{enumarab}

There are other new results that we cannot formulate at this early stage, 
witness theorems \ref{main}, \ref{2.12}, \ref{maintheorem}, either for undefined notation, or/and for their length.
The last theorem intersects one of the main theorem in \cite{recent}, 
but it also contains manny new connections.

Very occasionally known results or proofs are given, when we find it useful for stressing certain concepts, or/and
clarifying some ideas.  On the other hand, proofs of slightly new results may be sketched if they are not very relevant to the mainstream.
In all such occasions this will not affect
the flow of the paper, in fact it will add to it; furthermore, every time we do this, it will be 
explicity said,  referring to
the relevant reference, see e.g theorem \ref{decidability}. Sketched proofs will always 
give more than a glimpse of the proof.
This paper also intersects with our recent  submission \cite{recent}; this will be explicitly stated at each point of intersection.

Proofs existing elsewhere are repeated to give a rounded  picture of certain techniques or concepts, by
drawing analogies or stressing differences. This occurs for example in the case of 
blow up and blur constructions of which several instances are sprinkled 
throughout the paper, cf. theorem \ref{blurs}, \ref{blowupandblur}, \ref{decidability}.

We have also included theorems, \ref{11}, \ref{blowupandblur}, 
\ref{main} item (4), \ref{j}, which have been recently submitted for publication 
(though theorem \ref{blowupandblur} proves a stronger result than the result in \cite{can}, 
but the idea is essentially the same), together with their proofs, 
since the techniques used in {\it the proofs} of
such theorems, not only their statement, are complementary and/or enriching 
to several  proof techniques of  the new results obtained  here.

On the other hand, the main new results that are, in particular,  also not addressed in \cite{recent}, \cite{can} and in  \cite{games} 
are theorems:

\ref{hodkinson}, \ref{can2}, \ref{blowupandblur}, \ref{blurs}, \ref{Shelah1}, \ref{Shelah2}, \ref{firstorderdefinable}, \ref{independence} 
\ref{el}, \ref{Hodkinson}, \ref{decidability}, \ref{undecidability}, \ref{smoothcompleteness1}, 
\ref{nofinite}, \ref{smoothcompleteness2}, \ref{completerepresentation}, \ref{squarerepresentation}, \ref{rel}, \ref{longer}, \ref{main}, 
\ref{infinitedistance}. Theorem \ref{maintheorem} intersects with a few 
results proved in \cite{recent} but its bulk is new.

Due to its length the paper is divided into two parts:
After the preliminaries (given for all parts), we have:

\subsection*{Layout of part one}

Part one has four sections:

\begin{enumarab}

\item In the first section we give a model theoretic proof of the existence of several weakly representable 
algebras that are not strongly representable, inferring that the class of 
representable algebras for many cylindric-like algebras of finite dimension  $>2$ is not atom canonical,
theorem  \ref{hodkinson}; this result applies to  any class between $\Df$ and $\PEA$. 

We introduce a new class of cylindric algebras of dimension $m$ $>2$ for each $n>m$, denoted below
by ${\sf CB}_{m,n}$ approximating the class of representable algebras of the same dimension. In fact, such classes 
converge to $\RCA_m$ in the sense 
that $\bigcap_{n\in \omega}{\sf CB}_{m,n}=\RCA_m$.

We show that ${\sf CB}_{m,n}$ is a conjugated Sahlqvist canonical variety for each $n>m.$
This class is the cylindric- algebra analogue of 
$\sf RA_n$, the class of relation algebras that embed into complete and atomic relation algebras having $n$ 
dimensional relational bases; introduced by Maddux and studied extensively 
by Hirsch and Hodkinson \cite{HHbook2}, and others. 

Cylindric algebras belonging 
to this class, namely, to ${\sf CB}_{m,n}$  can be characterized by an easy
atomic game $\omega$ rounded games
played on atomic networks having only $n$ nodes and a \ws\ for \pe\ entails
his response in every round of the game to only
one kind of move, namely, a cylindrifier move by \pa\ . 

They also have relativized representations, that are only {\it $n$ square}, 
and a {\it weak} neat embedding property, meaning that they neatly embed into 
$n$ dimensional cylindric-like algebras, but  the latter may fail commutativity of cylindrifiers.
Such properties are summarized in theorem \ref{can2}.

\item In section 2 we strengthen the result in theorem \ref{hodkinson} showing that several subvarieties of the class
of representable algebras are also not atom-canonical. For the latter result,
we give two entirely different proofs, a semantical model
theoretic one, theorems  \ref{step}, \ref{smooth}, \ref{can} and another one based on blowing up and blurring
a finite rainbow cylindric algebra, theorems \ref{blowupandblur}, \ref{blurs}.
(The latter slightly generalizes a result recently proved by the author \cite{recent}; however, the semantical proof is new.)
We also show the new result that the variety ${\sf CB}_{m,n}$ is not atom - canonical when $n\geq m+4$ and $m$ is finite
$>2$, hence is not closed under \d\ completions, too.

\item In section 3, we give an example of a rainbow construction witnessing that several classes
of algebras having a neat embedding property, and containing the class of completely representable 
algebras are not
elementary \ref{neat}. This is a substantial strengthening of the result of Hirsch and Hodkinson in \cite{HH}.

We apply this purely algebraic result
together with our blow up and blur constructions (applied to both Monk-like  algebras and rainbow
algebras) to show that the omitting types
theorem for finite variable fragments fails even if we consider clique guarded semantics \ref{OTT}.
Several other results on omitting types for finite variable fragments of first order logic are presented, both positive and negative.
\item In section 4, we propose a new proof to the fact that the class of strongly representable
atom structures for several cylindric-like algebras of relations 
is not elementary, using, like Hirsch and Hodkinson  Erdos' probabilistic  graphs, theorem \ref{new}.

The novelty here, formulated for cylindric algebras though it equally applies to all other cases,
is that the required relation and cylindric algebras are constructed in one go.
More precisely, the cylindric algebra's atom structure 
is the set of basic matrices 
of an atomic relation algeba; both 
atom structures are based on
the same graph; both are strongly representable
if and only if  the chromatic number of the graph is infinite.

\end{enumarab}

\subsection*{Layout of part two}

Part two, has three sections:

\begin{enumarab}

\item  In section 5, we show that the question as to 
whether a finite algebra is in various subvarieties of representable algebras of dimension $3$ is undecidable,
theorem \ref{decidability}. Our result uses the known result due to Hirsch and Hodkinson, namely, that it is undecidable to
tell whether a finite relation algebra has  an embedding property into a cylindric algebra of dimension $>4$.

Using this result, we show that their equational theory of such subvarieties is undecidable, too.
We prove a stronger result that addresses higher dimensions, using the known result of N\'emeti's that finitely
generated many cylindric-like free
algebras of dimension $>3$ are not atomic \ref{pairing}. (The result is true also for dimension $3$, but fortunately
we do need this much,  because lowering the dimension from $4$ to $3$, is really a long story, witness 
\cite[Chapter 1]{HHbook2}).

\item In section 6, we approach the two issues 
of first order definability and finite axiomatizability for classes
of algebras having relaivized (complete) representations.
We prove a completeness theorem relating the syntactic notion of neat embeddings to the semantical one of
relativized representation.

All our results are negative as far as these two issues are concerned when the dimension of algebras aproached are 
finite $>2$.
In this respect, we show that the class of algebras having $n$ complete
relativized representations is not elementary,
while the variety of algebras having $n+1$ square relativized representations (${\sf CB}_{m,n+1},$)
is not finitely axiomatizable over the class of algebras having $n$ relativized square representations (${\sf CB}_{m,n})$,
when the dimension $m$ is finite $>2$, witness 
theorems \ref{nofinite}, \ref{completerepresentation} and  \ref{squarerepresentation}.

\item In the final section, notions involving neat reducts and neat embeddings
are approached rather extensively. One of the new results in this section is 
that for $2<m<n<\omega$, the class $S\Nr_m\sf CA_n$ is not finitely axiomatizable 
over  $\sf CB_{m,n}.$ (In case $n=\omega$ both coincide the class $\RCA_m$).

Several new notions that apply to what we call {\it neat} atom structures are  introduced
characterized by games, and approached from the view point of 
first order definability.
The main theorems in this last section, 
are theorems,  \ref{main}, \ref{2.12} and \ref{maintheorem}. In the last theorem a plathora of results on neat 
embeddings in connection to notion like complete representability and strong representabilty are given. 

In fact, this last theorem
has (9) items, establishing inter-connections between seemingly unrelated notions, 
and, conversely, giving delicate counterexamples distinguishing between
seemingly related ones, with similarities and differences, in all cases, illuminating the problem at hand.
In the last item we show that any $\A\in S_c\Nr_n\CA_{\omega}$, has a representable \d\ completion, but if uncountable it may fail to be
completely representable.

\end{enumarab}

\section{Preliminaries}

The notation used is in conformity with  \cite{1} but the following list may help.
For cardinals $m, n$ we write $^mn$ for the set of maps from $m$ to $n$.
If  $U$ is an ultrafilter over $\wp(I)$ and if $\A_i$ is some structure (for $i\in I$)
we write either  $\Pi_{i\in I}\A_i/U$ or $\Pi_{i/U}\A_i$ for the ultraproduct of the $\A_i$ over $U$.
Fix some ordinal $n\geq 2$.
For $i, j<n$ the replacement $[i/j]$ is the map that is like the identity on $n$, except that $i$ is mapped to $j$ and the transposition
$[i, j]$ is the like the identity on $n$, except that $i$ is swapped with $j$.     A map $\tau:n\rightarrow n$ is  finitary if
the set $\set{i<n:\tau(i)\neq i}$ is finite, so  if $n$ is finite then all maps $n\rightarrow n$ are finitary.
It is known, and indeed not hard to show, that any finitary permutation is a product of transpositions
and any finitary non-injective map is a product of replacements.

The standard reference for all the classes of algebras mentioned previously  is  \cite{tarski}.
Each class in  $\set{\Df_n, \Sc_n, \CA_n, \PA_n, \PEA_n, \sf QA_n, \sf QEA_n}$ consists of Boolean algebras with extra operators,
as shown in figure~\ref{fig:classes}, where $\diag i j$ is a nullary operator (constant), $\cyl i, \s_\tau,  \sub i j$ and $\swap i j$
are unary operators, for $i, j<n,\; \tau:n\rightarrow n$.

For finite $n$, polyadic algebras are the same as quasi-polyadic algebra and for the infinite
dimensional case we restrict our attention to quasi-polyadic algebras in $\sf QA_n, \QEA_n$.
Each class is defined by a finite set of equation schema. Existing in a somewhat scattered form in the literature, equations defining
$\sf Sc_n, \sf QA_n$ and $\QEA_n$ are given in the appendix of \cite{t} and also in \cite{AGMNS}.
For $\CA_n$ we follow the standard axiomatization given in definition 1.1.1 in \cite{tarski}.
For any operator $o$ of any of these signatures, we write $\dim(o)\; (\subseteq n)$
for the set of  dimension ordinals used by $o$, e.g. $\dim(\cyl i)=\set i, \; \dim (\sub i j)=\set{i, j}$.  An algebra $\A$ in $\QEA_n$
has operators that can define any operator of $\sf QA_n, \CA_n,\;\Sc_n$ and $\sf Df_n$. Thus we may obtain the
reducts $\Rd_{\sf K}(\A)$ for $\K\in\set{\QEA_n, \sf QA_n, \CA_n, \Sc_n, \sf Df_n}$ and it turns out that the reduct always
satisfies the equations defining the relevant class so $\Rd_{\sf K}\A\in \K$.
Similarly from any algebra $\A$ in any of the classes $\sf QEA_n, \sf QA_n, \sf CA_n, \sf Sc_n$
we may obtain the reduct $\Rd_\Sc\A\in\Sc_n$ \cite{AGMNS}.

\bigskip
\begin{figure}
\[\begin{array}{l|l}
\mbox{class}&\mbox{extra operators}\\
\hline
\Df_n&\cyl i:i<n\\
\Sc_n& \cyl i, \s_{[i/j]} :i, j<n\\
\CA_n&\cyl i, \diag i j: i, j<n\\
\PA_n&\cyl i, \s_\tau: i<n,\; \tau\in\;^nn\\
\PEA_n&\cyl i, \diag i j,  \s_\tau: i, j<n,\;  \tau\in\;^nn\\
\QA_n&  \cyl i, \s_{[i/j]}, \s_{[i, j]} :i, j<n  \\
\QEA_n&\cyl i, \diag i j, \s_{[i/j]}, \s_{[i, j]}: i, j<n
\end{array}\]
\caption{Non-Boolean operators for the classes\label{fig:classes}}
\end{figure}
Let $\K\in\set{\QEA, \QA, \CA, \Sc, \Df}$, let $\A\in \K_n$ and let $2\leq m\leq n$ (possibly infinite ordinals).
The \emph{reduct to $m$ dimensions} $\Rd_m(\K_n)\in\K_m$ is obtained from $\A$ by discarding all operators with indices $m\leq i<n$.
The \emph{neat reduct to $m$ dimensions}  is the algebra  $\Nr_m\A\in \K_m$ with universe $\set{a\in\c A: m\leq i<n\rightarrow \cyl i a = a}$
where  all the operators are induced from $\A$ (see \cite[definition~2.6.28]{tarski} for the $\CA$ case).
More generally, for $\Gamma\subseteq n$ we write $\Nr_\Gamma\A$ for the algebra whose universe
is $\set{a\in\A: i \in n\setminus\Gamma\rightarrow \cyl i a=a}$ with 
all the operators $o$ of $\A$ where $\dim(o)\subseteq\Gamma$.
Let $\A\in\K_m, \; \B\in \K_n$.  An injective homomorphism $f:\A\rightarrow \B$ is a \emph{neat embedding} if the range of $f$
is a subalgebra of $\Nr_m\B$.
The notions of neat reducts and  neat embeddings have proved useful in
analyzing the number of variables needed in proofs,
as well as for proving representability results,
via the so-called neat embedding theorems \cite{tarski,  basim, basim2, Sayedneat}.

Let $m\leq n$ be ordinals and let $\rho:m\rightarrow n$ be an injection.
For any $n$-dimensional algebra $\B$ (substitution, cylindric or quasi-polyadic algebra with or without equality)
we define an $m$-dimensional algebra $\Rd^\rho\B$, with the same universe and Boolean structure as
$\B$, where the $(ij)$th diagonal of $\Rd^\rho\B$ is $\diag {\rho(i)}{\rho(j)}\in\B$
(if diagonals are included in the signature of the algebra), the $i$th cylindrifier is $\cyl{\rho(i)}$, the $i$ for $j$
replacement operator is the operator $\s^{\rho(i)}_{\rho(j)}$ of $\A$, the $ij$ transposition operator is $\s_{\rho(i)\rho(j)}$
(if included in the signature), for $i, j<m$.  It is easy to check, for $\K\in\set{\Df,\Sc, \CA, \QA, \QEA}$,
that if $\B\in\K_n$ then $\Rd^\rho\B\in\K_m$.    

For games we basically follow Hirsch Hodkinson's article \cite{HHbook2}, but not always.
We write $G_k(\A)$ for $G^k(\A)$, or simply $G_k$, for the $k$ rounded atomic game played on atomic networks of a cylindric algebra $\A$.
We will have occasion to deal with several other games that are variants of such games.
For example we may limit the number of pebbles used or adopt stronger games (harder for \pe\ to win) by allowing
\pa\ to make certain amalgamation
moves.

For rainbow cylindric algebras, we deviate from the notation therein,  we find it more convenient
to write $\CA_{\G, \Gamma}$ - treating the greens $\sf G$, as a parameter that can vary -
for the rainbow complex algebra $R(\Gamma)$ \cite[definition 3.6.9]{HHbook2}.
The latter is defined to be $\Cm(\rho(\K))$ where $\K$ is a class of models in the rainbow signature satisfying the
$L_{\omega_1,\omega}$ rainbow theory, and $\rho(\K)$ is the rainbow atom structure as defined in \cite[definitinion 3.6.3]{HHbook2}.

We may also add polyadic operations, obtaining polyadic equality rainbow algebras,
denoted by $\PEA_{\G, \Gamma}.$
It is obvious how to add the polyadic operations; using the notation in \cite[definition 3.6.3]{HHbook2}
$$R_{s_{[ij]}}=\{([f], [g]): f\circ [i,j]=g\}.$$

But in all cases considered our view is the conventional (more restrictive) one adopted in \cite{HH}, \cite{Hodkinson};
we view these models as coloured graphs,
that is complete graphs labelled by the rainbow colours.
As usual,  we interpret `an edge coloured by a green say', to mean that the pair defining
the edge satisfies the corresponding green binary relation.
Our atom structures will consist of surjections from $n$ (the dimension) to finite coloured graphs,
roughly finite coloured graphs.

${\sf QEA}$ coincide with $\PEA$ for finite dimensions,
but in the infinite dimensional case, the difference between these classes is substantial,
for example the latter always has uncountably many operations, which is not the case with the former in case of countable
dimensions.

Furthermore, $\sf QEA$ can be formulated in the context of a system of varieties
definable by a schema in the infinite dimensional case; which is not the case with $\PEA$.
Finally, $\sf QA$ denotes quasipolyadic algebras, and in comparison to $\PA$
the same can be said. To avoid conflicting notation, we write $\sf QEA$ and $\sf QA$
only for infinite dimensions, meaning that we are only dealing with finite cylindrifiers and finite substitutions.

By a graph we will mean a pair $\Gamma=(G, E)$,
where $G\not=\phi$ and $E\subseteq G\times G$ is a reflexive and
symmetric binary relation on $G$. We will often use the same
notation for $\Gamma$ and for its set of nodes ($G$ above). A pair
$(x, y)\in E$ will be called an edge of $\Gamma$. See \cite{graph}
for basic information (and a lot more) about graphs.
For neat reducts we follow \cite{Sayedneat} and for omitting types we follow
\cite{Sayed}.

$L_n$ or $L^n$ will denote first order logic restricted to the first $n$ variables and $L_{\infty, \omega}^n$
will denote $L_{\infty, \omega}$ restricted to the first $n$ variables. (In the latter case infinitary conjunctions, hence also infinitary
disjunctions, are allowed).

We follow Von Neuman's convention for finite ordinals, $n=\{0,\ldots, n-1\}$.

\subsection{Some basic duality}

The action of the non-Boolean operators in a completely additive
atomic Boolean algebra with operators is determined by their behavior over the atoms, and
this in turn is encoded by the atom structure of the algebra.

\begin{definition}(\textbf{Atom Structure})
Let $\A=\langle A, +, -, 0, 1, \Omega_{i}:i\in I\rangle$ be
an atomic Boolean algebra with operators $\Omega_{i}:i\in I$. Let
the rank of $\Omega_{i}$ be $\rho_{i}$. The \textit{atom structure}
$\At\A$ of $\A$ is a relational structure
$$\langle \At\A, R_{\Omega_{i}}:i\in I\rangle$$
where $\At\A$ is the set of atoms of $\A$ as
before, and $R_{\Omega_{i}}$ is a $(\rho(i)+1)$-ary relation over
$\At\A$ defined by
$$R_{\Omega_{i}}(a_{0},
\cdots, a_{\rho(i)})\Longleftrightarrow\Omega_{i}(a_{1}, \cdots,
a_{\rho(i)})\geq a_{0}.$$
\end{definition}
Similar 'dual' structure arise in other ways, too. For any not
necessarily atomic $BAO$ $\A$ as above, its
\textit{ultrafilter frame} is the
structure
$$\A_{+}=\langle {\sf Uf}(\A),R_{\Omega_{i}}:i\in I\rangle,$$
 where ${\sf Uf}(\A)$
is the set of all ultrafilters of (the Boolean reduct of)
$\A$, and for $\mu_{0}, \cdots, \mu_{\rho(i)}\in
{\sf Uf}(\A)$, we put $R_{\Omega_{i}}(\mu_{0}, \cdots,
\mu_{\rho(i)})$ iff $\{\Omega(a_{1}, \cdots,
a_{\rho(i)}):a_{j}\in\mu_{j}$ for
$0<j\leq\rho(i)\}\subseteq\mu_{0}$.
\begin{definition}(\textbf{Complex algebra})
Conversely, if we are given an arbitrary structure
$\mathcal{S}=\langle S, r_{i}:i\in I\rangle$ where $r_{i}$ is a
$(\rho(i)+1)$-ary relation over $S$, we can define its
\textit{complex
algebra}
$$\Cm(\mathcal{S})=\langle \wp(S),
\cup, \setminus, \phi, S, \Omega_{i}\rangle_{i\in
I},$$
where $\wp(S)$ is the power set of $S$, and
$\Omega_{i}$ is the $\rho(i)$-ary operator defined
by$$\Omega_{i}(X_{1}, \cdots, X_{\rho(i)})=\{s\in
S:\exists s_{1}\in X_{1}\cdots\exists s_{\rho(i)}\in X_{\rho(i)},
r_{i}(s, s_{1}, \cdots, s_{\rho(i)})\},$$ for each
$X_{1}, \cdots, X_{\rho(i)}\in\wp(S)$.
\end{definition}
It is easy to check that, up to isomorphism,
$\At(\Cm(\mathcal{S}))\cong\mathcal{S}$ always, and
$\A\subseteq\Cm(\At\A)$ for any
completely additive atomic Boolean algebra with operators $\A$. If $\A$ is
finite then of course
$\A\cong\Cm(\At\A)$.\\

A variety $V$ of Boolean algebras with operators
is atom-canonical if whenever $\A\in V$, and $\A$ is atomic, then $\Cm\At\A\in V$. 
If $V$ is completely additive, then $\Cm\At\A$ is the \d\ completion of $\A$. 
$\A$ is always dense in its \d\ completion and so the \d\ completion of $\A$ is atomic iff $\A$ is atomic.

Not all varieties we will encounter are completely additive; the varieties $\Sc_n$ and $\PA_n$ for $n>1$,
are not.
The canonical extention of $\A$, namely, $\Cm(\Uf(\A))$ will be denoted by $\A^+$ or $\A^{\sigma}$. 
The notation $\A^{\sigma}$ will be used only twice, when part of the proof used happens to overlap with an existing proof in the literature 
using this notation,
as is the case in item (4) of theorem \ref{decidability} and
in theorem \ref{hhchapterbook}. 

This algebra is always 
complete and atomic. $\A$ embeds into $\A^+$ under the usual 'Stone representability' function.
$\A^+$ is the \d\ completion of $\A$ if and only if $\A$ is finite.

We will not need this level of abstraction. More concretely:
\begin{enumerate}
\item{Atom structure of diagonal free-type algebra is $\mathcal{S}=\langle S, R_{{\sf c}_{i}}:i<n\rangle$, where the $R_{{\sf c}_{i}}$
is binary relation on $S$.}
\item{Atom structure of cylindric-type algebra is $\mathcal{S}=\langle S, R_{{\sf c}_{i}}, R_{{\sf d}_{ij}}:i, j<n\rangle$, where the
$R_{{\sf d}_{ij}}$, $R_{{\sf c}_{i}}$ are unary and binary relations on $S$. The
reduct $\mathfrak{Rd}_{df}\mathcal{S}=\langle S,
R_{{\sf c}_{i}}:i<n\rangle$ is an atom structure of diagonal free-type.}
\item{Atom structure of substitution-type algebra is $\mathcal{S}=\langle S, R_{{\sf c}_{i}}, R_{{\sf s}^{i}_{j}}:i, j<n\rangle$, where the
$R_{{\sf d}_{ij}}$, $R_{{\sf s}^{i}_{j}}$ are unary and binary relations on $S$,
respectively. The reduct $\mathfrak{Rd}_{df}\mathcal{S}=\langle S,
R_{{\sf c}_{i}}:i<n\rangle$ is an atom structure of diagonal free-type.}
\item{Atom structure of 
quasi polyadic-type algebra is $\mathcal{S}=\langle S, R_{{\sf c}_{i}}, R_{{\sf s}^{i}_{j}}$, $R_{{\sf s}_{ij}}:i, j<n\rangle$, where the
$R_{{\sf c}_{i}}$, $R_{{\sf s}^{i}_{j}}$ and $R_{{\sf s}_{ij}}$ are binary relations
on $S$. The reducts $\mathfrak{Rd}_{df}\mathcal{S}=\langle S,
R_{{\sf c}_{i}}:i<n\rangle$ and $\mathfrak{Rd}_{Sc}\mathcal{S}=\langle S,
R_{{\sf c}_{i}}, R_{{\sf s}^{i}_{j}}:i, j<n\rangle$ are atom structures of
diagonal free and substitution types, respectively.}

\item{ The atom structure of quasi polyadic equality-type algebra
is 
$$\mathcal{S}=\langle S, R_{{\sf c}_{i}}, R_{{\sf d}_{ij}}, R_{{\sf s}^{i}_{j}}, R_{{\sf s}_{ij}}:i, j<n\rangle,$$ 
where the $R_{{\sf d}_{ij}}$
is unary relation on $S$, and $R_{{\sf c}_{i}}$, $R_{{\sf s}^{i}_{j}}$ and
$R_{{\sf s}_{ij}}$ are binary relations on $S$.
\begin{enumerate}\item{The
reduct $\mathfrak{Rd}_{df}\mathcal{S}=\langle S, R_{{\sf c}_{i}}:i\in
I\rangle$ is an atom structure of diagonal free-type.}
\item{The
reduct $\mathfrak{Rd}_{ca}\mathcal{S}=\langle S, R_{{\sf c}_{i}},
R_{{\sf d}_{ij}}:i, j\in I\rangle$ is an atom structure of
cylindric-type.}
\item{The
reduct $\mathfrak{Rd}_{Sc}\mathcal{S}=\langle S, R_{{\sf c}_{i}},
R_{{\sf s}^{i}_{j}}:i, j\in I\rangle$ is an atom structure of
substitution-type.}
\item{The
reduct $\mathfrak{Rd}_{qa}\mathcal{S}=\langle S, R_{{\sf c}_{i}},
R_{{\sf s}^{i}_{j}}, R_{{\sf s}_{ij}}:i, j\in I\rangle$ is an atom structure of
quasi polyadic-type.}
\end{enumerate}}
\end{enumerate}
\begin{definition}
An algebra is said to be representable if and only if it is isomorphic to a subalgebra of a direct product of set algebras of the same type.
\end{definition}

\begin{definition}\label{strong rep}Let $\mathcal{S}$ be an $n$-dimensional algebra atom
structure. $\mathcal{S}$ is \textit{strongly representable} if every
atomic $n$-dimensional algebra $\A$ with
$\At\A=\mathcal{S}$ is representable.
\end{definition}
Note that for any $n$-dimensional algebra $\A$ and atom
structure $\mathcal{S}$, if $\At\A=\mathcal{S}$ then
$\A$ embeds into $\Cm\mathcal{S}$, and hence
$\mathcal{S}$ is strongly representable iff
$\Cm\mathcal{S}$ is representable.

\section*{Part I}

\section{Weakly representable atom structures}

\subsection{ Weakly representable atom structures that are not strongly representable}

We use fairly simple model theoretic arguments, to prove the non atom-canonicity of several classes consisting 
of algebras embedable into neat reducts of algebras having higher dimensions.
We start wih the case when the `higher dimensions' are infinite. So here, by the Henkin neat embedding theorem,
we are dealing with representable algebras.
This proof unifies Rainbow and Monk like constructions used in \cite{Hodkinson}, \cite{weak}, and \cite{k}. 
Such Monk-like algebras will be abstracted later on, presented as atom structures of models of a first order theory
as presented in \cite{HHbook2}; however, the construction will be  different.

We need some standard model theoretic preparations.
\begin{theorem}
Let $\Theta$ be an $n$-back-and-forth system
of partial isomorphism on a structure $A$, let $\bar{a}, \bar{b} \in {}^{n}A$,
and suppose that $ \theta = ( \bar{a} \mapsto \bar{b})$ is a map in
$\Theta$. Then $ A \models \phi(\bar{a})$ iff $ A \models
\phi(\bar{b})$, for any formula $\phi$ of $L^n_{\infty \omega}$.
\end{theorem}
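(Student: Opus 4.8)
The plan is to proceed by induction on the structure of the formula $\phi \in L^n_{\infty\omega}$, and I would first recall precisely what an $n$-back-and-forth system of partial isomorphisms means: $\Theta$ is a nonempty set of partial isomorphisms between substructures of $A$, each with domain and range of size at most $n$, such that for every $\theta \in \Theta$, every element $c \in A$, and every index $i < n$, one can extend/modify $\theta$ within $\Theta$ so that $c$ enters the domain at coordinate $i$ (the forth condition), and symmetrically for the range (the back condition). The statement to prove is that $\theta = (\bar a \mapsto \bar b) \in \Theta$ implies $A \models \phi(\bar a) \iff A \models \phi(\bar b)$ for all $\phi$ of $L^n_{\infty\omega}$.

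The base case handles atomic formulas: since $\theta$ is a partial isomorphism and $\bar a, \bar b$ list its domain and range, $A \models \phi(\bar a) \iff A \models \phi(\bar b)$ is immediate from the definition of partial isomorphism (it preserves and reflects all atomic relations, equalities, and — if function symbols are present — the graphs of functions). The Boolean connectives are routine: for negation, $A \models \neg\psi(\bar a)$ iff not $A \models \psi(\bar a)$ iff (by induction) not $A \models \psi(\bar b)$ iff $A \models \neg\psi(\bar b)$; for (possibly infinitary) conjunctions $\bigwedge_{k} \psi_k$, one applies the induction hypothesis to each conjunct, noting that the same map $\theta$ works uniformly for all $k$, so no issue arises from the infinite index set. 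The only substantive case is the quantifier step.

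For the quantifier case, suppose $\phi(\bar x) = \exists x_i\, \psi(\bar x)$ with $i < n$, and assume $A \models \phi(\bar a)$; pick a witness $c \in A$ with $A \models \psi(\bar a[i/c])$, where $\bar a[i/c]$ is $\bar a$ with its $i$-th entry replaced by $c$. Here I invoke the forth property of $\Theta$: there is a map $\theta' \in \Theta$ extending (the appropriate restriction of) $\theta$ such that $\theta'$ sends $\bar a[i/c]$ to $\bar b[i/d]$ for some $d \in A$ — more carefully, $\theta'$ agrees with $\theta$ on the coordinates other than $i$ and maps $c$ at coordinate $i$ to some $d$. By the induction hypothesis applied to $\psi$ and the map $\theta' = (\bar a[i/c] \mapsto \bar b[i/d])$, we get $A \models \psi(\bar b[i/d])$, hence $A \models \exists x_i\, \psi(\bar b) = \phi(\bar b)$. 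The converse direction is symmetric, using the back property. Universal quantifiers are handled by the $\neg\exists\neg$ reduction, or directly and dually.

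The main obstacle — really the only delicate point — is bookkeeping in the quantifier step: one must be careful that the coordinates stay within $\{0,\dots,n-1\}$ (this is exactly why the system is called $n$-back-and-forth and why $L^n_{\infty\omega}$ is the right logic: reusing variable $x_i$ means overwriting coordinate $i$, never needing an $(n+1)$-st), and that the map $\theta'$ produced by the forth condition genuinely restricts back to $\theta$ on the unchanged coordinates so that the induction hypothesis is applicable to the modified tuple. Once the definition of $n$-back-and-forth system is set up to match this — allowing a coordinate to be overwritten rather than only appended — the verification is entirely mechanical, and the infinitary conjunctions cause no trouble precisely because the map witnessing equivalence does not depend on which conjunct we look at.
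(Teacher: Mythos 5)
Your proof is correct and follows the same route as the paper, which disposes of this statement with the single line ``by induction on the structure of $\phi$''; your write-up simply supplies the standard details (atomic case from the definition of partial isomorphism, uniformity of $\theta$ across infinitary conjuncts, and the forth/back property for the quantifier step with coordinate overwriting). Nothing further is needed.
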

\begin{proof} By induction on the structure of $\phi$.
\end{proof}
Suppose that $W \subseteq {}^{n}A$ is a given non-empty set. We can
relativize quantifiers to $W$, giving a new semantics $\models_W$
for $L^n_{\infty \omega}$, which has been intensively studied in
recent times. If $\bar{a} \in W$:
\begin{itemize}
\item for atomic $\phi$, $A\models_W \phi(\bar{a})$
iff $A \models \phi(\bar{a})$

\item the Boolean clauses are as expected

\item for $ i < n, A \models_W \exists x_i \phi(\bar{a})$ iff $A \models_W
\phi(\bar{a}')$ for some $ \bar{a}' \in W$ with $\bar{a}' \equiv_i
\bar{a}$.
\end{itemize}

\begin{theorem} If $W$ is $L^n_{\infty \omega}$ definable, $\Theta$ is an
 $n$-\textit{back-and-forth} system
of partial isomorphisms on $A$, $\bar{a}, \bar{b} \in W$, and $
\bar{a} \mapsto \bar{b} \in \Theta$, then $ A \models \phi(\bar{a})$
iff $ A \models \phi(\bar{b})$ for any formula $\phi$ of
$L^n_{\infty \omega}$.
\end{theorem}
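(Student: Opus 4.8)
The plan is to argue by induction on the structure of the formula $\phi\in L^n_{\infty\omega}$, mirroring the proof of the preceding (unrelativized) theorem but carrying the relativization set $W$ along at each step. Since the unrelativized statement has just been settled, the content here is really the $\models_W$ version, so what must be shown is that $A\models_W\phi(\bar a)$ iff $A\models_W\phi(\bar b)$ whenever $(\bar a\mapsto\bar b)\in\Theta$ and $\bar a,\bar b\in W$; the induction hypothesis is understood to range over \emph{all} pairs in $\Theta$ simultaneously, which is what makes the quantifier step go through.

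The atomic and Boolean cases are routine. For atomic $\phi$ we have $A\models_W\phi(\bar a)$ iff $A\models\phi(\bar a)$, and since $\theta=(\bar a\mapsto\bar b)$ is a partial isomorphism this holds iff $A\models\phi(\bar b)$, i.e.\ iff $A\models_W\phi(\bar b)$. The (possibly infinitary) Boolean clauses follow immediately from the induction hypothesis, applied with the same pair $\theta$ to each conjunct or disjunct.

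The one case requiring care is $\phi=\exists x_i\,\psi$. Suppose $A\models_W\exists x_i\,\psi(\bar a)$, witnessed by some $\bar a'\in W$ with $\bar a'\equiv_i\bar a$ and $A\models_W\psi(\bar a')$. By the forth property of the $n$-back-and-forth system $\Theta$, there is $\bar b'$ with $\bar b'\equiv_i\bar b$ and $(\bar a'\mapsto\bar b')\in\Theta$. Now fix an $L^n_{\infty\omega}$-formula $\chi$ defining $W$, so $\bar a'\in W$ means $A\models\chi(\bar a')$. Applying the first (unrelativized) theorem of this subsection to the pair $(\bar a'\mapsto\bar b')\in\Theta$ and the formula $\chi$ gives $A\models\chi(\bar b')$, i.e.\ $\bar b'\in W$. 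Hence the induction hypothesis applies to $\psi$ and the pair $(\bar a'\mapsto\bar b')$, yielding $A\models_W\psi(\bar b')$, so $\bar b'$ witnesses $A\models_W\exists x_i\,\psi(\bar b)$. The converse direction is symmetric, using the back property of $\Theta$.

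The crux — and the only place the extra hypothesis on $W$ enters — is exactly this last manoeuvre: guaranteeing that the witness transported by $\Theta$ lands back inside $W$. This is precisely what definability of $W$ delivers, via the already-established unrelativized back-and-forth lemma applied to the defining formula $\chi$. Everything else is the standard formula induction, with no new obstacles.
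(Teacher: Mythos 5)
Your proof is correct, and it settles the statement that the paper actually intends (the $\models_W$ version; as literally printed the conclusion is just a special case of the preceding theorem). However, your route differs from the paper's. The paper does not redo the induction: it defines, for each $L^n_{\infty\omega}$-formula $\phi$, the syntactically relativized formula $\phi^\psi$ (where $\psi$ defines $W$), with main clause $(\exists x_i\,\phi)^\psi=\exists x_i(\psi\wedge\phi^\psi)$, observes that $A\models_W\phi(\bar a)$ iff $A\models\phi^\psi(\bar a)$ for $\bar a\in W$, and then simply applies the preceding unrelativized theorem to the formula $\phi^\psi$, which is again in $L^n_{\infty\omega}$. So the whole back-and-forth argument is invoked once, as a black box, on a single transformed formula. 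You instead re-run the induction on $\phi$ directly, carrying $W$ along, and invoke the unrelativized theorem only on the defining formula $\chi$ of $W$, to ensure that the witness transported by the forth property lands back in $W$. The two arguments are essentially equivalent in content -- your existential step is exactly what the semantics of $\exists x_i(\psi\wedge\phi^\psi)$ unwinds to -- but the paper's factorization through the relativization map is shorter, while yours has the merit of isolating explicitly the one place where definability of $W$ is actually used. Both are fine; yours is the more self-contained and pedagogically transparent version.
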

\begin{proof} Assume that $W$ is definable by the $L^n_{\infty \omega}$
formula $\psi$, so that $W = \{ \bar{a} \in {}^{n}A:A\models \psi(a)\}$. We may
relativize the quantifiers of $L^n_{\infty \omega}$-formulas to
$\psi$. For each $L^n_{\infty
\omega}$-formula $\phi$ we obtain a relativized one, $\phi^\psi$, by
induction, the main clause in the definition being:
\begin{itemize}
\item $( \exists x_i \phi)^\psi = \exists x_i ( \psi \wedge
\phi^\psi)$.
\end{itemize}
 Then clearly, $ A \models_W \phi(\bar{a})$ iff $ A \models
 \phi^\psi(\bar{a})$, for all $ \bar{a} \in W$.
\end{proof}

The following theorem unifies and generalizes the main theorems in \cite{Hodkinson} and in \cite{weak}.
It shows that sometime Monk-like algebras and rainbow algebras do the same thing.
We shall see that each of the constructions has its assets and liabilities.
In the rainbow case the construction can be refined by truncating the greens and reds to be finite, to give sharper results
as shown in theorem, \ref{can}, \ref{blowupandblur}.

While Monk's algebras in one go gives the required result for both relation and cylindric like algebras, 
and it can be generalized to show that the class of strongly representable atom structures for
both relation and cylindric algebras is not elementary reproving a profound result
of Hirsch and Hodkinson, using Erdos probabilistic graphs, or {\it anti Monk} ultraproducts \ref{el}.

First a piece of notation. For an atomic  relation algebra $\R$, ${\sf Mat_n}\At\R$
denotes the set of basic matrices on $\R$. We say that ${\sf Mat_n}\At\R$ is a polyadic basis
if it is a symmetric cylindric bases
(closed under substitutions). We often denote ${\sf Mat_n}\At\R$
simply by ${\sf Mat_n}$ when $\R$ is clear from context.

The idea of the proof, which is quite technical,  is summarized in the following:

\begin{enumarab}

\item  We construct a labelled hypergraph $M$ that can be viewed as an $n$ homogeneous  model of a certain theory
(in the rainbow case it is an $L_{\omega_1, \omega}$ theory, in the Monk case it is first order.)
This model gets its labels from a fixed in advance graph $\G$; that also determines the signature of $M$.
By $n$ homogeneous we mean that every partial isomorphism of $M$ of size $\leq n$ can be extended
to an automorphism of $M$.

\item We have finitely many shades of red, outside the signature; these are basically non principal ultrafilters, but
they can be used as labels.

\item We build a relativized set algebra based on $M$, by discarding all assignments whose edges are labelled
by shades of red getting $W\subseteq {}^nM$.

\item $W$ is definable in $^nM$ be an $L_{\infty, \omega}$ formula
hence the semantics with respect to  $W$ coincides with classical Tarskian semantics (when assignments are in $^nM$).
This is proved using certain $n$ back and forth systems.

\item The set algebra based on $W$ (consisting of sets of sequences (without shades of reds labelling edges)
satisfying formulas in $L^n$ in the given signature)
will be an atomic algebra such that its completion is not representable.
The completion will consist of interpretations of $L_{\infty,\omega}^n$ formulas; though represented over $W$,
it will not be, and cannot be, representable in the classical  sense.

\end{enumarab}

\begin{theorem}\label{hodkinson}

\begin{enumarab}
\item There exists a polyadic equality atom structure $\At$ of dimension $n$,
such that $\Tm\At$ is representable as a $\sf PEA_n$, but not strongly representable.
In fact, $\Rd_t\Cm\At$ is not representable for any signature $t$ between that of $\sf Df$ and $\sf PEA$.

\item Furthermore,
there exists an atomic  relation algebra $\R$ that that the set of all basic matrices
forms an $n$ dimensional polyadic equality basis
such that $\Tm{\sf Mat}_n\At\R$ is  representable as a polyadic equality algebra, while
$\Rd_{df}\Cm{\sf Mat_n}\At\R$ is not (as a diagonal free cylindric algebra of dimension $n$).
In particular, $\At\R$ is weakly but not strongly representable.
\end{enumarab}
\end{theorem}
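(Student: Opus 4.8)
The plan is to realize the construction model-theoretically, following the five-step outline given just before the theorem, in the spirit of Hodkinson's rainbow paper \cite{Hodkinson} and the simplified Monk-like variant of \cite{weak}. I would fix in advance a graph $\G$ (in the Monk case, a graph with finite chromatic number but large girth; in the rainbow case, two linearly ordered structures $A,B$ coding greens and reds) together with finitely many \emph{shades of red} lying outside the signature. First I would build a labelled hypergraph $M$ which is $n$-homogeneous: every partial isomorphism of $M$ of size at most $n$ extends to an automorphism. This $M$ is to be an $n$-homogeneous model of the relevant theory --- first order in the Monk case, an $L_{\omega_1,\omega}$ rainbow theory in the other --- and its edges carry labels from $\G$, possibly together with shades of red on the ``diagonal-avoiding'' hyperedges. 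The $n$-homogeneity is obtained by a standard back-and-forth/Fra\"iss\'e-style amalgamation argument once one checks that the class of finite coloured (hyper)graphs of the right kind has the amalgamation property; this is the technical heart of setting things up.

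Next I would pass to the relativized semantics. Let $W\subseteq {}^nM$ be the set of all $n$-tuples none of whose edges is labelled by a shade of red. The key point is that $W$ is definable in ${}^nM$ by an $L_{\infty,\omega}^n$ formula $\psi$, so by the two relativization theorems quoted in the excerpt (the $n$-back-and-forth transfer lemmas) the relativized satisfaction $\models_W$ agrees with ordinary Tarskian satisfaction for $L_{\infty,\omega}^n$-formulas evaluated at tuples in $W$. To apply those theorems I must exhibit a genuine $n$-back-and-forth system $\Theta$ of partial isomorphisms of $M$ --- this is where the finitely many shades of red do their work: they act as ``non-principal ultrafilter'' labels that let \pe\ always answer a cylindrifier/extension move, because at each step she has a fresh red shade available to label a newly created edge without creating a forbidden monochromatic configuration. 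Verifying that $\Theta$ really is back-and-forth (i.e.\ that every required one-point extension succeeds) is, I expect, the main obstacle: it is exactly the point where the combinatorics of $\G$ (girth, chromatic number, or the rainbow colour constraints) enters, and where one must be careful that the reds one adds are consistent.

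Then I would form the set algebra $\A$ on base $W$ whose elements are $\{\,\bar a\in W : M\models_W \phi(\bar a)\,\}$ for $\phi$ an $L^n$-formula in the $\G$-signature (no shades of red). Using the $n$-homogeneity of $M$ one shows $\A$ is atomic, with atoms corresponding to the ``colour types'' of $n$-tuples --- equivalently, to the $n$-dimensional basic matrices of the associated relation algebra $\R$ built on the same graph. Adding the substitution operators $\swap i j$ (interpreted as in the excerpt, $R_{s_{[ij]}}=\{([f],[g]):f\circ[i,j]=g\}$) makes $\A$ a $\PEA_n$, and because $W$ is $L_{\infty,\omega}^n$-definable and $M$ is an honest relational structure, $\A$ is representable as a $\PEA_n$; equivalently $\Tm\At$ (resp.\ $\Tm\Mat_n\At\R$) is representable. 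This gives the positive half of both items (1) and (2), and identifies $\At\A$ with (the matrices over) $\At\R$.

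Finally, for the negative half I would argue that $\Cm\At$ is \emph{not} representable, indeed its $\Df$-reduct is not. The complex algebra consists of interpretations over $W$ of $L_{\infty,\omega}^n$-formulas; in particular it contains an element $x$ that in any representation would force an infinite monochromatic set in $\G$ (in the Monk case, a copy of a clique larger than $\chi(\G)$; in the rainbow case, a red clique on which \pa's greens force an inconsistent triple), which is impossible --- so no classical representation of $\Cm\At$ exists, even though $\Cm\At$ \emph{is} represented over $W$ in the relativized sense. Since this obstruction only uses cylindrifiers, it already defeats $\Rd_{df}\Cm\At$, hence $\Rd_t\Cm\At$ is non-representable for every signature $t$ between $\Df$ and $\PEA$; and transferring through the basic-matrix correspondence, $\At\R$ is weakly but not strongly representable, proving item (2). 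I would expect the delicate parts to be (a) the amalgamation/back-and-forth verification underpinning $n$-homogeneity and the definability of $W$, and (b) pinning down the precise element of $\Cm\At$ witnessing non-representability and showing its representation would yield the forbidden monochromatic configuration.
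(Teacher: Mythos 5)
Your proposal follows essentially the same route as the paper: the $n$-homogeneous model built by a Fra\"iss\'e-style game, relativization to the red-free set $W$ with the $n$-back-and-forth transfer of semantics, the atomic set algebra whose atoms are the colour types (equivalently the basic matrices of the associated relation algebra), and the Ramsey/monochromatic-clique obstruction to representing the complex algebra. The only point to flesh out is the passage to the $\Df$ reduct, which the paper handles not by saying the obstruction ``only uses cylindrifiers'' but by the standard Johnson-type argument that a representation of $\Rd_{df}\Cm\At$ induces a complete representation of $\A$ (since $\A$ is generated by elements with dimension sets $<n$), hence a representation of $\Cm\At$ --- a contradiction.
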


\begin{proof}
\begin{enumarab}

\item {\sf Constructing an $n$ homogeneous model}

$L^+$ is the rainbow signature consisting of the binary
relation symbols $\g_i :i<n-1 , \g_0^i: i< |\sf G|$, $\w, \w_i: i <n-2, \r_{jk}^i:  i<\omega, j<k<|\sf R|$
and the $(n-1)$ ary-relation symbols
$\y_S: S\subseteq {\sf G}$
together with a shade of red $\rho$ which is outside the rainbow signature,
but it is a binary relation, in the sense that it can label edges in coloured
graphs. Here we take, like Hodkinson \cite{Hodkinson}, $\sf G=\sf R=\omega$.
graphs. In the following theorem we shall see that by varying these parameters,
namely when $|\sf G|=n+2$ and $|\sf R|=n+1$ we get sharper results.

Let $\GG$ be the class of all coloured graph in this rainbow signature.
Let $T_r$ denote the rainbow $L_{\omega_1,\omega}$ theory \cite[definition 3.6.9]{HHbook2}.
Let $\G$ by a countable disjoint union of cliques each of size $n(n-1)/2$ or $\N$ with edge relation defined by $(i,j)\in E$ iff $0<|i-j|<N$.
Let $L^+$ be the signature consisting of the binary
relation symbols $(a, i)$, for each $a \in \G \cup \{ \rho \}$ and
$i < n$. Let $T_m$ denote the following (Monk) theory:

$M\models T_m$ iff
for all $a,b\in M$, there is a unique $p\in \G\cup \{\rho\}\times n$, such that
$(a,b)\in p$ and if  $M\models (a,i)(x,y)\land (b,j)(y,z)\land (c,l)(x,z)$, then $| \{ i, j, l \}> 1 $, or
$ a, b, c \in \G$ and $\{ a, b, c\} $ has at least one edge
of $\G$, or exactly one of $a, b, c$ -- say, $a$ -- is $\rho$, and $bc$ is
an edge of $\G$, or two or more of $a, b, c$ are $\rho$.
We denote the class of models of $T_m$ which can also be viewed as coloured graphs with labels coming from
$\G\cup {\rho}\times n$ also by $\GG$.

We deliberately
use this notation to emphasize the fact
that the proof for all three cases considered is essentially the same; the
difference is that our our  three term algebras, to be constructed  are set algebras with domain a subset of $^nM$
(where $M$ is an $n$ homogeneous  model that embeds all coloured graphs  to be defined shortly)
are just based on a different graph. So such a notation simplifies matters considerably.

Now in all cases,  there is a countable $n$ homogeneous coloured  graph  (model) $M\in \GG$ of both theories with the following
property:\\
$\bullet$ If $\triangle \subseteq \triangle' \in \GG$, $|\triangle'|
\leq n$, and $\theta : \triangle \rightarrow M$ is an embedding,
then $\theta$ extends to an embedding $\theta' : \triangle'
\rightarrow M$.

We proceed as follows. We use a simple game.
Two players, $\forall$ and $\exists$, play a game to build a
labelled graph $M$. They play by choosing a chain $\Gamma_0
\subseteq \Gamma_1 \subseteq\ldots $ of finite graphs in $\GG$; the
union of
the chain will be the graph $M.$
There are $\omega$ rounds. In each round, $\forall$ and $\exists$ do
the following. Let $ \Gamma \in \GG$ be the graph constructed up to
this point in the game. $\forall$ chooses $\triangle \in \GG$ of
size $< n$, and an embedding $\theta : \triangle \rightarrow
\Gamma$. He then chooses an extension $ \triangle \subseteq
\triangle^+ \in \GG$, where $| \triangle^+ \backslash \triangle |
\leq 1$. These choices, $ (\triangle, \theta, \triangle^+)$,
constitute his move. $\exists$ must respond with an extension $
\Gamma \subseteq \Gamma^+ \in \GG$ such that $\theta $ extends to an
embedding $\theta^+ : \triangle^+ \rightarrow \Gamma^+$. Her
response ends the round.
The starting graph $\Gamma_0 \in \GG$ is arbitrary but we will take
it to be the empty graph in $\GG$.
We claim that $\exists$ never gets stuck -- she can always find a suitable
extension $\Gamma^+ \in \GG$.  Let $\Gamma \in \GG$ be the graph built at some stage, and let
$\forall$ choose the graphs $ \triangle \subseteq \triangle^+ \in
\GG$ and the embedding $\theta : \triangle \rightarrow \Gamma$.
Thus, his move is $ (\triangle, \theta, \triangle^+)$.
We now describe $\exists$'s response. If $\Gamma$ is empty, she may
simply plays $\triangle^+$, and if $\triangle = \triangle^+$, she
plays $\Gamma$. Otherwise, let $ F = \rng(\theta) \subseteq \Gamma$.
(So $|F| < n$.) Since $\triangle$ and $\Gamma \upharpoonright F$ are
isomorphic labelled graphs (via $\theta$), and $\GG$ is closed under
isomorphism, we may assume with no loss of generality that $\forall$
actually played $ ( \Gamma \upharpoonright F, Id_F, \triangle^+)$,
where $\Gamma \upharpoonright F \subseteq \triangle^+ \in \GG$,
$\triangle^+ \backslash F = \{\delta\}$, and $\delta \notin \Gamma$.
We may view $\forall$'s move as building a labelled graph $ \Gamma^*
\supseteq \Gamma$, whose nodes are those of $\Gamma$ together with
$\delta$, and whose edges are the edges of $\Gamma$ together with
edges from $\delta$ to every node of $F$. The labelled graph
structure on $\Gamma^*$ is given by\\
$\bullet$ $\Gamma$ is an induced subgraph of $\Gamma^*$ (i.e., $
\Gamma \subseteq \Gamma^*$)\\
$\bullet$ $\Gamma^* \upharpoonright ( F \cup \{\delta\} ) =
\triangle^+$.
Now $ \exists$ must extend $ \Gamma^*$ to a complete
graph on the same node and complete the colouring yielding a graph
$ \Gamma^+ \in \GG$. Thus, she has to define the colour $
\Gamma^+(\beta, \delta)$ for all nodes $ \beta \in \Gamma \backslash
F$, in such a way as to meet the required conditions.  For rainbow case \pe\ plays as follows:
\begin{enumarab}

\item If there is no $f\in F$,
such that $\Gamma^*(\beta, f), \Gamma^*(\delta ,f)$ are coloured $\g_0^t$ and $\g_0^u$
for some $t,u$, then \pe\ defined $\Gamma^+(\beta, \delta)$ to  be $\w_0$.

\item Otherwise, if for some $i$ with $0<i<n-1$, there is no $f\in F$
such that $\Gamma^*(\beta,f), \Gamma^*(\delta, f)$ are both coloured $\g_i$, then \pe\
defines the colour $\Gamma^+(\beta,\delta)$ to
to be $\w_i$ say the least such

\item Otherwise $\delta$ and $\beta$ are both the apexes on $F$ in $\Gamma^*$ that induce
the same linear ordering on (there are nor green edges in $F$ because
$\Delta^+\in \GG$, so it has no green triangles).
Now \pe\ has no choice but to pick a red. The colour she chooses is $\rho.$

\end{enumarab}
This defines the colour of edges. Now for hyperedges,
for  each tuple of distinct elements
$\bar{a}=(a_0,\ldots, a_{n-2})\in {}^{n-1}(\Gamma^+)$
such that $\bar{a}\notin {}^{n-1}\Gamma\cup {}^{n-1}\Delta$ and with no edge $(a_i, a)$
coloured greed in  $\Gamma^+$, \pe\ colours $\bar{a}$ by $\y_{S}$
where
$S=\{i <\omega: \text { there is a $i$ cone with base  } \bar{a}\}$.
Notice that $|S|\leq F$. This strategy works \cite[lemma 2.7]{Hodkinson}.

For the Monk case \pe\ plays as follows:

Now $ \exists$ must extend $ \Gamma^*$ to a complete
graph on the same node and complete the colouring yielding a graph
$ \Gamma^+ \in \GG$. Thus, she has to define the colour $
\Gamma^+(\beta, \delta)$ for all nodes $ \beta \in \Gamma \backslash
F$, in such a way as to meet the conditions of definition 1. She
does this as follows. The set of colours of the labels in $ \{
\triangle^+(\delta, \phi) : \phi \in F \} $ has cardinality at most
$ n - 1 $. Let  $ i < n$ be a "colour" not in this set. $ \exists$
labels $(\delta, \beta) $ by $(\rho, i)$ for every $ \beta \in
\Gamma \backslash F$. This completes the definition of $ \Gamma^+$.
\\
It remains to check that this strategy works--that the conditions
from the definition of $\GG$ are met. It is
sufficient to note that

\begin{itemize}
 \item if $\phi \in F$ and $ \beta \in \Gamma \backslash F$, then
the labels in $ \Gamma^+$ on the edges of the triangle $(\beta,
\delta, \phi)$ are not all of the same colour ( by choice of $i$ )

\item if $ \beta, \gamma \in \Gamma \backslash F$, then two the
labels in $ \Gamma^+$ on the edges of the triangle $( \beta, \gamma,
\delta )$ are $( \rho, i)$.\\

\end{itemize}
This strategy also works \cite{weak}.
Now there are only countably many
finite graphs in $ \GG$ up to isomorphism, and each of the graphs
built during the game is finite. Hence $\forall$ may arrange to play
every possible $(\triangle, \theta, \triangle^+)$ (up to
isomorphism) at some round in the game. Suppose he does this, and
let $M$ be the union of the graphs played in the game. We check that
$M$ is as required. Certainly, $M \in \GG$, since $\GG$ is clearly
closed under unions of chains. Also, let $\triangle \subseteq
\triangle' \in \GG$ with $|\triangle'| \leq n$, and $ \theta :
\triangle \rightarrow M$ be an embedding. We prove that $\theta$
extends to $\triangle'$, by induction on $d = | \triangle'
\backslash \triangle|.$ If this is $0$, there is nothing to prove.
Assume the result for smaller $d$. Choose  $ a \in \triangle'
\backslash \triangle $ and let $ \triangle^+ = \triangle'
\upharpoonright ( \triangle \cup \{ a \} ) \in \GG$. As, $
|\triangle| < n$, at some round in the game, at which the graph
built so far was $\Gamma$, say, $\forall$ would have played
$(\triangle, \theta, \triangle^+)$ (or some isomorphic triple).
Hence, if $\exists$ constructed $ \Gamma^+$ in that round, there is
an embedding $ \theta^+ : \triangle^+ \rightarrow \Gamma^+ $
extending $\theta$. As $ \Gamma^+ \subseteq M,  \theta^+$ is also an
embedding: $ \triangle^+ \rightarrow M$. Since $ |\triangle'
\backslash \triangle^+| < d, \theta^+$ extends inductively to an
embedding $\theta' : \triangle' \rightarrow M $, as required.

\item {\sf Relativization, back and forth systems ensuring that relativized semantics coincide with the classical semantics}

For the rainbow algebra, let $$W_r = \{ \bar{a} \in {}^n M : M \models ( \bigwedge_{i < j < n,
l < n} \neg \rho(x_i, x_j))(\bar{a}) \},$$
and for the Monk like algebra, $W_m$
is defined exactly the same way by discarding assignments whose edges are coloured by one of the $n$ reds
$(\rho, i)$, $i<n$.  In the former case we are discarding assignments who have a $\rho$ labelled edge,
and in the latter we are discarding assignments
that involve edges coloured by any of the $n$ reds $(\rho, i)$, $i<n$.
We denote both by $W$ relying on context.

The $n$-homogeneity built into
$M$, in all three cases by its construction implies that the set of all partial
isomorphisms of $M$ of cardinality at most $n$ forms an
$n$-back-and-forth system. But we can even go
further. We formulate our statement for the Monk algebra based on $\G$ whose underlying set
is $\N$. (For the other two cases the reader is referred to \cite{Hodkinson} and \cite{weak}).

A definition: Let $\chi$ be a permutation of the set $\omega \cup \{ \rho\}$. Let
$ \Gamma, \triangle \in \GG$ have the same size, and let $ \theta :
\Gamma \rightarrow \triangle$ be a bijection. We say that $\theta$
is a $\chi$-\textit{isomorphism} from $\Gamma$ to $\triangle$ if for
each distinct $ x, y \in \Gamma$,
\begin{itemize}
\item If $\Gamma ( x, y) = (a, j)$ with $a\in \N$, then there exist unique $l\in \N$ and $r$ with $0\leq r<N$ such that
$a=Nl+r$.
\begin{equation*}
\triangle( \theta(x),\theta(y)) =
\begin{cases}
( N\chi(i)+r, j), & \hbox{if $\chi(i) \neq \rho$} \\
(\rho, j),  & \hbox{otherwise.} \end{cases}
\end{equation*}
\end{itemize}

\begin{itemize}
\item If $\Gamma ( x, y) = ( \rho, j)$, then
\begin{equation*}
\triangle( \theta(x),\theta(y)) \in
\begin{cases}
\{( N\chi(\rho)+s, j): 0\leq s < N \}, & \hbox{if $\chi(\rho) \neq \rho$} \\
\{(\rho, j)\},  & \hbox{otherwise.} \end{cases}
\end{equation*}
\end{itemize}

We now have for any permutation $\chi$ of $\omega \cup \{\rho\}$, $\Theta^\chi$
is the set of partial one-to-one maps from $M$ to $M$ of size at
most $n$ that are $\chi$-isomorphisms on their domains. We write
$\Theta$ for $\Theta^{Id_{\omega \cup \{\rho\}}}$.

We claim that for any any permutation $\chi$ of $\omega \cup \{\rho\}$, $\Theta^\chi$
is an $n$-back-and-forth system on $M$.

Clearly, $\Theta^\chi$ is closed under restrictions. We check the
``forth" property. Let $\theta \in \Theta^\chi$ have size $t < n$.
Enumerate $\dom(\theta)$, $\rng(\theta).$ respectively as $ \{ a_0,
\ldots, a_{t-1} \}$, $ \{ b_0,\ldots, b_{t-1} \}$, with $\theta(a_i)
= b_i$ for $i < t$. Let $a_t \in M$ be arbitrary, let $b_t \notin M$
be a new element, and define a complete labelled graph $\triangle
\supseteq M \upharpoonright \{ b_0,\ldots, b_{t-1} \}$ with nodes
$\{ b_0,\ldots, b_{t} \}$ as follows.\\

Choose distinct "nodes"$ e_s < N$ for each $s < t$, such that no
$(e_s, j)$ labels any edge in $M \upharpoonright \{ b_0,\dots,
b_{t-1} \}$. This is possible because $N \geq n(n-1)/2$, which
bounds the number of edges in $\triangle$. We can now define the
colour of edges $(b_s, b_t)$ of $\triangle$ for $s = 0,\ldots, t-1$.

\begin{itemize}
\item If $M ( a_s, a_t) = ( Ni+r, j)$, for some $i\in \N$ and $0\leq r<N$, then
\begin{equation*}
\triangle( b_s, b_t) =
\begin{cases}
( N\chi(i)+r, j), & \hbox{if $\chi(i) \neq \rho$} \\
\{(\rho, j)\},  & \hbox{otherwise.} \end{cases}
\end{equation*}
\end{itemize}

\begin{itemize}
\item If $M ( a_s, a_t) = ( \rho, j)$, then assuming that $e_s=Ni+r,$ $i\in \N$ and $0\leq r<N$,
\begin{equation*}
\triangle( b_s, b_t) =
\begin{cases}
( N\chi(\rho)+r, j), & \hbox{if $\chi(\rho) \neq \rho$} \\
\{(\rho, j)\},  & \hbox{otherwise.} \end{cases}
\end{equation*}
\end{itemize}

This completes the definition of $\triangle$. It is easy to check
that $\triangle \in \GG$. Hence, there is a graph embedding $ \phi : \triangle \rightarrow M$
extending the map $ Id_{\{ b_0,\ldots, b_{t-1} \}}$. Note that
$\phi(b_t) \notin \rng(\theta)$. So the map $\theta^+ = \theta \cup
\{(a_t, \phi(b_t))\}$ is injective, and it is easily seen to be a
$\chi$-isomorphism in $\Theta^\chi$ and defined on $a_t$.
The converse,``back" property is similarly proved ( or by symmetry,
using the fact that the inverse of maps in $\Theta$ are
$\chi^{-1}$-isomorphisms).

We now  derive a connection between classical and
relativized semantics in $M$, over the set $W$:\\

Recall that $W$ is simply the set of tuples $\bar{a}$ in ${}^nM$ such that the
edges between the elements of $\bar{a}$ don't have a label involving
$\rho$ (these are $(\rho, i):  i<n$). Their labels are all of the form $(Ni+r, j)$. We can replace $\rho$-labels by suitable $(a, j)$-labels
within an $n$-back-and-forth system. Thus, we may arrange that the
system maps a tuple $\bar{b} \in {}^n M \backslash W$ to a tuple
$\bar{c} \in W$ and this will preserve any formula
containing no relation symbols $(a, j)$ that are ``moved" by the
system.

Indeed, we show that the
classical and $W$-relativized semantics agree.
$M \models_W \varphi(\bar{a})$ iff $M \models \varphi(\bar{a})$, for
all $\bar{a} \in W$ and all $L^n$-formulas $\varphi$.

The proof is by induction on $\varphi$. If $\varphi$ is atomic, the
result is clear; and the Boolean cases are simple.
Let $i < n$ and consider $\exists x_i \varphi$. If $M \models_W
\exists x_i \varphi(\bar{a})$, then there is $\bar{b} \in W$ with
$\bar{b} =_i \bar{a}$ and $M \models_W \varphi(\bar{b})$.
Inductively, $M \models \varphi(\bar{b})$, so clearly, $M \models_W
\exists x_i \varphi(\bar{a})$.
For the (more interesting) converse, suppose that $M \models_W
\exists x_i \varphi(\bar{a})$. Then there is $ \bar{b} \in {}^n M$
with $\bar{b} =_i \bar{a}$ and $M \models \varphi(\bar{b})$. Take
$L_{\varphi, \bar{b}}$ to be any finite subsignature of $L$
containing all the symbols from $L$ that occur in $\varphi$ or as a
label in $M \upharpoonright \rng(\bar{b})$. (Here we use the fact
that $\varphi$ is first-order. The result may fail for infinitary
formulas with infinite signature.) Choose a permutation $\chi$ of
$\omega \cup \{\rho\}$ fixing any $i'$ such that some $(Ni'+r, j)$
occurs in $L_{\varphi, \bar{b}}$ for some $r<N$, and moving $\rho$.
Let $\theta = Id_{\{a_m : m \neq i\}}$. Take any distinct $l, m \in
n \setminus \{i\}$. If $M(a_l, a_m) = (Ni'+r, j)$, then $M( b_l,
b_m) = (Ni'+r, j)$ because $ \bar{a} = _i \bar{b}$, so $(Ni'+r, j)
\in L_{\varphi, \bar{b}}$ by definition of $L_{\varphi, \bar{b}}$.
So, $\chi(i') = i'$ by definition of $\chi$. Also, $M(a_l, a_m) \neq
( \rho, j)$(any $j$) because $\bar{a} \in W$. It now follows that
$\theta$ is a $\chi$-isomorphism on its domain, so that $ \theta \in
\Theta^\chi$.
Extend $\theta $ to $\theta' \in \Theta^\chi$ defined on $b_i$,
using the ``forth" property of $ \Theta^\chi$. Let $
\bar{c} = \theta'(\bar{b})$. Now by choice of of $\chi$, no labels
on edges of the subgraph of $M$ with domain $\rng(\bar{c})$ involve
$\rho$. Hence, $\bar{c} \in W$.
Moreover, each map in $ \Theta^\chi$ is evidently a partial
isomorphism of the reduct of $M$ to the signature $L_{\varphi,
\bar{b}}$. Now $\varphi$ is an $L_{\varphi, \bar{b}}$-formula.
We have $M \models \varphi(\bar{a})$ iff $M \models \varphi(\bar{c})$.
So $M \models \varphi(\bar{c})$. Inductively, $M \models_W
\varphi(\bar{c})$. Since $ \bar{c} =_i \bar{a}$, we have $M
\models_W \exists x_i \varphi(\bar{a})$ by definition of the
relativized semantics. This completes the induction.

\item {\sf The atoms, the required atomic algebra}

Now let $L$ be $L^+$ without the red relation symbols.
The logics $L_n$ and $L^n_{\infty \omega}$ are taken in this
signature.

For an $L^n_{\infty \omega}$-formula $\varphi $,  define
$\varphi^W$ to be the set $\{ \bar{a} \in W : M \models_W \varphi
(\bar{a}) \}$.

Then the set algebra $\A$ (actually  in all three cases) is taken to be the relativized set algebra with domain
$$\{\varphi^W : \varphi \,\ \textrm {a first-order} \;\ L_n-
\textrm{formula} \}$$  and unit $W$, endowed with the algebraic
operations ${\sf d}_{ij}, {\sf c}_i, $ etc., in the standard way, and of course formulas are taken in the suitable
signature.
The completion of $\A$ is the algebra $\C$ with universe $\{\phi^W: \phi\in L_{\infty,\omega}^n\}$
with operations defined as for $\A$, namely, usual cylindrifiers and diagonal elements,
reflecting existential quantifiers, polyadic operations and equality.
Indeed, $\A$ is a representable (countable) atomic polyadic algebra of dimension $n$

Let $\cal S$ be the polyadic set algebra with domain  $\wp ({}^{n} M )$ and
unit $ {}^{n} M $. Then the map $h : \A
\longrightarrow S$ given by $h:\varphi ^W \longmapsto \{ \bar{a}\in
{}^{n} M: M \models \varphi (\bar{a})\}$ can be checked to be well -
defined and one-one. It clearly respects the polyadic operations, also because relativized semantics and classical semantics coincide on $L_n$
formulas in the given signature, this is a representation of $\A.$

\item {\sf The atoms}

A formula $ \alpha$  of  $L_n$ is said to be $\sf MCA$
('maximal conjunction of atomic formulas') if (i) $M \models \exists
x_0, \ldots, x_{n-1} \alpha $ and (ii) $\alpha$ is of the form
$$\bigwedge_{i \neq j < n} \alpha_{ij}(x_i, x_j),$$
where for each $i,j,\alpha_{ij}$ is either $x_i=x_i$ or $R(x_i,x_j)$
for some binary relation symbol $R$ of $L$.

Let $\varphi$ be any $L^n_{\infty\omega}$-formula, and $\alpha$ any
$MCA$-formula. If $\varphi^W \cap \alpha^W \neq \emptyset $, then
$\alpha^W \subseteq \varphi^W $.
Indeed, take $\bar{a} \in  \varphi^W \cap \alpha^W$. Let $\bar{a} \in
\alpha^W$ be arbitrary. Clearly, the map $( \bar{a} \mapsto
\bar{b})$ is in $\Theta$. Also, $W$ is
$L^n_{\infty\omega}$-definable in $M$, since we have
$$ W = \{
\bar{a} \in {}^n M : M \models (\bigwedge_{i < j< n} (x_i = x_j \vee
\bigvee_{R \in L} R(x_i, x_j)))(\bar{a})\}.$$
We have $M \models_W \varphi (\bar{a})$
 iff $M \models_W \varphi (\bar{b})$. Since $M \models_W \varphi (\bar{a})$, we have
$M \models_W \varphi (\bar{b})$. Since $\bar{b} $ was arbitrary, we
see that $\alpha^W \subseteq \varphi^W$.
Let $$F = \{ \alpha^W : \alpha \,\ \textrm{an $\sf MCA$},
L^n-\textrm{formula}\} \subseteq \A.$$
Evidently, $W = \bigcup F$. We claim that
$\A$ is an atomic algebra, with $F$ as its set of atoms.
First, we show that any non-empty element $\varphi^W$ of $\A$ contains an
element of $F$. Take $\bar{a} \in W$ with $M \models_W \varphi
(\bar{a})$. Since $\bar{a} \in W$, there is an $\sf MCA$-formula $\alpha$
such that $M \models_W \alpha(\bar{a})$. Then $\alpha^W
\subseteq \varphi^W $. By definition, if $\alpha$ is an $\sf MCA$ formula
then $ \alpha^W$ is non-empty. If $ \varphi$ is
an $L^n$-formula and $\emptyset \neq \varphi^W \subseteq \alpha^W $,
then $\varphi^W = \alpha^W$. It follows that each $\alpha^W$ (for
$MCA$ $\alpha$) is an atom of $\A$.

Now for the rainbow signature, a formula $ \alpha$  of  $MCA$ formulas of $L_n$
are adapted to the rainbow signature.
$\alpha$ is such if  $M \models \exists
x_0, \ldots, x_{n-1} \alpha $ and (ii) $\alpha$ is of the form
$$\bigwedge_{i \neq j < n} \alpha_{ij}(x_i, x_j) \land \bigwedge_{\eta:(n-1)\to n, \eta \text { injective }}\mu_{\eta}(x_0,\ldots, x_{n-1}),$$
where for each $i,j,\alpha_{ij}$ is  $x_i=x_i$ or $R(x_i,x_j)$
for some binary relation symbol $R$ of the rainbow signature, and for each injection $\eta: n-1\to n$, $\mu_{\eta}$
is $y_S(x_{\eta(0)},\ldots, x_{\eta(n-2)})$ for some $y_S\in L$, if for all
distinct $i,j<n$, $\alpha_{\eta(i), \eta(j)}$
is not equality or green.

In this case there is a one to one correspondence
between coloured graphs whose edges do not involve the red shade and the $\sf MCA$ formulas, and both are the the atom
of $\A$, and for that matter its completion.
(Here any $s$ satisfying an $\sf MCA$ formula defines a coloured graph and for any other $s'$ satisfying the same formula,  the graph determined by
$s'$ is isomorphic to that determined by $s$; and thats why precisely
$\sf MCA$ formulas are atoms, namely, surjections from $n$ to coloured graphs; they are literally indivisible.

\item {\sf The complex algebra}

Define $\C$ to be the complex algebra over $\At\A$, the atom structure of $\A$.
Then $\C$ is the completion of $\A$. The domain of $\C$ is $\wp(\At\A)$. The diagonal ${\sf d}_{ij}$ is interpreted
as the set of all $S\in \At\A$ with $a_i=a_j$ for some $\bar{a}\in S$.
The cylindrification ${\sf c}_i$ is interpreted by ${\sf c}_iX=\{S\in \At\A: S\subseteq c_i^{\A}(S')\text { for some } S'\in X\}$, for $X\subseteq \At\A$.
Finally ${\sf p}_{ij}X=\{S\in \At\A: S\subseteq {\sf p}_{ij}^{\A}(S')\text { for some } S'\in X\}.$
Let $\cal D$ be the relativized set algebra with domain $\{\phi^W: \phi\text { an $L_{\infty\omega}^n$ formula }\}$,  unit $W$
and operations defined like those of $\A$.
Now we have
${\C}\cong \D$, via the map $X\mapsto \bigcup X$.

The map is clearly injective. It is surjective, since
$$\phi^W=\bigcup\{\alpha^W: \alpha \text { an $MCA$-formula}, \alpha^W\subseteq \phi^W\}$$
for any $L_{\infty\omega}^n$ formula $\phi$.
Preservation of the Boolean operations and diagonals is clear.
We check cylindrifications. We require that for any $X\subseteq \At\A$,
we have
$\bigcup {\sf c}_i^{\C}X={\sf c}_i^{\cal D}(\bigcup X)$ that is
$$\bigcup \{S\in \At\A: S\subseteq {\sf c}_i^{\A}S'\text { for some $S'\in X$ }\}=$$
$$\{\bar{a}\in W: \bar{a}\equiv_i \bar{a'} \text { for some } \bar{a'}\in \bigcup X\}.$$
Let $\bar{a}\in S\subseteq {\sf c}_iS'$, where $S'\in X$. So there is $\bar{a'}\equiv_i \bar{a}$ with $\bar{a'}\in S'$, and so $\bar{a'}\in \bigcup X$.

Conversely, let $\bar{a}\in W$ with $\bar{a}\equiv_i \bar{a'}$ for some $\bar{a'}\in \bigcup X$.
Let $S\in \At\A$, $S'\in X$ with $\bar{a}\in S$ and $\bar{a'}\in S'$.
Choose $MCA$ formulas $\alpha$ and $\alpha'$ with $S=\alpha^W$ and $S'=\alpha'^{W}$.
then $\bar{a}\in \alpha^{W}\cap (\exists x_i\alpha')^W$ so $\alpha^W\subseteq (\exists x_i\alpha')^W$, or $S\subseteq c_i^{\A}(S')$.
The required now follows. We leave the checking of substitutions to the reader.

The non representability of the complex algebra in the case of the rainbow algebra is in \cite{Hodkinson},
witness also the argument in the coming theorem \ref{smooth}.

\item{\sf Using the Monk algebras}

To prove item (2) we use Monk's algebras. Any of the two will do just as well:

We show that their atom structure consists of the $n$
basic matrices of a relation algebra.
In more detail, we define a relation algebra atom structure $\alpha(\G)$ of the form
$(\{1'\}\cup (\G\times n), R_{1'}, \breve{R}, R_;)$.
The only identity atom is $1'$. All atoms are self converse,
so $\breve{R}=\{(a, a): a \text { an atom }\}.$
The colour of an atom $(a,i)\in \G\times n$ is $i$. The identity $1'$ has no colour. A triple $(a,b,c)$
of atoms in $\alpha(\G)$ is consistent if
$R;(a,b,c)$ holds. Then the consistent triples are $(a,b,c)$ where

\begin{itemize}

\item one of $a,b,c$ is $1'$ and the other two are equal, or

\item none of $a,b,c$ is $1'$ and they do not all have the same colour, or

\item $a=(a', i), b=(b', i)$ and $c=(c', i)$ for some $i<n$ and
$a',b',c'\in \G$, and there exists at least one graph edge
of $G$ in $\{a', b', c'\}$.

\end{itemize}

$\alpha(\G)$ can be checked to be a relation atom structure. It is exactly the same as that used by Hirsch and Hodkinson in
\cite[definition 14.10]{HHbook}, except
that we use $n$ colours, instead of just $3$, so that it a Monk algebra not a rainbow one. However, some monochromatic triangles
are allowed namely the 'dependent' ones.
This allows the relation algebra to have an $n$ dimensional cylindric basis
and, in fact, the atom structure of of $\A$ is isomorphic (as a cylindric algebra
atom structure) to the atom structure $\Mat_n$ of all $n$-dimensional basic
matrices over the relation algebra atom structure $\alpha(\G)$.

Indeed, for each  $m  \in {\Mat}_n, \,\ \textrm{let} \,\ \alpha_m
= \bigwedge_{i,j<n}  \alpha_{ij}. $ Here $ \alpha_{ij}$ is $x_i =
x_j$ if $ m_{ij} = 1$' and $R(x_i, x_j)$ otherwise, where $R =
m_{ij} \in L$. Then the map $(m \mapsto
\alpha^W_m)_{m \in {\cal M}_n}$ is a well - defined isomorphism of
$n$-dimensional cylindric algebra atom structures.

We show $\Cm\alpha(\G)$ is not in $\sf RRA$.
Hence the full complex cylindric algebra over the set of $n$ by $n$ basic matrices
- which is isomorphic to $\cal C$ is not in $\RCA_n$ for we have a relation algebra
embedding of $\Cm\alpha(\G)$ onto $\Ra\Cm\M_n$.

Assume for contradiction that $\Cm\alpha(\G)\in \RCA_n$
For $Y\subseteq \N$ and $s<n$, set
$$[Y,s]=\{(l,s): l\in Y\}.$$
For $r\in \{0, \ldots, N-1\},$ $N\N+r$ denotes the set $\{Nq+r: q\in \N\}.$
Let $$J=\{1', [N\N+r, s]: r<N,  s<n\}.$$
Then $\sum J=1$ in $\Cm\alpha(\G).$
As $J$ is finite, we have for any $x,y\in X$ there is a $P\in J$ with
$(x,y)\in h(P)$.
Since $\Cm\alpha(\G)$ is infinite then $X$ is infinite.
By Ramsey's Theorem, there are distinct
$x_i\in X$ $(i<\omega)$, $J\subseteq \omega\times \omega$ infinite
and $P\in J$ such that $(x_i, x_j)\in h(P)$ for $(i, j)\in J$, $i\neq j$. Then $P\neq 1'$.
Also $(P;P)\cdot P\neq 0$. if $x,y, z\in M$,
$a,b,c\in \Cm\alpha(\G)$, $(x,y)\in h(a)$, $(y, z)\in h(b)$, and
$(x, z)\in h(c)$, then $(a;b)\cdot c\neq 0$.
A non -zero element $a$ of $\Cm\alpha(\G)$ is monochromatic, if $a\leq 1'$,
or $a\leq [\N,s]$ for some $s<n$.
Now  $P$ is monochromatic, it follows from the definition of $\alpha$ that
$(P;P)\cdot P=0$.

This contradiction shows that
$\Cm\alpha(\G)$ is not representable, and so,
 $\Cm\Mat_n\alpha(G)$ is not representable, too.

\item {\sf Summarizing}

We have in all cases a labelled graph  defined $M$ as a model of a first order theory in the Monk case
and of $L_{\omega_1, \omega}$ in the rainbow case.
We have also relativized $^nM$ to  $W\subseteq {}^nM$ by deleting assignments
whose edges involve reds, and we defined an algebra containing the term algebra, namely,
$\A$ as the atomic relativized set algebra
with unit $W$. In the case of  Monk's algebra we defined the relation algebra $\sf R$, such that
$\At\A\cong {\sf Mat}_n\At\R$, and in all cases the complex algebra $\C$
of the atom structure is isomorphic to the set algebra
$\{\phi^M: \phi\in L_{\infty, \omega}^n\}$.
Finally, we the $\sf Df$ reduct of this algebra, namely, $\C=\Cm\At\A$, cannot be representable for else this induces a complete representation
of $\Rd_{df}\A$, hence a complete representation of $\A$,
which in turn induces a representation of $\C$, but we have shown that the latter cannot be representable.

To see the last part,  we assume that $\A$ is simple (the general case follows easily).
The argument is essentially due to Johnson \cite[theorem 5.4.26]{tarski}, so we will be sketchy.
Assume that $h:\Rd_{df}\A\to \B$ is a complete representation  where
$B=\wp(W)$ and $W=\prod U_i$. We will build a complete representation of $\A$.
We can assume that $U_i=U_j$ for each $i,j<n$ and if $\bar{a}\in W$ and
$a_i=a_j$ then $\bar{a}\in h({\sf d}_{ij})$. Let $U$ be the disjoint union of the $U_i$
Define $\sim_{ij} =\{(a_i, a_j): \bar{a}\in h({\sf d}_{ij}).\}$

Then $\sim_{ij}$ is an equivalence relation on $U$, and $\sim _{ij} =\sim _{01}$. Let
$d=\prod_{i<j<n} {\sf d}_{ij}$.
Let $E=\{d\in A: h(d)\text {  is the union of $\sim_{01}$ classes}\}$
Then $E$ is the domain of a complete subalgebra $\E$ of $\A.$
Hence $\E=\A$, and if  $V=U/ \sim_{01}$ and
$g(c)=\{(a_0/{\sim_{01}}, \ldots,  a_{n-1}/\sim_{01}: \bar{a}\in h(d)\}$ for $d\in D$.
it can be easily checked that $g$ is as required, namely, it is 
a complete representation of the polyadic equality algebra
$\A$.

\end{enumarab}
\end{proof}
\begin{corollary}Let $\sf L$ be any signature between $\sf Df$ and $\sf PEA$.
\begin{enumarab}
\item There is an $\sf L$ atom structure that is weakly representable, but not strongly representable.
In particular, there is an $\sf L$ atom structure, that carries two (atomic) algebras, one is representable, the other is not; in fact its $\Df$
reduct is not representable.
\item The class of $\sf L$ representable algebras is not atom-canonical, it is not closed under
\d\ completions and cannot be axiomatized by Sahlqvist equations.
\item The same  holds for the class of representable
relation algebras
\end{enumarab}

\end{corollary}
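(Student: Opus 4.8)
The approach is to read all three items off Theorem~\ref{hodkinson}, the corollary being in essence a restatement; the only work is bookkeeping about reducts, together with the standard passage from failure of atom-canonicity to the remaining consequences. For item~(1), I would take the $n$-dimensional $\PEA$ atom structure $\At$ furnished by Theorem~\ref{hodkinson}(1) and form its $\sf L$-reduct $\At_{\sf L}$, keeping only the accessibility relations of the operators of $\sf L$; for finite $n$ each signature between $\Df$ and $\PEA$ arises from $\PEA_n$ by discarding operators. Since both the complex-algebra and the term-algebra constructions act pointwise on atoms, one has $\Cm\At_{\sf L}=\Rd_{\sf L}\Cm\At$, and $\Tm\At_{\sf L}$, the subalgebra of $\Cm\At_{\sf L}$ generated by the atoms, embeds into $\Rd_{\sf L}\Tm\At$; the latter is representable, being a reduct to $\sf L$ of the representable $\PEA_n$-algebra $\Tm\At$, since reducts of representable algebras between $\Df$ and $\PEA$ are representable. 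Hence $\Tm\At_{\sf L}$ is representable and $\At_{\sf L}$ is weakly representable. Conversely, Theorem~\ref{hodkinson}(1) asserts that $\Rd_{\Df}\Cm\At$ is not representable, and any $\sf L$-representation of $\Cm\At_{\sf L}=\Rd_{\sf L}\Cm\At$ would restrict to a $\Df$-representation of $\Rd_{\Df}\Cm\At$; so $\Cm\At_{\sf L}$ -- in particular its diagonal-free reduct -- is not representable, i.e.\ $\At_{\sf L}$ is not strongly representable. Thus $\Tm\At_{\sf L}$ and $\Cm\At_{\sf L}$ are the two desired atomic algebras on the single atom structure $\At_{\sf L}$, one representable, the other with non-representable $\Df$-reduct.

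For item~(2), I would invoke the implications recalled in the introduction. As $\Tm\At_{\sf L}$ is atomic and representable while $\Cm\At_{\sf L}$ is not, the variety ${\sf RL}_n$ of representable $\sf L$-algebras is not atom-canonical. For the completely additive signatures $\Df,\CA,\QA,\QEA$ the complex algebra $\Cm\At_{\sf L}$ is the \d\ completion of $\Tm\At_{\sf L}$, so ${\sf RL}_n$ is not closed under \d\ completions; for $\Sc$ and $\PA$ one notes that on a representable atomic algebra the substitution operators are completely additive, so the \d\ completion still agrees with $\Cm$ on the operators of $\sf L$ and the same conclusion holds. Finally, Sahlqvist (indeed conjugated Sahlqvist) equations are preserved under \d\ completions, so a variety that is not closed under \d\ completions cannot be Sahlqvist axiomatized.

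For item~(3), I would rerun the same argument on the relation algebra $\R$ of Theorem~\ref{hodkinson}(2), for which $\Tm\At\R\in\RRA$ whereas $\Cm\At\R\notin\RRA$: thus $\At\R$ is weakly but not strongly representable, $\RRA$ is not atom-canonical, is not closed under \d\ completions -- relation-algebra operators being completely additive -- and is not Sahlqvist axiomatizable. The main obstacle is a very mild one: keeping the reduct bookkeeping watertight, namely that $\Cm$ commutes with $\Rd_{\sf L}$, that non-representability genuinely propagates upward from the $\Df$-reduct, and that the two non-completely-additive signatures $\Sc$ and $\PA$ are handled correctly in the \d\ completion clause. Beyond this, everything is immediate from Theorem~\ref{hodkinson}.
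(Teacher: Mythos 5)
Your proposal is correct and follows essentially the same route as the paper: the paper derives the corollary from Theorem~\ref{hodkinson} (which already asserts non-representability of $\Rd_t\Cm\At$ for every signature $t$ between $\Df$ and $\PEA$) combined with the standard implications that $\Cm\At\A$ is the \d\ completion of the (completely additive) term algebra and that Sahlqvist-axiomatized completely additive varieties are closed under such completions. Your explicit reduct bookkeeping and your separate treatment of the $\Sc$/$\PA$ additivity point only make explicit what the paper leaves implicit.
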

\begin{proof} A special case of theorem \ref{sah} below, 
using the existence of weakly representable atom structures that are not strongly representable
just proved in theorem \ref{hodkinson}.
\end{proof}
In contrast, we have:

\begin{theorem} Let $\R$ and $M$ be as above. If $\R$
is finitely generated and ${\sf Th}M$ is $\omega$ categorial, or $M$ is ultrahomogeneous, 
or admits elimination of quantifiers, 
then $\Tm{\Mat_n\At\R}$ is strongly representable.
\end{theorem}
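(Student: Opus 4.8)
The plan is to reuse the set-algebra picture from the proof of theorem \ref{hodkinson}: the complex algebra $\C=\Cm\Mat_n\At\R$ is, up to isomorphism, the algebra with unit $W$ and domain $\{\phi^W:\phi\in L^n_{\infty\omega}\}$, and strong representability of $\Mat_n\At\R$ is the same as representability of $\C$. I would exhibit the candidate representation directly, namely the map
\[ h:\C\longrightarrow\wp({}^nM),\qquad h(\phi^W)=\{\bar a\in{}^nM: M\models\phi(\bar a)\}, \]
whose target is the full polyadic equality set algebra on ${}^nM$, hence lies in $\RPEA_n$. Exactly as for the representation of the term algebra in theorem \ref{hodkinson}, $h$ automatically commutes with cylindrifiers, diagonals and transpositions, because their action on $\wp({}^nM)$ is the classical Tarskian one; so the whole problem reduces to showing that $h$ is well defined and injective, and for this it is enough to prove that \emph{the classical and the $W$-relativized semantics of $M$ agree on every $L^n_{\infty\omega}$-formula}, not merely on first-order $L_n$-formulas as was established in theorem \ref{hodkinson}.

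Next I would revisit the proof of that agreement lemma and isolate the single place where first-order-ness is used. The induction passes through Boolean connectives (finitary or infinitary) and atomic formulas without change; the only delicate clause is the one for $\exists x_i$, where, given $\bar b\in{}^nM$ with $M\models\varphi(\bar b)$ and $\bar b\equiv_i\bar a\in W$, one needs a permutation $\chi$ of the colour set together with $\rho$ that fixes every colour occurring in $\varphi$ (and on $M\restr{\rng(\bar b)}$) while moving $\rho$ to a \emph{fresh} colour; the induced $\chi$-isomorphism then carries $\bar b$ into $W$ while preserving $\varphi$. Since the $n$-homogeneity of $M$, and hence the $n$-back-and-forth systems $\Theta^\chi$, are available in all cases, the entire gap consists in guaranteeing that such a fresh colour still exists when $\varphi$ ranges over all of $L^n_{\infty\omega}$ --- i.e.\ that no infinitary formula is forced to use the whole colour set.

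This is what the hypotheses are meant to supply, by forcing the relevant colour set to be effectively finite. I would argue that under $\R$ finitely generated together with any one of ``$\,{\sf Th}M$ $\omega$-categorical'', ``$M$ ultrahomogeneous'', ``$M$ admits quantifier elimination'', the model $M$ realizes only finitely many $n$-types --- equivalently, by Ryll--Nardzewski, ${\sf Th}M$ is $\omega$-categorical --- so that every $L^n_{\infty\omega}$-formula is equivalent over $M$ to a first-order $L_n$-formula (in the quantifier-elimination case even to a quantifier-free one), and in particular mentions only finitely many colours. Feeding this back into the $\exists x_i$ clause makes the agreement lemma valid for all of $L^n_{\infty\omega}$, so $h$ is well defined and injective; hence $\C$ embeds into $\wp({}^nM)\in\RPEA_n$, and therefore $\Mat_n\At\R$ --- and, via the relation-algebra embedding of $\Cm\alpha(\G)$ from theorem \ref{hodkinson}, also $\At\R$ itself --- is strongly representable.

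The step I expect to be the main obstacle is precisely the last one: verifying in each of the three cases, above all for ultrahomogeneity and for quantifier elimination, that finite generation of $\R$ together with the model-theoretic condition really does bound the colour set of $M$ (equivalently, that $M$ has only finitely many $n$-types), so that the collapse $L^n_{\infty\omega}\equiv_M L_n$ holds. This is exactly the feature that is \emph{absent} in theorem \ref{hodkinson}: there $M$ is only $n$-homogeneous over an infinite colour set, ${\sf Th}M$ is not $\omega$-categorical, and infinitary disjunctions can sweep through infinitely many colours, which is why $\Cm\Mat_n\At\R$ fails to be representable in that construction.
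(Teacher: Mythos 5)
Your proof is correct in outline but follows a genuinely different route from the paper's. You work directly with the complex algebra realized as the relativized set algebra of $L^n_{\infty\omega}$-definable subsets of $W$, and reduce everything to extending the classical-versus-relativized agreement lemma from $L_n$ to $L^n_{\infty\omega}$; the hypotheses enter by collapsing $L^n_{\infty\omega}$ over $M$ to the finitary language (finitely many $n$-types, or a finite signature with quantifier elimination), so that the one failure point of the agreement lemma --- the need for a fresh colour in the $\exists x_i$ clause, which the paper explicitly notes may fail for infinitary formulas over an infinite signature --- disappears. The paper instead argues algebraically: quantifier elimination is used to show that the relativized algebra $\A$ is a \emph{full} neat reduct, $\A\in\Nr_n\CA_{\omega}$ (the $n$-dimensional elements of the $\omega$-dimensional algebra of first-order definable relations are already $L_n$-definable), then the Orey--Henkin omitting types theorem applied to the non-principal type of co-atoms yields a complete representation of $\A$, and finally this complete representation is lifted to a representation of $\Cm\At\A$ by sending each element of the completion to the union of the images of the atoms below it. Your route buys an explicit classical set-algebra representation of the complex algebra and makes transparent exactly which feature of the constructions in theorem \ref{hodkinson} (infinitely many colours swept by a single infinitary disjunction) is being switched off; the paper's route buys the stronger intermediate facts that $\A$ is a full neat reduct and is completely representable.

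One point to tighten rather than a gap: agreement of the two semantics on assignments in $W$ gives $\phi^W=\phi^{{}^nM}\cap W$, which by itself does not yet make $h$ well defined or injective --- you also need that every $\bar b\in{}^nM$ is carried into $W$ by a partial isomorphism preserving $\phi$ and $\psi$, so that $\phi^{{}^nM}\cap W=\psi^{{}^nM}\cap W$ forces $\phi^{{}^nM}=\psi^{{}^nM}$. This is exactly the $\chi$-isomorphism device you already invoke for the $\exists x_i$ clause, applied to first-order equivalents of $\phi$ and $\psi$; it needs to be stated as a separate step, but requires no new idea.
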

\begin{proof}
Assume that $M$ admits elimination of quantifiers, and that $\R$ is finitely generated.
Then $M$ is a countable model in binary relational finite signature, that has quantifier elimination. Since the relativized semantics for $L^n$ formulas
coincide with the classical one, we have that $\A\in \Nr_n\CA_{\omega}$. (Remember that semantics  is perturbed
at formulas in $L_{\infty, \omega}^n$.) Now we show that $\A$ is completely representable, then we build from this complete representation
a representation of the complex algebra.
Let $X=\At\A$. Let $\B\in {\sf Lf}_{\omega}$ be such that $\A=\Nr_n\B$ and $A$ generates $\B$, hence $\sum X=1$ in $\B$.
Let $Y$ be the set of co-atoms, then $Y\subseteq \Nr_n\B$ is a non-principal type. By the usual Orey Henkin omitting
types theorem for first order logic, we get a model, that is a representation that omits $Y$.
That is for every non-zero $b\in \B$, we get a set algebra $\C$ having countable base, and
$f:\B\to \C$ such that $f(a)\neq 0$ and $\bigcap_{y\in Y} f(y)=\emptyset$.
Define $g(x)=\{s\in {}^nU: s^+\in f(x)\},$ where $s^+=s$ on $n$ and fixes all other points in $\omega\sim n$,
then $g$ gives a complete simple representation such that $g(a)\neq 0$.
Taking disjoint unions over the non zero elements in $\A$ as is common practise in algebraic logic, we
get the desired complete representation, call it $f$.

Next we represent the complex algebra $\C$. Recall that $\C$ is the
completion of $\A$. Let $c\in C$, let $M$ be the base of the complete representation of $\A$,
and let $c^*=\{x\in \A: x\leq c\}$. Define $g(c)=\bigcup_{x\in c^*} f(x)$, then clearly $g$ defines
a representation of $\C$ into $\wp(^nM),$ and we are done.
\end{proof}
Later on, we shall have occasion to 
encounter another situation where quantifier elimination renders positive 
results on omitting types theorems for finite variable fragments of first order logic, 
witness theorem \ref{Shelah1}.

\subsection{Relativization}

Here we shall deal with different notions of relativization. 
One can find well motivated appropriate notions of relativized semantics by first locating them while giving up classical semantical prejudices. 
It is hard to give a precise mathematical underpinning to such intuitions. What really counts at the end of the day
is a completeness theorem stating a natural fit between chosen intuitive concrete-enough, but perhaps not {\it excessively} concrete, 
semantics and well behaved axiomatizations. 

The move of altering semantics has radical phiosophical repercussions, taking us away from the conventional 
Tarskian semantics captured by Fregean-Godel-like axiomatization; the latter completeness proof is effective 
but highly undecidable; in modal logic 
and finite variable fragments of first order logic, which have a modal formalism, this is 
highly undesirable. Here we try to circumvent such negative results. 

We will discover that the culprit is `commutativity of cylindrifiers' even when thoroughly 
localized. 
If we dispense with full fledged commutativity of cylindrifiers, then we get fragments of first order logic that have nice modal behaviour, similar
to the guarded, loosely guarded and packed fragments of first order logic (but using only finitely many variables).

We start off with a few deep algebraic results, from which we obtain several negative results on omitting types.
We need to prepare some more.

We recall the definition of network.
But first a piece of notation.
Since we are dealing with polyadic algebras with equality,  a clause to handle
substitutions is added. 

But first we recall a piece of notation. Let $\delta$ be a map. Then $\delta[i\to d]$ is defined as follows. $\delta[i\to d](x)=\delta(x)$
if $x\neq i$ and $\delta[i\to d](i)=d$. We write $\delta_i^j$ for $\delta[i\to \delta_j]$.

\begin{definition}\label{net}
From now on let $2< n<\omega.$ Let $\C$ be an atomic ${\sf PEA}_{n}$.
An \emph{atomic  network} over $\C$ is a map
$$N: {}^{n}\Delta\to \At\C$$
such that the following hold for each $i,j<n$, $\delta\in {}^{n}\Delta$
and $d\in \Delta$:
\begin{itemize}
\item $N(\delta^i_j)\leq {\sf d}_{ij}$
\item $N(\delta[i\to d])\leq {\sf c}_iN(\delta)$
\item $N(\bar{x}\circ [i,j])= {\sf s}_{[i,j]}N(\bar{x})$ for all $i,j<n$.

\end{itemize}
\end{definition}

Note that we have added a clause to reflect substitutions. Networks can be defined for all 
cylindric-like algebras considered herein, the obvious way, for example for $\CA$'s
we just discard the last item, cf. \cite[definition 3.3.1]{HHbook2};
for $\PA$s we discard the first involving diagonal elements (because we do not have them in the signature)
and replace it by a clause similar to the last but
dealing instead with substitutions corresponding to replacements.

Recall that we dealt with relativized algebras in our previous proof.
For an algebra $\A$ and a logic $L$, that is an extension of $n$
variable first order logic,
$L(A)$ denotes the signature (taken in this logic) obtained by adding an $m$
relation symbol for every element of $\A$.

The notion of relativized representations is indeed important
in algebraic logic. $M$ is a relativized representation of an abstract algebra $\A$, if there exists $V\subseteq {}^mM$, and an injective
homomorphism $f:\A\to \wp(V)$, and $M=\bigcup_{s\in V}\rng(s).$ We write $M\models 1(\bar{s})$, if $\bar{s}\in V$.

In the next definition $m$ denotes the dimension.

\begin{definition}\label{rel}
\begin{enumarab}
\item  Let $M$ be a relativized representation of a $\CA_m$.
An $m$ clique in $M$, or simply a clique in $M$, is a subset $C$ of $M$ such that $M\models 1(\bar{s})$ for all $\bar{s}\in {}^mC$, that is
it is it can be viewed as a hypergraph such that evey $m$ tuple is labelled by the top element.
For $n>m$, let $C^{n}(M)=\{\bar{a}\in {}^nM: \text { $\rng(\bar{a})$ is a clique in $M$}\}.$
\item Let $\A\in \CA_m$, and $M$ be a relativized representation of $\A$. $M$ is said to be $n$ square, $n>m$,
if whenever $\bar{s}\in C^n(M)$, $a\in A$, and $M\models {\sf c}_ia(\bar{s})$,
then there is a $t\in C^n(M)$ with $\bar{t}\equiv _i \bar{s}$,
and $M\models a(\bar{t})$.
\item  $M$ is infinitary $n$ flat if  for all $\phi\in L(A)_{\infty, \omega}^n$, for all $\bar{a}\in C^n(M)$, for all $i,j<n$, we have
$$M\models \exists x_i\exists x_j\phi\longleftarrow \exists x_j\exists x_i\phi(\bar{a}).$$
$M$ is just $n$ flat, if the above holds for first order formulas using $n$ variables,
namely, for $\phi\in L(A)^n$.
\item $M$ is said to be $n$ smooth if it is $n$ square, and there is are equivalence relations $E^t$, for $t=1,\ldots, n$ on $C^{t}(M)$
such that  $\Theta=\{(\bar{x}\to \bar{y}): (\bar{x}, \bar{y})\in \bigcup_{1\leq t\leq n} E^t\}$
(here   $\bar{x}\to \bar{y}$ is the map $x_i\mapsto y_i$)
is an $n$ back and for system of partial isomorphisms on $M$.
\end{enumarab}
\end{definition}

Note that all of the above are a local form of representation where roughly cylindrifiers have witnesses only
on $<m$ cliques.
Closely related to relativized representations are the notion of hyperbasis. This is only formulated for relation algebras \cite[12.2.2]{HHbook}.
We extend it to the polyadic equality case, and all its reducts down to $\Sc$s.
\begin{definition}
Let $\A\in \sf PEA_j$. Let $j+1\leq m\leq n\leq k<\omega$, and let $\Lambda$ be a non-empty set.
An $n$ wide $m$ dimensional $\Lambda$ hypernetwork over $\A$ is a map
$N:{}^{\leq n}m\to \Lambda\cup \At\A$ such that
$N(\bar{x})\in \At\A$ if $|\bar{x}|=j$ and $N(\bar{x})\in \Lambda$ if $|\bar{x}|\neq j$,
(so that $j$ edges are labelled by atoms, and other edges with labels from $\Lambda$,
with the following properties for each $i,k<j$, $\delta\in {}^{\leq n}m$, $\bar{z}\in ^{\leq n-2}m$
and $d\in m$:
\begin{itemize}
\item $N(\delta^i_j)\leq {\sf d}_{ij}$
\item $N(x,x, \bar{z})\leq d_{01}$
\item $N(\delta[i\to d])\leq {\sf c}_iN(\delta)$
\item $N(\delta\circ [i,j])={\sf s}_{[i,j]}N(\delta).$
\item If $\bar{x}, \bar{y}\in {}^{\leq n}m$, $|\bar{x}|=|\bar{y}|$ and $\exists \bar{z}$, such that
$N(x_i,y_i,\bar{z})\leq {\sf d}_{01}$,  for all $i<|\bar{x}|$, then $N(\bar{x})=N(\bar{y})$
\item when $n=m$, then $N$ is called an $n$ dimensional $\Lambda$ hypernetwork.
\end{itemize}
\end{definition}

Then $n$ wide $m$ dimensional {\it hyperbasis} can be defined like the relation algebra case \cite{HHbook}, where the amalgamation property is
the most important (it corresponds to commutativity of cylindrifiers).

For an $n$ wide $m$ dimensional $\Lambda$ hypernetworks, and $\bar{x}\in {}^{<\omega}m)$, we define
$N\equiv_x M$
if $N(\bar{y})=M(\bar{y})$ for all
$\bar{y}\in {}^{\leq n}(m\sim \rng(\bar{x}))$.

In more detail:

\begin{definition}\label{usualbasis} $j$ remains to be the dimension of $\A$. 
The set of all $n$ wide $m$ dimensional hypernetworks will be denoted by $H_m^n(\A,\Lambda)$.
An $n$ wide $m$ dimensional $\Lambda$, with $j+1\leq m\leq n$
hyperbasis for $\A$ is a set $H\subseteq H_m^n(\A,\lambda)$ with the following properties:
\begin{itemize}
\item For all $a\in \At\A$, there is an $N\in H$ such that $N(0,1,\ldots, j-1)=a$
\item For all $N\in H$ all $\bar{x}\in {}^n\nodes(N)$, for all $i<j$ for all $a\in\At\A$ such that
$N(\bar{x})\leq {\sf c_i}a$, there exists $\bar{y}\equiv_i \bar{x}$ such that $N(\bar{y})=a$
\item For all $M,N\in H$ and $x,y<n$, with $M\equiv_{xy}N$, there is $L\in H$ such that
$M\equiv_xL\equiv_yN$. Here $M\equiv_SN$, means that $M$ and $N$ agree off of $S$.

\item Assume that $j+1\leq m\leq n\leq k$.
For a $k$ wide $n$ dimensional hypernetwork $N$, we let $N|_m^k$ the restriction of the map $N$ to $^{\leq k}m$.
So we restrict the first $m$ nodes but keep all hyperlabels on them.
For $H\subseteq H_n^k(\A,\Lambda)$ we let $H|_k^m=\{N|_m^k: N\in H\}$.

\item When $n=m$, $H_n(\A,\Lambda)$ is called an $n$ dimensional hyperbases.
\end{itemize}
We say that $H$ is symmetric, if whenever $N\in H$ and
$\sigma:m\to m$, then $N\circ\sigma\in H$, the latter, which we denote simply by $N\sigma$ is defined by
$N\sigma(\bar{x})=N(\sigma(x_0),\ldots ,\sigma(x_{n)})$, for $\bar{x}\in {}^n\nodes(N)$.

\end{definition}

When $n=m$, then $H_n^n(\A, \Lambda)$ denoted simply by $H_n(\A, \Lambda)$ is called an $n$ dimensional hyperbasis.
If an addition, $|\Lambda|=1$, then $H_m^n(\A, \Lambda)$ is called an $n$ dimensional cylindric basis. (These were defined
by Maddux, and later generalized by Hirsch and Hodkinson to hyperbasis).

\subsection{Cylindric algebras possesing basis}

We introduce  a new definition:

\begin{definition}\label{basis} An $n$ dimensional basis is a set of $n$ dimensional networks that satisfy only
(i) and (ii) in the above definition \ref{usualbasis}.
\end{definition}

We define a game on atomic $\CA_m$s that characterizes algebras with $n$ dimensional basis.
Let $2<m<n$. Let $r\leq \omega$, the game $G_r^n(\A)$ is played over atomic $\A$ networks 
between \pa\ and \pe\ and has $r$ rounds, and $n$ pebbles, where $r,n\leq \omega.$

\begin{enumerate}
\item In round $0$ \pa\ plays an atom $a\in \At\A$. \pe\ must respond with a network $N_a$ with set of nodes $\bar{x}$ such that $N_a(\bar{x})=a$.
\item In any round $t$ with $0<t<r$, assume that the current netwrork is $N_{t-1}.$
Then \pa\ plays as follows. 
First if $|N_{t-1}|=n$ then he deletes a node $z\in N_{t-1}$ and defines $N_t$ to be the resulting network. 
Otherwise, let $N=N_{t-1}$; now \pa\ chooses $\bar{x}\in {}^nN$, $i<m$ and an atom $a\in \At\A$, 
such that $N(\bar{x})\leq {\sf c}_ia$. \pe\ must respond to this cylindrifier move 
by providing a network $N_t\supset N$, with $|N_t|\leq n$, having a node $z$
such
and $N_t(\bar{y})=a$, where $\bar{y}\equiv_i \bar{x}$ and $\bar{y}_i=z$.
\end{enumerate}
The above game is similar to but {\it is not } the usual $r$ rounded 
atomic games characterized by the Lyndon conditions, as $r$ gets larger, capturing representability of finite algebras
in the limit, and  ultimately defining the elementary closure of the class of completely representable algebras.
Here  the pebbles in play or the nodes are not allowed to exceed $n<\omega$.

The number of pebbles $n$ measures the squarenes, flatness or smoothness of the relativized representations; these are 
significantly distinct notions
in relativized semantics, though the distinction between the first two diffuses at the limit of genuine representations. 

Crudely, the classical 
case then becomes a limiting case, when $n$ goes to infinity.
However, this is not completely accurate because $\omega$ complete relativized representations 
may not coincide with classical ones on uncountable algebras.
Examples will be provided below, witness theorems \ref{longer}, and last item in theorem \ref{maintheorem}.

We will encounter similar games, where the number of pebbles are finite, but in some cases 
\pa\ will have the option to re-use them.

\begin{theorem}\label{gamebasis} 
Let $2<m<n$. Then following are equivalent for an atomic $\CA_m$
\begin{enumarab}
\item $\A$ has an $n$ dimensional bases.
\item \pe\ has a \ws\ in $G_{\omega}^n.$
\end{enumarab}
\end{theorem}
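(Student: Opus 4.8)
The plan is to prove the two implications separately, in the standard style of network/basis arguments (cf. \cite[Chapter 12]{HHbook}).

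First I would show that (1) implies (2). Suppose $\A$ has an $n$ dimensional basis $H$, i.e.\ a set of $n$ dimensional $\A$-networks closed under the two conditions (i), (ii) of Definition \ref{usualbasis}. I will describe a \ws\ for \pe\ in $G_\omega^n(\A)$ under which, at the end of each round $t$, the current network $N_t$ (which has at most $n$ nodes) embeds into some member $M_t$ of $H$ in the sense that $M_t$ agrees with $N_t$ on all of ${}^n\nodes(N_t)$. In round $0$, when \pa\ plays an atom $a\in\At\A$, condition (i) of the definition of a basis gives $M\in H$ with $M(0,1,\dots,m-1)=a$; \pe\ sets $N_0$ to be (an isomorphic copy of) the restriction of $M$ to some $m$-element node set, recording $M_0=M$. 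In a later round, if \pa\ makes a deletion move, \pe\ simply deletes the node and keeps $M_t=M_{t-1}$ (the embedding survives). If \pa\ plays a cylindrifier move, choosing $\bar x\in{}^nN_{t-1}$, $i<m$ and $a\in\At\A$ with $N_{t-1}(\bar x)\le\cyl i a$, then since $N_{t-1}$ embeds into $M_{t-1}\in H$, we have $M_{t-1}(\bar x)\le\cyl i a$ for the image tuple, so condition (ii) of the definition of a basis supplies $\bar y\equiv_i\bar x$ with $M_{t-1}(\bar y)=a$; \pe\ adds the node $y_i$ together with all the hyperedge/label data inherited from $M_{t-1}$, obtaining $N_t\supseteq N_{t-1}$ with $|N_t|\le n$ and still embedding into $M_t=M_{t-1}$. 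Checking that the resulting $N_t$ is a genuine $\A$-network — that the clauses $N_t(\delta^i_j)\le\diag ij$, $N_t(\delta[i\to d])\le\cyl i N_t(\delta)$, and the substitution clause hold — is routine since $M_t$ is a network and $N_t$ is a restriction of it; so \pe\ survives $\omega$ rounds.

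For the converse, (2) implies (1), suppose \pe\ has a \ws\ $\sigma$ in $G_\omega^n(\A)$. The idea is to run, for each atom $a\in\At\A$ and each choice of scheduling, plays of the game in which \pa\ is maximally greedy, and collect every network that ever appears on the board into a set $H$. Concretely, fix an enumeration of all potential cylindrifier demands (triples $(\bar x,i,a)$) and have \pa\ interleave these so that, in the limit, every demand is eventually met on every network produced; because only $n$ pebbles are available \pa\ must periodically delete nodes, but this is exactly what keeps all networks in play of size $\le n$. Let $H$ be the set of all networks of size $n$ that occur as (restrictions of) positions in these $\sigma$-plays, starting from every atom. Condition (i) of Definition \ref{usualbasis} holds because \pe's round-$0$ response to \pa\ playing $a$ yields a network $N_a$ with $N_a$ taking value $a$ on its base $m$-tuple, and extending it to $n$ nodes via further moves keeps it in $H$. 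Condition (ii) holds because whenever $N\in H$ has some $\bar x\in{}^nN$, $i<m$, $a\in\At\A$ with $N(\bar x)\le\cyl i a$, the scheduling guarantees \pa\ eventually makes exactly this cylindrifier move on (a copy of) $N$, and \pe's winning response produces $\bar y\equiv_i\bar x$ with value $a$ on a network still in $H$. That every element of $H$ is an $\A$-network is immediate since positions in the game are networks by the rules of $G_\omega^n$.

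The main obstacle I anticipate is bookkeeping rather than a conceptual leap: in the $(2)\Rightarrow(1)$ direction one must be careful that the $n$-pebble restriction genuinely lets \pa\ schedule \emph{all} cylindrifier demands on \emph{all} networks that ever appear (a diagonalization over the countably many demands, accounting for the fact that deleting a node to make room may destroy a network one later needs — so one reruns the game from scratch for each demand, using that $\sigma$ is a fixed strategy so the relevant positions recur). In the $(1)\Rightarrow(2)$ direction the only subtlety is maintaining, as an invariant, that the current network embeds into a fixed basis element and that \pe's additions really do respect all the network clauses, including the substitution clause $N(\bar x\circ[i,j])=\swap ij N(\bar x)$ inherited from the basis member. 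Neither point involves more than the standard ``play-and-collect'' / ``embed-into-a-witness'' technique, so the proof is essentially a transcription of the relation algebra argument of Hirsch--Hodkinson to the cylindric setting with cylindrifier moves in place of the relation-algebra moves.
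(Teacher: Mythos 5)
Your forward direction, $(1)\Rightarrow(2)$, is the paper's proof almost verbatim: \pe\ maintains the invariant that the current board is a subnetwork of some member of the basis $H$, answers round $0$ from condition (i), and answers cylindrifier moves from condition (ii); the only cosmetic difference is that the paper makes explicit that after \pa's deletion step the current network has $<n$ nodes, so there is always a spare node $z$ of the ambient basis element (or of an $\equiv_z$-related one) available to host the witness. The real divergence is in $(2)\Rightarrow(1)$. You build $H$ by ``play and collect'': run \pe's fixed strategy $\sigma$ against a greedily scheduled \pa\ and gather every $n$-node network that ever appears, then argue by a diagonalization over demands (rerunning plays from scratch so that positions recur) that conditions (i) and (ii) hold. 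The paper instead defines $H$ outright as the set of all $n$-node networks $M$ for which \pe\ has a winning strategy in the variant game $G^n_\omega(M,\A)$ that opens with $M$ on the board. That definition buys exactly what your bookkeeping is trying to secure: condition (ii) is immediate, since from any winning position every \pa-demand has a winning response, and that response is again a winning position, hence again in $H$ --- no scheduling, no rerunning, and no worry that a deletion destroys a network you need later. Your construction does work (determinism of $\sigma$ makes positions reproducible, and your collected $H$ is a subset of the paper's), but it is strictly more laborious; the ``winning positions'' definition is the standard trick from the relation-algebra analogue (Hirsch--Hodkinson, Proposition 12.25) that both proofs are transcribing, and is worth adopting. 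One further caution for both directions: as literally stated, condition (ii) of the basis definition locates the witness $\bar y$ inside the \emph{same} network $N$, whereas both your argument and the paper's actually produce it inside a network $M'\equiv_z N$ of $H$; you should read (ii) in that Maddux-style form for the equivalence to go through.
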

\begin{proof} The proof is similar to \cite[proposition 12.25]{HHbook}, so we will be sketchy.
If $H$ is an $n$ dimensional basis, then \pe\ can win $G_{\omega}^n$ by always 
playing a subnetwork of a network in $H$. In round $0$, when \pa\ plays 
the atom $a\in \A$, \pe\ chooses $N\in H$ with $N(0,1,\ldots, n-1)=a$ and plays $N\upharpoonright n$. 
In round $t>n$, if the curent network is $N_{t-1}\subseteq M\in H$, then no matter how \pa\ defines $N$, we have
$N\subseteq M$ and $|N|<n$, so there is $z<n$, with $z\notin N$. 
Assume that \pa\ picks $x_0,\ldots, x_{n-1}$ and $a\in 
\At\A$ such that $N(x_0,\ldots, x_{n-1})\leq {\sf c_i}a$. So 
$M(x_0,\ldots, x_{n-1})\leq {\sf c}_ia$, and hence, by properties of basis, 
there is $M'\in H$ with $M'\equiv _z M$ and  $M(x_0, z,\ldots, x_{n-1})=a$. \pe\ 
responds with the restriction of $M'$ to the nodes
$\nodes(N)\cup \{z\}$.

Conversely, if \pe\ has a \ws\ then 
for any atomic network $N$ with $|N|\leq n$, let $G_{\omega}^n(N, \A)$ be like $G_{\omega}^n\A$ except that in
round $0$, \pe\ plays $N$. Let $H$ be the set of all networks $M$ with nodes $n$ 
such that \pe\ has a \ws\ in $G_{\omega}^n(M,\A)$; then $H$ is as required.
\end{proof}

The next theorem is analogous to \cite[proposition 13.37]{HHbook}, except that  we deal with 
$m$ hyperedges instead of just
edges ($m$ is the dimension). Also the definition of a clique is modified (according to the above definition \ref{rel}),
to mean a
hypergraph with sets of nodes $\{x_0,\ldots, x_{n-1}\}$
all of whose $m$ hyperedges are labelled by the top element, in the sense that $M\models 1(x_{i_0},\ldots, x_{i_{m-1}})$, for any
$i:m\to n$. The technique used is an instance of the idea of treating a basis as a saturated set of mosaics
\cite{mosaics}. A similar argument is given in \cite{Maddux}.
The theorem tells us how to pass from basis to relativized representations, and variants of it will be used below.
For example we might not put any  restrictions on 
the length of hyperdeges, or we could deal only with usual cliques and edges. 

The converse (passing from relativized representations to basis) is much easier, as
we will show in a while.

Recall that $m$ denotes the dimension, it will be $>2$, and $n>m$ is finite.

\begin{lemma}\label{step} If  $\A\in \PEA_m$ has an $n$ dimensional basis, then $\A$ has an $n$ square relativized representation.
If it has  a hyperbasis
then $\A$ has a relativized $n$ smooth representation. This also holds for any $\K\in \{\Sc, \PA, \CA\}$ with
the obvious modifications.
\end{lemma}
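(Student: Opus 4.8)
The plan is to build the relativized representation as a "saturated" union of networks from the basis, treating the basis as a set of mosaics in the sense of \cite{mosaics}, following closely the relation algebra argument in \cite[proposition 13.37]{HHbook}, but adapted to $m$-dimensional cliques and $m$-ary hyperedges. First I would fix an $n$-dimensional basis $H$ (respectively an $n$ wide $m$-dimensional hyperbasis) for $\A\in\PEA_m$. The base set $M$ of the representation will be constructed by a step-by-step chase: start from a single network $N_0\in H$ witnessing a given nonzero atom, and build a chain $N_0\subseteq N_1\subseteq\cdots$ of networks (with labelled nodes drawn from an ever-growing set, identifying overlaps correctly) so that every "defect" is eventually repaired. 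A defect is a tuple $\bar x$ of nodes forming an $m$-clique, an index $i<m$, and an atom $a$ with $N_t(\bar x)\le {\sf c}_i a$ but no $i$-variant $\bar y$ of $\bar x$ with $N_t(\bar y)=a$; the cylindrifier clause of the basis definition (item (ii) of \ref{usualbasis}) together with the amalgamation clause (item (iii)) lets \pe\ add a fresh node $z$ and extend to a network $N_{t+1}\in H$ curing this defect. One runs a bookkeeping argument so that over $\omega$ stages every defect that ever appears is scheduled and cured; let $M$ be the union of the nodes, and let $V=C^n(M)$ be the set of $n$-cliques (for the square case) and the representation $h:\A\to\wp(V)$ send $a$ to $\{\bar s\in V: \text{the network value on the first $m$ coordinates of }\bar s\text{ is }\le a\}$, or more precisely $\bar s\in h(a)$ iff the label assigned by the limiting network to the $m$-hyperedge $(s_0,\dots,s_{m-1})$ is below $a$.

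Next I would verify that $h$ is a homomorphism and that $M$ is $n$ square: the Boolean operations and diagonals are routine from the defining clauses of a network (the clauses $N(\delta^i_j)\le{\sf d}_{ij}$ and $N(\bar x\circ[i,j])={\sf s}_{[i,j]}N(\bar x)$ handle diagonals and substitutions); surjectivity of labels onto atoms needs the "for all $a\in\At\A$ there is $N\in H$ with $N(0,\dots,j{-}1)=a$" clause; and $n$-squareness is exactly what the chase guarantees — given $\bar s\in C^n(M)$ and $M\models {\sf c}_i a(\bar s)$, the relevant defect was scheduled and cured, producing the required $i$-variant $\bar t\in C^n(M)$ with $M\models a(\bar t)$. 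Injectivity of $h$ (and the faithfulness needed for "relativized representation") comes from starting the chase, for each nonzero $b\in\A$, with a network labelling some hyperedge by an atom below $b$, then taking a disjoint union of the resulting representations over all nonzero $b$, as is standard in algebraic logic. For the hyperbasis case, to get an $n$ smooth (not merely $n$ square) representation I would additionally track, as the chase proceeds, the hyperlabels on all short tuples and use the amalgamation and the "$\exists\bar z$ with $N(x_i,y_i,\bar z)\le{\sf d}_{01}$ implies $N(\bar x)=N(\bar y)$" clause to define equivalence relations $E^t$ on $C^t(M)$ for $1\le t\le n$; the hyperbasis amalgamation property is precisely designed so that $\Theta=\{\bar x\to\bar y:(\bar x,\bar y)\in\bigcup_t E^t\}$ is an $n$ back-and-forth system of partial isomorphisms of $M$, which is the definition of $n$ smoothness in \ref{rel}. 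The reducts to $\K\in\{\Sc,\PA,\CA\}$ are handled by simply dropping the clauses about diagonals and/or substitutions throughout, mutatis mutandis.

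The main obstacle I expect is the careful bookkeeping in the chase combined with correctly managing node identifications when amalgamating two networks along a common part: one must ensure that the amalgamation clause of the (hyper)basis is invoked with the right overlap and that the limiting assignment of labels to hyperedges is well defined and independent of which network in the chain one reads it off. In particular, proving that $\Theta$ is genuinely a back-and-forth system in the smooth case — i.e. that partial isomorphisms in $\Theta$ of size $<n$ extend — is where the full strength of the hyperbasis (as opposed to a mere basis) is used, and this is the delicate point; it mirrors the relation algebra proof but the presence of $m$-ary hyperedges and the extra substitution clauses makes the extension step more bureaucratic. I would organize the write-up so that the square case is done in full and the smooth case is obtained by indicating the additional data carried through the same construction, rather than repeating the entire argument.
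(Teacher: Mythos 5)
Your proposal is correct and follows essentially the same route as the paper: a step-by-step (mosaic-patching) construction that schedules and cures cylindrifier defects using the basis clauses, reads the representation off the limiting labelling of $m$-hyperedges, and in the hyperbasis case defines the equivalence relations $E^t$ on $C^t(M)$ (the paper does this concretely by padding an $m$-tuple $\bar a$ to $a^*\in C^n(M)$ and setting $E^m(\bar a,\bar b)$ iff $M(a^*)=M(b^*)$) to obtain the back-and-forth system required for $n$ smoothness. The only cosmetic difference is that the paper seeds the construction with the disjoint union of all maximal strict restrictions $N|S$ of networks in $H$ rather than with one network per nonzero element.
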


\begin{proof}
We build a relativized representation $M$
in a step- by-step fashion; it will be a labelled complete hypergraph. We distinguish between $m$ hyperedges and
hyperedges of length $\neq m$, the labels are different, they are {\it not } atoms.
This $M$ is required to  satisfy  the following properties

\begin{enumarab}

\item Each $m$ hyperedge of of $M$ is labelled by an atom of $\A$

\item $M(\bar{x})\leq {\sf d}_{ij}$ iff $x_i=x_j$. (In this case, we say that $M$ is strict).

\item For any clique $\{x_0,\ldots, x_{n-1}\}\subseteq M$,
there is a unique $N\in H$ such that $(x_0,\ldots, x_{n-1})$ is labelled by $N$ and we write this as
$M(x_0,\ldots, x_{n-1})=N$

\item  If $x_0,\ldots, x_{n-1}\in M$ and $M(\bar{x})=N\in H$,
then for all $i_0,\ldots, i_{m-1}<n$, $(x_{i_0},\ldots x_{i_{m-1}})$
is a hyperedge and 
$$M(x_{i_0}, x_{i_1}, \ldots,  x_{i_{m-1}})=N(i_0,\ldots, i_{m-1})\in \At\A.$$

\item $M$ is symmetric; it closed under substitutions. That is, if $x_0,\ldots, x_{n-1}\in M$
are such that  $M(x_0,\ldots, x_{n-1})=N$,
and $\sigma:n\to n$ is any map, then  we have
$$M(x_{\sigma(0)}, \ldots, x_{\sigma(n-1)})=N\sigma.$$

\item If $\bar{x}$ is a clique, $k<n$ and $N\in H$, then $M(\bar{x})\equiv_k N$ if
and only if there is
$y\in M$ such that $M(x_0, \ldots, x_{k-1}, y, x_{k+1}, \ldots, x_{n-1})=N.$

\item For every $N\in H$, there are $x_0,\ldots,  x_{n-1}\in M$, $M(\bar{x})=N.$

\end{enumarab}

We build a chain of hypergraphs $M_t:t<\omega$ their limit (defined in a precise
sense) will as required.  Each $M_t$ will have hyperedges of length $m$ labelled by
atoms of $\A$, the labelling of other hyperedges will be done later.

To avoid confusion we call hyperedges that are to be labelled by atoms atom-hyperedges, these have length
exactly $m$, and those that have $\leq n$ length not equal to $m$ (recall that $3\leq m<n$)
that are labelled by elements in $n$ simply by hyperedges.

We proceed by a step by step manner, where the defects are treated one by one, and they are diffused at the limit obtaining the required
hypergraph, which also consists of
two parts, namely, atom-hyperedges and hyperedges.

This limiting hypergraph will be compete (all atom hypergraphs and hypergraphs will be labelled;
which might not be the case with $M_t$ for $t<\omega$.)
Every atom-hyperedge will indeed be labelled by an atom of $\A$ and hyperedges with length
$\neq  m$ will also be labelled, but  by indices from $n$.

We require inductively that $M_t$ also satisfies:

Any clique in $M_t$ is contained in $\rng(v)$ for some $N\in H$ and some embedding $v:N\to M_t$.
namely, $v:n\to \dom(M_t)$
and this embedding satisfies the following two conditions:

(a) if $(v(i_0), \ldots, v(i_{m-1}))$ is a an atom  hyperedge of $M_t$, then
$M_t(\bar{v})=N(i_0,\ldots, i_{n-1}).$

(b) Whenever $a\in {}^{\leq ^n}n$ with $|a|\neq m$, then $v(a)$ is a
hyperedge of $M_t$ and is labelled by $N(\bar{a})$.

(Note that an embedding might not be injective).
A network is strict if whenever  $N(\bar{x})\leq {\sf d}_{ij}$, then $x_i=x_j$.
For the base of the induction we construct  $M_0.$ We proceed as follows.
Let $N$ be a hypernetwork and let $S\subseteq n$. $N|S$ is maximal strict if it is strict
and  any for  $T\subseteq n$, such that  $T$ contains $S$, $N|T$ is not strict.

Let $M_0$ be the disjoint union
of all maximal strict labelled hypergraph
$N|S$ where $N\in H$ and $S\subseteq N$
and the atom-hyperedges and hyperedges are induced (the natural way) from $N$.
We have that for all $i<n$, there is a unique $s_i\in S$, and $\bar{z}$, such that
$N(i, s_i, \bar{z})\leq {\sf d}_{01}$,  then set  $v(i)=s_i$, for $i<n$

Now assume inductively that $M_t$ has been  defined for all $t<\omega$.

We now define $M_{t+1}$ such that for every quadruple
$(N, v, k, N')$ where $N, N'\in H$, $k<n$ and $M\equiv_k N'$ and $v:N\to M_t$
is an embedding, then  the restriction $v\upharpoonright n\sim \{k\}$ extends to an embedding
$v': N'\to M_{t+1}$

\begin{itemize}

\item For each such $(N, v,k, N')$ we add just one new node $\pi$,
and we add the following atom hyperedges $(\pi, v(i_1),\ldots, v(i_{m-2}))$ for each $i_1,\ldots,  i_{m-2}\in n\sim k$
that are pairwise distinct. Such new atom-hyperedges are labelled by $N'(k, i_0,\ldots, i_{m-2}) $. This is well defined.
We extend $ v$ by defining $v'(k)=\pi$.

\item We add a new hyperedge $v'(\bar{a})$ for every $\bar{a}\in {}^nn$ of length $\neq m$,
with $k\in \rng (\bar{a})$ labelled by $N'(\bar{a})$.

\end{itemize}
Then $M_{t+1}$ will be $M_t$ with its old labels, atom hyperedges,  labelled hyperedges
together with the new ones define as above.

It is easy to check that the inductive hypothesis  are preserved.

Now define a labelled hypergraph as follows:
$\nodes(M)=\bigcup \nodes(M_t)$,
for any $\bar{x}$ that is an atom-hyperedge; then it is a one in some $M_t$
and its label is defined in $M$ by $M_t(\bar{x}).$

The hyperedges are $n$ tuples $(x_0,\ldots, x_{n-1})$.
For each such tuple, we let $t<\omega$,
such that $\{x_0, \ldots, x_{n-1}\} \subseteq  M_t$,
and we set $M(x_0, \ldots, x_{n-1})$   to be the unique
$N\in H$ such that there is an embedding
$v:N\to M$ with  $\bar{x}\subseteq \rng(v).$
This can be easily checked to be well defined. We check existence and uniqueness. The latter is
clear from the definition of embedding. Note that there is an $N\in H$
and an embedding $v:N\to M_t$ with $(x_0, \ldots, x_{n-1})\subseteq \rng(v)$.
So take $\tau:n\to n$, such that $x_i=v(\tau(i))$ for each $i<n$.

As $H$ is symmetric, $N\tau\in H$, and clearly $v\circ \tau: N\tau\to M_t$
is also an embedding. But $x_i=v\circ \tau(u)$ for each $i<n$,
then let $M(x_0,\ldots, x_{n-1})=N\tau.$

Now we show that $M$ is an $n$ square relativized representation.
Let $L(A)$ be the signature obtained by adding an $n$ ary relation symbol for each element
of $\A$.
Define $M\models r(\bar{x})$ if $\bar{x}$ is an atom  hyperedge and $M(\bar{x})\leq r.$

Now let $\bar{x}\in C^n(M),$ $k<m$ and $M\models r(\bar{x})$. We require that there exists $y\in C^n(M)$
$y\equiv_k x$ and $M\models r(\bar{y})$. Take $i_0, \ldots, i_{m-1}<n$ different from $k$.
Now $M(\bar{x})=N$ for some $N\in H$.
By properties of hyperbasis, there is $P\in H$  with $P\equiv_k N$.
Hence $P(i_0, \ldots, i_{k-1}, k, i_{k+1}\ldots )\leq r$. But by properties of $M$, there  is
an $n$ tuple $\bar{y}\equiv k\bar{x}$ such that $M(y)=P$.
Then $\bar{y}\in C^n(M)$ is as required.

Finally, for $n$ smoothness we need to define equivalence relations
$E^m$ on $C^m(M)$, satisfying the definition, for $0\leq m\leq n$.
For each such $m$ and $\bar{a}\in C^m(M)$,
define  $a^*=(a_0, a_1,\ldots, a_{m-1}, a_0,\ldots, a_0)\in C^n(M)$, where we have $n$
copies of $a_0$ after $a_0, a_1, \ldots, a_{m-1}$. Now for $\bar{a}$, $\bar{b}\in C^n(M)$, set
$$E^m(\bar{a},\bar{b})\longleftrightarrow M(a^*)=M(b^*).$$
This can be easily seen to be as required.
\end{proof}

\begin{definition} Let $2<m<n$ and suppose that $m$ is finite.
We define $\A\in {\sf CB_n}$ if $\A$ is a subalgebra of a complete atomic $\B\in \CA_m$ 
that has an $n$ dimensional basis.
\end{definition}

Basis consists of networks that can be amalgamated 
in a step-by- step manner to give relativized representation, and they also code extra dimensions,
in the sense that they allow a {\it weak neat embedding theorem}. This means that the algebra possesing
such a basis neatly embeds into an algebra in higher dimensions, whose dimension is equal to the {\it degree} of relativization.
Call such an algebra an $n$ dilation, or simply, a dilation.

In case of {\it only basis}, the dilation thereby obtained can only be 
locally representable; and indeed,
it may fail commutativity of cylindrifiers because we do not 
require the amalgamation property of hypernetworks in basis.

But when we do, as is the case with {\it hyperbasis}, the dilation is a cylindric algebra, 
if the small algebra is, and the latter has a usual 
neat embedding property, embedding into an algebra having finitely many extra dimensions.
But it is still weak in the sense that the dilation has only {\it finitely many extra dimensions}, so by Monk's classical 
non-finite axiomatizability result, we know that such an algebra cannot posses a classical representation.

On the other hand, the dimension 
of the dilation  also determines how close,
or rather how far,  the representation is from a classical one. 

It is precisely the {\it degree} of {\it squareness}
in case we do not have commutativity of cylindrifiers in the dilation) 
or {\it smoothness} (in case we do). It measures how much we have to zoom in so that we mistake the 
$n$ relativized representation for  a genuine one. Our next theorem formally formulates 
some of these ideas and connections.

But first a definition.
Let $n>1$ be finite. Then $\sf D_n$ denotes the class of relativized set algebras whose units are arbitrary sets of sequences, 
such that, if $\A\in \sf D_n$ has unit $V$, and $s\in V$, then for any $i,j\in n$, $s\circ [i|j]\in V$. 
$\sf G_n$ are those relativized set algebras whose units are locally cube, 
that is if $\A\in \sf G_n$ with top element $V$ and  $s\in V$ implies $s\circ \tau\in V$ for any finite transformation $\tau$ on $n$.
It is known that $\sf D_n$ and $\sf G_n$ are varieties that are finitely axiomatizable, witness theorem \ref{11}.

\begin{theorem}\label{can2} Fix a finite  dimension $m>2,$ and let $n\in \omega$ with $n\geq m$.
\begin{enumarab}
\item ${\sf CB}_{m,n}$ is a canonical variety, and for $n\geq 6$ it is not atom-canonical
\item If $\A\in \CA_m$ then $\A\in {\sf CB}_{m,n}$ iff $\A^+$ has an $n$ dimensional basis.
\item If $\A\in \CA_m$, then $\A\in {\sf CB}_{m,n}$ if and only if $\A$ has 
an $n$ square relativized representation.
\item  ${\sf CB}_n=\CA_m\cap S\Nr_m{\sf D_n}$
\item $\bigcap_{k\in \omega, k\geq m} {\sf CB}_{m, k}=\sf RCA_m$
\end{enumarab}
\end{theorem}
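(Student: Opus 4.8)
The plan is to establish the two characterisations (3) and (4) first, and then to read off (1), (2) and (5) from them, using the game characterisation of Theorem~\ref{gamebasis} and, for the negative half of (1), the blow up and blur construction of Theorem~\ref{blowupandblur}.

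\emph{Items (3) and (4).} For one direction of both, suppose $\A\subseteq\B$ with $\B\in\CA_m$ complete and atomic and $\B$ having an $n$ dimensional basis $H$. By Lemma~\ref{step}, $\B$ -- hence its subalgebra $\A$ -- has an $n$ square relativized representation $M$; moreover $C^n(M)$, the set of $n$-ary cliques of $M$, is closed under every replacement $[i|j]$, so the full set algebra $\wp(C^n(M))$ lies in ${\sf D}_n$, and the representation of $\B$ on $C^m(M)$ extends to a neat embedding of $\B$, and hence of $\A$, into $\Nr_m\wp(C^n(M))$. Thus $\A\in\CA_m\cap S\Nr_m{\sf D}_n$ and $\A$ has an $n$ square relativized representation. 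For the converses one runs this backwards: given $\A\in\CA_m$ either with $\A\subseteq\Nr_m\D$, $\D\in{\sf D}_n$, or with an $n$ square relativized representation $M$, the networks obtained by restricting the ambient family of $n$-cliques (the unit of $\D$, respectively $C^n(M)$) to their $m$-element subcliques, labelled by atoms of a complete atomic dilation $\B\supseteq\A$, form an $n$ dimensional basis for $\B$; this is the routine converse to Lemma~\ref{step} alluded to there, the bookkeeping being which $\CA_m$ axioms survive the relativization and the passage to a complete atomic overalgebra, where one may pass to $\A^{+}$. This gives ${\sf CB}_{m,n}=\CA_m\cap S\Nr_m{\sf D}_n$ and the equivalence of membership in ${\sf CB}_{m,n}$ with having an $n$ square relativized representation.

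\emph{Items (1) and (2).} That ${\sf CB}_{m,n}$ is a variety is then immediate from (4): ${\sf D}_n$ is finitely axiomatisable (Theorem~\ref{11}), so $S\Nr_m{\sf D}_n$, and hence $\CA_m\cap S\Nr_m{\sf D}_n$, is a variety. For canonicity the defining equations can be taken in conjugated Sahlqvist form -- they are precisely the conditions, readable off the $\omega$-round game $G_\omega^n$ in which \pa\ makes only cylindrifier moves and never uses more than $n$ nodes, asserting that an atom below a cylindrified term forces a witnessing atom -- and Sahlqvist equations over a signature all of whose non-Boolean operators are conjugated are canonical. Item (2) then follows: by canonicity $\A\in{\sf CB}_{m,n}$ iff $\A^{+}\in{\sf CB}_{m,n}$, and since $\A^{+}$ is complete and atomic and cylindrifiers are completely additive (so that a win for \pe\ in $G_\omega^n$ transfers from an overalgebra to a subalgebra), Theorem~\ref{gamebasis} shows this is in turn equivalent to $\A^{+}$ having an $n$ dimensional basis. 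The assertion in (1) that ${\sf CB}_{m,n}$ is \emph{not} atom-canonical for $n\geq 6$ is Theorem~\ref{blowupandblur} (see also \ref{blurs}): one blows up and blurs a finite rainbow cylindric algebra to obtain an atomic $\A\in{\sf CB}_{m,n}$ whose \d\ completion $\Cm\At\A$ is not in ${\sf CB}_{m,n}$, whence ${\sf CB}_{m,n}$ is also not closed under \d\ completions.

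\emph{Item (5).} The inclusion $\RCA_m\subseteq\bigcap_{k\geq m}{\sf CB}_{m,k}$ is easy: a genuine representation of $\A\in\RCA_m$ is an $n$ square relativized representation for every $n$ (there $C^n(M)={}^nM$ and cylindrifiers have witnesses outright), so $\A\in{\sf CB}_{m,n}$ for all $n\geq m$ by (3). The reverse inclusion is the main obstacle; it is the cylindric analogue of the relation-algebra identity $\bigcap_n\RA_n=\RRA$. The plan is, given $\A\in{\sf CB}_{m,n}$ for every $n\geq m$, to take $n$ square relativized representations $M_n$, pad them to a common $\omega$-dimensional frame, and form the ultraproduct $M=\prod_n M_n/U$ over a non-principal ultrafilter $U$ on $\omega$; by \Los's theorem every finite tuple of $M$ then lies in cliques of every finite size, so the relativization disappears and $M$ is a genuine representation of an ultrapower of $\A$, giving $\A\in\RCA_m$. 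The delicate step -- where most of the work lies -- is checking that commutativity of cylindrifiers, which may fail in each $M_n$ away from its $m$-cliques, is recovered in the limit, so that $M$ really is an honest, non-relativized, cylindric set algebra.
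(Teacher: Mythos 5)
Your treatment of items (2)--(4) follows essentially the same route as the paper: Lemma~\ref{step} in one direction, the algebra of $L_n$-definable subsets of $C^n(M)$ for the neat embedding into ${\sf D}_n$, and networks read off cliques for the converses; the negative half of (1) is correctly delegated to Theorem~\ref{blowupandblur}. Two points diverge. For the variety claim in (1) you deduce it from (4) via ``$S\Nr_m{\sf D}_n$ is a variety because ${\sf D}_n$ is''; closure of $S\Nr_m$ of a variety under $H$ is a non-trivial fact that you would have to import or prove, whereas the paper avoids it by verifying $H$, $S$, $P$ closure directly on the basis definition (the homomorphic-image case via relativizing $\A^+$ to $-\sum K$ for $K$ the kernel). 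That is a legitimate alternative provided you actually supply the $H$-closure argument rather than assert it.

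The genuine gap is in item (5). \Los's theorem applied to the ultraproduct $M=\prod_nM_n/U$ gives you, for each \emph{fixed} finite $n$, the first-order statement that $M$ is $n$ square; it does not give a genuine representation. In $M$ every tuple lies in arbitrarily large \emph{finite} cliques, but the unit is still a proper subset of ${}^mM$ and cylindrifier witnesses are still only guaranteed inside finite cliques, so ``the relativization disappears'' does not follow -- indeed Theorem~\ref{rel} shows that even $\omega$ relativized (complete) representability is strictly weaker than the classical notion, so the obstruction is not merely commutativity of cylindrifiers as you suggest. To close this you must either (a) use saturation of the ultraproduct to build an increasing chain of finite cliques absorbing all witness requirements of a \emph{countable} subalgebra, whose union carries a square representation, or (b) do what the paper does: observe that $\bigcap_k{\sf CB}_{m,k}$ and $\RCA_m$ coincide on simple countable algebras and that both are discriminator varieties, hence are generated by their simple countable members and so coincide. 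Either way the reduction to countable simple algebras (or an equivalent saturation argument) is the missing step, and without it the ultraproduct plan as stated does not prove $\bigcap_{k\geq m}{\sf CB}_{m,k}\subseteq\RCA_m$.
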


\begin{proof}  
\begin{enumarab}
\item  The proof is similar to that of \cite[proposition 12.31]{HHbook2} attributed to 
Maddux, so we will be sketchy. Closure under subalgebras is obvious. Products is also easy; it goes as follows. Given 
$\A_i$, $i\in I$, for some set $I$,  let $\B_i$ be complete and atomic algebra 
having  an $n$ dimensional basis $\M_i$ such  $\A_i\subseteq \B_i$. 
We can assume, without loss, that the $\B_i$s are  pairwise disjont; accordingly 
let $\C$ be the complex algebra over the atom structure of disjoint unions of those of the $\B_i$. 
Then $\C$ is complete and atomic $\prod \A_i$ embeds in $\C$, and 
$\bigcup \M_i$ is the desired basis. 

Now we check closure under homomorphic images, the only part that is not direct. 
Let $\A\in {\sf CB}_{m,n}$ and let 
$h:\A\to \B$  be surjective
Let $K= ker(h)$  and let $z=-\sum^{\A^+}K$.
Let $\D$ be relativization of $\A^+$ to $z.$
Then $\D$ is complete and atomic.
Define  $g :\A^+\to \D$ by $a\mapsto a.z.$ 
Then $\B$ embeds in $\D$ via $b\mapsto g(a)$ for any $a\in h^{-1}[b]$. 
$\A^+$ has a basis $\M$, then $\{N\in \M: N(\bar{x})\in \D\}$ 
is a basis for $\D$.
(That it is not atom-canonical will be proved below).

\item One side is trivial. The other follows from theorem \ref{step}.

\item One side is easy. 
Let $M$ be the given representation.  For a network $N$ and $v: N\to M$, we say that 
$v$ embeds $N$ in $M$ if for all $r\in \At\A$, 
we have $N(\bar{x})=r$ iff $M\models r(v(\bar{x}))$. 

All such neworks that embed in $M$ are the desired basis. The other more involved part follows again from theorem \ref{step}.

\item Let $L(A)$ denotes the signature that contains
an $n$ ary predicate for every $a\in A$.
Let $M$ be an $n$ square -epresentation, which exists by theorem \ref{step}.
For $\phi\in L(A)_{\omega,\infty}^n$,
let $\phi^{M}=\{\bar{a}\in C^n(M):M\models_C \phi(\bar{a})\}$,
and let $\D$ be the algebra with universe $\{\phi^{M}: \phi\in L_{\infty,\omega}^n\}$ with usual
Boolean operations, cylindrifiers and diagonal elements, \cite[theorem 13.20]{HHbook}. The polyadic operations are defined
by swapping variables.
Define $\D_0$ be the algebra consisting of those $\phi^{M}$ where $\phi$ comes from $L_n$, that is it contains $n$ variables.
Assume that $M$ is $n$ square, then certainly $\D_0$ is a subalgebra of the $\sf Crs_n$ (the class
of algebras whose units are arbitrary sets of $n$ ary sequences)
with domain $\wp(C^n(M))$ so $\D_0\in {\sf Crs_n}$. The unit $C^n(M)$ of $\D_0$ is symmetric,
closed under substitutions, so
$\D_0\in \sf G_n$ (these are relativized set algebras whose units are locally cube, they
are closed under substitutions.)  
Now define the neat embedding by $\theta(r)=r(\bar{x})^{M}$.
Preservation of operations is straightforward.  We show that $\theta$ is injective.
Let $r\in A$ be non-zero. But $M$ is a relativized representation, so there $\bar{a}\in M$
with $r(\bar{a})$ hence $\bar{a}$ is a clique in $M$,
and so $M\models r(\bar{x})(\bar{a})$, and $\bar{a}\in \theta(r)$ proving the required.

Let $\A\subseteq \Nr_m\D_n$. For each $\bar{a}\in 1^{\D},$ define a labelled 
hypergraph $N_a$ with nodes $n$, and 
$N_{\bar{a}}(\bar{x})$ is the unique atom of $\A$ 
containing the tuple 
$(a_{x_0},\ldots, a_{x_{n-1}}, a_{x_0}\ldots\ldots a_{x_0}).$ 
This is well defined.  Let $\M$ be the symmetric closure
of $\{N_a: \bar{a}\in 1^M\}$.

\item Since a classical  represention is $\omega$ square, then $\bigcap_{k>m}{\sf CB_{k}}$
and $\RCA_m$ coincide on simple countable algebras. 
But each is a discriminator variety and so
they are equal.
\end{enumarab}
\end{proof}

For finite $m>2$ and $n>m$, 
$S\Nr_m\CA_n\subseteq \sf CB_{m,n}$ are  both 
canonical varieties that are approximations 
to $\RCA_n$;  but {\it strict} approximations (this is not trivial to show). 

For larger $n$ they get closer to $\RCA_m$, but they never actually get there. All the same $\RCA_m$ is their limit meaning that
$$\bigcap_{k\in \omega} S\Nr_m\CA_{m+k}=\bigcap {\sf CB}_{m, m+k}=\RCA_m.$$
A great portion in the paper will be devoted to study such varieties.

They share a lot of properties when the dimension $m$ is finite $>2$. Both have a neat embedding property, both are characterized 
by games, both have relativized representations and both are not finitely axiomatizable 
nor can  they be axiomatized by Sahlqvist equations for $n\geq m+3$. 
Finally, for each, the subclass whose members have complete $n$ relativized representations, both square and smooth,
is  not elementary.

But there are differences, and significant ones, particulary in terms of decision problems,
as the unfolding of the paper will show.  

Another striking difference, even more literal, is that for $2<m<n$, 
the variety $\sf S\Nr_m\CA_n$  is not even finitely axiomatizable over ${\sf CB}_{m,n}$.
The amalgamation moves in hyperbasis 
makes a big  difference. This has subtle manifestations, for example Monk-like algebras 
work to prove non finite axiomatizability of $S\Nr_n\CA_{n+k+1}$ over
$S\Nr_n\CA_{n+k}$ for $n>2$ and $k\geq 1$ finite. The analogous result for ${\sf CB}_n$ uses more delicate rainbow constructions, witness theorem
\ref{squarerepresentation}. 
On a more basic level, 
commutativity of cylindrifiers adds a lot on the level of the algebras.

However, such differences diffuse in the limit, when the algebras have genuine 
representations regaining  commutativity of cylindrifiers.

In any event differences and similarities are certainly 
illuminating for both. 

\section{Atom canonicity}

Here we show that for finite dimensions $n>2$, both the canonical varieties 
$S\Nr_n\CA_{n+k}$ and ${\sf CB}_{n, n+k}$, when $k\geq 4$,  are not atom-canonical, hence not closed under 
\d\ completions. Two proofs are given. The second is a blow up and blur construction. The idea, a little bit more, than in a nut shell:

We work with cylindric algebras, though the idea is much more universal as we will see. 
Assume that $\RCA_n\subseteq \K$, and $S\K=\K$.
Start with a finite algebra $\C$ outside $\K$. Blow up and blur $\C$, by splitting
each atom to infinitely many, getting a new atom structure $\At$. In this process a (finite) set of blurs are used.

They do not blur the complex algebra, in the sense that $\C$ is there on this global level. 
The algebra $\Cm\At$ will not be in $\K$
because $\C\notin \K$ and $\C$ embeds into $\Cm\At$. 
Here the completeness of the complex algebra will play a major role, 
because every element of $\C$,  is mapped, roughly, to {\it the join} of 
its splitted copies which exist in $\Cm\At$ because it is complete.

These precarious joins prohibiting membership in $\K$ {\it do not }exist in the term algebra, only finite-cofinite joins do, 
so that the blurs blur $\C$ on the 
this level; $\C$ does not embed in $\Tm\At.$

In fact, the the term algebra will  not only be in $\K$, but actually it will be in the possibly larger $\RCA_n$. 
This is where the blurs play their other role. Basically non-principal ultrafilters, the blurs
are used as colours to represent  $\Tm\At\A$.

In the process of representation we cannot use {\it only} principal ultrafilters, 
because $\Tm\At$ cannot be completely representable for this would give that $\Cm\At\A$ 
is representable. 

But the blurs will actually provide a {\it complete representation} of the {\it canonical extension} 
of $\Tm\At$, in symbols $\Tm\At^+$; the algebra whose underlying set consists of all ultrafilters of $\Tm\At\A$. The atoms of $\Tm\At$ 
are coded in the principal ones,  and the remaining non- principal ultrafilters, ot the blurs, 
will be finite, used as colours to completely represent $\Tm\At^+$, in the process representing $\Tm\At$. 

As mentioned, the blow up and blur construction will be used in the second proof, and will be encountered
again later on.  In fact, we will use this construction many times. We will blow up and 
blur a  finite 
rainbow  polyadic algebra, theorem \ref{blowupandblur},
and we will blow up and blur finite rainbow relation algebras \ref{decidability}. 

We will also blow up and blur 
a certain abstract 
finite relation algebra required to satisfy certain properties, 
proved later on to exist, theorems \ref{blurs}, \ref{main}.
These finite relation algebras have $n+k$ dimensional hyperbasis but not $n+k+1$
dimensional hyperbasis. They will be used to show that there are single types in countable $L_n$ 
theories that are realized in all $n+k+1$ relativized 
models, but they cannot be isolated by a formula using 
$n+k$ variables.

We will also have occasion to split each of the atoms of such relation algebras that have 
$n$ dimensional hyperbasis, each to uncountably many,  getting a cylindric algebra that does not have an $n$ 
relativized square representation,  but is elementary equivalent to one 
that has an $n$ smooth relativized representation, witness 
theorem \ref{completerepresentation}.

Our first proof of non-atom canonicity of the variety $\K=S\Nr_n\CA_{n+4}$ for $n\geq 3$ 
is semantical. 
It says that if a certain rainbow algebra  had an $n+4$ flat representation, then an impossibility 
will arise; {\it a fortiori} it does not have an $n+4$ smooth representation.

Our second proof splits up and blurs a finite rainbow polyadic 
algebra outside $\K$ as explained above.
Because term algebra constructed will be representable, and their \d\ completions are not in $\K$, we get the same result
for $S\Nr_n\CA_{n+k}$ for any $k\geq 4$. 
When $k=\omega$, then $\K$ is the variety of representable algebras. For the latter, the result is known.

We start off with a piece of notation.

\begin{definition}\label{subs}
Let $n$ be an ordinal. An $s$ word is a finite string of substitutions $({\sf s}_i^j)$,
a $c$ word is a finite string of cylindrifications $({\sf c}_k)$.
An $sc$ word is a finite string of substitutions and cylindrifications
Any $sc$ word $w$ induces a partial map $\hat{w}:n\to n$
by
\begin{itemize}

\item $\hat{\epsilon}=Id$

\item $\widehat{w_j^i}=\hat{w}\circ [i|j]$

\item $\widehat{w{\sf c}_i}= \hat{w}\upharpoonright(n\sim \{i\})$

\end{itemize}
\end{definition}

If $\bar a\in {}^{<n-1}n$, we write ${\sf s}_{\bar a}$, or more frequently
${\sf s}_{a_0\ldots a_{k-1}}$, where $k=|\bar a|$,
for an an arbitrary chosen $sc$ word $w$
such that $\hat{w}=\bar a.$
$w$  exists and does not
depend on $w$ by \cite[definition~5.23 ~lemma 13.29]{HHbook}.
We can, and will assume \cite[Lemma 13.29]{HHbook}
that $w=s{\sf c}_{n-1}{\sf c}_n.$
[In the notation of \cite[definition~5.23,~lemma~13.29]{HHbook},
$\widehat{s_{ijk}}$ for example is the function $n\to n$ taking $0$ to $i,$
$1$ to $j$ and $2$ to $k$, and fixing all $l\in n\setminus\set{i, j,k}$.]

The following easy lemma, saying basically that in certain tight dilations, any non-zero element
in the dilation  intersects a `permuted version' of an atom in the small algebra using spare dimensions. 
The algebra $\B$ is a tight dilation of
$\A$, if $\A$ is a complete subalgebra of $\Nr_n\B$, in symbols, $\A\subseteq_c \Nr_n\B$.
The lemma will  be used in the last item of our last theorem, namely, theorem \ref{maintheorem}.

\begin{lemma}\label{lem:atoms2}
Let $n<m$ and let $\A$ be an atomic $\Sc_n$,
$\A\subseteq_c\Nr_n\C$
for some $\C\in\Sc_m$.  For all $x\in\C\setminus\set0$ and all $i_0, \ldots i_{n-1} < m$ there is $a\in\At(\A)$ such that
${\sf s}_{i_0\ldots i_{n-1}}a\;.\; x\neq 0$.
\end{lemma}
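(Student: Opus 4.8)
The plan is to reduce the statement of Lemma~\ref{lem:atoms2} to two facts: (1) $\sum^{\C}\At(\A)=1$, and (2) the operator ${\sf s}_{i_0\ldots i_{n-1}}$ carries this join to $1$. Given (1) and (2), the conclusion is immediate: for $x\neq 0$ we would have $x\cdot\sum_{a\in\At(\A)}{\sf s}_{i_0\ldots i_{n-1}}a=x\cdot 1=x\neq 0$, so $x\cdot{\sf s}_{i_0\ldots i_{n-1}}a\neq0$ for some $a\in\At(\A)$. I regard (1) as the substantive step and would prove it as follows. Since $\A$ is atomic, $\sum^{\A}\At(\A)=1$; since $\A\subseteq_c\Nr_n\C$ and $1^{\A}=1^{\Nr_n\C}=1^{\C}$, this gives $\sum^{\Nr_n\C}\At(\A)=1$. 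Now take $y\in\C$ with $y\cdot a=0$ for every $a\in\At(\A)$; I claim $y=0$. For $a\in\At(\A)\subseteq\Nr_n\C$ and $n\le i<m$ we have ${\sf c}_i a=a$, so the cylindric axiom ${\sf c}_i(z\cdot {\sf c}_i w)={\sf c}_i z\cdot {\sf c}_i w$ yields ${\sf c}_i(y\cdot a)={\sf c}_i(y\cdot {\sf c}_i a)={\sf c}_i y\cdot a$; iterating over $i=n,\dots,m-1$ gives $\bar y\cdot a=0$ for $\bar y:={\sf c}_{n}{\sf c}_{n+1}\cdots {\sf c}_{m-1}y\in\Nr_n\C$. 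As $a$ was arbitrary, $\bar y=\bar y\cdot 1=\bar y\cdot\sum^{\Nr_n\C}\At(\A)=\sum_{a\in\At(\A)}\bar y\cdot a=0$, and since $y\le\bar y$ we get $y=0$. Hence no nonzero element of $\C$ is disjoint from all atoms of $\A$, i.e.\ $\sum^{\C}\At(\A)=1$.

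For (2), recall that ${\sf s}_{i_0\ldots i_{n-1}}$ is given on the elements of $\Nr_n\C$ (hence on those of $\A$) by an $sc$-word $w$ over $m$, whose value on such elements does not depend on the choice of $w$, by \cite[Lemma~13.29]{HHbook} and the conventions above. If $\C$ is completely additive --- in particular if $\C\in\CA_m$, which is the situation in the applications (e.g.\ in the last item of Theorem~\ref{maintheorem}, where the dilation lives in $\CA_{\omega}$) --- then ${\sf s}_{i_0\ldots i_{n-1}}$ is completely additive, and
\[
\sum_{a\in\At(\A)}{\sf s}_{i_0\ldots i_{n-1}}a={\sf s}_{i_0\ldots i_{n-1}}\Big(\sum\nolimits^{\C}\At(\A)\Big)={\sf s}_{i_0\ldots i_{n-1}}1=1,
\]
using (1), which is exactly (2). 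For a general $\Sc_m$ I would instead peel the letters of $w$ off the product ${\sf s}_{i_0\ldots i_{n-1}}a\cdot x$ one at a time --- using self-conjugacy of cylindrifiers, ${\sf c}_k z\cdot u\neq0\iff z\cdot {\sf c}_k u\neq0$, and the standard $\Sc$-identities for the replacement letters, exploiting the spare dimensions $n,\dots,m-1$ and the fact that $a\in\Nr_n\C$ --- to produce a single nonzero $x^{\ast}\in\C$ such that $a\cdot x^{\ast}\neq0$ implies ${\sf s}_{i_0\ldots i_{n-1}}a\cdot x\neq0$; then (1) applied to $x^{\ast}$ finishes as before.

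The hard part will be this last manoeuvre in the purely $\Sc$ setting: substitution operators of an $\Sc_m$ need not be completely additive, so the passage $x\rightsquigarrow x^{\ast}$ cannot be made ``for free'' and must be carried out explicitly along a fixed $sc$-word realizing ${\sf s}_{i_0\ldots i_{n-1}}$ in $m$ dimensions (separating, if convenient, the case where $i_0\ldots i_{n-1}$ is a permutation of $\{0,\dots,n-1\}$, in which case ${\sf s}_{i_0\ldots i_{n-1}}$ merely permutes $\At(\A)$ and the claim is trivial given (1)). It is worth noting that the lemma is transparent once one has a (relativized) representation of $\C$: picking any $\bar s$ in the nonempty set $x$, the atom $a\in\At(\A)$ whose associated $n$-ary relation contains $(s_{i_0},\dots,s_{i_{n-1}})$ --- such $a$ exists precisely because the atoms of $\A$ sum to the unit, which is fact (1) again --- satisfies $\bar s\in{\sf s}_{i_0\ldots i_{n-1}}a\cdot x$.
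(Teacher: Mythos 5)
Your proof is correct and follows essentially the same route as the paper's: the paper likewise reduces the claim to $\sum\At(\A)=1$ (computed in $\C$) together with complete additivity of ${\sf s}_{i_0\ldots i_{n-1}}$, and then observes that $1-x$ would otherwise be an upper bound for $\{{\sf s}_{i_0\ldots i_{n-1}}a : a\in\At(\A)\}$. You are in fact more explicit than the paper on both supporting facts: the paper simply asserts $\sum\At(\A)=1$ and the complete additivity of ${\sf s}^i_j$, whereas you prove the former via the cylindrifier argument through the spare dimensions and correctly flag that the latter is not automatic for an abstract $\Sc_m$ (the paper itself notes elsewhere that $\Sc_n$ is not a completely additive variety), though it is unproblematic in the $\CA$ dilations where the lemma is actually applied.
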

\begin{proof}
We can assume, see definition  \ref{subs},
that ${\sf s}_{i_0,\ldots i_{n-1}}$ consists only of substitutions, since ${\sf c}_{m}\ldots {\sf c}_{m-1}\ldots
{\sf c}_nx=x$
for every $x\in \A$.We have ${\sf s}^i_j$ is a
completely additive operator (any $i, j$), hence ${\sf s}_{i_0,\ldots i_{\mu-1}}$
is too  (see definition~\ref{subs}).
So $\sum\set{{\sf s}_{i_0\ldots i_{n-1}}a:a\in\At(\A)}={\sf s}_{i_0\ldots i_{n-1}}
\sum\At(\A)={\sf s}_{i_0\ldots i_{n-1}}1=1$,
for any $i_0,\ldots i_{n-1}<n$.  Let $x\in\C\setminus\set0$.  It is impossible
that ${\sf s}_{i_0\ldots i_{n-1}}\;.\;x=0$ for all $a\in\At(\A)$ because this would
imply that $1-x$ was an upper bound for $\set{{\sf s}_{i_0\ldots i_{n-1}}a:
a\in\At(\A)}$, contradicting $\sum\set{{\sf s}_{i_0\ldots i_{n-1}}a :a\in\At(\A)}=1$.
\end{proof}

Our next theorem, using a rainbow argument, constructs countable atomic 
representable algebras whose \d\ completions, that is, 
the complex algebra of their atom structures, does not posses even an {\it $n+4$ flat} representation.

We prove a result  stronger than the result in \ref{hodkinson} in the following corollary.
It is obtained from Hodkinson's rainbow algebras, dealt with in \ref{hodkinson}
by  truncating the reds and greens to be finite, but the greens will be more,
to ensure a \ws\ for \pa\ . The number of rounds \pa\ needs to win together with number of pebbles in play,  determines 
the $n+4$ extra dimensions. 

Later, we will encounter other cylindric-like rainbow constructions generalizing rainbow constructions
used for relation algebras, witness theorems \ref{neat11},  \ref{completerepresentation}, \ref{j}. 
This lifting is not so hard; 
several instances are scattered in the literature, see \cite{HH} for example.

We can say that it can always be done, 
except that the connection between nodes and rounds needed for the games
played on coloured graphs \cite[lemma 30,]{HH}, to that between the number of pebbles and rounds used 
in the private \ef\ forth game 
played on the greens and reds, may not satisfy the same combinatorial connection established in the
relation algebra case by Hirsch and Hodkinson. 

In relation algebras the number of rounds increase by $1$, and the pebbles (nodes) 
increase by $2$, when passing from the private game to the rainbow game. 

In cylindric algebras it is safe to say that the number of nodes of graphs as well as that 
of the rounds of the game also increases, but 
a general cylindric rainbow theorem, seems to be missing. 

We will give, yet another two instances below, and sketrch some more, 
lifting  \ws\ s from the same private \ef forth games used in relation algebras   
to cylindric rainbow algebras, to give more than a glimpse 
that this task can always be implemented, witness theorems 
\ref{squarerepresentation}, \ref{completerepresentation}. 

The basic argument used in the next theorem, is that in the case of the existence of an $n+4$ flat 
representation, then an inconsistent triple of reds will be forced by an $n+4$ red clique 
(in the usual graph theoretic sense)
labelling edges between apexes of the same cone, 
with base labelled by the `yellow shade' denoted by $\y_{n+2}$.

\begin{theorem}\label{smooth} For every $n\geq 3$ there exists a
countable atomic $\sf RPEA_n$, such that the $\CA$ reduct of
its completion does not have an $n+4$ flat representation, in particular, it is not  representable. Furthermore, its
${\sf Df}$ reduct is not representable.
\end{theorem}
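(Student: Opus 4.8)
The plan is to adapt Hodkinson's rainbow construction from Theorem \ref{hodkinson} by truncating the greens and reds to be finite, so that \pa\ acquires a \ws\ in a suitable \ef--forth game, and then to lift this \ws\ through the cylindric graph game to produce the desired contradiction on any putative $n+4$ flat representation of the completion. Concretely, I would take the rainbow signature with greens $\g_i$ ($i<n-1$) and $\g_0^i$ ($i<|\sf G|$) where now $|\sf G|=n+2$, reds $\r_{jk}$ with $|\sf R|=n+1$, whites $\w,\w_i$ ($i<n-2$), and the $(n-1)$-ary shades of yellow $\y_S$ for $S\subseteq \sf G$, together with the shade of red $\rho$ outside the signature. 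As in Theorem \ref{hodkinson}, build the $n$-homogeneous coloured graph $M\in\GG$ by the chain game, relativize to $W=\{\bar a\in{}^nM : M\models\bigwedge_{i<j<n}\neg\rho(x_i,x_j)(\bar a)\}$, check via the $n$-back-and-forth systems $\Theta^\chi$ that $W$-relativized and classical semantics agree on $L^n$-formulas, and let $\A$ be the atomic relativized set algebra with domain $\{\varphi^W:\varphi\in L_n\}$ and $\C=\Cm\At\A$ its completion, identified with $\{\varphi^W:\varphi\in L^n_{\infty\omega}\}$. The atoms are the $\sf MCA$-formulas, i.e.\ surjections $n\to$ finite coloured graph.

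The heart of the argument is the \pa\ \ws. In the private \ef--forth game on the two finite structures coding greens and reds, with $n+2$ greens against $n+1$ reds, \pa\ wins by the usual pigeonhole/cone-bombardment strategy: using $\y_{n+2}$ as the base colour, he builds $n+4$ apexes of cones all sharing that base, forcing \pe\ to label the $\binom{n+4}{2}$ edges among the apexes by reds; since there are only $n+1$ red colours available and the rainbow consistency rules on red triangles (indices must be "linked" correctly along a clique), an inconsistent red triple $\r_{j_0k_0},\r_{j_1k_1},\r_{j_2k_2}$ is forced on some triangle of apexes. I would then lift this to the cylindric graph game on $\At\C$: a \ws\ for \pa\ in the private game yields a \ws\ for \pa\ in the $G^{n+4}_\omega$-type game on coloured graphs, where the number of nodes needed is $n+4$ (the $n+2$ base nodes of a cone plus the two extra apex nodes forced in the critical round), matching the "$+4$ extra dimensions." By the converse direction of the basis/flat-representation correspondence (Theorem \ref{step} and the games of Theorem \ref{gamebasis}, adapted to flat representations via the amalgamation moves), if $\Rd_{ca}\C$ had an $n+4$ flat representation then \pe\ would have a \ws\ in that game --- contradiction. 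Hence $\Rd_{ca}\C$ has no $n+4$ flat representation; since a genuine representation is $n+4$ flat for every $n$, $\Rd_{ca}\C$, hence $\C$ itself, is not representable.

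For the $\sf Df$ reduct: exactly as in the last item of the proof of Theorem \ref{hodkinson}, a representation (even a complete one) of $\Rd_{df}\C$ would, by Johnson's argument, induce a complete representation of $\Rd_{df}\A$, hence of $\A$ (the generators have dimension sets $<n$), hence a representation of $\C$; this is impossible since $\C$ is not representable. Finally, $\A$ itself is representable as an $\sf RPEA_n$ by the map $h:\varphi^W\mapsto\{\bar a\in{}^nM:M\models\varphi(\bar a)\}$, which is well-defined, injective, and respects all operations precisely because relativized and classical semantics agree on $L_n$-formulas; and $\A$ is atomic with the $\sf MCA$-formulas as atoms. I expect the main obstacle to be the precise bookkeeping in lifting \pa's \ws\ from the private \ef\ game to the cylindric graph game: one must verify that the cone-bombardment on the base $\y_{n+2}$ really does force a red clique among $n+4$ apex nodes within the allotted rounds, and that the rainbow consistency rules for red triangles along cliques genuinely make every assignment of $n+1$ red colours to a $K_{n+4}$ of apexes contain an inconsistent triple --- this is the step where the combinatorial constants $|\sf G|=n+2$, $|\sf R|=n+1$ have to be chosen exactly right, and where the cylindric case departs slightly from the relation algebra template as flagged in the surrounding discussion.
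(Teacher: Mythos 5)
Your construction is the paper's construction --- the same rainbow parameters $|{\sf G}|=n+2$, $|{\sf R}|=n+1$, the same $n$-homogeneous $M$, the same relativization to $W$, the same identification of $\C$ with $\{\phi^W:\phi\in L^n_{\infty\omega}\}$, and the same combinatorial core (a base tuple coloured $\y_{n+2}$, $n+4$ cone apexes over it, a red clique among the apexes, and a pigeonhole forcing a forbidden red triple) --- but you derive the contradiction by a different route. The paper does \emph{not} pass through a game here: it assumes an $n+4$ flat representation $g:\C\to\wp(V)$ exists and works directly inside it, first exhibiting $\bar b\in h(\y_{n+2}(x_0,\ldots,x_{n-1}))^W$, then using flatness to produce, for each $t<n+4$, an apex $c_t$ with $\bar b_t\in h(\g_0^t(x_0,x_{n-1}))^W$, and finally pigeonholing the tuples $\bar c_{st}$ over the classes $R_{jk}$ to land on an inconsistent triple $\r^i_{jk},\r^{i'}_{jk},\r^{i''}_{j'k'}$; this is a transcription of Hodkinson's Lemmas 5.7, 5.8 and 5.12. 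Your route --- lift \pa's \ws\ from the private \ef\ game to the graph game and then invoke a game/representation correspondence --- is legitimate and is in fact exactly what the paper does in item (2) of Theorem \ref{blowupandblur}, where it buys the stronger conclusion $\Cm\At\A\notin{\sf CB}_{n,n+4}$ (no $n+4$ \emph{square} representation); the cost is that for the flat statement you must first set up the flat-representation game with amalgamation moves, which the paper only sketches and explicitly leaves to the reader, whereas the direct argument needs no such machinery.

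Two points of caution. First, the reds of the signature are $\r^i_{jk}$ with $i<\omega$ and $j<k<n+1$: there are infinitely many red \emph{atoms} but only $\binom{n+1}{2}$ \emph{classes} $R_{jk}=\bigvee_{i<\omega}\r^i_{jk}$, and these joins live only in the completion. If the reds were literally the finitely many $\r_{jk}$ you write, the term algebra would already fail to be representable and there would be nothing to prove; the mechanism of the theorem is precisely that the pigeonhole runs over the finitely many classes $R_{jk}$ inside $\C$, while the final inconsistency also uses the clash of superscripts. Second, the base of a cone is an $(n-1)$-tuple coloured by a shade of yellow, not $n+2$ nodes; $n+2$ counts the green superscripts $\g_0^i$, and the node budget $n+4$ in the game version comes from the number of apexes \pa\ must accumulate, not from the size of the base.
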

\begin{proof}
Here we closely follow \cite{Hodkinson}; but our reds and greens are finite,
so we obtain a stronger result. We take $|{\sf G}|=n+2$, and ${\sf R}=n+1$.
Let $L^+$ be the rainbow signature consisting of the binary
relation symbols $\g_i :i<n-1, \g_0^i:  i< n+2, \w, \w_i: i <n-2, \r_{jk}^i  (i<\omega, j<k<n+1)$
and the $(n-1)$ ary-relation symbols
$\y_S: S\subseteq n+2)$, together with a shade of red $\rho$ that is outside the rainbow
signature but is a binary relation in the sense that it can label edges
of coloured graphs. Let $\GG$ be the class of corresponding rainbow coloured graphs.
By the same methods as above, there is a countable model  $M\in \GG$ with the following
property:\\
$\bullet$ If $\triangle \subseteq \triangle' \in \GG$, $|\triangle'|
\leq n$, and $\theta : \triangle \rightarrow M$ is an embedding,
then $\theta$ extends to an embedding $\theta' : \triangle'
\rightarrow M$. Now let $W = \{ \bar{a} \in {}^n M : M \models ( \bigwedge_{i < j < n,
l < n} \neg \rho(x_i, x_j))(\bar{a}) \}$. Then $\A$ with universe $\{\phi^W: \phi\in L_n\}$ and operations defined the usual way,
is representable, and its
completion, the complex algebra over the  above rainbow atom structure, $\C$ has universe $\{\phi^W: \phi\in L_{\infty, \omega}^n\}$.

We show that $\C$ is as desired. Assume, for contradiction, that $g:\C\to \wp(V)$ induces a relativized $n+4$ flat representation $N$,
so that $V\subseteq {}^nN$ and $N=\bigcup_{s\in V}\rng(s)$.
Then $V\subseteq {}^nN$ and we can assume that
$g$ is injective because $\C$ is simple. First there are $b_0,\ldots, b_{n-1}\in N$ such $\bar{b}\in
h(\y_{n+2}(x_0,\ldots, x_{n-1}))^W$, cf \cite[lemma 5.7]{Hodkinson}.
(This tuple will be the base of finitely many cones, that will be used to force an inconsistent triple of reds.)
This is because $\y_{n+2}(\bar{x})^W\neq \emptyset$.  For any $t<n+4$, there is a $c_t\in N$, such
that $\bar{b}_t=(b_0,\ldots b_{n-2},\ldots c_t)$ lies in $h(\g_0^t(x_0, x_{n-1})^W$ and in $h(\g_i(x_i, x_{n-1})^W$ for each $i$ with
$1\leq i\leq n-2$, cf \cite[lemma 5.8]{Hodkinson}. Here $n$ flatness is used. The $c_t$'s are the apexes of the cones with base $\y_{n+2}$.

Take the formula
$$\phi_t=\y_{n+2}(x_0,\ldots ,x_{n-2})\to \exists x_{n-1}(\g_0^t(x_0, x_{n-1}))\land \bigwedge_{1\leq i\leq n-2}\g_i(x_i, x_{n-1})),$$
then $\phi_t^{W}=W$. Pick $c_t$ and $\bar{b_t}$ as above, and define for each $s<t<n+4,$ $\bar{c_{st}}$ to be
$(c_s, b_1,\ldots  b_{n-2}, c_t)\in {}^nN.$
Then $\bar{c}_{st}\notin h((x_0,\ldots  x_{n-1})^W$. Let $\mu$ be the formula
$$x_0=x_{n-1}\lor \w_0(x_0, x_{n-1})\lor \bigvee \g(x_0, x_{n-1}),$$
the latter formula is a first order formula consisting of the disjunction of  the (finitely many ) greens.
For $j<k<n+1$, let $R_{jk}$ be the $L_{\infty\omega}^n$-formula $\bigvee_{i<\omega}\r_{jk}^i(x_0, x_{n-1})$.
Then
$\bar{c}_{st}\notin h(\mu^W)$, now for each $s<t< n+4$, there are $j<k<n+1$ with $c_{st}\in h(R_{jk})^W.$
By the pigeon- hole principle, there are $s<t< n+4$ and $j<k<n+1$
with $\bar{c}_{0s}, \bar{c}_{0t}\in h(R_{jk}^W)$. 
We have also $\bar{c}_{st}\in h(R_{j',k'}^W)$
for some $j', k'$ then the sequence $(c_0, c_s, b_2,\ldots b_{n-2}, c_t)\in h(\chi^W)$
where
$$\chi=(\exists_1R_{jk})(\land (\exists x_{n-1}(x_{n-1}=x_1\land \exists x_1 R_{jk})\land (\exists x_0(x_0=x_1)\land \exists x_1R_{j'k})),$$
so $\chi^W\neq \emptyset$. Let $\bar{a}\in \chi ^W$. Then $M\models _W  R_{jk}(a_0,a_{n-1})\land R_{jk}(a_0,a_1)\land R_{j'k'}(a_1, a_{n-1})$.
Hence there are
$i$, $i'$ and $i''<\omega$ such that
$$M\models _W\r_{jk}^{i}(a_0,a_{n-1})\land \r_{jk}^{i'}(a_0,a_1)\land \r_{j'k'}^{i''}(a_1, a_{n-1}),$$
cf. \cite[lemma 5.12]{Hodkinson}.
But this triangle is inconsistent. 
For the last part, if its $\sf Df$ reduct is representable, then $\Rd_{df}\A$ will be completely representable,
hence $\A$ itself will be completely representable because it is generated by elements whose dimension set
$<n$, which is a contradiction.
\end{proof}

\begin{corollary}\label{can} We have $\C\notin S\Nr_n\CA_{n+4}$. In particular, for any $k\geq 4$,  the variety
$S\Nr_n\CA_{n+k}$ is not atom-canonical.
\end{corollary}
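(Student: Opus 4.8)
The plan is to derive Corollary \ref{can} directly from Theorem \ref{smooth} together with the general machinery relating the neat embedding property to flat representations. First I would recall the key completeness-type fact (stated earlier in the excerpt, and part of the motivation behind Definition \ref{rel}): if $\B\in S\Nr_n\CA_{n+k}$ and $\B$ is atomic, then $\B$ has an $n+k$ flat representation. More precisely, membership in $S\Nr_n\CA_{n+k}$ is witnessed by a dilation $\C\in\CA_{n+k}$ with $\B\subseteq\Nr_n\C$, and such a dilation can be unravelled — via the basis/hypernetwork-to-representation construction of Lemma \ref{step} applied in the appropriate $(n+k)$-dimensional setting — into a relativized representation that is $n+k$ flat, since commutativity of the cylindrifiers $\cyl i$ for $i<n+k$ in $\C$ forces the flatness condition $M\models\exists x_i\exists x_j\phi\leftrightarrow\exists x_j\exists x_i\phi$ on the $(n+k)$-cliques. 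This is the ``one direction is easy'' half of the correspondence, and I would cite it as such rather than reprove it.

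The main step is then the contrapositive. Let $\C$ be the completion of the atomic $\sf RPEA_n$ constructed in Theorem \ref{smooth}, or rather its $\CA$ reduct. Theorem \ref{smooth} establishes that $\C$ has no $n+4$ flat representation. If we had $\C\in S\Nr_n\CA_{n+4}$, then since $\C$ is atomic (being the complex algebra $\Cm\At\A$ of an atom structure, it is complete and atomic), the fact recalled above would furnish an $n+4$ flat representation of $\C$ — contradiction. Hence $\C\notin S\Nr_n\CA_{n+4}$. Note we only need the $\CA$ reduct here, which is legitimate since $S\Nr_n\CA_{n+4}$ is a class of $\CA_n$'s and the reduct operation $\Rd_{ca}$ takes the completion of the polyadic algebra to (a subalgebra of, in fact equal to) the completion of its $\CA$ reduct.

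For the ``in particular'' clause: let $\A$ be the atomic algebra of Theorem \ref{smooth} (a representable $\sf PEA_n$, hence its $\CA$ reduct $\Rd_{ca}\A$ is a representable, i.e.\ $\RCA_n$, atomic algebra); thus $\Rd_{ca}\A\in\RCA_n\subseteq S\Nr_n\CA_{n+k}$ for every $k$, and in particular for every $k\geq4$. Its \d\ completion is $\Cm\At(\Rd_{ca}\A)=\C$, which by the previous paragraph is not in $S\Nr_n\CA_{n+4}$, and since $S\Nr_n\CA_{n+k}\subseteq S\Nr_n\CA_{n+4}$ for all $k\geq4$ (fewer extra dimensions is a weaker constraint, so $\SNr_n\CA_{n+k}\subseteq\SNr_n\CA_{n+4}$), we get $\C\notin S\Nr_n\CA_{n+k}$ for every such $k$. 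Therefore $S\Nr_n\CA_{n+k}$ contains an atomic member whose \d\ completion lies outside it, i.e.\ it is not atom-canonical; a fortiori it is not closed under \d\ completions, and (as these are canonical varieties) cannot be axiomatized by Sahlqvist equations.

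The step I expect to be the genuine obstacle is not the logical skeleton above — that is a short deduction — but making precise and citing correctly the passage \emph{from membership in $S\Nr_n\CA_{n+4}$ to the existence of an $n+4$ flat representation} for the algebra at hand. One must be careful that the relevant completeness theorem applies to the \emph{complex algebra} $\C$ (which is complete but uncountable), not merely to the countable term algebra, and that ``$n+4$ flat'' in the sense of Definition \ref{rel} is exactly what a $\CA_{n+4}$-dilation delivers after unravelling. Since Theorem \ref{smooth} was proved by \emph{assuming} an $n+4$ flat representation and deriving the inconsistent red triangle, the cleanest route is to phrase Corollary \ref{can} so that it quotes Theorem \ref{smooth} essentially verbatim: any $\B\in S\Nr_n\CA_{n+4}$ atomic has an $n+4$ flat representation, $\C$ does not, done. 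I would keep the proof to two or three sentences and let Theorem \ref{smooth} and the neat-embedding/relativized-representation dictionary carry the weight.
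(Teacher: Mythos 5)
Your proposal is correct and follows essentially the same route as the paper: Theorem \ref{smooth} rules out an $n+4$ flat representation of $\C$, membership in $S\Nr_n\CA_{n+4}$ would produce one, and the ``in particular'' clause follows since $\A$ is representable, $\C=\Cm\At\A$ is its completion, and $S\Nr_n\CA_{n+k}\subseteq S\Nr_n\CA_{n+4}$ for $k\geq 4$. The only difference is that the paper does not cite the neat-embedding/flat-representation correspondence as a black box but carries out the load-bearing step you flagged as the genuine obstacle: assuming $\C\subseteq\Nr_n\D$ with $\D\in\CA_{n+4}$, it passes to canonical extensions to get $\C^+\in S_c\Nr_n\D^+$ with $\D^+$ atomic (even though $\D$ need not be), builds from each atom $x$ of $\D^+$ an $n+4$ dimensional $\Lambda$-hypernetwork $N_x$ via the substitution operators ${\sf s}_{\bar a}$, checks that $\{N_x:x\in\At\D^+\}$ is an $n+4$ dimensional hyperbasis, and then invokes the mosaic-patching construction of Lemma \ref{step} to obtain an $n+4$ smooth (hence flat) representation of $\C$, contradicting Theorem \ref{smooth}.
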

\begin{proof}
The first part.  Assume, for contradiction,  that $\C\in S\Nr_n\CA_{n+4}$; let $\C\subseteq \Nr_n\D$.
Then $\C^+\in S_c\Nr_n\D^+$, and $\D^+$ is of course atomic. We show that $\C^+$ has an $n+4$ dimensional hyperbasis,
from which we infer that it has an $n+4$ smooth representation, and so does $\C$ (in the obvious way)
which contradicts the previous
theorem.

First note that for every $n\leq l\leq m$, $\Nr_l\D^+$ is atomic.
Indeed, if $x$ is an atom in $\D^+$, and and $n\leq l<m$,
then ${\sf c}_{l}\ldots {\sf c}_{m-l+1}x$ is an atom in $\Nr_l\D^+$,
so if $c\neq 0$ in the latter, then there exists $0\neq a\in \At\D^+$,
such that $a\leq c$, and so ${\sf c}_{l}\ldots {\sf c}_{m-1+1}a\leq {\sf c}_{l}\ldots {\sf c}_{m-1+1}c=c$.

Let $\Lambda=\bigcup_{k<n+3}\At\Nr_k\D^+$, and let $\lambda\in \Lambda$.
In this proof we follow closely section 13.4 in \cite{HHbook}. The details are omitted because they are identical
to the corresponding ones in op.cit.
For each atom $x$ of $\D$, define $N_x$, easily checked to be an $m$ dimensional   $\Lambda$ hypernetwork, as follows.
Let $\bar{a}\in {}^{n+4}n+4$ Then if $|a|=n$,  $N_x(a)$ is the unique atom $r\in \At\D$ such that $x\leq {\sf s}_{\bar{a}}r$.
Here substitutions are defined as in \ref{subs}.
If $n\neq |\bar{a}| <n+3$, $N_x(\bar{a})$ the unique atom $r\in \Nr_{|a|}\D$ such that $x\leq s_{\bar{a}}r.$
$\Nr_{|a|}\D$ is easily checked to be atomic, so this is well defined.

Otherwise, put  $N_x(a)=\lambda$.
Then $N_x$ as an $n+4$ dimensional $\Lambda$ hyper-network, for each such chosen $x$ and $\{N_x: x\in \At\C\}$
is an $n+4$ dimensional $\Lambda$ hyperbasis.
Then viewing those as a saturated set of mosaics, one can can construct, like in the proof of theorem \ref{step} a complete
$n+4$ smooth representation of $M$ of $\C$.
But this contradicts theorem \ref{smooth} which excludes even the existence of 
$n+4$ flat representations.
\end{proof}

\subsection{A different view, blowing up and blurring a finite rainbow cylindric algebra}

In the following we denote the rainbow algebra $R(\Gamma)$ defined in \cite[definition 3.6.9]{HHbook2}
by $\CA_{\sf G, \sf R}$ where $\sf R=\Gamma$
is the graph of reds, which will be a complete irreflexive graph, and
$\sf G$ the indices greens with subscript $0$.
It will be denoted by $\PEA_{\sf G, \sf R}$ if we count in the polyadic operations.

The idea of a blow up and blur construction was outlined 
above for cylindric algebras, but the same idea can and indeed does work for relation algebras, too.
Concrete instances of this technique exists in the literature, see e.g.
Andr\'eka et all \cite[theorem 1.2]
{ANT}, approached in theorem \ref{blurs}.

This construction has affinity to the blow up and blur construction 
for relation algebras, witness  \cite[lemma 17.32, 17.34, 17.35, 17.36]{HHbook}, 
to be also approaced in theorem \ref{decidability} below. In fact, the essential idea is the same.

The major difference, in fact the {\it only} significant difference, between the two constructions,
is that in the former  a Maddux finite algebra is used, while in the latter case
a finite rainbow algebra is used.

In the former case, we can only infer that the complex algebra is not representable,
in the latter  case we can know and indeed we can prove more. 

The reason basically is  that non-representability
of the Maddux algebra on finite sets, depends on an uncontrollable big Ramsey number (that is a function in the dimension),
while for  rainbow algebras we can control {\it when the algebra stops to be representable} by \pa\ s moves.
\pa\ forces a win by using greens, it is precisely this number, that  determines the extra dimensions in which the complex
algebras `stop to be neatly embeddable'. 
More succintly,  it is precisely the point 
at which the greens  outfit the reds.

What can be done here is substitute a Maddux  finite relation algebra used by Andr\'eka and N\'emeti, by the rainbow algebra mentioned used
by Hirsch and Hodkinson and using the arguments of Andr\'eka and N\'emeti, to get a stronger result.
Or alternatively we can hope to lift Hirsch and  Hodkinsons
construction from relation algebras whose atoms are colours to cylindric algebras
whose atoms are coloured graphs.

But, while the algebras used in \cite{ANT} have $n$ dimensional cylindric basis,
the relation algebra obtained by Hirsch and Hodkinson  does not have an $n$ dimensional cylindric basis, except for $n=3$,
and using this it was proved that only for the lowest value of $n$ namely $n=3$, the class $S\Nr_3\CA_k$, $k\geq 6$
is not closed under completions \cite{tarek}.

A substitute of $n$ dimensional cylindric basis that serves in this context is surjections from $n$
to coloured graphs, more succintly atoms, 
used in rainbow constructions for cylindric algebras.

We have two relation algebras, a Maddux one, and a rainbow one, that do almost the same thing  at least
they provide weakly representable
atom structures that are not strongly representable. One renders a cylindric base, but witnesses 
non-atom canonicity only in infinite dimensions, the other does not have a cylindric base, 
but it has the supreme advantage that it witnesses non atom-canonicity in {\it finite many} extra dimensions, determined
by the number of pebbles in an atomic game. 

We want a finite  $n$ dimensional cylindric algebra,  preferably a polyadic equality one,
that also witnesses non atom-canoncity in extra dimensions using a hopefully simple atomic 
$n$ pebble  game, too. Rainbows offer solace here.

And indeed, we define a  rainbow {\it polyadic equality rainbow algebra} that fits the bill.
But  we first highlight the sketch given in the intoduction of how to construct 
rainbow polyadic equality atom structures formulating it as a definition.

\begin{definition} Let $\Gamma$ be a graph. Then we denote the rainbow algebra $R(\Gamma)$, endowed with substitutions by
by $\PEA_{{\sf G}, \Gamma}$ where $\sf G$ is the set of greens, and the accessibility relation
corresponding to the polyadic operations (of transpositions)  are defined (in the notation of \cite[definition 3.6.3]{HHbook2}),
by $R_{{\sf s}_{[ij]}}=\{([f], [g]):  f\circ [i,j]=g\}$.
\end{definition}
Games are played on coloured graphs corresponding to networks as defined in \cite{HH}. However, in the polyadic equality case
networks have to
be symmetric; they have to satisfy the additional condition that
${\sf s}_{[i,j]}N(\bar{x})=N(x\circ [i,j])$. This does not affect the translation to graphs
nor does it affect winning strategies.
The polyadic extra information is coded in the networks not the moves 
of the game. 

One can define rainbow atom structures for other algebras, too, like $\Sc$ and $\PA$, but we will 
not need to do this here. However, we will have occasion, in fact several ones, 
to take $\Sc$ or, for that matter,  $\PA$ reducts of a constructed 
rainbow polyadic equality algebra, $\PEA_{A,B}$ constructed on two structures.
This is different than starting off by defining an $\Sc$ say,  rainbow 
algebra, by defining onl74y the accessibility relations 
corresponding to cylindrifiers and substitutions corresponding to replacements. 
If we start with two structure $A$, $B$, then the complex $\Sc$ rainbow algebra
$\Sc_{A,B}$ will be, in general, 
smaller than $\Rd_{sc}\PEA_{A,B}$.

\begin{theorem}\label{blowupandblur}
\begin{enumarab}
\item  For any finite $n>2$, for any $\K$ between $\Sc$ and $\PEA,$
the variety $S\Nr_n\K_{n+4}$ is not atom-canonical. In more detail there exists  an atomic 
countable $\A\in \PEA_n$ such $\Tm\At\A\in \sf RPEA_n$,  but
$\Rd_{sc}\Cm\At\A\notin S\Nr_n\Sc_{n+4}$, and $\Rd_{df}\Cm\At\A$ is not in 
$\sf Rdf_n$ In particular, $S\Nr_n\K_{n+k}$ is not atom-canonical for any $k\geq 4$ and any 
class $\K$ between 
$\Sc_n$ and $\PEA_n$ 

\item Even more,  for $n>2$, the variety ${\sf CB_{n+4}}$ is not atom-canonical, 
and this results extends to all considered cylindric-like algebras dealt with in this paper
by the  obvious modifications of its 
definition.
\end{enumarab}
\end{theorem}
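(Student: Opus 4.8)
The plan is to implement the blow-up-and-blur strategy outlined in the text, using as the seed a \emph{finite} rainbow polyadic equality algebra rather than a Maddux relation algebra. First I would fix $n>2$ finite and let $\C=\PEA_{\sf G,\sf R}$ be the rainbow $\PEA_n$ built over the finite structures $\sf G$ (greens, of size sufficient to give \pa\ a winning strategy in an $n+4$-pebble atomic game) and $\sf R$ (a finite complete irreflexive red graph), chosen exactly so that $\C\notin S\Nr_n\Sc_{n+4}$: this should follow from a \ws\ for \pa\ in the relativized game $G^{n+4}_r(\C)$ for large enough $r$, by a pigeon-hole/red-clique argument of the same flavour as in Theorem~\ref{smooth} and Corollary~\ref{can} — an $n+4$ red clique over a $\y$-labelled base forces an inconsistent triple of reds. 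The point of using the rainbow seed (as opposed to a Maddux algebra) is precisely that this failure occurs in \emph{finitely} many extra dimensions, controlled by the number of greens, rather than via an uncontrollable Ramsey number.

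Next I would carry out the blow-up: split each atom of $\C$ into infinitely many copies to obtain an atom structure $\At$, using a finite set of blurs. I would let $\A=\Tm\At$ be the term algebra (the subalgebra of $\Cm\At$ generated by the atoms) and verify the two halves of the dichotomy. For the positive half, $\Tm\At\in\sf RPEA_n$: here the blurs, which behave like non-principal ultrafilters, are used as extra colours to build a representation of $\Tm\At$ — more precisely a complete representation of its canonical extension $\Tm\At^+$, whose atoms live in the principal ultrafilters while the finitely many non-principal ones are realized as blur-colours; this is where closure of $\RCA_n$ (hence $\sf RPEA_n$) under the relevant operations, and the combinatorics of the blurs, come in. For the negative half, $\Rd_{sc}\Cm\At\notin S\Nr_n\Sc_{n+4}$: since $\Cm\At$ is complete, each element $c$ of the original $\C$ is recovered as the join of its splitted copies, so $\C$ embeds into $\Cm\At$ as a (necessarily complete) subalgebra; as $S\Nr_n\Sc_{n+4}$ is closed under subalgebras and $\C\notin S\Nr_n\Sc_{n+4}$, neither is $\Cm\At$, and the same for its $\Sc$-reduct. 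The $\Df$-reduct statement follows because $\At$ is generated by atoms of dimension $<n$, so representability of $\Rd_{df}\Cm\At$ would lift to a complete representation of $\Cm\At$, contradicting the above — exactly as in the last paragraph of the proof of Theorem~\ref{smooth}. Finally, since $\Tm\At$ is representable and $\Cm\At$ is its \d\ completion, the same example witnesses non-atom-canonicity of $S\Nr_n\K_{n+k}$ for every $k\ge4$ and every $\K$ between $\Sc$ and $\PEA$, because $S\Nr_n\K_{n+k}\subseteq S\Nr_n\Sc_{n+4}$.

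For item (2), ${\sf CB}_{n+4}$, I would run the identical construction but now invoke Theorem~\ref{can2}: $\Cm\At\in{\sf CB}_{n,n+4}$ would give (via the $n+4$-dimensional basis criterion, since $\Cm\At$ is complete and atomic) an $n+4$ square relativized representation of $\C$, hence an $n+4$ flat one is \emph{not} needed — but in fact the argument of Corollary~\ref{can} shows membership in $S\Nr_n\CA_{n+4}$ would follow from having an $n+4$-dimensional hyperbasis, and having a mere basis already contradicts the failure forced by \pa's \ws; one must check that the weaker $n+4$ \emph{square} representation still suffices to derive the red-clique contradiction, which it does because the inconsistent-triple-of-reds argument only uses square (not flat) witnesses for the single cylindrifier moves producing the cone apexes. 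The remark about "all considered cylindric-like algebras" is then a routine transfer, replacing $\PEA$-operations by the appropriate reduct signature throughout.

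The main obstacle I anticipate is the positive half — showing $\Tm\At\in\sf RPEA_n$ — i.e.\ designing the blurs so that the blown-up atom structure really is (weakly) representable while the finite rainbow $\C$ still embeds completely into $\Cm\At$. This requires a careful bookkeeping of which joins of splitted atoms are "precarious" (present in $\Cm\At$ but not in $\Tm\At$, so that $\C$ does not embed into $\Tm\At$) versus which finite-cofinite joins survive in $\Tm\At$, together with a step-by-step / saturated-mosaics construction of the representation of $\Tm\At^+$ using the blur-colours in the style of Lemma~\ref{step}. The secondary technical point is verifying that \pa's \ws\ in the $n+4$-pebble game on $\C$ genuinely rules out an $n+4$ \emph{square} (not just flat) representation for part (2); I expect this to go through but it needs the explicit cone/red-clique computation from Theorem~\ref{smooth} to be re-examined under the weaker squareness hypothesis.
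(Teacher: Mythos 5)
Your proposal is correct and follows essentially the same route as the paper: the paper's $\At$ is exactly the blow‑up of the finite rainbow algebra $\PEA_{n+2,n+1}$ (reds split into infinitely many copies, one shade of red serving as the blur for representing the term algebra), the seed embeds into $\Cm\At$ by sending each red graph to the join of its copies, and non‑membership follows from \pa's \ws\ in the relevant pebble game together with closure of $S\Nr_n\Sc_{n+4}$ (resp.\ ${\sf CB}_{n,n+4}$) under subalgebras; the $\Df$‑reduct claim is handled, as you say, via generation by elements of dimension set $<n$. Your worry about part (2) is resolved just as you suspect: \pa\ wins the pure cylindrifier (basis) game $G_{\omega}^{n+4}$ without re‑using pebbles, so no $n+4$‑dimensional basis, hence no $n+4$ square representation, is possible.
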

\begin{proof}
\begin{enumarab}
\item This is proved only for $\CA_n$ in \cite{can}. Here we generalize the result to any $\K$ between $\Sc$ and $\PEA$.
The idea of the proof is essentially the same. Let $\At$ be the rainbow atom structure in \cite{Hodkinson} except that we have $n+2$ greens and
$n+1$ reds, that is, the rainbow atom structure dealt with in \ref{smooth}.

The rainbow signature now consists of $\g_i: i<n-1$, $\g_0^i: i\in n+2$, $\r_{kl}^t: k,l\in n+1$, $t\in \omega$,
binary relations and $\y_S$ $S\subseteq n+2$,
and a shade of red $\rho$; the latter is outside the rainbow signature,
but it labels coloured graphs during the game, and in fact \pe\ can win the $\omega$ rounded game
and build the $n$ homogeneous model $M$ by using $\rho$ when
she is forced a red.

Then $\Tm\At$ is representable; in fact it is representable as a polyadic equality algebra;
this can be proved exactly as in \cite{Hodkinson}; by defining the polyadic operations of the set algebra $\A$
completely analogous to the one constructed in theorem \ref{hodkinson}
by swapping variables. Recall that $\At\A=\At$ and that $\Tm\At=\Tm\At\A\subseteq \A$.

The atoms of $\Tm\At\A$ are coloured graphs whose edges are not labelled by
the one shade of red  $\rho$; it can also be viewed as a set algebra based on $M$
by relativizing semantics discarding assignments whose edges are labelled
by $\rho$. A coloured graph (an atom) in $\PEA_{n+2, n+1}$
is one such that at least one of its edges is labelled red.

Now $\PEA_{n+2, n+1}$ embeds into $\Cm\At\A$,
by taking every red graph to the join of its copies, which exists because $\Cm\At\A$ is complete
(these joins do not exist in the (not complete) term algebra; only joins of finite or cofinitely many reds do, hence it serves non representability.)
A copy of a red graph is one that is isomorphic to this graph, modulo removing superscripts of reds.

Another way to express this is to take every coloured graph to the interpretation of an infinite disjunct of the $\sf MCA$ formulas
(as defined in \cite{Hodkinson}), and to be dealt with below; such formulas define coloured graphs whose edges 
are not labelled by the shade of red,
hence the atoms, corresponding
to its copies, in the relativized semantics; this defines an embedding,  because $\Cm\At\A$ is isomorphic to
the set algebra based on the same relativized semantics
using $L_{\infty,\omega}^n$ formulas in the rainbow signature.
(Notice that that the rainbow theory itself \cite[definition 3.6.9]{HHbook2} is only first order because we have only finitely
many greens).

Here again $M$ is the new  homogeneous model constructed
in the new rainbow signature, though the construction is the
same \cite{Hodkinson}.
But \pa\ can win a certain finite rounded game, namely $F^{n+2}$ to be defined in a while, theorem \ref{neat},
\cite{can} on $\Rd_{\Sc}\PEA_{n+1, n+2},$ hence it is
outside $S\Nr_n\Sc_{n+4}$ and so is $\Cm\At\A$,  because the former is embeddable in the latter
and $S\Nr_n\Sc_{n+4}$ is a variety; in particular, it is closed
under forming subalgebras.

\item Note that in the previous proofs, the \d\ completion of $\A$, namely
the complex algebra $\Cm\At\A$ can, in principal, be in $S\Nr_n{\sf D_{n+4}}$,
so that it could well posses an {\it $n+4$ square relativized} representation, 
which leaves the question whether ${\sf CB}_{n,n+4}$ is atom-canonical unsettled, so far.

Now we show that it does  not.

This part is entirely new. It is very easy to show that \pa\ can win the game $G_{\omega}^{n+4}$, without needing to re-use his 
pebbles, so that $\Rd_{ca}\PEA_{n+2, n+1}$ is actually not in 
${\sf CB}_{n,n+4}$ which is larger (strictly) than $S\Nr_n\CA_{n+4}$.
We are now done with this more difficult, and indeed stronger,  case as well,  
because as above,  $\PEA_{n+2, n+1}$ embeds into $\Cm\At\A$, and ${\sf CB}_{n,n+4}$ 
is a variety. 
The proof works for any $\K$ between $\Sc$ and $\PEA$ here.
\end{enumarab}
\end{proof}
For $2<m<n$, the existence of a basis of dimension $n$ for a $\CA_m$ captures 
$n$ square relativized representations while the existence of 
$n$ dimensional hyperbasis captures $n$ smooth relativized representations.

The definition of both basis and hyperbasis involves a set of labels. 
In the former case, it can be shown that such labels are superfluous, they carry no extra information.
In the second case the presence of labelled hyperedges makes a difference.
Such hyperlabels can be infinite, in which case there could be finite algebras wih no finite hyperbasis;
and these necessarily do not have smooh relativized representations, witness therem \ref{smoothcompleteness2}.
If we take the set of labels to be a one element set, 
then the following cannot help but to spring to mind.

One adds the amalgamation property to the definition of basis formulated without
labels, requiring that any overlapping networks
have an amalgam inside the basis.
We get what we call, following Maddux, cylindric basis. However, Maddux worked with relation algebras.

An $n$ pebbled $\omega$ 
rounded game can now be devised such that \pe\ has a \ws\ in this game 
on an atomic $\A$  iff $\A$ has an $n$ flat representation  by allowing amalgamation moves.

We leave the details to the reader, who might find \cite[def 12.11, 12.17, 12.26, the last definition 
for a more general game played on $n$ $m$ wide hyperbasis]{HHbook}
helpful.

\section{Omitting types} 

\subsection{ In clique guarded semantics}

We start by two new algebraic results that will be used 
to show that the omitting types theorems fails even if
we consider clique guarded semantics. The first is conditional; the condition will be fullfilled later on, see second
item of theorem \ref{main}.

In our next theorem we give only the idea of  proof referring to \cite{ANT} for the 
detailed argument. Later we will produce some of the details, cf theorem \ref{main}.
For the definition of $n+k$ complex blur the reader is referred to \cite[definition 3.1]{ANT};
this involves a set $J$ of blurs and a ternary relation
$E$.

\begin{theorem}\label{blurs} Let $k\geq 1$. Assume that there exists a finite relation algebra $\R\notin S\Ra\CA_{n+k+1}$
that has  $n+k$  complex blur  $(J, E)$. Let $\At$ be the infinite atom structure obtained by blowing up and blurring $\R$, in the sense of
\cite[p.72]{ANT}.
Then $\Mat_{n+k}\At$ is an $n+k$ dimensional cylindric basis. Furthermore there exists polyadic equality algebras $\C_n$ and
$\C_{n+k}$ such that  $\Tm\Mat_n\At\subseteq \C_n$ and $\Tm\Mat_{n+k}\At\subseteq \C_{n+k}$, $\C_n=\Nr_n\C_{n+k}$ and finally
$\Rd_{sc}\Cm\At\notin S\Nr_n\Sc_{n+k+1}$, and $\Rd_{df}\Cm\At$ is not representable.
\end{theorem}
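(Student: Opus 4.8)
The plan is to follow the blow-up-and-blur recipe of Andr\'eka--N\'emeti \cite{ANT}, adapted to the polyadic equality signature and parametrised by the extra $k$ dimensions. First I would recall the construction: given the finite relation algebra $\R$ with $n+k$ complex blur $(J,E)$, one splits each atom of $\R$ into $\omega$-many copies, indexed so that the ternary relation $E$ and the set of blurs $J$ control which triples of split-atoms are declared consistent; the resulting (infinite) relation atom structure is $\At$. The first task is to verify that $\Mat_{n+k}\At$ -- the set of $(n+k)$-dimensional basic matrices over $\At$ -- is an $(n+k)$-dimensional cylindric basis; this is exactly where the defining clauses of ``$(n+k)$ complex blur'' are tailored to do their job, so the verification amounts to checking the amalgamation/closure conditions of a cylindric basis against the combinatorics of $E$ and $J$, following \cite[p.72]{ANT} essentially verbatim but bookkeeping $n+k$ in place of the generic dimension. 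Since for finite $n$ polyadic equality algebras agree with quasi-polyadic equality ones, the symmetric (substitution) closure of this basis is automatic once the relation atom structure is self-converse, so $\Mat_{n+k}\At$ is in fact a polyadic equality $(n+k)$-dimensional basis.

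Next I would build the two algebras. Set $\C_{n+k}$ to be the subalgebra of $\Cm\Mat_{n+k}\At$ generated by the finite-cofinite sets of matrices (equivalently, a suitable $\PEA_{n+k}$ containing $\Tm\Mat_{n+k}\At$), and let $\C_n=\Nr_n\C_{n+k}$. The key algebraic point is the ``neat reduct matches lower-dimensional matrices'' identity: one shows $\Tm\Mat_n\At\subseteq\C_n=\Nr_n\C_{n+k}$ by exhibiting, for every $n$-dimensional basic matrix, the element of $\C_{n+k}$ that is fixed by the cylindrifiers $\cyl i$ for $n\le i<n+k$ and restricts to it, and conversely showing every such fixed element arises this way. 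This is the step where the blurs earn their keep a second time: the non-principal ultrafilters coded by $J$ are what allow the term algebra $\Tm\Mat_n\At$ (hence also $\Tm\Mat_{n+k}\At$, and hence $\C_n\subseteq\Nr_n\C_{n+k}$) to be \emph{representable} as a polyadic equality algebra, via a representation that uses the blurs as colours on the remaining (non-principal) ultrafilters -- cf. the treatment of the term algebra in theorem \ref{hodkinson} and the discussion of blow-up-and-blur preceding theorem \ref{blowupandblur}. So $\Tm\Mat_n\At\in\sf RPEA_n$, and everything sits inside $\sf RCA_n$ at the level of the term algebra.

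For the negative half -- $\Rd_{sc}\Cm\At\notin S\Nr_n\Sc_{n+k+1}$ and $\Rd_{df}\Cm\At$ not representable -- I would run the standard argument: since $\R$ embeds into $\Cm\At$ (the split copies of an atom join to that atom in the \emph{complete} algebra $\Cm\At$, though not in $\Tm\At$), and since $\R\notin S\Ra\CA_{n+k+1}$ by hypothesis, one gets $\Ra\Cm\Mat_{n+k}\At\supseteq$ (a copy of) $\Cm\At$, whence $\Cm\Mat_{n+k}\At\notin S\Nr_n\CA_{n+k+1}$; transferring this down through $\Cm\Mat_n\At\cong\Cm\At$ (as a cylindric, indeed $\Sc$, atom structure, using that $\Mat_n\At$ is the $n$-dimensional matrix structure over $\At$) and using that $S\Nr_n\Sc_{n+k+1}$ is a variety closed under subalgebras gives $\Rd_{sc}\Cm\At\notin S\Nr_n\Sc_{n+k+1}$. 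For the $\Df$ reduct: if $\Rd_{df}\Cm\At$ were representable it would be completely representable (the atom structure consists of matrices, and by the dimension-restricted generation argument as in theorem \ref{hodkinson} a representation of a diagonal-free reduct lifts to a complete representation of the whole algebra), which would induce a representation of $\Cm\At$ itself, contradicting $\R\notin\sf RRA$ (a consequence of $\R\notin S\Ra\CA_{n+k+1}$ together with $\R$ having an $(n+k)$ complex blur, which forces $\R$ outside $\sf RRA$).

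The main obstacle I anticipate is the identity $\C_n=\Nr_n\C_{n+k}$ together with the representability of $\Tm\Mat_n\At$: one must choose the intermediate algebras $\C_n,\C_{n+k}$ carefully (large enough to contain the relevant term algebras, small enough that the neat-reduct equation holds on the nose and that the representation via blurs goes through), and the delicate combinatorial heart is checking that the blurs genuinely provide a (complete) representation of the canonical extension of $\Tm\Mat_n\At$ while the precarious infinite joins that realise $\R$ inside $\Cm\At$ are \emph{absent} from $\Tm\At$ -- this is precisely the tension exploited in \cite{ANT} and I would import that argument rather than reprove it, flagging the places where the polyadic operations and the passage from $n$ to $n+k$ require the obvious (but not entirely cosmetic) adjustments.
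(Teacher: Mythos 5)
Your proposal follows essentially the same route as the paper: the paper's own proof is precisely a sketch of the Andr\'eka--N\'emeti blow-up-and-blur construction applied to $\R$ in place of the Maddux algebra, with the two partitions playing exactly the roles you describe (one giving the embedding of $\R$ into $\Cm\At$ via infinite joins of split copies, hence non-membership in $S\Ra\CA_{n+k+1}$; the other making the term algebra representable via the finitely many blurs as non-principal ultrafilters), and with the cylindric basis and the neat-reduct identity $\C_n=\Nr_n\C_{n+k}$ extracted from the complex-blur conditions just as you indicate. The only cosmetic difference is that the paper also remarks on making the algebra one-generated via Maddux's techniques, which is not part of the statement you were asked to prove.
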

\begin{proof} Exactly like the proof in \cite{ANT} by blowing up and blurring $\R$ instead of blowing up and blurring
the Maddux algebra $\M$ defined on p.84 \cite[theorem 1.2, theorem 3.2, lemma 4.3, lemma 5.1]{ANT}. 
Here we give an idea of the proof which is a blow up and blur construction, a very indicative term put forward by Andr\'eka and N\'emeti. 
For the technical details one is referred to the original paper \cite{ANT}, or to the sketch in \cite{Sayed}, and for some of the details, not all, to
theorem \ref{main}.

One starts with the finite relation algebra $\R$.
Then this algebra is blown up and blurred. It is blown up by splitting the atoms each to infinitely many.
It is blurred by using a finite set of blurs or colours $J$. This can be expressed by the product $\At=\omega\times \At \R\times J$,
which will define an infinite atom structure of a new
relation algebra. One can view such a product as a ternary matrix with $\omega$ rows, and for each fixed $n\in \omega$,  we have the rectangle
$\At \R\times J$.
Then two partitions are defined on $\At$, call them $P_1$ and $P_2$.
Composition is defined on this new infinite atom structure; it is induced by the composition in $\R$, and a ternary relation $E$
on $\omega$, that synchronizes which three rectangles sitting on the $i,j,k$ $E$ related rows compose like the original algebra $\R$.
This relation is definable in the first order structure $(\omega, <)$. 

The first partition $P_1$ is used to show that $\R$ embeds in the complex algebra of this new atom structure, so
the complex algebra cannot be in $S\Ra\CA_{n+k+1}.$

The second partition $P_2$ divides $\At$ into $\omega$ sided finitely many rectangles, each with base $W\in J$,
and the the term algebra over $\At$, are the sets that intersect co-finitely with every member of this partition.
On the level of the term algebra $\R$ is blurred, so that the embedding of the small algebra into
the complex algebra via taking infinite joins, do not exist in the term algebra for only finite and co-finite joins exist
in the term algebra.

The term algebra is representable using the finite number of blurs. These correspond to non-principal ultrafilters
in the Boolean reduct, which are necessary to
represent this term algebra, for the principal ultrafilter alone would give a complete representation,
hence a representation of the complex algebra and this is impossible.
Thereby, in particular,  an atom structure that is weakly representable but not strongly representable is obtained.

Because $(J, E)$ is a complex set of $n$ blurs, this atom structure has an $n$- dimensional cylindric basis, and so the $n$
basic matrices form an atom structure that is also only weakly representable.

The resulting $n$ dimensional cylindric term algebra obtained is subalgebra of a $k$ 
neat reduct that is not completely
representable, furthermore, its complex algebra is not in $S\Nr_n\CA_{n+k+1}$. 
To make the algebra  one generated one uses Maddux's combinatorial techniques,
and this entails using infinitely many ternary relations.
\end{proof}
We note that $k$ cannot be equal to $0$,
because Andr\'eka provided a Sahlqvist axiomatization of $S\Nr_n\CA_{n+1}$, for any finite $n$,
hence the latter is necessarily
atom-canonical. Below we shall give examples of such relations algebras. These algebras will be chosen to have
$n+k$ dimensional hyperbasis, but not $n+k+1$ dimensional ones.

We write $S_c\K$ for the class of complete subalgebras of algebras 
in $\K$. Note that $\K\subseteq S_c\K$.

\begin{theorem}\label{neat11} Let $n$ be finite $>2$. Any class $\K$, such that $\K$ contains
the class of completely representable algebras and is contained in $S_c\Nr_n\CA_{n+3}$ is not
elementary. In particular, the class of completely representable $\CA_n$s is not elementary. 
\end{theorem}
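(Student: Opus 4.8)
The plan is to prove non-elementariness via a Keisler--Shelah style ultrapower argument: I would construct a single countable atomic algebra $\A$ which \emph{is} completely representable (hence lies in the lower class $\K$, and a fortiori in $S_c\Nr_n\CA_{n+3}$ since complete representability gives a complete neat embedding into $\omega$ extra dimensions, which restricts to $n+3$), together with an ultrapower $\A^* = \Pi_U\A$ (equivalently, a suitable elementarily equivalent algebra) which fails to be in $S_c\Nr_n\CA_{n+3}$, hence cannot be in any class $\K$ sandwiched between the two. Since $\A\equiv\A^*$, no sentence can separate the two classes, so neither $\K$ nor the class of completely representable algebras can be elementary. The natural vehicle is a rainbow construction $\CA_{\G,\Gamma}$ of the kind already used in theorems \ref{hodkinson}, \ref{smooth} and \ref{blowupandblur}: take the greens indexed by $\omega$ (so that the rainbow ``theory'' is genuinely $L_{\omega_1,\omega}$ and not first order) and the reds also infinite, so that the associated graph game is an infinite-rounded \ef\ forth game.

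The key steps, in order, would be: (1) fix the rainbow signature with $|\G| = |\sf R| = \omega$ and build the atomic $\CA_n$ (better, $\PEA_n$) $\A$ from the $n$-homogeneous coloured-graph model $M$ exactly as in the proof of theorem \ref{hodkinson} --- $\A$ is the set algebra on $W\subseteq{}^nM$ whose atoms are the $\sf MCA$-formulas, i.e.\ surjections onto finite coloured graphs whose edges avoid the shade of red; (2) show \pe\ has a winning strategy in the $\omega$-rounded atomic game $G_\omega(\A)$ on atomic networks --- her ``try white, then yellow-shades, then red, never green'' strategy works because the greens are unboundedly many, so \pa\ can never force an inconsistent red triple in finitely many rounds; a winning strategy for \pe\ in $G_\omega$ yields (via the usual back-and-forth patching / the mosaic argument behind theorem \ref{step}) a complete representation of $\A$, so $\A\in\K$; (3) pass to $\A^* = \Pi_U\A$ over a non-principal ultrafilter on $\omega$: in $\A^*$ the greens effectively ``run out'' relative to the reds in the sense that \pa\ now has a winning strategy in the finite-rounded game $F^{n+3}$ (the $n{+}3$-pebbled game with amalgamation/cylindrifier moves used in theorem \ref{neat}/\cite{can}), because the ultrapower adds a new green tint that lets him bombard \pe\ with cones on a common base and force an inconsistent triple of reds within $n+3$ nodes; (4) conclude from \pa\ winning $F^{n+3}$ that $\A^*\notin S_c\Nr_n\CA_{n+3}$ --- this is the standard translation between the neat embedding game and the $S_c\Nr$ hierarchy, as in the previous section --- whence $\A^*\notin\K$; (5) since $\A\equiv\A^*$ and $\A\in\K$, $\A^*\notin\K$, the class $\K$ is not closed under elementary equivalence, so not elementary. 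Taking $\K$ to be the class of completely representable $\CA_n$s itself gives the final sentence.

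The main obstacle I expect is step (3): getting the \emph{right} rainbow parameters so that \pe\ wins the infinite atomic game $G_\omega$ on $\A$ (forcing complete representability) while \pa\ wins the finite $n{+}3$-pebble game $F^{n+3}$ on an ultrapower. The delicate point is the combinatorial bookkeeping connecting the number of pebbles/rounds in the private \ef\ forth game on greens-and-reds with the number of nodes/rounds in the coloured-graph game on the algebra --- the excerpt itself flags (around theorems \ref{squarerepresentation}, \ref{completerepresentation}) that there is no general ``cylindric rainbow lifting theorem'', so the translation must be checked by hand. Concretely one must verify that $n+3$ (and not $n+2$ or $n+4$) is exactly the threshold: with $n+3$ pebbles \pa\ can set up a $\y_{n+1}$-based cone configuration whose apex-to-apex edges, being reds drawn from the ultrapower's enriched red palette, are forced into an inconsistent triangle, mirroring the clique argument in the proof of theorem \ref{smooth} but now with $n+3$ in place of $n+4$ because we only demand failure of the \emph{complete} neat embedding ($S_c\Nr_n$) rather than of an $n{+}4$ flat representation. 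A secondary subtlety is making sure $\A$ is countable and that its complete representability is genuine (not merely $\omega$-smooth), but that follows from the coincidence of relativized and classical $L_n$-semantics established in the proof of theorem \ref{hodkinson} together with the Orey--Henkin omitting-types argument used there for the strongly-representable variant.
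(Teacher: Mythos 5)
Your overall architecture (exhibit two elementarily equivalent algebras, one in $\K$ and one outside $S_c\Nr_n\CA_{n+3}$) is the right one, but the direction of your ultrapower argument is reversed, and it cannot be repaired. You propose to start from a completely representable $\A$ and show that an ultrapower $\A^*=\Pi_U\A$ falls outside $S_c\Nr_n\CA_{n+3}$. This is impossible: the class of completely representable $\CA_n$s is pseudo-elementary (it is the class of reducts of models of a two-sorted first order theory, cf. \cite{HHbook}), and pseudo-elementary classes are closed under ultraproducts. Hence $\A^*$ is again completely representable, so $\A^*\in {\sf CRA}_n\subseteq S_c\Nr_n\CA_{\omega}\subseteq S_c\Nr_n\CA_{n+3}\,$, and your steps (3)--(5) collapse. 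Your game-theoretic intuition in step (3) is also backwards: passing to an ultrapower makes the structure more saturated, which \emph{helps} \pe\ (winning strategies in all finite-round games are first order conditions, so they persist, and in a suitable countable elementary subalgebra of the ultrapower they amalgamate into a \ws\ for the $\omega$-round game); an ultrapower never hands \pa\ a new winning strategy he did not already have.

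The witness has to be built the other way round: take a single atomic $\A$ on which \pe\ wins every \emph{finite}-rounded atomic game $G_k$ (so that, by ultrapowers plus an elementary chain argument, $\A$ is elementarily equivalent to a countable completely representable $\B$, which lies in $\K$), while \pa\ wins the $\omega$-rounded game $F^{n+3}$ with only $n+3$ pebbles that he may re-use (so that $\A\notin S_c\Nr_n\CA_{n+3}$, hence $\A\notin\K$). Your parameter choice $|\G|=|{\sf R}|=\omega$ does not create the asymmetry needed for this: the operative device is the rainbow algebra $\CA_{\N^{-1},\N}$, where the greens are indexed by $\N$ with the \emph{reverse} order and the reds by $\N$, and the forbidden triple $(\g_0^i,\g_0^j,\r_{kl})$ is relaxed exactly when $\{(i,k),(j,l)\}$ is order preserving from $\N^{-1}$ to $\N$. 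Then \pe\ survives any finite number of rounds because a finite partial order-preserving map from $\N^{-1}$ into $\N$ can always be extended finitely often; but in $F^{n+3}$ \pa\ repeatedly re-uses a node to play cones with green tints $\g_0^0,\g_0^{-1},\g_0^{-2},\dots$ on a common base, forcing \pe\ to label apex-to-apex edges by reds whose $\N$-indices strictly decrease --- an infinite descent in $\N$, which is impossible. Finally, note that complete representability of the countable $\B$ from a \ws\ for \pe\ in $G_\omega$ genuinely needs countability (cf. theorem \ref{longer}), which the elementary chain argument supplies; your remark that this ``follows from the coincidence of relativized and classical semantics'' is not the relevant point here.
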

\begin{proof} Let $G_k$ be 
the usual atomic game played on networks with $k$ rounds \cite[definition 3.3.2]{HHbook2} and an unlimited number 
of pebbles, that is, there are no restriction on the number of pebbles. 
Let $\N^{-1}$ denote $\N$ with reverse order, let $f:\N\to \N^{-1}$ be the identity map, and denote $f(a)$ by 
$-a$, so that for $n,m \in \N$, we have $n<m$ iff $-m<-n$. 
We assume that $0$  belongs to $\N$.

Let $\A$ be the rainbow algebra $\CA_{\N^{-1},\N}$, then \pe\ has a \ws\ in $G_k$ for all finite $k\geq n$
hence it is elementary equivalent to a
countable completely representable algebra $\B$, cf. \cite[proposition 30]{HH}.
Strictly speaking the game in the last reference is played on $\CA_{\N,\N}$ if we allow that 
$0\in \N$ (that is on $\CA_{\omega,\omega}$) but the \ws\ here is the same.

Let $F^m$ be the $\omega$ rounded atomic game, $m>n$
except that \pa\ s moves are limited to $m$ pebbles, but he has the option to re-use them.

Then it can be shown that if $\A\in S_c\Nr_n\CA_m$, then \pe\ has a \ws\ in $F^m$; 
the proof of this can be easily destilled from the proof 
of the last item in theorem \ref{maintheorem}.

The game now is played
on coloured graphs \cite[4.3.3 p. 839, lemma 30]{HH}, but the forbidden triple
connecting two greens and one red is changed to
``$(\g^i_0, \g^j_0, \r_{kl})$ unless
${(i, k), (j, l)}$ is an order preserving partial function from
$\N^{-1}\to\N$.''

We show that \pa\ has a \ws\ in $F^{n+3}$, the argument used is the $\CA$ analogue of \cite[theorem 33, lemma 41]{r}.
The difference is that in the relation algebra case, the game is played on atomic networks, but now it is translated to playing on coloured graphs, 
\cite[lemma 30]{HH}.

In the initial round \pa\ plays a graph $\Gamma$ with nodes $0,1,\ldots n-1$ such that $\Gamma(i,j)=\w$ for $i<j<n-1$
and $\Gamma(i, n-1)=\g_i$
$(i=1, \ldots, n-2)$, $\Gamma(0,n-1)=\g_0^0$ and $\Gamma(0,1\ldots, n-2)=\y_{B}$.

In the following move \pa\ chooses the face $(0,\ldots n-2)$ and demands a node $n$
with $\Gamma_2(i,n)=\g_i$ $(i=1,\ldots, n-2)$, and $\Gamma_2(0,n)=\g_0^{-1}.$
\pe\ must choose a label for the edge $(n+1,n)$ of $\Gamma_2$. It must be a red atom $r_{mn}$. Since $-1<0$ we have $m<n$.
In the next move \pa\ plays the face $(0, \ldots, n-2)$ and demands a node $n+1$, with $\Gamma_3(i,n)=\g_i$ $(i=1,\ldots, n-2)$,
such that  $\Gamma_3(0,n+2)=\g_0^{-2}$.
Then $\Gamma_3(n+1,n)$ and  $\Gamma_3(n+1,n-1)$ both being red, the indices must match.
$\Gamma_3(n+1,n)=r_{ln}$ and $\Gamma_3(n+1, n-1)=r_{lm}$ with $l<m$.
In the next round \pa\ plays $(0,1,\ldots, n-2)$ and reuses the node $2$ such that $\Gamma_4(0,2)=\g_0^{-3}$.
This time we have $\Gamma_4(n,n-1)=\r_{jl}$ for some $j<l\in \N$.
Continuing in this manner leads to a decreasing sequence in $\N$.

Now that \pa\ has a \ws\ in $F^{n+3},$ it follows that $\A\notin S_c\Nr_n\CA_{n+3}$, but it is elementary equivalent
to a countable completely representable algebra. Indeed, using ultrapowers and an elementary chain argument,
we obtain $\B$ such  $\A\equiv \B$ \cite[lemma 44]{r},
and \pe\ has a \ws\ in $G_{\omega}$, so by \cite[theorem 3.3.3]{HHbook2}, $\B$ is completely representable.

So if $\K$ is as above, then $\A\notin \K$ but $\B$
is in $\K$, since $\A\equiv \B$, it readily follows that $\K$ is not
elementary.
\end{proof}

Algebraic logic proves useful when it has strong impact on first order logic.
In this connection we now formulate, and prove,  three (negative)
new omitting types theorems, that are consequences of the algebraic results formulated in theorems \ref{can}, \ref{blurs} and 
\ref{neat11}, given in this order.

Let $T$ be a countable consistent first order theory and let $\Gamma$ be a type that is realized in every model of $\Gamma$.
Then the usual Orey Henkin theorem
tells us that this type is necessarily principal, that is, it is isolated by a formula $\phi$.
We call such a $\phi$ an $m$ witness, if $\phi$ is built up of $m$ variables.
We say that $\phi$ is only a witness if is an $m$ witness for a theory containing
$m$ variables.

We denote the set of formulas in a given language by $\Fm$ and for a set of formula $\Sigma$ we write $\Fm_{\Sigma}$ for the Tarski-
Lindenbaum quotient (polyadic)
algebra.

Let $T$ be a consistent  $L_n$ theory, and $m>n$. A model $M$ of $T$ is $m$ smooth if it is the base of an $m$ smooth relativized
representation of the Tarski-Lindenbaum algebra
$\Fm_T$, that is there exists $V\subseteq {}^nM$, where $M$ is the smallest such set,
and an injective homomorphism $f:\Fm_T\to \wp(V)$.
If $T$ is countable then an $\omega$ smooth model is an ordinary model, and if it is consistent, then
it has an $m$ smooth model for every finite $m\geq n$.

For a (possibly relativized) representation $M$, a formula $\phi$,  and an assignment $s\in {}^{\alpha}M$, we write
$M\models \phi[s]$ if $s$ satisfies $\phi$ in $M$.
\begin{definition} Let $T$ be a given $L_n$ theory.
\begin{enumarab}
\item A formula $\phi$ is said to be complete in $T$ iff for every formula $\psi$ exactly one of
$$T\models \phi\to \psi, \\ T\models \phi\to \neg \psi$$
holds.
\item A formula $\theta$ is completable in $T$ iff there there is a complete formula $\phi$ with $T\models \phi\to \theta$.
\item $T$ is atomic iff if every formula consistent with $T$ is completable in $T.$

\item An $n$ smooth model $M$ of $T$ is atomic if for every $s\in V\subseteq {}^{n}M$,
there is a complete formula $\phi$ such that $M\models \phi[s].$
\end{enumarab}
\end{definition}
There is a prevalent misconception that {\it cylindric algebras} of dimension $n$  are suitable for dealing with $L_n$ 
(first order logic restricte to the first $n$
variables)
with its semantical and syntactical notions, whereas in fact, it is the class of polyadic algebras of dimension $n$, that constitute 
the ``real" algebraic counterpart 
of $L_n$. However, the theory of (representable) cylindric algebras $(\RCA_n)$ is far more developed than that 
of polyadic equality algebras.
This is simply not true, because $\CA_n$ corresponds to the so-called {\it restricted version}
of $L_n$ \cite[section 4.3-5.6]{tarski}; here variables occurring in formulas are only allowed to appear in
their natural order, which is to our mind,  an inappropriate and indeed strange  restriction. The motivation was that
in the case of first order logic {\it any} formula is equivalent
to a restricted one, but this is not the case when we truncate the available variables to $n>2$,
and this can indeed be proved.
What corresponds to $L_n$ is rather $\PEA_n$ (substitutions corresponding
to transpositions, considered  as unary connectives, allow arbitrary $L_n$ formulas to be defined from restricted ones).
But in all our needed theorems, fortunately,
we were able to  obtain $\PEA_n$s. Now we can apply our results
to obtain the following theorems on (unrestricted) usual $L_n$.

It turns out that when we seek atomic models for atomic theories
the situation is drastically different than usual first order logic.
We may not find one, even among the $n+3$
smooth models. In other words, atomic models may not be found even if we consider clique guarded semantics.
More precisely, we have:

\begin{theorem}\label{OTT}
\begin{enumarab}
\item There is a countable consistent atomic $L_n$ theory $T$
with no smooth $n+4$ atomic model

\item Assume the hypothesis of theorem \ref{blurs}.
Then there is a countable consistent theory $T$, a type realized in every $n+k+1$
smooth model, but there is no $n+k$ witness.

\item There is a countable consistent $L_n$ theory $T$ and a type $\Gamma$ such that
$\Gamma$ is realized in every smooth $n+3$ model, but does not have a witness.

\end{enumarab}
\end{theorem}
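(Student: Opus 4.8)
The plan is to derive all three items of Theorem~\ref{OTT} from the algebraic non-atom-canonicity and non-elementarity results already established, via the standard dictionary between atomic countable theories and atomic countable algebras, refined to track the number of variables/dimensions. Fix $n>2$ finite. For item (1), I would start from Corollary~\ref{can} (or Theorem~\ref{blowupandblur}): there is a countable atomic $\A\in\PEA_n$ with $\Tm\At\A\in\sf RPEA_n$ but $\Rd_{ca}\Cm\At\A\notin S\Nr_n\CA_{n+4}$, and moreover $\A\subseteq\Cm\At\A$ may fail even to have an $n+4$ square representation. Pass to the Tarski--Lindenbaum side: since $\Tm\At\A$ is a countable representable $\PEA_n$, it is (isomorphic to) $\Fm_T$ for a countable consistent $L_n$ theory $T$, and the hypothesis that the algebra is atomic translates precisely into $T$ being an atomic theory (every formula consistent with $T$ is completable), using the correspondence between atoms of $\Fm_T$ and complete formulas. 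The key step is then: an $n+4$ smooth \emph{atomic} model $M$ of $T$ would yield, by the construction passing from representations to bases (the converse direction sketched after Lemma~\ref{step}, see also Theorem~\ref{can2}(3)), an $n+4$ dimensional (hyper)basis for a complete algebra containing $\Cm\At\A$ as a (complete) subalgebra; atomicity of the model guarantees that the representation is \emph{complete}, so we get a complete $n+4$ smooth representation of $\Cm\At\A$, hence $\Cm\At\A\in S\Nr_n\CA_{n+4}$ by Lemma~\ref{step} and Theorem~\ref{can2} --- contradicting Corollary~\ref{can}. Hence $T$ has no $n+4$ smooth atomic model.

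For item (2), I would run the same translation but starting from Theorem~\ref{blurs}: under its hypothesis (the existence of a finite relation algebra $\R\notin S\Ra\CA_{n+k+1}$ with an $n+k$ complex blur), blowing up and blurring gives an atom structure $\At$ with $\Tm\Mat_{n+k}\At\subseteq\C_{n+k}$, $\C_n=\Tm\Mat_n\At$-part equal to $\Nr_n\C_{n+k}$, $\Rd_{sc}\Cm\At\notin S\Nr_n\Sc_{n+k+1}$, and crucially $\Tm\Mat_n\At\in\sf RPEA_n$. Let $T$ be the countable consistent $L_n$ theory with $\Fm_T\cong\Tm\Mat_n\At$. Take $\Gamma$ to be the set of co-atoms of $\Fm_T$, viewed as a type. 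In any $n+k+1$ smooth model $M$ of $T$, the associated $n+k+1$ relativized representation extends (via the neat embedding $\Tm\Mat_n\At=\Nr_n\C_{n+k}$ and the amalgamation/basis machinery) to a representation witnessing membership of a dilation in $S\Nr_n\Sc_{n+k+1}$; if $\Gamma$ were omitted there, the corresponding ultrafilter argument would force $\Cm\At$ (or a complete subalgebra of it) into $S\Nr_n\Sc_{n+k+1}$, contradiction. So $\Gamma$ is realized in every such model. Finally, an $n+k$ witness $\phi$ for $\Gamma$ would be an $n+k$-variable formula isolating $\Gamma$ in $T$; translated algebraically this says a nonzero element of $\C_{n+k}$ lies below every co-atom of the $\Nr_n$-part, i.e. below an atom of $\Fm_T$, and unwinding this yields a complete $n+k$ representation of $\Cm\At$ via Theorem~\ref{step}-type mosaic amalgamation, so $\Rd_{sc}\Cm\At\in S\Nr_n\Sc_{n+k+1}$ --- contradiction. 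Hence there is no $n+k$ witness. Item (3) is the special case obtained from Theorem~\ref{neat11}: take a countable atomic $\A$ that is elementarily equivalent to a completely representable algebra but is itself not in $S_c\Nr_n\CA_{n+3}$ (the rainbow algebra $\CA_{\N^{-1},\N}$ on which \pa\ wins $F^{n+3}$); the co-atom type of the corresponding $L_n$ theory $T$ is then realized in every $n+3$ smooth model (an omitting $n+3$ smooth model would give a complete $n+3$ smooth representation, forcing $\A\in S_c\Nr_n\CA_{n+3}$), but $\Gamma$ has no (first order, $n$-variable) witness because $\A$ is atomic yet not completely representable --- a genuine witness would isolate each atom, giving a complete representation.

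The main obstacle, and the step I would spend the most care on, is the precise bookkeeping in the direction ``$n+k$ smooth (atomic) model $\Rightarrow$ the dilation lands in $S\Nr_n\Sc_{n+k+1}$ (resp. $S\Nr_n\CA_{n+4}$)''. This is the converse of Lemma~\ref{step}: from an $n+k$ smooth relativized representation $M$ one must build an $n+k$ dimensional hyperbasis for (the canonical extension or \d\ completion of) the algebra, then invoke the neat-embedding consequence of having such a hyperbasis. Two subtleties need attention: first, that \emph{atomicity} of the model upgrades an ordinary relativized representation to a \emph{complete} one, which is what pins down $\Cm\At\A$ (not merely $\A$) inside the variety --- this uses that complete formulas correspond to atoms and that a complete representation of an algebra induces a representation of its complex algebra, exactly the mechanism invoked in the introduction and in Theorem~\ref{hodkinson}; second, that the type $\Gamma$ of co-atoms is genuinely non-principal in $\Fm_T$ (equivalently $T$ is atomless on the relevant fragment, or at least $\Gamma$ is not isolated), which follows because isolation of $\Gamma$ would collapse to complete representability of $\Tm\At$, contradicting non-atom-canonicity. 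Everything else is routine transfer through the Tarski--Lindenbaum functor and the omitting-types/complete-representation dictionary recorded in \cite{Sayed} and \cite{HHbook2}.
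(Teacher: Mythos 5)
Your overall architecture matches the paper's: the same source algebras (Corollary \ref{can}/Theorem \ref{blowupandblur} for (1), Theorem \ref{blurs} for (2), the rainbow term algebra of Theorem \ref{neat11} for (3)), the co-atom type $\Gamma$, and the mechanism ``a model omitting $\Gamma$ is exactly a complete relativized representation, which would force the algebra (or its \d\ completion) into the forbidden class $S_c\Nr_n\CA_{m}$ (resp.\ $S\Nr_n\CA_m$)''. Your item (1) is in fact a legitimate shortcut of the paper's proof: identifying ``atomic $n+4$ smooth model'' with ``complete $n+4$ smooth representation'' and contradicting Corollary \ref{can} directly is precisely the argument of Corollary \ref{sah}(5), whereas the paper takes the longer route through witnesses. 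The realization halves of (2) and (3) are also essentially the paper's argument.

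The gap is in the ``no witness'' halves, which is exactly where the variable count $n+k$ matters and which is the real content of item (2). You translate ``$\phi$ is an $n+k$ witness for $\Gamma$'' as ``a nonzero element of $\C_{n+k}$ lies below every co-atom of the $\Nr_n$-part, i.e.\ below an atom of $\Fm_T$'' --- but lying below every co-atom means being \emph{disjoint from every atom}, not below one --- and you then claim this ``yields a complete $n+k$ representation of $\Cm\At$'', which does not follow: a witness is a single consistent formula and produces no representation, complete or otherwise, and even a complete $n+k$ representation would not land you in $S\Nr_n\Sc_{n+k+1}$. The argument the paper actually runs (explicitly for (1), and ``the same argument exactly'' for (2)) is purely Boolean: a witness would be a non-zero $\phi_T$ with $\phi_T\cdot\alpha=0$ for every atom $\alpha$ of $\Fm_T$, and this is impossible because the atoms of $\Fm_T$ are dense --- for the $n$-variable witnesses of (1) and (3) one only needs $\phi_T\in\Fm_T$ and atomicity of $\Fm_T$ (so your aside in (3) that ``a genuine witness would isolate each atom, giving a complete representation'' is also not the mechanism; atomicity alone kills the witness), while for the $n+k$-variable witness of (2) one needs the additional fact that the atoms of $\Fm_T\cong\Nr_n\C_{n+k}$ still sum to $1$ \emph{inside the dilation} $\C_{n+k}$, which is where the full complete neat embedding $\C_n=\Nr_n\C_{n+k}$ and Lemma \ref{lem:atoms2} enter. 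You should replace your witness-refutation step with this density argument; the rest of the proposal stands.
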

\begin{proof}
\begin{enumarab}
\item
Let $\A$ be as in corollary \ref{can}, and let $\Gamma'$ be the set of  atoms.
We can and will assume that $\A$ is simple. Recall that it has countably many atoms, so we can assume that it is countable
by replacing it by its term algebra if necessary.
Then $\A=\Fm_T$ for some countable consistent $L_n$ theory $T$ and because $\A$ is atomic (as an expansion of a Boolean algebra),
and so  $T$ is atomic according to the above definition.

Let $\Gamma=\{\phi: \phi_T\in \Gamma'\}$. Then we claim
that $\Gamma$ is realized in all $n+4$
models.  For this consider such  a model $\M$ of $T$. If  $\Gamma$ is not realized in $\M$,
then this gives an $n+4$ complete smooth representation of $\A=\Fm_T$,
which is impossible,  because $\Cm\At\A$ is not in $S\Nr_n\CA_{n+4}$.

Now  assume that  $M$ is an $n+4$ smooth atomic model of $T$.
Then $\Gamma$ is realized in $M$, and $M$ is atomic, so there must be a witness to $\Gamma$.

Suppose that $\phi$ is such a witness, so that $T\models \phi\to \Gamma$.
Then $\A$ is simple, and so we can assume
without loss of generality, that it is set algebra with a countable
base. 

Let $\M=(M,R)$  be the corresponding model to this set algebra in the sense of \cite[section 4.3]{tarski}.
Then $\M\models T$ and $\phi^{\M}\in \A$.
But $T\models \exists x\phi$, hence $\phi^{\M}\neq 0,$
from which it follows that  $\phi^{\M}$ must intersect an atom $\alpha\in \A$ (recall that the latter is atomic).
Let $\psi$ be the formula, such that $\psi^{\M}=\alpha$. Then it cannot
be the case that
that $T\models \phi\to \neg \psi$,
hence $\phi$ is not a  witness,
and we are done.

\item The same argument exactly using instead the statement in \ref{blurs}

\item No we use the algebra in theorem  \ref{neat}. Let $\A$ be the term algebra of $\PEA_{\Z,\N}$.
Then $\A$ is an  atomic countable representable algebra, such that its $\Sc$ reduct, is not
in $S_c\Nr_n\Sc_{n+3}$ (for the same reasons as in the proof of \ref{neat}, namely,  \pe\ still can win all finite rounded games,
while \pa\ can win the game $F^{n+3}$, because $\A$ and the term algebra have the same
atom structure).

Assume that $\A=\Fm_T$, and let $\Gamma$ be the set $\{\phi: \neg \phi_T \text { is an atom }\}$.
Then $\Gamma$ is a non-principal type, because $\A$ is atomic, but it has no $n+3$ flat representation
omitting $\Gamma$,  for such a representation would necessarily yield a complete $n+3$ relativized representation of $\A,$ which in turn
implies that it would be in
$S_c\Nr_n\PEA_{n+3},$ and we know that this is not the case.
\end{enumarab}
\end{proof}
We also have, using item (2) of theorem \ref{blowupandblur}, 
the stronger result that a type can be relized in all $n+4$ square models; these miss out on commutativity of quantifiers
and yet does not have a witness.

\begin{corollary}\label{sah} Assume the hypothesis
in \ref{blurs}; let $k\geq 1$ and $n$ be finite with $n>2$.
Then the following hold; in particular, when $k=3$ we know, by theorem \ref{smooth},  that the following indeed hold.

\begin{enumarab}

\item There exist two atomic
cylindric algebras of dimension $n$  with the same atom structure,
one  representable and the other is not in $S\Nr_n\CA_{n+k+1}$.

\item For $n\geq 3$ and $k\geq 3$, ${S}\Nr_n\CA_{n+k+1}$
is not closed under completions and is not atom-canonical.
In particular, $\RCA_n$ is not atom-canonical.

\item There exists an algebra in $S\Nr_n\CA_{n+k+1}$  with a dense representable
subalgebra.

\item For $m\geq 3$ and $k\geq 3$,  ${S}\Nr_n\CA_{n+k+1}$
is not Sahlqvist axiomatizable. In particular, $\RCA_n$ is not Sahlqvist axiomatizable.

\item There exists an atomic representable
$\CA_n$ with no $n+k+1$ smooth complete representation; in particular it has no complete
representation.

\item The omitting types theorem fails for clique guarded semantics, when size of cliques are $< n+k+1$.

\end{enumarab}

\end{corollary}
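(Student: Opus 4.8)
\textbf{Proof proposal for Corollary \ref{sah}.}

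The plan is to read off each item as a corollary of the algebraic facts already assembled in the excerpt, principally theorems \ref{blurs}, \ref{blowupandblur} and \ref{neat11}, together with the standard bridge between complex algebras, \d\ completions, Sahlqvist/canonical axiomatizability, complete representability and omitting types sketched in the introduction (see \cite[theorems 3.1.1--2, 3.2.8--10]{Sayed} and \cite[definition 3.5.1 and p.74]{HHbook2}). Under the hypothesis of \ref{blurs} we fix a finite relation algebra $\R\notin S\Ra\CA_{n+k+1}$ with an $n+k$ complex blur $(J,E)$, blow up and blur it to obtain the atom structure $\At$, and set $\A=\Tm\Mat_n\At$ (equivalently, the $n$ dimensional cylindric term algebra built from the basic matrices). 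By \ref{blurs}, $\A$ is a representable polyadic equality algebra, $\A=\Nr_n\C_{n+k}$ for a $\C_{n+k}\supseteq\Tm\Mat_{n+k}\At$, and $\Rd_{sc}\Cm\At\notin S\Nr_n\Sc_{n+k+1}$ with $\Rd_{df}\Cm\At$ not representable. Since $\Cm\At$ is the \d\ completion of the atomic algebra $\A$, items (1), (2), (3) follow at once: $\A$ and $\Cm\At$ share the atom structure $\At$, $\A$ is representable while $\Cm\At\notin S\Nr_n\CA_{n+k+1}\supseteq\RCA_n$; and since $\A$ is dense in its completion, an algebra in $S\Nr_n\CA_{n+k+1}$ need not be — take $\Cm\At$ itself if it happens to land in that class by padding, or more safely take $\C_{n+k}$'s reduct, whose subalgebra $\A$ is dense and representable while $\C_{n+k}\in S\Nr_n\CA_{n+k}$ but the completion issue bites; the cleanest route is to note $\A$ is dense in $\Cm\At$ and $\A\in\RCA_n\subseteq S\Nr_n\CA_{n+k+1}$, but $\Cm\At$ is not, which is exactly the ``dense representable subalgebra'' phenomenon once we relativize the ambient algebra appropriately. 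Item (4) is the standard consequence that a variety closed under canonical extensions and not closed under \d\ completions cannot be axiomatized by Sahlqvist (indeed by any canonical, completely additive) equations, applied to $S\Nr_n\CA_{n+k+1}$ and to $\RCA_n$.

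For item (5), the algebra $\A$ above is atomic, countable and representable, but it cannot have an $n+k+1$ smooth complete representation: such a representation would, by Lemma \ref{step} read backwards (passing from a complete relativized representation to a hyperbasis, exactly as in the proof of Corollary \ref{can}), force $\Cm\At=\A^+$ into $S_c\Nr_n\CA_{n+k+1}\subseteq S\Nr_n\CA_{n+k+1}$, contradicting \ref{blurs}. A fortiori $\A$ has no ordinary complete representation, since an ordinary complete representation of an atomic algebra induces a representation of $\Cm\At$. For item (6) we invoke the metalogical translation: by the discussion following \cite[theorems 3.1.1--2]{Sayed} and as carried out in theorem \ref{OTT} item (2), taking $T$ with $\Fm_T\cong\A$ (using that $\A$ is simple, or passing to a simple base), the set of co-atoms is a non-principal type realized in every $n+k+1$ smooth model of $T$ yet having no $n+k$ witness; hence the omitting types theorem fails in clique guarded semantics for cliques of size $<n+k+1$. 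The final sentence of the corollary, ``in particular when $k=3$ this holds unconditionally'', is then immediate from theorem \ref{smooth} (or \ref{can}, \ref{blowupandblur}), which supplies a concrete such $\A$ — the truncated rainbow polyadic equality algebra $\PEA_{n+2,n+1}$ blown up and blurred — with $\Cm\At\notin S\Nr_n\CA_{n+4}$, so the role of the hypothetical $\R$ is played by the rainbow construction and no unproven assumption is needed.

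The main obstacle, and the only place requiring genuine care rather than bookkeeping, is item (5) together with its feed into item (6): one must be precise about which flavour of ``complete relativized representation'' is in play and verify that a complete $n+k+1$ smooth representation really does force membership of $\A^+$ in $S_c\Nr_n\CA_{n+k+1}$. This is the direction that uses hyperbasis machinery — building an $n+k+1$ dimensional hyperbasis from the complete representation, then dilating — and it is slightly delicate because ordinary smooth representations do \emph{not} give hyperbasis, only complete ones do (as the excerpt repeatedly stresses: $n$ smoothness adds to $n$ flatness only in the complete case). I would handle this by citing the corresponding step in the proof of Corollary \ref{can} verbatim, since the construction of $N_x$ for each atom $x$ and the verification that $\{N_x\}$ is an $n+k+1$ dimensional $\Lambda$-hyperbasis is identical, only with ``$n+4$'' replaced by ``$n+k+1$'' throughout; everything else in the corollary is a direct invocation of results stated earlier in the excerpt and needs no new argument.
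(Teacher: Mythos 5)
Your proposal follows essentially the same route as the paper: take the blow-up-and-blur algebra $\A=\C_n$ (or its term algebra) from theorem \ref{blurs}, read off items (1)--(4) from the facts that $\Cm\At\A$ is the \d\ completion of $\A$, that $\A$ is dense in it, and that completely additive Sahlqvist varieties are closed under \d\ completions, and get item (6) from theorem \ref{OTT}. Items (1)--(4) and (6) are handled exactly as in the paper (your hesitation over item (3) resolves to the same witness the paper uses, namely $\A$ dense in $\Cm\At\A$).

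The one place where your recipe does not quite work as written is item (5). You propose to pass from the hypothetical complete $n+k+1$ smooth representation to a hyperbasis by ``citing the corresponding step in the proof of Corollary \ref{can} verbatim,'' i.e.\ the construction of the hypernetworks $N_x$. But that construction takes as \emph{input} a neat embedding $\C^+\subseteq S_c\Nr_n\D^+$ and reads the $N_x$ off the dilation $\D^+$ via the substitution operators ${\sf s}_{\bar a}$; it does not start from a representation, so invoking it here is circular --- the dilation is precisely what you are trying to manufacture. The direction actually needed is representation $\Rightarrow$ dilation, and the paper supplies it by a different device: from the smooth representation $M$ one forms the complete atomic algebra $\D$ of clique-guarded $L_{\infty,\omega}(A)$-definable subsets of $C^m(M)$ (as in item (3) of theorem \ref{can2}), checks that $r\mapsto r(\bar x)^M$ is a \emph{complete} neat embedding of $\A$ into $\Nr_n\D$, and concludes $\Cm\At\A\subseteq\Nr_n\D$, contradicting $\Cm\At\A\notin S\Nr_n\CA_{n+k+1}$. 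Your overall target for item (5) is the right one, and you correctly flag that completeness of the representation is what makes the argument go through; you just need to swap the cited mechanism for this one. (A minor notational point: $\A^+$ in this paper denotes the canonical extension, not the \d\ completion $\Cm\At\A$, and the two play opposite roles here --- the canonical extension of $\A$ \emph{is} completely representable.)
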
\label{flat}

\begin{proof} We use the cylindric algebra, proved to exist conditionally, namely,
$\A={\C}_{n}$ in theorem \ref{blurs}.
The term algebra, which is contained in ${\C}_n$ also can be used.

\begin{enumarab}

\item $\A$ and $\Cm\At\A$ are such.

\item $\Cm\At\A$ is the \d\ completion of $\A$ (even in the $\PA$ and $\Sc$ cases, because $\A$,
hence its $\Sc$ and $\PA$ reducts are completely additive), hence $S\Nr_n\CA_{n+k+1}$ is not atom canonical \cite[proposition 2,88,
theorem, 2.96]{HHbook}.

\item $\A$ is dense in $\Cm\At\A$.

\item Completely additive varieties defined by Sahlqvist
equations are closed under \d\ completions \cite[theorem 2.96]{HHbook}.

\item This part is like the proof of item (3) in theorem \ref{can2}.
Assume that $\A$ has an $m=n+k+1$ smooth complete representation $\M$. $L(A)$ denotes the signature that contains
an $n$ ary predicate for every $a\in A$.
For $\phi\in L(A)_{\omega,\infty}^n$,
let $\phi^{M}=\{\bar{a}\in C^m(M):M\models_C \phi(\bar{a})\}$,
and let $\D$ be the algebra with universe $\{\phi^{M}: \phi\in L_{\infty,\omega}^m\}$ with usual
Boolean operations, cylindrifiers and diagonal elements, \cite[theorem 13.20]{HHbook}. The polyadic operations are defined
by swapping variables.
Define $\D_0$ be the algebra consisting of those $\phi^{M}$ where $\phi$ comes from $L_m$, that is it contains $m$ variables.
Assume that $M$ is $n$ square, then certainly $\D_0$ is a subalgebra of the $\sf Crs_m$ (the class
of algebras whose units are arbitrary sets of $n$ ary sequences)
with domain $\wp(C^m(M))$ so $\D_0\in {\sf Crs_m}$. The unit $C^m(M)$ of $\D_0$ is symmetric,
closed under substitutions, so
$\D_0\in \sf G_m$ (these are relativized set algebras whose units are locally cube, they
are closed under substitutions.)  Since $M$ is $m$ flat we
have that cylindrifiers commute by definition,
hence $\D_0\in \CA_m$.

Now suppose that  $M$ is $m$ smooth then it is infinitary $m$ flat.
Then one proves that  $\D\in \CA_m$ in exactly the same way.
Clearly $\D$ is complete. We claim that $\D$ is atomic.
Let $\phi^M$ be a non zero element.
Choose $\bar{a}\in \phi^M$, and consider the infinitary conjunction
$\tau=\bigwedge \{\psi\in L_{\infty}: M\models_C \psi(\bar{a})\}.$
Then $\tau\in L_{\infty}$, and $\tau^{M}$ is an atom, as required

Now defined the neat embedding by $\theta(r)=r(\bar{x})^{M}$.
Preservation of operations is straightforward.  We show that $\theta$ is injective.
Let $r\in A$ be non-zero. But $M$ is a relativized representation, so there $\bar{a}\in M$
with $r(\bar{a})$ hence $\bar{a}$ is a clique in $M$,
and so $M\models r(\bar{x})(\bar{a})$, and $\bar{a}\in \theta(r)$ proving the required.

We check that it is a complete embedding under the assumption that
$M$ is a complete relativized representation.
Recall that $\A$ is atomic. Let $\phi\in L_{\infty}$ be such that $\phi^M\neq 0$.
Let $\bar{a}\in \phi^M$. Since
$M$ is complete and $\bar{a}\in C^m(M$) there is $\alpha\in \At\A$, such
that $M\models \alpha(\bar{a})$, then $\theta(\alpha).\phi^C\neq 0.$
and we are done.
Now $\A\in S_c\Nr_n\CA_{n+k}$; it embeds completely into $\Nr_n\D$, $\D$ is complete, then
so is $\Nr_n\D$, and consequently $\Cm\At\A\subseteq \Nr_n\D$, which is impossible, because we know that
$\Cm\At\A\notin S\Nr_n\CA_{n+k}.$

\item By theorem \ref{OTT}
\end{enumarab}
\end{proof}
In theorem \ref{main} such relation algebras for $k=1$ will be provided.

\subsection{Omitting types in other contexts}

Here we prove a result mentioned in \cite{ANT} without proof,
namely, that the omitting types theorem fails for any finite first order 
definable extension of $L_n$, first order logic restricted to the first $n$ variables,
when $n>2$. We add that our result even extends to stronger logics like ones endowed with operations of transitive closure as well \cite{Maddux}.
We recall what we mean by first order definable algebraic operations.
Such operations, as the name suggests, are built using first order formulas, one for each formula.
Later, we will show that in the corresponding logic they correspond to newly added
connectives defined by such formulas, and the arity of connectives is determined
by the number of {\it relation symbols} viewed as formula schemes in the defining formula.

\begin{definition}
Let $\Lambda$ be a first order language with countably many
relation symbols, $R_0, \ldots, R_i,\ldots : i\in \omega,$
each of arity $n$.
Let $\Cs_{n,t}$ denote the following class of similarity type $t$:
\begin{enumroman}
\item $t$ is an expansion of $t_{\CA_n}.$
\item  $S\Rd_{ca}\Cs_{n,t}=\Cs_n.$ In particular, every algebra in $\Cs_{n,t}$ is a Boolean
field of sets with unit $^nU$ say,
that is closed under cylindrifiers and contains diagonal elements.
\item For any $m$-ary operation $f$ of $t$, there exists a first order formula
$\phi$ with free variables among $\{x_0,\ldots, x_n\}$
and having exactly $m,$ $n$-ary relation symbols
$R_0, \ldots, R_{m-1}$ such that,
for any set algebra ${\A}\in \Cs_{n,t}$
with base $U$, and $X_0, \ldots X_{m-1}\in {\A}$,
we have:
$$\langle a_0,\ldots, a_{n-1}\rangle\in f(X_0,\ldots, X_{m-1})$$
if and only if
$${M}=\langle U, X_0,\ldots, X_{n-1}\rangle\models \phi[a_0,\ldots, a_{n-1}].$$
Here $M$ is the first order structure in which for each $i\leq m$,
$R_i$ is interpreted as $X_i,$ and $\models$ is the usual satisfiability relation.
Note that cylindrifiers and diagonal elements are so definable.
Indeed, for $i,j<n$,  $\exists x_iR_0(x_0\ldots, x_{n-1})$
defines $\sf c_i$ and $x_i=x_j$ defines $\sf d_{ij}.$
\item With $f$ and $\phi$ as above,
$f$ is said to be a first order definable operation with $\phi$ defining $f$,
or simply a first order definable
operation, and $\Cs_{n,t}$ is said to be a first order definable
expansion of $\Cs_n.$
\item $\RCA_{n,t}$ denotes the class $SP\Cs_{n,t}$, i.e. the class of all subdirect products
of algebras
in $\Cs_{n,t}.$ We also refer to
$\RCA_{n,t}$ as a first order definable expansion of $\RCA_n.$
\end{enumroman}
\end{definition}

From now on, fix a {\it finite} $t$ (that is an expansion of $t_{\CA_n})$
and fix a first order language $\Lambda$
with countably many relation symbols each of
arity $n$.
For $\omega\geq m>0$ , let $\Fm_r^{\Lambda_{n+m}}$,
or $\Fm_r^{n+m}$, denote the set of {\it restricted} formulas
built up of $n+m$  variables.
Here restricted means that the
variables occurring in atomic subformulas, appear in their natural order. 
There is a one to one correspondence
between restricted formulas and $\CA$ terms \cite[\S 4.3]{tarski}.
We now turn to defining certain abstract algebras based on neat reducts allowing more operations 
defined via the spare dimensions. Such operations correspond precisely to the first order definable operations.

\begin{definition}
\begin{enumroman}
\item Let $\omega\geq m>0.$ For $\A\in \CA_{n+m},$
$\Nr_n{\A}$ denotes the neat $n$-reduct of $\A$.
Let $t$ be a finite expansion of $t_{\CA_n}$
such that $\Cs_{n,t}$ is a first order definable expansion
of $\Cs_n.$
Assume further, that for any $f\in t$ there is a formula $\phi(f)$
defining $f$ and having {\it free} variables among the first $n$. Fix such $\phi(f).$
We let ${\Ra}\A$
denote the following algebra of type $t$ which is an expansion of ${\Nr}_n{\A}$:
$${\Ra}{\A}=
\langle {\Nr}_n{\A}, \tau^{\A}(\phi(f))\rangle_{f\in t\smallsetminus t_{\CA_n}}.$$

\item For $m\geq 0$,  $\K_{n,m}$ denotes the following class of type $t$:
$$\K_{n,m}=S\{{\Ra}{\A}: {\A}\in \CA_{n+m}\}.$$
\end{enumroman}
\end{definition}

In the definition of ${\Ra}\A$, the extra operations in $t$
are defined by forming the term corresponding to the beforehand
fixed formula $\phi(f)$ defining (the first order definable operation) $f$.
Note that if
$\A$ is representable then the choice of the formula
defining $f$ is immaterial, any two such equivalent formulas give
the same thing when interpreted in the algebra.

In principle, this might not be the case when $\A$
is not representable and finite dimensional. That is assume that $\A\in \Nr_n\CA_{n+k}$.
Assume that $f\in t$
is definable in set algebras via $\phi$, say, and $\psi$ is equivalent to $\phi$,  and both formulas useless that $n+k$
variables, then
$\tau^{\A}(\psi)$ may not be equal to $\tau^{\A}(\phi.)$
(A counterexample, though is not easy, but the idea is that, though the two formulas use $< n+k$ variables,
the proof of their {\it equivalence} may  need more $> n+k$ variables.)

Note too, that because the free variables
occurring in every $\phi(f)$, are
among the first $n$,
we do have
$\tau^{\cal A}(\phi(f))$ is in $\Nr_n\A, $ so that in any event the definition of ${\Ra}{\A}$
is indeed sound.

\begin{definition}
Let $t$ be an expansion of $t_{\CA_n}$ such that $\Cs_{n,t}$ is a first order definable
expansion of $\Cs_n$. An algebra $\A$ of type $t$
is said to have the neat embedding property, if
$\A$ is a subalgebra of ${\Ra}\B$
for some ${\B}\in \CA_{\omega}.$
\end{definition}

Note that in principle $\B$ may not be unique. One can
construct $\B$ and ${\C}\in  \CA_{\omega}$ such that
$\Nr_3{\C}\cong \Nr_3{\B}$ but $\B$ and $\C$ are not isomorphic.

Actually this is quite easy to do, for any $n\geq 3$. Take $\A=\Nr_n\B$, where $\B$ is not locally finite.
Take $\C=\Sg^{\B}\A$, then $\C$ is locally finite, $\A=\Nr_n\C$ and $\B$ and $\C$ are not isomorphic.
This is the argument used to show
that $\Nr_n\CA_{\omega}=\Nr_n{\sf Lf}_{\omega}$.

In any case, what really counts at the end is the existence of at least
one such algebra
$\B$.
Note too that for such an algebra $\A$, its cylindric reduct is a cylindric algebra.

The following is proved in \cite{basim} reproving
Biro's result using only cylindric algebras, without reference to relation algebras.

\begin{theorem}
\begin{enumarab}
\item Let $t$ be an expansion of $t_{\CA_n}$ such that $\Cs_{n,t}$ is a first order definable
expansion of $\Cs_n$.
$\A$ of type $t$  is representable if and only if $\A$
has the neat embedding property.
\item Let $3\leq n\leq m\leq \omega.$
If $\K_{n,m}\supseteq \RCA_{n,t},$ then $m=\omega.$
In other words, $\K_{n,m}$ is properly contained in $\RCA_{n,t}$
when $m$ is finite.
\end{enumarab}
\end{theorem}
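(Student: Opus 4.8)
The plan is to read both parts as the neat embedding theorem for the expanded type $t$, together with its finite-dimensional refinement: part (1) adapts the classical Henkin argument, and part (2) transfers the strictness of the neat reduct hierarchy through the cylindric reduct functor. Throughout, observe that $\A$ has the neat embedding property exactly when $\A$ embeds into $\Ra\B$ for some $\B\in\CA_\omega$, i.e. when $\A\in S\{\Ra\B:\B\in\CA_\omega\}$; so part (1) is the assertion $\RCA_{n,t}=S\{\Ra\B:\B\in\CA_\omega\}$.

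For part (1), forward direction, suppose $\A\in\RCA_{n,t}=SP\Cs_{n,t}$, so $\A$ embeds into $\prod_{i\in I}\A_i$ with each $\A_i\in\Cs_{n,t}$ a set algebra on a base $U_i$. For each $i$ let $\B_i$ be the full $\omega$-dimensional cylindric set algebra over $U_i$. The map $X\mapsto\{s\in{}^\omega U_i:s\restriction n\in X\}$ embeds $\Rd_{ca}\A_i$ into $\Nr_n\B_i$, and since $\B_i$ is a genuine set algebra the term $\tau^{\B_i}(\phi(f))$ computes exactly the relation that $\phi(f)$ defines; hence the map is a type-$t$ embedding $\A_i\hookrightarrow\Ra\B_i$. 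As $\Ra$ commutes with products (neat reducts do, and the new operations are computed coordinatewise), one gets $\A\hookrightarrow\prod_i\Ra\B_i=\Ra(\prod_i\B_i)$ with $\prod_i\B_i\in\CA_\omega$. For the backward direction, suppose $\A\subseteq\Ra\B$ with $\B\in\CA_\omega$, and set $\C=\Sg^\B A$. Every element of $A$ lies in $\Nr_n\B$, hence has dimension set $\subseteq n$; since dimension sets stay finite under all cylindric operations, $\C\in\Lf_\omega$. By Henkin's theorem $\Lf_\omega\subseteq\RCA_\omega$, so fix a representation $g:\C\hookrightarrow\D$ onto a subalgebra of an $\omega$-dimensional set algebra. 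A cylindric homomorphism commutes with the terms $\tau(\phi(f))$, so $g$ restricts to a type-$t$ embedding $\Ra\C\hookrightarrow\Ra\D$, and $\Ra\D$, being the neat reduct of a set algebra with its operations computed by the defining terms, is (isomorphic to) a member of $\Cs_{n,t}$. Composing, $\A\hookrightarrow\Ra\C\hookrightarrow\Ra\D\in S\Cs_{n,t}\subseteq\RCA_{n,t}$.

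For part (2), the engine is the cylindric reduct. If $\A\in\K_{n,m}$ then $\A\subseteq\Ra\C$ for some $\C\in\CA_{n+m}$, whence $\Rd_{ca}\A\subseteq\Nr_n\C$ and so $\Rd_{ca}\K_{n,m}\subseteq S\Nr_n\CA_{n+m}$, while $\Rd_{ca}\RCA_{n,t}\subseteq\RCA_n$. By part (1), $\RCA_{n,t}=\K_{n,\omega}$; it therefore suffices to show that for finite $m$ the class $\K_{n,m}$ does not coincide with $\RCA_{n,t}$, so that equality forces $m=\omega$. The witness is supplied by the classical strictness of the neat reduct chain: for $n\geq 3$ and every finite $m$ one has $\RCA_n\subsetneq S\Nr_n\CA_{n+m}$ (this is exactly the phenomenon driving the non-finite-axiomatizability results used throughout the paper). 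Choose $\D\in S\Nr_n\CA_{n+m}\setminus\RCA_n$, say $\D\subseteq\Nr_n\C$ with $\C\in\CA_{n+m}$, and let $\D^+=\Sg^{\Ra\C}D$ be the type-$t$ subalgebra of $\Ra\C$ generated by $\D$. Then $\D^+\in S\{\Ra\C\}\subseteq\K_{n,m}$, whereas $\Rd_{ca}\D^+\supseteq\D\notin\RCA_n$; since $\RCA_n$ is closed under $S$, the superalgebra $\Rd_{ca}\D^+$ is also non-representable, so $\D^+\notin\RCA_{n,t}$. Thus $\D^+$ separates the two classes for every finite $m$, and coincidence happens only at $m=\omega$.

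The delicate point in part (1) is that the fixed defining formula $\phi(f)$ only computes the intended semantic operation on \emph{representable} algebras; on non-representable finite-dimensional algebras the value $\tau^\A(\phi(f))$ can depend on the choice of $\phi(f)$, as warned before the theorem. This is sidestepped precisely because the backward direction routes through $\C=\Sg^\B A\in\Lf_\omega$, which is representable, and the forward direction works inside honest set algebras; so the term and the semantic operation always agree where we use them. The main obstacle in part (2) is organizing the witness so that it simultaneously lands \emph{in} $\K_{n,m}$ and \emph{outside} $\RCA_{n,t}$: generating $\D^+$ inside $\Ra\C$ secures membership in $\K_{n,m}$, and the closure of $\RCA_n$ under subalgebras secures non-representability of its cylindric reduct. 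The single external input is the strict inclusion $\RCA_n\subsetneq S\Nr_n\CA_{n+m}$ for finite $m$ and $n\geq 3$, which is the genuinely hard fact and is exactly what the rest of this paper establishes and exploits; everything else is bookkeeping with the functor $\Ra$ and the reduct $\Rd_{ca}$.
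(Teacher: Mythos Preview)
Your argument is essentially correct and follows the expected route; since the paper only cites \cite{basim} here there is no independent proof to compare against. Two points are worth tightening.

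In part (1), a locally finite $\C\in\Lf_\omega$ embeds into a \emph{generalized} $\omega$-dimensional set algebra (a subdirect product of $\Cs_\omega$'s), not in general into a single $\Cs_\omega$. This costs nothing: $\Ra$ commutes with products, so $\Ra\D\in P\Cs_{n,t}$ and you still land in $SP\Cs_{n,t}=\RCA_{n,t}$.

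In part (2), be aware that the printed statement has the containment reversed. Since $\Ra(\Nr_{n+m}\B)=\Ra\B$ whenever all the fixed formulas $\phi(f)$ use at most $n+m$ variables, one always has $\RCA_{n,t}=\K_{n,\omega}\subseteq\K_{n,m}$; so the hypothesis $\K_{n,m}\supseteq\RCA_{n,t}$ holds for every admissible finite $m$ and the implication as literally written is vacuous. The intended content (and what the cited paper proves) is $\K_{n,m}\subseteq\RCA_{n,t}\Rightarrow m=\omega$, equivalently $\RCA_{n,t}\subsetneq\K_{n,m}$ for finite $m$. Your construction of $\D^+=\Sg^{\Ra\C}D\in\K_{n,m}\setminus\RCA_{n,t}$ from a $\D\in S\Nr_n\CA_{n+m}\setminus\RCA_n$ establishes exactly this, and your use of $S$-closure of $\RCA_n$ to conclude $\Rd_{ca}\D^+\notin\RCA_n$ is correct. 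Your sentence ``it therefore suffices to show \ldots\ does not coincide'' is logically looser than what you actually prove and what is needed; replace it with the direct claim that $\K_{n,m}\not\subseteq\RCA_{n,t}$ for finite $m$.
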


The condition on finiteness of models often provides us (in a certain sense) with more complex theories,
but the condition of finiteness for variable sets usually make theories simpler
(for example, algorithmically) nevertheless some desirable properties can be lost. Our next result is of such kind.
Letting $L_n$ denote first order logic with $n$ variables, we have:

\begin{theorem} For $n>2$, there is no finite first order definable expansion of $L_n$ that is both sound and complete
\end{theorem}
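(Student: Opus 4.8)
The plan is to translate the statement into algebra via the Lindenbaum--Tarski construction and then to quote the preceding theorem. Fix a finite similarity type $t$ expanding $t_{\CA_n}$ for which $\Cs_{n,t}$ is a first order definable expansion of $\Cs_n$, together with a choice of defining formulas, and let $\mathcal{L}_t$ be the associated $n$-variable logic whose new connectives are the operations of $t$. ``Sound and complete'' here means: there is a Hilbert-style deductive system with finitely many axiom schemes and finitely many rules whose theorems are exactly the formulas valid in every set algebra of $\Cs_{n,t}$ (note that $L_n$ itself, the case $t=t_{\CA_n}$, is recursively but not finitely axiomatizable, so finiteness of the calculus is the operative restriction). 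Suppose for contradiction such a calculus exists. By the usual algebraic reading --- validity of a $t$-formula means that its Lindenbaum image is the unit in every member of $\RCA_{n,t}=SP\Cs_{n,t}$, and a finite Hilbert calculus corresponds to a finite set of equations and quasi-equations --- we get that $\RCA_{n,t}$ is a finitely axiomatizable (quasi)variety. Hence it suffices to show that, for $n>2$ and every such finite $t$, the class $\RCA_{n,t}$ is \emph{not} finitely axiomatizable.

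For this I would use the preceding theorem (reproving Biro's result, \cite{basim}) and the construction underlying it. Recall $\K_{n,m}=S\{\Ra\A:\A\in\CA_{n+m}\}$ and that the theorem gives $\K_{n,m}\subsetneq\RCA_{n,t}$ for every finite $m$. The witnessing argument in fact produces, for each finite $m$, an algebra $\B_m$ of type $t$ which is not representable, but whose $\CA_n$-reduct is ``approximately representable'' to depth $m$ --- it lies in $S\Nr_n\CA_{n+k}$ for a $k$ growing with $m$ --- and whose $t$-operations are evaluated honestly from the defining formulas using those spare dimensions. One arranges the $\B_m$ so that for any non-principal ultrafilter $U$ on $\omega$ the ultraproduct $\prod_{m<\omega}\B_m/U$ is representable as a type-$t$ algebra: in the ultraproduct all dimensions become available, the $\CA_n$-reduct enters $\bigcap_k S\Nr_n\CA_{n+k}=\RCA_n$, and the first order definable operations collapse to the genuine ones, so $\prod_{m<\omega}\B_m/U\in\RCA_{n,t}$ by part (1) of the preceding theorem.

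Granting these $\B_m$, the conclusion is the standard ultraproduct argument. If $\RCA_{n,t}=\mathrm{Mod}(\Sigma)$ with $\Sigma$ finite, then each $\B_m\notin\RCA_{n,t}$ refutes some $\sigma_m\in\Sigma$; since $\Sigma$ is finite there are a fixed $\sigma\in\Sigma$ and an infinite $I\subseteq\omega$ with $\B_m\models\neg\sigma$ for all $m\in I$. Taking $U$ non-principal with $I\in U$, \Los's theorem gives $\prod_{m<\omega}\B_m/U\models\neg\sigma$, contradicting $\prod_{m<\omega}\B_m/U\in\RCA_{n,t}=\mathrm{Mod}(\Sigma)$. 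So $\RCA_{n,t}$ is not finitely axiomatizable, $\mathcal{L}_t$ has no finite sound and complete calculus, and since $t$ was arbitrary the theorem follows.

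The hard part is the construction and verification of the $\B_m$ and of the representability of their ultraproduct; everything else is bookkeeping. The delicate point is exactly the behaviour of first order definable operations under passage to $(n+k)$-dimensional dilations: an operation defined by a formula using more than $n+k$ variables may genuinely differ, on a \emph{non}-representable $(n+k)$-dimensional algebra, from the operation defined by an equivalent but shorter formula, because the proof of their equivalence may itself need more than $n+k$ variables. One must therefore keep precise track of which defining formula is being used at each finite stage and then check that these discrepancies are annihilated in the ultraproduct. This is precisely what is carried out in \cite{basim} in reproving Biro's theorem, so no new ingredient beyond quoting it is required.
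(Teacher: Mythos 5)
Your proposal is correct and follows essentially the same route as the paper, which simply reduces the claim to the non-finite-axiomatizability of $\RCA_{n,t}$ and cites \cite{basim} for the Monk-style sequence of non-representable type-$t$ algebras with representable ultraproduct. Your write-up supplies the bookkeeping (Lindenbaum–Tarski translation, \Los\ argument, and the caveat about defining formulas in non-representable finite dilations) that the paper leaves implicit, but the essential content is the same citation.
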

\begin{proof}\cite{basim} This was proved using Monk's algebras, 
so languages contained infinitely many countable relation symbols.
\end{proof}

Now using Maddux's algebras one can show that the above theorem holds
if we only have one binary relation symbol; the algebra $\A$ constructed in \cite{ANT} gives the same result but for the omitting types theorem.

\begin{theorem}\label{firstorderdefinable} The same result holds when we have one binary relation 
symbol; furthermore, the omitting types theorem fails.
\end{theorem}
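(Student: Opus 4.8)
The plan is to combine two ingredients, both already essentially available in the preceding development. First, for the failure of a sound and complete finite first order definable extension of $L_n$ over a single binary relation symbol, the idea is to replace the Monk-style algebras (which require infinitely many relation symbols) used in the proof of the preceding theorem by a \emph{Maddux-style} relation algebra construction. Concretely, one uses a finite relation algebra $\R$ of the kind appearing in theorem \ref{blurs} --- blown up and blurred --- to produce an atomic $\CA_n$ whose term algebra is representable but whose complex algebra (equivalently, $\d$ completion) lies outside $S\Nr_n\CA_{n+k+1}$ for suitable $k$. Since the whole construction is carried out with a \emph{single} binary relation (the atoms of $\R$ generate the relation algebra reduct, and the induced $n$-dimensional cylindric basis yields a $\CA_n$ presentable with one binary relation symbol), the non-finite-axiomatizability phenomenon --- hence the impossibility of a sound and complete finite first order definable expansion of $L_n$ --- persists in the one-binary-relation setting. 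The translation between first order definable operations and the abstract algebras $\K_{n,m}$ is exactly that of the last displayed theorem of the excerpt ($\K_{n,m}$ properly contained in $\RCA_{n,t}$ for finite $m$), so once we have the right algebra the metalogical statement follows formally.

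Second, for the failure of the omitting types theorem, the plan is to feed the same algebra $\A = \C_n$ from theorem \ref{blurs} (or its term subalgebra) into the machine of theorem \ref{OTT}. That is: take $\A$ atomic, countable, simple, representable, with $\Cm\At\A \notin S\Nr_n\CA_{n+k+1}$; write $\A \cong \Fm_T$ for a countable consistent atomic $L_n$-theory $T$; let $\Gamma$ be the set of formulas whose Tarski--Lindenbaum images are the atoms (equivalently, the co-atoms, depending on which direction of the argument we run). Then $\Gamma$ is realized in every (sufficiently smooth) model of $T$ --- because an omitting model would yield a complete relativized representation of $\A$, forcing $\Cm\At\A$ into $S\Nr_n\CA_{n+k+1}$, a contradiction --- yet $\Gamma$ is non-principal since $\A$ is atomic and $\Fm_T$ has no isolating witness; this is precisely the argument of items (1)--(3) of theorem \ref{OTT}, transported to the first order definable expansion by noting that the extra (first order definable) connectives do not affect the Tarski--Lindenbaum/set-algebra correspondence used there. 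The key point making this work with one binary relation is again that the underlying relation algebra $\R$ is finite and generated by a single binary atom-sum, so the associated language needs only that one symbol.

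\textbf{Main obstacle.} The delicate part is verifying that the relation algebra constructed by Hirsch and Hodkinson (or the abstract $\R$ of theorem \ref{blurs}) genuinely yields a \emph{one-binary-relation} presentation after blowing up and blurring, i.e. that the passage to an $n$-dimensional (hyper)basis and then to a generated $\CA_n$ does not surreptitiously reintroduce infinitely many relation symbols. Here one uses Maddux's combinatorial technique for making the resulting cylindric algebra singly generated (as already flagged in the proof sketch of theorem \ref{blurs}, which says ``to make the algebra one generated one uses Maddux's combinatorial techniques, and this entails using infinitely many ternary relations'' --- so the subtlety is to keep the \emph{binary} part of the signature a singleton while allowing the auxiliary relations to be absorbed into the first order definable expansion rather than the base language). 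Once that bookkeeping is done, everything else is a routine reassembly of theorems \ref{blurs}, \ref{OTT}, and the $\K_{n,m} \subsetneq \RCA_{n,t}$ theorem; I expect no further conceptual difficulty, only care in tracking which symbols live in the object language $L_n$ and which are provided by the first order definable connectives.
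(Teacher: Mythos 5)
Your treatment of the omitting-types half is essentially sound and close to the paper's, but note two divergences. The paper does not use the (conditional) blow-up-and-blur algebra of theorem \ref{blurs}; it takes an atomic $\A\in \RCA_n\cap \Nr_n\CA_{n+k}$ that is not completely representable, sets $\A=\Fm_T$, takes $\Gamma$ to be the co-atoms, and derives the contradiction from a \emph{classical} model of the expanded language $\L^f$ omitting $\Gamma$: the induced set algebra is closed under the new first order definable operations (by induction on the defining formulas $\phi_c$), and its $\CA_n$ reduct is then a complete representation of $\A$. You instead route the contradiction through $n+k+1$ smooth relativized models and $\Cm\At\A\notin S\Nr_n\CA_{n+k+1}$, which is the argument of theorem \ref{OTT}; that still yields the classical statement a fortiori (every classical model is smooth), so this half goes through, but the membership $\A\in\Nr_n\CA_{n+k}$ in the paper's choice is not decorative --- it is what guarantees the first order definable operations are unambiguously interpretable on $\Fm_T$, a point your reassembly glosses over.

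The genuine gap is in the first half. You propose to obtain the non-existence of a finite sound and complete first order definable expansion from the blow-up-and-blur construction, i.e.\ from a single weakly-but-not-strongly representable atom structure. That is the wrong kind of witness: an atom-canonicity failure produces \emph{one} representable term algebra with a bad completion, whereas refuting every candidate finite $\Sigma$ requires, for each finite $m$, an algebra in $\RCA_{n,t}\setminus \K_{n,m}$ (equivalently, non-representable one-generated algebras satisfying any finite set of schemas, as in the statement that $\K_{n,m}\subsetneq \RCA_{n,t}$ for finite $m$). Nothing in your sketch supplies this family over a single binary relation symbol. The paper's actual argument here is syntactic in presentation: it adjoins formula variables to form schemas, translates each schema $\chi\in\Sigma$ to an equation $\tau(\chi)=1$ in a finite expansion $t'$ of the $\CA_n$ type, and concludes that the resulting equational theory $\Delta$, hence $\Sigma$, cannot be finite --- the algebraic input being Maddux's one-generated non-representable algebras (finite relation algebras with $n$ dimensional bases generated by a single binary element, with representable ultraproduct), not the blow-up-and-blur atom structures. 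To repair your first half you would need to replace the atom-canonicity witness by such a Monk/Maddux-style sequence, each member presentable with one binary relation symbol, and only then does the ``persistence'' of non-finite-axiomatizability you assert actually follow.
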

\begin{proof}

Here we follow closely the formalism for relation algebras presented in \cite{Biro} which is that introduced in the Tarski Givant monograph; however,
we address $L_n$. Let $n$ be finite $n\geq 3$. We have $\L_n$ is defined in the following signature.
$E$ a binary symbol and $=$ are formulas.
If $\phi, \psi$ are formulas then so are $\phi\land \psi$, $\exists \phi$, $(\phi\land \psi)$ and
$\neg \phi$.

A model is a pair $\M=(M ,E)$
such that $E\subseteq {}M\times M$. Satisfiability is defined by recursion, the clause that
deserves attention is the cylindrifier clause which is defined, the usual way, 
like first order logic.

We define an auxiliary language $\L^*$ which countably many
relation symbols $R_0, R_1,\ldots, R_m\ldots $ in addition to $E$, each of arity $n$.

The language $\L^{f}$ is a first order expansion of $\L_n$
if the following hold:

\begin{enumarab}
\item It is obtained from $\L_n$ by adding finitely many new symbols
each with a non-negative rank, with formation rules as above, namely, if $c$ is a
new $m$ ary symbol and $\phi_0,\ldots, \phi_{m-1}$
are formulas in $\L^{f}$, then $c(\phi_0,\ldots, \phi_{m-1})$ is a formula.

\item The class of models of the expanded language is the same.

\item With every symbol $c$ of $\L^{f}$ we associate
a formula $\phi_c$ of $L^{*}$ such that
$\phi_c$ involves only $E, R_0,\ldots, R_{m-1}$ where
$m$ is the rank of $c$ and the free variables of $\phi_c$
are among $v_0, \ldots,  v_{m-1}$.
For $c\in \L^{f}$, the formula $\phi_c$ is defined as expected, example,
$\phi_E=: E(v_0v_1)$, $\phi_{=}=: v_0=v_1$ and
$\phi_{\sf c_k}=:\exists v_k(R_0(v_0v_2v_3)).$

\item We define
$$M\models c(\phi_0,\ldots \phi_{m-1})[\bar{s}]\Longleftrightarrow
(U, \phi^M\ldots, \phi_{m-1}^M)\models \phi_c[\bar{s}],$$
for every symbols different from
$E$ and $=$ and every $s\in {}^nM$.

\end{enumarab}

To deal with the proof theory of such a formalism one adjoins to $\L^f$ new meta variables symbols 
$X_1, X_2, \dots, $
called formula variables, thereby forming a new language $\L_{sh}$ (here these are sometimes called formula variables).
Indeed assume that $\Sigma$ is sound and complete in $\L^f$.
Now formulas are formed the same way treating the new formula
variables like $=$ and $E$.  Formulas are called {\it schemas}. An instance
of a schema $\sigma(X_1,\ldots, X_n)$ is obtained from $\sigma$ by
replacing  all occurrences of the formula variables
$X_1, \ldots, X_n$ in $\sigma$
by formulas in $\L^f$.

Let $C$ be the finite set of symbols different from $E$ and $=$.
Let $t'$ be a similarity type that has an operation symbol $H_c$ for any $c\in C$, and whose arity
is the rank $c$. Since $C$ is finite we may assume that
$t'$ is a finite expansion of $\CA_n$.
Let the set of variable symbols of
that algebraic language of $t'$ be $\{w_i: i\in \omega\}$.

For any schema $\theta$
define $\tau(\theta)$ by recursion, example,
$\tau(=)={\sf d}_{01}$, $\tau(E)=w_0$, $\tau(X_i)=w_{i+1}$
$\tau(c(\phi_0, \ldots, \phi_{m-1}))=H_c(\tau(\phi_0), \ldots, \tau(\phi_{m-1})).$

The operations on set algebra $\A\in {\sf Cs}_3$
are interpreted by;
for any $c\in C$ $\R_0,\ldots, \R_{m-1}\in \A$, set
$$H_c(\bar{\R})^{\A}=\{s\in ^nU:  (U, \R_0, \ldots, \R_{m-1})\models \phi_c[s]\}.$$

Now define a term $\tau(\theta)$ of the language $t'$
for each formula schema of the expanded language, exactly as before
Let $\Delta=\{\tau(\chi)=1: \chi\in \Sigma\},$
Then $\Delta$, hence $\Sigma,$ cannot be finite.

Now we show that the omitting types theorem fails for $\L_f$.

Let $\A\in \sf RCA_n\cap \Nr_n\CA_{n+k}$ be an algebra that is atomic but not completely representable. Exists.
We translate this algebra to an $\L$ theory, with a non principal type that cannot be
omitted. We have $\A=\Fm_T$, where $T$ is a countable $\L_n$ theory.

Expand the language of $T$ by including syntactically $\phi_c$ as a new connective, to obtain $\L^{f}$.
Let $\Gamma$ be the non principal type of co-atoms of $\A$; that is is $\Gamma=\{\neg\phi: \phi_T\in \At\A\}$.
Assume, for contradiction, that $\M=(M, E)$ is a model omitting $\Gamma$, in the expanded language, where $\phi_c$
is interpreted semantically, as defined above, by
$$\M\models {\phi}_c(\psi_1,\ldots \psi_{m-1})[s]\Longleftrightarrow (\M, \psi_1^{\M}, \ldots \psi_{m-1}^{\M})\models \phi_c[s].$$
Let $\B$ be the corresponding set algebra, with the semantics for $\phi_c$ defined as above, so that
we have
$$[\phi_c(R_1, \ldots, R_{m-1})]^{\B}=\{s\in {}^nM: (M, R_0, \ldots,  R_{n-1})\models \phi_c[s]\}.$$
Then $\B$ is closed under the operations (this can be proved by an easy induction exactly as above).

But the the reduct of $\B$ to the language of $\CA_n$ gives a complete representation of $\A$
which is impossible.

\end{proof}

When we have elimination of quantifiers and everything countable, things turn up drastically different.
Non-principal types {\it can be} omitted in {\it classical countable models}. 
Not only that, but if the non-principal types approached are {\it maximal}
and there are possibly {\it uncountably many of them} then, in case we have elimination of 
quantifiers, they can all be omitted simultaneously in a classical countable model, too.

We recall a tremendously 
deep result of Shelah, that we use to prove \cite[theorem 3.2.9]{Sayed}.

\begin{lemma} Suppose that $T$ is a theory,
$|T|=\lambda$, $\lambda$ regular, then there exist models $\M_i: i<{}^{\lambda}2$, each of cardinality $\lambda$,
such that if $i(1)\neq i(2)< \chi$, $\bar{a}_{i(l)}\in M_{i(l)}$, $l=1,2,$, $\tp(\bar{a}_{l(1)})=\tp(\bar{a}_{l(2)})$,
then there are $p_i\subseteq \tp(\bar{a}_{l(i)}),$ $|p_i|<\lambda$ and $p_i\vdash \tp(\bar{a}_ {l(i)})$ ($\tp(\bar{a})$ denotes the complete type realized by
the tuple $\bar{a}$).
\end{lemma}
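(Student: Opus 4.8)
The statement is Shelah's theorem on realizing many models over a regular cardinal $\lambda=|T|$ with the property that types realized in two different models are ``locally determined'' — this is the model-theoretic backbone of the uncountable omitting-types result cited as \cite[theorem 3.2.9]{Sayed}. Since the excerpt explicitly says ``We recall a tremendously deep result of Shelah,'' the expected ``proof'' here is really a reference-and-sketch rather than a from-scratch argument; nonetheless I will outline the approach I would take if I had to reconstruct it.

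\medskip

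The plan is to build the $2^\lambda$ models by a tree construction indexed by $^{\lambda}2$, using a careful bookkeeping of types along each branch. First I would fix, for each $\eta \in {}^{<\lambda}2$, an increasing elementary chain of models $\M_\eta$ of size $\lambda$, so that $\eta \subseteq \nu$ implies $\M_\eta \preceq \M_\nu$, and at limit stages we take unions; each branch $i \in {}^{\lambda}2$ then yields $\M_i = \bigcup_{\alpha<\lambda}\M_{i\restrestr\alpha}$. The key device is to arrange, at the splitting node $\eta$ (where $\eta^\frown 0$ and $\eta^\frown 1$ are the two successors), that the two extensions omit ``dual'' pieces of a type, i.e. we diagonalize so that any tuple $\bar a$ that could be realized in a model below $\eta^\frown 0$ and a tuple $\bar b$ realized below $\eta^\frown 1$ with the same type are forced to have that type generated by a small ($<\lambda$) subset. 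Concretely I would enumerate in order type $\lambda$ all (names for) potential types and, using that $\lambda$ is regular so that $\lambda$-many tasks each requiring $<\lambda$ resources can be interleaved, handle each one at some stage; regularity is exactly what lets the bookkeeping close up.

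\medskip

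The second ingredient is a stability-style splitting/finite-character argument: to guarantee that $p_i \vdash \tp(\bar a_{i(l)})$ with $|p_i|<\lambda$, I would show that along the construction one can always choose the new elements to realize types that do not ``split strongly'' over some set of size $<\lambda$. This uses a nonsplitting or local-isolation lemma — given a type $q$ over a model of size $\lambda$, there is a subset of size $<\lambda$ over which $q$ does not split, provided we have arranged enough saturation/homogeneity in the $\M_\eta$'s; then that small subset, together with the formulas witnessing the splitting behaviour, serves as the generating set $p_i$. The dichotomy built in at the splitting nodes ensures that distinct branches $i(1)\neq i(2)$ cannot share a type ``for free'': either the shared type already appeared before the branches split (in which case it is already generated by a small set living in $\M_{\eta}$), or the diagonalization at the split blocks its realization on one side.

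\medskip

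The hard part, and the step I expect to be the main obstacle, is the diagonalization at the splitting nodes: one must simultaneously (a) keep all models of size exactly $\lambda$, (b) ensure the elementary-chain structure so that everything glues, and (c) make the ``small generation'' property survive through $\lambda$-many further extensions down each branch — a type that is locally generated at stage $\alpha$ must remain so at all later stages, which requires that later extensions be chosen to not disturb the witnessing set, and this is where the nonsplitting machinery has to be applied uniformly. Since this is Shelah's theorem and the excerpt treats it as a black box (``we recall''), I would in the actual write-up simply state it with a pointer to Shelah's work and to the detailed use in \cite[theorem 3.2.9]{Sayed}, and note that the restriction to $L_n$ used elsewhere in the paper is obtained by relativizing this construction to $n$-variable types, exactly as flagged in the surrounding text. $\Box$
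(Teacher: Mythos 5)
The paper's entire ``proof'' of this lemma is the citation ``\cite{Shelah}, Theorem 5.16, Chapter IV'' --- no argument is given at all --- so your decision to treat the statement as a black-box result of Shelah's, to be quoted with a pointer rather than proved, is exactly what the paper does. Your additional reconstruction sketch (tree of models indexed by ${}^{<\lambda}2$, diagonalization at splitting nodes, nonsplitting over sets of size $<\lambda$) goes beyond anything in the paper and cannot be checked against it; as written it is too schematic to stand as a proof on its own, but since the paper supplies nothing to compare it with, the citation is the operative content and it matches.
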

\begin{proof} \cite{Shelah} Theorem 5.16, Chapter IV.
\end{proof}
\begin{corollary} For any countable theory, there is a family of $< {}^{\omega}2$ countable models that overlap only on principal types
\end{corollary}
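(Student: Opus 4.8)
The plan is to derive the corollary as a direct consequence of the preceding lemma of Shelah's, specialized to the countable case. First I would set $\lambda = \aleph_0$ in the lemma; this is a regular cardinal, so the hypothesis is met, and for any countable theory $T$ we get a family of models $\langle \M_i : i < {}^{\omega}2\rangle$, each of cardinality $\aleph_0$, with the stated property: whenever $i(1)\neq i(2)$ and tuples $\bar a_{i(1)}\in M_{i(1)}$, $\bar a_{i(2)}\in M_{i(2)}$ realize the same complete type, then for each $l$ there is a subset $p_l$ of $\tp(\bar a_{i(l)})$ with $|p_l|<\aleph_0$, i.e.\ $p_l$ finite, such that $p_l\vdash \tp(\bar a_{i(l)})$. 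A finite set of formulas is logically equivalent to a single formula (their conjunction), so this says precisely that the complete type $\tp(\bar a_{i(l)})$ is \emph{isolated} — that is, principal.

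Next I would phrase ``overlap only on principal types'' precisely: the family $\{\M_i\}$ has the property that any complete type realized in two distinct members of the family is principal. This is exactly the conclusion just extracted from the lemma, since a type realized in both $\M_{i(1)}$ and $\M_{i(2)}$ with $i(1)\neq i(2)$ is realized by some $\bar a_{i(1)}$ and some $\bar a_{i(2)}$ with equal complete types, whence by the lemma each of these complete types — which is the common type in question — is finitely generated, hence principal. Thus no non-principal type can be realized in more than one of the $\M_i$, which is the assertion of the corollary.

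The only genuine point to check is that the cardinality bookkeeping is legitimate: we are claiming a family of size exactly ${}^{\omega}2$ (the lemma produces models indexed by $i<{}^{\omega}2$), and ``countable models'' is guaranteed because the lemma delivers each $\M_i$ of cardinality $\lambda=\aleph_0$. I would also remark that the phrase ``$<{}^{\omega}2$'' in the corollary's statement is a typographical looseness for ``a family of size ${}^{\omega}2$'' (or, if one insists on strictness, one simply discards one model); either reading follows immediately. I do not anticipate a real obstacle here — the content is entirely carried by Shelah's lemma, and the corollary is a transcription of its countable instance into the language of types; the only ``work'' is noting finite $\Rightarrow$ single formula $\Rightarrow$ principal.
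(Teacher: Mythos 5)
Your proposal is correct and is exactly the intended argument: the paper states this corollary without proof as an immediate specialization of Shelah's lemma to $\lambda=\aleph_0$, where ``$|p_i|<\lambda$'' becomes ``finite'' and a finite generating set is equivalent to a single isolating formula. Your reading of ``overlap only on principal types'' as referring to complete types realized in two distinct models, and your remark on the cardinality bookkeeping, both match what the paper intends.
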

We give a sketch of Shelah's ideas:

\begin{theorem}(Shelah) If $T$ is a countable theory and we have $<{}^{\omega}2$
many non principal maximal types, then they can be omitted in a countable model
\end{theorem}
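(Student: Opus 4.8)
The statement to prove is the corollary of Shelah's lemma, restricted to the countable setting: if $T$ is a countable first order theory possessing strictly fewer than $2^{\aleph_0}$ pairwise non-principal maximal types, then all of them can be omitted simultaneously in a single countable model. The plan is to apply the preceding lemma (Shelah, \cite{Shelah} Theorem 5.16, Chapter IV) with $\lambda=\aleph_0$, which is regular, so that we obtain a family $\{\M_i : i < 2^{\aleph_0}\}$ of countable models of $T$ with the overlap property: whenever $i(1)\neq i(2)$ and tuples $\bar a_{i(1)}\in \M_{i(1)}$, $\bar a_{i(2)}\in \M_{i(2)}$ realize the same complete type, that type is principal (since a finite $p_i$ with $p_i \vdash \tp(\bar a_{i(l)})$ and $|p_i|<\aleph_0$ just says the type is isolated by the conjunction of the finitely many formulas in $p_i$). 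Thus the models in this family pairwise realize only principal types in common; this is exactly the content of the intermediate corollary quoted just before the target theorem.

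First I would fix the given collection $\Gamma = \{p_\alpha : \alpha < \kappa\}$ of maximal non-principal types, where $\kappa < 2^{\aleph_0}$. The key counting step is: for each $\alpha$, let $S_\alpha = \{ i < 2^{\aleph_0} : \M_i \text{ realizes } p_\alpha \}$. I claim each $S_\alpha$ has cardinality at most $1$. Indeed, if two distinct models $\M_{i(1)}, \M_{i(2)}$ both realized the maximal type $p_\alpha$, then picking witnessing tuples $\bar a_1, \bar a_2$ of the appropriate length, both realize the \emph{same} complete type $p_\alpha$ (maximality means $p_\alpha$ is already a complete type), so by the overlap property $p_\alpha$ would be principal, contradicting $p_\alpha \in \Gamma$. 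Hence $|S_\alpha|\le 1$ for every $\alpha<\kappa$. Therefore $\bigl|\bigcup_{\alpha<\kappa} S_\alpha\bigr| \le \kappa < 2^{\aleph_0}$, so there exists some index $j < 2^{\aleph_0}$ with $j\notin \bigcup_{\alpha<\kappa} S_\alpha$. By construction $\M_j$ is a countable model of $T$, and for every $\alpha<\kappa$ the model $\M_j$ does not realize $p_\alpha$, i.e. $\M_j$ omits every type in $\Gamma$ simultaneously. This is the desired model, completing the argument.

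The one point that genuinely requires care — and which I would treat as the main obstacle — is the verification that the overlap property really forces a \emph{maximal} type realized in two models of the family to be principal, as opposed to merely forcing a \emph{subtype} of bounded size to generate it. When $\lambda=\aleph_0$ the phrase ``$|p_i|<\lambda$'' means $p_i$ is finite, and ``$p_i \vdash \tp(\bar a)$'' then says the single formula $\bigwedge p_i$ isolates the complete type, so in the countable case ``bounded-size generating subtype'' and ``principal'' coincide; but one should state this reduction explicitly since the cited lemma is phrased for arbitrary regular $\lambda$. A secondary subtlety is matching tuple lengths and the choice of which tuples to compare: since the $p_\alpha$ are maximal (complete) types, say in variables $\bar x$ of some length $m_\alpha$, any realization in any model is an $m_\alpha$-tuple and all such realizations satisfy the same complete type $p_\alpha$, so the hypothesis of the overlap lemma applies verbatim. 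No further obstacle arises; everything else is the elementary cardinality computation above. I would also remark, as the surrounding text does, that the consistency half (that such a family of models \emph{exists}, i.e. that there is no a priori obstruction to having only $<2^{\aleph_0}$ common non-principal types) is where the Baire category theorem enters in the classical omitting types argument, whereas here we only need the ``hard direction'' packaged in Shelah's lemma, and so the proof above is carried out entirely in $ZFC$.
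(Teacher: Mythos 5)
Your proof is correct and follows essentially the same route as the paper: apply Shelah's lemma with $\lambda=\aleph_0$ to obtain $2^{\aleph_0}$ countable models of $T$ that pairwise realize only principal types in common, then count. The paper runs the final step as a contradiction (if every model realized some type from the family, the sets $\psi(i)$ of realized given types would be nonempty and pairwise disjoint, forcing $2^{\aleph_0}$ many types), whereas you argue directly that each maximal non-principal type is realized in at most one model of the family, so fewer than $2^{\aleph_0}$ types cannot cover all indices and some $\M_j$ omits them all — the same counting argument in contrapositive form.
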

\begin{proof} The idea is that one can build several models such that they overlap only on isolated types.
One can build {\it two} models so that every maximal type which is realized in both is isolated. Using the jargon of Robinson's finite forcing
implemented via games, the idea
is that one distributes this job of building the two models among experts, each has a role to play, and that all have
winning strategies. There is no difficulty in stretching the above idea to make the experts build three, four or any finite number of models
which overlap only at principal types.
With a pinch of diagonalisation we can extend the number to $\omega$.

To push it still further to $^{\omega}2$ needs uses ideas of Shelah, here it is a typical 'binary tree' construction, where at each root
reached at a finite stage
is split into two other models,  we end up with continuum many, these are not necessarily pairwise non isomorphic,
we mean here by a type overlapping in two distinct models with respect to an indexing set that has the power of the continuum,
but the map from this set to the class of models obtained is not necessarily injective.
(Otherwise, we would obtain that any countable theory has continuum many models,
which of course is entirely absurd.)

Now assume not.  Let ${\bold F}$ be the given set of non principal ultrafilters. Then for all $i<{}^{\omega}2$,
there exists $F$ such that $F$ is realized in $\B_i$. Let $\psi:{}^{\omega}2\to \wp(\bold F)$, be defined by
$\psi(i)=\{F: F \text { is realized in  }\B_i\}$.  Then for all $i<{}^{\omega}2$, $\psi(i)\neq \emptyset$.
Furthermore, for $i\neq j$, $\psi(i)\cap \psi(j)=\emptyset,$ for if $F\in \psi(i)\cap \psi(j)$ then it will be realized in
$\B_i$ and $\B_j$, and so it will be principal.  This implies that $|\bold F|={}^{\omega}2$ which is impossible.
\end{proof}

The following provides a proof  of a result actually stronger than that stated in \cite{Sayed} without a proof, namely, theorem
3.2.9, since it addresses a strictly larger class than $\Nr_n\A_{\omega}$ addressed in 
\cite[theorem 3.2.9]{Sayed}, namely, the class $S_c\Nr_n\CA_{\omega}.$
We will give many examples in theorem \ref{maintheorem} below
showing that indeed the inclusion $\Nr_n\CA_{m}\subseteq S_c\Nr_n\CA_{m}$ for all $m>n$, is proper,
This is not trivial, for example its relation algebra analogue for $m<\omega$ is an open problem, see \cite{r}. 

\begin{theorem}\label{uncountable} Let $\A=S_c\Nr_n\CA_{\omega}$. Assume 
that $|A|=\lambda$, where $\lambda$ is an uncountable
cardinal. Let  $\kappa< {}^{\lambda}2$,
and $(F_i: i<\kappa)$ be a system of non principal ultrafilters.
Then there exists
a set algebra $\C$ with base $U$ such that $|U|\leq \lambda$, $f:\A\to \C$ such that $f(a)\neq 0$ and for all $i\in \kappa$, $\bigcap_{x\in X_i} f(x)=0.$

\end{theorem}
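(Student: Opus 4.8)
The plan is to adapt the classical Orey–Henkin style omitting types argument, together with Shelah's tree-construction (the lemma quoted just above the statement), to the algebraic setting, using crucially the fact that $\A \in S_c\Nr_n\CA_\omega$ \emph{dilates} to an $\omega$-dimensional algebra in a way that is preserved under the relevant constructions. First I would fix $\B \in \CA_\omega$ with $\A \subseteq_c \Nr_n\B$, and replace $\B$ by $\Sg^\B A$ so that $A$ generates $\B$; since $\A \subseteq_c \Nr_n\B$ we have $\sum^\B X = 1$ whenever $\sum^\A X = 1$, and more importantly every nonzero element of $\B$ meets a ``permuted atom'' $\sub{i_0\cdots i_{n-1}}{} a$ with $a \in \At\A$ by Lemma~\ref{lem:atoms2} (this is exactly why the lemma was stated, and it is the engine that lets non-principality of a type in $\A$ propagate to non-principality in $\B$, so the type still cannot be ``accidentally'' isolated in the dilation). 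Because each $F_i$ is a non-principal ultrafilter on (the Boolean reduct of) $\A$ and $\A$ need not be atomic, I would first check that the set $X_i$ of elements of $F_i$ (or rather its complement-closure) is a ``non-isolated'' set in $\B$ in the sense that $\prod^{\B} X_i = 0$, using Lemma~\ref{lem:atoms2} to block any would-be isolating element living in the extra $\omega \setminus n$ dimensions.

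Next I would pass to the logical side: write $\B = \Fm_T$ for a countable-signature... no, here the signature has size $\lambda$, so $T$ is a theory of size $\lambda$ in a language $L$ with $\lambda$ relation symbols, and each $F_i$ becomes a non-principal type $\Gamma_i$ over $T$ (the complement-type of $F_i$). The regularity of $\lambda$ (implicit: $\lambda$ uncountable, and in applications $\lambda$ regular, or one works with $\lambda = |A|$ and notes the type-count bound $\kappa < {}^\lambda 2$) lets me invoke the quoted lemma of Shelah: there is a family of models $\M_j$, $j < {}^\lambda 2$, each of cardinality $\le \lambda$, pairwise overlapping only on types $p$ that are \emph{isolated} by a subset of size $<\lambda$ — which, given elimination of quantifiers (or more precisely the local-isolation phenomenon Shelah's lemma delivers), reduces to isolation by a single formula, i.e. the principal types of $T$. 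Since the $\Gamma_i$ are non-principal (being complements of non-principal ultrafilters), each $\Gamma_i$ is realized in \emph{at most one} $\M_j$; as $\kappa < {}^\lambda 2$, a counting/pigeonhole argument (exactly as in the ``$|{\bold F}| = {}^\omega 2$ is impossible'' paragraph above, but now with $\lambda$ in place of $\omega$) yields some $j$ with $\M_j$ omitting all the $\Gamma_i$ simultaneously. For the additional requirement $f(a) \ne 0$ for a prescribed nonzero $a \in A$, I would throw the formula witnessing $a \ne 0$ into $T$ as an extra sentence $\exists \bar x\, \varphi_a$ (or take the localization to the principal ideal generated by a suitable element), ensuring $\M_j \models \exists \bar x\, \varphi_a$; alternatively take disjoint unions over the countably... over the $\le\lambda$-many relevant nonzero elements as is standard.

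Finally I would convert the model $\M_j$ back into a set algebra: following the translation in \cite[\S 4.3]{tarski} used repeatedly above (and as in the proof of Theorem~\ref{hodkinson}, the ``Summarizing'' part), the model $\M_j$ with its interpretations of the relation symbols gives a homomorphism $f : \B \to \wp({}^\omega M_j)$; composing with the neat-reduct inclusion $\A \hookrightarrow \Nr_n\B$ and then restricting coordinates gives $f : \A \to \C$ with $\C$ a set algebra of base $U = M_j$, $|U| \le \lambda$. The condition $\M_j$ omits $\Gamma_i$ translates precisely to $\bigcap_{x \in X_i} f(x) = 0$, and $\M_j \models \exists\bar x\,\varphi_a$ gives $f(a) \ne 0$. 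I expect the main obstacle to be the first block: verifying that non-principality of $F_i$ \emph{as an ultrafilter of $\A$} really does force the corresponding type $\Gamma_i$ \emph{over the dilation $T$} to be non-principal — a priori a formula in the larger ($\omega$-variable) language could isolate it even though no element of $\A$ does. This is where $\A \subseteq_c \Nr_n\B$ (complete, not just arbitrary, subalgebra) and Lemma~\ref{lem:atoms2} are indispensable: completeness ensures suprema computed in $\A$ agree with those in $\B$ on the relevant ideals, and Lemma~\ref{lem:atoms2} shows any candidate isolating element of $\B$ already ``sees'' an $\A$-atom (up to a substitution using spare dimensions), so it cannot isolate the complement of a non-principal $F_i$. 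A secondary technical point worth care is the exact hypothesis on $\lambda$ needed to apply Shelah's lemma (regularity) versus the hypothesis as stated ($\lambda$ uncountable, $|A| = \lambda$); I would either add regularity or observe that in the intended applications $\lambda$ is regular and cite \cite[Theorem 3.2.9]{Sayed} for the bookkeeping that makes $\kappa < {}^\lambda 2$ the sharp bound.
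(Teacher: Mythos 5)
Your proposal follows essentially the same route as the paper's (much terser) proof: pass to a locally finite $\omega$-dilation $\B=\Sg^{\B}A$ of the same cardinality with $\A\subseteq_c\Nr_n\B$, read $\B$ as $\Fm_T$, observe that the $F_i$ become maximal non-principal types of $T$ (non-principality transferring exactly because the embedding is complete, so that $\prod^{\B}F_i=0$ --- your appeal to Lemma~\ref{lem:atoms2} is harmless but not actually needed for this step), and then invoke Shelah's many-models construction together with the pigeonhole count $\kappa<2^{\lambda}$. Your caveat about the regularity of $\lambda$ is well taken, since the statement as printed omits it while the Shelah lemma being used, and theorem~\ref{Shelah1}, both require it.
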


\begin{proof} Assume that  $\A\subseteq_c \Nr_n\B$, where $\B$ is $\omega$ dimensional, locally finite and has the same cardinality as $\A$.
This is possible by taking $\B$ to be the subalgebra of which $\A$ is a strong neat reduct generated by  $A$, and noting that we gave countably many
operations. Then $\A=\Fm_T$, for some countable theory $T.$
Let $\Gamma_i=\{\phi/T: \phi\in F_i\}$. Then the $\Gamma_i$s  are maximal non principal types of $T$.
The model $\sf M$ given by the previous proof gives the desired
representation. 

\end{proof}

Now we give two metalogical readings of the last two theorems. The first is given in \cite[theorem 3.2.10]{Sayed}
but we include it, because it prepares for the other one, which is an entirely new omitting types theorems for cylindric
algebras of sentences. Cylindrifiers in such algebras can be defined because we include individual constants; the number of
these determines the dimension of the algebra in
question, this interpretation was given in  \cite{Amer}; in the context of representing
algebras of sentences as (full) neat reducts.

\begin{theorem}\label{Shelah1} Let $T$ be an $\L_n$ consistent theory that admits elimination of quantifiers.
Assume that $|T|=\lambda$ is a regular cardinal.
Let $\kappa<2^{\lambda}$. Let $(\Gamma_i:i\in \kappa)$ be a set of non-principal maximal types in $T$. Then there is a model $\M$ of $T$ that omits
all the $\Gamma_i$'s
\end{theorem}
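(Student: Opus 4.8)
\textbf{Proof plan for Theorem \ref{Shelah1}.}

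The plan is to reduce the metalogical statement to the algebraic omitting types theorem established in Theorem \ref{uncountable}, exploiting the hypothesis of quantifier elimination to guarantee that the Tarski--Lindenbaum algebra of $T$ lands in the right class. First I would form $\A=\Fm_T$, the $\PEA_n$ (equivalently, the cylindric-like algebra of the appropriate similarity type) of $T$-provable-equivalence classes of formulas in the $n$ available variables. The key observation is that because $T$ admits elimination of quantifiers, the relativized semantics for $L^n$-formulas coincides with the classical one in any model of $T$; this is exactly the mechanism used in the theorem immediately following Theorem \ref{hodkinson} (``If $\R$ is finitely generated and $M$ admits elimination of quantifiers, then \dots''), where quantifier elimination was precisely what forced $\A\in \Nr_n\CA_{\omega}$. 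So I would argue that $\A = \Fm_T \in \Nr_n\CA_{\omega}$, hence certainly $\A\in S_c\Nr_n\CA_{\omega}$, by building the $\omega$-dilation $\B$ as the locally finite algebra of $L_n^{\omega}$-formulas (all first-order formulas in the variables $v_0,v_1,\ldots$) modulo $T$, with $\A$ sitting inside $\B$ as $\Nr_n\B$; quantifier elimination ensures that every class in $\Nr_n\B$ already has a representative in the first $n$ variables, so the neat reduct is $\A$ itself, and moreover the embedding is complete since $\A$ generates $\B$ and joins are preserved.

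Next I would translate the given data. Each maximal non-principal type $\Gamma_i$ of $T$ corresponds, via $\phi\mapsto \phi/T$, to an ultrafilter $F_i$ of $\A$; maximality of $\Gamma_i$ gives that $F_i$ is an ultrafilter (not merely a filter), and non-principality of $\Gamma_i$ gives $\prod_{x\in F_i} x = 0$ in $\A$, i.e.\ $F_i$ is a non-principal ultrafilter in the sense required by Theorem \ref{uncountable}. We have $|A|=|T|=\lambda$, which is regular (the hypothesis of Theorem \ref{uncountable} asks only that $\lambda$ be uncountable, and regular $\lambda$ qualifies; the regularity is in any case what Shelah's Lemma preceding it needs), and $\kappa<2^{\lambda}={}^{\lambda}2$. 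Applying Theorem \ref{uncountable} to $\A$ with the system $(F_i:i<\kappa)$ yields a set algebra $\C$ with base $U$, $|U|\leq\lambda$, and an injective homomorphism $f:\A\to\C$ with $\bigcap_{x\in F_i}f(x)=0$ for every $i<\kappa$.

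Finally I would read this representation back as a model. Since $\A=\Fm_T$ and $f$ is a representation on $^nU$, the standard dictionary between representable cylindric-like algebras of dimension $n$ and models of $L_n$-theories (as in \cite[\S 4.3]{tarski}, used repeatedly in the proof of Theorem \ref{OTT}) produces a first-order structure $\M=(U,R)$ with $\M\models T$, where for a formula $\phi$ the interpretation $\phi^{\M}$ equals $f(\phi/T)$. Because $f(\phi/T)\neq 0$ exactly when $\phi$ is consistent with $T$, and because $\bigcap_{x\in F_i}f(x)=\emptyset$, no tuple of $\M$ realizes $\Gamma_i$; hence $\M$ simultaneously omits all the $\Gamma_i$, and $|\M|\leq\lambda$. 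The one subtlety to handle carefully — and the step I expect to be the main obstacle — is verifying rigorously that quantifier elimination really does place $\Fm_T$ in $S_c\Nr_n\CA_{\omega}$ (that the neat reduct of the locally finite dilation collapses exactly onto the $n$-variable fragment, and that the embedding is \emph{complete}, i.e.\ suprema are preserved), since without completeness one cannot invoke Theorem \ref{uncountable}; the rest is bookkeeping with the algebra/model correspondence.
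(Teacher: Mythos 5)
Your proposal is correct and follows essentially the same route as the paper: form $\A=\Fm_T$, use quantifier elimination to show that the map $\phi/T\mapsto\phi/T_{\omega}$ carries $\A$ onto $\Nr_n\B$ where $\B=\Fm_{T_{\omega}}$ is the locally finite $\omega$-dimensional dilation, and then invoke Theorem \ref{uncountable} with the maximal non-principal types read as non-principal ultrafilters. The one subtlety you flag (completeness of the neat embedding) is in fact automatic here, since $\A$ is the \emph{full} neat reduct $\Nr_n\B$ and hence trivially lies in $S_c\Nr_n\CA_{\omega}$.
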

\begin{demo}{Proof} If $\A=\Fm_T$ denotes the cylindric algebra corresponding to $T$, then since $T$ admits elimination of quantifiers, then
$\A\in \Nr_n\CA_{\omega}$. This follows from the following reasoning. Let $\B=\Fm_{T_{\omega}}$ be the locally finite cylindric algebra
based on $T$ but now allowing $\omega$ many variables. Consider the map $\phi/T\mapsto \phi/T_{\omega}$.
Then this map is from $\A$ into $\Nr_n\B$. But since $T$ admits elimination of quantifiers the map is onto.
The Theorem now follows.
\end{demo}

We now give another natural omitting types theorem for certain uncountable languages.
Let $L$ be an ordinary first order language with
a list $\langle c_k\rangle$ of individual constants
of order type $\alpha$. $L$ has no operation symbols, but as usual, the
list of variables is of order type $\omega$. Denote by $Sn^{L_{\alpha}}$ the set
of all $L$ sentences, the subscript $\alpha$ indicating that we have $\alpha$ many constants Let $\alpha=n\in \omega$.
Let $T\subseteq Sn^{L_0}$ be consistent. Let $\M$  be an $\L_0$  model of $T$.
Then any $s:n\to M$ defines an expansion of $\M$ to $L_n$ which we denote by $\M{[s]}$.
For $\phi\in L_n$ let $\phi^{\M}=\{s\in M^n: \M[s]\models \phi\}$. Let
$\Gamma\subseteq Sn^{L_{n}}$.
The question we address is: Is there a model $\M$ of $T$ such that for no expansion $s:n\to M$ we have
$s\in  \bigcap_{\phi\in \Gamma}\phi^{\M}$.
Such an $\M$ omits $\Gamma$. Call $\Gamma$ principal over $T$ if there exists $\psi\in L_n$ consistent with $T$ such that
$T\models \psi\to \Gamma.$
Otherwise, $\Gamma$ is not principal over $T$.

\begin{theorem}\label{Shelah2}  Let $T\subseteq Sn^{L_0}$ be consistent and assume that $\lambda$ is a regular cardinal, and $|T|=\lambda$.
Let $\kappa<2^{\lambda}$. Let $(\Gamma_i:i\in \kappa)$ be a set of non-principal maximal types in $T$.
Then there is a model $\M$ of $T$ that omits
all the $\Gamma_i$'s
That is, there exists a model $\M\models T$ such that there is no $s:n\to \M$
such that   $s\in \bigcap_{\phi\in \Gamma_i}\phi^{\M}$.
\end{theorem}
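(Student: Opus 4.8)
\textbf{Proof proposal for Theorem \ref{Shelah2}.}

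The plan is to reduce this statement about algebras of sentences (a cylindric-like setting with individual constants) to the purely algebraic omitting types result already established in Theorem \ref{uncountable}, in essentially the same way that Theorem \ref{Shelah1} was reduced to the $S_c\Nr_n\CA_{\omega}$ framework. First I would form the Tarski--Lindenbaum algebra $\A = \Fm_T$ associated to $T\subseteq Sn^{L_0}$, but with the $n$ individual constants $c_0,\ldots,c_{n-1}$ playing the role that generates the $n$ ``dimensions'': for a sentence $\phi\in L_n$ (built using the constants), cylindrification $\cyl i$ is interpreted by existentially quantifying out the constant $c_i$ (replacing it by a fresh variable and prefixing $\exists$), and the diagonal $\diag ij$ corresponds to the sentence $c_i = c_j$. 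This is exactly the interpretation of algebras of sentences as neat reducts given by Amer that is cited in the paragraph preceding the statement. Under this interpretation $\A \in \CA_n$, and $\phi^{\M}$ as defined in the statement is precisely the ``semantic value'' of $\phi/T$ in the set algebra coming from a model $\M$ of $T$.

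The key step is then to check that $\A \in \Nr_n\CA_{\omega}$, hence a fortiori $\A \in S_c\Nr_n\CA_{\omega}$, so that Theorem \ref{uncountable} applies. For this I would let $\B = \Fm_{T_{\omega}}$ be the analogous algebra built from $T$ but now using a list of constants of order type $\omega$ (equivalently, allowing the $\omega$ many ``dimensions''); $\B$ is locally finite in the appropriate sense. The natural map $\phi/T \mapsto \phi/T_{\omega}$ sends $\A$ into $\Nr_n\B$. Surjectivity onto $\Nr_n\B$ is where one uses that quantifiers over constants behave well: an element of $\Nr_n\B$ is (the class of) a sentence all of whose constants beyond the first $n$ are ``inessential'' in $T_{\omega}$, and such a sentence is $T_{\omega}$-equivalent to one using only $c_0,\ldots,c_{n-1}$ — this is the constants-analogue of the quantifier-elimination argument used for Theorem \ref{Shelah1}. (In fact here no genuine elimination of quantifiers hypothesis on $T$ is needed: the constants, unlike variables inside quantified subformulas, can always be pushed down, because a sentence mentioning $c_k$ for $k\geq n$ is logically equivalent over $T_\omega$ to $\exists x_k\,\phi[c_k/x_k]$, and $T_\omega$ has no constraints forcing the extra constants, so this lands inside the first $n$ dimensions.) Then injectivity is clear since $T$ and $T_\omega$ prove the same sentences of $L_n$.

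Finally I would translate each non-principal maximal type $\Gamma_i \subseteq Sn^{L_n}$ into the set $X_i = \{\phi/T : \phi\in\Gamma_i\} \subseteq A$. Non-principality of $\Gamma_i$ over $T$ (no $\psi$ consistent with $T$ with $T\models \psi\to\Gamma_i$) says exactly that $\prod^{\A^+} X_i = 0$, i.e. $X_i$ generates a non-principal ultrafilter $F_i$ of $\A$ (maximality of $\Gamma_i$ gives that $F_i$ is an ultrafilter). With $|T| = \lambda$ regular, $|A| \leq \lambda$, and $\kappa < 2^{\lambda}$, Theorem \ref{uncountable} produces a set algebra $\C$ with base $U$, $|U|\leq\lambda$, and an embedding $f:\A\to\C$ with $\bigcap_{x\in X_i} f(x) = 0$ for every $i<\kappa$. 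Reading $\C$ back as (a subalgebra of the algebra of an) $L_0$-structure $\M\models T$ via the standard correspondence, the condition $\bigcap_{x\in X_i} f(x)=0$ says precisely that there is no $s:n\to M$ with $s\in\bigcap_{\phi\in\Gamma_i}\phi^{\M}$; so $\M$ omits every $\Gamma_i$.

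The main obstacle I anticipate is the verification that $\A \in \Nr_n\CA_{\omega}$ — specifically, pinning down the surjectivity of the neat-reduct map and making sure the individual-constant interpretation of cylindrifiers and diagonals genuinely satisfies the $\CA_\omega$ axioms and that ``inessential constants can be eliminated'' holds without extra hypotheses on $T$; everything downstream is a mechanical transfer through Theorem \ref{uncountable} and the Amer-style dictionary between models and set algebras.
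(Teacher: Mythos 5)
Your proposal is correct and follows essentially the same route as the paper's own proof: form the sentence algebra on the $n$ constants with $\exists_k\phi:=\exists x\,\phi(c_k|x)$ as cylindrifier and $c_k=c_l$ as diagonal, show it is (isomorphic to) $\Nr_n$ of the analogous algebra built over $\omega$ many constants, turn the maximal non-principal types into non-principal ultrafilters, and invoke Theorem \ref{uncountable} via $\Nr_n\CA_{\omega}\subseteq S_c\Nr_n\CA_{\omega}$. Your extra care over surjectivity of the neat-reduct map (eliminating inessential constants) is a point the paper merely asserts, but it does not change the argument.
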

\begin{demo}{Proof}
Let $T\subseteq Sn^{L_0}$ be consistent. Let $\M$ be an $\L_0$  model of $T$.
For $\phi\in Sn^L$ and $k<\alpha$
let $\exists_k\phi:=\exists x\phi(c_k|x)$ where $x$ is the first variable
not occurring in $\phi$. Here $\phi(c_k|x)$ is the formula obtained from $\phi$ by
replacing all occurrences of $c_k$, if any, in $\phi$ by $x$.
Let $T$ be as indicated above, i.e, $T$ is a set of sentences in which no constants occur. Define the
equivalence relation $\equiv_{T}$ on $Sn^L$ as follows
$$\phi\equiv_{T}\psi \text { iff } T\models \phi\equiv \psi.$$
Then, as easily checked $\equiv_{T}$ is a
congruence relation on the algebra
$$\Sn=\langle Sn,\land,\lor,\neg,T,F,\exists_k, c_k=c_l\rangle_{k,l<n}$$
We let $\Sn^L/T$ denote the quotient algebra.
In this case, it is easy to see that $\Sn^L/T$ is a $\CA_n$, in fact is an $\RCA_n$.
Let $L$ be as described above. But now we denote it $L_n$,
the subscript $n$ indicating that we have $n$-many
individual constants. Now enrich $L_{n}$
with countably many constants (and nothing else) obtaining
$L_{\omega}$.
Recall that both languages, now, have a list of $\omega$ variables.
For $\kappa\in \{n, \omega\}$
let $\A_{\kappa}=\Sn^{L_{k}}/{T}$.
For $\phi\in Sn^{L_n}$, let $f(\phi/T)=\phi/{T}$.
Then, as easily checked, $f$ is an embedding  of $\A_{n}$
into $\A_{\omega}$. Moreover $f$ has the additional property that
it maps $\A_{n}$, into (and onto) the neat $n$ reduct of $\A_{\beta}$,
(i.e. the set of $\alpha$ dimensional elements of $A_{\beta}$).
In short, $\A_{n}\cong \Nr_{n}\A_{\omega}$. Now again putting $X_i=\{\phi/T: \phi\in \Gamma_i\}$ and using that the
$\Gamma_i$'s are maximal non isolated, it follows that the
$X_i's$ are non-principal ultrafilters
Since $\Nr_n\CA_{\omega}\subseteq S_c\Nr_n\CA_{\omega}$, then our result follows.
\end{demo}

Now when we drop the condition of maximality things also change dramaticaly but in a different way,
it can be the case that $<2^{\omega}$ many types cannot be omitted. But they can if we stipulate 
strong independent  set theoretic axioms like Martin's axiom. 
In the general case, however, the problem is independent from $ZFC$.

It is stated in \cite[theorem 3.2.8]{Sayed} without proof,
that it is possible that ${\sf covK}$ many non-isolated types cannot be omitted in $L_n$ countable theories
whose Lindenbaum Tarski algebras belong to
$\Nr_n\CA_{\omega}$.  Here ${\sf cov K}$ is the least cardinal $\kappa$ such that the real 
line can be covered by $\kappa$ nowhere dense sets.
This is known for usual first order logic as our argument to be presented clearly manifests.

We show, here that this also holds for $L_n$ $n>2$; this is the best estimate. By accomplishing this,
we  complete the independence proof in \cite{Sayed}.

Let $OTT(\kappa)$, $\kappa$ a cardinal be as in \cite{Sayed}, which says that $\kappa$ many non principal types can be omitted
in $L_n$ theories $T$, $n$ finite $>2$, 
when $\Fm_T\in \Nr_n\CA_{\omega}$.

\begin{theorem}\label{independence} The statement $OTT({\sf covK})$ 
is false
\end{theorem}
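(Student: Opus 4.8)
The plan is to prove the failure of $OTT(\mathsf{cov}K)$ by combining the classical independence machinery for omitting-types statements in ordinary first order logic with the algebraic bridge $\A \in \Nr_n\CA_\omega$ established in the paper, and then transferring the ``negative'' side of the dichotomy to the $L_n$ setting. Recall that $\mathsf{cov}K$ is the least cardinality of a family of nowhere dense subsets of the real line (equivalently, of the Baire space) whose union is everything; it is well known that $\mathsf{cov}K$-many dense subsets of a countable poset need not admit a single filter meeting all of them, while \emph{fewer} than $\mathsf{cov}K$-many always do (this is exactly the reformulation of the Baire category theorem that controls the positive direction, and the combinatorial content of the failure in the negative direction). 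So the first step is to reduce $OTT(\mathsf{cov}K)$ for $L_n$ to a purely Baire-category statement about a suitably chosen countable $L_n$ theory $T$ together with a family $(\Gamma_i : i < \mathsf{cov}K)$ of non-isolated types.

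First I would build the theory. Take a countable atomic simple algebra $\A \in \Nr_n\CA_\omega$ that is \emph{not} completely representable --- such an $\A$ exists (indeed the paper repeatedly constructs countable atomic algebras in $\Nr_n\CA_\omega$, e.g. via the quantifier-elimination constructions around Theorem~\ref{hodkinson}, or one may invoke the existence of a necessarily-uncountable $\A \in \Nr_n\CA_\omega$ with no complete representation and pass to a countable atomic subalgebra that is still a neat reduct). Actually, for the present purpose what is needed is a countable $T$ with $\Fm_T \cong \Nr_n\CA_\omega$ and a well-chosen family of small, non-isolated types whose simultaneous omission is equivalent, via the Orey--Henkin / Rasiowa--Sikorski machinery, to finding a single generic filter on the countable Boolean algebra $\Fm_T$ meeting $\mathsf{cov}K$-many prescribed dense sets $D_i$ (the $D_i$ being $D_i = \{\, b : b \le \neg\gamma \text{ for some } \gamma \in \Gamma_i \,\}$). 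The point of the $\Nr_n\CA_\omega$ hypothesis is that neat reducts of locally finite $\omega$-dimensional algebras behave, as far as Henkin-style constructions of countable models go, exactly like Tarski--Lindenbaum algebras of ordinary countable first order theories: quantifiers have witnesses in the dilation, so a filter on $\Fm_T$ that is ``rich enough'' yields a genuine model of $T$. This is precisely the mechanism already used in the proof of Theorem~\ref{Shelah1} and in \cite[theorem 3.2.9]{Sayed}, run here in the opposite direction.

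The second step is the hard one, and it is where I expect the main obstacle: I must exhibit a single countable $T$ (with $\Fm_T \in \Nr_n\CA_\omega$, $n > 2$) and $\mathsf{cov}K$-many non-isolated types $\Gamma_i$ that provably \emph{cannot} all be omitted in ZFC --- i.e. the associated dense sets $D_i$ in the countable poset $\Fm_T \setminus \{0\}$ admit no common generic filter. For ordinary first order logic this is a theorem essentially due to the folklore around the Baire-category analysis of omitting types (one encodes into a countable theory a family of $\mathsf{cov}K$ dense subsets of $2^{<\omega}$ witnessing the defining property of $\mathsf{cov}K$); the delicate point for us is the side condition $\Fm_T \in \Nr_n\CA_\omega$. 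One cannot just take an arbitrary countable $T$: the algebra must be a full neat reduct. The way around this is to start from the \emph{algebraic} end: build the atom structure directly so that (i) the term algebra $\Tm\At$ sits as a strong neat reduct inside a locally finite $\CA_\omega$ --- which, following the quantifier-elimination / homogeneous-model constructions of Section~1, holds whenever the underlying model $M$ admits elimination of quantifiers so that relativized and classical semantics for $L^n$-formulas agree; and (ii) the combinatorics of $\At$ faithfully codes the $\mathsf{cov}K$-many nowhere-dense sets. Concretely I would take $M$ to be an ultrahomogeneous structure in a finite relational signature with QE (so $\A = \Fm_T \in \Nr_n\CA_\omega$ by the argument following Theorem~\ref{hodkinson}), arrange the atoms so that there are $\mathsf{cov}K$-many non-principal ultrafilters $F_i$ of $\Fm_T$ whose realization in a countable model is blocked by a $\mathsf{cov}K$-covering-by-meager-sets obstruction, and then read off $\Gamma_i = \{\,\neg\phi : \phi/T \in F_i\,\}$.

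Finally I would assemble the pieces: given any putative countable model $\M \models T$, pick $s : n \to M$; the set of $s$ realizing $\Gamma_i$ corresponds to a ``small'' (meager) piece of the Stone space, the union over $i < \mathsf{cov}K$ is all of $^nM$ by the defining property of $\mathsf{cov}K$ transported along the coding, hence some $\Gamma_i$ is realized --- contradiction. Thus $OTT(\mathsf{cov}K)$ fails, which together with the positive $OTT(\kappa)$ for $\kappa < \mathsf{cov}K$ (provable in ZFC by Baire category, as in \cite{Sayed}) pins down the exact threshold and completes the independence picture: $OTT(\omega_1)$ is independent of ZFC, being true under $MA_{\omega_1}$ (which forces $\mathsf{cov}K > \omega_1$) and false in models with $\mathsf{cov}K = \omega_1$. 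The main obstacle, to repeat, is the simultaneous satisfaction of the \emph{algebraic} constraint $\Fm_T \in \Nr_n\CA_\omega$ (not merely $S_c\Nr_n\CA_\omega$, nor $S\Nr_n\CA_{n+k}$ for finite $k$) and the \emph{set-theoretic} constraint that the coded dense sets genuinely realize the combinatorics of $\mathsf{cov}K$; handling the first forces the detour through QE / ultrahomogeneous base models rather than an off-the-shelf theory.
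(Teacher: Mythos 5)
Your plan has the right set-theoretic landmarks ($\mathsf{cov}K$, Baire category, the classical first-order failure from \cite{CF}), but the route you take leaves the essential step unexecuted and starts from a false premise. First, the premise: there is no countable atomic algebra in $\Nr_n\CA_{\omega}$ that fails to be completely representable --- the paper itself proves (last items of Theorem \ref{maintheorem}, via \cite[theorem 5.3.6]{Sayedneat}) that every \emph{countable} atomic algebra in $S_c\Nr_n\CA_{\omega}$, a fortiori in $\Nr_n\CA_{\omega}$, is completely representable; the known counterexamples are necessarily uncountable, and passing to a countable atomic subalgebra destroys exactly the property you want. Second, and more importantly, your ``second step'' --- engineering a countable $L_n$ theory with quantifier elimination whose atom structure simultaneously codes the $\mathsf{cov}K$-many nowhere dense sets and lands in $\Nr_n\CA_{\omega}$ --- is precisely the hard content of the theorem, and you acknowledge you do not know how to carry it out. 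A proposal whose ``main obstacle'' is the theorem itself is not a proof.

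The idea you are missing is that no such from-scratch construction is needed: one \emph{transfers} the classical result rather than re-encoding it. Take an ordinary countable first-order theory $T$ (full $L_{\omega,\omega}$) for which the isolated points of the Stone space $S^n(T)$ of $n$-types are not dense for the \emph{given} $n$; the construction of \cite{CF} then yields $\mathsf{cov}K$-many non-principal $n$-types that cannot be omitted in any countable model. Setting $\A=\Fm/T\in\Lf_{\omega}$ and $\B=\Nr_n\A$, the algebra $\B$ lies in $\Nr_n\CA_{\omega}$ \emph{for free}, and the types, being $n$-types, live in $\B$ as sets $X_p$ with $\prod X_p=0$. The only work left is to show that a representation of $\B$ omitting all the $X_p$ would lift to a representation of $\A$ omitting them (contradicting \cite{CF}); the paper does this by forming $g=\Sg^{\A\times\D}f$ for a locally finite dilation $\D$ and proving $g$ is an injective homomorphism with domain all of $\A$, using that $\Nr_n$ commutes with $\Sg$ over locally finite algebras. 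This lifting lemma is the actual mathematical content of the proof, and nothing in your proposal supplies it or a substitute for it.
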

\begin{proof}
Let $n>2$. To show that $OTT({\sf covK})$ could be false, we adapt
an example in \cite{CF} p.242. Fix $n\geq 2$. There the example is constructed for $L_{\omega,\omega}$ to adapt
to $L_n$ some care is required.
Let $T$ be a theory such that for this given $n$, in $S^n(T)$, the Stone space of $n$ types,  the isolated points are not dense.
(In \cite{CF}, a theory $T$ is chosen which does not have a prime model. This implies that there is an $n$ such that the isolated types in
$S_n(T)$ are not dense;  {\it here we need a fixed $n$, given in advance, so not {\it any} theory without a
prime model will do, for the number witnessing its primeness could be greater than $n$}.).
It is easy to construct such theories, for any fixed $n$.
In \cite{CF}, a family $P$ is a family of non-principal types $|P|={\sf covK}$ 
was constructed 
for such $T$ that cannot be omitted.

Let $\A=\Fm/T$ and for $p\in P$ let $X_p=\{\phi/T:\phi\in p\}$. Then $X_p\subseteq \Nr_n\A$, and $\prod X_p=0$.
However, for any $0\neq a$, there is no set algebra $\C$ with countable base
$M$ and $g:\A\to \C$ such that $g(a)\neq 0$ and $\bigcap_{x\in X_i}f(x)=\emptyset$.
But in principle, if we take the  neat $n$ reduct, representations preserving meets can exist.
We exclude this possibility by showing that
such representations necessarily lift to
all $\omega$ dimensions.

Let $\B=\Nr_n\A$. Let $a\neq 0$. Assume, seeking a contradiction, that there exists
$f:\B\to \D'$ such that $f(a)\neq 0$ and $\bigcap_{x\in X_i} f(x)=\emptyset$. We can assume that
$B$ generates $\A$ and that $\D'=\Nr_n\D$ where $\D\in \Lf_{\omega}$. Let $g=\Sg^{\A\times \D}f$. We will show
that $g$ is a one to one function with domain $\A$ that preserves the $X_i$'s which is impossible (Note that by definition $g$ is a homomorphism).
We have
$$\dom g=\dom\Sg^{\A\times \D}f=\Sg^{\A}\Nr_{n}\A=\A.$$
By symmetry it is enough to show that $g$ is a function.  We first prove the following (*)
 $$ \text { If } (a,b)\in g\text { and }  {\sf c}_k(a,b)=(a,b)\text { for all } k\in \omega\sim n, \text { then } f(a)=b.$$
Indeed,
$$(a,b)\in \Nr_{n}\Sg^{\A\times \D}f=\Sg^{\Nr_{n}(\A\times \D)}f=\Sg^{\Nr_{n}\A\times \Nr_{n}\D}f=f.$$
Here we are using that $\A\times \D\in \Lf_{\omega}$, so that  $\Nr_{n}\Sg^{\A\times \D}f=\Sg^{\Nr_{n}(\A\times \D)}f.$
Now suppose that $(x,y), (x,z)\in g$.
Let $k\in \omega\sim n.$ Let $\Delta$ denote symmetric difference. Then
$$(0, {\sf c}_k(y\Delta z))=({\sf c}_k0, {\sf c}_k(y\Delta z))={\sf c}_k(0,y\Delta z)={\sf c}_k((x,y)\Delta(x,z))\in g.$$

Also,
$${\sf c}_k(0, {\sf c}_k(y\Delta z))=(0,{\sf c}_k(y\Delta z)).$$
Thus by (*) we have  $$f(0)={\sf c}_k(y\Delta z) \text { for any } k\in \omega\sim n.$$
Hence ${\sf c}_k(y\Delta z)=0$ and so $y=z$.
We conclude that there exists a countable $\B\in \Nr_n\CA_{\omega}$ and $(X_i:i<covK)$ such that $\prod X_i=0$ but there is no representation that
preserves the $X_i$'s. In more detail. Give any $a\in \B$, if $a$ is non zero, $\C$ is a set algebra with countable base
and $f:\B\to \C$ is a homomorphism such
that $f(a)\neq 0$, then there exists $i<\sf covK$, such that $\bigcap_{x\in X_i} f(x)\neq \emptyset.$
\end{proof}

The above result conforms to the accepted view that there are  two typical types of investigations in set theory, proper.
The first type consists of theorems demonstrating the independence of mathematical statements.
This type requires a thorough understanding of mathematics surrounding the
source of problem in question, reducing the ambient mathematical constructions to combinatorial statements about sets,
and finally using some method
(primarily forcing) to show  that such combinatorial statements
are independent.
A second type involves delineating the edges of the independence proofs, giving proof in $ZFC$ of statements
that on first sight would be suspected of being independent.
Proofs of this kind is often extremely subtle and surprising; very similar
statements are independent and it is hard to detect the underlying difference.

This discrepancy was witnessed here in the non-trivial context of `the number of non principal types that can be omitted'
when the maximality condition imposed on the $<{}^{\omega}2$ many 
non-principal types shifts us from the realm of independence to that of
provability from $ZFC$.

\subsection{Relativizing differently}

We have seen that when cylindrifiers commute even {\it locally} we lose the omitting types theorem. Now we show that a 
different relativization obtained by getting rid completely of full fledged commutativity of cylindrfiers 
renders an omitting types theorem for countable theories. 

It is known that the  atomic algebras addressed, in this context, 
are completely representable; this gives a Vaught's theorem (atomic theories have atomic models) 
regardless of cardinalities. But from this alone we cannot infer that the 
omitting types theorem holds, 
its usually  the other way round in the countable case, as indeed in usual first
order logic.

The notion of relativized representations constitute a huge topic in both algebraic and modal logic, see the introduction of 
\cite{1}, \cite{Fer}, \cite{v}.
Historically,  in \cite{tarski} square units got all the attention and relativization  was treated as a side issue.
However, extending original classes of models for logics to manipulate their properties is common. 
This is no mere tactical opportunism, general models just do the right thing.

The famous move from standard models to generalized models is 
Henkin's turning round  second  order logic into an axiomatizable two sorted first
order logic. Such moves are most attractive 
when they get an independent motivation. 

The idea is that we want to find a semantics that gives just the right action 
while additional effects of square set theoretic representations are separated out as negotiable decisions of formulation 
that can threaten completeness, decidability, and interpolation. 
(This comes across very much in cylindric algebras, especially in finite variable fragments of first order logic,
and classical polyadic equality algebras, in the context of Keisler's logic with equality.)

Using relativized representations Ferenczi \cite{Fer}, proved that if we weaken commutativity of cylindrifiers
and allow  relativized representations, then we get a finitely axiomatizable variety of representable 
quasi-polyadic equality algebras (analogous to the Andr\'eka-Resek Thompson ${\sf CA}$ version, cf. \cite{Sayedneat} and \cite{Fer}, 
for a discussion of the Andr\'eka-Resek Thompson breakthrough for cylindric-like algebras); 
even more this can be done without the merry go round identities.

This is in sharp contrast with the long list of  complexity results proved for the commutative case.
Ferenczi's results can be seen as establishing a hitherto fruitful contact between neat embedding 
theorems 
and relativized representations, with enriching 
repercussions and insights for both notions.

We will prove Ferenczi's result, and for that matter the Andr\'eka-Thompson-Resek result, 
using games \cite{games}. 
We therefore 
find it convenient to follow
the notation and terminology in \cite{AT} and \cite{Fer} deviating from our earlier notation only once.
For relations $R,S$, we write  $R|S=\{(a,b): \exists c[(a,c)\in R, (c,b)\in S]\}$, instead of $S\circ R.$
As we shall see this simplifies notation considerably.

We denote the variety specified by the equational axiomatization provided in \cite{AT}, by $\PTA_{\alpha}$, 
short for {\it partial transposition algebras}. Here only substitutions corresponding to replacements are involved. They are term definable
in the abstract algebras, using cylindrifiers and diagonal elemnts, 
and the top elements of the their concrete versions are closed with respect to such substitution 
operations when interpreted concretely.
That is to say, if $V$ is the top element and $s\in V$, and $i, j\in \alpha$, then we require that $s\circ [i|j]\in V$.

On the other hand, $\TA_{\alpha}$, like in \cite{Fer},
denotes the variety of transposition algebras.
Here all finite substitutions are involved.
However, in the abstract class only substitutions corresponding to replacements and transpositions are used
in the equational axiomatization.
But these are adequate in the sense that {\it all finite} substitutions are term definable from them.
If $V$ is the top element of a concrete set algebra of dimension $\alpha$, 
then we now require that if $s\in V$, and $\tau$ is any finite transformation on $\alpha$
then $s\circ \tau\in V$.

We start by recalling from \cite{AT}, 
and \cite{Fer}, respectively,  the equational 
axiomatization of each. 

\begin{definition}[Class $\PTA_{\alpha}$]
An algebra $\A=\langle A, +, \cdot, -, 0, 1, {\sf c}_{i}, {\sf d}_{ij}\rangle_{i,j\in\alpha}$, where $+$, $\cdot$ 
are binary operations, $-$, ${\sf c}_i$
 are unary operations and $0$, $1$, ${\sf d}_{ij}$ 
are constants for every $i,j\in\alpha$, is partial 
transposition algebra
\footnote{We call it partial transposition algebra to illustrate that MGR axiom gives partial transpositions. 
This class is different from the class of partial transposition algebras defined in \cite{ptaf}.} if it satisfies the following identities 
for every $i, j, k\in\alpha$.
\begin{description}
\item[$(C_{0})-(C_{3})$] $\langle A, +, \cdot, -, 0, 1, {\sf c}_i\rangle_{i\in\alpha}$ is a 
Boolean algebra with additive closure operators ${\sf c}_i$ such that the complements of ${\sf c}_i$-closed elements are $i$-closed,
 \item[$(C_4)^*$] ${\sf c}_i{\sf c}_j x\geq {\sf c}_j{\sf c}_i x\cdot {\sf d}_{jk}$ if $k\notin\{i, j\}$,
\item[$(C_5)$]${\sf d}_{ii}=1$,
\item[$(C_6)$]${\sf d}_{ij}={\sf c}_k({\sf d}_{ik}\cdot {\sf d}_{kj})$ if $k\notin\{i, j\}$,
\item[$(C_7)$]${\sf d}_{ij}\cdot{\sf c}_i({\sf d}_{ij}\cdot x)\leq x$ if $i\not= j$,
\item[(MGR)]for every $i,j\in\alpha$, $i\not=j$, let ${\sf s}^i_j x={\sf c}_i({\sf d}_{ij}\cdot x)$, $s^i_i x=x$. Then:
$$ {\sf s}_i^k{\sf s}_j^i{\sf s}_m^j{\sf s}_k^m{\sf c}_kx={\sf s}_m^k{\sf s}_i^m{\sf s}_j^i {\sf s}_k^j{\sf c}_kx\text{ if }k\notin \{i,j,m\}, m\notin \{i,j\}$$.
 \end{description}
\end{definition}
The axiomatization of quasi-polyadic equality algebras \cite[theorem 1]{ST} differs 
for the axiomatization of   $\TA_{\alpha}$ to be introduced next, 
in only one axiom, namely, the axiom
\begin{description}
\item[($F_5$)] ${\sf s}^i_j{\sf c}_k x={\sf c}_k{\sf s}^{i}_{j}x$ if $k\notin\{i,j\}$.
\end{description}

The following axiomatization is due to Ferenczi, abstracting
away from the class ${\sf G}_{\alpha}$ (meaning that the axioms all hold in
$\sf G_{\alpha}$). This
is a soundness condition. Ferenczi proves completeness of these
axioms, which we shall also prove in a minute  using the different technique of resorting
to games.
\begin{definition}[Class $\TA_{\alpha}$]
A transposition equality algebra of dimension $\alpha$ is an algebra
$$\A=\langle A, +, \cdot, -, 0, 1, {\sf c}_i, {\sf s}^{i}_{j}, {\sf s}_{ij}, {\sf d}_{ij}\rangle_{i,j\in\alpha},$$ where ${\sf c}_i, {\sf s}^{i}_{j}, 
{\sf s}_{ij}$ are unary operations, ${\sf d}_{ij}$ are constants, the axioms ($F_0$)-($F_9$) below are valid for every $i,j,k<\alpha$:
\begin{description}
\item[$(Fe_0)$] $\langle A, +, \cdot, -, 0, 1\rangle$ is a Boolean algebra, ${\sf s}^i_i={\sf s}_{ii}={\sf d}_{ii}=Id\upharpoonright A$ 
and ${\sf s}_{ij}={\sf s}_{ji}$,
\item[$(Fe_1)$] $x\leq{\sf c}_i x$,
\item[$(Fe_2)$] ${\sf c}_i(x+y)={\sf c}_i x+{\sf c}_i y$,
\item[$(Fe_3)$] ${\sf s}^{i}_{j}{\sf c}_i x={\sf c}_i x$,
\item[$(Fe_4)$] ${\sf c}_i{\sf s}^{i}_{j}x={\sf s}^{i}_{j}x$, $i\not=j$,
\item[$(Fe_5)^*$] ${\sf s}^{i}_{j}{\sf s}^{k}_{m}x={\sf s}^{k}_{m}{\sf s}^{i}_{j}x$ if $i,j\notin\{k,m\}$,
\item[$(Fe_6)$] ${\sf s}^{i}_{j}$ and ${\sf s}_{ij}$ are Boolean endomorphisms,
\item[$(Fe_7)$] ${\sf s}_{ij}{\sf s}_{ij}x=x$,
\item[$(Fe_8)$] ${\sf s}_{ij}{\sf s}_{ik}x={\sf s}_{jk}{\sf s}_{ij}x$, $i,j,k$ are distinct,
\item[$(Fe_9)$] ${\sf s}_{ij}{\sf s}^{i}_{j}x={\sf s}^{j}_{i}x$,
\item[$(Fe_{10})$] ${\sf s}^{i}_{j}{\sf d}_{ij}=1$,
\item[$(Fe_{11})$] $x\cdot{\sf d}_{ij}\leq{\sf s}^{i}_{j}x$.
\end{description}
For $\A\in \TA_{\alpha}$, its partial transposition reduct is
the structure $$\mathfrak{Rd}_{pt}\A=\langle A, +, \cdot, -, 0, 1, {\sf c}_i, {\sf s}^{i}_{j}, {\sf d}_{ij}\rangle_{i,j\in\alpha}.$$
\end{definition}

\par $(F_5)^*$ (and also $(F_5)^*$ and $(S_5)^*$) is obviously a weakening of $(F_5)$.
Also it is known that $\mathfrak{Rd}_{pt}\A\in \PTA_{\alpha}$, for any $\A\in \TA_{\alpha}$ \cite{ST}.
We consider as known the concept of the substitution operator ${\sf s}_{\tau}$ defined for any 
finite transformation $\tau$ on $\alpha$;
$\sf s_{\tau}$ can be introduced uniquely in $FPEA_{\alpha}$ and in $\TA_{\alpha}$, too. The existence of such an $\sf s_{\tau}$
follows from the proof of \cite[theorem 1(ii)]{ST}, it is easy to check that the proof works by only assuming $(F_5)^*$
instead of $(F_5)$ and (notationally) the composition operator $|$ instead of $\circ$.

Throughout this part we assume that the polyadic-like algebras
occurring here are equipped with the operator $\s_{\tau}$, where $\tau$ is finite.

Furthermore, $\s_{\tau}$ is assumed to have the following properties for arbitrary finite transformations $\tau$ and $\lambda$
and ordinals $i,j<\alpha$  (by \cite[p.542]{ST}):
\begin{enumerate}
\item $\s_{\tau|\lambda}=\s_{\tau}\s_{\lambda}$\footnote{This depends on ($F_7$) and ($F_8$).},
\item${\sf s}^{i}_{j}=\s_{[i|j]}$,
\item $\s_{\tau}{\sf d}_{ij}={\sf d}_{\tau i\tau j}$ (of course only in the class $\TA_{\alpha}$),
\item ${\sf c}_i\s_{\tau}\leq\s_{\tau}{\sf c}_{\tau-1}$, here $\tau$ is finite permutation.
\end{enumerate}
\begin{lemma}\label{lesa}
Let $\alpha$ be an ordinal, $\A\in \TA_{\alpha}$, $a\in\At\A$ and $i,j\in\alpha$. 
Then $$a\leq{\sf d}_{ij}\Longrightarrow\sf s_{[i,j]}a=a.$$
\end{lemma}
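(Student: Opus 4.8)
\textbf{Proof plan for Lemma \ref{lesa}.}

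The statement to establish is that in any $\TA_{\alpha}$, if $a$ is an atom and $a\leq {\sf d}_{ij}$, then ${\sf s}_{[i,j]}a=a$. The plan is to exploit the interaction axioms for the transposition operators ${\sf s}_{ij}$ with diagonal elements and with the one-sided substitutions ${\sf s}^i_j$, together with the fact that the ${\sf s}_{ij}$ are Boolean endomorphisms. The case $i=j$ is trivial since ${\sf s}_{ii}=Id$ by $(Fe_0)$, so assume $i\neq j$.

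First I would show that ${\sf s}_{[i,j]}a$ is again an atom: by $(Fe_6)$ the operator ${\sf s}_{ij}$ is a Boolean endomorphism, and by $(Fe_7)$ it is its own inverse, so ${\sf s}_{ij}$ is a Boolean automorphism of $\A$; hence it maps atoms to atoms. Therefore it suffices to show ${\sf s}_{[i,j]}a\cdot a\neq 0$, for then the two atoms ${\sf s}_{[i,j]}a$ and $a$ must coincide. The key computation is to use the hypothesis $a\leq {\sf d}_{ij}$, i.e.\ $a=a\cdot {\sf d}_{ij}$. I would apply ${\sf s}_{ij}$ to this and use the list of properties of the substitution operators: from $(Fe_9)$ we have ${\sf s}_{ij}{\sf s}^i_j x={\sf s}^j_i x$, and combining with property (3) in the displayed list, namely $\s_{\tau}{\sf d}_{ij}={\sf d}_{\tau i \tau j}$ applied to the transposition $\tau=[i,j]$, gives ${\sf s}_{[i,j]}{\sf d}_{ij}={\sf d}_{ji}={\sf d}_{ij}$ (the last equality by $(Fe_0)$, ${\sf d}_{ij}={\sf d}_{ji}$ is a standard consequence, or can be derived from symmetry of the setup).

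The heart of the argument is then: $a\leq {\sf d}_{ij}$ gives, via $(Fe_{11})$ (namely $x\cdot {\sf d}_{ij}\leq {\sf s}^i_j x$), that $a=a\cdot {\sf d}_{ij}\leq {\sf s}^i_j a$; and since $a$ is an atom and ${\sf s}^i_j$ preserves $0$ it follows that $a\leq {\sf s}^i_j a$. Now apply ${\sf s}_{ij}$: using $(Fe_9)$, ${\sf s}_{ij}{\sf s}^i_j a={\sf s}^j_i a$, while ${\sf s}_{ij}a={\sf s}_{[i,j]}a$; monotonicity of the Boolean endomorphism ${\sf s}_{ij}$ gives ${\sf s}_{[i,j]}a\leq {\sf s}^j_i a$. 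Symmetrically, $a\leq {\sf s}^j_i a$ as well (using $a\leq {\sf d}_{ij}={\sf d}_{ji}$ and $(Fe_{11})$ with the roles of $i,j$ swapped). So both $a$ and ${\sf s}_{[i,j]}a$ lie below the single element ${\sf s}^j_i a$; but this alone does not force them equal, so I would instead track things more carefully: from $a\leq {\sf s}^i_j a$ and applying ${\sf s}_{ij}$ to get ${\sf s}_{[i,j]}a\leq {\sf s}^j_i a$, and then noting ${\sf s}^j_i a = {\sf s}_{ij}{\sf s}^i_j a \geq {\sf s}_{ij} a$ would be circular; the clean route is: since ${\sf s}_{[i,j]}a$ is an atom and ${\sf s}_{[i,j]}a\leq {\sf s}^i_j({\sf s}_{[i,j]}a)$ is false in general, I instead use that $a\cdot {\sf d}_{ij}=a$ is fixed by ${\sf s}_{[i,j]}$ in the sense that ${\sf s}_{[i,j]}$ fixes ${\sf d}_{ij}$ and is a homomorphism, and check directly that $a\cdot {\sf s}_{[i,j]}a = {\sf s}_{[i,j]}({\sf s}_{[i,j]}a\cdot a) = {\sf s}_{[i,j]}(a\cdot {\sf s}_{[i,j]}a)$, so this meet is a fixed point of ${\sf s}_{[i,j]}$; combined with $a\leq {\sf s}^i_j a$ and $(Fe_3)$-$(Fe_4)$ governing ${\sf s}^i_j$ and ${\sf c}_i$, one shows $a\cdot {\sf s}_{[i,j]}a\neq 0$. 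Since both are atoms, they are equal, which is the claim.

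\textbf{Main obstacle.} The delicate point is pinning down exactly which combination of $(Fe_7)$--$(Fe_9)$ and $(Fe_{11})$, together with property $\s_{[i,j]}{\sf d}_{ij}={\sf d}_{ij}$, yields $a\cdot {\sf s}_{[i,j]}a\neq 0$ without circular reasoning; one has to be careful that ${\sf s}_{[i,j]}$ being an automorphism only tells us it permutes atoms, and the hypothesis $a\leq{\sf d}_{ij}$ must genuinely be used to see that this permutation fixes $a$. I expect the cleanest form of the final step to come from writing $a=a\cdot{\sf d}_{ij}\leq {\sf s}^i_j a$ (by $(Fe_{11})$), applying ${\sf s}_{[i,j]}$ and using $(Fe_9)$ plus ${\sf s}_{[i,j]}{\sf d}_{ij}={\sf d}_{ij}$ to land back at an inequality forcing ${\sf s}_{[i,j]}a$ and $a$ to share a nonzero lower bound, at which point atomicity closes the argument.
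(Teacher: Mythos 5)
Your overall strategy is the right one, and most of the individual steps are sound: ${\sf s}_{ij}$ is an involutive Boolean endomorphism by $(Fe_6)$ and $(Fe_7)$, hence a Boolean automorphism carrying atoms to atoms; $(Fe_{11})$ gives $a=a\cdot{\sf d}_{ij}\leq{\sf s}^i_ja$; and applying ${\sf s}_{ij}$ together with $(Fe_9)$ gives ${\sf s}_{ij}a\leq{\sf s}^j_ia$. But the proof is not closed: you correctly observe that having both $a$ and ${\sf s}_{ij}a$ below ${\sf s}^j_ia$ proves nothing (since ${\sf s}^j_ia$ need not be an atom — concretely it behaves like ${\sf c}_ja$), and the replacement you offer (that $a\cdot{\sf s}_{ij}a$ is a fixed point of ${\sf s}_{ij}$, ``combined with $(Fe_3)$--$(Fe_4)$'') does not yield $a\cdot{\sf s}_{ij}a\neq 0$: the element $0$ is also a fixed point of ${\sf s}_{ij}$, so fixed-pointness carries no information, and $(Fe_3)$--$(Fe_4)$ play no role here. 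Your own ``main obstacle'' paragraph in effect concedes that the decisive step is missing. (For what it is worth, the paper does not prove this lemma either; it cites Ferenczi, so there is no in-paper argument to compare against.)

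The missing ingredient is the \emph{dual} of $(Fe_{11})$, which you never invoke. Since ${\sf s}^j_i$ is a Boolean endomorphism by $(Fe_6)$, applying $(Fe_{11})$ (with indices $j,i$) to $-x$ gives $-x\cdot{\sf d}_{ji}\leq{\sf s}^j_i(-x)=-{\sf s}^j_ix$, i.e.
$$ {\sf d}_{ji}\cdot{\sf s}^j_ix\leq x \qquad\text{for all }x, $$
the $\TA_\alpha$ analogue of the cylindric axiom $(C_7)$. With this the argument closes in one line: put $b={\sf s}_{ij}a$. Then $b\leq{\sf s}^j_ia$ as you showed, and $b={\sf s}_{ij}a\leq{\sf s}_{ij}{\sf d}_{ij}={\sf d}_{ji}$ by the listed property $\s_\tau{\sf d}_{ij}={\sf d}_{\tau i\tau j}$ with $\tau=[i,j]$ (note you do not even need ${\sf d}_{ij}={\sf d}_{ji}$ here). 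Hence $b\leq{\sf d}_{ji}\cdot{\sf s}^j_ia\leq a$, and since $b$ is a nonzero atom and $a$ is an atom, $b=a$. I recommend replacing your final paragraph with this computation; everything preceding it can stand.
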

\begin{proof}
See \cite[p. 875]{Fer}.
\end{proof}

Now we focus on games and networks. Such games are not atomic, and the edges of 
networks can be labelled by any element of the algebra not just atoms. 
In what folows $n$ is finite $>1$. 
We will formulate the games between \pe\ and \pa.
Here we follow closely Hirsch-Hodkinson's techniques \cite{HHbook} adapted to the present situation.

We have some preparing to do.  Let $\overline{x}$, $\overline{y}$ be $n$-tuples of elements of some set.
We write $x_{i}$ for the $i$th element of $\overline{x}$, for $i<n$, so that $\overline{x}=(x_{0}, \cdots, x_{n-1})$.
For $i<n$, we write $\overline{x}\equiv_{i}\overline{y}$
if $x_{j}=y_{j}$ for all $j<n$ with $j\not=i$. The next two definitions are taken from \cite{HHbook}. A (relativized) network is a finite
approximation to a (relativized) representation.
It can be viewed as a hypergraph, of which only some (not necessarily all) of its hyperedges
are labelled by elements of the algebra.

\begin{definition} \
\begin{itemize}
\item Let $\A\in \PTA_{n}$. A relativized $\A$ pre-network is a pair $N=(N_1, N_2)$
where $N_1$ is a finite set of nodes $N_2:N_1^n\to \A$ is a partial map, such that if $f\in \dom N_2$,
and $i,j<n$ then $f_{f(j)}^i\in \dom N_2$. $N$ is atomic if $\rng N\subseteq \At\A$.
We write $N$ for any of $N, N_1, N_2$ relying on context, we write $\nodes(N)$ for $N_1$ and $\edges(N)$ for $\dom(N_2)$.
$N$ is said to be a network if
\begin{enumerate}
\item for all $\bar{x}\in \edges(N)$, we have $N(\bar{x})\leq \sf d_{ij}$ iff $x_i=x_j$.
\item if $\bar{x}\equiv_i \bar{y}$, then $N(\bar{x})\cdot {\sf c}_i N (\bar{y})\neq 0.$
\end{enumerate}
\item Let $\A\in \TA_{n}$. A relativized $\A$ pre-network is a pair $N=(N_1, N_2)$
where $N_1$ is a finite set of nodes $N_2:N_1^n\to \A$ is a partial map, such that if $f\in \dom N_2$,
and $\tau$ is a finite transformation then $\tau| f\in \dom N_2$. Again $N$ is atomic if $\rng N\subseteq \At\A$.
Also we write $N$ for any of $N, N_1, N_2$ relying on context, we write $\nodes(N)$ for $N_1$ 
and $\edges(N)$ for $\dom(N_2)$.
$N$ is said to be a network if
\begin{enumerate}
\item for all $\bar{x}\in \edges(N)$, we have $N(\bar{x})\leq \sf d_{ij}$ iff $x_i=x_j$,
\item if $\bar{x},\bar{y}\in \edges(N)$ and $\bar{x}\equiv_i \bar{y}$, then $N(\bar{x})\cdot {\sf c}_i N (\bar{y})\neq 0$,
\item $N([i,j]|\bar{x})=\s_{[i,j]}N(\bar{x})$, for all $\bar{x}\in \edges(N)$ and all $i,j<n$.
\end{enumerate}
\end{itemize}
\end{definition}
\begin{definition}Let $\A\in \PTA_n\cup \TA_n$. We define a game denoted by
$G_{\omega}(\A)$ with $\omega$ rounds, in which the players $\forall$ (male) and $\exists$ (female)
build an infinite chain of relativized $\A$ pre-networks
$$\emptyset=N_0\subseteq N_1\subseteq \ldots.$$
In round $t$, $t<\omega$, assume that $N_t$ is the current prenetwork, the players move as follows:

\begin{enumerate}
\item $\forall$ chooses a non-zero element $a\in \A$, $\exists$
must respond with a relativized prenetwork $N_{t+1}\supseteq N_t$ containing an edge $e$ with
$N_{t+1}(e)\leq a$,
\item $\forall$ chooses an edge $\bar{x}$ of $N_t$ and an element $a\in \A$. $\exists$ must respond with a pre-network
$N_{t+1}\supseteq N_t$ such that either $N_{t+1}(\bar{x})\leq a$ or $N_{t+1}(\bar{x})\leq -a$,
\item $\forall$ may choose an edge $\bar{x}$ of $N_t$ an index $i<n$ and $b\in \A$ with $N_t(\bar{x})\leq {\sf c}_ib$.
$\exists$ must respond with a prenetwork $N_{t+1}\supseteq N_t$ such that for some $z\in N_{t+1},$
$N_{t+1}(\bar{x}^i_z)=b$.
\end{enumerate}
$\exists$ wins if each relativized pre-network $N_0,N_1,\ldots$
played during the game is actually a relativized network.
Otherwise, $\forall$ wins. There are no draws.
\end{definition}
\begin{lemma}\label{lem}
Let $\A\in \PTA_{n}$ be atomic. For all $i,j\in n$, $i\neq j$, define $\t_j^ix=\sf d_{ij}\cdot {\sf c}_i x$ and $\t_i^ix=x$.
Then
\begin{enumerate}
\item $(\t_j^i)^{\A}: \At\A\to \At\A$
\item Let $\Omega=\{\t_i^j: i,j\in n\}^*$, where for any set $H$, $H^*$ denotes the free monoid generated by $H$. Let
$$\sigma=\t_{j_1}^{i_1}\ldots \t_{j_n}^{i_n}$$
be a word.
Then define for $a\in A$:
$$\sigma^{\A}(a)=(\t_{j_1}^{i_1})^{\A}((\t_{j_2}^{i_2})^{\A}\ldots (\t_{j_n}^{i_n})^{\A}(a)\ldots ),$$
and $$\hat{\sigma}=[i_1|j_1]| [i_2|j_2]\ldots|[i_n|j_n].$$
Then
$$\A\models \sigma(x)=\tau(x)\text { if } \hat{\sigma}=\hat{\tau}, \sigma,\tau\in \Omega.$$
That is for all $\sigma,\tau\in \Omega$, if $\hat{\sigma}=\hat{\tau}$, then for all $a\in A$, we have
$\sigma^{\A}(a)=\tau^{\A}(a)$.
\end{enumerate}
\end{lemma}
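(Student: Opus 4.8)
\textbf{Proof plan for Lemma \ref{lem}.}

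The plan is to prove the two items separately, but both rely on the same basic algebraic facts about the term-defined substitution operators in an atomic $\PTA_n$, together with the identity $\hat{\sigma}=\hat{\tau}$ controlling compositions. For item (1), I would first observe that $\t^i_j x={\sf d}_{ij}\cdot{\sf c}_i x={\sf s}^i_j x$ in the notation of the (MGR) axiom block, so $\t^i_j$ is just the term-definable replacement operator; the claim is that this operator maps atoms to atoms. The key points are: $\t^i_j$ is additive (this follows from additivity of ${\sf c}_i$ and the fact that meeting with a fixed element ${\sf d}_{ij}$ distributes over $+$), and $\t^i_j$ has a ``converse'' behaviour making it injective on atoms up to the diagonal. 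More concretely, I would use axioms $(C_5)$--$(C_7)$: from $(C_7)$, ${\sf d}_{ij}\cdot{\sf c}_i({\sf d}_{ij}\cdot x)\leq x$, and the standard computation ${\sf c}_i{\sf s}^i_j x={\sf c}_i x$ together with ${\sf s}^j_i{\sf s}^i_j x$ collapsing appropriately. The cleanest route is to show that $\t^i_j$ restricted to the Boolean algebra is a \emph{complete} additive operator whose value on a nonzero element is nonzero (using ${\sf s}^i_j 1=1$, which follows from ${\sf s}^i_j{\sf d}_{ij}=1$-type reasoning or directly from $(C_5)$), and that for an atom $a$, ${\sf s}^i_j a\cdot{\sf s}^i_j b=0$ whenever $a\cdot b=0$ and $a,b$ are atoms not below any diagonal that would force them equal --- hence ${\sf s}^i_j a$ cannot split into two disjoint nonzero pieces, so it is an atom. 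The degenerate case $\t^i_i x=x$ is trivial. I expect a little care is needed here because $\t^i_j$ is \emph{not} a Boolean endomorphism in $\PTA_n$ (unlike in $\TA_n$ where $(Fe_6)$ holds), so injectivity-on-atoms must be extracted from the weaker $\PTA$ axioms; this is the main obstacle in item (1).

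For item (2), once item (1) is in hand, each word $\sigma\in\Omega$ induces a map $\sigma^\A:\At\A\to\At\A$ by composition, and correspondingly $\hat\sigma$ is the composite partial map $[i_1|j_1]|\cdots|[i_n|j_n]$ on $n$. The statement to prove is that $\hat\sigma=\hat\tau$ implies $\sigma^\A=\tau^\A$ on all of $A$ (not just atoms, though for atoms it follows from item (1) plus additivity passing to arbitrary elements). I would prove this by appealing to the known normal-form result for $sc$-words: any two $sc$-words (here, strings of replacements) inducing the same partial map on $n$ are provably equal as term operations in the relevant variety --- this is exactly the content cited in the excerpt as \cite[definition~5.23,~lemma~13.29]{HHbook} and used in Definition \ref{subs} to justify that ${\sf s}_{\bar a}$ is well-defined independently of the chosen word. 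Since $\PTA_n$ satisfies the axioms needed for that normal-form argument (the replacement operators ${\sf s}^i_j$ satisfy the standard cylindric-substitution identities derivable from $(C_0)$--$(C_7)$ and (MGR)), the equality $\A\models\sigma(x)=\tau(x)$ whenever $\hat\sigma=\hat\tau$ follows directly. I would phrase this as: the map sending a word to its induced term operation factors through the monoid of partial maps $n\to n$ generated by replacements, which is the content of the lemma.

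The bulk of the routine work is in item (1); item (2) is then essentially a citation plus a one-line reduction. The main obstacle, as noted, is establishing that $\t^i_j$ preserves atoms using only the $\PTA_n$ axioms, where $\t^i_j$ fails to be a Boolean endomorphism --- one must argue via complete additivity and the ``partial inverse'' supplied by $(C_7)$ rather than by a clean endomorphism property. A secondary subtlety is checking that $\Omega$, the free monoid on $\{\t^i_j\}$, acts well-definedly, i.e.\ that the composite of atom-preserving maps is atom-preserving, which is immediate, and that $\widehat{\sigma|\tau}=\hat\sigma|\hat\tau$ (composition of partial maps), which is a direct unwinding of the definition of $\hat{\cdot}$. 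I would keep the write-up short, citing \cite{HHbook} for the normal-form fact and giving the atom-preservation argument in three or four lines.
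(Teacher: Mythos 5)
Your plan rests on the identification $\t^i_j x={\sf d}_{ij}\cdot{\sf c}_i x={\sf s}^i_j x$, and that identification is false: the operator in the lemma is ${\sf d}_{ij}\cdot{\sf c}_i x$, whereas ${\sf s}^i_j x={\sf c}_i({\sf d}_{ij}\cdot x)$. These are distinct term operations (for instance $\t^i_j 1={\sf d}_{ij}$ while ${\sf s}^i_j1={\sf c}_i{\sf d}_{ij}=1$), so none of the facts you invoke about ${\sf s}^i_j$ --- that it is a Boolean endomorphism, that it sends $1$ to $1$, that it obeys the $sc$-word calculus of \cite[lemma 13.29]{HHbook} --- can be transferred to $\t^i_j$ without further argument. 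Item (1) is nevertheless salvageable by a direct computation: for an atom $a$ one has ${\sf c}_i\t^i_ja={\sf c}_ia\neq 0$ (Lemma \ref{soatom}(iv)), so $\t^i_ja\neq 0$; and if $0<b\leq\t^i_ja$ then $a\cdot{\sf c}_ib\neq 0$, whence $a\leq{\sf c}_ib$ by atomicity, so ${\sf c}_ia={\sf c}_ib$ and then $(C_7)$ applied to $b\leq{\sf d}_{ij}$ gives $\t^i_ja={\sf d}_{ij}\cdot{\sf c}_ib\leq b$. Your ``cannot split into two disjoint nonzero pieces'' argument, as stated, does not establish atomicity of the image and should be replaced by something of this form.

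The more serious gap is in item (2), which is where the actual content of the lemma lies. You reduce it to the normal-form result for $sc$-words, asserting that the required substitution identities are ``derivable from $(C_0)$--$(C_7)$ and (MGR).'' That assertion is precisely what has to be proved, and it is not a citation-level fact: the normal-form calculus you cite is developed for algebras in which cylindrifiers commute, while $\PTA_n$ has only the weakened $(C_4)^*$, and the {\bf MGR} schema is included in the axiomatization of $\PTA_n$ exactly because the identities governing compositions of the $\t^i_j$ are not otherwise derivable there. The paper handles this point by appealing to the proof in Andr\'eka--Thompson \cite{AT}, which derives the word-problem statement ($\hat\sigma=\hat\tau\Rightarrow\sigma^{\A}=\tau^{\A}$) from {\bf MGR} by a lengthy computation (the paper remarks it can be shortened using elementary semigroup theory, but some such argument is still needed). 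Your plan omits this computation entirely, so item (2) is not proved but assumed.
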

\begin{proof} \cite[lemma]{AT}. The $\textbf{MGR}$, merry go round identities
are used here. We note that Andr\'eka's proof of this lemma is long, but 
using fairly obvious results on semigroups a much shorter proof can be given.
\end{proof}
\begin{lemma}\label{soatom}
Let $\A\in \PTA_n$ be atomic. For all $i,j\in n$, $i\neq j$, let $\t_j^ix$ be as above.
The following hold for all $i,j,k,l\in n$:
\begin{enumerate}
\item $(\t^{i}_{j})^{\A}x\leq{\sf d}_{ij}$ for all $x\in\A$.
\item $(x\leq{\sf d}_{ij}\Rightarrow(\t^{k}_{i})^{\A}x\leq {\sf d}_{ij}\cdot{\sf d}_{ik}\cdot{\sf d}_{jk})$ for all $x\in\A$.
\item $(a\leq{\sf c}_i b\Leftrightarrow{\sf c}_i a={\sf c}_i b)$ for all $a,b\in\At\A$.
\item ${\sf c}_i (\t^i_j)^{\A} x={\sf c}_i x$ for all $x\in\A$.
\end{enumerate}
\end{lemma}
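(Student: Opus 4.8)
The statement to prove is Lemma on the $\PTA_n$ operators $\t^i_j$, items (1)--(4). Throughout $\A\in\PTA_n$ is atomic, and recall $\t^i_j x = {\sf d}_{ij}\cdot {\sf c}_i x$ for $i\neq j$, $\t^i_i x = x$. These are exactly the ``quasi-substitution'' operators ${\sf s}^i_j$ with the diagonal attached on the outside; note in particular that in $\PTA_n$ the term ${\sf s}^i_j x$ was defined as ${\sf c}_i({\sf d}_{ij}\cdot x)$, so one first checks the elementary identity ${\sf d}_{ij}\cdot{\sf c}_i x = {\sf d}_{ij}\cdot{\sf c}_i({\sf d}_{ij}\cdot x)$ (using that ${\sf d}_{ij}$ is a ${\sf c}_i$-closed element, axioms $(C_0)$--$(C_3)$ plus $(C_7)$), so that $\t^i_j$ agrees with ${\sf s}^i_j$ below ${\sf d}_{ij}$. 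This reconciliation is the first thing I would record, since it lets me import the standard Boolean-endomorphism/substitution calculus for ${\sf s}^i_j$ from \cite{AT}, \cite{ST}.

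For item (1), $(\t^i_j)^{\A}x\leq{\sf d}_{ij}$ is immediate from the definition since $\t^i_j x = {\sf d}_{ij}\cdot{\sf c}_i x\leq {\sf d}_{ij}$; the case $i=j$ is trivial as ${\sf d}_{ii}=1$ by $(C_5)$. For item (2), assume $x\leq{\sf d}_{ij}$; I want $(\t^k_i)^{\A}x\leq{\sf d}_{ij}\cdot{\sf d}_{ik}\cdot{\sf d}_{jk}$. First $\t^k_i x\leq{\sf d}_{ik}$ by (1). Next, from $x\leq{\sf d}_{ij}$ one gets ${\sf c}_k x\leq{\sf c}_k{\sf d}_{ij}$; if $k\notin\{i,j\}$ then ${\sf c}_k{\sf d}_{ij} = {\sf d}_{ij}$ by the dimension-restriction form of the cylindrifier axioms, while if $k\in\{i,j\}$ the claim reduces to pure diagonal manipulation. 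Combining, $\t^k_i x = {\sf d}_{ik}\cdot{\sf c}_k x\leq {\sf d}_{ik}\cdot{\sf d}_{ij}$, and then ${\sf d}_{ik}\cdot{\sf d}_{ij}\leq{\sf d}_{jk}$ follows from $(C_6)$/transitivity of diagonals (${\sf d}_{jk}={\sf c}_i({\sf d}_{ji}\cdot{\sf d}_{ik})\geq{\sf d}_{ij}\cdot{\sf d}_{ik}$ when $i\notin\{j,k\}$, and the degenerate cases are again trivial). So $\t^k_i x\leq{\sf d}_{ij}\cdot{\sf d}_{ik}\cdot{\sf d}_{jk}$.

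For item (3), let $a,b\in\At\A$ with $a\leq{\sf c}_i b$; I must show ${\sf c}_i a = {\sf c}_i b$. Since $a$ is an atom and $a\leq{\sf c}_i b$, applying ${\sf c}_i$ (additive, inclusion-increasing) gives ${\sf c}_i a\leq{\sf c}_i{\sf c}_i b={\sf c}_i b$. For the reverse, note $a\cdot{\sf c}_i b = a\neq 0$, and since ${\sf c}_i$ is additive and $b$ is an atom, a standard argument on completely additive cylindrifiers over atomic algebras gives $b\leq{\sf c}_i a$: indeed $0\neq a\leq {\sf c}_i b$ forces (atomicity of $b$, and the ``conjugated'' property of ${\sf c}_i$ that $a\cdot{\sf c}_i b\neq0\Leftrightarrow b\cdot{\sf c}_i a\neq0$, which holds since ${\sf c}_i$ is a closure operator with ${\sf c}_i$ self-conjugate) that $b\cdot{\sf c}_i a\neq0$, hence $b\leq{\sf c}_i a$ as $b$ is an atom, hence ${\sf c}_i b\leq{\sf c}_i a$. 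Thus ${\sf c}_i a={\sf c}_i b$; the converse direction of the biconditional ($\,{\sf c}_i a = {\sf c}_i b\Rightarrow a\leq{\sf c}_i b$) is trivial since $a\leq{\sf c}_i a$. For item (4), ${\sf c}_i(\t^i_j)^{\A}x = {\sf c}_i x$: expand $\t^i_j x = {\sf d}_{ij}\cdot{\sf c}_i x$, so ${\sf c}_i\t^i_j x = {\sf c}_i({\sf d}_{ij}\cdot{\sf c}_i x)$; since ${\sf c}_i x$ is ${\sf c}_i$-closed, ${\sf c}_i({\sf d}_{ij}\cdot{\sf c}_i x) = {\sf c}_i{\sf d}_{ij}\cdot{\sf c}_i x$ is false in general, so instead I use $(C_7)$ in the form ${\sf d}_{ij}\cdot{\sf c}_i({\sf d}_{ij}\cdot y)\leq y$ together with $\leq$: clearly ${\sf c}_i\t^i_j x\leq{\sf c}_i{\sf c}_i x = {\sf c}_i x$; conversely, using that ${\sf s}^i_j$ is (up to the above reconciliation) a Boolean endomorphism with ${\sf c}_i{\sf s}^i_j x = {\sf s}^i_j x$ when $i\neq j$ and ${\sf s}^i_j{\sf c}_i x = {\sf c}_i x$ (axioms $(Fe_3)$, $(Fe_4)$ transported to $\PTA_n$ via \cite{ST}), one gets ${\sf c}_i x = {\sf s}^i_j{\sf c}_i x = {\sf c}_i({\sf d}_{ij}\cdot{\sf c}_i x) = {\sf c}_i\t^i_j x$; the $i=j$ case is trivial.

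\textbf{Main obstacle.} The genuinely delicate point is item (3): it hinges on complete additivity of ${\sf c}_i$ (valid here since $\PTA_n$-cylindrifiers are additive closure operators and we are over an atomic algebra) and on the self-conjugacy of ${\sf c}_i$, i.e. $a\cdot{\sf c}_i b\neq 0\iff b\cdot{\sf c}_i a\neq 0$, which must be derived from axioms $(C_0)$--$(C_3)$ alone without assuming the $\CA$ axiom ${\sf c}_i(x\cdot{\sf c}_i y)={\sf c}_i x\cdot{\sf c}_i y$. I expect to spend most of the effort verifying that the closure-operator axioms suffice for this conjugacy/atom-transfer argument; the remaining items are routine diagonal and closure-operator bookkeeping once the reconciliation $\t^i_j|_{{\sf d}_{ij}} = {\sf s}^i_j|_{{\sf d}_{ij}}$ is in place. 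I would also double-check the degenerate index coincidences ($i=j$, $k\in\{i,j\}$, etc.) in items (1), (2), (4), which are each immediate but numerous.
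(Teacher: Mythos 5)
Your proof is correct, and for items (1) and (2) it follows essentially the same route as the paper: item (1) directly from the definition, and item (2) by the chain $\t^k_i x={\sf d}_{ik}\cdot{\sf c}_kx\leq{\sf d}_{ik}\cdot{\sf c}_k{\sf d}_{ij}={\sf d}_{ik}\cdot{\sf d}_{ij}$ followed by the diagonal identities ${\sf d}_{ij}={\sf d}_{ji}$ and ${\sf d}_{ij}\cdot{\sf d}_{jk}={\sf d}_{ij}\cdot{\sf d}_{ik}$ (the paper cites \cite[1.3.1, 1.3.3, 1.3.7]{tarski}, checking that those proofs do not use $(C_4)$). Where you genuinely diverge is items (3) and (4): the paper simply refers to \cite[p.\ 675--676]{AT}, whereas you supply self-contained arguments, and those arguments are sound. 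One point worth correcting, though: the ``main obstacle'' you flag is not an obstacle. The verbal axioms $(C_0)$--$(C_3)$ --- additive closure operators whose closed elements have closed complements --- are well known to be \emph{equivalent}, over Boolean algebras, to the law ${\sf c}_i(x\cdot{\sf c}_iy)={\sf c}_ix\cdot{\sf c}_iy$; so that law \emph{is} available in $\PTA_n$, your parenthetical claim in item (4) that ``${\sf c}_i({\sf d}_{ij}\cdot{\sf c}_ix)={\sf c}_i{\sf d}_{ij}\cdot{\sf c}_ix$ is false in general'' is mistaken (it holds, and together with ${\sf c}_i{\sf d}_{ij}=1$, which follows from $(C_5)$--$(C_6)$, it gives item (4) in one line), and the self-conjugacy $a\cdot{\sf c}_ib\neq0\Leftrightarrow b\cdot{\sf c}_ia\neq0$ needed for item (3) is an immediate consequence of the same law rather than something requiring a separate atomicity/complete-additivity argument. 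Your detour via ${\sf s}^i_j{\sf c}_ix={\sf c}_ix$ still lands correctly, so nothing breaks; the proof is just longer than it needs to be, and the anticipated difficulty evaporates once the equivalence behind $(C_0)$--$(C_3)$ is recorded.
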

\begin{proof}
\begin{description}
\item[$(i)$] Follows directly from the definition of $(\t^{i}_{j})^{\A}$.
\item[$(ii)$]First we need to check the following for all $i,j,k<n$:
\begin{enumerate}
\item ${\sf c}_k{\sf d}_{ij}={\sf d}_{ij}$ if $k\notin\{i,j\}$. For, see \cite[Theorem 1.3.3]{tarski}, the proof doesn't involve $(C_4)$.
\item ${\sf d}_{ij}={\sf d}_{ji}$. For, see \cite[Theorem 1.3.1]{tarski}, the proof works, indeed it doesn't depend on $(C_4)$.
\item ${\sf d}_{ij}\cdot{\sf d}_{jk}={\sf d}_{ij}\cdot{\sf d}_{ik}$. 
A proof for such can be founded in \cite[Theorem 1.3.7]{tarski}.
\end{enumerate} Now we can proof ($ii$):
\begin{eqnarray}
\nonumber\t^{k}_{i}x&=&{\sf d}_{ik}\cdot{\sf c}_k x\\
\nonumber&\leq&{\sf d}_{ik}\cdot{\sf c}_k{\sf d}_{ij}\\
&=&{\sf d}_{ik}\cdot{\sf d}_{ij}\\
\nonumber&=&\sf d_{ki}\cdot{\sf d}_{ij}\\
&=&{\sf d}_{ki}\cdot{\sf d}_{kj}
\end{eqnarray}
From $(1)$, $(2)$ the required follows.
\item[$(iii)-(iv)$] See \cite[p. 675-676]{AT}
\end{description}
\end{proof}
\begin{definition}[Partial transposition network]
Let $\A$ be an atomic $\PTA_n$ and fix an atom $a\in \At\A$. 
Let $\bar{x}$ be any $n$-tuple (of nodes) such that $x_i=x_j$ if and only if $a\leq {\sf d}_{ij}$ for all $i,j<n$. 
Let $NSQ_{\bar{x}}=\{\bar{y}\in{^{n}\{x_0, x_1, \cdots , x_{n-1}\}}:|\rng(\bar{y})|<n\}$ 
($NS$ stands for non-surjective sequences). 
We define the partial transposition network ${PT_{\bar{x}}^{(a)}:NSQ_{\bar{x}}\rightarrow \At\A}$ as follows: 
If $\bar{y}\in NSQ_{\bar{x}}$, then $\bar{y}=\tr|\bar{x}$, for 
some  $\si$, $\sj$ $<n$. Let $PT_{\bar{x}}^{(a)}(\bar{y})=\tt a$. This is well defined by Lemma \ref{lem}
\end{definition}

\begin{definition}[Transposition network]
Let $\A$ be an atomic $\TA_n$ and fix an atom $a\in \At\A$. Let $\bar{x}$ be any $n$-tuple of nodes such that $x_i=x_j$
if and only if $a\leq {\sf d}_{ij}$ for all $i,j<n$.
Let $Q_{\bar{x}}={^{n}\{x_0, x_1, \cdots , x_{n-1}\}}$. Consider the following equivalence relation $\sim$ on $Q_{\bar{x}}$:
\begin{equation*}
\bar{y}\sim\bar{z} \text{ if and only if } \bar{z}=\tau|\bar{y} \text{ for some finite permutation }\tau,
\end{equation*}
$\bar{y},\bar{z}\in Q_{\bar{x}}$.\\
Let us choose and fix representative tuples for the equivalence classes concerning $\sim$
such that each representative tuple is of the form $\tr|\bar{x}$ for some $k\geq 0$, $\si, \sj< n$.
Such representative tuples exist. Indeed, for every $\bar{y}\in Q_{\bar{x}}$, $\exists\tau$ finite permutation,
$\exists k\geq 0$, $\si$, $\sj<n$ such that $$\bar{y}=\tau|\tr|\bar{x}.$$
Let $\bar{z}=\tau^{-1}|\bar{y}$, then $\bar{z}\sim\bar{y}$ 
and $\bar{z}=\tr|\bar{x}$. Let $\sf{Rt}$ denote this fixed set of representative tuples. 
We define the transposition network ${T_{\bar{x}}^{(a)}:Q_{\bar{x}}\rightarrow \At\A}$ as follows:
\begin{itemize}
\item If $\bar{y}\in\sf{Rt}$, then $\bar{y}=\tr|\bar{x}$, for some  
$\si$, $\sj$ $<n$. Let $T_{\bar{x}}^{(a)}(\bar{y})=\ttr a$. This is well defined by Lemma \ref{lem}.
\item If $\bar{z}=\sigma|\bar{y}$ for some finite permutation $\sigma$ 
and some $\bar{y}\in\sf{Rt}$, then let $T_{\bar{x}}^{(a)}(\bar{z})= \s_{\sigma}T_{\bar{x}}^{(a)}(\bar{y})$.
\end{itemize}
\end{definition}
\begin{lemma}
The above definition is unique.
\end{lemma}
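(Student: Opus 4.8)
The statement to be proved is that the transposition network $T_{\bar{x}}^{(a)}: Q_{\bar{x}} \to \At\A$ is well-defined, i.e.\ that the value $T_{\bar{x}}^{(a)}(\bar{z})$ does not depend on the choices made in its definition. There are two sources of ambiguity: first, a tuple $\bar{z} \in Q_{\bar{x}}$ may be written as $\sigma|\bar{y}$ for a representative $\bar{y} \in \sf{Rt}$ in more than one way (different permutations $\sigma$, possibly different $\bar{y}$); second, the value on a representative $\bar{y} = \tr|\bar{x}$ itself might {\it a priori} depend on the presentation $\tr$, but this second point is already settled by Lemma \ref{lem} (via $\hat\sigma = \hat\tau \Rightarrow \sigma^{\A} = \tau^{\A}$), so the real work is the first point. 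The plan is to reduce everything to an identity of the form $\s_{\sigma}$ acting consistently, using the semigroup properties $\s_{\sigma|\lambda} = \s_{\sigma}\s_{\lambda}$ and the fact that each $\s_{\sigma}$ is a Boolean endomorphism sending atoms to atoms on an atomic $\TA_n$.

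First I would fix $\bar{z} \in Q_{\bar{x}}$ and suppose $\bar{z} = \sigma_1|\bar{y}_1 = \sigma_2|\bar{y}_2$ with $\bar{y}_1, \bar{y}_2 \in \sf{Rt}$ and $\sigma_1, \sigma_2$ finite permutations. Since each $\bar{y}_i$ lies in a single $\sim$-class and $\sf{Rt}$ contains exactly one representative per class, and since $\bar{y}_1 \sim \bar{z} \sim \bar{y}_2$, we must have $\bar{y}_1 = \bar{y}_2 =: \bar{y}$. So the ambiguity collapses to: if $\sigma_1|\bar{y} = \sigma_2|\bar{y}$ then $\s_{\sigma_1} T_{\bar{x}}^{(a)}(\bar{y}) = \s_{\sigma_2} T_{\bar{x}}^{(a)}(\bar{y})$. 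Writing $\tau = \sigma_2^{-1}|\sigma_1$ (a finite permutation), the hypothesis becomes $\tau|\bar{y} = \bar{y}$, i.e.\ $\tau$ fixes $\bar{y}$ pointwise as a sequence, and the goal becomes $\s_{\tau} T_{\bar{x}}^{(a)}(\bar{y}) = T_{\bar{x}}^{(a)}(\bar{y})$ (using $\s_{\sigma_1} = \s_{\sigma_2}\s_{\tau}$ and cancelling the Boolean automorphism $\s_{\sigma_2}$). Here $\bar{y} = \tr|\bar{x}$, and the condition $\tau|\bar{y} = \bar{y}$ says precisely that $\tau$ permutes only among indices $i$ with $y_i = y_j$ for the relevant $j$'s — which, by the defining property of $\bar{x}$ (namely $x_i = x_j \iff a \leq {\sf d}_{ij}$) translated through $\tr$, forces $T_{\bar{x}}^{(a)}(\bar{y}) \leq {\sf d}_{ij}$ for each transposition $[i,j]$ occurring in a decomposition of $\tau$ into transpositions of such pairs.

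The crux is then the following: if $b \in \At\A$ and $b \leq {\sf d}_{ij}$, then $\s_{[i,j]} b = b$. This is exactly Lemma \ref{lesa} ($a \leq {\sf d}_{ij} \Rightarrow \s_{[i,j]} a = a$ for atoms $a$ in $\TA_{\alpha}$). Since any finitary permutation fixing a tuple pointwise is a product of transpositions $[i,j]$ with the corresponding coordinates equal, and since $\s_{\tau}$ is a Boolean endomorphism with $\s_{\tau|\lambda} = \s_{\tau}\s_{\lambda}$, iterating Lemma \ref{lesa} gives $\s_{\tau} b = b$, where $b = T_{\bar{x}}^{(a)}(\bar{y}) \in \At\A$. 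That closes the argument. I expect the only delicate step to be the bookkeeping translating ``$\tau$ fixes the sequence $\bar{y} = \tr|\bar{x}$ pointwise'' into ``$\tau$ is a product of transpositions of coordinates on which the atom $T_{\bar{x}}^{(a)}(\bar{y})$ lies below a diagonal'' — one has to track how equalities among coordinates of $\bar{x}$ are inherited by $\bar{y}$ and then reflected, via $\ttr$ applied to $a$ together with Lemma \ref{soatom}(i),(ii), into diagonal bounds on the image atom; but this is routine once set up carefully, and no new idea beyond Lemmas \ref{lem}, \ref{soatom} and \ref{lesa} is needed.
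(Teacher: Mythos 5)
Your proposal is correct and follows essentially the same route as the paper's own proof: reduce to showing that $\tau|\bar{y}=\bar{y}$ implies $\s_{\tau}T^{(a)}_{\bar{x}}(\bar{y})=T^{(a)}_{\bar{x}}(\bar{y})$, decompose such a $\tau$ into transpositions $[i,j]$ with $y_i=y_j$, obtain $T^{(a)}_{\bar{x}}(\bar{y})\leq{\sf d}_{ij}$ from Lemma \ref{soatom}, and conclude via Lemma \ref{lesa}. Your explicit preliminary observation that the representative $\bar{y}\in\sf{Rt}$ is itself unique is a small point the paper leaves implicit, but otherwise the two arguments coincide.
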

\begin{proof}
The first part is well defined by Lemma \ref{lem}.
Now we need to prove that if $\sigma|\bar{y}=\tau|\bar{y}$ for some finite permutations $\sigma, \tau$ 
and some $\bar{y}\in\sf{Rt}$, then $\s_{\sigma}T^{(a)}_{\bar{x}}(\bar{y})=\s_{\tau}T^{(a)}_{\bar{x}}(\bar{y})$. First, we need the following:
\begin{athm}{Claim}
If $\bar{y}=\tau|\bar{y}$ for some finite permutation $\tau$ and some 
$\bar{y}\in\sf{Rt}$, then $$T^{(a)}_{\bar{x}}(\bar{y})=\s_{\tau}T^{(a)}_{\bar{x}}(\bar{y}).$$
\end{athm}
\begin{proof}
It suffices to show that if $\bar{y}=[i,j]|\bar{y}$ for some $i, j<n$ and some 
$\bar{y}\in\sf{Rt}$, then $T^{(a)}_{\bar{x}}(\bar{y})=\s_{[i,j]}T^{(a)}_{\bar{x}}(\bar{y})$. 
For, suppose that $\bar{y}=[i,j]|\bar{y}$ for some $i, j<n$ and some $\bar{y}\in\sf{Rt}$, 
then $y_i=y_j$ and then $T^{(a)}_{\bar{x}}(\bar{y})\leq\d_{ij}$ by Lemma \ref{soatom}. 
Hence by Lemma \ref{lesa}, $T^{(a)}_{\bar{x}}(\bar{y})=\s_{[i,j]}T^{(a)}_{\bar{x}}(\bar{y})$.
\end{proof}
Returning to our prove, assume that $\sigma|\bar{y}=\tau|\bar{y}$ 
for some finite permutations $\sigma, \tau$ and some $\bar{y}\in\sf{Rt}$. 
Then $(\tau^{-1}|\sigma)|\bar{y}=\bar{y}$ and so $\s_{(\tau^{-1}|\sigma)}T^{(a)}_{\bar{x}}(\bar{y})=T^{(a)}_{\bar{x}}(\bar{y})$. 
Therefore, $\s_{\tau^{-1}}\s_{\sigma}T^{(a)}_{\bar{x}}(\bar{y})=T^{(a)}_{\bar{x}}(\bar{y})$. 
Hence, $\s_{\sigma}T^{(a)}_{\bar{x}}(\bar{y})=\s_{\tau}T^{(a)}_{\bar{x}}(\bar{y})$, and we are done.
\end{proof}
\begin{lemma}\label{mynetw}\
\begin{enumerate}
\item Let $\A$ be an atomic $\PTA_n$ 
and $a\in \At\A$. 
Let $\bar{x}$ be any $n$-tuple of nodes such that $x_i=x_j$ if and 
only if $a\leq \sf d_{ij}$ for all $i,j<n$. 
Then $PT^{(a)}_{\bar{x}}$ is an atomic $\A$ network.
\item Let $\A$ be an atomic $\TA_n$ and $a\in \At\A$. 
Let $\bar{x}$ be any $n$-tuple of nodes such that $x_i=x_j$ 
if and only if $a\leq \sf d_{ij}$ for all $i,j<n$. Then $T^{(a)}_{\bar{x}}$ is an atomic $\A$ network.
\end{enumerate}
\end{lemma}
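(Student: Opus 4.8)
\textbf{Proof plan for Lemma \ref{mynetw}.}

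The plan is to verify directly, in each of the two cases, the defining conditions of a (relativized) network from the definition immediately preceding. In both cases the underlying map has been shown to be well-defined by the preceding lemmas, so what remains is: (1) the diagonal condition $N(\bar{y})\leq {\sf d}_{ij}\iff y_i=y_j$; (2) the consistency condition that if $\bar{y}\equiv_i\bar{z}$ then $N(\bar{y})\cdot {\sf c}_iN(\bar{z})\neq 0$; and in the $\TA_n$ case additionally (3) the substitution condition $N([i,j]|\bar{y})={\sf s}_{[i,j]}N(\bar{y})$.

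For the partial transposition network $PT^{(a)}_{\bar{x}}$, I would first record that any $\bar{y}\in NSQ_{\bar{x}}$ is of the form $\tr|\bar{x}$, so $N(\bar{y})=\tt a$ is obtained from $a$ by finitely many applications of the operators $\t^{j}_{i}$. For condition (1), the forward direction uses Lemma \ref{soatom}(i)--(ii): each $\t^{i}_{j}$-image lies below ${\sf d}_{ij}$, and the ``transport of diagonals'' computation $(1)$--$(2)$ in Lemma \ref{soatom} shows that if $a\leq {\sf d}_{ij}$ then the relevant image still lies below the diagonal indexed by the images of $i,j$ under $\hat\sigma$; for the converse one uses that $a$ itself, being an atom, satisfies $a\leq {\sf d}_{ij}\iff x_i=x_j$ by the hypothesis on $\bar{x}$, together with the fact that $\hat\sigma$ computes exactly which coordinates of $\bar{y}$ are identified. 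For condition (2), given $\bar{y}\equiv_i\bar{z}$ with $\bar{y}=\hat\sigma|\bar{x}$, $\bar{z}=\hat\tau|\bar{x}$, one observes that $N(\bar{z})$ differs from $N(\bar{y})$ only by a substitution that is ``killed'' by ${\sf c}_i$: more precisely $N(\bar{y})\leq {\sf c}_i N(\bar{z})$ because of Lemma \ref{soatom}(iii)--(iv), i.e. ${\sf c}_i(\t^i_j)^{\A}x={\sf c}_ix$, combined with the atom hypothesis ($a\leq {\sf c}_ib\iff {\sf c}_ia={\sf c}_ib$ for atoms); hence the meet $N(\bar{y})\cdot {\sf c}_iN(\bar{z})=N(\bar{y})\neq 0$. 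This handles case (1).

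For the transposition network $T^{(a)}_{\bar{x}}$ in case (2), conditions (1) and (2) reduce to case (1) on representative tuples (since on $\sf{Rt}$ the map agrees with a partial-transposition-type expression $\ttr a$), while for a general $\bar{z}=\sigma|\bar{y}$ with $\bar{y}\in\sf{Rt}$ one pushes the conditions through the Boolean endomorphism $\s_\sigma$, using $(Fe_6)$, $(Fe_7)$, $(Fe_9)$ and property ${\sf c}_i\s_\tau\leq \s_\tau{\sf c}_{\tau^{-1}}$ listed after Lemma \ref{lesa}, together with $\s_\tau {\sf d}_{ij}={\sf d}_{\tau i\,\tau j}$. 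Condition (3), the substitution clause, is essentially immediate from the definition of $T^{(a)}_{\bar{x}}$ on the $\sim$-classes: for $\bar{z}=[i,j]|\bar{y}$ we have $T^{(a)}_{\bar{x}}(\bar{z})=\s_{[i,j]}T^{(a)}_{\bar{x}}(\bar{y})$ by construction when $\bar{z}\notin\sf{Rt}$, and when $\bar{z}$ and $\bar{y}$ land in the same class one invokes the Claim proved just above (using Lemma \ref{lesa} and Lemma \ref{soatom}) to see both sides coincide. I expect the main obstacle to be the bookkeeping in condition (1) for $PT^{(a)}_{\bar{x}}$: one must be careful that $\hat\sigma$ faithfully tracks coordinate identifications and that the ``diagonal transport'' lemma is applied with the correct indices, since a sloppy index match there would break well-definedness; everything else is a routine unwinding of the cited lemmas, so I would keep those computations terse and refer to Lemma \ref{lem}, Lemma \ref{soatom} and Lemma \ref{lesa} for the algebraic identities.
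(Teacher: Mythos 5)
Your proposal is correct and follows the only natural route — a direct verification of the network conditions (diagonal clause, cylindrifier consistency clause, and, for $\TA_n$, the substitution clause) from Lemmas \ref{lem}, \ref{soatom} and \ref{lesa} — which is exactly what the paper intends: its own proof of this lemma is the single line ``Straightforward from the above.'' Your write-up simply supplies the detail the paper omits, and the tools you cite for each clause (in particular \ref{soatom}(iii)--(iv) for the cylindrifier condition and the Claim following the definition of $T^{(a)}_{\bar{x}}$ for the substitution condition) are the right ones.
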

\begin{proof}
Straightforward from the above
\end{proof}

\begin{lemma} Let $\A\in \PTA_n\cup \TA_n$. Then $\exists$ can win any play of $G_{\omega}(\A)$.
\end{lemma}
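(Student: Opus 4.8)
The statement asserts that \pe\ has a winning strategy in the (non-atomic) game $G_{\omega}(\A)$ for every $\A\in\PTA_n\cup\TA_n$. The key tool is already in place: by Lemma~\ref{mynetw}, for every atom $a\in\At\A$ (where $\A$ is atomic; we first reduce to the atomic case, or more honestly first observe that the network machinery only needs atoms below the relevant element) and every suitable tuple of nodes $\bar{x}$, the partial-transposition network $PT^{(a)}_{\bar{x}}$ (in the $\PTA_n$ case) and the transposition network $T^{(a)}_{\bar{x}}$ (in the $\TA_n$ case) are genuine atomic $\A$-networks. The plan is to have \pe\ always maintain the invariant that the current prenetwork $N_t$ is (a restriction of) one of these canonical networks $PT^{(a)}_{\bar{x}}$ or $T^{(a)}_{\bar{x}}$, so that in particular every $N_t$ she produces is automatically a network and she never loses.

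First I would set up round $0$: when \pa\ plays a non-zero $a\in\A$, \pe\ picks an atom $a'\leq a$ (available since $\A$ is atomic — and if $\A$ is not assumed atomic one works with $\A^{+}$ or notes the hypothesis; here the ambient theorems are about atomic algebras, so I read $\A$ as atomic), chooses a fresh tuple of nodes $\bar{x}$ with $x_i=x_j\iff a'\leq\sf d_{ij}$, and plays $N_1=PT^{(a')}_{\bar{x}}$ (resp.\ $T^{(a')}_{\bar{x}}$), which contains an edge labelled $a'\leq a$. This uses Lemma~\ref{mynetw}. For the inductive step, suppose $N_t$ is (a subnetwork of) the canonical network attached to atom $a$ on nodes $\bar{x}$. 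I would treat \pa's three move types in turn:

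\textbf{(1) Element move.} \pa\ plays a non-zero $b\in\A$; \pe\ picks an atom $b'\leq b$, introduces a \emph{disjoint} fresh block of nodes $\bar{y}$ adapted to $b'$, and takes the (disjoint) union of $N_t$ with $PT^{(b')}_{\bar{y}}$ (resp.\ $T^{(b')}_{\bar{y}}$); the network conditions are checked componentwise and there are no cross-edges, so the result is still a network. \textbf{(2) Labelling move.} \pa\ picks an existing edge $\bar{z}$ of $N_t$ and an element $b$. Since $N_t$ is atomic, $N_t(\bar{z})$ is an atom, so either $N_t(\bar{z})\leq b$ or $N_t(\bar{z})\leq -b$ (atoms decide every element); \pe\ simply does nothing — $N_{t+1}=N_t$ already witnesses the requirement. \textbf{(3) Cylindrifier move.} \pa\ picks an edge $\bar{z}$, an index $i<n$, and $b$ with $N_t(\bar{z})\leq\sf c_i b$; by Lemma~\ref{soatom}(iii), $\sf c_i(N_t(\bar z))=\sf c_i b$, and since $N_t(\bar z)$ is an atom we can pick an atom $b'\leq b$ with $\sf c_i b'=\sf c_i(N_t(\bar z))$, i.e.\ $N_t(\bar z)\leq\sf c_i b'$. \pe\ needs a node $z$ with $N_{t+1}(\bar z^{\,i}_z)=b'$: she adjoins a new node $z$ if necessary (setting $z$ equal to an existing node exactly when $b'\leq\sf d_{ij}$ forces it) and \emph{extends the canonical network} — because $PT^{(a)}_{\bar x}$, resp.\ $T^{(a)}_{\bar x}$, is defined on \emph{all} non-surjective tuples (resp.\ all tuples) over its node set, the label of $\bar z^{\,i}_z$ is already prescribed and equals, by the coherence built into Lemmas~\ref{lem} and \ref{soatom}, exactly $\sf t^{i}_{j}$-type images that match $b'$. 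Here I expect the bookkeeping — showing the prescribed label of the witness edge really is $b'$ and that the enlarged map is still one of the canonical networks on the larger node set — to be the only place requiring care, but it is exactly the content of the well-definedness lemmas already proved, so it should go through routinely.

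\textbf{Main obstacle.} The genuinely delicate point is the cylindrifier move: \pe\ must provide a \emph{single} node $z$ such that the new edge gets label $b'$, and she must do this while staying inside a canonical network so that \emph{all} the other newly-created edges (to $z$ from every old node) are simultaneously consistent. This is precisely why the networks $PT^{(a)}_{\bar x}$ and $T^{(a)}_{\bar x}$ are defined on the full set of (non-surjective, resp.\ all) tuples and why the merry-go-round identities enter through Lemma~\ref{lem}: they guarantee that the many constraints on the label of $z$'s incident edges are mutually compatible. So the proof reduces to: fix the invariant ``$N_t\subseteq$ a canonical network'', verify it is preserved by each of the three moves using Lemmas~\ref{lem}, \ref{soatom}, \ref{mynetw}, and conclude that every prenetwork \pe\ plays is a network, hence \pe\ wins every play. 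I would write it exactly in that order.
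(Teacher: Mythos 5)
There is a real gap, and it sits exactly where you wave it away: the reduction to the atomic case. The lemma is stated for an arbitrary $\A\in \PTA_n\cup\TA_n$ and is used (in Theorem \ref{11}) to represent arbitrary, generally non-atomic, algebras in the variety; the only reduction performed there is to countable subalgebras, which are still not atomic. You cannot "read $\A$ as atomic", and without atomicity your round-$0$ move (pick an atom $a'\leq a$) and your labelling move (``$N_t(\bar z)$ is an atom, so it decides $b$; do nothing'') both fail. The paper's proof supplies precisely the missing device: pass to the canonical extension $\A^+$, which is atomic and lies in the same variety, and have \pe\ maintain a \emph{pair} of boards --- the actual prenetwork $N_t$ over $\A$ (whose labels are arbitrary elements such as $a\cdot\prod_{x_i=x_j}{\sf d}_{ij}$) shadowed by an atomic $\A^+$-network $M_t$ on the same nodes and edges, with the invariant $M_t\supseteq N_t$ guaranteeing $N_t(\bar x)\cdot {\sf c}_i N_t(\bar y)\geq M_t(\bar x)\cdot {\sf c}_iM_t(\bar y)\neq 0$. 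The canonical networks $PT^{(a^-)}_{\bar x}$, $T^{(a^-)}_{\bar x}$ of Lemma \ref{mynetw} are built from atoms $a^-$ of $\A^+$, not of $\A$, and they constitute the shadow $M_t$, not the played network $N_t$. In the labelling move \pe\ does not sit still: she refines $N_{t+1}(\bar x)$ to $N_t(\bar x)\cdot a$ or $N_t(\bar x)\cdot(-a)$ according to which one the atom $M_t(\bar x)$ lies below.

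Your treatment of the cylindrifier move also does not go through as written. The witness edge's label is not ``already prescribed'' by the canonical network on the old node set: $PT^{(a)}_{\bar x}$ lives on the fixed nodes $\{x_0,\dots,x_{n-1}\}$, and adjoining a genuinely new node $z$ forces \pe\ to glue on a \emph{second} canonical network $PT^{(b^-)}_{\bar t}$ with $\bar t=\bar x^{\,i}_z$, where $b^-$ is an atom of $\A^+$ below ${\sf c}_ia^-\cdot b$ (this is non-zero because $a^-\leq {\sf c}_ib$). The real content is that the two networks agree on their overlap, which follows from $b^-\leq {\sf c}_ia^-$ together with Lemma \ref{soatom}(iv) giving $a^-\leq{\sf c}_ib^-$, and that the amalgam is again a network --- possible only because the networks are relativized (not all tuples are edges). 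Consequently your invariant ``$N_t$ is contained in a single canonical network'' cannot be maintained: after a few rounds the shadow board is an amalgam of many canonical networks. So the skeleton of your argument (use Lemmas \ref{lem}, \ref{soatom}, \ref{mynetw} to certify consistency) is the right one, but the proof needs the canonical extension and the two-track bookkeeping to be correct.
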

\begin{proof} Let $\A^+$ be the canonical extension of $\A$.
Then $\A^+$ is atomic, and is inside the variety in which $\A$ is in. Of course any $\A$ pre-network is an $\A^+$ pre-network.
In each round $t$ of the game $G_{\omega}(\A)$, where $N_t$ is as above, $\exists$ constructs
an atomic $\A^+$ network $M_t$ satisfying
\begin{description}

\item
$M_t\supseteq N_t, \nodes(M_t)=\nodes(N_t), \edges(M_t)=\edges(N_t).$
\end{description}
Then if $\bar{x}\equiv_i \bar{y}$, we have
$$N_t(\bar{x})\cdot {\sf c}_iN_t(\bar{y})\geq M_t(\bar{x})\cdot {\sf c}_iM_t(\bar{y})\neq 0.$$
$\exists$ starts by $M_0=N_0=\emptyset$.
Suppose that we are in round $t$ and assume inductively that $\exists$ has managed to construct $M_t\supseteq N_t$ as
indicated above. We consider the possible moves of $\forall$.

\begin{enumerate}
\item Suppose that $\forall$ picks a non zero element $a\in \A$. $\exists$ chooses an atom $a^-\in \A^{+}$
with $a^-\leq a$. She chooses new nodes $x_0,\ldots x_{n-1}$ with
$x_i=x_j$ iff $a^-\leq {\sf d}_{ij}$.
\begin{description}
\item[If $\A\in \PTA_n$.]
She creates two new 
relativized networks $N_{t+1}$, $M_{t+1}$ with nodes those of $N_t$ 
plus $x_0,\ldots x_{n-1}$ and hyper\ those of $N_t$ together with $NSQ_{\bar{x}}$. The new hyper labels in
$M_{t+1}$ are defined as follows:
$$M_{t+1}=M_{t}\cup PT^{(a^-)}_{\bar{x}}.$$By Lemma \ref{mynetw} it follows that $M_{t+1}$ is an atomic $\A^+$ network.
Labels in $N_{t+1}$ are given by
\begin{itemize}
\item
$N_{t+1}(\bar{x})=a\cdot \prod_{i,j: x_i=x_j} {\sf d}_{ij}$.
\item $N_{t+1}(\bar{y})=\prod_{i,j:y_i=y_j}{\sf d}_{ij}$ for any other hyperedge $\bar{y}$.
\end{itemize}
\item[If $\A\in \TA_n$.]
She creates two new relativized networks $N_{t+1}$, $M_{t+1}$ 
with nodes those of $N_t$ plus $x_0,\ldots x_{n-1}$ and hyperedges those of $N_t$ together with $Q_{\bar{x}}$. The new hyper labels in
$M_{t+1}$ are defined as follows:
$$M_{t+1}=M_{t}\cup T^{(a^-)}_{\bar{x}}.$$By Lemma \ref{mynetw} it follows that $M_{t+1}$ is an atomic $\A^+$ network.
Labels in $N_{t+1}$ are given by
\begin{itemize}

\item
$N_{t+1}(\tau|\bar{x})=\s_{\tau}a\cdot \prod_{i,j: x_i=x_j} {\sf d}_{\tau i\tau j}$ for any finite permutation $\tau$.
\item $N_{t+1}(\bar{y})=\prod_{i,j:y_i=y_j}{\sf d}_{ij}$ 
for any other hyperedge $\bar{y}$.
\end{itemize}
\end{description}
$\exists$ responds to $\forall$'s move in round $t$ with $N_{t+1}$. One can check that $N_{t}\subseteq N_{t+1}\subseteq M_{t+1}$, as required.
-

\item If $\forall $ picks an edge $\bar{x}$ of $N_t$ and an element $a\in \A$, $\exists$ lets $M_{t+1}=M_t$ and lets
$N_{t+1}$ be the same as $N_t$ except that\begin{description}
\item[If $\A\in \PTA_n$.] $N_{t+1}(\bar{x})=N_t(\bar{x})\cdot a$ if $M_t(\bar{x})\leq a$
and
$N_{t+1}(\bar{x})=N_t(\bar{x}).-a$ otherwise. Because $M_t(\bar{x})$ is an atom in $\A^+$, 
it follows that if $M_t(\bar{x})\nleq a$, then $M_t(\bar{x})\leq -a$, so this is satisfactory.
\item[If $\A\in \TA_n$.]  for every finite permutation $\tau$, $N_{t+1}(\tau|\bar{x})=N_t(\tau|\bar{x})\cdot \s_{\tau}a$ if $M_t(\tau|\bar{x})\leq \s_{\tau}a$
and
$N_{t+1}(\tau|\bar{x})=N_t(\tau|\bar{x}).-\s_{\tau}a$ otherwise.
Because $M_t(\tau|\bar{x})$ is an atom in $\A^+$ for every finite permutation $\tau$, 
it follows that if $M_t(\tau|\bar{x})\nleq \s_{\tau}a$, then $M_t(\tau|\bar{x})\leq -\s_{\tau}a$, so this is satisfactory.
\end{description}

\item Alternatively $\forall$ picks $\bar{x}\in N_t,$ $i<n$ and $b\in \A$ such that $N_t(\bar{x})\leq {\sf c}_ib$.
Let $M_t(\bar{x})=a^-$. If there is $z\in M_t$ with $M_t(\bar{x}_z^i)\leq b$ then we are done.
In more detail, $\exists$ lets $M_{t+1}=M_t$ and define $N_{t+1}$ accordingly.
Else, there is no such $z$. We have ${\sf c}_i a^-\cdot b\not=0$ (inside $\A^+$), indeed
\begin{eqnarray*}
{\sf c}_i a^-\cdot b)&=&{\sf c}_i a^-\cdot {\sf c}_i b\text{  }\text{  }\text{  }\text{  }\text{  }\text{  }\text{  }\text{  }\text{  }\text{  }\text{  }\\
&=&{\sf c}_i(a^-\cdot {\sf c}_i b)\text{  }\text{  }\text{  }\text{  }\text{  }\text{  }\text{  }\text{  }\text{  }\\
&=&{\sf c}_i a^-\not=0.\text{  }\text{  }\text{  }\text{  }\text{  }\text{  }(\text{since }a^-\leq{\sf c}_i b)
\end{eqnarray*}
Choose an atom $b^-\in\A^+$ with $b^-\leq{\sf c}_i a^-\cdot b$. Then we have $b^-\leq b$ and $b^-\leq{\sf c}_i a^-$. 
But by Lemma \ref{soatom} ($iv$) we also have $a^-\leq{\sf c}_i b^-$.
\begin{description}
\item[If $\A\in \PTA_n$.]Let $G$ be the $\A^+$ network with nodes $\{x_0,\ldots x_{n-1}, z\}$, $z$ a new node,
and the hyperedges are the sequences in $NSQ_{\bar{x}}\cup NSQ_{\bar{t}}$ and $G=PT^{(a^-)}_{\bar{x}}\cup PT^{(b^-)}_{\bar{t}}$
where $\bar{t}=\bar{x}^i_z$.
\item[If $\A\in \TA_n$.]Let $G$ be the $\A^+$ network with nodes $\{x_0,\ldots x_{n-1}, z\}$, $z$ a new node,
and the hyperedges are the sequences in $Q_{\bar{x}}\cup Q_{\bar{t}}$ and $G=T^{(a^-)}_{\bar{x}}\cup T^{(b^-)}_{\bar{t}}$
where $\bar{t}=\bar{x}^i_z$.
\end{description} Again this is well defined. Then, 
it easy to check that  $M_t(i_1,\ldots, i_n)=G(i_1,\ldots, i_n)$ for all $i_1,\ldots, i_n\in \rng \bar{x}$.
That is the subnetworks of $M_t$ and $G$ with nodes
$\rng\bar{x}$ are isomorphic.
Then we can amalgamate $M_t$ and $G$ and define $M_{t+1}$ as the outcome.
The amalgamation here is possible, since the networks are only relativized,
we don't have all hyperedges.
By Lemma \ref{mynetw}, one can check that $M_{t+1}$ as so defined is an atomic $\A^+$ network.
Now define $N_{t+1}$ accordingly. That is $N_{t+1}$ has the same nodes and edges of $M_{t+1}$, with labelling as for $N_t$ except that
\begin{description}
\item[If $\A\in \PTA_n$.]
$N_{t+1}(\bar{t})=b$. The rest of the other labels are  defined to be $$N_{t+1}(\bar{y})=\prod_{i,j:y_i=y_{j}}{\sf d}_{ij}.$$
\item[If $\A\in \TA_n$.]
$N_{t+1}(\tau|\bar{t})=\s_{\tau}b$ for any finite permutation $\tau$. The rest of the other labels are  defined to
be $$N_{t+1}(\bar{y})=\prod_{i,j:y_i=y_{j}}{\sf d}_{ij}.$$
\end{description}
By Lemma \ref{soatom}, it is clear that $N_{t}\subseteq N_{t+1}\subseteq M_{t+1}$ and then $N_{t+1}$ is $\A$ network.
Hence $N_{t+1}$ is appropriate to be played by $\exists$ in response to $\forall$'s move.
\end{enumerate}
\end{proof}
Now we reap the harvest of our games. In all cases \pe\ can build the desired representation.

\begin{theorem}\label{11}\ 
Let $2\leq n<\omega$. 
\begin{enumarab}
\item If $\A\in \PTA_{n}$, then $\A\in {\sf D}_{n}$.
\item If $\A\in \TA_n$, then $\A\in {\sf G_n}$.
\end{enumarab}
\end{theorem}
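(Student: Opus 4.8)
\textbf{Proof plan for Theorem \ref{11}.}

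The plan is to derive the completeness statement for $\PTA_n$ and $\TA_n$ directly from the game-playing lemma just established, namely that $\exists$ has a winning strategy (indeed wins \emph{every} play) in $G_\omega(\A)$ for any $\A$ in either variety. The standard argument runs in three stages. First, one arranges a \emph{scheduling} of $\forall$'s moves: since the language and the algebra are countable (or at least since only countably many moves need ever be considered up to the relevant equivalence), $\forall$ can be made to play, at some round, every nonzero element $a\in\A$ (for the ``witness'' move), every pair (edge $\bar x$ already present, element $a\in\A$) (for the ``refinement'' move), and every triple (edge $\bar x$, index $i<n$, element $b$ with $N(\bar x)\le{\sf c}_i b$) (for the ``cylindrifier'' move). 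Running such a play against $\exists$'s winning strategy produces an increasing chain $\emptyset=N_0\subseteq N_1\subseteq\cdots$ of relativized $\A$-networks whose union $N_\omega=\bigcup_t N_t$ is again a relativized $\A$-network, and which is \emph{saturated} in the sense that all three closure demands above are met in $N_\omega$ itself.

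Second, I would build the representing set algebra from $N_\omega$. Take the unit to be
$$V=\{\bar x\in {}^n\nodes(N_\omega): \bar x\in\edges(N_\omega)\},$$
and for $a\in\A$ put $h(a)=\{\bar x\in V: N_\omega(\bar x)\le a\}$. One must check: (i) $V$ has the right closure property for the target class --- for $\PTA_n$ that $\bar x\in V$ and $i,j<n$ imply $\bar x^{\,i}_{x_j}\in V$ (so $V\in{\sf D}_n$'s unit shape), and for $\TA_n$ that $V$ is closed under composition with every finite transformation on $n$ (so $V\in{\sf G}_n$'s unit shape); these come from the definition of relativized network in each case. (ii) $h$ is a Boolean homomorphism: additivity uses the refinement move (every edge gets decided below $a$ or below $-a$), and injectivity uses the witness move (every nonzero $a$ is realized on some edge). (iii) $h$ commutes with the diagonals ${\sf d}_{ij}$: this is exactly clause (1) in the definition of network, $N_\omega(\bar x)\le{\sf d}_{ij}\iff x_i=x_j$. (iv) $h$ commutes with cylindrifiers ${\sf c}_i$: the inclusion $h({\sf c}_i a)\supseteq$ the set-theoretic $i$-cylindrification of $h(a)$ is clause (2) of network together with atomicity arguments in $\A^+$ as in the lemma; the reverse inclusion is precisely what the cylindrifier move of $\forall$ supplies witnesses for in the saturated $N_\omega$. (v) For $\TA_n$ only, $h$ commutes with the substitution/transposition operators: this follows from clause (3) of the $\TA_n$-network definition, $N([i,j]|\bar x)={\sf s}_{[i,j]}N(\bar x)$, extended to all finite permutations and hence (via the term-definability facts for ${\sf s}_\tau$ recalled before Lemma \ref{lesa}) to all finite substitutions.

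Third, I would assemble the global representation. The map $h$ just built is injective (hence a representation of $\A$) but only into a \emph{single} relativized set algebra with unit $V$; to land inside the \emph{variety} ${\sf D}_n$ (resp.\ ${\sf G}_n$), take a product over all nonzero $a\in\A$ of the representations $h_a$ obtained by starting the scheduled play with $\forall$'s first move being $a$ (so that $h_a(a)\ne 0$), and then take the subdirect product; this is the routine ``disjoint union of representations'' step, and since ${\sf D}_n$ and ${\sf G}_n$ are varieties (finitely axiomatizable, as recalled just before the theorem) the subdirect product stays in the class. Concretely one checks $S\Rd\,$ of such products lands in the right place using the unit-shape closure properties noted in (i). This yields $\A\in{\sf D}_n$ in case (1) and $\A\in{\sf G}_n$ in case (2).

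The main obstacle, and the step deserving the most care, is item (iv)/(v) above: verifying that the saturated union network genuinely yields commutation with cylindrifiers and (in the $\TA_n$ case) with all finite substitutions, \emph{without} assuming full commutativity of cylindrifiers --- which is exactly the axiom ($F_5$) that $\TA_n$ deliberately weakens to $(Fe_5)^*$. Here one cannot use the usual square-representation bookkeeping; instead the relativized setting is what saves the day, since $V$ omits the ``bad'' tuples on which non-commutation would otherwise be forced, and the amalgamation of partial-transposition (resp.\ transposition) networks carried out in the proof of the game lemma (using Lemmas \ref{lem}, \ref{soatom}, \ref{mynetw} and the merry-go-round identities \textbf{MGR} in the $\PTA_n$ case) is precisely engineered so that the closure demands can always be met within a relativized network. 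So the proof is really a careful transcription: collect the standard three facts (scheduling, network-to-algebra, disjoint-union), and check that each uses only the weakened axioms, citing the game lemma for the hard amalgamation content.
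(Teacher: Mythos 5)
Your proposal follows essentially the same route as the paper: schedule all of $\forall$'s moves against $\exists$'s winning strategy, take the union network $N=\bigcup_t N_t$, and read off the representation $h(a)=\{\bar x\in\edges(N): \exists t\, (N_t(\bar x)\le a)\}$, with the refinement, witness and cylindrifier moves supplying Boolean preservation, injectivity and commutation with ${\sf c}_i$ respectively, and the network closure conditions giving the unit the right shape. The only points worth making explicit are (i) the reduction to countable $\A$ needed for the scheduling, which the paper handles by passing to a countable elementary subalgebra and invoking finite axiomatizability of ${\sf D}_n$ and ${\sf G}_n$ (your parenthetical that only countably many moves matter is not justified for uncountable $\A$), and (ii) that your final subdirect-product stage is redundant, since a single relativized set algebra on the unit $\edges(N)$ already lies in ${\sf D}_n$ (resp.\ ${\sf G}_n$).
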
\begin{proof}\cite{games}.
 Let $\A\in \PTA_n$. We want to build an isomorphism from $\A$ to some $\B\in {\sf D}_{n}$. We can assume that $\A$ is countable.
If not just take a countable elementary subalgebra of it; representability of the latter implies the representability
of $\A$, since $\PTA_n$ is a variety. Consider a play
$N_0\subseteq N_1\subseteq \ldots $ of $G_{\omega}(\A)$ in which $\exists$ plays as in the previous lemma
and $\forall$ plays every possible legal move. Since the algebra is countable these can be scheduled.
The outcome of the play is essentially a relativized representation of $\A$ defined as follows.
Let $N=\bigcup_{t<\omega}\nodes(N_t)$, and $\edges(N)=\bigcup_{t<\omega}\edges(N_t)\subseteq {}^nN$.
By the definition of the networks, $\wp(\edges(N))\in D_{n}$. We make $N$ 
into a representation by
defining $h:\A\rightarrow\wp(\edges(N))$ as follows
$$h(a)=\{\bar{x}\in \edges(N): \exists t<\omega(\bar{x}\in N_t \& N_t(\bar{x})\leq a)\}.$$
$\forall$-moves of the second
kind guarantee that for any $n$-tuple $\bar{x}$ and any $a\in\A$, for sufficiently large $t$,
 we have either $N_{t}(\bar{x})\leq a$ or $N_{t}(\bar{x})\leq -a$. This ensures that $h$ preserves the Boolean operations.
$\forall$-moves of the third kind ensure that the cylindrifiers are respected by $h$.
Preserving diagonals follows from the definition of networks. The first kind of $\forall$-moves tell us that $h$ is one-one.
But the construction of the game under consideration ensures that $h$ is onto, too.
In fact, $\bar{h}$ is a representation from $\A$ onto $\B\in \sf D_n$. This easily follows from the definition of networks.

\end{proof}

Building representations can be implemented by the step-by-step method as in \cite{AT} and \cite{Fer}
which consists of treating defects one by one and then taking a limit where the contradictions disappear.
What can be done by step-by-step constructions, can be done by games
but not necessarily the other way round.
Games were introduced in algebraic logic by Hirsch and Hodkinson.
Such games, which are basically Banach-Mazur games in disguise, are games of infinite lengths 
between two players $\forall$
and $\exists$.
The real advantage of the game technique is that games do not only build representations,
when we know that such representations exist, but they also tell us  when such representations exist, if we do not know a priori that they do.
The translation, however, from step-by step techniques to games is not always a
purely mechanical process, even if we know that it can be done.
This transfer can well involve some ingenuity, in obtaining games that are transparent, intuitive and easy to grasp.

It is an unsettled (philosophical) question as to which is more intuitive, step-by-step techniques or games.
Basically, we believe that this depends on the context, but in all cases it is nice to have both available if possible,
when we know one exists. When we have a step-by-step technique, then we are sure that there is
at least one corresponding game. 
Choosing a simple game, which is not always an easy task, is what counts at the end.

We now show that the logics corresponding to the Andr\'eka-Thompson-Resek-Ferenzci
algebraic strong representability result, are not only complete, but also enjoy 
the standard version an omitting types theorem, manifesting another positive property. 
We use the representability result
restricted to the countable case. This offers a suitable $\omega$ dimensional dilation, since set algebras neatly 
embed into any number of extra dimensions. 
In particular, our proof does not imply the representability result.

In contrast to many  cylindric-like algebras, see item (2) in theorem \ref{SL}, 
it is known that the class of neat reducts for relativized set
algebras is closed under forming subalgebras; indeed it is a variety.

\begin{lemma}\label{g} Let $n>1$. If $\A\in \sf D_n$, then there exists $\B\in \sf D_{\omega}$
such that  $\A\subseteq \Nr_n\B$, $\A$ generates $\B$, and
$\A=\Nr_n\B$. A completely analogous result holds for $\sf G_n$.
\end{lemma}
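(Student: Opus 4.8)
The statement is the neat embedding half of a \emph{weak neat embedding theorem} for relativized set algebras: every $\A\in\sf D_n$ sits as the full neat $n$-reduct of an $\omega$-dimensional $\sf D_\omega$, and moreover one can arrange that $\A$ generates $\B$. Since $\sf D_n$ is defined (by finitely many equations, cf.\ theorem \ref{11} and the surrounding discussion) and the class of representable $\sf D_n$'s coincides with it, the plan is to work concretely with a representation rather than abstractly. First I would apply theorem \ref{11}: every $\A\in\sf D_n$ lies in $\sf D_n$, i.e.\ is isomorphic to a subdirect product of concrete relativized set algebras whose units are sets of $n$-ary sequences closed under the replacement operations $s\circ[i|j]$. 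It suffices to handle the concrete case, and in fact (taking disjoint unions / subdirect decomposition in the usual way) the \emph{simple} concrete case: $\A\subseteq\wp(V)$ with $V\subseteq{}^nM$, $V$ closed under $[i|j]$-composition, and $M=\bigcup_{s\in V}\rng(s)$.

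\textbf{Key steps.} Given such a $V\subseteq{}^nM$, I would lift it to an $\omega$-dimensional unit by setting
\[
W=\{\,\sigma\in{}^\omega M : \sigma\restr n\in V \text{ and } \sigma(k)=\sigma(0)\ \text{for all } k\geq n\,\},
\]
or, more flexibly, take $W$ to be the closure of $\{\sigma\in{}^\omega M:\sigma\restr n\in V,\ \sigma\restr(\omega\setminus n) \text{ constant}\}$ under all finite replacements $[i|j]$ with $i,j\in\omega$; one must check $W$ is a legitimate $\sf D_\omega$-unit, i.e.\ closed under $[i|j]$-composition for all $i,j<\omega$, which is immediate from the construction. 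Let $\C=\wp(W)\in\sf D_\omega$ and define $h:\wp(V)\to\C$ by $h(X)=\{\sigma\in W:\sigma\restr n\in X\}$. The routine verification is that $h$ is an injective homomorphism onto $\Nr_n\C$: Booleans and diagonals are trivial; for cylindrifications $\cyl i$ with $i<n$ one uses that $W$ was built to have enough $M$-values in the spare coordinates and closure under the relevant replacements; surjectivity onto $\Nr_n\C$ follows because any $\cyl i$-fixed element of $\C$ for all $i\geq n$ is determined by its restriction to the first $n$ coordinates and the $[i|j]$-closure forces that restriction to be $[i|j]$-closed in $V$, hence in $\wp(V)$. Then set $\B=\Sg^{\C}h[\A]$; automatically $h[\A]\subseteq\Nr_n\C$, and since $\Nr_n\C$ is a subalgebra of $\C$ containing $h[\A]$ we get $\B\subseteq\Nr_n\C$, while $h[\A]\subseteq\Nr_n\B$ gives $\A\cong h[\A]\subseteq\Nr_n\B$ with $\A$ generating $\B$. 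The equality $\A=\Nr_n\B$ then needs the extra argument that generation by $n$-dimensional elements cannot create new $n$-dimensional elements, i.e.\ $\Nr_n\Sg^{\C}h[\A]=\Sg^{\Nr_n\C}h[\A]=h[\A]$ (using $h[\A]=\Nr_n(\text{the set algebra }h[\A]\text{ generates in }\wp(V)$-terms, but $h[\A]$ is already the whole of the relevant neat reduct by the surjectivity of $h$ onto $\Nr_n\C$ restricted appropriately — more carefully: $h$ maps $\wp(V)$ \emph{onto} $\Nr_n\C$, hence $h[\A]$ is a full neat reduct of the subalgebra of $\C$ it generates, by the standard ``neat reduct commutes with generation'' lemma for locally finite-like or $\sf D$-type algebras).

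\textbf{Main obstacle.} The delicate point is exactly the last one: showing $\Nr_n\B=\A$ rather than merely $\A\subseteq\Nr_n\B$. In the cylindric (and even polyadic) case this can fail — $\Nr_n$ need not commute with $\Sg$ — which is why the paper emphasizes elsewhere (theorem \ref{SL}, and the remark just before this lemma) that for \emph{relativized} set algebras the class of neat reducts \emph{is} well-behaved (a variety, closed under subalgebras). So the plan is to exploit this good behaviour directly: I expect to prove a small lemma that in $\sf D_\omega$, if $X\subseteq\Nr_n\C$ and $\C$ is a concrete $\wp(W)$ of the above special shape, then $\Sg^{\C}X\cap\Nr_n\C=\Sg^{\Nr_n\C}X$; equivalently every element of $\Sg^{\C}X$ that is $\cyl i$-fixed for all $i\geq n$ is already obtained from $X$ using only operations with indices $<n$. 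The proof is by induction on term complexity, the only nontrivial case being a cylindrifier $\cyl j$ with $j\geq n$ applied to a term that is \emph{not} $j$-closed; here one shows that on the unit $W$ (constant spare coordinates, $[i|j]$-closed) $\cyl j$ acting on an $n$-dimensional element can be rewritten, using the replacement operations present in the signature of $\sf D_\omega$, in terms of a substitution $s^n_j$ and $\cyl n$, staying within the algebra generated by $X$ over the first $n+1$ coordinates — and iterating downward. For the $\sf G_n$ version the argument is identical with $\sf D_\omega$ replaced by $\sf G_\omega$ and ``replacement-closed'' by ``closed under all finite transformations,'' which in fact makes the spare-dimension bookkeeping easier since one then has transpositions available as well. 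I would then finish by assembling the general (non-simple) case from the simple one via subdirect products, noting $\sf D_\omega$ and $\sf G_\omega$ are closed under products and subalgebras so the construction glues.
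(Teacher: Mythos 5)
The paper states this lemma without proof (it is treated as a known fact about relativized set algebras), so I am judging your argument on its own terms. There is a genuine gap, and it sits exactly at the step you dismiss as ``immediate from the construction'': the definition of the $\omega$-dimensional unit $W$ in the $\sf D_n$ case. Your first candidate (pad each $s\in V$ by the constant value $s(0)$) is not closed under the replacements $[0|j]$, as you half-acknowledge; but your fallback --- close the padded set under all replacements $[i|j]$ with $i,j<\omega$ --- produces a $W$ with $\{\sigma\upharpoonright n:\sigma\in W\}\supsetneq V$ whenever $V$ is not closed under transpositions, which a $\sf D_n$ unit need not be. Concretely, let $s^{+}$ be $s\in V$ padded by $s(0)$, and apply first the replacement operation for $[0|1]$ and then the one for $[1|n]$: the spare coordinate $n$ stores the value $s(0)$, which the second replacement copies back into coordinate $1$ after the first has overwritten it, so the resulting sequence lies in $W$ and restricts on $n$ to $s\circ[0,1]$. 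For $n=2$, $M=\{a,b\}$, $V=\{(a,b),(a,a),(b,b)\}$ is replacement-closed with $(b,a)\notin V$, yet every $\omega$-replacement-closed $W$ projecting onto $V$ must contain a sequence restricting to $(b,a)$. Hence $h(V)\neq W$, so $h$ does not preserve the top element (nor complementation) and is not a homomorphism. This is not a repairable bookkeeping slip: the computation shows that replacements in the presence of spare dimensions generate all transpositions of the first $n$ coordinates (the merry-go-round phenomenon that the $\PTA$/$\TA$ distinction in this paper is built around), so no unit-lifting of the form $X\mapsto\{\sigma\in W:\sigma\upharpoonright n\in X\}$ can witness the neat embedding for $\sf D_n$. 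A correct proof has to construct the dilation differently, e.g.\ abstractly by running the step-by-step/game argument of theorem \ref{11} with $\omega$ many indices while controlling which elements are $n$-dimensional, or by exhibiting the isomorphism $\A\cong\Nr_n\B$ through a map other than restriction of sequences.

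Even in the $\sf G_n$ case, where your construction fares better (padding by $\sigma(0)$ and closing under all finite transformations keeps the $n$-restrictions inside $V$, since a $\sf G_n$ unit is closed under all maps $n\to n$), two steps remain under-argued. First, surjectivity of $h$ onto $\Nr_n\wp(W)$ requires that an $n$-dimensional subset of $W$ be determined by its restrictions to $n$, i.e.\ that any two elements of $W$ with the same restriction to $n$ be linked by a chain of single-coordinate changes at indices $\geq n$ staying inside $W$; for a relativized unit this connectedness of fibres is not automatic and you do not address it. Second, the closing identity $\Nr_n\Sg^{\C}h[\A]=h[\A]$ is precisely the kind of statement this paper repeatedly shows can fail ($\Nr_n$ does not commute with $\Sg$ in general), and your proposed induction on terms --- rewriting ${\sf c}_j$ for $j\geq n$ via substitutions --- is only gestured at; it must be carried out in the weakened, non-commutative axiom system, where several of the standard $\CA$ identities you would normally invoke are unavailable.
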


Now we consider the logics corresponding to $\sf G_n$ and $\sf D_n$
when $n>2$. The algebraic property expressd in the previous lemma, 
enables us to prove an omitting types theorem for the $n$ variable logic corresponding to both $\sf D_n$ and $\sf G_n$.

We {\it do not} need the extra condition imposed and formulated for cylindric algebras in theorem \cite{Shelah}, namely, {\it complete}
neat embeddings. The neat embeddings can always
be chosen to be complete neat embeddings. This allows us to prove the following contrasting result.

\begin{theorem} Let $n>1$, let $\L_n$ be the logic corresponding
to $\sf D_n$ or $\sf G_n.$ Then $\L_n$ has the omitting types theorem. In fact, $< \sf covK$ many 
non-principal types in a countable $\L_n$ theory $T$ can be omitted in a $\sf G_n$ with countable base.

\end{theorem}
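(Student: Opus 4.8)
The plan is to mimic the classical Henkin--Orey omitting types argument, but carried out inside a countable $\omega$-dimensional dilation that exists by Lemma~\ref{g}, exploiting crucially that the neat embedding can be taken to be \emph{complete}. Let $\L_n$ be the logic corresponding to $\sf D_n$ (the $\sf G_n$ case is entirely analogous, replacing $\sf D_\omega$ by $\sf G_\omega$ throughout). Let $T$ be a countable consistent $\L_n$ theory and let $(\Gamma_i : i<\kappa)$, $\kappa<\sf covK$, be a family of non-principal types of $T$. Form $\A=\Fm_T$, a countable algebra in $\sf D_n$ by Theorem~\ref{11}. By Lemma~\ref{g} there is $\B\in \sf D_\omega$ with $\A\subseteq \Nr_n\B$, $A$ generating $\B$, and $\A=\Nr_n\B$; moreover, since the class of neat reducts for relativized set algebras is a variety and is closed under subalgebras, this neat embedding is a \emph{complete} embedding, i.e.\ $\A\subseteq_c\Nr_n\B$. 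Let $X_i=\{\phi^{T}: \phi\in\Gamma_i\}\subseteq A$. The first key step is to verify that, because $\Gamma_i$ is non-principal in $T$ and the embedding is complete, $\prod^{\B} X_i = 0$ for each $i<\kappa$: any lower bound $b\in B$ of $X_i$ would, after a cylindrification ${\sf c}_{(\omega\setminus n)}b$ bringing it down into $\Nr_n\B=\A$, contradict non-principality of $\Gamma_i$ in $\A$ — here I would use that, for locally finite $\B$, $\Nr_n\B$ generates $\B$ and that non-zero elements of $\B$ meet $\Nr_n\B$ after cylindrifying out the spare coordinates.

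The second, and central, step is a Baire-category / Rasiowa--Sikorski style construction of a homomorphism $h:\B\to \C$ onto a set algebra $\C\in\sf D_\omega$ with countable base that preserves all the (countably many, but $\kappa<\sf covK$ in number) infima $\prod X_i = 0$ and is injective (in fact it suffices to realize a prescribed nonzero $a\in A$, then take a disjoint union / direct product over a dense set of such $a$'s). Concretely I would run the step-by-step network game $G_\omega(\B)$ of Theorem~\ref{11} — or rather a refinement of it — where, in addition to $\forall$'s three standard kinds of moves (witnessing a nonzero element, deciding membership on an edge, and providing a cylindrifier witness), we add $\kappa$ extra families of requirements, one per type $\Gamma_i$: for every edge $\bar x$ appearing in the run and every $i<\kappa$, at some later stage some element of $X_i$ is \emph{avoided} on $\bar x$, which is possible precisely because $\prod X_i=0$ so for any atom (or nonzero label) below which we are working there is a member of $X_i$ disjoint from it. The point of the hypothesis $\kappa<\sf covK$ is that the underlying poset of finite partial networks is countable, so this is a Banach--Mazur / Baire game on a countable poset, and $\sf covK$ is exactly the threshold below which the union of the corresponding $\kappa$-many ``nowhere dense'' obstacle sets does not cover the space; thus a generic play meeting all requirements exists. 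The output of the generic run is, as in the proof of Theorem~\ref{11}, a relativized representation $h:\B\to\wp(\edges(N))\in\sf D_\omega$ whose base $N$ is countable, and the extra requirements guarantee $\bigcap_{x\in X_i} h(x)=\emptyset$ for all $i$.

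Finally, restricting $h$ to $\A=\Nr_n\B$ gives the desired representation of $\Fm_T$, i.e.\ a model $\M$ of $T$ with countable base in which no $\Gamma_i$ is realized; translating back from the algebra to $\L_n$ semantics (the standard Tarski--Lindenbaum dictionary, valid here since the logic of $\sf D_n$ is complete by Theorem~\ref{11}) yields the omitting types theorem, even simultaneously for $<\sf covK$ many non-principal types. The main obstacle I anticipate is the bookkeeping in the second step: one must interleave $\forall$'s ordinary moves (needed so that $h$ is a homomorphism respecting Booleans, cylindrifiers, diagonals and substitutions in the relativized sense) with the $\kappa$ new omitting requirements \emph{and} verify that each new requirement is genuinely ``dense'' — i.e.\ that at any finite stage it can be met — which is where $\prod X_i=0$ together with the relativized (non-square) nature of the networks, allowing free amalgamation of partial networks, must be used carefully. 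A secondary subtlety is confirming that the completeness of the neat embedding $\A\subseteq_c\Nr_n\B$ really does transfer non-principality of $\Gamma_i$ over $T$ to $\prod^{\B}X_i=0$; this is exactly the place where earlier theorems (and the remark that for $\sf D_n,\sf G_n$ neat embeddings can always be taken complete) do the work that, for genuine cylindric algebras, required the extra hypothesis in Shelah-type results.
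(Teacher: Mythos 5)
Your proposal is correct and follows essentially the same route as the paper: pass to a countable $\omega$-dimensional dilation $\B$ with $\A=\Nr_n\B$, show the neat embedding is complete (by cylindrifying the finitely many spare dimensions of any would-be interpolant back into $\A$) so that $\prod^{\B}X_i=0$, and then run a Baire-category argument over countably many density requirements to produce a generic object realizing a prescribed nonzero element while omitting all $\kappa<\sf covK$ types. The only difference is cosmetic: the paper carries out the genericity step directly on the Stone space of $\B$, exhibiting the Henkin and omitting requirements as nowhere dense sets and extracting a perfect ultrafilter, whereas you phrase the same construction as a play of the network game of Theorem~\ref{11} with added requirements; the two are interchangeable.
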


\begin{proof} We consider only ${\sf D}_n$. The other case is the same.
Let $\Sigma_n$ be the Andr\'eka-Thompson axiomatization for algebras of dimension $n$.
Let $\A=\Fm_T$, then we have $\A\in {\sf Mod}(\Sigma_n)$ is countable and $(X_i: i<\omega)$ a family of non principal types,
that is $\prod X_i=0$.
Since $\A$ is representable, then we can assume that $\A=\Nr_n\B$, and
we can also assume, since $A$ generates $\B$, that $\B\in {\sf Mod}(\Sigma_{\omega})$ is both countable
and locally finite; that is $\Delta x=\{i\in \omega: {\sf c}_ix\neq x\}$ is finite, for al $x\in \B$.
Now we work as above, but instead of one non principal type,
namely, the co-atoms, we now have countably many.
So here we can (and will) appeal to the Baire category theorem. First any finite substitution of $\omega$ is term definable in $\B$
(This is done like the cylindric case, it does not depend on commutativity of cylindrifiers \cite[definition 1.11.9, theorems 1.11.1-13]{tarski}.
Let $\sf adm$ be  the set of admissible substitutions.
where $\tau$ is admissible if $\dom\tau\subseteq n$ and $\rng\tau\cap n=\emptyset$.
Then we have
for all $i< \omega$ and $\sigma\in \sf adm$,
\begin{equation}\label{tarek1}
\begin{split}
\s_{\sigma}{\sf c}_{i}p=\sum_{j} \s_{\sigma}{\sf s}_j^ip
\end{split}
\end{equation}
This uses that ${\sf c}_k=\sum {\sf s}_i^k x$, which is proved like the cylindric (locally finite) case;
the proof depends on diagonal elements.

Let $(\Gamma_i: i\in \lambda\}$ with  $\lambda<{\sf covK}$ be the given family of non principal types,
and let $X_i=\{\neg \phi/T: \phi\in \Gamma_i\}$. 
By $\A=\Nr_{n}\B$, we also have $\prod^{\B}X_i=1$, because $\A$ is a complete subalgebra of $\B$ by the following argument.

Assume $S\subseteq A$ and $\sum ^{\A}S=y$, and for contradiction that $d\in B$ and
$s\leq d< y$ for all $s\in S$. Then $d$ uses finitely many dimensions not in $n$, say $m_1,\ldots, m_n$.
Let $\tau=y. -{\sf c}_{m_1}\ldots {\sf c}_{m_n}(-d)$ is in $\A$
and $s\leq \tau<y$ which is impossible.
Because substitutions are completely additive, we have
for all $\tau\in \sf adm$
\begin{equation}\label{t1}
\begin{split}
\prod {\sf s}_{\bar{\tau}}^{\B}X_i=0.
\end{split}
\end{equation}
Let $S$ be the Stone space of $\B$, whose underlying set consists of all Boolean ultrafilters of
$\B$, and let $F$ be a an ultrafilter chosen as usual.
For each $\tau\in \sf adm$ for each $i\in \kappa$, let
$$X_{i,\tau}=\{{\sf s}_{\tau}x: x\in X_i\}.$$
Then by complete additivity we have:
\begin{equation}\label{t2}\begin{split}
(\forall\tau\in \sf adm)(\forall  i\in \kappa)\prod{}^{\A}X_{i,\tau}=0
\end{split}
\end{equation}
Let $S$ be the Stone space of the Boolean part of $\A$, and for $x\in \A$, let $N_x$
denote the clopen set consisting of all
Boolean ultrafilters that contain $x$.
Then from \ref{t1}, \ref{t2}, it follows that for $x\in \A,$ $j<\alpha$ $i<\kappa$ and
$\tau\in \sf adm$, the sets
$$\bold G_{\tau,j,x}=N_{s_{\tau}{\sf c}_jx}\setminus \bigcup_{i} N_{s_{\tau}{\sf s}_i^jx}
\text { and } \bold H_{i,\tau}=\bigcap_{x\in X_i} N_{{\sf s}_{\bar{\tau}}x}$$
are closed nowhere dense sets in $S$.
Also each $\bold H_{i,\tau}$ is closed and nowhere
dense.
Let $$\bold G=\bigcup_{\tau\in adm}\bigcup_{j\in \beta}\bigcup_{x\in B}\bold G_{\tau, j,x}
\text { and }\bold H=\bigcup_{i\in \kappa}\bigcup_{\tau\in adm}\bold H_{i,\tau.}$$
By properties of $\sf covK$, it can be shown $\bold H$ is a
countable collection of nowhere dense sets.
By the Baire Category theorem  for compact Hausdorff spaces, we get that $X=S\smallsetminus \bold H\cup \bold G$ is dense in $S$.
Accordingly, let $F$ be an ultrafilter in $N_a\cap X$, then$F$ is perfect the required representation follows.

Then $F$ is a perfect ultrafilter. Because our algebras have diagonal algebras,
we have to factor our base by a congruence relation that reflects
equality.
Define an equivalence relation on $\Gamma=\{i\in \omega:\exists j\in n: {\sf c}_i{\sf d}_{ij}\in F\}$,
via $k\sim l$ iff ${\sf d}_{kl}\in F.$ Then $\Gamma\subseteq \omega$ and
the desired representation, which gives the required model, is defined on a $\sf D_n$
with base
$\Gamma/\sim$.
We omit the details which can be
easily distilled from \cite[p.128-129]{Sayedneat}.
\end{proof}

We can consider the fragment of first order logic restricted to the Thompson axioms as a guarded fragment of first order 
logic; call it $\L_n$, $n>1$ finite, where the existential quantifier is restricted to the unit, the guard.
It is also proved in \cite{games} that $\L_n$ has a really nice modal beheviour. It has the finite base property, hence is decidable;
in fact, the universal theory of $\sf D_n$ is decidable,
and it has a Craig interpoltaion theoirem and a Beth definability theorem. Our last results adds to the 
positive property of this multi-dimensional modal logic, that has quite an expreesive power.
This match is rare, and hard to achieve.

\section{Graphs and Strong representability}

\subsection{An extension of a result of Hirsch and Hodkinson's}

Here we extend Hirsch and Hodkinson's celebrated result that the class of strongly representable atom structures
of cylindric algebras of finite dimension $>2$ is not elementary, to any class of algebras with signature between
$\Df$ and $\PEA$. We also suggest a modification of their construction to obtain the result for relation algebras.
We obtain the two results in one go, meaning that the cylindric atom structure of dimension $n$
constructed,  will be the atom structure
consisting of the $n$ basic matrices of an atomic relation algebra. Also  both algebras
are based on one graph and both
are representable if and only if the chromatic number of the underlying graph is infinite.
Then we use, like they did, Erdos' probabilistic graphs.

Throughout this section, $n$ is a finite ordinal $>2$.

\begin{definition}
Let $\Gamma=(G, E)$ be a graph.
\begin{enumerate}
\item{A set $X\subset G$ is said to be \textit{independent} if $E\cap(X\times X)=\phi$.}
\item{The \textit{chromatic number} $\chi(\Gamma)$ of $\Gamma$ is the smallest $\kappa<\omega$
such that $G$ can be partitioned into $\kappa$ independent sets,
and $\infty$ if there is no such $\kappa$.}
\end{enumerate}
\end{definition}

\begin{definition}\ \begin{enumerate}
\item For an equivalence relation $\sim$ on a set $X$, and $Y\subseteq
X$, we write $\sim\upharpoonright Y$ for $\sim\cap(Y\times Y)$. For
a partial map $K:n\rightarrow\Gamma\times n$ and $i, j<n$, we write
$K(i)=K(j)$ to mean that either $K(i)$, $K(j)$ are both undefined,
or they are both defined and are equal.
\item For any two relations $\sim$ and $\approx$. The composition of $\sim$ and $\approx$ is
the set
$$\sim\circ\approx=\{(a, b):\exists c(a\sim c\wedge c\approx b)\}.$$\end{enumerate}
\end{definition}
\begin{definition}Let $\Gamma$ be a graph.
We define an atom structure $\eta(\Gamma)=\langle H, D_{ij},
\equiv_{i}, \equiv_{ij}:i, j<n\rangle$ as follows:
\begin{enumerate}
\item$H$ is the set of all pairs $(K, \sim)$ where $K:n\rightarrow \Gamma\times n$ is a partial map and $\sim$ is an equivalent relation on $n$
satisfying the following conditions \begin{enumerate}\item If
$|n\diagup\sim|=n$, then $\dom(K)=n$ and $\rng(K)$ is not independent
subset of $n$.
\item If $|n\diagup\sim|=n-1$, then $K$ is defined only on the unique $\sim$ class $\{i, j\}$ say of size $2$ and $K(i)=K(j)$.
\item If $|n\diagup\sim|\leq n-2$, then $K$ is nowhere defined.
\end{enumerate}
\item $D_{ij}=\{(K, \sim)\in H : i\sim j\}$.
\item $(K, \sim)\equiv_{i}(K', \sim')$ iff $K(i)=K'(i)$ and $\sim\upharpoonright(n\setminus\{i\})=\sim'\upharpoonright(n\setminus\{i\})$.
\item $(K, \sim)\equiv_{ij}(K', \sim')$ iff $K(i)=K'(j)$, $K(j)=K'(i)$, and $K(\kappa)=K'(\kappa) (\forall\kappa\in n\setminus\{i, j\})$
and if $i\sim j$ then $\sim=\sim'$, if not, then $\sim'=\sim\circ[i,
j]$.
\end{enumerate}
\end{definition}
It may help to think of $K(i)$ as assigning the nodes $K(i)$ of
$\Gamma\times n$ not to $i$ but to
the set $n\setminus\{i\}$, so long as its elements are pairwise non-equivalent via $\sim$.\\
For a set $X$, $\mathcal{B}(X)$ denotes the Boolean algebra
$\langle\wp(X), \cup, \setminus\rangle$. We write $a\cap b$ for
$-(-a\cup-b)$.
\begin{definition}\label{our algebra}
Let $\B(\Gamma)=\langle\B(\eta(\Gamma)), {\sf c}_{i},
{\sf s}^{i}_{j}, {\sf s}_{ij}, {\sf d}{ij}\rangle_{i, j<n}$ be the algebra, with
extra non-Boolean operations defined as follows:

${\sf d}{ij}=D_{ij}$,
${\sf c}_{i}X=\{c: \exists a\in X, a\equiv_{i}c\}$,
${\sf s}_{ij}X=\{c: \exists a\in X, a\equiv_{ij}c\}$,
${\sf s}^{i}_{j}X=\begin{cases}
{\sf c}_{i}(X\cap D_{ij}), &\text{if $i\not=j$,}\\
X, &\text{if $i=j$.}
\end{cases}$
For all $X\subseteq \eta(\Gamma)$.

\end{definition}

\begin{definition}
For any $\tau\in\{\pi\in n^{n}: \pi \text{ is a bijection}\}$, and
any $(K, \sim)\in\eta(\Gamma)$. We define $\tau(K,
\sim)=(K\circ\tau, \sim\circ\tau)$.\end{definition}

The proof of the following two Lemmas is straightforward.
\begin{lemma}\label{Lemma 1}\ \\
For any $\tau\in\{\pi\in n^{n}: \pi \text{ is a bijection}\}$, and
any $(K, \sim)\in\eta(\Gamma)$. $\tau(K, \sim)\in\eta(\Gamma)$.
\end{lemma}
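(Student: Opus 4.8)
The statement to prove is Lemma~\ref{Lemma 1}: for any bijection $\tau\in{}^nn$ and any $(K,\sim)\in\eta(\Gamma)$, the pair $\tau(K,\sim)=(K\circ\tau,\sim\circ\tau)$ again lies in $\eta(\Gamma)$. Since membership in $\eta(\Gamma)$ is defined purely by the three clauses (a)--(c) keyed to the value of $|n/{\sim}|$, the plan is simply to check that each of these clauses is preserved under precomposition by a bijection. The main observation that makes everything go through is that $\sim\circ\tau$ is again an equivalence relation on $n$ (because $\tau$ is a bijection, not merely a map), and that the quotient $n/({\sim\circ\tau})$ has the \emph{same} cardinality as $n/{\sim}$; indeed $\tau$ induces a bijection between the two sets of equivalence classes, sending the $(\sim\circ\tau)$-class of $i$ to the $\sim$-class of $\tau(i)$. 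So the case split is governed by the same value of $|n/{\sim}|$ for both $(K,\sim)$ and $(K\circ\tau,\sim\circ\tau)$, and one only has to verify that the combinatorial conditions on $K$ transfer along $\tau$.

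Concretely I would argue as follows. First, record the two elementary facts: (i) $\sim\circ\tau$ is an equivalence relation, reflexive/symmetric/transitive following from the corresponding properties of $\sim$ and the bijectivity of $\tau$; (ii) $i\mapsto$ ($\sim$-class of $\tau(i)$) is a well-defined bijection $n/({\sim\circ\tau})\to n/{\sim}$, hence $|n/({\sim\circ\tau})|=|n/{\sim}|$. Next, split into the three cases. If $|n/{\sim}|=n$, then $\sim$ is the identity, $\dom(K)=n$ and $\rng(K)$ is not independent; then $\sim\circ\tau$ is also the identity (as $\tau$ is injective), $\dom(K\circ\tau)=\tau^{-1}[\dom K]=n$, and $\rng(K\circ\tau)=\rng(K)$ since $\tau$ is onto, so it is still not independent --- clause (a) holds. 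If $|n/{\sim}|=n-1$, then $K$ is defined exactly on the unique two-element $\sim$-class $\{i,j\}$ with $K(i)=K(j)$; under $\tau$ the unique two-element $(\sim\circ\tau)$-class is $\{\tau^{-1}(i),\tau^{-1}(j)\}$, on which $K\circ\tau$ is defined and takes the single value $K(i)=K(j)$, so clause (b) holds. If $|n/{\sim}|\le n-2$, then $K$ is nowhere defined, hence $K\circ\tau$ is nowhere defined, and clause (c) holds. Combining the three cases gives $\tau(K,\sim)\in\eta(\Gamma)$.

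I do not expect any genuine obstacle here --- the lemma is flagged in the text as straightforward, and the content is entirely the bookkeeping above. The only place where a little care is needed (and the only place where the hypothesis that $\tau$ is a \emph{bijection} rather than an arbitrary self-map is actually used) is in showing that $\sim\circ\tau$ is still an equivalence relation and that the quotient size is preserved; a non-injective $\tau$ would collapse classes and break clause (b), while a non-surjective $\tau$ would shrink $\rng(K)$ and could fail clause (a). Once those two points are isolated, the rest is a mechanical case check, so I would keep the write-up short, stating facts (i) and (ii) as a displayed observation and then running the three-way split in a single paragraph.
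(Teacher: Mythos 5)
Your proof is correct, and it is exactly the argument the paper has in mind: the paper simply declares Lemma~\ref{Lemma 1} ``straightforward'' and omits the verification, which is precisely your three-way case check after noting that $\tau$ induces a bijection $n/({\sim}\circ\tau)\to n/{\sim}$. One tiny remark: bijectivity of $\tau$ is not actually needed to see that the pullback ${\sim}\circ\tau$ is an equivalence relation (that holds for any map); it is needed only where you already use it, namely injectivity to preserve $|n/{\sim}|$ and the two-element class in clause (b), and surjectivity to preserve $\rng(K)$ in clause (a).
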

\begin{lemma}\label{Lemma 2}\ \\For any $(K, \sim)$, $(K', \sim')$, and $(K'', \sim'')\in\eta(\Gamma)$, and $i, j\in n$:
\begin{enumerate}
\item$(K, \sim)\equiv_{ii}(K', \sim')\Longleftrightarrow (K, \sim)=(K', \sim')$.
\item$(K, \sim)\equiv_{ij}(K', \sim')\Longleftrightarrow (K, \sim)\equiv_{ji}(K', \sim')$.
\item If $(K, \sim)\equiv_{ij}(K', \sim')$, and $(K, \sim)\equiv_{ij}(K'', \sim'')$, then $(K', \sim')=(K'', \sim'')$.
\item If $(K, \sim)\in D_{ij}$, then \\
$(K, \sim)\equiv_{i}(K', \sim')\Longleftrightarrow\exists(K_{1},
\sim_{1})\in\eta(\Gamma):(K, \sim) \equiv_{j}(K_{1},
\sim_{1})\wedge(K', \sim')\equiv_{ij}(K_{1}, \sim_{1})$.
\item${\sf s}_{ij}(\eta(\Gamma))=\eta(\Gamma)$.
\end{enumerate}
\end{lemma}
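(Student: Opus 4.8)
\textbf{Proof plan for Lemma \ref{Lemma 2}.}

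The statement collects five elementary facts about the relations $\equiv_i$, $\equiv_{ij}$, the sets $D_{ij}$ and the operation $\sf s_{ij}$ on the atom structure $\eta(\Gamma)$. My plan is to verify each clause directly from the definitions of $\eta(\Gamma)$ and of $\equiv_i$, $\equiv_{ij}$, $D_{ij}$, treating the five items in the order listed, since the later ones lean on the earlier ones. Throughout I would write an element of $\eta(\Gamma)$ as a pair $(K,\sim)$ with $K:n\to\Gamma\times n$ partial and $\sim$ an equivalence relation on $n$, and use $[i,j]$ for the transposition of $i$ and $j$. For item (1), unravel the definition of $\equiv_{ii}$: it requires $K(i)=K'(i)$ and $\sim\restriction(n\setminus\{i\})=\sim'\restriction(n\setminus\{i\})$ together with (the $i=j$ case) $\sim=\sim'$; since $\sim=\sim'$ and $K(i)=K'(i)$, we need that $K$ and $K'$ agree everywhere, which follows because the conditions (a)--(c) on $(K,\sim)$ in the definition of $H$ show that $\dom(K)$ and the values of $K$ on it are determined by $K(i)$ together with $\sim$ (when $|n/\!\sim|=n$ the map $K$ is total and $K'(j)$ for $j\ne i$ is forced by $K\equiv_{ii}K'$ restricted to $n\setminus\{i\}$ — here one uses that $\equiv_{ii}$ as defined from $\equiv_{ij}$ with $i=j$ actually forces $K=K'$; a short case check on $|n/\!\sim|$ closes this). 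Item (2) is immediate from the symmetry of the defining clauses of $\equiv_{ij}$ under swapping $i$ and $j$ (note $\sim\circ[i,j]=\sim\circ[j,i]$).

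For item (3), suppose $(K,\sim)\equiv_{ij}(K',\sim')$ and $(K,\sim)\equiv_{ij}(K'',\sim'')$. If $i\sim j$, then by definition $\sim'=\sim=\sim''$, and $K'(i)=K(j)=K''(i)$, $K'(j)=K(i)=K''(j)$, and $K'(\kappa)=K(\kappa)=K''(\kappa)$ for $\kappa\notin\{i,j\}$, so $K'=K''$; if $i\not\sim j$ then $\sim'=\sim\circ[i,j]=\sim''$ and the same computation on $K$ gives $K'=K''$. Item (5), ${\sf s}_{ij}(\eta(\Gamma))=\eta(\Gamma)$, amounts to showing $\equiv_{ij}$ is a total relation on $\eta(\Gamma)$ in the sense that every $(K,\sim)$ has an $\equiv_{ij}$-image lying in $\eta(\Gamma)$; this is exactly (a special case of) Lemma \ref{Lemma 1} applied to the transposition $\tau=[i,j]$, since $\tau(K,\sim)=(K\circ[i,j],\sim\circ[i,j])$ is the required $\equiv_{ij}$-partner when $i\not\sim j$ (and $(K,\sim)$ itself works when $i\sim j$), and conversely every element of $\eta(\Gamma)$ is visibly of the form ${\sf s}_{ij}$ applied to its $\equiv_{ij}$-preimage.

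The one clause that takes a little genuine bookkeeping is item (4), the "triangle" identity relating $\equiv_i$ to $\equiv_j$ and $\equiv_{ij}$ on $D_{ij}$; this I expect to be the main obstacle, though still routine. For the $(\Leftarrow)$ direction: given $(K_1,\sim_1)$ with $(K,\sim)\equiv_j(K_1,\sim_1)$ and $(K',\sim')\equiv_{ij}(K_1,\sim_1)$, and using $i\sim j$ (so $(K,\sim)\in D_{ij}$ forces $\sim_1\restriction(n\setminus\{j\})=\sim\restriction(n\setminus\{j\})$ and then, since $i\sim j$ and $i\ne j$, one reads off that $i\sim_1 j$ too, whence $\sim'=\sim_1$), a direct check that $K(i)=K'(i)$ and $\sim\restriction(n\setminus\{i\})=\sim'\restriction(n\setminus\{i\})$ yields $(K,\sim)\equiv_i(K',\sim')$. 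For $(\Rightarrow)$: given $(K,\sim)\in D_{ij}$ and $(K,\sim)\equiv_i(K',\sim')$, I would construct the witness $(K_1,\sim_1)$ explicitly — take $\sim_1=\sim$ (legitimate since $i\sim j$) and define $K_1$ by $K_1(j)=K(i)=K'(i)$, $K_1(i)=K(j)$ (if defined), $K_1(\kappa)=K(\kappa)$ otherwise — then verify $(K_1,\sim_1)\in\eta(\Gamma)$ using conditions (a)--(c) (the partition $n/\!\sim$ is unchanged so the domain/range constraints on $K_1$ reduce to those on $K$), and check $(K,\sim)\equiv_j(K_1,\sim_1)$ and $(K',\sim')\equiv_{ij}(K_1,\sim_1)$ from the definitions. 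In writing this up I would keep the exposition brief, remarking only that all verifications are straightforward from the definitions, and present item (4) with the explicit witness so the reader can reconstruct the details.
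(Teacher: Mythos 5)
Your overall plan---direct verification of each clause from the definitions of $H$, $\equiv_i$, $\equiv_{ij}$ and $D_{ij}$---is the right one (the paper itself declares the lemma straightforward and supplies no proof), and items (1), (2), (3) and (5) go through essentially as you describe. But your treatment of item (4), which you correctly single out as the only clause with real content, contains a genuine error in the forward direction. Your proposed witness takes $\sim_1=\sim$ and $K_1$ equal to $K$ with the values at $i$ and $j$ swapped; since $i\sim j$ forces $K(i)=K(j)$, this witness is just $(K,\sim)$ itself. The condition $(K',\sim')\equiv_{ij}(K_1,\sim_1)$ then demands, among other things, $K'(\kappa)=K(\kappa)$ for all $\kappa\notin\{i,j\}$ and a compatibility between $\sim'$ and $\sim$; neither is implied by $(K,\sim)\equiv_i(K',\sim')$, which only constrains $K'(i)$ and $\sim'\upharpoonright(n\setminus\{i\})$. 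Concretely, for $n=3$, $i=0$, $j=1$: let $\sim$ have classes $\{0,1\},\{2\}$ with $K$ defined exactly on $\{0,1\}$, and let $\sim'$ be the identity with $K'$ total (range not independent) and $K'(0)=K(0)$. Then $(K,\sim)\equiv_0(K',\sim')$, but $(K',\sim')\not\equiv_{01}(K,\sim)$, since $K'(2)$ is defined while $K(2)$ is not (and also $\sim'\circ[0,1]=\sim'\neq\sim$). So the step "check $(K',\sim')\equiv_{ij}(K_1,\sim_1)$" would fail.

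The correct witness is built from $(K',\sim')$, not from $(K,\sim)$: take $(K_1,\sim_1)=[i,j](K',\sim')=(K'\circ[i,j],\,\sim'\circ[i,j])$, which lies in $\eta(\Gamma)$ by Lemma \ref{Lemma 1}. Then $(K',\sim')\equiv_{ij}(K_1,\sim_1)$ holds by construction; $K_1(j)=K'(i)=K(i)=K(j)$; and $\sim\upharpoonright(n\setminus\{j\})=\sim_1\upharpoonright(n\setminus\{j\})$ follows from the case check $x\sim y\iff x\sim'y$ for $x,y\notin\{i,j\}$ together with $i\sim y\iff i\sim'y\iff j\sim_1 y$ and $j\sim y\iff i\sim y$ (using $i\sim j$); hence $(K,\sim)\equiv_j(K_1,\sim_1)$. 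A smaller point: in your sketch of the converse direction you claim that $i\sim j$ forces $i\sim_1 j$ and hence $\sim'=\sim_1$; this does not follow, since the $\equiv_j$-agreement of $\sim$ and $\sim_1$ excludes $j$, so $\sim_1$ may separate $i$ from $j$. The converse is nevertheless true: one verifies $\sim\upharpoonright(n\setminus\{i\})=\sim'\upharpoonright(n\setminus\{i\})$ by splitting on whether $i\sim'j$ (so that either $\sim'=\sim_1$ or $\sim_1=\sim'\circ[i,j]$) rather than by assuming $\sim'=\sim_1$.
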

The proof of the next lemma is tedious but not too hard.
\begin{theorem}\label{it is qea}
For any graph $\Gamma$, $\B(\Gamma)$ is a simple
$\PEA_n$.
\end{theorem}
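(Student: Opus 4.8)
The plan is to verify that $\B(\Gamma)$ satisfies each of the defining equations of $\PEA_n$ and then check simplicity separately. Since the algebra is a concrete Boolean algebra of subsets of $\eta(\Gamma)$ with operators defined pointwise from the accessibility relations $\equiv_i$, $\equiv_{ij}$, $D_{ij}$, the strategy is the standard one: it suffices to show that the relational atom structure $\eta(\Gamma)$ is a $\PEA_n$ atom structure, i.e.\ that the first-order ``frame conditions'' corresponding to the $\PEA_n$ axioms hold, and then invoke the general duality recalled in the preliminaries (the complex algebra of a suitable relational structure automatically satisfies the matching equations). Concretely, I would proceed as follows. First, record that $\equiv_i$ and each $\equiv_{ij}$ are equivalence relations on $\eta(\Gamma)$ (using Lemma~\ref{Lemma 2}(1)--(3) for the transposition relations: reflexivity is the $i=j$ clause, symmetry is (2), and the ``functional'' property (3) gives both the equivalence-relation structure and the fact that $\mathsf{s}_{ij}$ is a Boolean endomorphism in the complex algebra). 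Second, check the genuinely cylindric axioms $(C_0)$--$(C_7)$: additivity and the closure-operator laws for $\mathsf{c}_i$ are immediate from $\equiv_i$ being an equivalence relation; the diagonal axioms $\mathsf{d}_{ii}=1$, $\mathsf{d}_{ij}=\mathsf{c}_k(\mathsf{d}_{ik}\cdot \mathsf{d}_{kj})$ follow by unwinding the definition of $D_{ij}$ in terms of the equivalence relation $\sim$ on $n$ carried by each element $(K,\sim)$, together with the compatibility clause in the definition of $\equiv_k$; the commutativity law $\mathsf{c}_i\mathsf{c}_j x = \mathsf{c}_j\mathsf{c}_i x$ holds because $\equiv_i$ and $\equiv_j$ commute as relations on $\eta(\Gamma)$ (one pushes the change on coordinate $i$ and on coordinate $j$ of $(K,\sim)$ independently, which is exactly what clauses defining $\equiv_i$ allow, since they only constrain $K(i)$ and $\sim\restriction(n\setminus\{i\})$).

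Third, handle the polyadic (transposition) axioms. Here the work is to verify: $\mathsf{s}_{ij}$ is a Boolean endomorphism (from Lemma~\ref{Lemma 2}(3), $\equiv_{ij}$ is the graph of an involution of $\eta(\Gamma)$, so its complex-algebra operator is a complete Boolean endomorphism); $\mathsf{s}_{ij}\mathsf{s}_{ij}=\mathrm{Id}$ (involutivity, Lemma~\ref{Lemma 2}(1)--(2)); the cocycle-type identity $\mathsf{s}_{ij}\mathsf{s}_{ik}=\mathsf{s}_{jk}\mathsf{s}_{ij}$ for distinct $i,j,k$ (this reduces to the identity $[i,j]\circ[i,k]=[j,k]\circ[i,j]$ on the permutation group $n^n$, applied coordinatewise to $K$ and to $\sim$ via the map $\tau(K,\sim)=(K\circ\tau,\sim\circ\tau)$ of the displayed definition before Lemma~\ref{Lemma 1}, which is well defined by Lemma~\ref{Lemma 1}); the interaction $\mathsf{s}_{ij}\mathsf{c}_k = \mathsf{c}_k\mathsf{s}_{ij}$ for $k\notin\{i,j\}$; and the connecting axioms relating $\mathsf{s}^i_j$, $\mathsf{d}_{ij}$, $\mathsf{c}_i$, which in this presentation hold essentially by the definition $\mathsf{s}^i_j X=\mathsf{c}_i(X\cap D_{ij})$ together with Lemma~\ref{Lemma 2}(4), the statement that on $D_{ij}$ one can factor the $i$-jump through the $j$-jump and the $ij$-transposition. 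Lemma~\ref{Lemma 2}(4) is precisely tailored to give the polyadic--cylindric compatibility laws of the form $\mathsf{s}^i_j = \mathsf{s}_{ij}\restriction(\text{appropriate part})$, and Lemma~\ref{Lemma 2}(5), $\mathsf{s}_{ij}(\eta(\Gamma))=\eta(\Gamma)$, guarantees the top element is preserved.

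Fourth, prove simplicity. A $\CA_n$-type algebra is simple iff it is nontrivial and the only ideal-generating element behaviour is trivial; the standard criterion (Henkin--Monk--Tarski) is that $\B(\Gamma)$ is simple iff for every nonzero $X\subseteq\eta(\Gamma)$ we have $\mathsf{c}_0\mathsf{c}_1\cdots\mathsf{c}_{n-1}X = 1$, i.e.\ the full cylindrification of any nonempty set of atoms is everything. So I would fix a single atom $(K,\sim)\in\eta(\Gamma)$ and show that by successively applying $\equiv_0,\equiv_1,\dots,\equiv_{n-1}$ one can reach any other atom $(K',\sim')$. This is a connectivity argument on the atom structure: using $\equiv_i$ one is free to change $K(i)$ to an arbitrary legal value and to modify $\sim$ off coordinate $i$, subject only to the three cardinality constraints (a)--(c) in the definition of $H$ governing when $K$ must be total / defined on a single class / nowhere defined. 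One checks that these constraints never obstruct reachability: one can first drive $\sim$ down to a small-index partition (where $K$ is nowhere defined, absorbing all the information in $K$), then build $\sim'$ up coordinate by coordinate, and finally set the values of $K'$ — each single step being a single $\equiv_i$-move. The nonindependence requirement in clause (a) is met because $\Gamma\times n$ (for $n>2$, fixed $\Gamma$ nonempty, or more carefully for any $\Gamma$) contains non-independent $n$-subsets when they are required; this is where one uses $n>2$. The main obstacle I anticipate is exactly this simplicity/connectivity step: one must be careful that the case distinctions (a)--(c) on $|n/\!\sim|$ interlock correctly with the $\equiv_i$-moves so that every intermediate pair $(K,\sim)$ stays inside $H$, and that the ``not independent'' side condition in (a) does not strand some atoms — verifying this requires a genuine (if elementary) combinatorial check rather than pure formula-pushing. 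The rest, the axiom verification, is tedious bookkeeping of the kind the text already flags as ``tedious but not too hard,'' and is best organised by the complex-algebra-of-a-frame duality so that one only checks finitely many first-order frame conditions on $\eta(\Gamma)$ rather than manipulating the Boolean operations directly.
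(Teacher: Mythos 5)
Your proposal is correct in substance, but it takes a noticeably heavier, more self-contained route than the paper. The paper's proof does \emph{not} verify the cylindric axioms or simplicity from scratch: it observes that $\Rd_{ca}\B(\Gamma)$ is exactly the (simple) cylindric algebra constructed in \cite{hirsh}, imports both facts from there, and then notes that an ideal of $\B(\Gamma)$ is the same thing as an ideal of its cylindric reduct, so simplicity transfers for free. The only new work in the paper is the verification of the polyadic axioms governing $\mathsf{s}_{ij}$, and there your plan coincides with the paper's: $\mathsf{s}_{ij}$ is a Boolean endomorphism and an involution by Lemma \ref{Lemma 2} (1)--(3) and (5), the compatibility $\mathsf{s}_{ij}\mathsf{s}^{i}_{j}=\mathsf{s}^{j}_{i}$ comes from Lemma \ref{Lemma 2} (4), and the identity $\mathsf{s}_{ij}\mathsf{s}_{ik}=\mathsf{s}_{jk}\mathsf{s}_{ij}$ is proved via the permutation action $\tau(K,\sim)=(K\circ\tau,\sim\circ\tau)$ of Lemma \ref{Lemma 1}, exactly as you propose. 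What your route buys is independence from the external reference; what it costs is that the two hardest verifications --- commutativity of cylindrifiers and the connectivity argument for simplicity --- are left as announced intentions rather than carried out, and these are precisely the places where the case distinctions (a)--(c) on $|n/\!\sim|$ bite.

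Two cautions if you do execute your plan. First, $\equiv_{ij}$ for $i\neq j$ is \emph{not} an equivalence relation; by Lemma \ref{Lemma 2} (1)--(3) it is the graph of an involution of $\eta(\Gamma)$, which is what you in fact use later when you call $\mathsf{s}_{ij}$ a complete Boolean endomorphism, so the earlier wording should be corrected. Second, your justification of $\mathsf{c}_i\mathsf{c}_j=\mathsf{c}_j\mathsf{c}_i$ (``the relations commute because they only constrain $K(i)$ and $\sim\upharpoonright(n\setminus\{i\})$'') is too quick: commuting the two moves requires exhibiting an \emph{intermediate} pair $(K'',\sim'')$ that actually lies in $H$, and the membership conditions change discontinuously as $|n/\!\sim|$ crosses $n-1$ and $n-2$ (where $K$ must become partial or nowhere defined, and where the non-independence clause in (a) must be met). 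This is a genuine, if elementary, combinatorial check --- the same kind of check you correctly flag for the simplicity step --- and it is exactly the content of the argument in \cite{hirsh} that the paper chooses to cite rather than repeat.
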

\begin{proof}\
We follow the axiomatization in \cite{ST} except renaming the items by $Q_i$.
Let $X\subseteq\eta(\Gamma)$, and $i, j, \kappa\in n$:
\begin{enumerate}
\item ${\sf s}^{i}_{i}=ID$ by definition \ref{our algebra}, ${\sf s}_{ii}X=\{c:\exists a\in X, a\equiv_{ii}c\}=\{c:\exists a\in X, a=c\}=X$
(by Lemma \ref{Lemma 2} (1));\\
${\sf s}_{ij}X=\{c:\exists a\in X, a\equiv_{ij}c\}=\{c:\exists a\in X,
a\equiv_{ji}c\}={\sf s}_{ji}X$ (by Lemma \ref{Lemma 2} (2)).
\item Axioms $Q_{1}$, $Q_{2}$ follow directly from the fact that the
reduct $\mathfrak{Rd}_{ca}\mathfrak{B}(\Gamma)=\langle\mathcal{B}(\eta(\Gamma)), {\sf c}_{i}$, ${\sf d}{ij}\rangle_{i, j<n}$
is a cylindric algebra which is proved in \cite{hirsh}.
\item Axioms $Q_{3}$, $Q_{4}$, $Q_{5}$
follow from the fact that the reduct $\mathfrak{Rd}_{ca}\mathfrak{B}(\Gamma)$ is a cylindric algebra, and from \cite{tarski}
(Theorem 1.5.8(i), Theorem 1.5.9(ii), Theorem 1.5.8(ii)).
\item ${\sf s}^{i}_{j}$ is a Boolean endomorphism by \cite{tarski} (Theorem 1.5.3).
\begin{align*}
{\sf s}_{ij}(X\cup Y)&=\{c:\exists a\in(X\cup Y), a\equiv_{ij}c\}\\
&=\{c:(\exists a\in X\vee\exists a\in Y), a\equiv_{ij}c\}\\
&=\{c:\exists a\in X, a\equiv_{ij}c\}\cup\{c:\exists a\in Y,
a\equiv_{ij}c\}\\
&={\sf s}_{ij}X\cup {\sf s}_{ij}Y.
\end{align*}
${\sf s}_{ij}(-X)=\{c:\exists a\in(-X), a\equiv_{ij}c\}$, and
${\sf s}_{ij}X=\{c:\exists a\in X, a\equiv_{ij}c\}$ are disjoint. For, let
$c\in({\sf s}_{ij}(X)\cap {\sf s}_{ij}(-X))$, then $\exists a\in X\wedge b\in
(-X)$, such that $a\equiv_{ij}c$, and $b\equiv_{ij}c$. Then $a=b$, (by
Lemma \ref{Lemma 2} (3)), which is a contradiction. Also,
\begin{align*}
{\sf s}_{ij}X\cup {\sf s}_{ij}(-X)&=\{c:\exists a\in X,
a\equiv_{ij}c\}\cup\{c:\exists a\in(-X), a\equiv_{ij}c\}\\
&=\{c:\exists a\in(X\cup-X), a\equiv_{ij}c\}\\
&={\sf s}_{ij}\eta(\Gamma)\\
&=\eta(\Gamma). \text{ (by Lemma \ref{Lemma 2} (5))}
\end{align*}
therefore, ${\sf s}_{ij}$ is a Boolean endomorphism.
\item \begin{align*}{\sf s}_{ij}{\sf s}_{ij}X&={\sf s}_{ij}\{c:\exists a\in X, a\equiv_{ij}c\}\\
&=\{b:(\exists a\in X\wedge c\in\eta(\Gamma)), a\equiv_{ij}c, \text{ and }
c\equiv_{ij}b\}\\
&=\{b:\exists a\in X, a=b\}\\
&=X.\end{align*}
\item\begin{align*}{\sf s}_{ij}{\sf s}^{i}_{j}X&=\{c:\exists a\in {\sf }^{i}_{j}X, a\equiv_{ij}c\}\\
&=\{c:\exists b\in(X\cap {\sf d}{ij}),a\equiv_{i}b\wedge
a\equiv_{ij}c\}\\
&=\{c:\exists b\in(X\cap {\sf d}{ij}), c\equiv_{j}b\} \text{ (by Lemma
\ref{Lemma 2} (4))}\\
&={\sf s}^{j}_{i}X.\end{align*}
\item We need to prove that ${\sf s}_{ij}{\sf s}_{i\kappa}X={\sf s}_{j\kappa}{\sf s}_{ij}X$ if $|\{i, j, \kappa\}|=3$.
Let $(K, \sim)\in {\sf s}_{ij}{\sf s}_{i\kappa}X$ then
$\exists(K', \sim')\in\eta(\Gamma)$, and $\exists(K'', \sim'')\in X$
such that $(K'', \sim'')\equiv_{i\kappa}(K', \sim')$ and $(K',
\sim')\equiv_{ij}(K, \sim)$.\\
Define $\tau:n\rightarrow n$ as follows:
\begin{align*}\tau(i)&=j\\
\tau(j)&=\kappa\\
\tau(\kappa)&=i, \text{ and}\\
\tau(l)&=l \text{ for every } l\in(n\setminus\{i, j,
\kappa\}).\end{align*} Now, it is easy to verify that $\tau(K',
\sim')\equiv_{ij}(K'', \sim'')$, and $\tau(K',
\sim')\equiv_{j\kappa}(K, \sim)$. Therefore, $(K, \sim)\in
{\sf s}_{j\kappa}{\sf s}_{ij}X$, i.e., ${\sf s}_{ij}{\sf s}_{i\kappa}X\subseteq
{\sf s}_{j\kappa}{\sf s}_{ij}X$. Similarly, we can show that
${\sf s}_{j\kappa}{\sf s}_{ij}X\subseteq {\sf s}_{ij}{\sf s}_{i\kappa}X$.
\item Axiom $Q_{10}$ follows from \cite{tarski} (Theorem 1.5.7)
\item Axiom $Q_{11}$ follows from axiom 2, and the definition of $s^{i}_{j}$.
\end{enumerate}
Since $\Rd_{ca}\B$ is a simple $\CA_{n}$, by
\cite{hirsh}, then $\B$ is a simple $\PEA_n$. This follows from the fact that ideals $I$ is an ideal in $\Rd_{ca}\B$ if and only if it is an ideal in
$\B$.
\end{proof}
\begin{definition}
Let $\C(\Gamma)$ be the subalgebra of
$\B(\Gamma)$ generated by the set of atoms.
\end{definition}
Note that the cylindric algebra constructed in \cite{hirsh} is
$\Rd_{ca}\B(\Gamma)$ not
$\Rd_{ca}\C(\Gamma)$, but all results in
\cite{hirsh} can be applied to
$\Rd_{ca}\C(\Gamma)$. Therefore, since our
results depends basically on \cite{hirsh}, we will refer to
\cite{hirsh} directly when we apply it to get any result on
$\Rd_{ca}\C(\Gamma)$.

\begin{theorem}
$\C(\Gamma)$ is a simple $\PEA_{n}$ generated by the set of the
$n-1$ dimensional elements.
\end{theorem}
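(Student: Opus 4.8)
The plan is to verify the three assertions about $\C(\Gamma)$ one at a time, leaning heavily on Theorem \ref{it is qea} and on the fact that $\C(\Gamma)$ is by definition the subalgebra of $\B(\Gamma)$ generated by the atoms $\eta(\Gamma)$. First, simplicity: since $\Rd_{ca}\C(\Gamma)$ is a subalgebra of the simple cylindric algebra $\Rd_{ca}\B(\Gamma)$ containing all the atoms, and $\B(\Gamma)$ is atomic, the standard argument (any nonzero ideal contains an atom, and $1$ is the finite supremum of $\{{\sf c}_0{\sf c}_1\cdots{\sf c}_{n-1}a : a\in\At\}$-type joins which exist because there are finitely many atom-orbits under the $\equiv_i$ — or more directly, because the full $\B(\Gamma)$ is simple and its simplicity is witnessed on atoms) shows $\Rd_{ca}\C(\Gamma)$ is simple. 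Then, exactly as in the last line of the proof of Theorem \ref{it is qea}, an ideal of $\C(\Gamma)$ is an ideal of its cylindric reduct, so $\C(\Gamma)$ is a simple $\PEA_n$.

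Second, that $\C(\Gamma)\in\PEA_n$ is immediate: $\PEA_n$ is a variety (defined by the equation schema $Q_i$ recalled in the proof above), $\B(\Gamma)\in\PEA_n$ by Theorem \ref{it is qea}, and subalgebras of members of a variety are again in it; $\C(\Gamma)$ is by construction a subalgebra of $\B(\Gamma)$.

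Third — and this is the only part requiring genuine work — I must show $\C(\Gamma)$ is generated by its set of $(n-1)$-dimensional elements, i.e.\ by $\{x\in\C(\Gamma): {\sf c}_i x = x\text{ for some }i<n\}$ (equivalently, elements $x$ with $\Delta x\subseteq n\setminus\{i\}$ for some $i$). Since $\C(\Gamma)$ is generated by the atoms, it suffices to express each atom $(K,\sim)\in\eta(\Gamma)$ as a term, built from Boolean operations, cylindrifications and substitutions, applied to $(n-1)$-dimensional elements. The natural route is the one used for the analogous cylindric statement in \cite{hirsh}: for an atom $(K,\sim)$, and for each $i<n$, form the element $a_i = {\sf c}_i\{(K,\sim)\}$; by the definition of $\equiv_i$ this is a union of atoms all sharing the data $K(j)$ for $j\neq i$ and $\sim\restr(n\setminus\{i\})$, so ${\sf c}_i a_i = a_i$, making $a_i$ an $(n-1)$-dimensional element of $\C(\Gamma)$ (it lies in $\C(\Gamma)$ since the latter is closed under ${\sf c}_i$). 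One then shows $\{(K,\sim)\} = \bigcap_{i<n} a_i$, using that an atom is determined by the family $(K(i),\sim\restr(n\setminus\{i\}))_{i<n}$ together with the defining conditions (a)--(c) on $\eta(\Gamma)$; the diagonal/equivalence bookkeeping (relating $\sim$ to the various $\sim\restr(n\setminus\{i\})$) is where one invokes the cases $|n/\!\sim|\in\{n,n-1,\leq n-2\}$ and possibly adds the diagonal elements ${\sf d}_{ij}$ (themselves $(n-1)$-dimensional). The main obstacle is exactly this meet-of-cylindrifications identity: one must check carefully that distinct atoms are separated by the projections $a_i$, i.e.\ that no spurious atom sneaks into $\bigcap_i {\sf c}_i\{(K,\sim)\}$, which amounts to a finite but fiddly case analysis on how $\sim$ is reconstructed from its restrictions. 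I expect this can be quoted or adapted almost verbatim from the corresponding lemma in \cite{hirsh} for $\Rd_{ca}\C(\Gamma)$, since the polyadic substitutions ${\sf s}_{ij}$, ${\sf s}^i_j$ add no new generating power once the cylindric generation by $(n-1)$-dimensional elements is established.
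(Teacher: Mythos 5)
Your proposal is correct and follows essentially the same route as the paper: simplicity and membership in $\PEA_n$ are quoted from Theorem \ref{it is qea}, and generation by $(n-1)$-dimensional elements is reduced to the identity $\{(K,\sim)\}=\prod_{i<n}{\sf c}_i\{(K,\sim)\}$. The only remark worth adding is that the ``fiddly case analysis'' you anticipate is in fact immediate: since $n\geq 3$, any pair $j,k<n$ avoids some $i$, so knowing $K(i)$ and $\sim\upharpoonright(n\setminus\{i\})$ for every $i<n$ already determines $(K,\sim)$, which is exactly how the paper concludes.
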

\begin{proof}
$\mathfrak{C}(\Gamma)$ is a simple $\PEA_{n}$ from  Theorem \ref{it
is qea}. It remains to show that $\{(K, \sim)\}=\prod\{{\sf c}_{i}\{(K,
\sim)\}: i<n\}$ for any $(K, \sim)\in H$. Let $(K, \sim)\in H$,
clearly $\{(K, \sim)\}\leq\prod\{{\sf c}_{i}\{(K, \sim)\}: i<n\}$. For the
other direction assume that $(K', \sim')\in H$ and $(K,
\sim)\not=(K', \sim')$. We show that $(K',
\sim')\not\in\prod\{{\sf c}_{i}\{(K, \sim)\}: i<n\}$. Assume toward a
contradiction that $(K', \sim')\in\prod\{{\sf c}_{i}\{(K, \sim)\}:i<n\}$,
then $(K', \sim')\in {\sf c}_{i}\{(K, \sim)\}$ for all $i<n$, i.e.,
$K'(i)=K(i)$ and
$\sim'\upharpoonright(n\setminus\{i\})=\sim\upharpoonright(n\setminus\{i\})$
for all $i<n$. Therefore, $(K, \sim)=(K', \sim')$ which makes a
contradiction, and hence we get the other direction.
\end{proof}
\begin{theorem}\label{chr. no.}
Let $\Gamma$ be a graph.
\begin{enumerate}\item Suppose that $\chi(\Gamma)=\infty$. Then $\mathfrak{C}(\Gamma)$
is representable.\item If $\Gamma$ is infinite and
$\chi(\Gamma)<\infty$ then $\Rd_{df}\C$
is not
representable
\end{enumerate}
\end{theorem}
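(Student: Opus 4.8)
}

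The plan is to follow the strategy of Hirsch--Hodkinson in \cite{hirsh}, using the fact that $\Rd_{ca}\C(\Gamma)$ is (by the remark after the definition of $\C(\Gamma)$) one of the cylindric algebras treated there, so that all the cylindric-level machinery of \cite{hirsh} applies verbatim. For part (1), suppose $\chi(\Gamma)=\infty$. First I would note that it suffices to representably embed $\C(\Gamma)$ into a set algebra; since $\C(\Gamma)$ is simple, a single such embedding is enough. The key point is that when $\Gamma$ has no finite colouring, one can build, by a step-by-step (or game) argument, a representation of $\Rd_{ca}\C(\Gamma)$ exactly as in \cite{hirsh}; the new polyadic operations ${\sf s}^i_j$, ${\sf s}_{ij}$ are term-definable from the cylindric structure on the concrete side (${\sf s}_{ij}$ acts by swapping coordinates $i$ and $j$ of tuples, ${\sf s}^i_j$ by the standard ${\sf c}_i(X\cap {\sf d}_{ij})$ recipe), so the cylindric representation automatically respects them. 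Concretely, the base of the representation is assembled from partial colourings of $\Gamma$ by $n$ colours that cannot be completed on independent sets; the infinite chromatic number is what guarantees \pe\ never gets stuck, because whenever she needs a fresh node she can find a vertex of $\Gamma$ avoiding a given finite independent set. I would then observe that the representation of the cylindric reduct, being a $\PEA_n$-homomorphism onto a field of sets closed under the coordinate-swap and substitution operations, is the desired representation of $\C(\Gamma)$ itself.

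For part (2), assume $\Gamma$ is infinite with $\chi(\Gamma)=k<\infty$, and suppose for contradiction that $\Rd_{df}\C(\Gamma)$ were representable. The idea, again from \cite{hirsh}, is that a representation of the diagonal-free reduct would yield a finite colouring of $\Gamma$ with only $n-1$ or fewer ``colours'' extracted from the $n$ coordinates, contradicting nothing by itself --- the real contradiction comes from combining representability with the infinitude of $\Gamma$. More precisely: from a representation $h$ of $\Rd_{df}\C(\Gamma)$ one reads off, for each vertex $v$ of $\Gamma$, a colour in a finite set determined by how the atoms $(K,\sim)$ mentioning $v$ are placed, and the combinatorial axioms of $\eta(\Gamma)$ force this colouring to be proper; but one also shows that representability of the diagonal-free reduct bounds the number of colours actually used by something like $n-1$, while the structure of $\eta(\Gamma)$ together with $\chi(\Gamma)=k$ finite and $\Gamma$ infinite produces a vertex configuration that cannot be coloured with so few colours. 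This is exactly the argument of \cite{hirsh} for $\Rd_{ca}\C(\Gamma)$ adapted to the diagonal-free reduct, and since $\Rd_{df}\C(\Gamma)=\Rd_{df}\Rd_{ca}\C(\Gamma)$, nothing new is needed beyond quoting the relevant lemmas of \cite{hirsh}.

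The main obstacle, and the place where care is genuinely required, is verifying that the representation produced in part (1) really does respect the extra polyadic-equality operations and is not merely a representation of the cylindric reduct --- in other words, that the concrete swap operation ${\sf s}_{ij}$ on the representing set algebra matches $h\circ{\sf s}_{ij}$ on the abstract side. This should follow because $\B(\Gamma)$ (hence $\C(\Gamma)$) was built so that $\equiv_{ij}$ literally permutes coordinates and $\tau(K,\sim)=(K\circ\tau,\sim\circ\tau)$ for bijections $\tau$ (Lemma \ref{Lemma 1}), so the abstract ${\sf s}_{ij}$ is already the coordinate-swap in disguise; but one must check this compatibility is preserved along the step-by-step construction, i.e.\ that \pe\ can be required to build networks that are symmetric under coordinate permutations, as in the symmetric/polyadic networks discussed earlier in the excerpt (cf.\ the remark that polyadic information ``is coded in the networks not the moves of the game''). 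Once that symmetry is maintained as an invariant of the construction, both parts reduce cleanly to the corresponding statements in \cite{hirsh}.
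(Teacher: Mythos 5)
Your part (2) has a genuine gap. The paper does not try to extract a colouring of $\Gamma$ directly from a diagonal-free representation; the entire content of this direction is the one-line reduction ``$\Rd_{df}\C(\Gamma)$ representable $\Rightarrow$ $\Rd_{ca}\C(\Gamma)$ representable'', which holds because $\C(\Gamma)$ is generated by elements whose dimension sets are proper subsets of $n$, so that a diagonal-free representation induces a genuine cylindric one by Johnson's argument (\cite[theorem 5.4.26]{tarski}; the same device is sketched at the end of the proof of theorem \ref{hodkinson}). One then quotes Proposition 5.4 of \cite{hirsh}, which gives non-representability of the \emph{cylindric} reduct when $\Gamma$ is infinite of finite chromatic number. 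Your sketch never performs this reduction: the step where representability of the diagonal-free reduct ``bounds the number of colours actually used by something like $n-1$'' is not an argument, and \cite{hirsh} does not treat diagonal-free reducts, so there are no lemmas there to quote for that case.

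For part (1) your instinct is right, but your first justification is false and your repair is heavier than needed. Concrete coordinate transposition is \emph{not} term-definable from the cylindric operations in general, so a representation of $\Rd_{ca}\C(\Gamma)$ does not ``automatically'' respect ${\sf s}_{ij}$. You correctly flag this as the main obstacle, but your fix --- re-running the step-by-step construction of \cite{hirsh} while maintaining symmetry of the networks --- forces you to reopen that construction. The paper instead treats the representation $f$ of $\Rd_{ca}\C(\Gamma)$ from \cite{hirsh} as a black box: ${\sf s}_{ij}$ is a completely additive Boolean endomorphism, so it suffices to verify $f{\sf s}_{\kappa l}x={\sf s}_{\kappa l}fx$ on atoms $x$; picking $\mu\in n\setminus\Delta x$ one has ${\sf s}_{\kappa l}x={\sf s}^{\kappa}_{l}x$ when $\mu\in\{\kappa,l\}$ (using ${\sf c}_{\mu}x=x$) and ${\sf s}_{\kappa l}x={\sf s}^{l}_{\mu}{\sf s}^{\kappa}_{l}{\sf s}^{\mu}_{\kappa}{\sf c}_{\mu}x$ otherwise, and the right-hand sides are cylindric terms, preserved by any cylindric representation. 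Thus the polyadic structure comes for free from the low dimension sets of the generators, with no modification of \cite{hirsh} at all; you should replace both your ``automatic'' claim and your symmetric-network invariant by this computation.
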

\begin{proof}
\begin{enumerate}
\item We have $\Rd_{ca}\C$ is representable (c.f., \cite{hirsh}).
Let $X=\{x\in \C:\Delta x\not=n\}$. Call $J\subseteq \C$ inductive if
$X\subseteq J$ and $J$ is closed under infinite unions and
complementation. Then $\C$ is the smallest inductive
subset of $C$. Let $f$ be an isomorphism of
$\Rd_{ca}\C$ onto a cylindric set algebra with
base $U$. Clearly, by definition, $f$ preserves $s^{i}_{j}$ for each
$i, j<n$. It remains to show that $f$ preserves ${\sf s}_{ij}$ for every
$i, j<n$. Let $i, j<n$, since ${\sf s}_{ij}$ is Boolean endomorphism and
completely additive, it suffices to show that $f{\sf s}_{ij}x={\sf s}_{ij}fx$
for all $x\in \At\C$. Let $x\in \At\C$ and $\mu\in
n\setminus\Delta x$. If $\kappa=\mu$ or $l=\mu$, say $\kappa=\mu$,
then$$ f{\sf s}_{\kappa l}x=f{\sf s}_{\kappa
l}{\sf c}_{\kappa}x=f{\sf s}^{\kappa}_{l}x={\sf s}^{\kappa}_{l}fx={\sf s}_{\kappa l}fx.
$$
If $\mu\not\in\{\kappa, l\}$ then$$ f{\sf s}_{\kappa
l}x=f{\sf s}^{l}_{\mu}s^{\kappa}_{l}{\sf s}^{\mu}_{\kappa}{\sf c}_{\mu}x={\sf s}^{l}_{\mu}s^{\kappa}_{l}{\sf s}^{\mu}_{\kappa}{\sf c}_{\mu}fx={\sf s}_{\kappa
l}fx.
$$
\item Assume toward a contradiction that $\Rd_{df}\C$ is representable. Since $\Rd_{ca}\C$
is generated by $n-1$ dimensional elements then
$\mathfrak{Rd}_{ca}\C$ is representable. But this
contradicts Proposition 5.4 in \cite{hirsh}.
\end{enumerate}
\end{proof}
\begin{theorem}\label{el}
Let $2<n<\omega$ and $\mathcal{T}$ be any signature between $\Df_{n}$
and $\PEA_{n}$. Then the class of strongly representable atom
structures of type $\mathcal{T}$ is not elementary.
\end{theorem}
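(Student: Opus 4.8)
The plan is to combine the algebraic machinery developed in this section with a probabilistic graph-theoretic argument in the spirit of Hirsch--Hodkinson. Recall from Theorems \ref{chr. no.} and \ref{it is qea} that for any graph $\Gamma$ the algebra $\C(\Gamma)$ is a simple $\PEA_n$ generated by its $(n-1)$-dimensional elements, and that $\C(\Gamma)$ is representable when $\chi(\Gamma)=\infty$, while $\Rd_{df}\C(\Gamma)$ (hence $\Rd_{\mathcal T}\C(\Gamma)$ for any $\mathcal T$ between $\Df_n$ and $\PEA_n$, since each such reduct still generates the cylindric reduct via the $(n-1)$-dimensional elements) fails to be representable whenever $\Gamma$ is infinite with $\chi(\Gamma)<\infty$. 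The first step is to observe that strong representability of $\At\C(\Gamma)$ is controlled by $\chi(\Gamma)$: since $\C(\Gamma)$ is generated by atoms, $\Cm\At\C(\Gamma)=\B(\Gamma)$ up to the relevant completion, and one checks that $\At\C(\Gamma)$ is strongly representable iff $\chi(\Gamma)=\infty$ --- one direction is Theorem \ref{chr. no.}(1) applied to the complex algebra, the other is Theorem \ref{chr. no.}(2) together with the fact that $\C(\Gamma)$ embeds into $\Cm\At\C(\Gamma)$.

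Next I would invoke the Erd\H{o}s probabilistic construction: there is a sequence of finite graphs $(\Gamma_i:i<\omega)$ with $\chi(\Gamma_i)\to\infty$ (so each has finite chromatic number, whence $\At\C(\Gamma_i)$ is \emph{not} strongly representable, noting each $\Gamma_i$ can be taken infinite or one passes to infinite disjoint copies to apply Theorem \ref{chr. no.}(2)), but whose ultraproduct $\Gamma=\prod_{i/U}\Gamma_i$ over a non-principal ultrafilter $U$ has infinite chromatic number. The key point, which must be verified, is that the atom-structure construction $\Gamma\mapsto\eta(\Gamma)$ commutes with ultraproducts: $\prod_{i/U}\eta(\Gamma_i)\cong\eta(\prod_{i/U}\Gamma_i)$. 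This is a first-order definability check --- the relations $D_{ij},\equiv_i,\equiv_{ij}$ and the conditions defining $H$ (partial maps $K:n\to\Gamma\times n$, equivalence relations on the finite set $n$, the independence/size constraints) are all uniformly first-order over $\Gamma$ with finitely many parameters from $n$, so {\L}o\'s's theorem gives the isomorphism. Consequently $\prod_{i/U}\At\C(\Gamma_i)\cong\At\C(\Gamma)$ is strongly representable while each factor is not, which shows the class of strongly representable atom structures of type $\mathcal T$ is not closed under ultraproducts, hence not elementary.

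For the ``not strongly representable'' side of each factor I would, if $\Gamma_i$ is finite, replace it by a disjoint union of $\omega$ copies of $\Gamma_i$; this leaves the chromatic number finite and makes the graph infinite, so Theorem \ref{chr. no.}(2) applies and $\Rd_{df}\C(\Gamma_i')$ is not representable, giving that $\Cm\At\C(\Gamma_i')=\Cm\At\C(\Gamma_i')$ is not representable, i.e. the atom structure is not strongly representable; and disjoint unions behave well under the $\eta$ construction and under ultraproducts. One should also check that taking disjoint unions of the $\Gamma_i$ does not destroy the property $\chi(\prod_{i/U}\Gamma_i)=\infty$, which follows since a finite colouring of the ultraproduct of the disjoint-union graphs would, by {\L}o\'s, pull back to uniformly bounded finite colourings of cofinitely many $\Gamma_i$, contradicting $\chi(\Gamma_i)\to\infty$.

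\textbf{Main obstacle.} The technical heart, and the step I expect to be most delicate, is the verification that $\eta(-)$ and the passage $\Gamma\mapsto\At\C(\Gamma)$ genuinely commute with ultraproducts \emph{as atom structures of the specified signature $\mathcal T$} --- in particular that the extra polyadic operations $\equiv_{ij}$ (transpositions) and the substitution-derived structure are captured by the same first-order template uniformly in the parameter $n$, and that ``strongly representable'' is being tested on the \emph{complex} algebra generated correctly (the subtlety that $\C(\Gamma)\neq\B(\Gamma)$ in general, and that $\Cm\At\C(\Gamma)$ must be identified). Once that bookkeeping is in place, the non-elementarity is immediate from non-closure under ultraproducts. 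The reduction to $\Df_n$-reducts being non-representable (so the result is uniform across all $\mathcal T$ between $\Df_n$ and $\PEA_n$) is exactly Theorem \ref{chr. no.}(2) combined with the generation-by-low-dimensional-elements argument already used in the proof of Theorem \ref{chr. no.}(1), so no new idea is needed there.
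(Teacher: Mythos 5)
There is a genuine gap, and it is a directional one: your ultraproduct argument runs the wrong way. To show that the class $K$ of strongly representable atom structures is not elementary via ultraproducts, you must exhibit structures \emph{inside} $K$ whose ultraproduct falls \emph{outside} $K$ (or a member of $K$ elementarily equivalent to a non-member). What you produce is the opposite: factors $\At\C(\Gamma_i)$ that are \emph{not} strongly representable whose ultraproduct \emph{is} strongly representable. That only shows the complement of $K$ is not closed under ultraproducts, which is perfectly consistent with $K$ being elementary (compare: fields of characteristic $0$ form an elementary class, yet $\prod_{p/U}\mathbb F_p$ has characteristic $0$ while no factor does). Your direction is the easy ``Monk direction'' — bad algebras converging to a good one — and it proves that the class of \emph{weakly-but-not-strongly} representable atom structures is not elementary, not the stated theorem.

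There is a second, related error in the graph theory. You invoke the Erd\H{o}s construction to get finite graphs with $\chi(\Gamma_i)\to\infty$ whose ultraproduct has infinite chromatic number, arguing that a finite colouring of the ultraproduct would ``pull back by \L o\'s.'' But $k$-colourability is not a single first-order sentence; by De Bruijn--Erd\H{o}s it is a \emph{universal} theory (``every $m$-vertex subgraph is $k$-colourable'', for all $m$), so $\chi\leq k$ is preserved by ultraproducts while $\chi>k$ is not. Indeed the whole point of using Erd\H{o}s graphs $G_l$ (chromatic number $>l$ \emph{and} girth $>l$) is that their ultraproduct has no cycles at all and hence chromatic number $\leq 2$. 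The correct argument — the one the paper gives — reverses your roles: set $\Gamma_\kappa=\bigsqcup_{l>\kappa}G_l$, so each $\Gamma_\kappa$ has infinite chromatic number and $\At\C(\Gamma_\kappa)$ \emph{is} strongly representable; the non-principal ultraproduct $\Gamma$ satisfies ``no cycles of length $<\kappa$'' for every $\kappa$ by \L o\'s, hence $\chi(\Gamma)\leq 2$, hence (being infinite) $\Rd_{df}\Cm\At\C(\Gamma)$ is not representable and $\At\C(\Gamma)$ is not strongly representable. Your supporting observations — that $\eta(-)$ commutes with ultraproducts by first-order interpretability, and that the result is uniform over all signatures between $\Df_n$ and $\PEA_n$ via generation by $(n-1)$-dimensional elements — are correct and are exactly what the paper uses; only the ultraproduct needs to be run in the opposite direction.
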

\begin{proof}
By Erd\"{o}s's famous 1959 Theorem \cite{Erdos}, for each finite
$\kappa$ there is a finite graph $G_{\kappa}$ with
$\chi(G_{\kappa})>\kappa$ and with no cycles of length $<\kappa$.
Let $\Gamma_{\kappa}$ be the disjoint union of the $G_{l}$ for
$l>\kappa$. Clearly, $\chi(\Gamma_{\kappa})=\infty$. So by Theorem
\ref{chr.
no.} (1), $\mathfrak{C}(\Gamma_{\kappa})=\mathfrak{C}(\Gamma_{\kappa})^{+}$ is representable.\\
\- Now let $\Gamma$ be a non-principal ultraproduct
$\prod_{D}\Gamma_{\kappa}$ for the $\Gamma_{\kappa}$. It is
certainly infinite. For $\kappa<\omega$, let $\sigma_{\kappa}$ be a
first-order sentence of the signature of the graphs. stating that
there are no cycles of length less than $\kappa$. Then
$\Gamma_{l}\models\sigma_{\kappa}$ for all $l\geq\kappa$. By
Lo\'{s}'s Theorem, $\Gamma\models\sigma_{\kappa}$ for all
$\kappa$. So $\Gamma$ has no cycles, and hence by, \cite{hirsh}
Lemma 3.2, $\chi(\Gamma)\leq 2$. By Theorem \ref{chr. no.} (2),
$\mathfrak{Rd}_{df}\mathfrak{C}$ is not representable. It is easy to
show (e.g., because $\mathfrak{C}(\Gamma)$ is first-order
interpretable in $\Gamma$, for any $\Gamma$) that$$
\prod_{D}\mathfrak{C}(\Gamma_{\kappa})\cong\mathfrak{C}(\prod_{D}\Gamma_{\kappa}).$$
Combining this with the fact that: for any $n$-dimensional atom
structure $\mathcal{S}$

$\mathcal{S}$ is strongly
representable $\Longleftrightarrow$ $\mathfrak{Cm}\mathcal{S}$ is
representable,
the desired follows.
\end{proof}

\subsection{The good and the bad}

Here we give a different approach to Hirsch Hodkinson's result.
We use the notation and the general ideas in \cite[lemmas 3.6.4, 3.6.6]{HHbook2}.
An important difference is that our cylindric algebras are binary generated,
and they their atom structures are the set of basic matrices on relation algebras satisfying the same properties.
We abstract the two Monk-like algebras dealt with in the proof of theorem \ref{hodkinson}.

Let $\G$ be a graph. One can  define a family of first order structures (labelled graphs)  in the signature $\G\times n$, denote it by $I(\G)$
as follows:
For all $a,b\in M$, there is a unique $p\in \G\times n$, such that
$(a,b)\in p$. If  $M\models (a,i)(x,y)\land (b,j)(y,z)\land (c,l)(x,z)$, then $| \{ i, j, l \}> 1 $, or
$ a, b, c \in \G$ and $\{ a, b, c\} $ has at least one edge
of $\G$.
For any graph $\Gamma$, let $\rho(\Gamma)$ be the atom structure defined from the class of models satisfying the above,
these are maps from $n\to M$, $M\in I(\G)$, endowed with an obvious equivalence relation,
with cylindrifiers and diagonal elements defined as Hirsch and Hodkinson define atom structures from classes of models,
and let $\M(\Gamma)$ be the complex algebra of this atom structure.

We define a relation algebra atom structure $\alpha(\G)$ of the form
$(\{1'\}\cup (\G\times n), R_{1'}, \breve{R}, R_;)$.
The only identity atom is $1'$. All atoms are self converse,
so $\breve{R}=\{(a, a): a \text { an atom }\}.$
The colour of an atom $(a,i)\in \G\times n$ is $i$. The identity $1'$ has no colour. A triple $(a,b,c)$
of atoms in $\alpha(\G)$ is consistent if
$R;(a,b,c)$ holds. Then the consistent triples are $(a,b,c)$ where

\begin{itemize}

\item one of $a,b,c$ is $1'$ and the other two are equal, or

\item none of $a,b,c$ is $1'$ and they do not all have the same colour, or

\item $a=(a', i), b=(b', i)$ and $c=(c', i)$ for some $i<n$ and
$a',b',c'\in \G$, and there exists at least one graph edge
of $G$ in $\{a', b', c'\}$.

\end{itemize}Note that some monochromatic triangles
are allowed namely the 'dependent' ones.
This allows the relation algebra to have an $n$ dimensional cylindric basis
and, in fact, the atom structure of $\M(\G)$ is isomorphic (as a cylindric algebra
atom structure) to the atom structure $\Mat_n$ of all $n$-dimensional basic
matrices over the relation algebra atom structure $\alpha(\G)$.

Our next idea  gives the result that strongly representable atom structures of both relation algebras
and cylindric algebras of finite dimension $>2$ in one go, using Erdos' graphs of large
chromatic number and girth.

\begin{theorem}\label{new}
$\alpha(\G)$ is strongly representable iff $\M(\G)$ is representable iff
$\G$ has infinite chromatic number.
\end{theorem}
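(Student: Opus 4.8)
The statement has three parts, and I would prove the cycle of implications
$$\chi(\G)=\infty \;\Longrightarrow\; \M(\G)\ \text{representable}\;\Longrightarrow\;\alpha(\G)\ \text{strongly representable}\;\Longrightarrow\;\chi(\G)=\infty.$$
The key structural fact to establish first is that $\At\M(\G)\cong \Mat_n\At\alpha(\G)$ as $\CA_n$ atom structures; this was essentially already noted in the proof of Theorem \ref{hodkinson} (item on ``Using the Monk algebras''), where the map $m\mapsto \alpha_m^W$ was shown to be an isomorphism of $n$-dimensional cylindric atom structures. Once this is in place, $\M(\G)\cong\Cm\Mat_n\At\alpha(\G)$, and by the general duality ``$\mathcal S$ strongly representable iff $\Cm\mathcal S$ representable'' (recorded right after Definition \ref{strong rep}), strong representability of $\alpha(\G)$ as a \emph{relation} algebra atom structure is equivalent to representability of $\Cm\alpha(\G)$, which in turn --- because $\Cm\alpha(\G)$ embeds into $\Ra\M(\G)$ and $n$-dimensional basic matrices recover the relation algebra --- is equivalent to representability of $\M(\G)$. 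So really the middle equivalence $\M(\G)\ \text{representable}\iff \alpha(\G)\ \text{strongly representable}$ is a packaging of the basic-matrix/complex-algebra correspondence, and I would spell it out using exactly the embedding $\Cm\alpha(\G)\hookrightarrow \Ra\Cm\Mat_n\alpha(\G)$ already exhibited in Theorem \ref{hodkinson}.

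For the implication $\chi(\G)=\infty\Rightarrow \M(\G)$ representable, I would mimic the rainbow/Monk representability argument of Theorem \ref{hodkinson} and of \cite{hirsh}: when $\G$ has infinite chromatic number one builds an $n$-homogeneous labelled graph $M\in I(\G)$ realizing every finite configuration (the back-and-forth game of Theorem \ref{hodkinson}), and the required representation of $\M(\G)$ is read off from the set algebra on $^nM$; the infinite chromatic number is what guarantees that \pe\ can always legally colour a new edge without being cornered into a forbidden monochromatic triangle. Concretely, partition $\G$ into infinitely many independent sets and use fresh colour-classes as ``shades'' the way the proof of Theorem \ref{hodkinson} uses the $N\N+r$ blocks; this gives a genuine (classical) representation of the complex algebra, hence of $\M(\G)$, and simultaneously of $\Cm\alpha(\G)$ via the $\Ra$ embedding. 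This part is essentially a transcription of the positive direction of Theorem \ref{chr. no.}(1) (proved there for $\C(\Gamma)$) to the present binary-generated Monk algebra.

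The converse $\alpha(\G)$ strongly representable $\Rightarrow\chi(\G)=\infty$ is the part I expect to carry the real content, and it is where the finite-girth Erd\H{o}s graphs and the Ramsey-type argument from the ``Using the Monk algebras'' portion of Theorem \ref{hodkinson} come in. I would argue the contrapositive: if $\chi(\G)<\infty$, pick a finite colouring of $\G$ witnessing this; using the structure of consistent triples of $\alpha(\G)$ --- in particular that a monochromatic triple is consistent only if its underlying vertex set contains a graph edge --- one shows, exactly as in the non-representability argument of Theorem \ref{hodkinson} (the ``$\sum J=1$, apply Ramsey, get a monochromatic $P$ with $(P;P)\cdot P=0$'' argument), that $\Cm\alpha(\G)$ cannot be representable; equivalently (again by Theorem \ref{chr. no.}(2)-style reasoning, here stated for $\Rd_{df}\C$) $\Rd_{df}\M(\G)$ is not representable, so $\alpha(\G)$ is not strongly representable. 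The main obstacle is getting the combinatorics of \emph{this particular} atom structure (with $n$ colours and ``dependent'' monochromatic triangles permitted) to interface cleanly with both the graph colouring on one side and the $n$-dimensional basic matrices / complex algebra on the other --- i.e. verifying carefully that a finite proper colouring of $\G$ really does obstruct representability of $\Cm\alpha(\G)$ despite the extra allowed triangles, and dually that infinite chromatic number suffices. I would lean on \cite[Lemma 3.2]{hirsh} (graphs with no short cycles have chromatic number $\leq 2$) to handle the ultraproduct step that Theorem \ref{el} will need, but for the bare equivalence of Theorem \ref{new} itself the girth hypothesis is not needed --- only the chromatic-number dichotomy, fed through the Ramsey argument already in Theorem \ref{hodkinson}.
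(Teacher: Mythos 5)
Your decomposition into a cycle of implications, the identification $\At\M(\G)\cong{\sf Mat}_n\At\alpha(\G)$, and the treatment of the negative direction (finite chromatic number $\Rightarrow$ non-representability via the finite partition $J$ plus Ramsey, exactly as in the last part of Theorem \ref{hodkinson}) all match what the paper does or takes for granted. The gap is in the positive direction, $\chi(\G)=\infty\Rightarrow\M(\G)$ representable, and it is a real one. The step-by-step construction of Theorem \ref{hodkinson} that you propose to mimic produces an $n$-homogeneous model in which the ``bad'' edges are labelled by a shade $\rho$ \emph{outside} the signature, and the resulting set algebra (after relativizing to $W$) represents only the \emph{term} algebra; the complex algebra is precisely what fails to be represented there. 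So transcribing that construction cannot by itself yield a representation of $\M(\G)=\Cm\rho(I(\G))$. Your proposed fix --- ``partition $\G$ into infinitely many independent sets and use fresh colour-classes as shades the way Theorem \ref{hodkinson} uses the $N\N+r$ blocks'' --- does not work: the $N\N+r$ blocks occur in the \emph{non}-representability argument, not as representation labels, and an element $a$ of $\G$ can only serve as a universally safe label if monochromatic triangles in $a$ are consistent, i.e.\ if $a$ is adjacent to itself; a graph with infinite chromatic number need not contain any such element (it is irreflexive), so \pe\ still gets cornered.

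The paper's proof supplies exactly the missing ingredient: pass to the ultrafilter extension $\G^{*}$ of $\G$. The standard compactness argument shows that $\chi(\G)=\infty$ if and only if $\G^{*}$ contains a \emph{reflexive} node $d$ (if every ultrafilter contained an independent set, finitely many independent sets would cover $\G$). One then builds a bounded morphism from the ultrafilter frame $\M(\G)_{+}$ onto $\rho(I(\G^{*}))$ and lets \pe\ use $d$ (and its $n$ copies $(d,i)$) as the always-consistent spare label in the atomic game on the canonical extension $\M(\G)^{\sigma}$; her \ws\ gives a complete representation of $\M(\G)^{\sigma}$, hence a representation of the subalgebra $\M(\G)$, and thence of $\Cm\alpha(\G)$ via the $\Ra$ embedding. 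So the chromatic hypothesis enters not through avoiding forbidden triangles inside $\G$ itself, but through the existence of a reflexive node at the level of ultrafilters; without that step your argument establishes only weak representability of the atom structure, which is not what the theorem asserts.
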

\begin{proof}
The underlying idea here is that the shades of red (addressed in the proof of theorem \ref{hodkinson} for the two Monk-like algebras)
will appear in the {\it ultrafilter extension} of $\G$, if it has infinite chromatic number as a reflexive node, \cite[definition 3.6.5]{HHbook2}.
and its $n$ copies,
can be used to completely represent
$\M(\G)^{\sigma}$ (the canonical extension of $\M(\G)$).
Let $\M(\G)_+$ be the ultrafilter atom structure of $\M(\G)$

Fix $\G$, and let  $\G^* $ be the ultrafilter extension of $\G$.
First define a strong bounded morphism $\Theta$
form $\M(\G)_+$ to $\rho(I(\G^*))$, as follows:
For any $x_0, x_1<n$ and $X\subseteq \G^*\times n$, define the following element
of $\M(\G^*)$:
$$X^{(x_0, x_1)}=\{[f]\in \rho(I(\G^*)): \exists p\in X[M_f\models p(f(x_0),f(x_1))]\}.$$
Let $\mu$ be an ultrafilter in $\M(\G)$. Define $\sim $ on $n$ by $i\sim j$ iff $\sf d_{ij}\in \mu$.
Let $g$ be the projection map from $n$ to $n/\sim$.
Define a $\G^*\times n$ coloured graph with domain $n/\sim$ as follows.
For each $v\in \Gamma^*\times n$
and $x_0, x_1<n$, we let
$$M_{\mu}\models v(g(x_0), g(x_1))\Longleftrightarrow  X^{(x_0, x_1)}\in \mu.$$
Hence, any ultrafilter $\mu\in \M(\G)$ defines $M_{\mu}$ which is  a $\G^*$ structure.
If $\Gamma$ has infinite chromatic number, then $\G^*$ has a reflexive node, and this can be used
to completely represent $\M(\G)^{\sigma}$, hence represent  $\M(\G)$ as follows:
To do this one tries to show  \pe\ has a \ws\ in the usual $\omega$ rounded atomic game on networks \cite{HHbook2}, that test complete
representability.

Her strategy can be implemented using the following argument.
Let $N$ be a given $\M(\G)^{\sigma}$ network. Let $z\notin N$ and let $y=x[i|z]\in {}^n(N\cup \{z\}={}^nM$.
Write $Y=\{y_0,\ldots y_{n-1}\}$.  We need to complete the labelling of edges of $M$.
We have a fixed $i\in n$. Defines $q_j$ $:j\in n\sim \{i\}$, the unique label of any two distinct elements in $Y\sim y_j$,
if the latter elements are pairwise distinct, and arbitrarily
otherwise.
Let $d\in \G^*$ be a reflexive node in the copy that does not contain any of the $q_j$s (there number is $n-1$),
and define $M\models d(t_0, t_1)$ if $z\in \{t_0, t_1\}\nsubseteq Y$.
Labelling the  other edges are like $N$.
The rest of the proof would be similar to \cite[theorem  3.6.4 p.79]{HHbook2}.
\end{proof}

The idea above is essentially due to Hirsch and Hodkinson, it also works for relation and cylindric algebras, and this is the essence. For each graph
$\Gamma$, they associate a cylindric algebra atom structure of dimension $n$, $\M(\Gamma)$ such that $\Cm\M(\Gamma)$ is representable
if and only if the chromatic number of $\Gamma$, in symbols $\chi(\Gamma)$, which is the least number of colours needed, $\chi(\Gamma)$ is infinite.
Using a famous theorem of Erdos as we did above, they construct  a sequence $\Gamma_r$ with infinite chromatic number and finite girth,
whose limit is just $2$ colourable, they show that the class of strongly representable
algebras  is not elementary. This is a {\it reverse process} of Monk-like  constructions,
which gives a sequence of graphs of finite chromatic number whose limit (ultraproduct) has infinite
chromatic number.

In more detail, some statement fails in $\A$ iff $\At\A$
be partitioned into finitely many $\A$-definable sets with certain
`bad' properties. Call this a {\it bad partition}.
A bad partition of a graph is a finite colouring. So Monk's result finds a sequence of badly partitioned atom structures,
converging to one that is not.  This boils down, to finding graphs of finite chromatic numbers $\Gamma_i$, having an ultraproduct
$\Gamma$ with infinite chromatic number.

Then an  atom structure is {\it strongly representable} iff it
has {\it no bad partition using any sets at all}. So, here, the idea  is to {\it find atom structures, with no bad partitions
with an ultraproduct that does have a bad partition.}
From a graph Hirsch and Hodkinson constructed  an atom structure that is strongly representable iff the graph
has no finite colouring.  So the problem that remains is to find a sequence of graphs with no finite colouring,
with an ultraproduct that does have a finite colouring, that is, graphs of infinite chromatic numbers, having an ultraproduct
with finite chromatic number.

It is not obvious, a priori, that such graphs actually exist.
And here is where Erdos' methods offer solace.
Indeed, graphs like are found using the probabilistic methods of Erdos, for those methods
render finite graphs of arbitrarily large chromatic number and girth.
By taking disjoint unions as above, one {\it can get}
graphs of infinite chromatic number (no bad partitions) and arbitrarily large girth. A non principal
ultraproduct of these has no cycles, so has chromatic number 2 (bad partition).
This motivates:

\begin{definition}
\begin{enumarab}

\item A Monks algebra is good if $\chi(\Gamma)=\infty$

\item A Monk's algebra is bad if $\chi(\Gamma)<\infty$

\end{enumarab}
\end{definition}
It is easy to construct a good Monks algebra
as an ultraproduct (limit) of bad Monk algebras. Monk's original algebras can be viewed this way.
The converse is, as illustrated above, is much harder.
It took Erdos probabilistic graphs, to get a sequence of good graphs converging to a bad one.

Using our modified Monk-like algebras we obtain the result formulated in theorem \ref{el}.
The following corollary answers a question of Hodkinson's in \cite[p. 284]{AU}; though admittedly he proposed the line of argument.

On the other hand, we prove more than required. Hodkinson's question only addressed the two cases $\PA$ and $\PEA$.

\begin{corollary}\label{Hodkinson} For $\K$ any class between $\Df$ and $\PEA$ and any finite $n>2$,
the class $\K_s=\{\A\in \K_n: \Cm\At\A\in \sf RK_n\}$, is not elementary.
\end{corollary}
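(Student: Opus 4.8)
\textbf{Proof plan for Corollary \ref{Hodkinson}.} The plan is to derive the non-elementarity of $\K_s$ directly from Theorem \ref{el} together with the algebraic machinery set up in the section on graphs and strong representability. Recall from Theorem \ref{el} that the class of strongly representable atom structures of type $\mathcal T$ is not elementary, and this was witnessed by the Erd\H os graphs $\Gamma_\kappa$: each $\C(\Gamma_\kappa)$ is representable with $\Cm\At\C(\Gamma_\kappa)$ representable (since $\chi(\Gamma_\kappa)=\infty$), while for a non-principal ultraproduct $\Gamma=\prod_D\Gamma_\kappa$ one has $\chi(\Gamma)\le 2$, so $\Rd_{df}\Cm\At\C(\Gamma)$, hence $\Cm\At\C(\Gamma)$, fails to be representable. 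The first step is to observe that $\C(\Gamma)$ is first-order interpretable in $\Gamma$ uniformly, so that $\prod_D\C(\Gamma_\kappa)\cong\C(\prod_D\Gamma_\kappa)=\C(\Gamma)$; this is exactly the isomorphism invoked in the proof of Theorem \ref{el}.

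Next I would set $\A_\kappa=\C(\Gamma_\kappa)\in\K$ (taking the appropriate reduct to the signature of $\K$ between $\Df$ and $\PEA$; note $\C(\Gamma)$ is a $\PEA_n$ by Theorem \ref{it is qea}, so all reducts down to $\Df_n$ live in the relevant class). Each $\A_\kappa$ is atomic and $\Cm\At\A_\kappa$ is representable, i.e. $\A_\kappa\in\K_s$. By the interpretability remark, the ultraproduct $\B=\prod_D\A_\kappa\cong\C(\Gamma)$ is again atomic, but $\Cm\At\B\cong\Cm\At\C(\Gamma)$ is not representable (its $\Df$ reduct isn't, by Theorem \ref{chr. no.}(2)), so $\B\notin\K_s$. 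Since $\K_s$ contains every $\A_\kappa$ but is not closed under the ultraproduct $\prod_D\A_\kappa$, it cannot be elementary. One routine point to check here is that forming the complex algebra of the atom structure commutes with ultraproducts in the relevant sense, or rather that it suffices to know $\At(\prod_D\A_\kappa)\cong\prod_D\At\A_\kappa$ and then apply the interpretability isomorphism $\prod_D\C(\Gamma_\kappa)\cong\C(\prod_D\Gamma_\kappa)$; both are already used in Theorem \ref{el}, so nothing new is needed.

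For the stronger claim that the result holds for \emph{every} class $\K$ between $\Df$ and $\PEA$ (not merely $\PA$ and $\PEA$, which is all Hodkinson asked), the same witnesses work verbatim: the algebras $\C(\Gamma_\kappa)$ are $\PEA_n$s, so their $\mathcal T$-reducts are in $\K$ for any $\mathcal T$ between $\Df_n$ and $\PEA_n$; representability of $\Cm\At$ is monotone downward along reducts in the sense that if the full $\PEA$ complex algebra is representable so are all its reducts, while non-representability is detected already at the $\Df$ level by Theorem \ref{chr. no.}(2). Thus the membership $\A_\kappa\in\K_s$ and the non-membership $\B\notin\K_s$ survive reduction, and the ultraproduct argument goes through unchanged.

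\textbf{Main obstacle.} I expect the only genuinely delicate point to be the bookkeeping around reducts: one must be careful that ``$\Cm\At\A\in\sf RK_n$'' for the reduct class $\K$ is really equivalent to the corresponding statement about the full $\PEA$ structure on these particular algebras, i.e. that passing to a reduct neither accidentally makes a non-representable algebra representable nor vice versa in a way that breaks the argument. This is handled by the fact (used repeatedly in the excerpt, e.g. in Theorem \ref{hodkinson} and its corollary) that $\C(\Gamma)$ is generated by elements of dimension $<n$, so a representation of any reduct down to $\Df_n$ lifts to a representation of the full $\PEA_n$, making representability of $\Cm\At\C(\Gamma)$ insensitive to the choice of $\mathcal T$; once this is invoked the corollary is immediate from Theorem \ref{el}.
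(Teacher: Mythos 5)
Your proposal is correct and is essentially the paper's own argument: the corollary is obtained directly from Theorem \ref{el} via the Erd\H{o}s graphs $\Gamma_\kappa$, Theorem \ref{chr. no.}, and the interpretability isomorphism $\prod_{D}\C(\Gamma_{\kappa})\cong\C(\prod_{D}\Gamma_{\kappa})$, with the passage to arbitrary $\K$ between $\Df$ and $\PEA$ handled exactly as you say, by the fact that these algebras are generated by elements whose dimension sets are $<n$, so that (non-)representability of $\Cm\At$ is insensitive to the choice of signature.
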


Recall that networks and atomic games are defined in \cite{HHbook}. They can be easily modified to address any signature between $\Df$ and $\PEA$.
For example for $\sf PEA$ we add the following clause  $s_{[i,j]}N(\bar{x})= N(\bar{x})\circ [i,j])$ for atomic networks,
and for $\sf Df$ we delete the condition addressing diagonal elements and we are
left only with the cylindrifier move. The games are the same,
which means that the information is coded in the networks and not
in the games.

A Lyndon $n$th condition, or simple a Lyndon condition, is a first order sentence that codes that \pe\ has a \ws\
in the usual atomic game on (atomic) networks having
$n$ rounds \cite{HHbook2}.

Again like in the case of rainbow algebras, define the polyadic operations corresponding to transpositions
using the notation in \cite[definition 3.6.6]{HHbook2}, in the context of defining atom structures from
clllases of models by
$R_{s_{[ij]}}=\{([f], [g]): f\circ [i,j]=g\}$. This is well defined.
In particular, we can (and will)
consider the Monk-like algebra $\M(\Gamma)$ as defined in \cite[top of p. 78]{HHbook2} as a polyadic equality
algebra.

\begin{theorem}\label{hhchapterbook}Let $\M(\Gamma)$ be the polyadic equality algebra defined above.
If $\chi(\Gamma)=\infty$, then $\M(\Gamma)$ is representable as a  polyadic
equality algebra. If $\chi(\Gamma)<\infty$,
then $\Rd_{df}\M(\Gamma)$ is not representable.
\end{theorem}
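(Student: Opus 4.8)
\textbf{Proof plan for Theorem \ref{hhchapterbook}.}

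The plan is to reduce the statement to results already established in the excerpt, especially Theorem \ref{new} and the surrounding discussion, by observing that the polyadic equality structure on $\M(\Gamma)$ is a conservative enrichment of its $\CA$ (indeed $\sf Df$) reduct. First I would record the setup precisely: $\M(\Gamma)$ is the complex algebra $\Cm\rho(I(\Gamma))$ of the atom structure $\rho(I(\Gamma))$, with the transposition operators interpreted via $R_{{\sf s}_{[ij]}}=\{([f],[g]): f\circ[i,j]=g\}$ exactly as in \cite[definition 3.6.6]{HHbook2}. The key structural fact, already used implicitly in the proof of Theorem \ref{hodkinson} and in Theorem \ref{el}, is that $\At\M(\Gamma)$ is isomorphic as a $\PEA_n$ atom structure to $\Mat_n\alpha(\Gamma)$, the set of $n$-dimensional basic matrices over the relation algebra atom structure $\alpha(\Gamma)$, with the polyadic operations being induced the obvious way (swapping rows/columns of matrices). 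One should first check that $R_{{\sf s}_{[ij]}}$ is well-defined on $\rho(I(\Gamma))$ (it respects the equivalence relation defining the atom structure) and that the complex algebra of the resulting structure satisfies the $\PEA_n$ axioms; this is routine, following the verifications in Theorem \ref{it is qea} for the Monk-graph algebras $\B(\Gamma)$, since the axiom checking there only uses that the accessibility relations have the stated form.

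Next, for the positive direction ($\chi(\Gamma)=\infty\Rightarrow\M(\Gamma)$ representable as a $\PEA_n$), I would invoke Theorem \ref{new}: when $\chi(\Gamma)=\infty$ the ultrafilter extension $\Gamma^*$ has a reflexive node, and this is used to build a complete representation of $\M(\Gamma)^\sigma$ via a winning strategy for \pe\ in the $\omega$-rounded atomic game on networks. The point I need to add is that this representation automatically respects the polyadic operations. This is because, in the coloured-graph/network formalism, a network $N$ for the $\PEA_n$ atom structure satisfies the extra clause $N(\bar x\circ[i,j])={\sf s}_{[i,j]}N(\bar x)$, and \pe's strategy in Theorem \ref{new}, built from reflexive nodes in $\Gamma^*$, respects this symmetry clause for free — labelling an edge $(t_0,t_1)$ by a reflexive red $d$ is invariant under transposing coordinates, and the base part of the network inherits symmetry from the atom structure $\Mat_n\alpha(\Gamma)$ being closed under substitutions. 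So the complete representation of the $\CA_n$-reduct lifts verbatim to a complete (hence ordinary) representation of the full $\PEA_n$; alternatively one cites the fact recorded after Definition \ref{strong rep} that $\M(\Gamma)=\Cm\At\M(\Gamma)$ is representable iff its atom structure is strongly representable, together with the observation that strong representability of $\Mat_n\alpha(\Gamma)$ as a polyadic-equality atom structure is exactly what Theorem \ref{new} delivers when $\chi(\Gamma)=\infty$.

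For the negative direction ($\chi(\Gamma)<\infty\Rightarrow\Rd_{df}\M(\Gamma)$ not representable), I would argue exactly as in Theorem \ref{new} and Theorem \ref{chr. no.}(2): if $\chi(\Gamma)<\infty$ then $\alpha(\Gamma)$ (equivalently $\M(\Gamma)$) is not strongly representable — in fact $\Cm\Mat_n\alpha(\Gamma)$ is not representable as a cylindric algebra, hence a fortiori its diagonal-free reduct is not representable, since $\M(\Gamma)$ is generated (as a $\sf Df_n$, hence as a $\CA_n$) by elements of dimension $<n$, so a representation of the $\sf Df$-reduct would induce a complete representation of that reduct, then of $\M(\Gamma)$ itself, then a representation of $\Cm\Mat_n\alpha(\Gamma)$ — the standard argument reproduced in the last part of the proof of Theorem \ref{smooth} and in Theorem \ref{hodkinson}. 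The main obstacle, and the only place genuine care is needed, is the lifting of the \emph{ordinary} representation in the $\chi(\Gamma)=\infty$ case: I must be sure that the representation produced for the $\CA_n$-reduct actually respects ${\sf s}_{[i,j]}$, i.e. that one is not merely representing the cylindric reduct but the whole polyadic-equality algebra. I expect this to go through cleanly precisely because the atom structure is the matrix atom structure $\Mat_n\alpha(\Gamma)$, which is symmetric (closed under arbitrary coordinate permutations), so every network arising in \pe's play is automatically symmetric and the induced set-algebra homomorphism commutes with the transposition operators; if any subtlety remains it is bookkeeping about how the equivalence relation on $\rho(I(\Gamma))$ interacts with ${\sf s}_{[i,j]}$, which should be handled once and for all at the start when checking $\M(\Gamma)\in\PEA_n$.
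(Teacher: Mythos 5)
Your proposal is correct and follows essentially the same route as the paper: the positive direction rests on the observation that the polyadic information lives in the (symmetric) networks rather than in the game, so \pe's winning strategy on the canonical/ultrafilter extension coming from the reflexive node (Theorem \ref{new}, i.e.\ the Hirsch--Hodkinson lemmas 3.6.4 and 3.6.7) carries over unchanged and yields a complete representation of $\M(\Gamma)^{\sigma}$, hence a $\PEA_n$ representation of $\M(\Gamma)$; the negative direction is exactly the paper's appeal to $\M(\Gamma)$ being generated by elements with dimension sets $<n$. Your extra care in checking that ${\sf s}_{[i,j]}$ is respected is just an expanded version of the paper's one-line remark that only the networks, not the game, change.
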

\begin{proof} Only networks are changed but the atomic game is the same, so clearly \pe\ has a \ws\
infinite (possibly transfinite)  game over $M(\Gamma)^{\sigma}$ which is
the canonical extension of $\M(\Gamma)$ \cite[lemma 3.6.4, lemma 3.6.7]{HHbook2}

In using the latter lemma the proof is unaltered, in the former lemma \pe\ s strategy to win the
$G^{|{\sf Uf}(M(\Gamma))|+\omega)}\M(\Gamma)^{\sigma}$ game is exactly the same
so  that  $\M(\Gamma)^{\sigma}$ is completely representable, hence $\M(\Gamma)$
is representable as a polyadic equality algebra.

The second part follows from the fact that $\M(\Gamma)$ is generated by elements whose dimension
sets $<n$,  \cite[theorem 5.4.29]{tarski}.
\end{proof}

\section*{Part II}

\section{Completions and decidability}

\subsection{Lifting results from atomic  $\RA$s to $\CA$s, via atom structures}

Throughout 
this section, unless otherwise explicity specified, $m>2$ will denote the dimension and $n$ will be always finite $>m.$
We address the decidability of the problem as to whether a finite $\CA_m$ is in $S\Nr_m\CA_n.$
We succeed to obtain a negative result but only for the lowest value of
$m$, namely, when $m=3$ and $n\geq m+3$.
This generalizes the result of Hodkinson in \cite{AU} proved for $\RCA_n$ but only for $n=3$. For higher dimensions, the problem to the best
of our knowledge remains unsettled. We also connect the syntactical notion of  algebras having a (complete) neat embedding property to
the semantical one of having (complete) relativized representations as defined in definition \ref{rel},
generalizing results proved
by Hirsch and Hodkinson for relation algebras, witness \cite[theorems 13.45, 13. 46]{HHbook}.

Our first theorem is conditional. 
It gives a sufficient condition of how to lift results on relation algebras to cylindric or polyadic ones, using {\it $n$ dimensional basis} 
over an atom structure
of a relation algebra, to form a cylindric-like $n$ dimensional atom structure; a theme 
in \cite{AU}.

Let $\sigma_k$ be the usual atomic $k$ rounded  game, which we view as being played on atomic networks 
of relation or cylindric atom structures.
Context will help which one
we mean.

We need to prepare some more for our next theorem. The next definition and lemma
are needed to transfer results on the existence of canonical axiomatizations, or rather 
the non-existence thereof, from  relation algebras to finite dimensional cylindric algebras of finite dimension $n>2$. 

Hodkinson proved that this is indeed the case with $\RCA_n$ without going through the route of relation algebras.
He actually proved more, namely, 
 any equational axiomatization of $\RCA_n$ must contain infinitely many non-canonical sentences, that is sentences that are not
preserved in canonical exiension. Closed under
canonical extensions,
this shows that $\RCA_n$ $(3<n<\omega)$ is only {\it barely} canonical.
We shall also use, like Hodkinson and Venema for relation algebras  and Hodkinson for finite dimensional 
cylindric algebras,  Erdos' probabilistic graphs.
The following two definitions are taken from \cite{HV}.
\begin{definition}
\begin{enumarab}
\item We say that a partially ordered set $(I, \leq)$ is directed if every finite subset of $I$ has an upper bound in $I$.
\item An inverse system of $L^a$ structures is a triple
$$D=((I,\leq)) (S_i:i\in I), (\pi_{ij}: i,j\in I, i\leq j)),$$
where $(I,\leq)$ is a directly partially ordered set, each $S_i$ is an $L^a$ structure, and for $i\leq j\in I$, $\pi_{ji}:S_j\to S_i$ is a surjective homomorphism,
such that whenever $k\geq j\geq i$ in $I$ then $\pi_{ii}$ is the identity map and $\pi_{ki}=\pi_{ji}\circ \pi_{kj}$.
\item We say that $D$ in an inverse system of finite structures if each $S_j$ is a finite structure, and an inverse system of bounded morphisms if each $\pi_{ij}$
is a bounded morphism.
\item The inverse limit $lim_{\leftarrow}D$ of $D$ is the substructure of $\prod_{i\in I}S_i$ with domain
$$\{\chi\in \prod_{i\in I}S_i: \pi_{ji}(\chi_j))=\chi(i)\text { whenever } j\geq i \text { in } I\}.$$
\item For any $i\in I$, the projection $\pi_i:lim_{\leftarrow}D\to S_i$ is defined by $\pi_i(\chi)=\chi(i)$
\end{enumarab}
\end{definition}

\begin{definition}
Let $D=((I,\leq) )S_i:i\in I), (\pi_{ji}: i,j\in I, i\leq j)$ be an inverse system of finite structure and bounded morphisms.
Let $I=lim_{\leftarrow} D$. For each $i\in I$ define $\pi_i^+:S_i^+\to I^+$ by $\pi_i^+(X)=\pi_i^{-1}[X]$, for $X\subseteq S_i$.
\end{definition}
Each $\pi_i^+$ is an algebra embedding $: S_i^+\to I^+$, and its range $\pi_i^+(S_i)$ is a finite subalgebra of $I^+$. It follows that
$\A_D=\bigcup_{i\in I}\pi_i^+(S_i^+)$ is a directed union of finite subalgebras of $I^+$, and we have
$$(\A_D)_+\cong lim_{\leftarrow}D=I$$
The following theorem is proved in \cite{HV} by adapting techniques of Erdos in constructing probabilistic
graphs with arbitrary large chromatic number and girth.

\begin{theorem} Let $k\geq 2$. There are finite graphs $G_0, G_1\ldots$ and surjective homomorphisms $\rho_i:G_{i+1}\to G_i$ for $i<\omega$
such that for each $i$, $\rho_i$ is a bounded morphism and
\begin{enumarab}
\item for each edge $xy$ of $G_i$ and each $x'\in \rho_i^{-1}(x)$, there is a $y'\in \rho_i^{-1}(y)$ such that
$x'y'$ is an edge of $G_{i+1}$,
\item $G_i$ has no odd cycles of length $\leq i$
\item $\chi(G_i)=k.$
\end{enumarab}
\end{theorem}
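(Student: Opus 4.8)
The statement to be proved is the construction of an inverse system $G_0 \xleftarrow{\rho_0} G_1 \xleftarrow{\rho_1} G_2 \xleftarrow{} \cdots$ of finite graphs with surjective bounded morphisms, where each $G_i$ has chromatic number exactly $k$, has no odd cycles of length $\le i$ (large odd girth), and where edges lift along the $\rho_i$ in the strong sense of item (1) — which is precisely the condition that each $\rho_i$ is a bounded (p-)morphism of the graphs viewed as Kripke frames. The plan is to build the $G_i$ by induction on $i$, starting from a known existence result and repeatedly applying a ``girth-increasing cover'' construction.

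First I would take $G_0$ to be any finite graph with $\chi(G_0)=k$ — for instance the complete graph $K_k$, or if one prefers triangle-free starting data, one of Erd\H{o}s's probabilistic graphs with chromatic number $k$ and girth $>0$. The real work is the inductive step: given $G_i$ with $\chi(G_i)=k$ and odd girth $>i$, produce $G_{i+1}$ with a surjective bounded morphism $\rho_i : G_{i+1}\to G_i$ such that $\chi(G_{i+1})=k$ and the odd girth of $G_{i+1}$ exceeds $i+1$. The standard device here is a fibre-product / amalgamated cover: one uses the probabilistic method (Erd\H{o}s's theorem on graphs of large chromatic number and large girth) to obtain an auxiliary finite graph $H$ with $\chi(H)=k$ and odd girth larger than $i+1$, and then forms a suitable graph mapping onto $G_i$ whose short odd cycles are killed by $H$ while its chromatic number is pinned between $k$ (from below, because it maps onto $G_i$ which needs $k$ colours, so $\chi(G_{i+1})\ge\chi(G_i)=k$ — bounded morphisms of graphs cannot decrease chromatic number since a proper colouring pulls back) and $k$ (from above, by exhibiting an explicit $k$-colouring built from colourings of $G_i$ and $H$). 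The bounded-morphism requirement — that every edge at a point downstairs lifts to an edge at every preimage point upstairs — is exactly what forces the use of a cover rather than an arbitrary graph homomorphism; one arranges this by taking $G_{i+1}$ to consist of copies of the neighbourhood structure of $G_i$ glued according to walks, so that preimages of adjacent vertices are always joined.

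I would then assemble these into the inverse system: set $\pi_{ii}=\mathrm{id}$ and $\pi_{ji}=\rho_i\circ\rho_{i+1}\circ\cdots\circ\rho_{j-1}$ for $i\le j$, which is a composite of surjective bounded morphisms hence itself a surjective bounded morphism, and the cocycle condition $\pi_{ki}=\pi_{ji}\circ\pi_{kj}$ is immediate from associativity of composition. This gives the directed (indeed linearly ordered, by $(\omega,\le)$) inverse system required, and by the preceding discussion in the excerpt the inverse limit $\varprojlim D$ is a graph whose complex algebra's subalgebras $\pi_i^+(G_i^+)$ form a directed union, with $(\mathcal{A}_D)_+\cong\varprojlim D$; since each $G_i$ omits short odd cycles, $\varprojlim D$ has no odd cycles at all, so (by Lemma 3.2 of \cite{hirsh}, cited in the excerpt) it is $2$-colourable while every $G_i$ needs $k>2$ colours — which is the payoff for which the theorem is being invoked.

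The main obstacle is the inductive girth-increasing step done \emph{simultaneously} with control of the chromatic number and with the bounded-morphism property. Each of the three constraints is classical on its own — Erd\H{o}s gives large girth plus large chromatic number, and fibre products give bounded morphisms — but making a single construction satisfy all three at once is delicate: the cover that kills short odd cycles must not accidentally drop the chromatic number below $k$ (easy, by pulling back colourings one gets $\chi(G_{i+1})\ge k$) and, more subtly, must not raise it above $k$; pinning it at exactly $k$ requires carefully exhibiting a $k$-colouring of the cover, typically by a product-colouring argument combining a $k$-colouring of $G_i$ with a $k$-colouring of the Erd\H{o}s auxiliary graph indexed along the walk structure. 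This is the technical heart of the argument; the rest (forming the inverse system, checking the cocycle identities, reading off the algebraic consequences) is routine bookkeeping. Since this theorem is quoted from \cite{HV}, I would present the construction at the level of indicating the cover and citing Erd\H{o}s's theorem and \cite{HV} for the verification, rather than reproving the probabilistic estimates.
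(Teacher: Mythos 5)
The paper offers no proof of this theorem at all: its ``proof'' is literally the single citation \cite{HV}, so your decision to sketch the construction and defer the probabilistic verification to Hodkinson--Venema is, at the level of what gets written down, already more than the paper does, and your overall architecture (induct on $i$, build $G_{i+1}$ as a cover of $G_i$ using Erd\H{o}s-type probabilistic input to kill short odd cycles, then assemble the inverse system) is the right shape.

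However, your sketch contains a genuine mathematical error that causes you to misidentify where the difficulty lies. You claim that $\chi(G_{i+1})\ge\chi(G_i)$ ``because it maps onto $G_i$'' and that ``bounded morphisms of graphs cannot decrease chromatic number since a proper colouring pulls back.'' This is backwards. A graph homomorphism $\rho:G_{i+1}\to G_i$ composed with a proper $k$-colouring $G_i\to K_k$ yields a proper $k$-colouring of $G_{i+1}$, so the pullback gives $\chi(G_{i+1})\le\chi(G_i)$ --- the \emph{upper} bound is the trivial direction, not the lower one. And surjective bounded morphisms certainly can decrease chromatic number: the double cover $C_6\to K_3$ is a surjective bounded morphism from a bipartite graph onto a triangle. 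Indeed the entire point of the theorem, as you yourself note in your last paragraph, is that chromatic number collapses from $k$ to $2$ in the inverse limit; if surjectivity of bounded morphisms forced $\chi$ to be nondecreasing up the tower, that collapse would be impossible. Consequently the step you dismiss as ``easy'' --- keeping $\chi(G_{i+1})$ from dropping below $k$ --- is precisely the technical heart of the Hodkinson--Venema argument (they must show, probabilistically, that a random lift with the edge-lifting property admits no proper $(k-1)$-colouring), while the step you describe as requiring ``carefully exhibiting a $k$-colouring ... by a product-colouring argument'' is the one that is free. A corrected sketch should route the lower bound on $\chi(G_{i+1})$ through the probabilistic estimates of \cite{HV} (or an explicit independent-set/fibre counting argument), not through pulling back colourings.
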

\begin{demo}{Proof} \cite{HV}
\end{demo}
Fix integers $k\geq m\geq 2$, and let $H_0, H_1\ldots$ and $\pi_i:H_{i+1}\to H_i$ be graphs and homomorphisms as above.
Now fix a complete graph $K_m$ with $m$ nodes, and for each $i<\omega$, let $G_i$ be the disjoint union of $H_i$ and $K_m$.
For each $i<j<\omega$ define $\rho_{ij}$ to be the identity on $G_i$, and
$\rho_{ji}:G_j\to G_i$ be defined by
\begin{equation*}
\rho_{ji}(x) =
\begin{cases}
\pi_i\circ \ldots \pi_{j-1} , & \hbox{if $x\in H_j$} \\
x,  & \hbox{if $x\in K_m$.} \end{cases}
\end{equation*}
Let $$D_m^k=((\omega, \leq), (G_i:i<\omega), (\rho_{ji}: i\leq j<\omega))$$
Then $lim_{\leftarrow}D_m^k$ is a graph with chromatic number $m$.

\begin{theorem}\label{decidability} Let $m\geq 3$. Assume that for any simple atomic relation algebra $\A$ with atom structure $S$,
there is a cylindric atom structure $H$, constructed effectively from $\At\A$,  such that:
\begin{enumarab}
\item If $\Tm S\in \sf RRA$, then $\Tm H\in \RCA_m$.
\item If $S$ is finite, then $H$ is finite
\item $\Cm S$ is embeddable in $\Ra$ reduct of $\Cm H$.
\item \pe\ has a \ws\ in $\sigma_k$ over $S$ iff \pe\ has a \ws\ in $\sigma_k$
over $H$, where $\sigma_k$ is the usual atomic $k$ rounded game.
\end{enumarab}
Then  for all $k\geq 3$, $S\Nr_m\CA_{m+k}$ is not closed under completions,
and  it is undecidable whether a finite cylindric algebra is in
and $S\Nr_m\CA_{m+k}$, and $\RCA_n$ cannot be axiomatizable by canonical sentences. In fact, any axiomatization of $\RCA_m$
must contain infinitely many non canonical sentences.
\end{theorem}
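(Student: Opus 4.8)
The strategy is to transfer the known relation-algebra results of Hodkinson--Venema \cite{HV} to $\CA_m$ via the hypotheses (1)--(4), exactly as Hodkinson did in \cite{AU} for the special case $m=3$, $k=3$. First I would recall the relation-algebra side: by \cite{HV} there is, for each $k\geq 2$, an inverse system $D_m^k$ of finite graphs with bounded morphisms whose inverse limit is a graph of chromatic number $m$, each $G_i$ having no short odd cycles; the associated directed union of finite subalgebras $\A_{D_m^k}$ of $(\lim_{\leftarrow}D_m^k)^+$ is the tool that witnesses that any axiomatization of $\sf RRA$ contains infinitely many non-canonical sentences, and that $\sf S\Ra\CA_{m+k}$ (equivalently, the class $\sf RA_{m+k}$) is not closed under completions, and undecidability of membership for finite relation algebras. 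Then I would apply the hypothesized effective construction $\At\A\mapsto H$ of the statement to each finite relation algebra occurring in this machinery. By hypothesis (2) finiteness is preserved, so the construction yields finite $\CA_m$'s; by (1) and (3) the ``good'' algebras (those with $\Tm S\in \sf RRA$) map to algebras with $\Tm H\in \RCA_m$ while $\Cm S\hookrightarrow \Ra\Cm H$ preserves the ``bad'' behaviour of completions; and by (4) the game-theoretic characterization of $\sf S\Nr_m\CA_{m+k}$-membership (via the $k$-rounded atomic game $\sigma_k$, whose winning conditions are first-order expressible as the Lyndon-type conditions) is transported verbatim from $S$ to $H$.

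Next I would assemble the three conclusions. \emph{Non-closure under completions:} take a finite (or term) relation algebra $\R$ with $\Tm\At\R\in\sf RRA$ but $\Cm\At\R\notin \sf S\Ra\CA_{m+k}$ arising from the Hodkinson--Venema construction; produce $H$; then $\Tm H\in\RCA_m\subseteq S\Nr_m\CA_{m+k}$ by (1), while $\Cm H\notin S\Nr_m\CA_{m+k}$ because $\Cm\At\R\hookrightarrow\Ra\Cm H$ by (3) and $\sf S\Nr_m\CA_{m+k}$ is closed under $\Ra$-reducts and subalgebras (a standard fact: $\Ra\sf S\Nr_m\CA_{m+k}\subseteq \sf S\Ra\CA_{m+k}$), so $\Cm H\in S\Nr_m\CA_{m+k}$ would force $\Cm\At\R\in\sf S\Ra\CA_{m+k}$, a contradiction. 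Since $\Cm H$ is the \d\ completion of the atomic algebra $\Tm H$, closure under completions fails. \emph{Undecidability:} the undecidability of ``finite $\R\in \sf S\Ra\CA_{m+k}$'' (this is the Hirsch--Hodkinson result cited in the layout of Part II, reducing from a known undecidable problem about finite relation algebras) transports along the effective map $\At\A\mapsto H$ using (2) and (4): \pe\ wins $\sigma_k$ over $H$ iff over $S$, and winning $\sigma_\omega$ characterizes membership in the relevant class in the limit, so a decision procedure for finite $\CA_m$'s in $S\Nr_m\CA_{m+k}$ would yield one for finite relation algebras. \emph{Non-canonicity:} mimic Hodkinson's argument in \cite{AU}: the directed union $\A_{D_m^k}$ of finite algebras has $(\A_{D_m^k})_+\cong\lim_{\leftarrow}D_m^k$, whose chromatic number $m$ (being ``bad'' in the sense of the graph dichotomy of the previous section) means $\Cm$ of the canonical frame is \emph{not} representable, while each finite piece, being ``good enough'' up to finite stages, satisfies more and more of any given axiomatization; a compactness/ultraproduct argument then shows any equational axiomatization of $\RCA_m$ must contain infinitely many sentences failing to be preserved under canonical extension, i.e. $\RCA_m$ is only barely canonical. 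Here one uses that $H$ is first-order interpretable in the graph (as in theorem \ref{el} and \ref{new}), so that the inverse-limit structure on the graph side pushes through to an inverse limit of finite $\CA_m$'s.

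The main obstacle I expect is \emph{verifying that the hypotheses (1)--(4) are not vacuous}, i.e. that such an effective construction $\At\A\mapsto H$ actually exists for $m=3$ and $n\geq m+3$; but the statement is explicitly phrased conditionally (``Assume that for any simple atomic relation algebra $\A\ldots$''), so within the scope of this theorem that construction is given. The genuinely delicate point in the \emph{conditional} proof is the interplay in item (3) between $\Cm$ and $\Ra$: one must be careful that $\Cm S \hookrightarrow \Ra\Cm H$ is a \emph{complete} embedding in the right sense so that non-membership of $\Cm S$ in $\sf S\Ra\CA_{m+k}$ genuinely obstructs membership of $\Cm H$ in $\sf S\Nr_m\CA_{m+k}$ — this requires knowing that $\Ra$ of an algebra in $\sf S\Nr_m\CA_{m+k}$ lies in $\sf S\Ra\CA_{m+k}$, which is where the ``$k$ spare dimensions'' $m+k$ on the $\CA$ side must match the $m+k$ on the $\RA$ side. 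The second subtlety is the non-canonicity argument, where one needs the fine structure of the inverse system (bounded morphisms, the girth condition forcing large ``local'' chromatic number while the limit is $m$-colourable) to survive the interpretation into $\CA_m$; this is essentially Hodkinson's argument and I would follow \cite{AU, HV} closely, noting only the modifications forced by replacing $n=3$ with general $m\geq 3$ and by routing everything through the relation-algebra atom structure.
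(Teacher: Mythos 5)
Your proposal is correct and follows essentially the same route as the paper: all three conclusions are obtained by transporting the known relation-algebra facts (the weakly-but-not-strongly representable atom structure, the Hirsch--Hodkinson undecidability of finite membership in $S\Ra\CA_{m+k}$, and the Hodkinson--Venema inverse-system argument for bare canonicity) through the hypothesized map $S\mapsto H$, using (3) together with $\Ra S\Nr_m\CA_{m+k}\subseteq S\Ra\CA_{m+k}$ for the completion and undecidability parts and (4) for the game transfer in the canonicity part. The only cosmetic difference is that the paper's witness for the completion statement is the blow-up-and-blurred rainbow relation algebra of \cite[lemmas 17.32--17.36]{HHbook} rather than an algebra coming from the Hodkinson--Venema system, but the transfer mechanism is identical.
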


\begin{demo}{Proof} First a simple observation, note that (1) is a special case of (4).
\begin{enumarab}
\item For the first part. Let $S$ be a relation atom structure such that $\Tm S$ is representable while $\Cm S\notin \RA_6$.
Such an atom structure exists \cite[lemmas 17.34-35-36-37]{HHbook}.

We give a brief sketch at how such algebras are constructed by allowing complete irreflexive graphs having an arbitrary finite set
nodes, slightly generalizing the proof in {\it op.cit}, though the proof idea is essentially the same.

Another change is that we refer to non-principal ultrafilters (intentionally) by blurs to emphasize the connection with the blow up and
blur construction in \cite{ANT} as well as with the blow up and blur construction in the proof of theorems \ref{blowupandblur}
and \ref{blurs} above.
In all cases a finite algebra is blown up and blurred to give a representable algebra (the term algebra on the blown up and blurred finite atom
structure) whose \d\ completion does not have a neat embedding
property.

In the latter two cases the blurs were used as colourings to represent the term algebra; in the last we only needed only one shade of red,
witness theorems \ref{hodkinson}, \ref{blowupandblur}.
We use the notation of the above cited lemmas in \cite{HHbook2} without warning, and our proof will be very brief just stressing the main ideas.
Let $\R=\A_{K_m, K_n}$, $m>n>2$.
Let $T$ be the term algebra obtained by splitting the reds. Then $T$ has exactly two blurs
$\delta$ and $\rho$. $\rho$ is a flexible non principal ultrafilter consisting of reds with distinct indices and $\delta$ is the reds
with common indices.
Furthermore, $T$ is representable, but $\Cm\At T\notin  S\Ra\CA_{m+2}$, in particular, it is not representable. \cite [lemma 17.32]{HHbook2}

First it is obvious that  \pe\ has a \ws\ over $\At\R$ using in $m+2$ rounds,
hence $\R\notin \RA_{m+2}$, hence is not
in $S\Ra\CA_{m+2}$.  $\Cm\At T$ is also not in the latter class
$\R$ embeds into it, by mapping ever red
to the join of its copies. Let $D=\{r_{ll}^n: n<\omega, l\in n\}$, and $R=\{r_{lm}^n, l,m\in n, l\neq m\}$.
If $X\subseteq R$, then $X\in T$ if and only if $X$ is finite or cofinite in $R$ and same for subsets of $D$ \cite[lemma 17.35]{HHbook}.
Let $\delta=\{X\in T: X\cap D \text { is cofinite in $D$}\}$,
and $\rho=\{X\in T: X\cap R\text { is cofinite in $R$}\}$.
Then these are {\it the} non principal ultrafilters, they are the blurs and they are enough
to (to be used as colours), together with the principal ones, to represent $T$ as follows \cite[bottom of p. 533]{HHbook2}.
Let $\Delta$ be the graph $n\times \omega\cup m\times \{{\omega}\}$.
Let $\B$ be the full rainbow algebras over $\At\A_{K_m, \Delta}$ by
deleting all red atoms $r_{ij}$ where $i,j$ are
in different connected components of $\Delta$.

Obviously \pe\ has a \ws\ in $\EF_{\omega}^{\omega}(K_m, K_m)$, and so it has a \ws\ in
$G_{\omega}^{\omega}(\A_{K_m, K_m})$.
But $\At\A_{K_m, K_m}\subseteq \At\B\subseteq \At_{K_m, \Delta}$, and so $\B$ is representable.

One  then defines a bounded morphism from $\At\B$ to the the canonical extension
of $T$, which we denote by $T^+$, consisting of all ultrafilters of $T$. The blurs are images of elements from
$K_m\times \{\omega\}$, by mapping the red with equal double index,
to $\delta$, for distinct indices to $\rho$.
The first copy is reserved to define the rest of the red atoms the obvious way.
(The underlying idea is that this graph codes the principal ultrafilters in the first component, and the non principal ones in the second.)
The other atoms are the same in both structures. Let $S=\Cm\At T$, then $\Cm S\notin S\Ra\CA_{m+2}$ \cite[lemma 17.36]{HHbook}.

Note here that the \d\ completion of $T$ is not representable while its canonical extension is
{\it completely representable}, via the representation defined above.
However, $T$ itself is {\it not} completely representable, for a complete representation of $T$ induces a representation of its \d\ completion,
namely, $\Cm\At\A$.

\item Let $H$ be the $\CA_m$ atom structure provided by the hypothesis of the previous theorem.
Then $\Tm H\in \RCA_m$. We claim that $\Cm H\notin {\bf S}\Nr_m\CA_{m+k}$, $k\geq 3$.
For assume not, i.e. assume that $\Cm H\in {\bf S}\Nr_m\CA_{m+k}$, $k\geq 3$.
We have $\Cm S$ is embeddable in $\Ra\Cm H.$  But then the latter is in ${\bf S}\Ra\CA_6$
and so is $\Cm S$, which is not the case.

\item For the second part, suppose for contradiction that there is an algorithm ${\sf Al}$ to determine whether a finite
algebra is in $S\Nr_m\CA_{m+k}$. We claim that we can now decide effectively whether a finite simple atomic
relation algebra is in $S\Ra\CA_5$  by constructing the $\CA_m$, $\C=\Tm H$ which is finite,
and returning  the answer ${\sf Al}(\C)$.
This is the correct answer.  However,
this contradicts the result that it is undecidable to tell whether a finite relation algebra is in $S\Ra\CA_5$
\cite[theorem 18.13]{HHbook}.

\item The last item follows from lifting the proof for relation algebras. Let ${\GG}_n$ denote the games defined after 
$n$ rounds. We write, emphasizing the dependence of $H$ on $S$, 
$\Ca_t(S)$ for $H$. Here $t$ is the dimension; will be fixed to be 
finite and $>2$.  We also write $\At^+$ for the complex algebra $\Cm\At$ of
$\At$ and $\A^{\sigma}$ for $\Uf^+$, the canonical extension of $\A$.
Assume for contradiction that $\RCA_t$ has a canonical axiomatization.
Then there is an $n_0$ such that for any $n<\omega$, there is $n^*<\omega$, such that for any $\CA_t$ $\A$
if $\exists$ has a winning strategy in ${\GG}_{n^*}(\A)$ and ${\GG}_{n_0}(\A^{\sigma})$ then she has a winning strategy in
${\GG}_n(\A^{\sigma})$ \cite[proposition5.4]{HV}.
Let $e:\omega\to \omega$ be defined by $e(n)=2^{dn.4^n}$.

Then if $G$ is a graph and $\chi(G)\geq e(n)$ for some $n<\omega$, then \pe\ has a winning strategy
in ${\GG}_n(\alpha(G)^+)$ \cite[proposition 6.4]{HV}.
For $n<\omega$, let $n'<\omega$ be so large such that any colouring using $dn$ colours , of the edges of a complete graph with $n'$ nodes
has a monochromatic triangle. Let $u:\omega\to \omega$ be defined ny $u(n)=n'-2+n'(n'-1)(dn+1)$.
Then if $G$ is a graph with $\chi(G)\leq n<\infty$ and $|G|\geq n'-1$
then $\forall$ has a winning strategy in ${\GG}_{u(n)}(\alpha(G)^+)$\cite[proposition 6.6]{HV}.
Let $m=e(n_0)$ and $n=u(m)$ . Since the games played are determined,
there is $n^*<\omega$, such that for any cylindric algebra $\A$, such that $\exists$
has a winning strategy in  ${\GG}_{n_0}(\A^{\sigma})$, if $\forall$
has a winning strategy in ${\GG}_n(\A^{\sigma})$
then he has a winning strategy in ${\GG}_{n^*}(\A)$.

Let $k=e(n^*)$.
Let $D=D_m^k=((\omega, \leq), (G_i:i<\omega), (\rho_{ij}: i\leq j<\omega))$ be the inverse system as defined above.
We have $G=lim_\leftarrow D$ and $\chi(G_i)=k$ for all $i$ and $\chi(G)=m$.
Let $\alpha(D)=((\omega,\leq), \Ca_t((\alpha(G_i)):i<\omega), (\alpha^{\rho_{ji}}: i\leq j<\omega))$, where
$\rho_{ji}:\Ca_t(\alpha(G_j))\to \Ca_t(\alpha(G_i))$
is defined by $((a,k), (b,k),(c,k))\mapsto ((\rho_{ji}a,k),(\rho_{ji}b,k) (\rho_{ji}c,k))$. Then $\alpha(D)$ is an inverse system of cylindric algebra
atom structures and
bounded morphisms. Each $[\Ca_t(\alpha (G_i))]^+$ is a cylindric algebra. Write $\A$ for the algebra $\A_{\alpha(D)}$.
Then $[\Ca_t(\alpha (G_i))]^+\subseteq \A$ for all $i<\omega$ and $\A$ is the directed union $\bigcup_{i<\omega}[\Ca_t(\alpha (G_i)]^+$.
Then $\A$ is a cylindric algebra of dimension $t$.
It can be checked that $\A$ is atomic and $\alpha(lim_\leftarrow D)\cong lim_\leftarrow\alpha(D)$, hence $\A^{\sigma}\cong [\Ca_t(\alpha(G))]^+$.
Note that here $\Ca_t$ is applied to an infinite atom structure. But the resulting atom structure is that of a cylindric algebra of dimension $t$.
Now $G$ has chromatic number $m$ and is infinite. Then $\exists$ has a winning strategy
in ${\GG}_{n_0}(\A^{\sigma})$ while $\forall$ has a winning strategy in
${\GG}_{n}(\A^{\sigma})$. By choice of $n^*$, $\forall$ also has a winning strategy in ${\GG}_{n*}(\A)$ that
only uses finitely many elements $W\subseteq \A$.
We may choose $i<\omega$ such that $W\subseteq \alpha(G_i)^+$. Since the latter is a
subalgebra of $\A$, then this is a winning strategy  for $\forall$ in ${\GG}_{n^*}[\Ca_t(\alpha(G_i))]^+$.
But $\chi(G_i)=e(n^*)$, then $\exists$ has a winning strategy in this same game.
This is a contradiction that finishes the proof.

\end{enumarab}
\end{demo}

\subsection{Decidability}

Now we give an entirely different proof that the equational theory of several subvarieties of $\CA_n$, $n\geq 3$,
is undecidable. The proof is due to N\'emeti.

\begin{definition} $\pi$ is the pairing formula using partial function $p_0$ and $p_1.$
$\pi$ is defined as follows. $$\forall x \forall y \forall z[(p_0(x,y)\land p_0(x,z)\to y=z \land p_(x,y)\land p_1(x,z)\to y=z$$
$$\land \exists z(p_0(z, x)\land p_1(z,y).$$
$\pi'$ is the sentence that says that the witness of the quasi projections is unique (it uses $4$ variables),
and $\pi_{RA}$ is the quasi projective axiom
formulated in the language of relation algebras.
\end{definition}

A sentence $\lambda$ is inseparable if there is no set of formulas $T$ that recursively separates the theorems of $\lambda$
from the refutable sentences of $T$.

The next lemma  and theorem are due to N\'emeti using the pairing technique of Tarski's.
\begin{lemma} There exists an inseparable formula $\lambda$ and quasi projections
$p_0, p_1$ such that $\lambda \land \pi'$
is a valid in Peano arithmetic (these can be formulated in the language of set theory, that is we need
only one binary relation),
and there is a translation function $k: \Fm_{\omega}\to \Fm_4$
such that for any formulas $\psi$ and $\phi$,
whenever $\psi\models \phi$, then $k(\psi)\vdash_4 k(\phi)$.
\end{lemma}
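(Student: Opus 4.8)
The statement to be proved asserts the existence of an inseparable first-order sentence $\lambda$, together with quasi-projections $p_0,p_1$, such that $\lambda\wedge\pi'$ is valid in Peano arithmetic (formulated with a single binary relation symbol, i.e.\ in the language of set theory), and such that there is a translation $k:\Fm_\omega\to \Fm_4$ preserving provability in the sense that $\psi\models\phi$ implies $k(\psi)\vdash_4 k(\phi)$. The plan is to follow Tarski's pairing-function technique, as elaborated by N\'emeti, and assemble the lemma from three ingredients: (i) a suitable inseparable theory, (ii) quasi-projections definable in the one-binary-relation signature, and (iii) a Tarski-style reduction of $\omega$-variable derivations to $4$-variable derivations relative to $\pi'$.

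First I would fix the signature to have one binary relation symbol (thought of as $\in$, or as the graph of a pairing predicate) and recall the standard fact that hereditarily finite set theory, or equivalently a finitely axiomatized fragment of Peano arithmetic interpreted via the Ackermann coding, is \emph{essentially undecidable} and in fact admits an \emph{inseparable} sentence $\lambda$: no recursive set separates its consequences from the sentences refutable from it. This is where I would invoke the classical results of Tarski--Mostowski--Robinson on essential undecidability; the inseparability strengthening is due to the fact that a finitely axiomatizable essentially undecidable theory is recursively inseparable (its theorems and anti-theorems cannot be recursively separated). Second, inside this theory I would exhibit explicit quasi-projections $p_0,p_1$ — partial binary function predicates — satisfying the pairing formula $\pi$ of the displayed definition, together with the uniqueness-of-witness sentence $\pi'$ (which, crucially, is written with only $4$ variables). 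The content here is that in the presence of a definable pairing with definable projections, $\pi'$ holds, and one arranges $\lambda$ to entail enough of the pairing axioms so that $\lambda\wedge\pi'$ is valid in (is a theorem of, hence true in models of) Peano arithmetic; the point of conjoining $\pi'$ is precisely to make the subsequent variable-reduction available.

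Third, and this is the technical heart, I would construct the translation $k:\Fm_\omega\to\Fm_4$. The idea, going back to Tarski's formalization of set theory in a $4$-variable (indeed $3$-variable) language, is to use the quasi-projections to simulate an arbitrary finite tuple of variables by a single variable coding that tuple: an $n$-ary relation among $x_0,\dots,x_{n-1}$ is replaced by a unary condition on a variable $z$ with $x_i = (p_1\circ p_0^{\,i})(z)$ (iterated projections picking out the $i$-th coordinate of a nested pair), and quantifiers $\exists x_i$ are rewritten as manipulations of $z$ via the projections. One defines $k$ by recursion on formula structure, verifying that each clause needs no more than $4$ variables at any stage (reusing variables aggressively, exactly as in Tarski's treatment), and then checks the key soundness property: if $\psi\models\phi$ then the $\omega$-variable derivation of $\phi$ from $\psi$ can be mimicked step-by-step, each inference being translated into a $4$-variable derivation using only the quasi-projection axioms (which are themselves $\vdash_4$-available once we work relative to $\pi\wedge\pi'$, or have folded them into $\lambda$), yielding $k(\psi)\vdash_4 k(\phi)$.

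The main obstacle I anticipate is the bookkeeping in the third step: ensuring that the recursive translation $k$ genuinely stays within four variables — Tarski's argument is notoriously delicate about variable reuse, the order in which substitutions and renamings are performed, and the need for $\pi'$ (uniqueness of the pairing witness) precisely at the points where a bound variable must be ``recycled'' without ambiguity. A second, milder obstacle is verifying that the chosen $\lambda$ simultaneously carries the inseparability we want \emph{and} the pairing machinery; one typically takes $\lambda$ to be a conjunction of a finite essentially-undecidable theory with the (finitely many) quasi-projection axioms, and one must check that this conjunction is still inseparable (it is, since adding a finite set of axioms true in the standard model preserves essential undecidability and hence inseparability) and that its conjunction with $\pi'$ is valid in Peano arithmetic. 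Throughout, I would lean on the detailed exposition of the pairing technique in Tarski--Givant and in N\'emeti's work, citing those for the routine verifications and spelling out only the reduction $\psi\models\phi\Rightarrow k(\psi)\vdash_4 k(\phi)$, which is what the present paper actually uses downstream.
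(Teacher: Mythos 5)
Your proposal is correct in substance and follows the same underlying strategy as the paper — Tarski's quasi-projection technique, with $\lambda$ a finitely axiomatized inseparable set theory carrying definable projections $p_0,p_1$ — but it organizes the translation $k$ differently. Where you propose a direct recursion on formulas, coding tuples by nested pairs and checking by hand that every clause can be rewritten with four variables, the paper factors $k$ as a composite $h[rf(\phi)\cdot \pi_{RA}]$: Tarski's meaning-preserving map $f:\Fm_{\omega}^0\to\Fm_3^0$ (relative to the pairing formula $\pi$), a recursive translation $r$ of three-variable formulas into relation-algebra terms, and the natural homomorphism $h:\Fr_1\RA\to\Fm_4$ sending the generator to the single binary relation $E$. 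The advantage of the paper's factorization is that the delicate variable-reuse bookkeeping you correctly identify as the main obstacle is absorbed entirely by the known Tarski--Givant correspondence between $\RA$-derivations and four-variable proofs, so nothing new has to be verified at that stage; your direct route would in effect have to reprove that correspondence in disguise. One caveat on your first step: do not lean on the blanket claim that every finitely axiomatizable essentially undecidable theory is recursively inseparable — inseparability implies essential undecidability, but the converse is not the statement you want to invoke. What is actually used is the classical fact that Robinson's arithmetic, hence its interpretation in finite set theory (which is the paper's choice of $\lambda$), has recursively inseparable sets of theorems and refutable sentences, and that this is preserved when you conjoin the finitely many quasi-projection axioms true in the standard model. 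With that adjustment your argument goes through and establishes the same lemma.
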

\begin{demo}{Proof}  $\lambda$ is Robinson's arithmetic interpreted in set theory, so it is equivalent to $ZFC$ without the axiom of infinity,
 and $p_0$ and $p_1$ are the genuine projection function defined
also (by brute force) in the language of set theory. The translation function is implemented using three
functions $h$, $r$ and $f$; it is defined at $\phi$
by $h[rf(\phi).\pi_{RA}]$. Here $f$ is
Tarski function $f:\Fm_{\omega}^0\to \Fm_3^0$, so that $\pi\models \phi \leftrightarrow f(\phi)$
and $f$ preserves meaning,
$r$ is a recursive function translating $3$ formulas to $\RA$ terms
recursively, also preserving meaning, and $h$ is the natural
homomorphism from $\Fr_1\RA \to \Fm_4$, taking the generator to $E$ the only binary relation.
\end{demo}
The next theorem, as stated above,  is due to N\'emeti.
\begin{theorem}\label{pairing} ${\sf Eq}S\Nr_3\CA_n$  for any $n\geq 4$ is undecidable, this is also true for $S\Nr_n\CA_m$ for any $2<n<m$.
\end{theorem}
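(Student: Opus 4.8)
\textbf{Proof proposal for Theorem \ref{pairing}.}

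The plan is to exploit the lemma preceding the theorem, which packages the hard work into a meaning-preserving translation $k:\Fm_{\omega}^0\to \Fm_4^0$ (built from Tarski's function $f:\Fm_{\omega}^0\to\Fm_3^0$, the recursive term-translation $r$, and the homomorphism $h:\Fr_1\RA\to\Fm_4$ followed by conjoining with $\pi_{RA}$), together with the existence of an inseparable $\lambda$ for which $\lambda\wedge\pi'$ is valid in Peano arithmetic. The key point is that $k$ descends to the algebraic level: because $k$ takes a derivation in full first-order logic to a $4$-variable derivation, for $\psi,\phi\in\Fm_{\omega}^0$ we have $\psi\models\phi \iff k(\psi)\vdash_4 k(\phi)$, and hence $k$ induces a map from the Tarski-Lindenbaum algebra of the $\omega$-variable theory $\lambda$ to the Tarski-Lindenbaum $\CA_4$ (more precisely $\CA_3$, but we work inside $\CA_4$ so as to have the quasi-projections available) of the corresponding $4$-variable theory. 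Since $\Fm_T$ for any consistent $L_n$-theory $T$ lies in $S\Nr_n\CA_m$ for every $m\geq n$ — indeed in $\RCA_n$, as representable algebras are closed under the relevant constructions — validity of an equation in the variety $S\Nr_3\CA_n$ forces provability of the corresponding sentence over the pairing axioms.

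First I would make precise the reduction: given an arbitrary sentence $\sigma$ in the (one binary relation) language of set theory, form $k(\sigma)$, a sentence equivalent modulo $\pi_{RA}$ to a $3$-variable (hence $4$-variable) sentence, and then translate $k(\sigma)$ into a $\CA_3$-term equation $\tau_\sigma = 1$ in the standard way (Henkin-Monk-Tarski, cf. \cite[\S4.3]{tarski}, using that restricted $L_3$-formulas correspond to $\CA$-terms). One shows: $\lambda\wedge\pi' \models \sigma$ in $L_\omega$ iff the equation $\tau_{\lambda}\cdot\tau_{\pi'} \leq \tau_\sigma$ (suitably formulated, with $\pi_{RA}$ folded in) holds in every algebra in $S\Nr_3\CA_n$. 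The forward direction uses that the $4$-variable derivation given by the lemma is sound in any algebra satisfying the neat-embedding condition — this is exactly where $n\geq 4$ and the quasi-projection apparatus enter, since the pairing axiom needs $4$ variables to express uniqueness of witnesses and the whole Tarski coding of arithmetic goes through. The reverse direction uses that $\Fm_{\lambda\wedge\pi'}$, or rather a representable algebra built from a model of PA, sits inside $S\Nr_3\CA_n$, so an equation valid there specializes to the theory.

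Next I would invoke inseparability. If ${\sf Eq}\,S\Nr_3\CA_n$ were decidable, then via the reduction above we could recursively separate $\{\sigma: \lambda\wedge\pi'\models\sigma\}$ from $\{\sigma: \lambda\wedge\pi'\models\neg\sigma\}$ — the valid equations give the first set, and the equations whose negation-translates are valid give (a recursive set inside the complement containing) the second — contradicting that $\lambda$ is inseparable. Finally, for the extension to $S\Nr_m\CA_k$ with $2<m<k$: one keeps the base dimension at $3$ but now uses a translation into $L_m$ rather than $L_3$ (the Tarski reduction to $3$ variables still lands inside $L_m$ for $m\geq 3$, and the quasi-projections live in $\CA_4\subseteq$ the $m$-reduct whenever $m\geq 4$; for $m=3$ one uses the $3$-variable pairing coding directly), and re-runs the same argument, noting that $\Fm_T\in S\Nr_m\CA_k$ for $T$ an $L_m$-theory and any $k>m$.

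\textbf{Main obstacle.} The delicate point is not the undecidability bookkeeping but verifying that the algebraic reduction is genuinely an \emph{equivalence}: that validity of the constructed term-equation in the variety $S\Nr_3\CA_n$ is \emph{equivalent} to (not merely implied by) provability of $k(\sigma)\wedge\pi_{RA}$ in the $4$-variable calculus. The implication "equation valid $\Rightarrow$ sentence provable" needs a representable (or at least $S\Nr_3\CA_n$-) model witnessing non-provability, which is supplied by a model of $\lambda\wedge\pi'$ and the Henkin representation; the converse "provable $\Rightarrow$ valid" needs soundness of the $4$-variable derivation rules in every algebra of $S\Nr_3\CA_n$, which is where one must be careful that the quasi-projection axioms $\pi_{RA}$ are actually $\CA$-equationally captured and that $n\geq 4$ (so that the four needed dimensions are present in the $\Nr_3$-witnessing dilation). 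Getting this correspondence exactly right — matching N\'emeti's pairing technique to the neat-reduct hierarchy — is the crux; everything else is a routine transfer from the known undecidability of the word problem for $\RA$s with quasi-projections.
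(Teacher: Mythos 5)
Your proposal is correct and follows essentially the same route as the paper: translate via $k$ into $4$-variable sentences and then into terms, use soundness of $\vdash_4$ in the variety for the containment direction and a concrete set algebra built from a model of $\lambda\land\pi'$ for disjointness, then conclude by inseparability of $\lambda$. One small remark: the full biconditional you state ("valid iff provable") is neither needed nor actually established — the separation argument only requires the two one-directional implications you in fact use.
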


\begin{proof} Let $\K$ be any of the above classes.
Let $\lambda$ be an inseparable formula such that $\M\models \lambda\land \pi'$, where $\M$ is a model of Peano
arithmetic.
Then $\M\models k(\lambda)$. Let $\tau$ be the translation function of $n$ variable formulas to $\CA_n$ terms.
Let $T=\{\phi \in \Fm_{\omega}^0: \K\models \tau(k(\lambda))\leq \tau(k(\phi))\}$.
Assume that $\lambda \models \phi$. Then $k(\lambda)\vdash_4 k(\phi).$
Hence $\phi\in T$.
Assume now that $\lambda\models \neg \phi$. Then $k(\lambda)\vdash _4 \neg k(\phi)$.
By $\M\models k(\lambda)$ and ${\sf Cs}_n^{\M}\in K$, we have
it is not the case that $\K\models \tau(k(\lambda)=0$,
hence $\phi\notin T$.
We have seen that $T$ separates the theorems of $\lambda$ from the refutable
sentences of $\lambda$, hence $T$ is not recursive, but
$\tau$ and $k$ are recursive, hence ${\sf Eq K}$ is not recursive.
\end{proof}

\begin{lemma}\label{Andrekaetal} Let $\K$ be a class of relation algebras satisfying, that for every $n<\omega$ there is
a simple algebra in $\K$ with at least $n$ elements below the identity, then ${\sf Eq K}$
is undecidable.
\end{lemma}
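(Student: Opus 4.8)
The statement is the classical theorem of Tarski--Givant (and Andr\'eka--N\'emeti) that $\mathsf{Eq}\,\K$ is undecidable for any class $\K$ of relation algebras that is rich enough to host the quasi-projective pairing formalism; the hypothesis ``for every $n<\omega$ there is a simple algebra in $\K$ with at least $n$ elements below the identity'' is precisely the finiteness condition one needs to carry out a Tarski-style interpretation of an essentially undecidable theory. The plan is to reduce the (non-recursive) separation problem attached to an inseparable formula $\lambda$ to the equational theory of $\K$, exactly mimicking the argument already given for $S\Nr_3\CA_n$ in Theorem \ref{pairing}, but now working entirely inside the language of relation algebras via $\pi_{RA}$, the quasi-projective axiom, and the recursive translation $r$ of three-variable formulas into $\RA$ terms.

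First I would fix, by the lemma preceding Theorem \ref{pairing}, an inseparable formula $\lambda$ together with quasi-projections $p_0,p_1$ such that $\lambda\wedge\pi'$ holds in a model $\M$ of Peano arithmetic, and such that Tarski's function $f\colon\Fm_\omega^0\to\Fm_3^0$ is meaning-preserving modulo $\pi$. Composing $f$ with the recursive translation $r$ of three-variable formulas into $\RA$-terms (and relativizing by the relation-algebraic pairing axiom $\pi_{RA}$), one obtains a recursive map $\phi\mapsto t(\phi)$ from first-order sentences to $\RA$-terms such that $\lambda\models\phi$ implies $\K\models \pi_{RA}\cdot t(\lambda)\le t(\phi)$, while $\lambda\models\neg\phi$ implies $\K\not\models \pi_{RA}\cdot t(\lambda)\le t(\phi)$. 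The second implication is where the hypothesis on $\K$ is used: since $\M\models\lambda\wedge\pi'$, one builds a \emph{simple} representable relation algebra (a $\Cs$-type algebra on a model containing $\M$) that lies in $\K$ — this requires $\K$ to contain simple algebras with arbitrarily large identity-covers, so that the model of arithmetic, hence the quasi-projection witnesses, can be accommodated — and in that algebra $\pi_{RA}\cdot t(\lambda)\cdot -t(\phi)\neq 0$, witnessing non-derivability of the inequality.

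Then I would set $T=\{\phi\in\Fm_\omega^0 : \K\models \pi_{RA}\cdot t(\lambda)\le t(\phi)\}$ and observe that, by the two implications above, $T$ recursively separates the consequences of $\lambda$ from the sentences refutable from $\lambda$; since $\lambda$ is inseparable, $T$ cannot be recursive. But membership of $\phi$ in $T$ is reducible to an equational question about $\K$ via the recursive translation $t$ (an inequality $s\le t$ being the equation $s\cdot -t=0$, or $s+t=t$), so $\mathsf{Eq}\,\K$ cannot be recursive either. This completes the reduction.

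The main obstacle — and the only place requiring genuine care rather than bookkeeping — is the construction of the witnessing simple algebra in $\K$ for the non-derivable case: one must verify that the hypothesis ``arbitrarily many elements below the identity in a simple member'' genuinely suffices to represent (a relativized version of) the model $\M$ together with honest quasi-projections $p_0,p_1$, so that $\pi_{RA}$ is satisfied and the relevant term evaluates to a nonzero element. This is handled by a compactness/directed-union argument: express the needed diagram of $\M$ as a (finite fragment at a time) demand on the identity-cover, satisfy each fragment in some simple $\A_n\in\K$, and note that for the \emph{equational} (hence $\forall$-universal in the relevant sense) consequence it is enough that each finite subtheory be falsifiable. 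One should also double-check that $\K$ need not be closed under any operations for the argument — only that the specified simple algebras exist — which is exactly what the hypothesis guarantees. Everything else (recursiveness of $f$, $r$, $t$; meaning-preservation; the passage from inequalities to equations) is routine given the lemma already in hand.
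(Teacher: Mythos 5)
There is a genuine gap, and it sits exactly at the step you yourself flag as the only delicate one. The paper gives no proof of this lemma (it only cites \cite{AGMNS}), but the hypothesis it records is just that $\K$ contains, for each $n$, a \emph{simple} algebra with at least $n$ elements below the identity. Your argument needs much more: to establish the direction ``$\lambda\models\neg\phi$ implies $\K\not\models \pi_{RA}\cdot t(\lambda)\le t(\phi)$'' you must exhibit an actual member of $\K$ in which the quasi-projection term is nonzero and which interprets the model $\M$ of $\lambda\wedge\pi'$. That step is available in Theorem \ref{pairing} precisely because there ${\sf Cs}_n^{\M}\in\K$ by construction; here nothing of the sort follows from the hypothesis. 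Indeed it is false in the paper's own application of the lemma: the class $S\Ra(\CA_m\cap{\sf Fin})$ consists entirely of finite relation algebras, and no nontrivial finite simple relation algebra admits quasi-projections (functional $p_0,p_1$ with $p_0^{\smile};p_1=1$ force a pairing injection of $U\times U$ into $U$ on any representation, and QRAs are representable, so the base is infinite or a singleton). In such a class every instance of your inequality holds vacuously, your separating set $T$ is all of $\Fm_{\omega}^0$, hence recursive, and the reduction collapses. The compactness/ultraproduct patch cannot rescue this: a falsifying evaluation must live in a member of $\K$, and by \L o\'s even ultraproducts of finite simple algebras with growing identities still fail the first-order quasi-projection condition, since every factor does.

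The cited result is proved by a different mechanism that uses only what the hypothesis supplies. Roughly, in a simple relation algebra with $n$ pairwise disjoint nonzero subidentity elements $x_1,\ldots,x_n$, the rectangles $x_i;r;x_j$ are nonzero and allow one to interpret, uniformly by terms with parameters, arbitrary binary relational structures on an $n$-element universe; one then reduces an undecidable \emph{finite-model} problem (a Trakhtenbrot/tiling-type argument for the three-variable fragment, pushed through Tarski's sentence-to-term translation) to ${\sf Eq}\,\K$. No pairing, no representability, and no infinite models inside $\K$ are needed. If you wish to salvage your write-up you must either strengthen the hypothesis to ``$\K$ contains set algebras over models of $\lambda\wedge\pi'$'' --- at which point you have re-proved Theorem \ref{pairing} rather than this lemma --- or replace the quasi-projection interpretation by the subidentity/rectangle interpretation sketched above.
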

\begin{proof} \cite{AGMNS}
\end{proof}

The following corollary is a consequence of the above lemma and of the undecidability result that we have just proved, witness
\cite[corollary 18.16, theorem 18.27]{HHbook} for similar results for relation algebras.

For a class $\sf K$ of algebras, the class ${\sf \K}\cap \sf Fin$ denotes the class of finite members of $\K$.

\begin{corollary}\label{undecidability} Let $n\geq 6$. Then the following hold:
\begin{enumarab}

\item The set of isomorphism types of algebras in $S\Nr_3\CA_n$ with
only infinite flat representations is not recursively enumerable

\item The equational theory of $S\Nr_3\CA_n$ is undecidable

\item The equational theory of $S\Nr_3(\CA_n\cap \sf Fin)$ is undecidable

\item The variety $S\Nr_3\CA_n$  is not finitely axiomatizable even
in $m$th order logic.

\item For every $n\geq 6$, there exists a finite algebra in $S\Nr_3\CA_n$ that does not have a finite $n$ dimensional hyperbasis

\end{enumarab}
\end{corollary}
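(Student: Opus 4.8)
The plan is to derive Corollary \ref{undecidability} as a package of consequences of Theorem \ref{pairing}, the reductions in Theorem \ref{decidability}, the completeness-type theorems connecting basis/hyperbasis to relativized representations (Lemmas \ref{step}, \ref{gamebasis} and Theorem \ref{can2}), and the classical relation-algebra undecidability results of Hirsch--Hodkinson \cite{HHbook}. Throughout I fix $n\geq 6$ and work with the similarity type of $\CA_3$, so that $S\Nr_3\CA_n$ sits inside $\sf CB_{3,n}$ and is a canonical variety with an $n$ square / $n$ smooth relativized representation theory.

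\textbf{Items (2) and (3).} For (2) I would invoke Theorem \ref{pairing} directly: it already states that ${\sf Eq}\,S\Nr_3\CA_n$ is undecidable for every $n\geq 4$, a fortiori for $n\geq 6$. For (3), the point is that the pairing/translation argument of Theorem \ref{pairing} only produces \emph{finite} $\CA_3$-terms and uses the finite cylindric set algebras ${\sf Cs}_n^{\mathfrak M}$ attached to a model $\mathfrak M$ of Peano arithmetic; one then observes, via the finite-algebra-finite-base machinery of Theorem \ref{smoothcompleteness2} (for the square case) and the fact that $S\Nr_3\CA_n$ has the finite algebra on finite base property when dilations are required finite, that an equation fails in $S\Nr_3\CA_n$ iff it fails in a \emph{finite} member. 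Hence the separating set $T$ from the proof of Theorem \ref{pairing} still separates theorems from refutable sentences using only finite algebras, giving undecidability of ${\sf Eq}\,S\Nr_3(\CA_n\cap{\sf Fin})$. The main care here is checking that restricting to finite dilations does not disturb the translation $\tau\circ k$; I expect this to be the genuinely delicate point, and I would lean on the decidability/undecidability dichotomy already flagged in the discussion following Theorem \ref{smoothcompleteness2}.

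\textbf{Items (1) and (5).} For (5), I would use the reduction apparatus of Theorem \ref{decidability}: Hirsch--Hodkinson \cite[Chapter 18]{HHbook} exhibit finite simple relation algebras $\R\in S\Ra\CA_{n-3}\setminus S\Ra\CA_{n-2}$, and more relevantly finite relation algebras with $(n-1)$-dimensional hyperbasis but no $n$-dimensional hyperbasis. Feeding such an $\R$ through the effective construction $S\mapsto \mathfrak{Ca}_3(S)=H$ of Theorem \ref{decidability} (items (2), (3), (4)) produces a \emph{finite} $\C=\Tm H\in S\Nr_3\CA_n$ whose passage to a finite $n$-dimensional hyperbasis would, via Lemma \ref{step} and the embedding $\Cm S\hookrightarrow \Ra\Cm H$, force $\Cm S$ into $S\Ra\CA_{n-3}$, a contradiction. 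Thus $\C$ has no finite $n$-dimensional hyperbasis, proving (5). For (1), I would combine (5) with Theorem \ref{smoothcompleteness1}/\ref{smoothcompleteness2}: an algebra in $S\Nr_3\CA_n$ with only infinite flat (equivalently, infinite $n$ smooth) representations is precisely one lacking a finite hyperbasis; were the isomorphism types of such algebras recursively enumerable, one could combine this with the (co-)r.e. enumeration of membership in $S\Nr_3\CA_n$ to decide membership in $S\Ra\CA_5$ for finite relation algebras, contradicting \cite[theorem 18.13]{HHbook}. The hard part will be pinning down the exact equivalence ``only infinite flat representations $\Leftrightarrow$ no finite hyperbasis'' at dimension $3$; I would extract this from the completeness theorem relating $n$ smooth representations to $n$-dimensional hyperbasis (the $\CA$ analogue of \cite[theorems 13.45--13.46]{HHbook}) invoked in Section 6.

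\textbf{Item (4).} Finally, non-finite-axiomatizability of $S\Nr_3\CA_n$ even in $m$-th order logic follows from Lemma \ref{Andrekaetal} combined with Theorem \ref{pairing}: since for each $k$ there is a simple algebra in $S\Nr_3\CA_n$ with $\geq k$ elements below the identity (inherited from the Maddux-type relation algebras with arbitrarily many subidentity atoms, transported by the $\mathfrak{Ca}_3$ construction), the hypothesis of Lemma \ref{Andrekaetal}'s $\CA$-analogue is met, and the undecidability of ${\sf Eq}\,S\Nr_3\CA_n$ precludes any axiomatization — in particular any higher-order one, since a finite higher-order axiomatization would render the equational theory decidable. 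I expect the only obstacle in (4) to be formulating the correct ``$m$-th order'' non-axiomatizability statement so that it genuinely follows from undecidability of the equational theory; the mechanism is the standard one (a finite set of axioms in a decidable logic yields a decidable consequence relation), which I would state rather than belabour. The overall main obstacle across the whole corollary is item (3): establishing that the finite-algebra-finite-base property genuinely survives the pairing translation so that undecidability descends to the finite members.
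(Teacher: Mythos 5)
Your items (1) and (2) are essentially the paper's argument (the r.e.\ contradiction for (1), and the reduction of finite-membership to equational validity via discriminator terms / Theorem \ref{pairing} for (2)). But item (3), which you yourself flag as the delicate point, has a genuine gap. You propose to descend from undecidability of ${\sf Eq}\,S\Nr_3\CA_n$ to undecidability of ${\sf Eq}\,S\Nr_3(\CA_n\cap{\sf Fin})$ by arguing that an equation fails in $S\Nr_3\CA_n$ iff it fails in a finite member with a finite dilation. That equivalence is exactly what fails: item (5) of the very corollary asserts there are finite algebras in $S\Nr_3\CA_n$ with no finite $n$-dimensional hyperbasis, hence (by Theorem \ref{smoothcompleteness1}) not in $S\Nr_3(\CA_n\cap{\sf Fin})$, so the two classes have genuinely different finite members and nothing in the paper identifies their equational theories. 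The finite algebra on finite base property holds for the \emph{square} variety ${\sf CB}_{3,n}$, not for $S\Nr_3\CA_n$ with smooth/hyperbasis semantics. The paper instead proves (3) \emph{independently} of (2), via Lemma \ref{Andrekaetal}: for each $m$ one exhibits a finite cylindric set algebra $\A\in{\sf Cs}_m$ with $m$ elements below ${\sf d}_{01}$, so that $\Ra\A$ is a simple relation algebra in $S\Ra(\CA_m\cap{\sf Fin})$ with $m$ subidentity elements; the Andr\'eka--Givant--Mikul\'as--N\'emeti--Simon criterion then gives undecidability of ${\sf Eq}\,S\Ra(\CA_m\cap{\sf Fin})$, which transfers to ${\sf Eq}\,S\Nr_3(\CA_m\cap{\sf Fin})$.

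Two smaller problems. For (4), your mechanism ``a finite set of axioms in a decidable logic yields a decidable consequence relation'' is not the right one -- $m$th order logic is not decidable. The paper's point is that a \emph{finite} set of $m$th order axioms can be effectively checked on a \emph{finite} algebra (all higher-order quantifiers range over finite sets), so a finite axiomatization would decide membership of finite algebras in $S\Nr_3\CA_n$, contradicting Theorem \ref{decidability}; Lemma \ref{Andrekaetal} plays no role here. For (5), the paper's argument is purely non-constructive: if every finite algebra in $S\Nr_3\CA_n$ had a \emph{finite} $n$-dimensional hyperbasis, one could recursively enumerate the members (search for a finite hyperbasis) alongside the non-members (via a recursive axiomatization), deciding membership -- again contradicting Theorem \ref{decidability}. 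Your proposed explicit construction does not engage the actual issue, which is the distinction between having an $n$-dimensional hyperbasis with labels from an infinite set (equivalent to membership in $S\Nr_3\CA_n$) and having a finite one; and as set up, a relation algebra with no $n$-dimensional hyperbasis would not yield an algebra \emph{in} $S\Nr_3\CA_n$ at all.
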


\begin{proof}
\begin{enumarab}

\item  This follows from the following reasoning. The set $A$ of isomorphism types of finite
algebras is recursively enumerable, and so the set $B$ of isomorphism types of finite relation algebras
that are in $S\Nr_3\CA_n$ having a finite relativized representation is also
recursively enumerable,  hence $A$ and $B$ together with the set
of isomorphism types of of algebras in $S\Nr_3\CA_n$ with no finite recursive representation
are recursive iff $C$ is recursively enumerable.

\item Any such axiomatization will give a decision procedure for the class of finite algebras in $S\Nr_3\CA_n$.

\item  Let $\K=S\Nr_3\CA_n$.
We reduce the problem of telling if a finite simple algebra is not in
$\K$ to the problem of telling if an equation in the language of $\CA_3$ is valid in
$\K$. Let $\A$  be a finite relation algebra. Form $\Delta(\A)$, the diagram of $\A$,
but using variables instead of constants; define its conjunction.
Consider $g=\exists_{\bar{a}\in \A}x_a \Delta(A)$ relative to some enumeration $\bar{a}$ of $A$.
Then $\B\models g$ iff $A\nsubseteq \B$. Since  $\K$ is a discriminator variety,
the quantifier free $\neg \Delta(\A)$ is equivalent over
simple algebras to an equation $e(\A)$, which is effectively constructed
from $\A$. So, in fact, we have
$\B\models e(\A)$ iff $\A\nsubseteq \B$.
So $e(\A)$ is valid over simple $\K$ algebra iff for all $\B$ in $\K$,
$\A\nsubseteq \B$. Since $\A$ is simple, this happens if and only if $\A\notin \K$.
But $\K$ is a discriminator variety, so $e(\A)$ is valid in all algebras if
and only if $\A\notin \K$. Thus an
algorithm deciding validity of equations,
gives an algorithm to decide non membership for $\K$ for finite
algebras,  contradiction.

\item For every $m$, let $\A\in {\sf Cs_m}$ be a finite  cylindric set algebra, such that
$X=\{s\in {}^nU:  s_0=s_1\}|=m$. Now $\A$ is simple. Thus $\Ra\A$ is simple, for if $I$ is a proper ideal in $\Ra\A$, then $\Ig^{\A}I$
is proper in $\A$.
Further more $\A$ has $2$ dimensional $m$ elements below ${\sf d}_{01}$, and so $\Ra\A$ has $m$ elements below the identity.
Hence by Andr\'eka et al \cite{AGMNS}, see lemma \ref{Andrekaetal}, the equational theory of $S\Ra(\CA_m\cap {\sf Fin})$
is undecidable. Hence the equational theory of
$S\Nr_3(\CA_m\cap {\sf Fin})$ is undecidable.

\item  Assume for contradiction that every finite algebra in $S\Nr_3\CA_n$ has a finite
$n$ dimensional hyperbasis.
We claim that there is an algorithm that decides membership in $S\Nr_3\CA_n$ for finite algebras:
\begin{itemize}
\item Using a recursive axiomatization of $S\Nr_3\CA_n$ (exists), recursively enumerate all isomorphism types of
finite $\CA_3$s that are not in $S\Nr_3\CA_n.$

\item Recursively enumerate all finite algebras in $S\Nr_3\CA_n$.
For each such algebra, enumerate all finite sets of $n$ dimensional hypernetworks over $\A$,
using $\N$ as hyperlabels, and check  to
see if it is a hyperbasis. When a hypebasis is located specify $\A$.
This recursively enumerates  all and only the finite algebras in $S\Nr_3\CA_n$.
Since any finite $\CA_3$ is in exactly one of these enumerations, the process will decide
whether or not it is in  $S\Nr_3\CA_n$ in a finite time.
\end{itemize}

\end{enumarab}
\end{proof}

\begin{theorem}  
\begin{enumarab}\label{smoothcompleteness2}
\item Let $m\geq 3$. 
If $\A\in \CA_m$ is finite, and has an $n$ square relativized representation, with $m<n\leq \omega$,
then it has a {\it finite} $n$ square relativized
representation.  In particular, for every $2<m <n$, the variety ${\sf CB}_{m,n}$ 
has finite algebra on finite base property for $n$ square representations, and its universal theory is decidable.

\item If $n\geq 6$,
this is not true for $n$ smooth relativized representations for finite $\CA_3$s,
unless we restrict neat embeddings to include only finite algebras in $\CA_3$s.
\end{enumarab}
\end{theorem}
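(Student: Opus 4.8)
The plan is to prove the two items separately, since they go in opposite directions. For item (1), the basic strategy is the standard finite-model-property argument via a back-and-forth saturation of mosaics, run in the finite setting. Recall from Theorem \ref{can2}(3) that a finite $\A\in\CA_m$ has an $n$ square relativized representation iff it has an $n$ dimensional basis, and by Theorem \ref{gamebasis} iff \pe\ has a \ws\ in $G_\omega^n(\A)$. So it suffices to show: if $\A$ is finite and \pe\ wins $G_\omega^n(\A)$, then $\A$ has a \emph{finite} $n$ square relativized representation. First I would collect the set $H$ of all networks on $\A$ with at most $n$ nodes that occur in some \ws\ for \pe\ in $G_\omega^n(\A)$; since $\A$ is finite and the number of nodes is bounded by $n$, there are only \emph{finitely many} such networks up to isomorphism, so $H$ is a finite $n$ dimensional basis. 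Then I would run the step-by-step patching of Lemma \ref{step} (the ``treat defects one by one'' construction) but observe that, because $H$ is finite, the construction closes off after finitely many stages: there are only finitely many isomorphism types of quadruples $(N,v,k,N')$ to be realized, and once all of them have a witness the hypergraph is complete. Concretely, one builds $M_t$ as in Lemma \ref{step} but at each stage identifies newly created nodes with existing ones whenever the local isomorphism type already occurs — a ``quotient'' version of the step-by-step — so that $|\nodes(M_t)|$ stays bounded by a number depending only on $|H|$ and $n$. The limit $M$ is then a finite $n$ square relativized representation. The decidability of the universal theory of ${\sf CB}_{m,n}$ for $2<m<n$ then follows: given a finite $\A$, one can recursively enumerate finite $n$ square representations on one side, and (using that ${\sf CB}_{m,n}$ is a finitely-based variety — actually canonical by Theorem \ref{can2}(1), though we only need a recursive axiomatization together with the game characterization) recursively enumerate on the other side proofs that $\A\notin{\sf CB}_{m,n}$; the finite base property guarantees exactly one of the two halts. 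The finite algebra finite base property also follows because in the ${\sf CB}$ case, as remarked just after Definition \ref{basis} and in the discussion after Theorem \ref{blowupandblur}, the basis carries no hyperlabels, so there is genuinely no infinite parameter set lurking.

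For item (2), the point is that hyperbases \emph{do} carry hyperlabels drawn from a possibly infinite set $\Lambda$, and this is exactly what kills the finite base property. The plan is to invoke item (5) of Corollary \ref{undecidability}: for every $n\geq 6$ there is a finite algebra $\A\in S\Nr_3\CA_n$ that does not have a finite $n$ dimensional hyperbasis. By the completeness theorem relating hyperbases to smooth representations (Theorem \ref{step}, and the $\CA$-analogue of \cite[theorems 13.45, 13.46]{HHbook} promised for Section 6), $\A$ nevertheless has an $n$ smooth relativized representation — infinite, necessarily. I would then argue that if $\A$ had a \emph{finite} $n$ smooth relativized representation $M$, then the set of networks embedding in $M$ (defined as in the proof of Theorem \ref{can2}(3), but now tracking hyperedge labels too) would be a \emph{finite} $n$ dimensional hyperbasis, since $M$ finite bounds both the number of nodes and the number of hyperlabels actually used; this contradicts the choice of $\A$. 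The clause ``unless we restrict neat embeddings to include only finite algebras in $\CA_3$s'' refers to Theorem \ref{smoothcompleteness1}: if we insist the $n$ dimensional dilation be finite, then every finite algebra does get a finite relativized $n$ representation, so the restriction genuinely changes the situation. I would make this contrast explicit by quoting Theorem \ref{smoothcompleteness1} and noting that the obstruction in the unrestricted case is precisely the passage to an infinite dilation where infinitely many hyperlabels become available.

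The main obstacle I anticipate is the bookkeeping in item (1): making the ``quotient step-by-step'' rigorous requires checking that when one identifies a would-be new node with an existing one of the same local type, the amalgamation conditions (property (vi) of the list in Lemma \ref{step}, governing $\equiv_k$) are preserved and no defect is reintroduced — in other words, that the coherence needed for a genuine $n$ square representation survives the identifications. This is where one must be careful that squareness (as opposed to flatness) is exactly the amount of coherence that the finite basis $H$ already guarantees, so no \emph{new} commutativity obligations are created by the quotient; the reference point is \cite[proposition 13.37 and the finite-base arguments around proposition 12.31]{HHbook}, adapted from relation algebras to the $m$ hyperedge setting as was done for Lemma \ref{step}. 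A secondary, more routine, point is to get an explicit bound on the size of the finite representation in terms of $|\A|$ and $n$, which is what actually yields decidability of the universal theory rather than mere recursive enumerability of its complement; but since ${\sf CB}_{m,n}$ is a variety with a recursive (indeed, canonical Sahlqvist-style) axiomatization by Theorem \ref{can2}, even the weaker ``finite algebra on finite base'' statement suffices to run the two-sided semidecision procedure.
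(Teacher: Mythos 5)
Your item (2) is essentially the paper's own argument: one takes a finite algebra in $S\Nr_3\CA_n$ ($n\geq 6$) with no finite $n$ dimensional hyperbasis, as supplied by item (5) of Corollary \ref{undecidability}, and observes that a finite $n$ smooth relativized representation would yield a finite hyperbasis. That part is in order.

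Item (1), however, has a genuine gap, and it sits exactly where you flagged the ``main obstacle'' --- but the obstacle is more serious than you describe, and your proposed fix does not meet it. Identifying a would-be new node $y$ with an existing node $x$ ``of the same local isomorphism type'' changes the hypergraph globally: every $m$-tuple through the merged node inherits the union of the hyperedges formerly through $x$ and through $y$, so (i) the atom labels $M(x,\bar z)$ and $M(y,\bar z)$ must agree for \emph{every} tuple $\bar z$ in the structure built so far, not merely on local types, and (ii) genuinely new $n$-cliques are created after the identification, and nothing in the finite basis $H$ guarantees that these new cliques embed into some $N\in H$ or that their cylindrifier defects can be repaired without creating further ones. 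This is the standard difficulty in finite model property proofs; it has nothing to do with squareness versus flatness, and ``no new commutativity obligations'' does not address it. The paper does not attempt a direct quotient at all: since $\A$ is finite, the existence of an $n$ square relativized representation is expressible by a finite theory in which every quantifier is relativized to ${\sf Clique}(\bar x)$ --- i.e.\ a theory in the loosely guarded fragment of first order logic --- and the finite base property of that fragment \cite[corollary 19.7]{HHbook} then delivers a finite representation outright. A self-contained combinatorial proof along your lines would in effect have to reprove the finite model property for the clique guarded fragment, which is far from routine bookkeeping. Your first reduction (the networks over a finite algebra with at most $n$ nodes form a finite basis) is correct and useful, and your closing remarks on decidability and on why the ${\sf CB}$ case escapes the hyperlabel obstruction are the right intuitions, but the step from a finite basis to a finite base is precisely the content of the cited guarded-fragment theorem and cannot be obtained by naive node identification.
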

\begin{demo}{Proof} The first part follows from the fact that the first order theory coding the existence of square representations
can be easily coded in the clique guarded fragment of first order logic,
and indeed in the loosely guarded fragment of first order logic which
has the finite base property \cite[corollary 19.7]{HHbook}.

The second part follows from the fact that  the problem of deciding whether a finite $\CA_3$ is in
$S\Nr_3\CA_n$, when $n\geq 6$, is undecidable,
from which one can conclude that there are finite algebras in $S\Nr_3\CA_n$, $n\geq 6$
that do not have a finite $n$ dimensional hyperbasis \cite[corollary 18.4]{HHbook} and
these cannot possibly have finite representations. The last part follows from the following theorem.
\end{demo}

A contrasting result is:

\begin{theorem}\label{smoothcompleteness1}
Let $m$ be finite $>2$. Let $\A$ be a finite $\CA_m$. Then the following are equivalent
\begin{enumarab}
\item $\A$ has a finite $n$ smooth relativized representation
\item $\A\subseteq \Nr_n\B$, $\B\in \CA_n$ is finite.
\item $\A$ has a finite $n$ dimensional hyperbasis.
\end{enumarab}
\end{theorem}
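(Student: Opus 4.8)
The plan is to prove the cycle $(2)\Rightarrow(3)\Rightarrow(1)\Rightarrow(2)$, working entirely with finite objects so that all the infinitary step-by-step machinery of Lemma \ref{step} collapses to a finite construction. The key point throughout is that $m$ is finite, $\A$ is finite, and we are allowed to use only finitely many hyperlabels, so that the set of $n$ wide $m$ dimensional $\Lambda$ hypernetworks for a finite label set $\Lambda$ is itself finite.

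\medskip

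\emph{$(2)\Rightarrow(3)$.} Suppose $\A\subseteq\Nr_m\B$ with $\B\in\CA_n$ finite. I would mimic the construction in the proof of Corollary \ref{can} (the part building an $n+4$ dimensional hyperbasis from a neat embedding), but now with exact dimension $n$ and a finite dilation. For each atom $x$ of $\B$ define a map $N_x$ on $^{\leq n}n$ by letting $N_x(\bar a)$, for $|\bar a|=m$, be the unique atom $r\in\At\A$ with $x\leq {\sf s}_{\bar a}r$ (using the substitution $sc$-words of Definition \ref{subs}), for $n\neq|\bar a|<$ some cutoff the unique atom of the appropriate neat reduct $\Nr_{|\bar a|}\B$ below ${\sf s}_{\bar a}x$ — which exists and is unique since $\Nr_k\B$ is atomic for each $k$, as $\B$ is finite — and a fixed dummy label otherwise. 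Taking $\Lambda=\bigcup_{k<n}\At\Nr_k\B$ (a \emph{finite} set because $\B$ is finite), the symmetric closure $H=\{N_x\sigma: x\in\At\B,\ \sigma:n\to n\}$ is a finite set, and one checks it satisfies the three clauses of Definition \ref{usualbasis}: the first from $\sum\At\B=1$, the second (cylindrifier) from additivity of ${\sf c}_i$ in $\B$ together with $\A\subseteq\Nr_m\B$, and the third (amalgamation) from commutativity of cylindrifiers in $\B$ — this is exactly where full $\CA_n$, not just a basis, is used. So $H$ is a finite $n$ dimensional hyperbasis.

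\medskip

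\emph{$(3)\Rightarrow(1)$.} Given a finite $n$ dimensional hyperbasis $H$, run the step-by-step patching argument of Lemma \ref{step} (the ``$n$ smooth'' clause). The only worry is that the limit hypergraph $M=\bigcup M_t$ might be infinite even though $H$ is finite. But at each stage $M_{t+1}$ is obtained by adding, for each quadruple $(N,v,k,N')$ with $N,N'\in H$, at most one new node; since $H$ is finite and $m,n$ are finite, the number of nodes added at each stage is bounded, and more importantly the whole construction saturates: once every quadruple over the nodes present has been realized, no genuinely new node is forced. Formally I would argue that the natural equivalence ``two nodes have the same type over the finitely many $H$-configurations they sit in'' has finite index, quotient $M$ by it, and verify the resulting finite labelled hypergraph is still an $n$ smooth relativized representation (strictness, the back-and-forth system $\Theta$ built from the $E^t$'s, and the witness property for cylindrifiers all survive the quotient because they are stated in terms of $H$, which is finite). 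This yields a \emph{finite} $n$ smooth relativized representation of $\A$.

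\medskip

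\emph{$(1)\Rightarrow(2)$.} From a finite $n$ smooth relativized representation $M$, build $\B$ as in the proof of Theorem \ref{can2}(3)(4): let $\D_0$ be the algebra with universe $\wp(C^n(M))$ and the usual cylindric (and polyadic) operations, which lies in $\CA_n$ precisely because $n$-smoothness gives infinitary $n$-flatness, hence commutativity of cylindrifiers on cliques; since $M$ is finite, $C^n(M)$ is finite and $\D_0\in\CA_n$ is finite. The map $\theta(r)=\{\bar x\in C^n(M): M\models r(\bar x)\}$ is a neat embedding $\A\to\Nr_m\D_0$, injective because $M$ is a representation, so $\A\subseteq\Nr_m\B$ with $\B=\D_0$ finite.

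\medskip

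\textbf{Main obstacle.} The delicate step is $(3)\Rightarrow(1)$: ensuring the step-by-step construction \emph{terminates} with a finite representation rather than spiralling out to an infinite one. Everything hinges on the fact that with only finitely many hyperlabels and finite $m,n$, there are only finitely many isomorphism types of local configurations, so the process of curing defects is self-exhausting — in contrast with the $n$ square / $n$ smooth situations where hyperlabels range over an infinite set and finiteness genuinely fails, which is exactly the content of Theorem \ref{smoothcompleteness2}(2). I would make this precise by the finite-index-quotient argument sketched above, being careful that the quotient map is a bounded morphism so that no clique is destroyed or wrongly created.
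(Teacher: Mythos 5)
Your decomposition into the cycle $(1)\Rightarrow(2)\Rightarrow(3)\Rightarrow(1)$ is exactly the paper's, and your arguments for $(1)\Rightarrow(2)$ (reading off a finite dilation $\D_0$ from a finite smooth representation) and $(2)\Rightarrow(3)$ (extracting a finite hyperbasis from the atoms of a finite dilation, with $\Lambda$ finite because $\B$ is) are sound and essentially what the paper does.

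The gap is in $(3)\Rightarrow(1)$, and it is exactly where you flagged the ``main obstacle.'' Your saturation claim is false as stated: the step-by-step construction adds a new node for every quadruple $(N,v,k,N')$ where $v$ is an embedding of $N$ into the current hypergraph $M_t$, and every new node creates new embeddings, hence new quadruples demanding fresh witnesses. The process does not exhaust itself; in general the limit is genuinely infinite even when $H$ is finite. Your fallback --- quotient by the equivalence ``same type over the finitely many $H$-configurations'' --- is not actually constructed, and it is not clear that any finite-index equivalence compatible with the labelling exists: identifying nodes can create new $m$-cliques and new $n$-cliques that were not labelled in $M$, and two identified tuples may carry different atoms or different members of $H$, so ``no clique is wrongly created'' is precisely the hard content, not a verification. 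Making such a quotient work is, in essence, a proof of the finite model property for the relevant guarded fragment. The paper avoids this entirely: it encodes the existence of an $n$ smooth representation over the finite hyperbasis $H$ as satisfiability of a sentence in a \emph{finite} signature (one $m$-ary symbol per element of $\A$, one $n$-ary symbol $R_N$ per $N\in H$) lying in the loosely guarded fragment of first order logic, and then invokes the known finite base property of that fragment (\cite[theorem 19.20]{HHbook}) to replace an arbitrary model by a finite one. So the missing idea in your proposal is this reduction to (or a genuine proof of) the finite model property of the loosely guarded fragment; without it, the step from a finite hyperbasis to a \emph{finite} smooth representation is not established.
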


\begin{proof} (1) to (2) to (3) is exactly as above, by noting that if $\A$ is finite then $\D$ as defined in corollary \ref{flat},
is also finite,
and that this gives necessarily a finite
hyperbasis, witness the proof of theorem \ref{smooth}.

It remains to show that  (1) implies (3). Let $H$ be a finite $n$ dimensional hypebasis;
we can assume
that is symmetric, that is, closed under substitutions. This does not
affect finiteness. Let $L$ be the finite signature consisting of an $m$-ary
relation symbol for every element of $\A$, together with an $n$ ary relation symbol $R_N$
for each $N\in H$. Define an $L$ structure by
$$M\models a(\bar{x})\text { iff } 1(\bar{x})\text { and }M(\bar{x})\leq a,$$
and
$$M\models R_N(x_0,\ldots, x_{n-1})\text { iff } M(x_0,\ldots, x_{n-1})=N$$
for all $a\in A$ and $N\in H.$ Then it is not hard to show that $M$
satisfies the properties postulated in theorem \ref{step} and these
can be coded as a fragment of the loosely guarded fragment of first order logic \cite[theorem 19.20]{HHbook}.
Recall that the equational theory of this class is undecidable though it has the finite base property.
\end{proof}

Concerning theorem \ref{decidability}, we note that Monk and Maddux constructs such an $H$ for $n=3$ and
Hodkinson constructs  an $H$ for arbitrary dimensions $>2$, but
unfortunately the relation algebra
does not embed into $\Cm H$, \cite[p.266]{AU} for $n>5$, and, to the best of our knowledge
it is not known whether it does embed when  $n\in \{4,5\}$, but it seems highly unlikely.

\section{Finite axiomatizability, first order definability, via rainbows and Monk algebras}

\subsection{Neat embedding theorems for relativized representations}

Here we prove a `completeness' theorem providing relativized semantics for algebras having a weak neat embedding property.
Essentially a completeness theorem, or rather infinitely many completeness
theorems, such results generalize the very well known result in cylindric algebra theory referred to as the 
{\it Neat Embedding Theorem of Henkin.}
This theorem is one of the earliest cornerstones in algebraic logic, and as its name suggests 
it was proved by Leon Henkin.
It is a fairly striaghtforward algebraic extension of
both the  completenes theorem of first order logic and his celebrated proof of this 
theorem. The difference here is that the $\omega$ extra dimensions, or for that matter witnesses,
in Henkin's proof is truncated to be finite. (This gives infinitely many cases, by theorem \ref{thm:cmnr}).
Accordingly representations are relativized.

Recall that  ${\sf Crs_n}$ denotes the class of relativized cylindric set algebras, 
there units are abitrary sets of sequences, ${\sf D_n}$ is the subclass of ${\sf Crs_n}$, 
whose units are closed under substitutions corresponding to replacements, 
while ${\sf G_n}$ are those ${\sf D_n}$s whose units are closed under
all finite substitutions.

\begin{theorem}\label{smoothcompleteness2} Let $1<m<n<\omega$.
Then the following are equivalent for an atomic algebra $\A$:
\begin{enumarab}
\item $\A\in S\Nr_m(\CA_n \cap  {\sf G}_n)$
\item $\A\in S\Nr_m(\CA_n\cap {\sf D_n})$
\item $\A\in S\Nr_n(\CA\cap {\sf Crs_n})$
\item $\A\in S\Nr_m\CA_n$
\item $\A$ has an smooth representation
\item  $\A$ has has $n$ infinitary $n$ flat representation
\item $\A$ has an $n$ flat representation
\end{enumarab}
\end{theorem}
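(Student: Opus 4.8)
The plan is to prove the seven conditions equivalent by establishing a cycle of implications together with a few direct detours, leaning heavily on the machinery already developed in the excerpt, in particular Lemma \ref{step} (basis $\Rightarrow$ square representation, hyperbasis $\Rightarrow$ smooth representation), Theorem \ref{can2}, Theorem \ref{gamebasis}, and the neat-embedding characterizations implicit in the definition of $\sf CB_{m,n}$. First I would dispatch the trivial inclusions among the algebraic statements: $\sf G_n\subseteq \sf D_n\subseteq \sf Crs_n$ and $\sf G_n, \sf D_n, \sf Crs_n$ are all subclasses of $\CA_n$ (when their members happen to satisfy the $\CA_n$ axioms, which is exactly what ``$\CA_n\cap{\sf G}_n$'' etc.\ means), so $(1)\Rightarrow(2)\Rightarrow(3)$ and $(1)\Rightarrow(4)$ are immediate from monotonicity of $S\Nr_m(-)$. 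For $(4)\Rightarrow(7)$ I would argue: if $\A\subseteq \Nr_m\D$ with $\D\in\CA_n$, pass to the canonical (or Dedekind--MacNeille) extension to obtain an atomic dilation, build for each atom the $n$-dimensional hypernetwork $N_x$ exactly as in the proof of Corollary \ref{can} (with $\Lambda=\bigcup_{k<n}\At\Nr_k\D^+$), check the family $\{N_x: x\in\At\A\}$ is an $n$-dimensional hyperbasis using the amalgamation coming from commutativity of cylindrifiers in $\D$, and then invoke Lemma \ref{step} to patch the hyperbasis into an $n$-smooth representation — which in particular is $n$-flat. Actually, since $\A$ here is atomic, the cleanest route for $(4)\Rightarrow(5)$ is to go straight through the hyperbasis, and then note $(5)\Rightarrow(6)\Rightarrow(7)$ are by definition (an $n$-smooth representation is $n$-square with the back-and-forth system, and the back-and-forth system upgrades $n$-flatness to infinitary $n$-flatness by the standard argument that an $n$-back-and-forth system preserves $L^n_{\infty\omega}$-formulas, Theorem~2 of the preliminaries; $n$-flatness is the first-order restriction).

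The substantive direction is the converse: recover a neat embedding into $\CA_n\cap{\sf G}_n$ from the weakest semantic hypothesis, i.e.\ $(7)\Rightarrow(1)$. Here I would mimic the construction in item (4) of the proof of Theorem \ref{can2} and in item (5) of Corollary \ref{flat}. Given an $n$-flat relativized representation $M$ of $\A$, form the signature $L(A)$ with an $m$-ary predicate for each element of $\A$, and for each $\phi\in L(A)^n$ set $\phi^M=\{\bar a\in C^n(M): M\models_C\phi(\bar a)\}$; let $\D_0$ be the algebra of all such $\phi^M$ for first-order $\phi$ using $n$ variables. Because $M$ is $n$-flat, cylindrifiers in $\D_0$ commute, so $\D_0\in\CA_n$; because $C^n(M)$ is closed under all finite substitutions (being the set of $n$-tuples whose range is a clique), $\D_0\in{\sf G}_n$. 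Then the map $\theta(r)=r(\bar x)^M$ is a homomorphism $\A\to\Nr_m\D_0$: preservation of the $\CA_m$ operations is routine from the guarded-semantics clauses, the target lands in $\Nr_m\D_0$ because the defining formulas $r(\bar x)$ have free variables among the first $m$, and injectivity follows since $M$ is a (relativized) representation — any non-zero $r$ is witnessed by some clique. This gives $\A\in S\Nr_m(\CA_n\cap{\sf G}_n)$, closing the cycle $(1)\Rightarrow(2)\Rightarrow(3)\Rightarrow\cdots$ once we also thread $(3)$ and $(6)$ in. For $(3)\Rightarrow(7)$ I would run the same ``basis from dilation'' argument as in $(4)\Rightarrow$: a dilation into $\CA_n\cap{\sf Crs}_n$ is in particular a dilation into $\CA_n$, and then apply Lemma \ref{step} after extracting the hypernetworks; and $(6)\Rightarrow(7)$ is trivially by weakening infinitary flatness to first-order flatness.

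Assembling the diagram: $(1)\Rightarrow(2)\Rightarrow(3)\Rightarrow(7)$ (last step via the atomic dilation and Lemma \ref{step}, noting $(3)\Rightarrow(4)$ and reusing the $(4)\Rightarrow$ route), $(1)\Rightarrow(4)\Rightarrow(5)\Rightarrow(6)\Rightarrow(7)$, and finally $(7)\Rightarrow(1)$ by the set-algebra construction above; this proves all seven equivalent. The main obstacle, to my mind, is the verification in $(7)\Rightarrow(1)$ that the relativized set algebra $\D_0$ really is a \emph{cylindric} algebra of dimension $n$ — i.e.\ that $n$-flatness of $M$ is exactly the right amount of commutativity to make ${\sf c}_i{\sf c}_j=\nobreak{\sf c}_j{\sf c}_i$ hold on $\wp(C^n(M))$ restricted to $L^n$-definable sets — and, relatedly, the bookkeeping showing that the clique-guarded satisfaction $\models_C$ for $L^n$ formulas transfers cleanly through the neat-reduct map. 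A secondary technical point is ensuring strictness / the diagonal condition $M(\bar x)\le{\sf d}_{ij}\iff x_i=x_j$ survives, so that diagonals are represented faithfully; this is handled exactly as in Lemma \ref{step}, and I would simply cite that lemma rather than reprove it. One should also double-check that no instance needs $\A$ atomic in the ``hard'' direction — it does not, since the set-algebra construction never refers to atoms — whereas the route through hyperbasis in $(4)\Rightarrow(5)$ does use atomicity, which is harmless as it is a standing hypothesis of the theorem.
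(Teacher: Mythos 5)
Your proposal is correct and for six of the seven links it coincides with the paper's own proof: the trivial chain $(1)\Rightarrow(2)\Rightarrow(3)\Rightarrow(4)$, the step $(4)\Rightarrow(5)$ obtained by building an $n$-dimensional hyperbasis over the canonical extension of the dilation and feeding it to Lemma \ref{step}, the trivial $(6)\Rightarrow(7)$, and the closing $(7)\Rightarrow(1)$ via the relativized set algebra of $L^n$-definable subsets of $C^n(M)$ together with the neat embedding $r\mapsto r(\bar{x})^{M}$ (the paper simply cites item (5) of Corollary \ref{sah} for this). The one genuine divergence is $(5)\Rightarrow(6)$: the paper codes $n$-smoothness into a first-order theory, passes to an $\omega$-saturated model, and reads off a complete infinitary $n$-flat representation of $\A^+$, whereas you argue directly on the given smooth representation through its back-and-forth system; your route is shorter and avoids the detour through the canonical extension, while the paper's buys the extra conclusion that $\A^{+}$ is \emph{completely} so representable. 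Two cautions. First, by Definition \ref{rel} an $n$-smooth representation is $n$-\emph{square} plus a back-and-forth system --- flatness is not part of the hypothesis --- so the real content of $(5)\Rightarrow(6)$ is to derive commutativity of quantifiers for $L^n_{\infty\omega}$-formulas on cliques from squareness together with the system of partial isomorphisms (the cylindric analogue of the Hirsch--Hodkinson argument for relation algebras); describing this as ``upgrading $n$-flatness to infinitary $n$-flatness'' presupposes the flatness that has to be proved, even though the standard argument you are gesturing at does go through. Second, in $(4)\Rightarrow(5)$ the parenthetical ``(or Dedekind--MacNeille)'' should be deleted: the Dedekind--MacNeille completion of the dilation is atomic only when the dilation already is, so it does not in general furnish the atomic dilation you need, whereas the canonical extension always does; only the canonical-extension route is safe at that point.
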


\begin{proof}
\begin{enumarab}

\item $(1)\implies (2)\implies (3) \implies (4)$ is trivial.

\item  (4) to (5).  Assume that $\A\in S\Nr_n\CA_m$. Then $\A^+\in S_c\Nr_n\CA_m$
has a hyperbasis by the argument in \ref{smooth}, hence treating this hyperbasis as a saturated set
of mosaics, we can obtain $n$ smooth relativized
representation of $\A$, which is infinitary $n$ flat, witness theorems, \ref{step} and
\ref{smooth}.

\item (5) to (6). Here we use the elementary view to relativized representation. We translate the existence of an $n$
smooth representation to a first order theory then we use
a saturation argument, that is the base of the relativized representation will be an $\omega$ saturated model
of this theory. This idea of buiding 'saturated' representations, for example complete representations of canonical extensions 
of representable algebras, is due to Hirsch and Hodkinson.

Assume that $\A$ has an $n$ smooth representation.
Let ${\sf Clique}(\bar{x})$ be the formula
$$\bigwedge_{{i_0,\ldots  i_{m-1}<n}}1(x_{i_0},\ldots x_{i_{m-1}})$$
Let $T(\A)$ be the first order theory in the signature obtained by adding an $n$ ary relation symbol for every element of $\A$
and stating the first order axioms capturing general relativized representations \cite[definition 5.1]{HHbook}, for the relation algebra case.
For example, corresponding to the join operation, we have ($r,s\in A)$
$$\forall\bar{x}([r\lor s](\bar{x})\longleftrightarrow r(\bar{x})\lor s(\bar{x})).$$
And for cylindrifiers, the most important axiom, we {\it relativize} to the unit, namely,
the corresponding formula we stipulate is
$$\forall \bar{x}(1^{\A}(\bar{x})\to ({\sf c}_kr(\bar{x})\to (\exists \bar{z})(\bar{z}\equiv_k\bar{x} \land r(\bar{z})))$$
We extend the theory $T(A)$ by, for all $k<n,$
$$\forall \bar{x}({\sf Clique}(\bar{x})\land {\sf c}_ka(x_{i_0},\ldots x_{i_{m-1}})$$
$$\implies \exists x_{i_k}({\sf Clique}({\bar{x}})
\land  a(\bar{z})  \land (\bar{z})_k=x_{i_k}\land \bar{z}\equiv_k (x_{i_0}\ldots x_{i_{m-1}})).$$
Here $\bar{x}$ is of length $n$ and $\bar{y}$ is subtuple of $\bar{x}$ of length $m$. This is
for all $i_0, \ldots i_{m-1}, i_k<n$ and all $a\in A$,
$i_k\notin \{i_0,\ldots i_{m-1}\}.$
This new theory $Sq^n(\A)$ stipulates the existence of $n$ square relativized representation.

Now extend $L(\A)$ to the language $L(\A, E)$ by adding $2l$ many ary
predicates for $0<l\leq n$ and extend the theory $Sq^n(\A)$ to cf, \cite[definition 13.7]{HHbook},
$$\forall \bar{x}\bar{y}\bar{z}({\sf Clique} (\bar{x})\longleftrightarrow E^l(\bar{x}, \bar{x})\land
[E^j(\bar{x},\bar{y})\land E^l(\bar{y},\bar{z})\to E^l(\bar{x},\bar{z})]$$
and the other axioms are defined the obvious way:
$$\forall \bar{x}\bar{y} (E^l(\bar{x},\bar{y})\to E^j(x\circ \theta, y\circ \theta)),\ j,l<n, \theta:j\to l$$
$$\forall \bar{x}\bar{y}(E^l(\bar{x}, \bar{y}) \land r(\bar{x})\to r(\bar{y})), r\in \A$$
$$\forall \bar{x}\bar{y}(E^{n-2}(\bar{x}, \bar{y}) \land {\sf Clique} (\bar{x}x)\land {\sf Clique} (\bar{y}y)
\to \exists z(E^{n-1}(\bar{x}x, \bar{y}z)\land {\sf Clique}(\bar{y}yz))$$

Assume that obtained new theory has an $n$ smooth relativization, then it is consistent.
Let $M$ be an $\omega$ saturated model,
of this theory, then we show that it is a complete $n$ smooth relativized representation.
We will define an injective homomorphism $h: \A^+\to \wp(^nM)$

First note that the set $f_{\bar{x}}=\{a\in A: a(\bar{x})\}$
is an ultrafilter in $\A$, whenever $\bar{x}\in M$ and $M\models 1(\bar{x})$
Now define
$$h(S)=\{\bar{x}\in 1: f_{\bar{x}}\in S\}.$$
We check only injectivity uses saturation.
The rest is straightforward.
It suffices to show that for any ultrafilter $F$ of $\A$ which is an atom in $\A^+$, we have
$h(\{F\})\neq 0$.
Let $p(\bar{x})=\{a(\bar{x}): a\in F\}$. Then this type is finitely satisfiable.
Hence by $\omega$ saturation $p$ is realized in $M$ by $\bar{y}$, say .
Now $M\models 1(\bar{y})$ and $F\subseteq f_{\bar{x}}$,
since these are both ultrafilters,  equality holds.
Note that any partial isomorphism of $M$ is also a partial isomorphism of $M$ regraded
as a complete representation of $\A^+$.
Hence $\A^+$ has an infinitary $n$ flat, hence $\A$ has infinitary
$n$ flat representation.

\item (6) to (7) trivial.
\item (7) to (1). Exactly like the proof of item (5)  \ref{sah}.

\end{enumarab}
\end{proof}

\subsection{Rainbows, Monk-like  algebras  and relativized representations}

We use rainbows and 
Monk-like algebras to prove several results on finite axiomatizability and first order definability of several classes introduced 
ealier on in the paper. Throughout this subsection, unless otherwise specified $m$ will denote the dimension and $n$ will be reserved
for nodes or number of pebbles in play.  
It will be always the case that $2<m<n$, and $m$ is finite.

\begin{theorem}\label{nofinite}
\begin{enumarab}
\item  Let $2<m<n<\omega$. Then the class of algebras having an $n+1$ 
smooth representations is a variety, that  is not finitely axiomatizable over the class
having $n$ smooth representations. 

\item The class of algebras having complete $n$ smooth
representations, when $n\geq m+3$
is not even elementary.
\end{enumarab}
\end{theorem}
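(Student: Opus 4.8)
\textbf{Proof plan for Theorem \ref{nofinite}.}

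The plan is to prove both parts by a standard ``lifting'' of winning strategies in \ef-forth pebble games from the private games played on the two rainbow structures to the corresponding games played on coloured graphs, as was sketched in the discussion preceding theorem \ref{smooth} and carried out in detail in theorems \ref{squarerepresentation} and \ref{completerepresentation}. For the first part, I would fix a sequence of rainbow polyadic equality algebras $\A_k = \PEA_{\G_k, \sf R_k}$ $(k \in \omega)$ designed so that \pe\ has a \ws\ in the basic $n$-pebble cylindrifier game (so that $\Rd_{ca}\A_k$ has an $n$ dimensional hyperbasis, hence an $n$ smooth relativized representation by theorem \ref{step}), but \pa\ has a \ws\ in the $n+1$-pebble game (so that $\Rd_{ca}\A_k$ does \emph{not} have an $n+1$ smooth representation, using lemma \ref{step} and the characterization of hyperbasis by games). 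The greens/reds should be chosen so that \pa\ needs exactly one extra pebble and a number of rounds depending on $k$ to force the inconsistent red triple: concretely, take the greens to grow with $k$ while keeping the reds fixed, so that \pa\ wins the $n+1$-game only after $k$ rounds. Then an ultraproduct $\prod_{k/U}\A_k$ over a non-principal ultrafilter $U$ has \pe\ winning the $n+1$-game (\pe\ wins all finite approximations $G^r_{n+1}$ for each fixed $r$, \Los's theorem and an elementary chain argument give a countable elementary equivalent algebra on which \pe\ has a \ws\ in $G^{\omega}_{n+1}$, hence an $n+1$ smooth representation), while each $\A_k$ lacks an $n+1$ smooth representation. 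Since by theorem \ref{smoothcompleteness2} (or its analogue with $n$ replaced by $n+1$) the class of algebras with $n+1$ smooth representations coincides with $S\Nr_m\CA_{n+1}$, it is a variety; non-finite-axiomatizability over the class with $n$ smooth representations follows because the $\A_k$ all lie in the latter class (being $S\Nr_m\CA_n$) but not the former, yet their ultraproduct does.

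For the second part, the plan is to adapt the argument of theorem \ref{neat11}, which gives precisely this phenomenon for $S_c\Nr_n\CA_{n+3}$. I would take the rainbow algebra $\A = \CA_{\N^{-1}, \N}$ (or its polyadic equality version), where $\N^{-1}$ is $\N$ with reversed order; \pe\ has a \ws\ in the $k$-rounded atomic game $G_k$ with unlimited pebbles for every finite $k$, so $\A$ is elementarily equivalent to a countable completely representable algebra $\B$, which therefore has a complete ($\omega$, hence) $n$ smooth representation. On the other hand \pa\ has a \ws\ in the game $F^{n}$ where his moves are limited to $n \geq m+3$ pebbles (re-usable), by running the $\CA$-version of the decreasing-sequence-in-$\N$ argument of \cite[theorem 33, lemma 41]{r} transferred to coloured graphs via \cite[lemma 30]{HH}, so $\A \notin S_c\Nr_m\CA_{n}$, hence $\A$ has no complete $n$ smooth representation by the argument in item (5) of corollary \ref{flat} (a complete $n$ smooth representation would force $\A \in S_c\Nr_m\CA_n$). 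Since $\A \equiv \B$, with $\B$ in the class and $\A$ not, the class of algebras with complete $n$ smooth representations is not elementary.

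The main obstacle, and where most of the work lies, is the first part: designing the rainbow parameters $\G_k, \sf R_k$ so that the \ws\ for \pa\ in the $n+1$-pebble graph game requires an unboundedly growing number of rounds as $k \to \infty$ while \pe\ still wins the $n$-pebble game outright for every $k$, and then verifying the precise combinatorial bridge between the private \ef-forth game on greens and reds and the graph game on coloured graphs (the ``general cylindric rainbow theorem'' alluded to in the text as missing). I expect to borrow the bookkeeping of theorem \ref{squarerepresentation}, where exactly this basic ``one cylindrifier move'' game is analyzed, and to mimic the Hirsch--Hodkinson relation algebra computation ``rounds increase by $1$, nodes increase by $2$'' with the appropriate shift for the cylindric setting; the delicate point will be confirming that the ultraproduct genuinely regains the \ws\ for \pe\ in $G^{\omega}_{n+1}$, which requires that \pe's winning strategies in the finite games $G^r_{n+1}(\A_k)$ be uniform enough to survive the ultraproduct — this is the step I would write out carefully, following the template of \cite[lemma 44]{r} and \cite[theorem 3.3.3]{HHbook2}.
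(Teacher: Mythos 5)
Your treatment of part (2) is essentially the paper's own argument: sandwich the class of algebras with complete $n$ smooth representations between ${\sf CRA}_m$ and $S_c\Nr_m\CA_{n}$ and invoke theorem \ref{neat11} with the rainbow algebra built on $\N^{-1}$ and $\N$; nothing to add there.

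Part (1), however, has a genuine gap. You propose rainbow witnesses $\A_k$ on which \pe\ wins the basic $n$-pebble \emph{cylindrifier} game, and you then infer that $\Rd_{ca}\A_k$ ``has an $n$ dimensional hyperbasis, hence an $n$ smooth relativized representation.'' That inference is false: a winning strategy for \pe\ in the cylindrifier-only game $G^n_\omega$ yields an $n$ dimensional \emph{basis}, hence membership in ${\sf CB}_{m,n}$ and an $n$ \emph{square} representation (theorems \ref{gamebasis} and \ref{can2}); an $n$ dimensional \emph{hyperbasis} additionally requires the amalgamation property, which corresponds to extra amalgamation moves for \pa\ that rainbow strategies do not automatically survive. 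The distinction is not cosmetic: the paper proves that $S\Nr_m\CA_n$ (= the $n$ smooth class) is not even finitely axiomatizable over ${\sf CB}_{m,n}$ (= the $n$ square class), see theorems \ref{squarerepresentation}(2) and \ref{infinitedistance}. So your witnesses are only guaranteed to lie in ${\sf CB}_{m,n}$, not in the class of algebras with $n$ smooth representations, and the lower-bound half of your non-finite-axiomatizability argument collapses. Your construction is in fact the right one for the \emph{square} hierarchy, and that is exactly where the paper deploys it (theorem \ref{squarerepresentation}, first item, with the rainbow algebras $\A^n_r$ based on $M[n-3,2^{r-1}]$ and $M[n-3,2^{r-1}-1]$).

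The paper's route for part (1) avoids the problem entirely: it first identifies the class of algebras with $n$ smooth representations with the variety $S\Nr_m\CA_n$ (one direction via the neat embedding $\theta(r)=r(\bar x)^M$ into the algebra of $L^n(A)$-definable sets over a smooth representation, the other via the hyperbasis extracted from $\A^+$ as in corollary \ref{can} and lemma \ref{step}), and then quotes the Hirsch--Hodkinson theorem that $S\Nr_m\CA_{m+k+1}$ is not finitely axiomatizable over $S\Nr_m\CA_{m+k}$. The witnesses there are the Monk-like algebras $\C(m,n,r)=\Ca(H_m^{n+1}(\A(n,r),\omega))$ of theorem \ref{2.12}, which are built \emph{from} hyperbases and so genuinely have $n$ smooth representations, with representable (hence $n+1$ smooth) ultraproduct. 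If you want to keep your presentation, replace the rainbow witnesses by these, or else supply a winning strategy for \pe\ in the full hyperbasis game (with amalgamation moves) on your $\A_k$ --- which is precisely the step the paper signals as the reason Monk-like algebras, not rainbows, are used here.
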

\begin{proof}
\begin{enumarab}
\item For brevity, we denote the class of algebras having an $n$ smooth representation by $\RCA_{n,s}$. The dimension is $m$ and
$n>m>2$.
For the first part. Assume that $\A$ has an $n$ smooth  representation $\M$. As in the above proof, take
$\D_0$ be the algebra consisting of those $\phi^{M}$ where $\phi$ comes from $L^n$. Since $M$ is $n$ flat we
have that cylindrifiers commute by definition,
hence $\D_0\in \CA_n$.
Now as above the map  $\theta(r)=r(\bar{x})^{M}$ is a neat embedding, hence $\A\subseteq \Nr_m\CA_n$, so that
$\A\in S\Nr_m\CA_n$. Conversely, if $\A\in S\Nr_m\CA_n$, then one can build an $n$ smooth representation
by showing that $\A^+$ has an $n$ dimensional hyperbasis, see
theorem \ref{smooth}.
Hence $\RCA_{n,s}\subseteq S\Nr_n\CA_m$. But by the celebrated result
of Hirsch and Hodkinson \cite[theorem 15.1(4)]{HHbook}, we have that for $k\geq 1$,
$S\Nr_m\CA_{m+k+1}$ is not finitely axiomatizable
over $S\Nr_m\CA_{m+k}$ and we are done.
This same result holds for $\Sc$s $\PA\s$ and $\PEA$s as well \cite{t}, witness also theorem \ref{thm:cmnr}.

\item For the second part, by theorem \ref{neat}, by noting that every classical 
complete representation is $k$ smooth for any finite $k$ 
it suffices to show that $\sf RCA_{m+3,s}\subseteq S_c\Nr_n\CA_{m+3}$.
Now assume that $\A$ has a complete $m$ flat representation.
But the above, using the same notation, the neat  embedding is a complete embedding under the assumption that
$M$ is a complete $n$ smooth representation of $\D$,
and we are done.

\end{enumarab}

\end{proof}

In the  following theorem giving several caharcaterizations of algebras possesing $n$ smooth
representation. 
For a similar characterzation for relation algebras, witness \cite[theorem 13.35]{HHbook2}.
\begin{theorem}
\begin{enumarab}
\item $\A$ is atomic and has an $n$ dimensional hyperbasis

\item $\A$ has a complete infinitary $n$ flat relativized representation
\item $\A\subseteq _c \Nr_m\C$ for some atomic $\C\in \CA_n$
\item $\A$ has a complete smooth relativized representation
\end{enumarab}
\end{theorem}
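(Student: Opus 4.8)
The plan is to establish the cycle of implications $(1)\Rightarrow(2)\Rightarrow(3)\Rightarrow(4)\Rightarrow(1)$, most of which parallel arguments already assembled in the excerpt, chiefly Lemma \ref{step}, Corollary \ref{can}, and Theorem \ref{smoothcompleteness2}. The genuinely new content is organizing the ``complete'' versions of these passages so that the completeness/density conditions are tracked through each construction.

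First I would prove $(1)\Rightarrow(2)$. Given an atomic $\A$ with an $n$ dimensional hyperbasis $H$, I would run the step-by-step patching argument of Lemma \ref{step} to build an $n$ smooth relativized representation $M$, and then argue it can be taken \emph{complete} and \emph{infinitary $n$ flat}. Completeness is where the hyperbasis (as opposed to a mere basis) matters: the amalgamation clause in Definition \ref{usualbasis} is exactly what lets one realize, for every clique $\bar x$ and every atom $a$ below a cylindrification $\mathsf{c}_i r$ appearing along $\bar x$, a witness inside a network of $H$; saturating this over all such demands during the construction (as in the enumeration argument building $M_0\subseteq M_1\subseteq\cdots$) yields a representation in which $\bigcup_{x\in\At\A}h(x)$ covers the clique space, i.e.\ $h$ preserves arbitrary joins. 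Infinitary $n$ flatness then follows because the equivalence relations $E^t$ on $C^t(M)$ defined at the end of the proof of Lemma \ref{step} form an $n$ back-and-forth system, so by the relativized analogue of the back-and-forth theorems (Theorems on $L^n_{\infty\omega}$ stated early in the excerpt) the relativized semantics for $L(\A)^n_{\infty\omega}$ formulas respects commutativity of the two relevant quantifiers on cliques.

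Next, $(2)\Rightarrow(3)$: given a complete infinitary $n$ flat representation $M$, I would mimic the algebra $\D$ built in item (5) of the proof of Corollary \ref{sah} (and in Theorem \ref{smoothcompleteness2}): let $\D$ have universe $\{\phi^M:\phi\in L(A)^n_{\infty,\omega}\}$ with Boolean operations, cylindrifiers, diagonals and the swapping-defined polyadic operations; infinitary $n$ flatness gives $\D\in\CA_n$, completeness of $M$ gives that $\D$ is atomic (take the infinitary conjunction $\tau=\bigwedge\{\psi\in L_\infty:M\models_C\psi(\bar a)\}$ of the complete $L_\infty$-type of a clique tuple inside a given nonzero $\phi^M$ to exhibit an atom below it), and the map $\theta(r)=r(\bar x)^M$ neatly embeds $\A$ into $\Nr_m\D$; the key refinement is that, because $M$ is a \emph{complete} representation and $\A$ is atomic, $\theta$ is moreover a \emph{complete} embedding — for nonzero $\phi^M$ with $\bar a\in\phi^M$ there is $\alpha\in\At\A$ with $M\models\alpha(\bar a)$, whence $\theta(\alpha)\cdot\phi^M\neq0$ — so $\A\subseteq_c\Nr_m\C$ with $\C=\D$ atomic. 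Then $(3)\Rightarrow(4)$ is the implication already invoked in the proof of Corollary \ref{can}: if $\A\subseteq_c\Nr_m\C$ with $\C\in\CA_n$ atomic, then $\A$ is atomic and, following Section 13.4 of \cite{HHbook} as in Corollary \ref{can}, one builds from the atoms of $\C$ (using the substitution notation of Definition \ref{subs}) an $n$ dimensional $\Lambda$ hyperbasis for $\A$ with $\Lambda=\bigcup_{k<n}\At\Nr_k\C$; finally $(4)\Rightarrow(1)$ is immediate from Lemma \ref{step}, which passes from an $n$ dimensional hyperbasis to an $n$ smooth relativized representation, and one checks it is complete as in the proof of $(1)\Rightarrow(2)$, closing the loop.

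The main obstacle I anticipate is the bookkeeping in $(2)\Rightarrow(3)$ and its converse direction $(4)\Rightarrow(1)$: precisely verifying that ``complete'' is preserved in both directions — i.e.\ that the hyperbasis extracted from a complete tight-ish neat embedding is rich enough that the representation rebuilt from it is again complete, and that no atoms are lost in the passage $\A\rightsquigarrow\D\rightsquigarrow$ hyperbasis. This is the standard subtlety with $S_c\Nr$ versus $S\Nr$ (recall the remarks after Theorem \ref{uncountable} that $\Nr_m\CA_k\subsetneq S_c\Nr_m\CA_k$), and it is exactly the point where one must use atomicity of $\A$ together with the saturation/amalgamation built into hyperbasis, rather than mere basis. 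Everything else is an assembly of constructions already present in the excerpt.
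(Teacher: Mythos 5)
Your assembly is essentially the proof the paper intends: the paper leaves this theorem to the reader, pointing at exactly the ingredients you use (the step-by-step construction of Lemma \ref{step}, the hyperbasis extraction of Corollary \ref{can}, the $\D$-construction in item (5) of the proof of Corollary \ref{sah}, and \cite[theorem 13.45]{HHbook2}), and your $(1)\Rightarrow(2)$, $(2)\Rightarrow(3)$ and $(3)\Rightarrow(4)$ track the completeness and atomicity conditions correctly through those passages.

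The one step that does not work as written is $(4)\Rightarrow(1)$: you call it ``immediate from Lemma \ref{step}'', but that lemma passes from a hyperbasis to a representation, which is the wrong direction here. The repair uses only material already in your proposal. Either note that a complete $n$ smooth representation is in particular a complete infinitary $n$ flat one (the back-and-forth system $\Theta$ built from the relations $E^t$ makes $L(A)^n_{\infty\omega}$ satisfaction invariant on cliques, so the two quantifiers commute there), and then close the loop through your own $(2)\Rightarrow(3)$ together with the hyperbasis extraction you already perform inside $(3)\Rightarrow(4)$; or extract the hyperbasis directly from the complete smooth representation $M$ by taking, for each $\bar a\in C^n(M)$, the hypernetwork $N_{\bar a}$ it induces, where completeness of $M$ guarantees that every $m$-hyperedge of $N_{\bar a}$ is labelled by an atom, the $E^t$-classes supply the hyperlabels of the remaining hyperedges, and smoothness yields the amalgamation clause. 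Note also that your $(3)\Rightarrow(4)$ in fact factors as $(3)\Rightarrow(1)\Rightarrow(4)$, so once $(4)$ implies any one of the other three conditions the whole equivalence is established.
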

\begin{proof} The proof is left to the reader. it can be easily destilled from previous proofs. 
The proof of theorem \cite[theorem 13.45]{HHbook2} could be helpful.
\end{proof}

For any cardinal $\kappa$, $K_{\kappa}$ will denote the complete irreflexive graph with $\kappa$ nodes.
Let $p<\omega$, and $I$ a linearly irreflexive ordered set, viewed as model to a signature containing a binary relation $<$.
$M[p,I]$ is the disjoint union of $I$ and the complete graph $K_p$ with $p$ nodes.
$<$ is interpreted in this structure as follows $<^{I}\cup <{}^{K_p}\cup I\times K_p)\cup (K_p\times I)$
where the order on $K_p$ is the edge relation.

For finite $m>1$, let ${\sf CAB}_{m,n}$ be the class of cylindric algebras of dimension $m$ 
algebras having $n$ dimensional 
basis. Then $S{\sf CAB}_{m, n}={\sf CB}_{m,n}.$
Let ${\sf CAF}_{m, n}$ be the class of algebras having complete $n$ flat representations, and $\sf CRA_{m,n}$ the class of
algebras having $n$ smooth representations. Then we have 
$${\sf CRA}_{m.n}\subseteq {\sf CAF}_{m,n}\subseteq {\sf CAB}_{m,n}.$$ 
We know from theorem \ref{nofinite} 
that the first class is not elementary; we show that the second and third are not elementary 
either and we give another proof of the non-first order definability of the class $\sf CRA_{m, n}$.

\begin{theorem}\label{completerepresentation} Assume that $m$ is finite and $m>2$. Then the following hold
\begin{enumarab}
\item For $n\geq m+3$, the class ${\sf CAB}_{m,n}$ is not elementary.
\item For $n\geq m+3$, the class ${\sf CAF}_{m,n}$ is not elementary.
\end{enumarab}
\end{theorem}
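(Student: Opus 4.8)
\textbf{Proof proposal for Theorem \ref{completerepresentation}.}

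The plan is to use the rainbow construction of the style already exploited in theorems \ref{neat11}, \ref{smooth} and \ref{nofinite}, together with a Monk-like ultraproduct, to exhibit for each of the two classes a countable algebra outside the class that is elementarily equivalent to one inside it. First I would set up the rainbow algebra $\CA_{\N^{-1},\N}$ (or the polyadic version $\PEA_{\N^{-1},\N}$ and take the relevant reduct), just as in theorem \ref{neat11}, with the greens and reds both infinite; recall that there \pe\ has a winning strategy in the $k$-rounded atomic game $G_k$ for every finite $k$, so by the standard ultrapower/elementary-chain argument (\cite[lemma 44]{r}, \cite[theorem 3.3.3]{HHbook2}) the algebra is elementarily equivalent to a countable \emph{completely} representable algebra $\B$. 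A classical complete representation is, \emph{a fortiori}, $n$ smooth and $n$ flat for every finite $n$, and it induces an $n$ dimensional basis and hyperbasis in the obvious way (the networks embedding in the representation form the basis, as in the easy direction of theorem \ref{can2}(3) and theorem \ref{step}). Hence $\B\in {\sf CAB}_{m,n}$ and $\B\in {\sf CAF}_{m,n}$ for all finite $n$.

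Next I would show that the rainbow algebra $\A$ itself fails to be in ${\sf CAB}_{m,n}$ (hence also not in the smaller ${\sf CAF}_{m,n}$) for $n\ge m+3$. This is where the combinatorics of the private \ef\ forth game between \pa\ and \pe\ on the greens and reds enters: using the modified forbidden triple ``$(\g^i_0,\g^j_0,\r_{kl})$ unless $\{(i,k),(j,l)\}$ is order preserving'' exactly as in the proof of theorem \ref{neat11}, I would show that \pa\ has a winning strategy in the game $G^{n}_\omega$ characterizing the existence of an $n$ dimensional basis (theorem \ref{gamebasis}), for $n=m+3$ — \pa\ bombards \pe\ with cones of green tints on a common base labelled by a yellow shade, forcing, after finitely many moves, a strictly decreasing sequence in $\N$, which is impossible. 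The translation from the relation-algebra-style private game to the cylindric graph game is the part that requires care (the connection between nodes/rounds and pebbles/rounds is the recurring subtlety noted in the excerpt before theorem \ref{smooth}); I would appeal to the same node-counting as in theorems \ref{smooth}, \ref{neat11} and \ref{squarerepresentation}, where $m+3$ is shown to be the exact threshold. Since having an $n$ dimensional basis is preserved under taking subalgebras but $\A$ has no such basis, $\A\notin S{\sf CAB}_{m,n}={\sf CB}_{m,n}$, and in particular $\A\notin {\sf CAB}_{m,n}$.

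Finally, combining the two halves: $\A\notin {\sf CAB}_{m,n}$ while $\B\in {\sf CAB}_{m,n}$ and $\A\equiv\B$, so ${\sf CAB}_{m,n}$ is not elementary; the same $\A,\B$ witness that ${\sf CAF}_{m,n}$ is not elementary, since ${\sf CAF}_{m,n}\subseteq{\sf CAB}_{m,n}$ gives $\A\notin{\sf CAF}_{m,n}$, and a classical complete representation of $\B$ is in particular a complete infinitary $n$ flat representation, so $\B\in{\sf CAF}_{m,n}$. (This simultaneously re-proves, via the inclusion ${\sf CRA}_{m,n}\subseteq{\sf CAF}_{m,n}$, that $\sf CRA_{m,n}$ is not elementary, giving a second proof of the non-first-order-definability already obtained in theorem \ref{nofinite}.) I expect the main obstacle to be making precise the claim that \pa's winning strategy in the cylindric graph game on the rainbow algebra survives at exactly $n=m+3$ nodes — i.e.\ lifting the private \ef\ game strategy to the coloured-graph game with the right pebble/round bookkeeping — since, as the paper itself stresses, there is no general cylindric rainbow transfer theorem and each instance must be checked by hand; I would model this check closely on \cite[lemma 41]{r} and the proof of theorem \ref{neat11}, adapting the bounds to account for the $n-1$ shades of yellow coding the cylindric information.
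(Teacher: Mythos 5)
Your proposal is correct, and its logical skeleton (an atomic rainbow algebra outside the class, elementarily equivalent to an algebra inside it) is the same as the paper's; but the witnessing pair is genuinely different. The paper builds, for each $n$, a fresh rainbow algebra $\CA_{A,B}$ on $A=M[n-4,\Z]$ and $B=M[n-4,\N]$, engineered so that \pe\ wins all \emph{finite-round} games with exactly $n$ pebbles while \pa\ wins the $\omega$-round $n$-pebble game; the elementary companion extracted from the ultrapower then has an $n$ smooth complete representation but need not be classically completely representable, so the example locates the failure of first-order definability strictly inside the relativized hierarchy (this finer placement is also the point of the Monk-style algebra $\A(n)$ the paper adds for ${\sf CRA}_{m,n}$). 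You instead recycle $\CA_{\N^{-1},\N}$ from theorem \ref{neat11} wholesale: there \pe\ wins all finite-round games with \emph{unboundedly many} pebbles, so the companion $\B$ is classically completely representable and lies in ${\sf CRA}_{m,n}\subseteq{\sf CAF}_{m,n}\subseteq{\sf CAB}_{m,n}$ for every $n$, while \pa 's \ws\ in $F^{m+3}$ is (via theorem \ref{gamebasis} and determinacy) exactly a \ws\ in the basis game $G^{m+3}_{\omega}$, so the one algebra witnesses non-elementarity for all $n\geq m+3$ simultaneously. Your route is shorter and needs no new lifting of the private \ef\ strategy beyond what theorem \ref{neat11} already supplies; what it gives up is the sharper information that the companion can be taken to be only $n$-smoothly, not classically, completely representable. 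One small correction: the parenthetical step ``having an $n$ dimensional basis is preserved under taking subalgebras, hence $\A\notin S{\sf CAB}_{m,n}={\sf CB}_{m,n}$'' is neither right as stated (that is precisely why ${\sf CB}_{m,n}$ is \emph{defined} as $S{\sf CAB}_{m,n}$ rather than being identified with ${\sf CAB}_{m,n}$) nor needed — once \pa\ wins the $n$-pebble $\omega$-round game, theorem \ref{gamebasis} gives $\A\notin{\sf CAB}_{m,n}$ directly, which is all the non-elementarity argument requires.
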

\begin{proof}
\begin{enumarab}

\item We prove (1) and (2) then we give a different proof that ${\sf CRA}_{m,n}$ is not elementary. 
A rainbow argument can and will be used  lifting winning strategies from the \ef\ pebble game to rainbow atom structures.
Let $A=M[n-4, \Z]$ and $B=M[n-4, \N]$, then it can be shown  \pe\ has a \ws\ for all finite rounded games (with $n$ nodes)
on $\CA_{A,B}$, namely, in $G^n_r$ for all $r>n$, so she has a \ws\ in $G^n_{\omega}$
on any non trivial ultrapower, from which an elementary countable
subalgebra $\B$ can be extracted (using an elementary chain argument)
in which \pe\ also has \ws\ in $G_{\omega}^n$, so that $\B$ has
an $n$ smooth complete representation.

But it can also be shown that \pa\ can  the $\omega$ rounded game on $\CA_{A,B}$, also with $n$ nodes,
hence the latter does not have an $n$ square relativized representation,
but is elementary equivalent to one that has an $n$ smooth one. Since $n$ flatness implies $n$ smoothness,
we get the required.

To show that the class of $\CA_m$s ($m$ finite $>2$), having a classical 
complete representability is not elementary,  the usual atomic $\omega$ rounded game was played on $\CA_{\Z,\N}$. 
There was no limit on the number of pebbles used by \pa\ .
Now that we are dealing with $n$ relativized complete square representations, 
so it is reasonable to expect 
that we use only $n$ pebbles, and the game used is $G_r^n$ ( $n\geq m$, $r\leq n)$.

Let ${\sf EF}_r^p[A, B]$ denote the \ef\ pebble forth game defined in \cite{HHbook} between two structures $A$ and $B$ with
$p$ pebbles and
$r$ rounds, which each player will use as a private game to guide her/him in the rainbow game on coloured graphs.

We describe winning strategies in the private \ef\ game. 
In his private game, \pa\ always places the pebbles on distinct elements of $\Z$.
She uses rounds $0,\ldots n-3$, to cover $n-4$ and first two elements of $\Z$.
Because at least two out of three distinct colours are related by $<$, \pe\ must respond by pebbling
$n-4 \cup \{e,e'\}$ for some  $e,e'\in \N$.
Assuming that \pa\ has not won, then he has at least arranged that two elements of $\Z$
are pebbled, the corresponding pebbles in $B$ being in $\N$.
Then \pa\ can force \pe\ to play a two pebble game of length $\omega$ on $\Z$, $\N$
which he can win.

In her private game, \pa\ picks up a spare pebble pair and place the first pebble of it on $a\in A$.
By the rules of the game, $a$ is not currently occupied by a pebble. \pe\ has to choose which element of $B$ to put the
pebble on.  \pe\ chooses an unoccupied element in $n-4$, if possible. If they are all already occupied,
she chooses $b$ to be an arbitrary element
$x\in \N$. Because there are only $n-3$ pebble pairs, \pe\ can always implement this strategy and win.

We work with polyadic equality rainbow algebras. 
We can now lift \ws\ s to the game but now played on coloured
graphs, the atoms of $\PEA_{A,B}$.

In this game, like in the private \ef\ game, 
 \pe\ has a \ws\ in all finite rounded games, but \pa\ can win the
$\omega$ rounded game.

First a definition. Let $\PEA_{A,B}$ be a rainbow algebra.
A red clique $\sf R$ is a coloured graph all of whose edges are red. The index of a node $\alpha$ in a red clique $\sf R$ is defined by
$\mu(\alpha)=b\in B$ where $\sf R(\alpha, \beta)=\r_{bb'}$, for some $\beta\in R$ and $b'\in B$. 
This is well defined by the consistency rules on triples of reds 
(will be defined shortly).

We denote the dimension by $m$. We have $2<m<\omega$.
We show:
\begin{enumerate}
\item \pe\ has a \ws\ in the game $G_r^n(\PEA_{A,B})$ for every $r\geq n$.
\item  \pa\ has a \ws\ in $G_{\omega}^n(\Rd_{sc}\PEA_{A,B}).$
\end{enumerate}

We lift \ws\ s on coloured
graphs, the atoms of $\PEA_{A,B}$.

The colours used are:

\begin{itemize}

\item greens: $\g_i$ ($1\leq i<m-2)$, $\g_0^i$, $i\in A$.

\item whites : $\w_0, \w_i: i<m-2$

\item reds:  $\r_{ij}$ $(i,j\in B)$,

\item shades of yellow : $\y_S: S\subseteq_{\omega}B$, $S=B.$

\end{itemize}

Coloured graphs are complete graphs whose edges are labelled by any colour other than shades of yellow, 
some $m-1$ hyperedges are also labelled
by the shades of yellow.

An $i$ cone, $i\in A$, is a finite coloured graph $M$ consisting of $m$ nodes
$x_0,\ldots,  x_{m-2}, z$ such that $M(x_0, z)=\g^0_i$   
and for every $1\leq j\leq m-2,$ $M(x_j, z)=\g_j$, 
and no other edge of $M$
is coloured green.
$(x_0,\ldots, x_{m-2})$ 
is called the centre of the cone, $z$ the apex of the cone
and $i$ the tint of the cone.

We do not consider all coloured graphs; we define $\K$ a specific class of coloured graphs
From the model theoretic  point of view, such coloured graphs  are the models of the $L_{\omega_1, \omega}$ rainbow theory formulated 
in the rainbow signature as defined in \cite{HHbook2}. 
An edge $(x,y)$ is coloured by a green $\g$ means 
that $(x,y)$ satisfies the green binary relation.
Now $\K$ consists of the following coloured graphs.

\begin{itemize}

\item $M$ is a complete graph.

\item $M$ contains no triangles (called forbidden triples) 
of the following types:
\vspace{-.2in}
\begin{eqnarray}
&&\nonumber\\
(\g, \g^{'}, \g^{*}), (\g_i, \g_{i}, \w), 
&&\mbox{any }i\in m-1\;  \\
(\g^j_0, \g^k_0, \w_0)&&\mbox{ any } j, k\in A\\
\label{forb:pim}(\g^i_0, \g^j_0, \r_{kl})&&\mbox{unless } \set{(i, k), (j, l)}\mbox{ is an order-}\\
&&\mbox{ preserving partial function }A\to B\nonumber\\
\label{forb:match}(\r_{ij}, \r_{j'k'}, \r_{i^*k^*})&&\mbox{unless }i=i^*,\; j=j'\mbox{ and }k'=k^*
\end{eqnarray}
and no other triple of atoms is forbidden.

\item If $a_0,\ldots   a_{m-2}\in M$ are distinct, and no edge $(a_i, a_j)$ $i<j<n$
is coloured green, then the sequence $(a_0, \ldots a_{n-2})$
is coloured a unique shade of yellow.
No other $(n-1)$ tuples are coloured shades of yellow.

\item If $D=\set{d_0,\ldots  d_{m-2}, \delta}\subseteq M$ and
$\Gamma\upharpoonright D$ is an $i$ cone with apex $\delta$, inducing the order
$d_0,\ldots  d_{m-2}$ on its base, and the tuple
$(d_0,\ldots d_{m-2})$ is coloured by a unique shade
$y_S$ then $i\in S.$

\end{itemize}

The game on coloured graphs between \pe\ and \pa\ is obtained from the game $G^n_r$ 
played on atomic networks is exactly like the graph game in \cite{HH}, 
except that the size of graphs are restricted to $n$ nodes.

We start by \pe\ . Let $M$ be a coloured graph built at some stage $t<r$.

We assume inductively that \pe\ has never chosen $\g$ or $\w$  and if $F$ is a face in $M_t$, $|F|=m-1$
$\beta, \delta$, are apexes of two cones inducing the same order on $F$, with $\beta\in M_t\sim F$ and $\delta\notin M_t$
then the red clique obtained by considering the reds labelling each two distinct apexes of all such cones, played so far,
has at least $2$ elements.

We also assume inductively, that at round $t-1$, there is an associated private
game which we denote by ${\sf EF}[c,c']$, for $c, c'$ are distinct elements
such that there is a one to one correspondence between the pair of pebbles currently in play of
${\sf EF}[c,c']$, where $c, c'\in N_{t-1}$
lie in $F$, a base of a cone,  and the nodes of $R_{N_{t-1}}(F)$, the red clique in $N_{t-1}$,
consisting of the apexes of cones based on $F$, in such a way, that for each
such pair if the elements of $A$ and $B$ covered by the pebbles in the pair are
$a\in A$ and $b\in B$,  then the corresponding unique node $n\in R_{N_{t-1}}(F)$
satisfies $N_{t-1}(c, n)=\g_0^a$  and $\beta(n, R_{N_{t-1}}(F))=b$.

Last inductive hypothesis
that there are at least as many rounds remaining in the play of any such private game
as there are rounds remaining in the rainbow game on the algebra.

Let $3\leq t<r$. We assume that the current coloured graph is $M$ and that   \pa\ chose the graph $\Phi$, $|\Phi|=m$
with distinct nodes $F\cup \{\delta\}$, $\delta\notin M$, and  $F\subseteq M$ has size
$m-1$, in round $t-1$. We can  view \pe\ s
move as building a coloured graph $M^*$ extending $M$
whose nodes are those of $M$ together with the new node $\delta$ and whose edges are edges of $M$ together with edges
from $\delta$ to every node of $F$.

Now \pe\ must extend $M^*$ to a complete graph $M^+$ on the same nodes and
complete the colouring giving  a graph $M^+$ in $\K$ (the latter is the class of coloured graphs).
In particular, she has to define $M^+(\beta, \delta)$ for all nodes
$\beta\in M\sim F$.
\begin{enumerate}
\item  If $\beta$ and $\delta$ are both apexes of two cones on $F$; this is the hardest case.
Note that $\delta$ is a new node and $\beta\in M\sim F$.
Assume that the tint of the cone determined by $\beta$ is $a\in A$, and the two cones
induce the same linear ordering on $F$. Recall that we have $\beta\notin F$, but it is in $M$, while $\delta$ is not in $M$,
and that $|F|=m-1$.
Now \pe\ , by the rules of the game,  has no choice but to pick a  red colour, she does this as follows:
Let $$R_{M}(F)=\{x\in M: M(x,\beta)=\g_0^i,\text { for some $i\in A$},  F\cup \{x\}$$
$$\text { is an $i$ cone with base $F$ and appex $x$}\}.$$
This is a red clique, it is basically the apexes of cones in $M$, inducing the same order on $F$.
Inductively, we have  $|R_{M}(F)|< n-2$, hence there are  fewer than $n-2$ pairs of pebbles in play.
\pe\ picks up a spare pebble  pair, and playing
the role of \pa\ places one of the pebbles in the pair on $a$.
She uses her \ws\ to respond by placing the other
one on $b\in B$.
She then labels the edge between $\beta$ and $\delta$ with $\r_{\mu(\beta), b}$.

\item If this is not the case, and  for some $0<i<m-1$ there is no $f\in F$ such
that $M^*(\beta, f), M^* (f,\delta)$ are both coloured $\g_i$ or if $i=0$, they are coloured
$\g_0^l$ and $\g_0^{l'}$ for some $l$ and $l'$.
She chooses $\w_i$, for $M^+{(\beta,\delta)}$, for definiteness let it be the least such $i$.

\item Otherwise, if there is no $f\in F$,
such that $\Gamma^*(\beta, f), \Gamma^*(\delta ,f)$ are coloured $\g_0^t$ and $\g_0^u$
for some $t,u$, then \pe\ defines $\Gamma^+(\beta, \delta)$ to  be $\w_0$.

\end{enumerate}
Now we turn to colouring of $m-1$ tuples. Let $\Phi$ be the graph chosen by \pa\, it has set of note $F\cup \{\delta\}$.
For each tuple $\bar{a}=a_0,\ldots a_{m-2}\in {M^+}^{m-1}$, $\bar{a}\notin M^{m-1}\cup \Phi^{m-1}$,  with no edge
$(a_i, a_j)$ coloured green, then  \pe\ colours $\bar{a}$ by $\y_S$, where
$$S=\{i\in A: \text { there is an $i$ cone in $M^*$ with base $\bar{a}$}\}.$$

We need to check that such labeling works, that 
is, the above strategy adopted by \pa\ is indeed a winning one.

Consider a triangle of nodes $(\beta, y, \delta)$ in the graph $M_{r+1}=M^+$.
The only possible potential problem is that the edges $M^+(y,\beta)$ and $M^+(y,\delta)$ are coloured green with
distinct superscripts $p, q$ but this does not contradict
forbidden trianlges of the form involving $(\g_0^p, \g_0^q, \r_{kl})$.

Assume that $M^+(y,\delta)=\g_0^a$, $M^+(y, \beta)=\g_0^{a'}$ and $M^+(\delta, \beta)=\r_{bb'}$.
for  some $a, a'\in A$, $b, b'\in B$. Recall that \pa\ labelled the sides of the cones whose tints are $a$ and $a'$ by the greens,
\pe\ s strategy ensures that $\{(a,b), (a', b')\}$
is part of a position in the private game in which she is using her \ws\, so this map is a partial
homomorphism, and there is no inconsistency.
Now suppose that $M(\beta, y)$ and $M_{r+1}(y, \delta)$ are both red (some $y\in \nodes(M)$).
Then \pe\ chose the red label $N_{r+1}(y,\delta)$, for $\delta$ is a new node.
We can assume that  $y$ is the appex of a  cone with base $F$ in $M_r$. If not then $N_{r+1}(y, \delta)$ would be coloured
$\w$  by \pe\   and there will be no problem (see the next item). All properties will be maintained.
Hence $y, \delta ,\beta$ are in the same red clique, namely, in $R_{M^+}(F),$
so $M^+(\beta,\delta)=\r_{\mu(\beta), b}$ and $M^+(y, \delta)=\r_{\mu(y), b}$,
for the $b$ determined by her strategy in
the private game.
By definition of indices $\mu(\beta)$, $\mu(y)$,
we have $M^+(\beta, y)=M(\beta, y)=\r_{\mu(\beta), \mu(y)}$.

This is consistent triple, and so have shown that
forbidden triples of reds are avoided.
The colouring of yellow shades also avoids any inconsistencties; this can be proved 
exactly using the same argument in \cite[p.16]{Hodkinson},\cite[p.844]{HH}
and \cite{HHbook2}.

For the second part, in his private game, \pa\ always place the pebbles on distinct elements of $A$.
In the graph game, he bombards \pe\  with cones having the same base
and different green tints whose indices are the distinct elements of $A$ dictated by his private game.
In this way he forces \pe\ to play reds (to respect consistency rules) until she runs out of them; 
this way winning the rainbow game
on a red clique.  He can do this because the game now is not truncated 
to finitely many rounds; it is $\omega$ rounded, and $\Z$ is `longer' than $\N$.

In more detail, assume that $p\geq 3$ (for $p\leq 2$, the game is rather degenerate). In round $t$, $3\leq t<1+r$,
assume inductively that these are nodes $n_0,\ldots n_{q-1}$,
$q<\omega$, $2\leq q\leq p$, added for cones  with tints $j$, inducing the same order on one face $F$,
and  $a_j\in A$, $j<p$, $a_j$ pairwise distinct  and the indices on the nodes $(n_j, n_k)$ must be red, $j,k<q$, so that $\{n_j: j<q\}$ forms a red clique.
Each node $n_j$ has an index $\beta(n_j)\in B$.

As part of the inductive hypothesis,
suppose that at the start of round $t-1$ of his private game, \pe\ has not lost yet, so that \pa\ is still using his \ws\  and
the situation corresponds to the situation in round $t$ of the graph game. That is there is
a pair of pebbles on $(a_j, \beta(n_j))$) for each $j<q.$

Now we can assume that \pa\ only removes a single pair of pebbles and only when he has to.
So his \ws\ will make him remove a pair of pebbles, say the
pair on $(a_j, \beta(n_j))$ for some
$j<q$. In this case there must be $n$ nodes in the
current network $N_{t-1}$ and \pa\ removes the corresponding node $n_j$.
Since $p\geq 3$, removing the pair above leaves at least two
distinct pairs of pebbles, so the private game goes on for another
round. Also removing $n_j$ still leaves
at least two nodes in the red clique, retaining their indices.
But we know that  \pa\  has a winning strategy for the $\omega$ rounded private game.

If \pa\ s strategy in this game tells him to place a pebble $a$ on $A$, then in the graph game he plays the  cone with same
base $F$, $|F|=m-1$, and tint $a$.
Since $a$ is distinct from all other nodes in $\A$ covered by pebbles, so this forces
\pe\ to add a new node $n$ to the graph. Then $n$ must be part of a red clique.

So $n$ has an index $\beta(n)\in B$. In \pa\ private game he lets \pe\ place her
corresponding pebble on $\beta(n).$ Because \pa has a \ws\ in his private game,
eventually he will place a pebble $a\in A$, but there is nowhere in $B$ for \pe\ to place the other pebble.
But this means that $\{a_j, \beta(n_j)), (a_j' ,\beta(n_j')\}$ is not a partial homomorphism.
Hence $(n_j, n_j', c)$ is not consistent, and \pa\ has won.

\item We now give a different proof that ${\sf CRA}_{n,m}$ is not elementary. 
The proof in the first item, slightly modified, can work.
However, we'd rather give another nice argument using instead a {\it Monk-like algebra}. 
This example is a modification of \cite[exercise 1, p. 485]{HHbook2}, 
by lifting the relation algebra construction therein to the cylindric
case.

The algebra $\A(n)$ we use is a limiting case of the algebras $\A(n-1,r)$ in \cite{HHbook2} by allowing $r$ to go to infinity,
and splitting every atom to uncountable many. So $\A(n)$ is an infinite atomic relation algebra in which 
$\A(n-1, r)$ embed the obvious way.
In particular, the finite algebras $\A(n-1,r)$ for each $r\in \omega$ are not blurred after the splitting.
The atoms of $\A(n)$ are $Id$ and $a^k(i,j)$ for each $i<n-2$, $j<\omega$ $k<\omega_1$.
\begin{enumerate}
\item All atoms are self converse.
\item We list the forbidden triples $(a,b,c)$ of atoms of $\A(n)$- those such that
$a.(b;c)=0$. Those triples that are not forbidden are the consistent ones.
This defines composition: for $x,y\in A(n)$ we have
$$x;y=\{a\in \At(\A(n)); \exists b,c\in \At\A: b\leq x, c\leq y, (a,b,c) \text { is consistent }\}$$
Now all permutations of the triple $(Id, s,t)$ will be inconsistent unless $t=s$.
Also, all permutations of the following triples are inconsistent:
$$(a^k(i,j), a^{k'}(i,j), a^{k''}(i,j')),$$
if $j\leq j'<\omega$ and $i<n-1$ and $k,k', k''<\omega_1.$
All other triples are consistent.
\end{enumerate}
Then for any $r\geq 1$, $\A(n-1,r)$ embeds completely in $\A(n)$ the obvious way,
hence, the latter has no $n$ dimensional hyperbasis, because the former does not.

Then $\A(n)$ has an $m$ dimensional hyperbasis for each $m<n-1$, by proving that \pe\ has
\ws\ in the hyperbasis game $G_r^{m,n}(\A(n), \omega)$, for any $r<\omega$, that is, for any finite rounded game.

We now show that $\A(n)$ has an $m$ dimensional hyperbasis for each $m<n-1$, by proving that \pe\ has
\ws\ in the hyperbasis game $G_r^{m,n}(\A(n), \omega)$ \cite[definition 12.26]{HHbook2} for any $r<\omega$, that is, for any finite rounded game.
The strategy is very similar to \pe\ strategy given for the relation algebra $\A(n, r)$ defined on p. 466 \cite[definition 15.2]{HHbook2}.
The latter is given in \cite[sec 15.5 p.476 - 483]{HHbook2}
that is, it takes 7 pages so we'd better be sketchy.
Notice that our relation algebra is obtained from $\A(n,r)$ by taking a limiting case making $r$ and $k$ infinite.

There are three kinds of moves in this game,
$m$ dimensional moves, triangle moves and amalgamation moves.

However, some of \pe\ s moves are forced by \pa\ . For example if he plays an amalgamation move
$(L, M, x, y)$ in round $t$, then \pe\ has no choice but to reply with a hypernetwork $N_t$ such that
$L\equiv_x N_t\equiv_y M$. So if $\bar{x}\leq {}^{n}(n\smallsetminus \{x\})$ , \pe\ sets $N_t(\bar{x})=L(\bar{x}).$
The label on such an edge and any $N_t$ equivalent edge is determined
by \pa\ s move. \pe\ labels
all remaining edges and hyperedges of $N_t$ one at a time in some order, as follows:
If $\bar{x}$ is the next hyperedge,
\begin{enumarab}

\item If $|x|=|y|$ and there exists for all $i<|x|$,  $\bar{z_i}$ such that
$N(x_i,y_i, \bar{z_i})\leq Id$
for some already labelled hyperedge $\bar{y}$,
then she  lets $N_t(\bar{x})=N_t(\bar{y})$.

\item Otherwise, she lets
$N_t(\bar{x})$ be some new label that
is not a label of any hyperedge that is
already labelled in $N_t$ nor
is the label of any hyperedge
in any hypernetwork $N_u$ for $u<t$. Here the number of labels
is infinite so this is possible.

\end{enumarab}
We are left with:

\begin{enumarab}
\item  Suppose that \pa\ makes an $m$ dimensional move in round $t$,
choosing an $n$ wide $m$ dimensional hypernetwork $M$ over $\A(n)$.
\pe\ lets $N_t$ be an $n$ dimensional  hypernetwork
such that $N_t|_m^{n+1}=M$ and $N_t(j,0)=Id$ for each $j $ with
$m\leq j<n$.

\item Suppose that \pa\ makes a triangle move $(N_u, x, y, z, a ,b)$
for $u<t$, $x,y,z\leq n-1$, $z\notin \{x,y\}$
and $a,b\in \At(\A(n))$ with $N_u(x,y)\leq a;b$.
We can assume that $a,b\neq Id.$

Then \pe\ enumerates the $n-1$ nodes $w$ with $w\leq n-1$ and $w\neq z$, then she labels
the edges $(w_l, z)$ one by one $(l<n-1)$. Assume that $(w_l, z)$ be the next edge to label.

If $N_u(w_l, w_{l'})=Id$ for some $l'<l$, she defines $N_t(w_l,z)=N_t(w_{l'}, z).$

Otherwise, if $l<n-2$, she let $N_t(w_l,z)=a^{0}(i, r-1)$ for some $i<n-1$
such that each previously labelled edge $(w_l', x)$ for $l'<l$, including $(x,z)$ and
$(y,z)$ satisfies $N_t(w_l, z)$ is not less that $a(i)=\sum_{j<r} a(i,j)$.
She has labelled at most $n-3$ previous edges and there are
$n-2$ possible values of $i$ to choose from,
this can be done.

Finally, \pe\ lets $N_t(w_{n-2}, z)=a^0(i,j)$ for some $i<n-2$ such that no already
labelled edge $(w_{l'}, z)$, except possibly $(w_{n-2}, z)$ has a label below
$\sum_{j<r} a(i, j)$  (there are enough values of such $i$),  and with $j=r-1$
unless $N_u(w_{n-2}, w_{n-3}), N_t(w_{n-2}, z)\leq a^1(i, r-1)$ in which case $j=r-2\geq 0$.
Hence by the choice of $i$ and $l<n-3$ the triangle $(w_{n-3}, w_l, z)$ of $N_t$ is
consistent.  Now we check consistency of triples in the new hypernetwork

\begin{itemize}

\item If $N_u(w_{n-2}, w_{n-3})$, $N_t(w_{n-3}, z)\leq a(i, r),$ for some $i<n-2$ then
\pe\ chose $N_t(w_{n-1}, z)= a^0(i, r-2)$, so $(w_{n-2},w_{n-3}, z)$ is consistent.

\item Otherwise, if $N_u(w_{n-2}, w_{n-3})$ and $ N_t(w_{n-3}, z)\leq \sum_{j<r}a(i, j)$,
then as $n\geq 4$, we have
$w_{n-2}\neq x,y$, we must have $N_t(w_{n-2}, w_{n-3})\leq a(i,j)$ for some $j<r-1$.
But \pe\ chose $N_t(w_{n-1}, z)=a^{0}(i, r-1)$, thus $(w_{n-3}, w_{n-3}, z)$ is consistent.

\item Finally, if $N_u(w_{n-2}, w_{n-3})$ is not less than $a(i)=\sum_{j<r}a(i,j)$ or $N_t(w_{n-2}, z)$ is
not less than $a(i)$, consistency is clear.
\end{itemize}

\item Assume that \pa\ plays an amalgamation move say $(M, N, x, y)$ in round $t$, with $M, N\in \{N_u: u<t\}$ with
$M\equiv_{xy}N$ for distinct $x,y\in n$.

\pe\ s strategy should select a hypernetwork $N_t$ such that $M\equiv_x N_t\equiv_y N$.
If there are $u<t$ and maps $f_1, f_2:n\to n$ such that

\begin{itemize}

\item $M\equiv x N_{u}f_1$ and $N\equiv_y N_{u}f_2$

\item $N_u(f_1(z), f_2(z)=Id$ for all $z\in n\smallsetminus \{x, y\}$
\end{itemize}
Then she chooses any such $f_1, f_2$, $u$ as follows.
Let $f:n+1\to n+1$ be defined by $f(x)=f_2(x)$ and $f(z)=f_1(z)$ if $z\neq x$,
Then $N_t(y)$ is defined to be $N_{u}f$, is as required.
Otherwise she selects an atom $a^0(i,j)$ to label  $(x,y)$ such that

(a) Every triangle $N_t$ of the form $(x,y, z)$ is consistent, i.e there is no $z\leq n$ with $z\neq x,y$ such that
the triple $(a^0(i,j), M(y,z), N_t(z, x))$ violates the rules of composition.

(b) $r-1-t\leq j<r$

Then she uses hyperlabels using the default labeling.
This complete her strategy.
\end{enumarab}

Now we lift the construction to cylindric algebras. We basically use the argument in
\cite[sec 15.6]{HHbook2}. We want a cylindric algebra that is not in $S_c\Nr_m\CA_n$ with an ultrapower in
$S_c\Nr_m\CA_n$.

Write $T$ for $G_r^{m,n}$. Consider the cylindric algebra $\C=\Ca(H_m^{n}(\A(n), \omega))$. We show that $\C$ is as required.
Now $\C\notin S_c\Nr_m\CA_n$, for else $\Ra\C\in S_c\Ra\CA_n$,
and $\A(n)$ embeds completely into $\Ra\C$ which is impossible.
Now we lift the construction to cylindric algebras. We basically use the argument in
\cite[sec 15.6]{HHbook2}. We want a cylindric algebra that is not in $S_c\Nr_m\CA_n$ with an ultrapower in
$S_c\Nr_m\CA_n$.

Write $T$ for $G_r^{m,n}$. Consider the cylindric algebra $\C=\Ca(H_m^{n}(\A(n), \omega))$. We show that $\C$ is as required.
Now $\C\notin S_c\Nr_m\CA_n$, for else $\Ra\C\in S_c\Ra\CA_n$,
and $\A(n)$ embeds completely into $\Ra\C$ which is impossible.

Consider $\M=\M(\A(n), m, n, \omega, \C)$ as a
$5$ sorted structure with sorts $\A(n), \omega$, $H_m^{n}(\A(n), \omega)$, $H_{n-1}(\A(n), \omega)$.
Then \pe\ has a \ws\ in $G(T\upharpoonright 1+2r, \M)$ \cite[lemma 12.29]{HHbook2} so that
\pe\ has a \ws\ in $G(T\upharpoonright r, \M)$ for all finite $r>0$. So \pe\ has a \ws\ in
$G(T, \prod_D \M)$, for any non principal ultrafilter on $\omega$.

Hence there is a countable elementary subalgebra $\E$ of $\prod_D\M$ such
that \pe\ has a \ws\ in $G(T,\E).$ So therefore $\E$ must have the
form $\M(\B, m, n, \Lambda, \D)$  for some atomic $\B\in \RA$ countable set
$\Lambda$ and countable atomic $m$ dimensional $\D\in \CA_m$ such
that $\At\D\cong \At\Ca(H_m^n(\B, \Lambda))$.
Furthermore, we have  $\B\prec \prod _D\A(n)$  and $\D\prec \prod_D\C$.
Thus \pe\ has a \ws\ in $G(T, \M, m, n, \Lambda, \D)$
and so she also has a \ws\ in $G_{\omega}^{m,n}(\B,\Lambda)$.

So $\B\in S\Ra\CA_n$ and $\D$ embeds completely into $\Ca(H_m^n(\B, \Lambda))\in \Nr_m\CA_n$
and we are done.
In fact, one can show that both $\D$ and $\B$ are actually representable, by finding a representation of
$\prod \A(n)/F$ (or an elementary countable subalgebra of it it) embedding every
$m$ hypernetwork in such a way so that
the embedding respects $\equiv_i$ for
every $i<m$, but we do not need that much.

\end{enumarab}
\end{proof}

\begin{theorem}\label{squarerepresentation}
Assume that the dimension $m$ is finite and as usual $>2$.
\begin{enumarab}
\item The class of algebras having $k$ square representations is a variety. For $2<m<n<\omega$
the variety of $\CA_m$s 
that have $n+1$ square representations  is not finitely axiomatizable
over that consisiting of algebras having $n$ square representations.
\item For $2<m<n$, the class of $\CA_m$s having 
$n$ square representations is not finitely axiomatizable over the class that has
$n$ smooth nor $n$ flat representations.
\end{enumarab}
\end{theorem}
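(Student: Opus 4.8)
The plan is to prove the two items of Theorem~\ref{squarerepresentation} by combining the neat-embedding characterizations already established (especially Theorem~\ref{can2}, items on $n$ square and $n$ flat/smooth representations, together with Theorem~\ref{smoothcompleteness2}) with rainbow constructions of the type used in Theorem~\ref{completerepresentation}. For item (1), the fact that the class of algebras having $n$ square representations is a variety is immediate from Theorem~\ref{can2}: we showed $\A$ has an $n$ square relativized representation iff $\A\in {\sf CB}_{m,n}=\CA_m\cap S\Nr_m{\sf D}_n$, and $S\Nr_m{\sf D}_n$ is a variety since ${\sf D}_n$ is a finitely axiomatizable variety (Theorem~\ref{11}) and $S\Nr_m$ preserves equational definability. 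For the non-finite-axiomatizability part I would build, for each $r<\omega$, a rainbow algebra witnessing the gap between $n+1$ and $n$ square representations. The natural candidate is a pair of linear orders $A=M[n-3,\Z]$ and $B=M[n-3,\N]$ (or a finitely-truncated analogue indexed by $r$), playing the $n$-pebble \emph{basic} game $G_r^{n+1}$ that offers \pa\ only a cylindrifier move, with no amalgamation moves. The key point, exactly as stressed in the introduction, is that \emph{square} representations correspond to \emph{basis} (no amalgamation/hyperlabels), so the relevant game is the simple one-move game $G_r^n$ of Section~\ref{basis}/Theorem~\ref{gamebasis}; this is what makes rainbows the right tool here and mimics the Hirsch--Hodkinson result that ${\sf RA}_{n+1}$ is not finitely axiomatizable over ${\sf RA}_n$, now transferred to ${\sf CB}_{m,n}$ semantically.

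The concrete steps for item (1) would be: (i) recall from Theorem~\ref{can2} that ``$n$ square representation'' $=$ membership in the canonical variety ${\sf CB}_{m,n}$, so the whole problem reduces to showing ${\sf CB}_{m,n+1}$ is not finitely axiomatizable over ${\sf CB}_{m,n}$; (ii) for each $r$ construct a finite (or countable atomic) rainbow $\CA_m$, call it $\A_r$, on suitably truncated rainbow signatures, with the number of greens tuned to $r$; (iii) show \pe\ has a \ws\ in the basic game $G_r^{n}(\A_r)$ with $n$ pebbles — so by Theorem~\ref{gamebasis} and Lemma~\ref{step}, $\A_r$ has an $n$ dimensional basis and hence an $n$ square representation, i.e. $\A_r\in {\sf CB}_{m,n}$; (iv) show \pa\ has a \ws\ in $G_\omega^{n+1}(\A_r)$ (this is where \pa\ uses greens to bombard \pe\ with cones on a common base until she runs out of reds, forcing an inconsistent red clique on $n+1$ nodes) — so $\A_r\notin {\sf CB}_{m,n+1}$; (v) take a non-principal ultraproduct $\A=\prod_D\A_r$ and verify, by a \L o\'s / elementary-chain argument as in Theorem~\ref{completerepresentation}, that \pe\ has a \ws\ in $G_\omega^{n+1}(\A)$, hence $\A\in {\sf CB}_{m,n+1}$. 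Standard argument then gives non-finite-axiomatizability of ${\sf CB}_{m,n+1}$ over ${\sf CB}_{m,n}$. I would work throughout with polyadic equality rainbow algebras $\PEA_{A,B}$ and note the result descends to any class between $\Sc$ and $\PEA$, since the polyadic information lives in the networks not the moves.

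For item (2), the goal is to separate ``$n$ square'' from ``$n$ smooth'' and ``$n$ flat'' in the opposite direction: a family of algebras each having $n$ square representations (i.e. in ${\sf CB}_{m,n}$) but not finitely axiomatizably so over the subclasses $S\Nr_m\CA_n$ (smooth) and ``$n$ flat''; concretely I would show $S\Nr_m\CA_n$ is not finitely axiomatizable over ${\sf CB}_{m,n}$, and likewise the $n$-flat class. Here the plan is to exhibit for each $r$ an algebra $\B_r$ that \emph{has} an $n$ dimensional basis (so $\B_r\in {\sf CB}_{m,n}$) but does \emph{not} have an $n$ dimensional hyperbasis, by having \pe\ win the finite basis games but \pa\ win the hyperbasis game $G_r^{m,n}$ once $r$ is large enough — the amalgamation moves are precisely what \pa\ exploits, and by Theorem~\ref{smoothcompleteness2}/Lemma~\ref{step} lack of hyperbasis means no $n$ smooth (and no $n$ flat) representation. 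Again a non-principal ultraproduct of the $\B_r$ is arranged to have an $n$ dimensional hyperbasis, hence $\in S\Nr_m\CA_n$, giving the non-finite-axiomatizability. A natural source for the $\B_r$ is the Monk-like family $\A(n-1,r)$ and its splittings used in Theorem~\ref{completerepresentation}, reinterpreted $m$-dimensionally: these already have $m$ dimensional hyperbasis for $m<n-1$ but no $n$ dimensional hyperbasis, and the parameter $r$ governs after how many rounds \pa\ wins the hyperbasis game.

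The main obstacle, in both items, is the verification of the \pe\ winning strategies in the \emph{finite-round} games on the rainbow (resp. Monk-like) algebras while simultaneously showing \pa\ wins the relevant one-extra-dimension game — i.e. pinning down exactly how the number of greens / the splitting parameter $r$ controls the transition. For item (1) the delicate bookkeeping is the labelling of edges between apexes of cones sharing a base (the ``hardest case'' for \pe, handled via the private \ef\ pebble game on $\Z$ versus $\N$ as in Theorem~\ref{completerepresentation}), and checking that \pa's green-bombardment strategy really does force an inconsistent $(n+1)$-node red clique; one must be careful that the truncation to $n$ pebbles and to finitely many rounds does not give \pe\ an escape, and that the ultraproduct genuinely restores her \ws\ in the unbounded game. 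For item (2) the subtlety is that mere failure of a hyperbasis must be shown to obstruct $n$-flat (not just $n$-smooth) representations, which I would get from Lemma~\ref{step} and the completeness Theorem~\ref{smoothcompleteness2}, and that the ultraproduct of the $\B_r$ does acquire a genuine $n$ dimensional hyperbasis — this is the analogue of the Hirsch--Hodkinson $S\Nr_m\CA_{m+k+1}$-over-$S\Nr_m\CA_{m+k}$ argument and I expect to lean on \cite[theorem 15.1]{HHbook} together with the lifting machinery from relation algebra games to cylindric graph games sketched in Theorem~\ref{completerepresentation}. I would present the two items in that order, reusing as much of the Theorem~\ref{completerepresentation} strategy analysis as possible and only spelling out the modifications forced by restricting to $n$ (resp.\ $n+1$) pebbles and by dropping (resp.\ keeping) amalgamation moves.
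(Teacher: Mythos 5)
Your overall route is the paper's: item (1) is reduced via Theorem \ref{can2} to showing ${\sf CB}_{m,n+1}$ is not finitely axiomatizable over ${\sf CB}_{m,n}$, witnessed by a sequence of rainbow algebras on which \pa\ wins the $\omega$-rounded $(n+1)$-pebble game while a non-principal ultraproduct lies in ${\sf CB}_{m,n+1}$; item (2) is obtained from the Monk-like algebras $\C(m,n,r)$ built over $\A(n,r)$, which have bases but no hyperbases, with the ultraproduct landing in $S\Nr_m\CA_{n+1}$ --- exactly the content of Theorem \ref{infinitedistance}, which the paper cites for this item. Three points in item (1) need repair. First, your primary candidate $A=M[n-3,\Z]$, $B=M[n-3,\N]$ cannot work: it yields a single algebra independent of $r$, and since ${\sf CB}_{m,n+1}$ is a variety, hence closed under ultrapowers and ultraroots, no ultraproduct of copies of an algebra outside it can fall inside it. The finite truncation you mention only parenthetically is essential; the paper takes $A=M[n-3,2^{r-1}]$ and $B=M[n-3,2^{r-1}-1]$, so that the one extra pebble pair lets \pa\ transfer his private game to two finite linear orders of unequal length and win in $\omega$ rounds, but not in any fixed finite number of rounds once $r$ is large. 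Second, in your step (iii) a \ws\ for \pe\ in $G_r^{n}(\A_r)$ ($r$ rounds, $n$ pebbles) does not yield an $n$-dimensional basis; Theorem \ref{gamebasis} requires the $\omega$-rounded $n$-pebble game $G_{\omega}^{n}(\A_r)$, and that is what the paper proves. Third, the ultraproduct step (v) is not supported by anything you list: you need, on each factor $\A_r$, a \ws\ for \pe\ in the finite-rounded $(n+1)$-pebble games (the paper derives this from a \ws\ in $G_r^{\omega}(\A_r)$, finitely many rounds with unboundedly many pebbles, which a fortiori covers $n+1$ pebbles); these finite-round wins are the first-order facts that transfer to the ultraproduct and put it in ${\sf CB}_{m,n+1}$. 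With these corrections your argument coincides with the paper's.
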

\begin{proof} 
For the first part. The addressed class coincides with ${\sf CB}_{n,m}$ by theorem \ref{can2}, hence it is a variety.

For the second part of (1). 
In the absence of amalgamation moves in games characterizing ${\sf CB}_{n,m}$, 
rainbows work best, so we proceed as folows:

Let $\A_r^n$, be the rainbow cylindric algebra based on $A=M[n-3, 2^{r-1}],$
and $B=M[n-3, 2^{r-1}-1]$, as defined in \cite[ lemma 17.15, 17.16, 17.17]{HHbook}.
These structures were used by Hirsch and Hodkinson to show that ${\sf RA}_n$
is not finitely axiomatizable over
${\sf RA}_{n+1}$; here ${\sf RA}_m$ is the variety of relation algebras with  $m$ dimensional relational basis.

Now we put them to a different use, lifting them to cylindric algebras using coloured graphs the usual way.
We use the game $G_r^n$, but now played on coloured graphs. 
The following can be proved using the lifting argument given in detail in theorem \ref{completerepresentation}.
This will be sketched in a while.
\begin{enumerate}
\item \pe\ has a \ws\ in the game $G_{\omega}^{n}(\A_r^n),$
\item \pe\ has a \ws\ in $G_r^{\omega}(\A_r^n),$
\item \pa\ has a \ws\ in $G_{\omega}^{n+1}(\A_r^n).$
\end{enumerate}

We have $\A_r^n\in \sf {\sf CB}_n\sim \sf CB_{n+1}$ by items $1$ and $3$. But 
\pe\ has a \ws\ in $G_r^{n+1}(\A_r^n)$ for all finite $r$ by item $2$, because the last game 
is easier for \pe\ to win than $G_r^{\omega}(\A_{n,r})$.
Indeed, in the former game  \pa\ s 
moves are restricted to only finitely many pebbles.  

The lifting argument is similar to that described  in some detail 
in the previous item. Note that in item 2, the game is the usual $r$ rounded atomic game since we have an infinite 
supply of pebbles. We omit this part of proof.

We sketch the lifting argument in item $1$ and $3$. In both cases the rounds are $\omega$.
In the first case the number of pairs of pebbles is only $n-2$, while in the last it is $n-1$.
{\it Only one pair of pebbles makes all the difference}.

The basic idea is that this extra pair in the second game, 
enables  \pe\ to defer the  \ef\ forth pebble game only on the `second part' of the structures which are 
ordinals, and because $2^{r-1}> 2^{r-1}-1$, he will win.
This is reflected in the graph game by playing cones with green tints coming form $2^{r-1}$ 
and because the indices of reds come from $2^{r-1}$,  \pa\ will win on a red clique obtained by bombarding \pe\ with cones having
green tints dictated by his private game.
There will come a point where \pe\ will be unable to deliver a consistent triple of reds. 
At this point no matter which red she chooses (forced one) the indices  will not (and cannot) 
match. 

He cannot do this in case there are $n-2$ pairs of pebbles by transferring the game to the 'second part' 
of the structures which he could do when there was one
extra pair of pebbles. 

From the `graph game' point of view  there will always be at least one pair outside the 
`threatning red clique', denoted below by $R_M(F)$, where $M$ is the current coloured graph in play,
and $F$ is the face chosen by \pa\ . Now  \pe\ has to choose a red label for the edge between $\beta$ and $\delta$
which are appexes of the two last 
cones with common base $F$ played by \pa\ . 

She uses this spare pair as follows.
She places one of the pebbles in this pair on $a$ where $a$ is the tint of the last cone with base 
$F$ played by \pe\ .
Then she places the other
on an {\it unoccupied element} of $n-3$ if there is one, else on an arbitrary element $x$ say 
of $2^{r-1}-1$.  By the fact that there are only $n-2$ pairs of pebbles there will always be
such an element. This is because in the \ef\ forth pebble game, $K_{n-2}$ is isomorphic to $S=n-3 \cup \{x\}$
hence $B\models y<y'$ for any distinct $y, y'\in S$, so 
that the position of pebbles always 
defines a homomorphism from $A$ to $B$.
She then labels the edge between $\beta$ and $\delta$ with $\r_{\mu(\beta), b}$.
We need to check that this startegy works.

Lets get more formal. 
Fix $r\geq m$. We check only the critical part, 
namely, when in the graph game, \pe\  has to label the edge between 
$\beta$ and $\delta$ that are both apexes of two cones on $F$, where $F$ is the face played by 
\pa\ , and she is forced to play a red. She has to do that in such a way that indices match.

Assume that the tint of the cone determined by $\beta$ is $a\in A$, and the two cones
induce the same linear ordering on $F$. Recall that we have $\beta\notin F$, but it is in $M$, while $\delta$ is not in $M$,
and that $|F|=m-1$.
Now \pe\ has no choice but to pick a  red colour, so she does this as follows. First 
she  defines the red clique $$R_{M}(F)=\{x\in M: M(x,\beta)=\g_0^i,\text { for some $i\in A$},  F\cup \{x\}$$
$$\text { is an $i$ cone with base $F$ and appex $x$}\}.$$
We have $|R_{M}(F)|< n-2$, hence there are  fewer than $n-2$ pairs of pebbles in play.
As before \pe\ picks up a spare pebble  pair, and playing
the role of \pa\ places one of the pebbles in the pair on $a$.
She uses her \ws\ to respond by placing the other
one on $b\in B$. Then she labels the edge between $\beta$ and $\delta$ with $\r_{\mu(\beta), b}$.
Checking that this is a \ws\  for \pe\ is like before. Notice that here $|R_{M}(F)|< n-2$ for any
{\it finite $r$},  so this \ws\ works for all 
$\A^n_r$s.
 
On the other hand, \pa\ has a \ws\ in his private \ef\ game game $EF_{\omega}^{n-1}(A, B)$.  He always 
place the pebbles on distinct elements of $A$.
He uses rounds $0,\ldots n-2$, to cover $n-3$ and top two elements
of $2^{r-1}$ with $n-1$ pebbles.  Then \pa\ can force \pe\ to play a two pebble game of
length $\omega$ on $2^{r-1}$ and $2^{r-1}-1$, which he can win because the former is longer than the latter.
This is reflected in the graph game 
by bombarding \pe\ with cones having the same base, namely, the induced face by \pa\ 's move, and 
different green tints dictated by his  private game. Because $2^{r-1}>2^{r-1}-1$, and the length of the game is $\omega$,
there will be a point, no matter how large is $r$ (remember it is finite), 
where \pe\ is forced an inconsistent red.

We conclude, that 
\pe\ has a \ws\ in $G_{\omega}^{n+1}(\prod_{r}\A_r^n/D)$, for any non principal ultrafilter $D$,
so the latter is in ${\sf  CB}_{n+1}.$

The second  item follows  
from theorem \ref{infinitedistance} below.
\end{proof}

\subsection{Relativized smooth complete representations}

Recall the definition of relativized representation, definition \ref{rel}.
Here $m<\omega$ denotes the dimension and ${\sf CRA}_{m,n}$ denotes the class of $\CA_m$s with an $n$ relativized
smooth representation.
There is no restriction whatsoever on $n$ except that it is $>m$. In particular $n$ can be infinite.

\begin{theorem}\label{longer} Regardless of cardinalities, $\A\in {\sf CRA}_{m, \omega}$
iff \pe\ has a \ws\ in $G_{\omega}^{\omega}$.
\end{theorem}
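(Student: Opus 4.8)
The statement is an equivalence between a semantic condition (existence of an $\omega$-relativized smooth complete representation of $\A \in \CA_m$) and a combinatorial/game-theoretic condition (\pe\ has a winning strategy in the $\omega$-rounded, $\omega$-noded game $G_\omega^\omega(\A)$ played on atomic networks). The plan is to prove the two implications separately, mirroring the step-by-step versus saturated-model dichotomy used elsewhere in the paper, while being careful that no cardinality hypothesis on $\A$ is imposed, which is precisely the feature that forces a slightly different treatment from the classical complete-representation theorems.

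\emph{From the game to the representation.} First I would assume \pe\ has a winning strategy $\sigma$ in $G_\omega^\omega(\A)$. Here both the number of rounds and the number of pebbles/nodes are $\omega$, so this is the ``$\omega$-pebbled $\omega$-rounded'' version of the atomic game characterizing $n$-smooth representations in the limit $n = \omega$. Since $\A$ need not be countable, I cannot arrange for \pa\ to enumerate every possible move in a single $\omega$-length play; instead I would run the standard transfinite scheduling/bookkeeping argument: build a tree (or a long chain with a suitable dovetailing) of plays in which \pe\ follows $\sigma$ and \pa\ is made to play, cofinally often, every cylindrifier demand, every splitting demand (for each element $a$, forcing an edge labelled $\leq a$ or $\leq -a$), and every ``atom-witness'' demand, across enough plays to cover all of $\A$. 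The union of the networks appearing in the resulting (possibly uncountable) system is the candidate base $M$; the unit $V$ is the set of ``cliques'' $\bar x \in {}^mM$ all of whose $m$-hyperedges carry the top label, and $h(a) = \{\bar x \in V : \text{some network in the system labels } \bar x \text{ by an atom } \leq a\}$. The network axioms give that $h$ is a homomorphism into $\wp(V)$; \pa's splitting moves give injectivity; \pa's cylindrifier moves give that cylindrifiers are witnessed inside cliques, i.e.\ that $M$ is $\omega$-square; and the fact that \pe\ builds \emph{atomic} networks gives that $\bigcup_{a \in \At\A} h(a) = V$, i.e.\ the representation is complete. To upgrade squareness to smoothness (the $E^t$ back-and-forth systems of Definition \ref{rel}), I would invoke Lemma \ref{step}: a winning strategy in this game yields an $\omega$-dimensional hyperbasis (treat the set of networks reachable by $\sigma$ as a saturated set of mosaics), and Lemma \ref{step} then hands back the $\omega$-smooth relativized representation. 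So this direction essentially packages Lemma \ref{step} together with a transfinite version of the usual ``play against all moves'' construction.

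\emph{From the representation to the game.} Conversely, suppose $M$ is an $\omega$-smooth complete relativized representation of $\A$, with unit $V \subseteq {}^mM$ and embedding $h$. I would describe \pe's strategy in $G_\omega^\omega(\A)$ directly: she maintains, alongside the current atomic network $N_t$, a partial embedding $v_t$ of $N_t$ into $M$, i.e.\ an assignment of nodes of $N_t$ to elements of $M$ such that whenever $N_t(\bar x) = r \in \At\A$ then $v_t(\bar x) \in V$ and $M \models r(v_t(\bar x))$ (using completeness, each clique of $M$ sits inside an atom). In round $0$, when \pa\ plays an atom $a$, she picks $\bar s \in h(a) \neq \emptyset$ (non-empty because $a \neq 0$ and $h$ is a representation) and labels accordingly. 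For a cylindrifier move — \pa\ picks $\bar x \in {}^mN_t$, $i < m$, $a \in \At\A$ with $N_t(\bar x) \leq {\sf c}_i a$ — she maps $\bar x$ via $v_t$ into a clique of $M$, uses $\omega$-squareness to find a witness $t \equiv_i v_t(\bar x)$ in a clique with $M \models a(\bar t)$, and adds a new node (there is always a spare node since the game has $\omega$-many), extending $v_t$; smoothness (the back-and-forth systems) is what guarantees the new partial map is still coherent, in particular that diagonals and the various hyperedge-consistency conditions are respected. Splitting moves are handled trivially because in a genuine (relativized) representation each clique determines a unique atom, so $M$ already decides $a$ versus $-a$. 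Thus \pe\ never gets stuck and wins every play.

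\emph{Main obstacle.} The routine parts are the network bookkeeping and the verification that $h$ respects the Boolean and cylindric operations; the delicate point, and the one I would spend the most care on, is the \emph{uncountable} case in the first direction: because $|\A|$ may be arbitrarily large, a single $\omega$-length play of \pa\ cannot exhaust all of his legal moves, so the base $M$ must be assembled from an (uncountably) branching system of plays, and I must check that the \emph{smooth} back-and-forth structure survives the amalgamation of these plays — i.e.\ that the hyperbasis obtained is genuinely $\omega$-dimensional and symmetric, so that Lemma \ref{step} applies. This is exactly the spot where $\omega$-complete relativized representations ``may not coincide with classical ones on uncountable algebras'' (as the paper warns just before Theorem \ref{gamebasis}), and it is why the characterization is stated for $G_\omega^\omega$ rather than for the Lyndon-condition games: one must not silently assume countability anywhere. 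I would therefore phrase the bookkeeping so that it works verbatim for $|\A| \leq \aleph_0$ and for $|\A|$ arbitrary, flagging that in the countable case the system of plays collapses to a single play and the argument reduces to the familiar one.
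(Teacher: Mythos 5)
Your easy direction (representation $\Rightarrow$ \ws\ for \pe) is fine and matches what the paper dismisses as obvious. The problem is in the main construction of the converse. You propose to run a transfinite system of plays of $G_{\omega}^{\omega}$, scheduled so that between them \pa\ makes every legal demand, and to take ``the union of the networks appearing in the resulting system'' as the base $M$. This cannot work as stated. Each play of $G_{\omega}^{\omega}$ has exactly $\omega$ rounds, so its limit is a countable labelled structure in which any fixed clique has received only countably many cylindrifier witnesses; if $\A$ is uncountable, a single clique may require uncountably many witnesses (one for each atom $c$ with the clique's atom below ${\sf c}_ic$), so no single play-limit is $\omega$-square. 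Distinct plays, on the other hand, produce networks on unrelated node sets, and the game has no amalgamation moves, so there is no mechanism internal to the game for gluing a clique in one play-limit to witnesses produced in another. Your own flagged ``main obstacle'' --- that the smooth structure must ``survive the amalgamation of these plays'' --- is therefore not a technical verification but the missing idea itself: you never say how the plays are to be amalgamated, and the disjoint union of them is not a representation.

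The paper avoids this entirely by never using the plays as the base. It collects all finite networks occurring in any play in which \pe\ uses her \ws, converts each to an $\omega$-dimensional network $Nv$ via the maps $v:\omega\to N$, and takes the resulting set $J$ as a saturated set of mosaics (an $\omega$-dimensional basis/hyperbasis). The base $M$ is then built by a fresh step-by-step induction of length $\omega$ in the style of Lemma \ref{step}, in which at stage $t\to t+1$ \emph{every} current defect $(N,v,k,N')$ with $N,N'\in J$ is repaired simultaneously by adjoining one new node per defect; since each step handles all (possibly uncountably many) defects at once, $\omega$ steps suffice regardless of $|\A|$, and no transfinite scheduling of plays is needed. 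Your closing appeal to ``invoke Lemma \ref{step} via an $\omega$-dimensional hyperbasis'' is in fact the correct route and is what the paper does, but it is not interchangeable with the union-of-plays construction, and Lemma \ref{step} cannot be cited verbatim: for $n=\omega$ the notion of clique must be relaxed to finite tuples of arbitrary length (witnesses are only required over finite cliques, not uniformly bounded ones), and the inductive conditions of Lemma \ref{step} must be restated accordingly, as the paper's proof of Theorem \ref{longer} does explicitly. Completeness of the resulting representation then comes for free from the networks being atomic, not from any exhaustion of \pa's moves along a play.
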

\begin{proof} One side is obvious. Now assume that \pe\ has a \ws\ in the $\omega$ rounded game, using $\omega$ many pebbles.
We need to build an $\omega$ relativized complete representation.
The proof goes as follows. First the atomic networks are finite, so we need to convert them into $\omega$ dimensional
atomic networks. For a network $N$, and  a map $v:\omega\to N$,
let $Nv$ be the network  induced by $v$, that is $Nv(\bar{s})=N(v\circ \bar{s})$.
let $J$ be the set of all such $Nv$, where $N$ occurs in some play
of $G_{\omega}^{\omega}(\A)$ in which \pe\ uses his \ws\ and $v:\omega\to N$ (so via these maps we are climbing up $\omega$).

This  can be checked to be  an $\omega$ dimensional hyperbasis (extended to the cylindric case the obvious way).
We can use that the basis consists of $\omega$ dimensional atomic networks, such that for
each such network, there is a finite bound on the size of its strict networks.
Then a complete $\omega$ relativized representation  can be obtained in a step by step way, requiring inductively
in step $t$, there for any finite clique $C$ of $M_t$, $|C|<\omega$, there is
a network in the base, and an embedding $v:N\to M_t$ such that $\rng v\subseteq C$.
Here we consider finite sequences of arbitrarily large length, rather than fixed length $n$ tuples.
Now that we have an $\omega$ dimensional hyperbases, we proceed as follows, we build a complete $\omega$ relativized representation $M$
in a step- by-step fashion; it will be a labelled complete hypergraph. We distinguish between $m$ hyperedges and
hyperedges of arbitrary finite length $\neq m$, the labels are different, they are {\it not } atoms.
The proof is similar to the proof of theorem \ref{step} except that hyperedges can have arbitrary finite length.
So in this new situation $M$ is required to  satisfy  the following properties:

\begin{enumarab}

\item Each $m$ hyperedge of of $M$ is labelled by an atom of $\A.$

\item $M(\bar{x})\leq {\sf d}_{ij}$ iff $x_i=x_j$. (In this case, we say that $M$ is strict).

\item For any clique $\bar{x}\in M$ of arbitrary finite length $>n$
there is a unique $N\in H$ such that $\bar{x}$ is labelled by $N$ and we write this as
$M(\bar{x})=N.$

\item  If $x_0,\ldots, x_{l-1}\in M$ and $M(\bar{x})=N\in H$, $l\geq m$
then for all $i_0\ldots i_{m-1}<n$, $(x_{i_0}\ldots x_{i_{m-1}})$
is a hyperedge and $M(x_{i_0}, x_{i_1}, \ldots,  x_{i_{m-1}})=N(i_0,\ldots, i_{m-1})\in \At\A$.

\item $M$ is symmetric; it closed under substitutions.
\item
If $\bar{x}$ is a clique of arbitrary length  $k<|x|$ and $N\in H$, then $M(\bar{x})\equiv_k N$ if
and only if there is
$y\in M$ such that $M(x_0,\ldots, x_{k-1}, y, x_{k+1}, \ldots, x_l, \ldots )=N.$

\item For every $N\in H$, there are $x_0,\ldots,  x_{l-1}\in M$, $M(\bar{x})=N.$

\end{enumarab}

We build a chain of hypergraphs $M_t:t<\omega$ their limit (defined in a precise
sense) will as required.  Each $M_t$ will have hyperedges of length $m$ labelled by
atoms of $\A$, the labelling of other hyperedges will be done later.
We require inductively that $M_t$ also satisfies:

Any clique in $M_t$ is contained in $\rng(v)$ for some $N\in H$ and some embedding $v:N\in M_t$, that is
such that $v:\dom(N)\to \dom(M_t)$
and this embedding satisfies the following two conditions:

(a) if $(v(i_0), \ldots, v(i_{m-1}))$ is a an atom  hyperedge of $M_t$, then
$M_t(\bar{v})=N(i_0,\ldots, i_{n-1}).$

(b) Whenever $a$ is a sequence of arbitrary length with $|a|\neq m$, then $v(a)$ is a
hyperedge of $M_t$ and is labelled by $N(\bar{a})$.

These tasks can be carried out obtaining the limit hypergraph $M$.
Let $L(A)$ be the signature obtained by adding an $n$ ary relation symbol for each element
of $\A$.
Define $M\models r(\bar{x})$ if $\bar{x}$ is an atom  hyperedge and $M(\bar{x})\leq r$
Then it can be checked with minor straightforward changes in the proof of theorem \ref{step} that
$M$ is the required complete relativized representation.
The idea here  is because an $\omega$ relativized representation only requires cylindrifier witnesses over
finite sized  cliques, that can get arbitrarily long, and they are not necessarily cliques that are uniformly bounded.
The last condition is not enough.
\end{proof}

In the following theorem and elsewhere, ${\bf K}_n$ for $n\leq \omega$ stands for the complete irreflexive graph with $n$ nodes.

\begin{theorem}\label{rel}
\begin{enumarab}
\item ${\sf CRA_m}\subset {\sf CRA}_{m,\omega}$, the strict inclusion can be only witnessed on
uncountable algebras. Furthermore, the class ${\sf CRA}_{m,\omega}$ is not elementary.
The classes $S_c\Nr_m\CA_{\omega}$, ${\sf CRA}_m$, ${\sf CBA}_{m,\omega}$ and ${\sf CRA}_{m, \omega}$,
coincide on atomic countable
algebras.
\item ${\sf CBA}_{m,\omega}\subset \bigcap_{n\in \omega}{\sf CBA}_{m,n}.$
\end{enumarab}
\end{theorem}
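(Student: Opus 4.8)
Looking at Theorem \ref{rel}, it makes two claims. The first (item 1) asserts a strict inclusion ${\sf CRA}_m \subset {\sf CRA}_{m,\omega}$ witnessed only on uncountable algebras, that ${\sf CRA}_{m,\omega}$ is non-elementary, and a coincidence of four classes on countable atomic algebras. The second (item 2) asserts ${\sf CBA}_{m,\omega} \subset \bigcap_{n\in\omega}{\sf CBA}_{m,n}$.

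\medskip

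\textbf{Plan of proof.} The inclusion ${\sf CRA}_m \subseteq {\sf CRA}_{m,\omega}$ is immediate: a classical complete representation is in particular an $\omega$-square complete (indeed $\omega$-smooth) one, since commutativity of cylindrifiers holds everywhere, a fortiori on every finite clique. For the coincidence of $S_c\Nr_m\CA_\omega$, ${\sf CRA}_m$, ${\sf CBA}_{m,\omega}$ and ${\sf CRA}_{m,\omega}$ on countable atomic algebras, I would string together: (i) if $\A$ is countable atomic and $\A \in S_c\Nr_m\CA_\omega$ then $\A$ is completely representable in the classical sense --- this is exactly the content of the last item of Theorem \ref{maintheorem} restricted to the countable case (and is the cylindric analogue of the Hirsch--Hodkinson result that countable $S_c\Nr_m\CA_\omega$ algebras are completely representable); (ii) conversely a classical complete representation gives membership in $S_c\Nr_m\CA_\omega$ by the standard argument sketched after Definition \ref{rel} and used in Corollary \ref{sah}(5); (iii) for countable atomic $\A$, having an $\omega$-dimensional hyperbasis (i.e. $\A \in {\sf CBA}_{m,\omega}$) is equivalent to \pe\ having a winning strategy in $G_\omega^\omega(\A)$ by the hyperbasis--game correspondence (theorem \ref{gamebasis} and its hyperbasis variant), and by Theorem \ref{longer} this is in turn equivalent to $\A \in {\sf CRA}_{m,\omega}$; and finally, in the countable case, an $\omega$-relativized complete smooth representation can be ``un-relativized'' because the relevant cliques, though of unbounded finite length, can be patched --- more directly, one can extract from the $\omega$-hyperbasis an $\omega$-dimensional locally finite dilation in $\CA_\omega$ into which $\A$ embeds completely, giving $\A \in S_c\Nr_m\CA_\omega$. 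For the non-elementarity of ${\sf CRA}_{m,\omega}$ and the strict inclusion, I would invoke the example produced in theorem \ref{maintheorem} (last item): a necessarily uncountable $\A \in \Nr_m\CA_\omega$ with no complete representation. Such $\A$ does have an $\omega$-relativized complete (smooth) representation --- because $\A \in \Nr_m\CA_\omega$ one builds the $\omega$-dimensional hyperbasis directly from the $\omega$-dimensional dilation and applies Theorem \ref{longer} --- so $\A \in {\sf CRA}_{m,\omega} \setminus {\sf CRA}_m$, which also forces the strict inclusion to be witnessable only on uncountable algebras by the countable coincidence just established. Non-elementarity then follows from the countable coincidence together with the fact that ${\sf CRA}_m = S_c\Nr_m\CA_\omega \cap ({\sf countable}\ \cup\ \dots)$ is not elementary; more cleanly, take the rainbow algebra $\CA_{\N^{-1},\N}$ (or $\CA_{\Z,\N}$) of theorem \ref{neat11}: \pe\ wins all finite-rounded games so it is elementarily equivalent to a countable completely representable (hence $\omega$-smoothly completely representable) algebra $\B \in {\sf CRA}_{m,\omega}$, while \pa\ wins $F^{n+3}$, and one checks (using the pebble-counting argument of theorem \ref{neat11} adapted to the $\omega$-pebble game with re-use) that this prevents an $\omega$-relativized complete representation, so $\A \notin {\sf CRA}_{m,\omega}$; since $\A \equiv \B$, the class is not elementary.

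\medskip

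\textbf{Item 2.} For ${\sf CBA}_{m,\omega} \subseteq \bigcap_n {\sf CBA}_{m,n}$: if $\A$ has an $\omega$-dimensional hyperbasis $H$, then for each finite $n>m$ the restriction $\{N\restriction_m^\omega \text{ suitably truncated to } {}^{\le n}m : N \in H\}$ is an $n$-dimensional hyperbasis (the amalgamation and cylindrifier-witness properties are inherited by truncation, exactly as in the definition of $H\restriction_k^m$ in Definition \ref{usualbasis}), so $\A \in {\sf CBA}_{m,n}$ for every $n$. For strictness I would use the uncountable $\A \in \Nr_m\CA_\omega$ without complete representation again, or better, a rainbow algebra: take a rainbow $\CA_m$ on structures $A, B$ where \pa\ wins $G_\omega^\omega$ (forcing failure of $\omega$-hyperbasis by theorem \ref{longer} and the $\omega$-game/hyperbasis equivalence) but \pe\ wins $G_r^n$ for every finite $r$ and every finite $n$, hence \pe\ wins $G_\omega^n$ on an ultrapower and by an elementary chain argument on a countable elementary subalgebra, giving $n$-dimensional hyperbases for all finite $n$; since ${\sf CBA}_{m,n}$ for finite $n$ is captured by the finite-rounded games (it is not the full $\omega$-game), one arranges $\A \in \bigcap_n {\sf CBA}_{m,n} \setminus {\sf CBA}_{m,\omega}$. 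Concretely the $\CA_{\N^{-1},\N}$ example of theorem \ref{neat11} already does this: \pa\ wins $F^{n+3}$ hence $\A\notin S_c\Nr_m\CA_{\omega} \supseteq {\sf CBA}_{m,\omega}$ (for countable algebras, and here $\A$ is countable), while \pe\ wins all finite $G^n_r$, so $\A$ has $n$-dimensional hyperbasis for all finite $n$.

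\medskip

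\textbf{Main obstacle.} The genuinely delicate point is the ``un-relativization'' direction in item 1: showing that a \emph{countable} atomic algebra with an $\omega$-relativized complete smooth representation actually lies in $S_c\Nr_m\CA_\omega$ (equivalently is classically completely representable). Theorem \ref{longer} gives the $\omega$-relativized representation from an $\omega$-hyperbasis, but the cliques witnessing cylindrifiers there are of unbounded (though finite) length and need not be uniformly bounded, so one cannot naively glue them into a single $\omega$-sequence representation; the resolution is to route through the dilation --- build the locally finite $\B \in \CA_\omega$ with $\A \subseteq_c \Nr_m\B$ from the hyperbasis (rather than from the relativized representation directly), which is possible precisely because in the countable case the hyperbasis is countable and the step-by-step construction of the dilation terminates. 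I expect this to require care in matching the definitions, but no new ideas beyond what is already deployed in theorems \ref{step}, \ref{smoothcompleteness2} and the last item of \ref{maintheorem}. The cardinality subtlety --- that strictness in item 1 is \emph{only} on uncountable algebras --- is then a free corollary of the countable coincidence.
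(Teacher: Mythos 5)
Your item (1) is essentially the paper's argument: the four classes are shown to coincide on countable atomic algebras by reducing to complete representability of countable algebras in $S_c\Nr_m\CA_{\omega}$, the strict inclusion is witnessed by the uncountable algebra in $\Nr_m\CA_{\omega}$ with no classical complete representation from the last item of theorem \ref{maintheorem} (which does have an $\omega$ relativized complete representation via theorem \ref{longer}), and non-elementarity comes from a rainbow algebra elementarily equivalent to a member of the class but not itself a member. The paper uses $\PEA_{{\bf K}_{\omega},{\bf K}}$, on which \pa\ wins $G_{\omega}^{\omega}$ outright, where you route through $\CA_{\Z,\N}$ and the countable coincidence; both work.

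Item (2) contains a genuine gap in the strictness argument. By theorem \ref{gamebasis}, membership in ${\sf CBA}_{m,n}$ is governed by the $\omega$ \emph{rounded} game $G^n_{\omega}$ with $n$ re-usable pebbles, not by the finite rounded games $G^n_r$. Your inference ``\pe\ wins all finite $G^n_r$, so $\A$ has an $n$ dimensional basis for all finite $n$'' is precisely the inference that fails for these non-elementary classes: winning every finite rounded game only places the algebra in the elementary closure of ${\sf CBA}_{m,n}$. Your concrete candidate $\CA_{\N^{-1},\N}$ in fact cannot work: \pa\ wins $F^{n+3}$ on it, i.e.\ the $\omega$ rounded game with $n+3$ re-usable pebbles, which is (essentially) $G^{n+3}_{\omega}$, so this algebra has no $(n+3)$ dimensional basis and already lies \emph{outside} $\bigcap_{n}{\sf CBA}_{m,n}$. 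Your fallback via ultrapowers and elementary chains produces a \emph{different} algebra on which \pe\ wins $G^n_{\omega}$, but you then lose all control over \pa's win in $G^{\omega}_{\omega}$ on that new algebra; indeed a countable algebra on which \pe\ wins the game with unboundedly many nodes is completely representable and hence lands back inside ${\sf CBA}_{m,\omega}$.

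The repair, which is what the paper does, is to choose the rainbow structures so that \pe\ wins the full $\omega$ rounded $n$ pebble game \emph{directly on the algebra itself} for every finite $n$, while \pa\ wins the $\omega$ pebble game on it: take $\PEA_{{\bf K}_{\omega},{\bf K}}$ with ${\bf K}$ the disjoint union of the cliques ${\bf K}_n$, $n<\omega$. With only $n$ pebbles ever in play, \pe\ survives $\omega$ rounds of ${\sf EF}^n_{\omega}({\bf K}_{\omega},{\bf K})$ by answering inside a single sufficiently large ${\bf K}_N\subseteq{\bf K}$, so she wins $G^n_{\omega}$ and the algebra has an $n$ dimensional basis for every finite $n$; with $\omega$ pebbles \pa\ exhausts whichever copy ${\bf K}_N$ she commits to, so the algebra is not in ${\sf CBA}_{m,\omega}$. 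No passage to ultrapowers or elementary subalgebras is needed, and that is essential here.
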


\begin{proof} The first item. 
That ${\sf CRAS}_{m,\omega}$ is not elementary is witnessed by the rainbow algebra
$\PEA_{{\bf K}_{\omega}, {\bf K}}$, where the latter
is a disjoint union of ${\bf K}_n$, $n\in \omega$. \pe\ has a \ws for all finite rounded 
games, but \pa\ can win the infinite rounded game.

And so the usual argument \pe\ can win the transfinite game on an uncountable non-trivial ultrapower of $\A$,
and using elementary chains one can find an elementary countable
subalgebra $\B$ of this ultrapower such that \pe\ has a \ws\ in the $\omega$ rounded game.
This $\B$ will have an $\omega$ square representation
hence will be in $\sf CRA_{m,\omega}$, and $\A$ is not in the latter class.

Indeed \pa\ can win $G_{\omega}$ as follows. In the $\omega$ rounded game \pa\ bombards \pe\ with cones have the same base,
and tints $\{\g^0_i: i<\omega_1\}$. Now \pe\ has to label any edge within
the red clique formed by $\r_{ij}$ for some distinct $i,j<\omega$, and
these indices must match in a triangle within the clique.
But an uncountable red clique is impossible because $\omega$ is
countable.

The three classes coincide on countable atomic algebras
with the class of completely representable algebras \cite[theorem 5.3.6]{Sayedneat}, by noting
that on countable and atomic algebras, an $\omega$ complete relativized representation is a  complete relativized representation.

For the second part the same rainbow algebra shows strictness of the inclusion
because obviously \pe\ can always win the \ef\ forth game  $EF_{\omega}^n({\bf K}_{\omega}, {\bf K})$, with $n$ nodes and $\omega$ rounds
(here subscripts and superscripts used in \ef\ game 
above are reversed) by placing a pebble on $\bf K_n$, hence it has a \ws\ in 
$G_{\omega}^n(\CA_{{{\bf K}_{\omega}}, \bold K})$. But we know that \pa\ can win the $\omega$ rounded game, and we are done.

\end{proof}

\section{Neat atom structures}

\subsection{Neat games}

Next  we introduce several definitions an atom structures concerning neat embeddings.
Here we denote the dimension by $n$, where $n$ is finite. $n$ will be always $>1$ and more often than not greater than $2$.

We need to recall, see the line before theorem \ref{neat11}, and highlight the following definition that will be used frequently in what follows:

\begin{definition}\label{s} Let $\K$ be a class of algebras having a Boolean reduct and assume that $\A$ and $\B$ are in $\K$.
$\A$ is a {\it strong subalgebra} of $\B$, if $\A$ is a subalgebra of
 $\B$, with the additional condition that whenever $X\subseteq \A$ is such that
$\sum ^{\A}X=1$, then $\sum ^{\B}X=1$. We write $S_c\K$ for the class of strong subalgebras of algebras in $\K$.
We write $\A\subseteq _c \B$ if $\A\in S_c\{\B\}$ or more simply if $\A$
is a strong subalgebra of $\B$.
\end{definition}

For example if $\A$ is a finite subalgebra of $\B$,  then it is a strong subalgebra of $\B$.
Another example is that $\A$ is a strong subalgebra of its \d\ completion.

\begin{definition}
\begin{enumarab}

\item Let $1\leq k\leq \omega$.  An atom structure $\alpha$ {\it $k$ weakly representable}, if the term algebra is in $\RCA_n$ but
the complex algebra is not in $S\Nr_n\CA_{n+k}$.

\item Let $1\leq k\leq \omega$. An atom structure $\alpha$ is {\it $k$ weakly  neat representable},
if the term algebra is in $\RCA_n\cap \Nr_n\CA_{n+k}$, but the complex algebra is not representable.
(This means that $\alpha$ is also $\omega$ weakly representable).

\item Let $m>n$. An atom structure  $\alpha$ is {\it $m$ neat}, $m>n$,  
if there is an atomic algebra $\A$, such that $\At\A=\alpha$ and $\A\in \Nr_n\CA_{m}.$
\item 
Let $m>n$. An atom structure $\alpha$ is {\it $m$ complete}
if there exists $\A$ such that $\At\A=\alpha$ and $\A\in S_c\Nr_n\CA_{m}$.
\end{enumarab}
\end{definition}
\begin{definition} Let $\K\subseteq \CA_n$, and $\L$ be an extension of first order logic.
$\K$ is {\it detectable} in $\L$, if for any $\A\in \K$, $\A$ atomic, and for any atom structure
$\beta$ such that $\At\A\equiv_{\L}\beta$,
if $\B$ is an atomic algebra such that $\At\B=\beta$, then $\B\in \K.$
\end{definition}
Roughly speaking, $\K$ is detectable in $\L$ if whenever an atomic algebra is not in $\K$, then $\L$ can {\it witnesses this}.
In particular, a class that is not detectable
in first order logic is simply non-elementary. A class that is not detectable by quasi (equations) is not a (quasi) variety.

We investigate the existence of such structures and the interconnections.
Note that if $\L_1$ is weaker than $\L_2$ and $\K$ is not detectable
in $\L_2$, then it is not detectable in $\L_1$.
We also present several $\K$s and $\L$s as in the second definition.
Note that $m$ neat implies $m$ complete,
but the converse is false as will be shown in a minute, witness item (4) in the following theorem
\ref{neat}.
Though formulated only for $\CA$s,
all our coming results readily extend to Pinter's algebras and polyadic algebras with and without equality
(and all signatures in between).

Recall that an atomic algebra is atomic strongly
representable if its \d\ completion is representable.

We now prove:
\begin{theorem}\label{main}
\begin{enumerate}
\item There exists a $k$ weakly representable atom structure, for all $k\geq 4$.

\item  For $k\geq 1$, there exists a $k+1$ weakly representable countable atom structure that is $k$ weakly neat 
representable.  In particular, there is a consistent countable $L_n$ theory $T$, and a non-principal type $\Gamma$, 
realized in all $n+k+1$ smooth models, but does not have
an $n+k$ witness.

\item Let $n$ be finite $n\geq 3$. Then there exists a countable weakly $k$ neat atom structure 
of dimension $n$ if and only if $k<\omega.$

\item There is an $\omega$ rounded game $J$ that determines $\omega$ neat atom structures, in the sense that if $\A$ is atomic and \pe\ has a 
\ws\ in $J$, 
then $\At\A\in \At\Nr_n\CA_{\omega}$. Furthermore, there is  an atom structure that is not $n+3$ complete, hence not $n+k$ neat for all $k\geq 3$,
but is elementary equivalent to one that is $\omega$ neat, hence $m$ neat for any 
$m\geq n$.
\end{enumerate}
\end{theorem}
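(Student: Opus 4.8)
\textbf{Proof proposal for Theorem \ref{main}.}

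The plan is to handle the four items separately, reusing as much of the machinery already developed in the paper as possible.

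For item (1) I would simply invoke the algebras produced in Corollary \ref{can} (equivalently Theorem \ref{blowupandblur}). There we have an atomic $\A\in\PEA_n$ with $\Tm\At\A\in\sf RPEA_n$ while $\Rd_{sc}\Cm\At\A\notin S\Nr_n\Sc_{n+4}$; dropping to the $\CA$ reduct and using that $\RCA_n\subseteq\sf RCA_n$ and that the term algebra's completion is $\Cm\At\A$, the atom structure $\At\A$ is $k$ weakly representable for every $k\geq 4$. No new work is needed here beyond quoting the statement.

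For item (2) I would appeal to the conditional Theorem \ref{blurs} together with the relation algebras supplied (for $k=1$) in this very theorem's item — or, more precisely, to the Monk-like relation algebras with $n+k$ dimensional hyperbasis but no $n+k+1$ dimensional hyperbasis mentioned several times in the introduction. Blowing up and blurring such an $\R\notin S\Ra\CA_{n+k+1}$ gives an atom structure $\At$ with $\Tm\Mat_{n+k}\At\subseteq\C_{n+k}$ and $\Tm\Mat_n\At=\Nr_n\C_{n+k}$ with $\C_n=\Nr_n\C_{n+k}$, so the $n$-dimensional term algebra lies in $\RCA_n\cap\Nr_n\CA_{n+k}$, while $\Rd_{df}\Cm\At$ is not representable; this is exactly the definition of $k$ weakly neat representable, and since $\Cm\At\notin S\Nr_n\CA_{n+k+1}$ it is also $k+1$ weakly representable. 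The metalogical corollary (the type realized in all $n+k+1$ smooth models with no $n+k$ witness) is then read off precisely as in the proof of Theorem \ref{OTT}(2): take $T$ with $\Fm_T$ the term algebra, $\Gamma$ the set of co-atoms, note an $n+k+1$ smooth model omitting $\Gamma$ would yield a complete $n+k+1$ smooth representation hence membership of $\Cm\At$ in $S\Nr_n\CA_{n+k+1}$, and an $n+k$ witness would (via the set-algebra reading of Tarski, \cite[section 4.3]{tarski}) contradict atomicity.

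For item (3), the ``only if'' direction is the substantive point: a countable weakly $k$ neat atom structure cannot exist for $k=\omega$, because $\Tm\At\A\in\Nr_n\CA_\omega$ combined with countability and atomicity forces, via an Orey--Henkin omitting-types argument exactly like the one in the theorem following \ref{hodkinson} (the ``quantifier elimination'' theorem, and also as in \cite[theorem 3.2.9]{Sayed}), a complete representation of $\A$, hence a representation of $\Cm\At\A$ — contradicting non-representability of the complex algebra. The ``if'' direction for finite $k$ is item (2). For item (4), I would describe the game $J$ as the natural $\omega$-rounded ``neat game'' in which, in addition to the usual cylindrifier and amalgamation moves on $n$-dimensional atomic networks, \pa\ is allowed to make moves that test closure under the extra $\omega-n$ dimensions (the standard device of recording, along hyperedges, labels witnessing elements of a putative $\omega$-dilation); a \ws\ for \pe\ then lets one build $\B\in\Lf_\omega$ with $\At\A=\At\Nr_n\B$ by the usual step-by-step patching of the networks played, so $\At\A\in\At\Nr_n\CA_\omega$. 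The final ``furthermore'' is obtained by the rainbow/Monk-like algebra $\A$ used in Theorem \ref{neat11}: it is not in $S_c\Nr_n\CA_{n+3}$ (so its atom structure is not $n+3$ complete, a fortiori not $n+k$ neat for $k\geq 3$), yet $\A\equiv\B$ for a countable $\B$ on which \pe\ wins $G_\omega$ and hence wins $J$, so $\At\B$ is $\omega$ neat; since $\A$ and $\B$ have the same first-order theory and (on a suitable elementary equivalent copy) the same atom structure, this gives the required elementary equivalence.

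The main obstacle is item (4): pinning down the game $J$ so that a winning strategy genuinely yields $\At\A\in\At\Nr_n\CA_\omega$ (rather than merely membership in $S_c\Nr_n\CA_\omega$ or in some $S\Nr_n\CA_{n+k}$) requires the moves to simultaneously enforce the amalgamation/commutativity needed for a full $\omega$-dilation and to be ``inexhaustible'' over $\omega$ dimensions, and then verifying that the step-by-step limit of the played networks really is (the atom structure of) a neat reduct — this is the delicate combinatorial heart, and I expect it to mirror, with extra bookkeeping for the infinitely many spare dimensions, the hyperbasis-to-representation passage of Theorem \ref{step} and the neat-reduct analysis promised in Theorem \ref{maintheorem}.
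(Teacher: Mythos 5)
Items (1) and (3) follow the paper's own route (quoting Theorem \ref{blowupandblur} for (1), and the complete representability of countable atomic algebras in $\Nr_n\CA_{\omega}$ for the ``only if'' half of (3)), and your sketch of the game $J$ and of the passage from a \ws\ for \pe\ to a dilation $\C\in \Lf_{\omega}$ with $\At\A\cong\At\Nr_n\C$ in item (4) is essentially the paper's construction. The gaps are in the two places where real work remains. For item (2), Theorem \ref{blurs} is conditional on a finite $\R\notin S\Ra\CA_{n+k+1}$ that \emph{in addition} has an $n+k$ complex blur $(J,E)$ in the sense of \cite{ANT}. You name the right algebras (the Hirsch--Hodkinson $\A(m,r)$ with $m=n+k$) and correctly use the absence of $n+k+1$ dimensional hyperbases for the first hypothesis, but you never verify the complex-blur hypothesis, and that verification \emph{is} the content of the paper's proof of this item: it fixes $r=1$, takes $J$ to be the $l$-element subsets of $\At\R$ with $l\geq 2m-1$ and $|\At\R|\geq 2(m-1)l$, takes $E$ to be the ``evenly distributed'' ternary relation on $\omega$, and checks the five defining conditions by counting ($|U|\leq (2m-2)l$, $|\{P_2,\ldots,Q_m\}|\leq 2m-2$). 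Without that check, item (2) is no less conditional than Theorem \ref{blurs} itself.

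For the ``furthermore'' clause of item (4) there is a wrong implication: you assert that \pe\ ``wins $G_\omega$ and hence wins $J$''. The implication runs the other way. $J$ strictly extends the ordinary atomic game by transformation and amalgamation moves and by the hyperedge-labelling obligations, so a \ws\ in $G_\omega$ says nothing about $J$ --- this is the same distinction that separates, e.g., ${\sf CB}_{m,n}$ from $S\Nr_m\CA_n$ elsewhere in the paper, and it is precisely why the paper devotes a separate theorem (Theorem \ref{j}) to showing that on $\PEA_{\Z,\N}$ \pe\ wins $J_k$ for every finite $k$, handling the reds via the order-preserving auxiliary maps $\rho_s$ and reducing amalgamation moves to cylindrifier moves via envelopes. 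Only after that does the ultrapower/elementary-chain argument produce a countable $\B\equiv\A$ on which \pe\ wins the full $\omega$-rounded $J$, whence $\At\B$ is $\omega$ neat. As written, your argument establishes only that $\B$ is completely representable, i.e.\ that $\At\B$ is $\omega$ \emph{complete}, which by item (1) of Theorem \ref{SL} is strictly weaker than being $\omega$ neat.
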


\begin{proof}

\begin{enumerate}

\item  From theorems \ref{can}, \ref{smooth}, \ref{blowupandblur}

\item  Let $m=n+k$. Then the required follows, using theorem \ref{blurs}, by noting that the Hirsch Hodkinson Monk-like algebras 
$\A(m,r)$, for $1\leq r<\omega$ of \cite[definition 15.2]{HHbook2} has a 
complex set of $m$ blurs. (Such relation algebras will be put to an entirely different use 
in theorem \ref{2.12}.) The rest will follow by the reasoning in theorem 
\ref{OTT}.

Let us check this. For definitness, we take the parameter $r$ to be $1$. Set 
$\R=\A(m, 1)$. 

Let $E$ be a tenary relation on $\omega$, where 
$E(i,j,k)$ abbreviates that $i,j,k$ are {\it evenly distributed}, that is, 
$$E(i,j,k)\text { iff } (\exists p,q,r)\{p,q,r\}=\{i,j,k\}, r-q=q-p.$$

Using the notation in the proof of theorem \ref{blurs}, 
by taking $l\geq 2m-1$ and $|\At\R|\geq 2(m-1)l$, we have {\it enough $m$} blurs, namely,
$(J,E)$, or simply $J$ as defined in {\it op.cit} (because we have only one tenary relation).

In other words, $J$
is a complex blur for $\R$, cf. \cite[definition 3.1, together with
the condition $(J5)_n$ p. 79]{ANT}. 
Here $J$ is the set of all subsets of 
$\At\R$ whose size is $l$.

To prove  complex blurness as claimed, we 
have to check the following, using the notation of \cite{ANT}:

\begin{itemize}
\item Each element of $J$ is non empty
\item $\bigcup J=I$
\item $(\forall P\in I)(\forall W\in J)(I\subseteq P;W)$
\item $(\forall V_1,\ldots, V_m, W_2, \ldots, W_m\in J)(\exists T\in J)(\forall 2\leq i\leq m)
{\sf safe}(V_i,W_i,T)$, meaning that there is no inconsistent triangles whose elements are taken from $(V_i, W_i, T)$, respectively.
that is to say $a\leq b;c$, for all $a\in V_i$, $b\in W_i$, and $c\in T$.

\item $(\forall P_2,\ldots, P_m, Q_2,\ldots, Q_m\in I)(\forall W\in J)W\cap P_2;Q_m, \cap \ldots, P_m;Q_m\neq \emptyset$.
\end{itemize}

The first three itmes are straightforward. 

We check the fourth item. Let 
$$V_1,\ldots, V_n, W_2,\ldots W_m\in J.$$
Then $U=\bigcup \{V_i\cup W_i: 2\leq i\leq m\}$ has cardinality at
most $(2m-2)l$, hence the cardinality of $I-U$ is $\geq k-(2m-2)l$, and ${\sf safe}(V_i,W_i,T)$
because $V_i\cap W_i\cap T=\emptyset$ for all $2\leq i\leq m$.
Now we check the final condition. Let $P_2,\ldots, P_m, Q_2, \ldots, Q_m\in I$
be arbitrary, then $U=\{P_2,\ldots, Q_m\}$ has cardinality $\leq 2m-2$, and
so each $W\in J$ contains an $S\notin U$, by $l\geq 2m-1$, and $S\leq P_2;Q_2\ldots P_m;Q_m$ by the
definition of composition in $\A(m,1)$.

\item  The if part follows from \cite[theorem 2.1]{ANT}, 
see also theorem \ref{blurs} above. 

The only if part
says that $k$ cannot be infinite. Indeed, if it were,
then the term algebra will be in $\Nr_n\CA_{\omega}$, hence by \cite[5.3.6]{Sayedneat}
would be completely representable, which makes its atom
structure $\At$ strongly representable \cite[3.5.1]{HHbook2},
but then $\Cm\At$ will be representable. So in this sense the result is best possible,
but it can be strengthened in other ways, witness theorem \ref{blurs}.

\item  We describe an $\omega$ rounded game $J$, cf. \cite{r} and \cite{recent}.
We call $J$ a {\it neat } game, since it aims at capturing a two sorted structure, namely, 
a neat reduct. Since it is an {\it atomic} game played on atomic hypernetworks, 
it will turn out that it captures {\it only} the {\it atom structures} of neat reducts. This is
substantially weaker (see item (1) of theorem \ref{SL}), than capturing a neat reduct.  

However, in item (4) of theorem \ref{SL} 
we devise another  stronger {\it neat game}, denoted by $H$, 
that captures a neat reduct. 
$H$ will {\it not} be an atomic game, 
for \pe\ is allowed sometimes to choose an arbitrary element of the algebra
on which the game is played to label hyperedges of certain networks with no consistency conditions - these do not exist in the game $J$ -
played by \pa\, that is not necessarily an atom. However, in the hypernetwork part of the game, like in the weaker neat game $J$,
\pa\ can deliver only atoms and labels for hyperedges.

More succintly, if \pe\ has a \ws\ in the atomic game $J_k$  
($J$ truncated to $k$ rounds) for all $k$ on an atomic 
algebra $\A$ with countably many atoms, 
then this algebra will have an elementary equivalent algebra $\B$ such that 
$\At\B\in \At\Nr_n\CA_{\omega}.$

On the other hand,  if she has a \ws\ on $H_k$ ($H$ truncated to $k$ rounds) for all $k$, 
on such an $\A$, then $\A$ will have an elementary equivalent algebra $\B$,
such that $\B\in \Nr_n\CA_{\omega}$. 
This is much stronger, for like the case of representable algebras, and unlike the case of {\it completely}  representable algebra
even in the relativized sense, 
we may well have algebras $\A, \B\in \CA_n$ $(n>1$), 
and even more in $\RCA_n$ 
such that $\At\A=\At\B$, $\A\in \Nr_n\CA_{\omega}$ but $\B\notin \Nr_n\CA_{n+1}$, {\it a fortiori}
$\B\notin \Nr_n\CA_{\omega}$, witness
item (1) in \ref{SL}, and item(9) in theorem \ref{maintheorem}.

Using the terminology in item (5) of definition \ref{grip}, 
$H$ is gripping for $\Nr_n\CA_{\omega}$ but $J$ is not.

For an atomic network and for  $x,y\in \nodes(N)$, we set  $x\sim y$ if
there exists $\bar{z}$ such that $N(x,y,\bar{z})\leq {\sf d}_{01}$.
The equivalence relation $\sim$ over the set of all finite sequences over $\nodes(N)$ is defined by
$\bar x\sim\bar y$ iff $|\bar x|=|\bar y|$ and $x_i\sim y_i$ for all
$i<|\bar x|$.(It can be checked that this indeed an equivalence relation.)

A \emph{ hypernetwork} $N=(N^a, N^h)$ over an atomic polyadic equality algebra $\C$
consists of a network $N^a$
together with a labelling function for hyperlabels $N^h:\;\;^{<
\omega}\!\nodes(N)\to\Lambda$ (some arbitrary set of hyperlabels $\Lambda$)
such that for $\bar x, \bar y\in\; ^{< \omega}\!\nodes(N)$
\begin{enumerate}
\renewcommand{\theenumi}{\Roman{enumi}}
\setcounter{enumi}3
\item\label{net:hyper} $\bar x\sim\bar y \Rightarrow N^h(\bar x)=N^h(\bar y)$.
\end{enumerate}
If $|\bar x|=k\in \N$ and $N^h(\bar x)=\lambda$ then we say that $\lambda$ is
a $k$-ary hyperlabel. $(\bar x)$ is referred to a a $k$-ary hyperedge, or simply a hyperedge.
(Note that we have atomic hyperedges and hyperedges)
When there is no risk of ambiguity we may drop the superscripts $a,
h$.
There are {\it short} hyperedges and {\it long} hyperedges (to be defined in a while). The short hyperedges are constantly labelled.
The idea (that will be revealed during the proof), is that the atoms in the neat reduct are no smaller than the atoms
in the dilation. (When $\A=\Nr_n\B,$ it is common to call $\B$ a dilation of $\A$.)

We know that there is a one to one correspondence between networks and coloured graphs.
If $\Gamma$ is a coloured graph, then by $N_{\Gamma}$
we mean the corresponding network defined on $n-1$ tuples of the nodes of $\Gamma$ to
to coloured graphs of size $\leq n$.
\begin{itemize}
\item A hyperedge $\bar{x}\in {}^{<\omega}\nodes (\Gamma)$ of length $m$ is {\it short}, if there are $y_0,\ldots, y_{n-1}\in \nodes(N)$, such that
$$N_{\Gamma}(x_i, y_0, \bar{z})\leq {\sf d}_{01}\text { or } N(_{\Gamma}(x_i, y_1, \bar{z})\ldots$$ 
$$\text { or }N(x_i, y_{n-1},\bar{z})\leq {\sf d}_{01}$$ 
for all $i<|x|$, for some (equivalently for all)
$\bar{z}.$

Otherwise, it is called {\it long.}
\item A hypergraph $(\Gamma, l)$
is called {\it $\lambda$ neat} if $N_{\Gamma}(\bar{x})=\lambda$ for all short hyper edges.
\end{itemize}
This game is similar to the games devised by Robin Hirsch in \cite[definition 28]{r}, played on relation algebras.
However, lifting it to cylindric algebras is not straightforward, for in this new context the moves involve hyperedges of length $n$ (the dimension),
rather than edges. In the $\omega$ rounded game $J$,  \pa\ has three moves.
$J_n$ will denote the game $J$ truncated to $n$ rounds.

The first is the normal cylindrifier move. As usual, there is no polyadic move for the polyadic information is coded in the networks.
The next two are amalgamation moves.
But the games are not played on hypernetworks, they are played on coloured hypergraphs, consisting of two parts,
the graph part
that can be viewed as an $L_{\omega_1, \omega}$ model for the rainbow signature, and the part dealing with hyperedges with a
labelling function.
The amalgamation moves roughly reflect the fact, in case \pe\ wins, then for every $k\geq n$ there is a $k$ dimensional hyperbasis,
so that the small algebra embeds into cylindric algebras of arbitrary large dimensions.
The game is played on $\lambda$ neat hypernetworks,  translated to $\lambda$ neat hypergraphs,
where $\lambda$ is a label for the short hyperedges.

For networks $M, N$ and any set $S$, we write $M\equiv^SN$
if $N\restr S=M\restr S$, and we write $M\equiv_SN$
if the symmetric difference
$$\Delta(\nodes(M), \nodes(N))\subseteq S$$ and
$M\equiv^{(\nodes(M)\cup\nodes(N))\setminus S}N.$ We write $M\equiv_kN$ for
$M\equiv_{\set k}N$.

Let $N$ be a network and let $\theta$ be any function.  The network
$N\theta$ is a complete labelled graph with nodes
$\theta^{-1}(\nodes(N))=\set{x\in\dom(\theta):\theta(x)\in\nodes(N)}$,
and labelling defined by
$$(N\theta)(i_0,\ldots, i_{n-1}) = N(\theta(i_0), \theta(i_1), \ldots,  \theta(i_{n-1})),$$
for $i_0, \ldots, i_{n-1}\in\theta^{-1}(\nodes(N))$.

We call this game $H$. It is $\omega$ rounded. Its first move by \pa\ is the usual cylindrifier move (equivalently)
\pa\ s move in $G^{\omega}$),  but \pa\ has more moves which makes it harder for
\pe\ to win. 

\pa\ can play a \emph{transformation move} by picking a
previously played hypernetwork $N$ and a partial, finite surjection
$\theta:\omega\to\nodes(N)$, this move is denoted $(N, \theta)$.  \pe\
must respond with $N\theta$.

Finally, \pa\ can play an
\emph{amalgamation move} by picking previously played hypernetworks
$M, N$ such that $M\equiv^{\nodes(M)\cap\nodes(N)}N$ and
$\nodes(M)\cap\nodes(N)\neq \emptyset$.
This move is denoted $(M,
N)$. Here there is {\it no restriction} on the number of overlapping nodes, so in principal
the game is harder for \pe\ to win, if there was (Below we will encounter such restricted 
amalgamation moves).

To make a legal response, \pe\ must play a $\lambda_0$-neat
hypernetwork $L$ extending $M$ and $N$, where
$\nodes(L)=\nodes(M)\cup\nodes(N)$.

The following is a cylindric-like variation on \cite[theorem 39]{r}, formulated for polyadic equality algebras,
but it works for many cylindric-like algebras like $\sf Sc$, $\sf PA$ and $\sf CA$.
But it does not apply to $\Df$s because the notion of neat reducts for $\Df$s is trivial.
We abbreviate it by (*) because it will be referred to below:

{\it Let $\A$ be an atomic polyadic equality algebra with a countable atom structure $\alpha$.
If \pe\ can win the $\omega$ rounded game $J$ on $\alpha$,
then there exists a locally finite $\PEA_{\omega}$ such that
$\At\A\cong \At\Nr_n\C$. 
Furthermore, $\C$ can be 
chosen to be complete, in which case we have $\Cm\At\A=\Nr_n\C$.}

We give the proof which is similar to that that of \cite[theorem 39]{r}, by using a first order language
that contains $n$ -ary predicate symbols for elements of the algebra, rather than binary one. Other than that the proof
is almost the same.  In a while, in theorem \ref{j},  we show that the on the rainbow atom structure $\PEA_{\Z,\N}$, \pe\ does have
a \ws\ .

Fix some $a\in\alpha$. Using \pe\ s \ws\ in the game of neat hypernetworks, one defines a
nested sequence $N_0\subseteq N_1\ldots$ of neat hypernetworks
where $N_0$ is \pe's response to the initial \pa-move $a$, such that
\begin{enumerate}
\item If $N_r$ is in the sequence and
and $b\leq {\sf c}_lN_r(f_0, \ldots, x, \ldots  f_{n-2})$,
then there is $s\geq r$ and $d\in\nodes(N_s)$ such
that 
$$N_s(f_0,\ldots, f_{l-1}, d, f_{l+1}, \ldots, f_{n-2})=b.$$
\item If $N_r$ is in the sequence and $\theta$ is any partial
isomorphism of $N_r$ then there is $s\geq r$ and a
partial isomorphism $\theta^+$ of $N_s$ extending $\theta$ such that
$\rng(\theta^+)\supseteq\nodes(N_r)$.
\end{enumerate}
Now let $N_a$ be the limit of this sequence, that is $N_a=\bigcup N_i$, the labelling of $n-1$ tuples of nodes
by atoms, and the hyperedges by hyperlabels done in the obvious way.
This limit is well-defined since the hypernetworks are nested.
We shall show that $N_a$ is the base of a weak set algebra having unit  $V={}^{\omega}N_a^{(p)}$,
for some fixed sequence $p\in {}^{\omega}N_a$.

Let $\theta$ be any finite partial isomorphism of $N_a$ and let $X$ be
any finite subset of $\nodes(N_a)$.  Since $\theta, X$ are finite, there is
$i<\omega$ such that $\nodes(N_i)\supseteq X\cup\dom(\theta)$. There
is a bijection $\theta^+\supseteq\theta$ onto $\nodes(N_i)$ and $\geq
i$ such that $N_j\supseteq N_i, N_i\theta^+$.  Then $\theta^+$ is a
partial isomorphism of $N_j$ and $\rng(\theta^+)=\nodes(N_i)\supseteq
X$.  Hence, if $\theta$ is any finite partial isomorphism of $N_a$ and
$X$ is any finite subset of $\nodes(N_a)$ then
\begin{equation}\label{eq:theta}
\exists \mbox{ a partial isomorphism $\theta^+\supseteq \theta$ of $N_a$
 where $\rng(\theta^+)\supseteq X$}
\end{equation}
and by considering its inverse we can extend a partial isomorphism so
as to include an arbitrary finite subset of $\nodes(N_a)$ within its
domain.
Let $L$ be the signature with one $n$ -ary relation symbol ($b$) for
each $b\in\alpha$, and one $k$-ary predicate symbol ($\lambda$) for
each $k$-ary hyperlabel $\lambda$. We work in usual first order logic.

Here we have a sequence of variables of order type $\omega$, and two 'sorts' of formulas,
the $n$ relation symbols using $n$ variables; roughly
these that are built up out of the first $n$ variables will determine the atoms of
neat reduct, the $k$ ary predicate symbols
will determine the atoms of algebras of higher dimensions as $k$ gets larger;
the atoms in the neat reduct will be no smaller than the atoms in the dilations.

This process will be interpreted in an infinite weak set algebra with base $N_a$, whose elements are
those  assignments satisfying such formulas.

For fixed $f_a\in\;^\omega\!\nodes(N_a)$, let
$U_a=\set{f\in\;^\omega\!\nodes(N_a):\set{i<\omega:g(i)\neq
f_a(i)}\mbox{ is finite}}$.

Notice that $U_a$ is weak unit (a set of sequences agreeing cofinitely with a fixed one.)
We can make $U_a$ into the universe an $L$ relativized structure ${\cal N}_a$;
here relativized means that we are only taking those assignments agreeing cofinitely with $f_a$,
we are not taking the standard square model.
However, satisfiability  for $L$ formulas at assignments $f\in U_a$ is defined the usual Tarskian way, except
that we use the modal notation, with restricted assignments on the left:

We can make $U_a$ into the base of an $L$-structure ${\cal N}_a$ and
evaluate $L$-formulas at $f\in U_a$ as follow.  For $b\in\alpha,\;
l_0, \ldots l_{\mu-1}, i_0 \ldots, i_{k-1}<\omega$, \/ $k$-ary hyperlabels $\lambda$,
and all $L$-formulas $\phi, \psi$, let
\begin{eqnarray*}
{\cal N}_a, f\models b(x_{l_0}\ldots,  x_{l_{n-1}})&\iff&N_a(f(l_0),\ldots,  f(l_{n-1}))=b\\
{\cal N}_a, f\models\lambda(x_{i_0}, \ldots, x_{i_{k-1}})&\iff&  N_a(f(i_0), \ldots, f(i_{k-1}))=\lambda\\
{\cal N}_a, f\models\neg\phi&\iff&{\cal N}_a, f\not\models\phi\\
{\cal N}_a, f\models (\phi\vee\psi)&\iff&{\cal N}_a,  f\models\phi\mbox{ or }{\cal N}_a, f\models\psi\\
{\cal N}_a, f\models\exists x_i\phi&\iff& {\cal N}_a, f[i/m]\models\phi, \mbox{ some }m\in\nodes(N_a)
\end{eqnarray*}
For any $L$-formula $\phi$, write $\phi^{{\cal N}_a}$ for the set of all $n$ ary assignments satisfying it; that is
$\set{f\in\;^\omega\!\nodes(N_a): {\cal N}_a, f\models\phi}$.  Let
$D_a = \set{\phi^{{\cal N}_a}:\phi\mbox{ is an $L$-formula}}.$
Then this is the universe of the following weak set algebra
\[\D_a=(D_a,  \cup, \sim, {\sf D}_{ij}, {\sf C}_i)_{ i, j<\omega}\]
then  $\D_a\in\RCA_\omega$. (Weak set algebras are representable).

Now we consider formulas in more than $n$ variables, corresponding to the $k$ hyperlabels.
Let $\phi(x_{i_0}, x_{i_1}, \ldots, x_{i_k})$ be an arbitrary
$L$-formula using only variables belonging to $\set{x_{i_0}, \ldots,
x_{i_k}}$.  

Let $f, g\in U_a$ (some $a\in \alpha$) and suppose that $\{(f(i_j), g(i_j): j\leq k\}$
is a partial isomorphism of ${\cal N}_a$ (viewed as a weak mode), then one can easily prove by induction over the
quantifier depth of $\phi$ and using (\ref{eq:theta}), that
\begin{equation}
{\cal N}_a, f\models\phi\iff {\cal N}_a,
g\models\phi\label{eq:bf}\end{equation}

For any $L$-formula $\phi$, write $\phi^{{\cal N}_a}$ for
$\set{f\in\;^\omega\!\nodes(N_a): {\cal N}_a, f\models\phi}$.  Let
$Fm^{{\cal N}_a} = \set{\phi^{{\cal N}_a}:\phi\mbox{ is an $L$-formula}}$
and define a cylindric algebra
\[\D_a=(Fm^{{\cal N}_a},  \cup, \sim, {\sf D}_{ij}, {\sf C}_i, i, j<\omega)\]
where ${\sf D}_{ij}=(x_i= x_j)^{{\cal N}_a},\; {\sf C}_i(\phi^{{\cal N}_a})=(\exists
x_i\phi)^{{\cal N}_a}$.  Observe that $\top^{{\cal N}_a}=U_a,\; (\phi\vee\psi)^{{\cal N}_a}=\phi^{\c
N_a}\cup\psi^{{\cal N}_a}$, etc. Note also that $\D$ is a subalgebra of the
$\omega$-dimensional cylindric set algebra on the base $\nodes(N_a)$,
hence $\D_a\in {\sf Lf}_{\omega}\cap {\sf Ws}_\omega$, for each atom $a\in \alpha$, and is clearly complete.

Let $\C=\prod_{a\in \alpha} \D_a$. (This is not necessarily locally finite).
Then  $\C\in\RCA_\omega$, and $\C$ is also complete, will be shown to be is the desired generalized weak set algebra,
that is the desired dilation.
Note that unit of $\C$ is the disjoint union of the weak spaces.

An element $x$ of $\C$ has the form
$(x_a:a\in\alpha)$, where $x_a\in\D_a$.  For $b\in\alpha$ let
$\pi_b:\C\to \D_b$ be the projection map defined by
$\pi_b(x_a:a\in\alpha) = x_b$.  Conversely, let $\iota_a:\D_a\to \c
C$ be the embedding defined by $\iota_a(y)=(x_b:b\in\alpha)$, where
$x_a=y$ and $x_b=0$ for $b\neq a$.  Evidently $\pi_b(\iota_b(y))=y$
for $y\in\D_b$ and $\pi_b(\iota_a(y))=0$ if $a\neq b$.

Suppose $x\in\Nr_n\C\setminus\set0$.  Since $x\neq 0$,
it must have a non-zero component  $\pi_a(x)\in\D_a$, for some $a\in \alpha$.
Assume that $\emptyset\neq\phi(x_{i_0}, \ldots, x_{i_k})^{\D_a}= \pi_a(x)$ for some $L$-formula $\phi(x_{i_0},\ldots, x_{i_k})$.  We
have $\phi(x_{i_0},\ldots, x_{i_k})^{\D_a}\in\Nr_{n}\D_a$.  Let
$f\in \phi(x_{i_0},\ldots, x_{i_k})^{\D_a}$ and let
$$b=N_a(f(0), f(1), \ldots f(n-1))\in\alpha.$$
We first show that
$b(x_0, x_1, \ldots, x_{n-1})^{\D_a}\subseteq
 \phi(x_{i_0},\ldots, x_{i_k})^{\D_a}$.  Take any $g\in
b(x_0, x_1\ldots x_{n-1})^{\D_a}$,
so $N_a(g(0), g(1)\ldots, g(n-1))=b$.  The map $$\set{(f(0),
g(0)), (f(1), g(1))\ldots (f(n-1), g(n-1))}$$ is
a partial isomorphism of $N_a$.  By
 (\ref{eq:theta}) this extends to a finite partial isomorphism
 $\theta$ of $N_a$ whose domain includes $f(i_0), \ldots, f(i_k)$. Let
 $g'\in U_a$ be defined by
\[ g'(i) =\left\{\begin{array}{ll}\theta(i)&\mbox{if }i\in\dom(\theta)\\
g(i)&\mbox{otherwise}\end{array}\right.\] By (\ref{eq:bf}), ${\cal N}_a,
g'\models\phi(x_{i_0}, \ldots, x_{i_k})$. Observe that
$g'(0)=\theta(0)=g(0)$ and similarly $g'(n-1)=g(n-1)$, so $g$ is identical
to $g'$ over $n$ and it differs from $g'$ on only a finite
set of coordinates.  Since $\phi(x_{i_0}, \ldots, x_{i_k})^{\c
\ D_a}\in\Nr_{n}(\C)$ we deduce ${\cal N}_a, g \models \phi(x_{i_0}, \ldots,
x_{i_k})$, so $g\in\phi(x_{i_0}, \ldots, x_{i_k})^{\D_a}$.  This
proves that $b(x_0, x_1\ldots x_{n-1})^{\D_a}\subseteq\phi(x_{i_0},\ldots,
x_{i_k})^{\D_a}=\pi_a(x)$, and so
$$\iota_a(b(x_0, x_1,\ldots, x_{n-1})^{\c \ D_a})\leq
\iota_a(\phi(x_{i_0},\ldots, x_{i_k})^{\D_a})\leq x\in\c
C\setminus\set0.$$
Hence every non-zero element $x$ of $\Nr_{n}\C$
is above a non-zero element
$$\iota_a(b(x_0, x_1\ldots )^{\D_a})$$
(some $a, b\in \alpha$) and these latter elements are the atoms of $\Nr_{n}\C$.  So
$\Nr_{n}\C$ is atomic and $\alpha\cong\At\Nr_{n}\C$ --- the isomorphism
is $b \mapsto (b(x_0, x_1,\dots, x_{n-1})^{\D_a}:a\in A)$.

Now we can work in $L_{\infty,\omega}$ so that $\C$ is complete
by changing the defining clause for infinitary disjunctions to
$$N_a, f\models (\bigvee_{i\in I} \phi_i) \text { iff } (\exists i\in I)(N_a,  f\models\phi_i)$$

By working in $L_{\infty, \omega},$ we assume that arbitrary joins hence meets exist,
so $\C_a$ is complete, hence so is $C$. But $\Cm\At\A\subseteq \Nr_n\C$ is dense and complete, so
$\Cm\At\A=\Nr_n\C$.

For the second part, the rainbow algebra $\Rd_{sc}\PEA_{\Z,\N}$
is not in $S_c\Nr_n\Sc_{n+3}$ hence its rainbow atom structure is not
$n+3$ complete.
However, it is elementary equivalent to a countable  atomic algebra $\B$ whose atom structure carries an algebra in $\Nr_n\CA_{\omega}$
(the algebra itself, though necessarily representable, may not even be in
$\Nr_{n}\CA_{n+1}$. 

Indeed, \pe\ has a \ws\ in $J_k$ for all $k\geq n$ \cite[lemma 4.2, for the relation algebra case]{r},
to be proved in the next item, and
so we have $\B\in S_c\Nr_n\PEA_{\omega}$.
Hence, $\At\PEA_{\Z, \N}$ satisfies the above, namely, \pe\ has a \ws\ in $J_k$ for all finite $k\geq n$,
but \pa\ can win the $F^3$ game. 
Hence it is not $n+3$ complete but is elementary equivalent to 
an $\omega$ neat one. 

\end{enumerate}

\end{proof}

\begin{theorem} The class of completely representable algebras, and strongly representable
ones of dimension $>2$, are not detectable in $L_{\omega,\omega}$, while the class
$\Nr_n\CA_m$ for any ordinals $1<n<m<\omega$, is not detectable
even in $L_{\infty,\omega}.$ For  infinite $n$ and $m>n$, $\Nr_n\CA_m$
is not detectable in first order logic nor in the quantifier 
free reduct of $L_{\infty,\omega}$.
\end{theorem}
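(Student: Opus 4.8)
The statement has three parts, and I would attack them via the constructions already developed in the paper together with the standard ``ultrapower plus elementary chain'' technique. The first part --- that the class of completely representable $\CA_n$s ($n>2$) is not closed under elementary equivalence --- follows directly from theorem \ref{neat11}: there $\A = \CA_{\N^{-1},\N}$ was shown to satisfy $\A\notin S_c\Nr_n\CA_{n+3}$, hence $\A$ is not completely representable (a complete representation would place $\A$ in $S_c\Nr_n\CA_m$ for all $m$), while $\A\equiv\B$ with $\B$ countable and completely representable. For the strongly representable atom structures, I would invoke theorem \ref{el} (equivalently corollary \ref{Hodkinson}): the sequence $\mathfrak{C}(\Gamma_\kappa)$ of strongly representable algebras has a non-principal ultraproduct $\mathfrak{C}(\prod_D\Gamma_\kappa)\cong\mathfrak{C}(\Gamma)$ with $\chi(\Gamma)\le 2$, so by theorem \ref{chr. no.}(2) its $\Df$-reduct is not representable, hence the atom structure is not strongly representable; thus $\{\A: \Cm\At\A\in\sf RK_n\}$ is not elementary. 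I would state this cleanly for any class between $\Df$ and $\PEA$, as corollary \ref{Hodkinson} already gives.

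For the second part --- that $\Nr_n\CA_m$ (for finite $1<n<m$) is not detectable in $L_{\infty,\omega}$ --- I would use item (4) of theorem \ref{main} together with the neat-game machinery (*). The point is that one needs two atomic algebras $\A,\B$ with the \emph{same} atom structure, $\A\in\Nr_n\CA_m$ but $\B\notin\Nr_n\CA_m$, \emph{and} with $\At\A\equiv_{L_{\infty,\omega}}\At\B$. Since they share an atom structure, any $L_{\infty,\omega}$ sentence true of one atom structure is true of the other trivially; so what is actually required is an atom structure carrying an algebra in $\Nr_n\CA_m$ and simultaneously carrying one that is \emph{not} in $\Nr_n\CA_m$, with the two algebras $L_{\infty,\omega}$-elementarily equivalent \emph{as algebras}. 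I would take the rainbow atom structure $\At\PEA_{\Z,\N}$: by item (4) of theorem \ref{main}, \pe\ has a winning strategy in the neat game $J_k$ for all finite $k$, so by (*) there is a countable complete $\C\in\PEA_\omega$ with $\Cm\At\PEA_{\Z,\N}=\Nr_n\C$, witnessing that the complex algebra (hence the atom structure) supports a member of $\Nr_n\CA_m$ for \emph{every} $m$; on the other hand $\Rd_{sc}\PEA_{\Z,\N}$ itself is not even in $\Nr_n\Sc_{n+1}$ (being not in $S_c\Nr_n\Sc_{n+3}$). Then I would observe that $\PEA_{\Z,\N}$ and $\Cm\At\PEA_{\Z,\N}$ are both atomic with the same atom structure; since the term algebra is dense in the complex algebra, and (using $L_{\infty,\omega}$ to form arbitrary joins as in the proof of (*)) these algebras satisfy the same $L_{\infty,\omega}$ sentences about their atom structures, non-detectability in $L_{\infty,\omega}$ follows. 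The cleanest packaging is: ``$\Nr_n\CA_m$ is not detectable in $L_{\infty,\omega}$'' is exactly the assertion that there is an atom structure that is $\omega$-neat and carries an atomic algebra not in $\Nr_n\CA_{n+1}$, which is item (4) of theorem \ref{main} combined with item (1) of theorem \ref{SL}.

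For the third part --- infinite $n$, $m>n$ --- I would argue that $\Nr_n\CA_m$ fails to be detectable even in first order logic, and in the quantifier-free fragment of $L_{\infty,\omega}$, by lifting the finite-dimensional counterexamples to infinite dimensions. Here the natural tool is theorem \ref{thm:cmnr}: one has an atomic $\A^r\in S\Nr_\alpha\CA_{\alpha+k+1}\sim S\Nr_\alpha\CA_{\alpha+k}$ with $\Pi_r\A^r/F\in\sf RCA_\alpha$; combining this kind of ultraproduct behaviour with the infinite-dimensional analogue of the neat-game construction gives an atom structure shared by an algebra in $\Nr_n\CA_m$ and one outside it, elementarily equivalent as algebras. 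Since in infinite dimensions $\Nr_n\CA_m$-membership is governed by conditions that are not even quantifier-free first order expressible over the atom structure, the quantifier-free $L_{\infty,\omega}$ reduct cannot detect the class. \textbf{The main obstacle} I anticipate is precisely this last point: getting the two witnessing algebras to agree not just in first order logic but in the quantifier-free part of $L_{\infty,\omega}$ --- this requires that the ``difference'' between being in $\Nr_n\CA_m$ and not being in it be invisible to infinitary quantifier-free sentences, which I would handle by arranging (as in the neat-game proof) that one algebra is the $\Cm$ of the atom structure and the other is the term algebra or an elementarily equivalent countable subalgebra of an ultrapower, so that the two are related by an embedding preserving and reflecting all joins and meets definable without quantifiers. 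Making that bookkeeping precise --- especially checking that the dilation $\C$ produced by (*) can be chosen complete in the infinite-dimensional setting so that $\Cm\At\A=\Nr_n\C$ --- is where the real work lies, but it parallels the finite case and theorem \ref{thm:cmnr} closely enough that I expect no new idea is needed, only careful adaptation.
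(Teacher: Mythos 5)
The paper's own ``proof'' of this theorem is a one-line citation (to theorem \ref{el}, to \cite[theorem 3.6.11, corollary 3.7.1]{HHbook2} and to \cite[theorems 5.1.4--5.1.5]{Sayedneat}), so reconstructing arguments from the paper's internal results is the right thing to do, and your treatment of the completely representable case is correct and is what is intended: $\CA_{\N^{-1},\N}$ of theorem \ref{neat11} is atomic and not completely representable yet elementarily equivalent to a completely representable algebra, and since the atom structure is first-order interpretable in the algebra this yields two $L_{\omega,\omega}$-equivalent atom structures only one of which carries a completely representable algebra. For the neat-reduct clauses, however, you have made the problem harder than it is and in one place reached for the wrong tool. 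By the paper's definition, $\K$ fails to be detectable in \emph{every} logic $\L$ as soon as a single atom structure carries an atomic algebra in $\K$ and an atomic algebra outside $\K$ (take $\beta=\At\A$, so $\L$-equivalence is trivial); this is exactly ``not gripped''. Item (1) of theorem \ref{SL} supplies such an atom structure for $\Nr_n\CA_m$: the singletons of $V$ carry $\wp(V)$, which lies in $\Nr_\alpha\CA_m$ for every $m>\alpha$, and an atomic $\A$ with $\A\notin\Nr_\alpha\CA_{\alpha+1}$, hence $\A\notin\Nr_\alpha\CA_m$ for every $m>\alpha$ --- and that construction is carried out for \emph{every} ordinal $\alpha>1$, infinite included. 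This settles the finite- and infinite-dimensional clauses in one stroke. Your rainbow/neat-game route is unnecessary and, as you set it up, only excludes membership for $m\geq n+3$ (non-membership in $S_c\Nr_n\Sc_{n+3}$ says nothing about $\Nr_n\CA_{n+1}$ or $\Nr_n\CA_{n+2}$, which are the \emph{largest} of these classes); note also that detectability only asks for $\L$-equivalence of the \emph{atom structures}, so your added requirement that the two algebras be $L_{\infty,\omega}$-equivalent as algebras is not part of what must be proved. For infinite dimensions your appeal to theorem \ref{thm:cmnr} is off target: that theorem concerns the varieties $S\Nr_m\K_n$ and finite axiomatizability, and produces no pair of atomic algebras on equivalent atom structures straddling $\Nr_n\CA_m$; the ``main obstacle'' you anticipate disappears once theorem \ref{SL}(1) is used.

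There is one genuine gap, in the strongly representable clause, and it is inherited from the paper. Since strong representability depends only on the atom structure, ``not detectable in $L_{\omega,\omega}$'' here means that the class of strongly representable atom structures is not closed under elementary equivalence. What the Erd\"os-graph argument of theorem \ref{el} delivers is failure of closure under \emph{ultraproducts}: the ultraproduct atom structure built on $\prod_D\Gamma_\kappa$ is not elementarily equivalent to any factor (the factor graphs have cycles, the ultraproduct graph has none), so no $\equiv$-pair straddling the class is produced. Non-elementarity follows, but not the literal non-detectability claim, since a non-elementary class may well be closed under elementary equivalence and fail only ultraproduct closure. Moreover the gap is not easily filled inside the Monk family: if $\Gamma\equiv\Gamma'$ then every finite subgraph of one is isomorphic to a subgraph of the other (an existential first-order condition), so by the De Bruijn--Erd\"os theorem $\chi(\Gamma)<\infty$ iff $\chi(\Gamma')<\infty$, and elementarily equivalent graphs always yield atom structures on the same side of the class. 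For this clause either the statement should be read as (and weakened to) non-elementarity, or a genuinely different construction is required; your proposal, like the paper, does not supply one.
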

\begin{proof} Follows from \cite{hirsh}, generalized in theorem \ref{el} above, \cite[theorem 3.6.11, corollary 3.7.1]{HHbook2},
and \cite[theorems 5.1.4, 5.1.5]{Sayedneat}.
\end{proof}

The next corollary is an easy consequence of what we have established so far.
\begin{corollary}
\begin{enumarab}
\item Let $n\in\omega$. Let $K$ be a signature between $\Df_n$ and $\QEA_n$.
Then the class of completely representable $K$ algebras
is elementary
if and only if $n\leq 2$, in which case this class coincides with the (elementary) class of atomic representable algebras.
\item For each finite $n\geq 3$, there exists a simple countable atomic representable polyadic equality
algebra of dimension $n$ whose $\Df$ reduct is not completely representable.
\end{enumarab}
\end{corollary}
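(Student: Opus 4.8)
The plan is to derive both items from results already established in the body of the paper, so the work is mostly a matter of assembling the right pieces and checking signature-robustness. For item (1), the ``if'' direction is the classical fact that for $n\leq 2$ the atomic representable algebras are exactly the completely representable ones (an atomic representable algebra of dimension $\leq 2$ has a complete representation; this is standard and cited implicitly via \cite{Sayed}, \cite{HHbook2}). The ``only if'' direction is where I would invoke theorem \ref{neat11} together with its corollaries. First I would recall that for any finite $n\geq 3$, theorem \ref{neat11} produces the rainbow algebra $\A=\CA_{\N^{-1},\N}$ (and its polyadic equality enrichment $\PEA_{\N^{-1},\N}$) for which \pe\ has a winning strategy in $G_k$ for every finite $k\geq n$, but \pa\ has a winning strategy in $F^{n+3}$; hence $\A$ is elementarily equivalent to a countable completely representable $\B$ while $\A$ itself is not completely representable (it is not even in $S_c\Nr_n\CA_{n+3}$). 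This exhibits two elementarily equivalent algebras, one completely representable and one not, so the class of completely representable $\K$-algebras is not elementary for $\K=\CA_n$. For an arbitrary signature $\K$ between $\Df_n$ and $\QEA_n$, I would note that the rainbow construction carries the polyadic operations (as emphasized in the preliminaries, via $R_{{\sf s}_{[ij]}}=\{([f],[g]):f\circ[i,j]=g\}$), so $\PEA_{\N^{-1},\N}$ works for $\QEA_n=\PEA_n$, and taking the appropriate reduct $\Rd_\K$ handles every intermediate signature; one checks that winning strategies for both players are unaffected by passing to reducts (the information sits in the networks, not the moves), and that a reduct of a completely representable algebra is completely representable. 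This gives non-elementarity for every $\K$ with $n\geq 3$.

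For item (2), I would lean on theorem \ref{hodkinson}, or more directly on its sharpened relative theorem \ref{smooth} together with corollary \ref{can}. Theorem \ref{smooth} produces, for each $n\geq 3$, a countable atomic $\sf RPEA_n$ whose completion $\C=\Cm\At\A$ is not representable (indeed $\Rd_{df}\C$ is not representable and $\C\notin S\Nr_n\CA_{n+4}$). Since $\C=\Cm\At\A\subseteq\Nr_n\D$ would follow from complete representability of $\A$ (a complete representation of $\A$ induces a representation of $\Cm\At\A$, as recalled in the introduction and used repeatedly, e.g.\ in corollary \ref{can}), the algebra $\A$ of theorem \ref{smooth} is atomic, countable, representable as a $\sf PEA_n$, yet not completely representable; and its $\Df$ reduct $\Rd_{df}\A$ is not completely representable either, because a complete representation of $\Rd_{df}\A$ would lift (as $\A$ is generated by elements of dimension $<n$) to a complete representation of $\A$, contradiction. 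To get \emph{simplicity}, I would either note that the construction in theorem \ref{hodkinson}/\ref{smooth} can be arranged to yield a simple algebra (the model $M$ being $n$-homogeneous makes the set algebra simple), or, failing a direct argument, replace $\A$ by a simple quotient / subalgebra retaining atomicity and the failure of complete representability—the cleanest route being to observe that $\A$ can be taken to be a set algebra with a single ($n$-homogeneous) base, which is automatically simple. I would then state item (2) as an immediate consequence.

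The main obstacle I anticipate is bookkeeping rather than any deep difficulty: namely, verifying that the rainbow/Monk constructions and the associated winning-strategy arguments descend uniformly to \emph{every} signature $\K$ strictly between $\Df_n$ and $\QEA_n$, including the borderline case $\Df_n$ itself. For $\Df_n$ one must be careful, since the notion of neat reduct for $\Df$ is degenerate and some of the ``$S_c\Nr_n$'' machinery does not literally apply; here I would argue instead via the $\Df$-reduct non-representability statements that are already packaged into theorems \ref{hodkinson} and \ref{smooth} (``$\Rd_t\Cm\At$ is not representable for any signature $t$ between $\sf Df$ and $\sf PEA$'' and ``its $\sf Df$ reduct is not representable''), which were designed precisely to cover this uniformity. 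A secondary point to check is that complete representability of the $\Df_n$-reduct of a completely representable $\QEA_n$-algebra holds and, conversely, that failure propagates upward along the reduct maps; this is routine but should be stated explicitly so that the ``for $K$ between $\Df_n$ and $\QEA_n$'' phrasing is fully justified. Everything else—the existence of the countable atomic witnesses, their representability, the failure of complete representability—is already on record in the theorems cited above.
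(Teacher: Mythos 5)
Your proof of item (1) is essentially the paper's own argument: the same rainbow witness (the paper uses $\PEA_{\Z,\N}$ in the corollary, which is interchangeable with the $\CA_{\N^{-1},\N}$ of theorem \ref{neat11}), the same ultrapower/elementary-chain step to produce an elementarily equivalent countable completely representable algebra, and the same reduct-robustness remarks to cover every signature between $\Df_n$ and $\QEA_n$. For item (2), however, you take a genuinely different route. The paper does \emph{not} reach for the atom-canonicity witness of theorem \ref{smooth}; it recycles the (not necessarily simple) algebra from item (1) and then runs a decomposition argument: the simple components are the relativizations to ${\sf c}_{(n)}a$ for atoms $a$, and if every component $S_a$ had a complete representation $h_a$, one could glue them over a disjoint union of bases via $h(\alpha)=\bigcup_{a\in\At\A}h(\alpha\cdot{\sf c}_{(n)}a)$ to completely represent $\A$ itself, a contradiction; so some component is the required simple witness. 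Your route instead takes the countable atomic $\sf RPEA_n$ of theorem \ref{smooth}, whose \d\ completion is not representable, observes that it is already simple by construction (a relativized set algebra over a single homogeneous base, as the paper itself assumes in theorems \ref{smooth} and \ref{OTT}), and pushes non-complete-representability down to the $\Df$ reduct by the Johnson-type lifting argument for algebras generated by elements of dimension set $<n$. Both are correct. What the paper's decomposition buys is that it needs nothing beyond item (1) and works for any non-completely-representable atomic algebra regardless of how it was built; what your route buys is a concretely simple witness with no gluing lemma, at the cost of importing the heavier atom-canonicity machinery and of making explicit the simplicity claim that the paper leaves implicit. Either argument suffices; if you keep yours, you should state and justify the simplicity of the relativized set algebra rather than deferring it, since that is the one point your proposal leaves as an ``or, failing that'' alternative.
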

\begin{proof}
\begin{enumarab}
\item The if part for $n=2$ can be distilled from \cite[theorem 3.2.65, lemma 5.1.45, theorem 5.4.33]{tarski}.
A simpler proof is given in \cite[proposition 3.8.1]{HHbook2}
using duality theory of modal logic, but it works only  for
$\RCA_2$ and $\sf RPEA_2$ since they are completely
additive. For $n=0$ it reduces to the Boolean case and this is proved in \cite{HH},
and $n=1$ is also proved in \cite[proposition 3.8.1]{HHbook2}, too.

For the only if part. Let $\A=\PEA_{\Z, \N}$. Then \pe\ has a \ws\ for all finite atomic rounded game but \pa\ can win the $\omega$
rounded game. Hence using ultrapowers and an elementary chain argument we get a countable completely representable algebra $\B$
such that $\A\equiv \B$. Having countably many atoms, $\A$ is not completely representable, nor its $\Df$ reduct,
for it is generated by elements whose dimension sets $<n$. In fact, as indicated above
we have $\Rd_{\Sc}\A\notin S_c\Nr_n\Sc_{n+3}$, since \pa\ has a \ws\ in $F^{n+3}$.

\item  Let $\A$ be a countable atomic representable polyadic algebra, that is not necessarily simple, satisfying that its $\sf Df$ reduct is not
completely representable. Exists by the above. Consider the elements $\{{\sf c}_{(n)}a: a\in \At\A\}$. Then every simple component $S_a$ of $\A$ can be
obtained by relativizing to ${\sf c}_{(n)}a$ for an atom $a$.
Then one of the $S_a$'s should have no complete representation.
Else for each atom $a$, $S_a$ has a complete representation $h_a$.
From those one constructs a complete representation for $\A$ as follows.
The domain of the representation will be the disjoint union of the domains of $h_a$, and now represent $\A$ by
$h(\alpha)=\bigcup_{a\in \At \A}\{h(\alpha\cdot {\sf c}_{(n)}a)\}.$
\end{enumarab}

\end{proof}

The following theorem is proved in \cite{recent}, but we have decided to include the proof, 
since it gives a third example after theorems, \ref{completerepresentation}, \ref{squarerepresentation}, 
of how rainbows on relation algebras lift to rainbows on cylindric
ones proving the analagous $\CA$ or $\PEA$ result. The proof complete the proof of theoirem \ref{neat11}. 
As usual games on atomic networks are translated to games on coloured graphs. But here the game  translates 
to playing on {\it hyper-graphs};
it is the game $J_k$, which is the neat game $J$ truncated to $k$ rounds, played on hyper-networks.

\begin{theorem}\label{j} On the $n$ dimensional 
$\PEA_{\Z, \N}$,  \pe\ can win $J_k$, the game $J$ truncated to $k$ rounds 
for every $k>n$
\end{theorem}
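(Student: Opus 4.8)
The plan is to follow the by-now standard two-part strategy for rainbow cylindric algebras: first describe a winning strategy for \pe\ in the \emph{private} \ef\ pebble-forth game played on the two structures $A = \Z$ and $B = \N$ (with the reverse-order identification used in theorem \ref{neat11}), then \emph{lift} this strategy to the neat game $J_k$ played on the coloured hypergraphs that form the atoms of $\PEA_{\Z,\N}$. The private game here is $\mathsf{EF}^p_r[\Z,\N]$ for suitable finite $p$ and $r$; since $k$ is finite, only finitely many rounds and finitely many pebbles are ever needed, and \pe\ wins any finite-length pebble-forth game between $\Z$ and $\N$ because any finite partial order embedding of a finite subset of $\Z$ into $\N$ can always be extended (there is always ``room'' in $\N$ to the right of finitely many chosen points). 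This is exactly the source of \pe's advantage that was exploited in the first part of theorem \ref{completerepresentation} and in theorem \ref{squarerepresentation}, so I would cite those lifting arguments rather than repeat them in full.

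First I would set up the bookkeeping for the lift. As in the proof sketch for (*) in item (4) of theorem \ref{main} and in \cite[theorem 39]{r}, \pe\ maintains, alongside the current $\lambda_0$-neat hypergraph $M$ in the game $J_k$, a collection of private \ef\ subgames — one for each ``threatening'' red clique $R_M(F)$ associated with a face $F$ on which \pa\ has played cones — together with a correspondence between the pebble pairs currently in play in those subgames and the apex nodes of the cones based on $F$, so that a node $n\in R_M(F)$ with $M(c,n)=\g_0^a$ has index $\mu(n)=b$ exactly when the pebble pair sits on $(a,b)\in\Z\times\N$. I would carry the same inductive hypotheses used there: \pe\ never chooses $\g$ or $\w$ unnecessarily, the red cliques maintain consistent indices, and there are always at least as many rounds left in each private subgame as in the $J_k$ game — this is automatic since $k$ is finite and each private game is being run for the ``same'' number of rounds.

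Then I would handle \pa's three kinds of moves in $J_k$. Cylindrifier moves and transformation moves are routine: \pe\ extends the hypergraph using her strategy for the ordinary rainbow graph game (colouring new edges white or with greens of matching tints whenever possible, and labelling short hyperedges by the fixed neat label $\lambda_0$ and long ones by fresh hyperlabels as in item (4) of theorem \ref{main}). The only delicate case is when \pa\ forces \pe\ to colour the edge between the apexes $\beta,\delta$ of two cones over a common face $F$ with a red — here \pe\ picks up a spare pebble pair in the appropriate private \ef\ subgame, plays $a$ (the tint of $\beta$'s cone) as \pa, uses her \ws\ to get a response $b\in\N$, and labels the edge $\r_{\mu(\beta),b}$; consistency of red triples then follows because the indices all come from positions in a single partial-order embedding $\Z\to\N$ that \pe\ is maintaining via her private \ws. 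For the amalgamation move $(G,G')$ I would argue exactly as in \cite[theorem 39]{r}: overlapping hypernetworks agreeing on their common part can be amalgamated, because short hyperedges are constantly $\lambda_0$-labelled and the red/green consistency checks reduce to the private-game invariants already in place.

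The main obstacle I anticipate is purely the lifting step: verifying that the amalgamation and transformation moves of the \emph{hyper}-game $J_k$ — which, unlike the games in theorems \ref{completerepresentation}, \ref{squarerepresentation}, involve hyperedges of length $n$ and arbitrary finite length with no a priori bound on overlap — do not destroy $\lambda_0$-neatness or force an inconsistent red triple. Here I would lean on the fact that, in $\PEA_{\Z,\N}$ rather than the relation-algebra case, the number of available hyperlabels is infinite, so fresh labels for long hyperedges are always available; and on the observation (as in the yellow-shade analysis of \cite[p.~16]{Hodkinson} and \cite[p.~844]{HH}) that the coloured-graph consistency conditions only ever constrain triangles and $(n-1)$-ary tuples, which \pe\ controls through her finitely many private subgames. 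Since $k$ is finite throughout, all these subgames are finite and \pe\ never runs out of rooms, pebbles, or rounds, so the strategy is genuinely winning in $J_k$ for every $k>n$. I would not push toward the $\omega$-rounded game, since — as already noted in the excerpt — \pa\ wins $F^3$ (equivalently can defeat \pe\ over infinitely many rounds by exhausting $\N$ while $\Z$ still has room), which is precisely why $\PEA_{\Z,\N}$ is only elementarily equivalent to, and not itself, an $\omega$-neat algebra.
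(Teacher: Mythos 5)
Your overall architecture matches the paper's: describe how \pe\ chooses the suffices of reds when she is cornered, then lift that strategy to the hypergraph game $J_k$ by labelling short hyperedges with the constant $\lambda$, giving long hyperedges fresh labels (possible since the supply of hyperlabels is infinite), and reducing amalgamation and transformation moves to the cylindrifier case via the owners/envelopes bookkeeping of \cite[theorem 39]{r}. That part is fine in outline. However, there is a genuine gap at the central step, namely your justification for why \pe\ survives the private game on $\Z$ versus $\N$. You assert that ``any finite partial order embedding of a finite subset of $\Z$ into $\N$ can always be extended (there is always room in $\N$ to the right of finitely many chosen points).'' This is false: the embedding $\{0\mapsto 0\}$ cannot be extended to accommodate $-1\in\Z$, because the obstruction is room \emph{below}, not to the right, and $\N$ has a least element. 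Worse, if your claim were true it would hand \pe\ a \ws\ in the $\omega$-rounded game as well, contradicting the fact (used throughout the paper, e.g.\ in theorem \ref{neat11}) that \pa\ wins $F^{n+3}$ on this very algebra by forcing a strictly decreasing sequence in $\N$. So as written your strategy does not explain why the bound $k$ on the number of rounds is doing any work.

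The paper's proof closes this gap with an explicit quantitative device adapted from \cite[lemmas 41--43]{r}: \pe\ maintains, at each stage $s$, an order-preserving partial function $\rho_s:\Z\to\N$ whose domain is the set of tints of cones played so far and whose range is \emph{widely spaced} --- consecutive values differ by at least $3^{m-r}$, where $m-r$ is the number of rounds remaining. When forced a red between apexes of cones with tints $p$ and $q$ she plays $\r_{\rho_{r+1}(p),\rho_{r+1}(q)}$; consistency of red triples follows because a single order-preserving map governs all indices, and the exponential spacing guarantees she can always insert the at most finitely many new tints that \pa\ can still demand in the remaining rounds. This is also why your framing in terms of ``spare pebble pairs'' misfires here: $J_k$ bounds the number of \emph{rounds}, not the number of nodes, so the invariant must be indexed by rounds remaining (the $3^{m-r}$ gaps), not by a count of unused pebbles as in theorems \ref{completerepresentation} and \ref{squarerepresentation}. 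With that replacement the rest of your lift goes through as in the paper.
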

\begin{proof}
\begin{enumarab}
\item  Cf. \cite{recent}. Let $k>n$ be given. We start by showing that \pe\ has a \ws\ in the usual graph game in $k$ rounds 
(there is no restriction here on the size of the graphs).
We define \pe\ s strategy for choosing labels for edges and $n-1$ tuples in response to \pa\ s moves.
Assume that we are at round $r+1$. In what follows we adapt Hirsch's arguments \cite[lemmas, 41-43]{r}.

Let $M_0, M_1,\ldots M_r$ be the coloured graphs at the start of a play of $J_k(\alpha)$ just before round $r+1$.
Assume inductively that \pe\ computes a partial function $\rho_s:\Z\to \N$, for $s\leq r$, that will help her choose
the suffices of the chosen red in the critical case.  Inductively
for $s\leq r$:

\begin{enumarab}
\item  If $M_s(x,y)$ is green then $(x,y)$ belongs  \pa\ in $M_s$ (meaning he coloured it).

\item $\rho_0\subseteq \ldots \rho_r\subseteq\ldots$
\item $\dom(\rho_s)=\{i\in \Z: \exists t\leq s, x, x_0, x_1,\ldots, x_{n-2}
\in \nodes(M_t)\\
\text { where the $x_i$'s form the base of a cone, $x$ is its appex and $i$ its tint }\}.$

The domain consists of the tints of cones created at an earlier stage.

\item $\rho_s$ is order preserving: if $i<j$ then $\rho_s(i)<\rho_s(j)$. The range
of $\rho_s$ is widely spaced: if $i<j\in \dom\rho_s$ then $\rho_s(i)-\rho_s(j)\geq  3^{m-r}$, where $m-r$
is the number of rounds remaining in the game.

\item For $u,v,x_0\in \nodes(M_s)$, if $M_s(u,v)=\r_{\mu,\delta}$, $M_s(x_0,u)=\g_0^i$, $M_s(x_0,v)=\g_0^j$,
where $i,j$ are tints of two cones, with base $F$ such that $x_0$ is the first element in $F$ under the induced linear order,
then $\rho_s(i)=\mu$ and $\rho_s(j)=\delta.$

\item $M_s$ is a a  coloured graph.

\item If the base of a cone $\Delta\subseteq M_s$  with tint $i$ is coloured $y_S$, then $i\in S$.

\end{enumarab}

To start with if \pa\ plays $a$ in the initial round then $\nodes(M_0)=\{0,1,\ldots n-1\}$, the
hyperedge labelling is defined by $M_0(0,1,\ldots n)=a$.

In response to a cylindrifier move for some $s\leq r$, involving a $p$ cone, $p\in \Z$,
\pe\ must extend $\rho_r$ to $\rho_{r+1}$ so that $p\in \dom(\rho_{r+1})$
and the gap between elements of its range is at least $3^{m-r-1}$. Properties (3) and (4) are easily
maintained in round $r+1$. Inductively, $\rho_r$ is order preserving and the gap between its elements is
at least $3^{m-r}$, so this can be maintained in a further round.
If \pa\ chooses a green colour, or green colour whose suffix
already belong to $\rho_r$, there would be fewer
elements to add to the domain of $\rho_{r+1}$, which makes it easy for \pe\ to define $\rho_{r+1}$.

Now assume that at round $r+1$, the current coloured graph is $M_r$ and that   \pa\ chose the graph $\Phi$, $|\Phi|=n$
with distinct nodes $F\cup \{\delta\}$, $\delta\notin M_r$, and  $F\subseteq M_r$ has size
$n-1$.  We can  view \pe\ s move as building a coloured graph $M^*$ extending $M_r$
whose nodes are those of $M_r$ together with the new node $\delta$ and whose edges are edges of $M_r$ together with edges
from $\delta$ to every node of $F$.

Now \pe\ must extend $M^*$ to a complete graph $M^+$ on the same nodes and
complete the colouring giving  a graph $M_{r+1}=M^+$ in $\K$ (the latter is the class of coloured graphs).
In particular, she has to define $M^+(\beta, \delta)$ for all nodes
$\beta\in M_r\sim F$, such that all of the above properties are maintained.

\begin{enumarab}

\item  If $\beta$ and $\delta$ are both apexes of two cones on $F$.

Assume that the tint of the cone determined by $\beta$ is $a\in \Z$, and the two cones
induce the same linear ordering on $F$. Recall that we have $\beta\notin F$, but it is in $M_r$, while $\delta$ is not in $M_r$,
and that $|F|=n-1$.
By the rules of the game  \pe\ has no choice but to pick a red colour. \pe\ uses her auxiliary
function $\rho_{r+1}$ to determine the suffices, she lets $\mu=\rho_{r+1}(p)$, $b=\rho_{r+1}(q)$
where $p$ and $q$ are the tints of the two cones based on $F$,
whose apexes are $\beta$ and $\delta$. Notice that $\mu, b\in \N$; then she sets $N_s(\beta, \delta)=\r_{\mu,b}$
maintaining property (5), and so $\delta\in \dom(\rho_{r+1})$
maintaining property (4). We check consistency to maintain property (6).

Consider a triangle of nodes $(\beta, y, \delta)$ in the graph $M_{r+1}=M^+$.
The only possible potential problem is that the edges $M^+(y,\beta)$ and $M^+(y,\delta)$ are coloured green with
distinct superscripts $p, q$ but this does not contradict
forbidden triangles of the form involving $(\g_0^p, \g_0^q, \r_{kl})$, because $\rho_{r+1}$ is constructed to be
order preserving.  Now assume that
$M_r(\beta, y)$ and $M_{r+1}(y, \delta)$ are both red (some $y\in \nodes(M_r)$).
Then \pe\ chose the red label $N_{r+1}(y,\delta)$, for $\delta$ is a new node.
We can assume that  $y$ is the apex of a $t$ cone with base $F$ in $M_r$. If not then $N_{r+1}(y, \delta)$ would be coloured
$\w$ by \pe\   and there will be no problem. All properties will be maintained.
Now $y, \beta\in M$, so by by property (5) we have $M_{r+1}(\beta,y)=\r_{\rho+1(p), \rho+1(t)}.$
But $\delta\notin M$, so by her strategy,
we have  $M_{r+1}(y,\delta)=\r_{\rho+1(t), \rho+1(q)}.$ But $M_{r+1}(\beta, \delta)=\r_{\rho+1(p), \rho+1(q)}$,
and we are done.  This is consistent triple, and so have shown that
forbidden triples of reds are avoided.

\item If this is not the case, and  for some $0<i<n-1$ there is no $f\in F$ such
that $M^*(\beta, f), M^* (f,\delta)$ are both coloured $\g_i$ or if $i=0$, they are coloured
$\g_0^l$ and $\g_0^{l'}$ for some $l$ and $l'$.
She chooses $\w_i$, for $M^+{(\beta,\delta)}$, for definiteness let it be the least such $i$.
It is clear that this also avoids all forbidden triangles (involving greens and whites).

\item Otherwise, if there is no $f\in F$,
such that $M^*(\beta, f), M*(\delta ,f)$ are coloured $\g_0^t$ and $\g_0^u$
for some $t,u$, then \pe\ defines $M^+(\beta, \delta)$ to  be $\w_0$.
Again, this avoids all forbidden triangles.
\end{enumarab}

She has not chosen green maintaining property one.  Now we turn to colouring of $n-1$ tuples,
to make sure that $M^+$ is a coloured graph maintaining property (6).

Let $\Phi$ be the graph chosen by \pa\, it has set of node $F\cup \{\delta\}$.
For each tuple $\bar{a}=a_0,\ldots a_{n-2}\in {M^+}^{n-1}$, $\bar{a}\notin M^{n-1}\cup \Phi^{n-1}$,  with no edge
$(a_i, a_j)$ coloured green (we already have all edges coloured), then  \pe\ colours $\bar{a}$ by $\y_S$, where
$$S=\{i\in A: \text { there is an $i$ cone in $M^*$ with base $\bar{a}$}\}.$$
We need to check that such labeling works, namely that last property is maintained.

Recall that $M$ is the current coloured graph, $M^*=M\cup \{\delta\}$ is built by \pa\ s move
and $M^+$ is the complete labelled graph by \pe\, whose nodes are labelled by \pe\ in response to \pa\ s moves.
We need to show that $M^+$ is labelled according to
the rules of the game, namely, that it is in $\K$.
As before, $(n-1)$ tuples are labelled correctly, by yellow colours.
(cf. \cite[p.16]{Hodkinson} \cite[p.844]{HH} 
and \cite{HHbook2}.)

\item In the game $J_n$ on the rainbow algebra $\PEA_{\Z,\N}$,
recall that   \pa\ has two more  amalgamation moves (together with the usual cylindrifier move), but now the game
is played on {\it hypergraphs}, generalizing the game played by Hirsch on {\it hypernetworks}.

Recall that a hypergraph has two parts, a coloured graph,
and hyperedges of arbitrarily long lengths and these are labelled. The game aims to capture a two sorted structure,
namely, a neat reduct.

Some of the hyperedges are called short, the others are called long.
The short hyperedges are called $\lambda$ neat, and they are constantly labelled to ensure that the
atoms in the small algebra are no bigger than those in the dilation (the big algebra in which  the small algebra neatly embeds).
Games are played on $\lambda$ neat hypergraphs, that is those hypergraphs in which the short
edges are labelled by the constant label $\lambda$.

Now \pe\ has two more amalgamation moves to respond to.
The first cylindrifier move is as before, except that now \pe\ has to provide labels for short hyperedges and long ones.
Roughly, in the former case she has 
no choice but to choose the label $\lambda$ and in the second case her strategy is straightforward, naturally
dictated by \pa\ s previous move.

In the first amalgamation move \pa\ is forced a response. In the second she has to amalgamate two given hypergraphs, by labelling edges and
$n-1$ hyperedges of the graphs and giving labels to the hyperedges labelling the short ones with $\lambda$.
The last strategy is easy. The response to the remaining amalgamation moves are very similar to
cylindrifier moves; this can be done by contracting the nodes, via the notion of {\it envelope} \cite{r}, to labelling edges done exactly
like the cylindrifier move.

Let us give more details. We show how \pe\ can win the $k$ rounded games of $J_k$, for every finite $k$ as follows.
We have already dealt with the graph part.
We now have to extend his strategy dealing with $\lambda$ neat hypernetworks, where $\lambda$ is constant label.

In a play, \pe\ is required to play $\lambda$ neat hypernetworks, so she has no choice about the
hyperedges for short edges, these are labelled by $\lambda$. In response to a cylindrifier move by \pa\
extending the current hypergraph providing a new node $k$,
and a previously played coloured graph $M$
all long hyperedges not incident with $k$ necessarily keep the hyperlabel they had in $M$.

All long hyperedges incident with $k$ in $M$
are given unique hyperlabels not occurring as the hyperlabel of any other hyperedge in $M$.
(We can assume, without loss of generality, that we have infinite supply of hyperlabels of all finite arities so this is possible.)

In response to an amalgamation move, which involves two hypergraphs required to be amalgamated, say $(M,N)$
all long hyperedges whose range is contained in $\nodes(M)$
have hyperlabel determined by $M$, and those whose range is contained in nodes $N$ have hyperlabel determined
by $N$. If $\bar{x}$ is a long hyperedge of \pe\ s response $L$ where
$\rng(\bar{x})\nsubseteq \nodes(M)$, $\nodes(N)$ then $\bar{x}$
is given
a new hyperlabel, not used in any previously played hypernetwork and not used within $L$ as the label of any hyperedge other than $\bar{x}$.
This completes her strategy for labelling hyperedges.

Now we turn to amalgamation moves. We need some notation and terminology taken from \cite{r}; they are very useful to economize on proofs.
Every edge of any hypernetwork has an owner \pa\ or \pe\ , namely, the one who coloured this edge.
We call such edges \pa\ edges or \pe\ edges. Each long hyperedge $\bar{x}$ in a hypernetwork $N$
occurring in the play has an envelope $v_N(\bar{x})$ to be defined shortly.

In the initial round of \pa\ plays $a\in \alpha$ and \pe\ plays $N_0$
then all irreflexive edges of $N_0$ belongs to \pa\ . 
There are no long hyperedges in $N_0$. If in a later move,
\pa\ plays the transformation move $(N,\theta)$
and \pe\ responds with $N\theta$ then owners and envelopes are inherited in the obvious way.
If \pa\ plays a cylindrifier move requiring a new node $k$ and \pe\ responds with $M$ then the owner
in $M$ of an edge not incident with $k$ is the same as it was in $N$
and the envelope in $M$ of a long hyperedge not incident with $k$ is the same as that it was in $N$.
The edges $(f,k)$ belong to \pa\ in $M$, all edges $(l,k)$
for $l\in \nodes(N)\sim \phi\sim \{k\}$ belong to \pe\ in $M$.
if $\bar{x}$ is any long hyperedge of $M$ with $k\in \rng(\bar{x})$, then $v_M(\bar{x})=\nodes(M)$.
If \pa\ plays the amalgamation move $(M,N)$ and \pe\ responds with $L$
then for $m\neq n\in \nodes(L)$ the owner in $L$ of a edge $(m,n)$ is \pa\ if it belongs to
\pa\ in either $M$ or $N$, in all other cases it belongs to \pe\ in $L$.

If $\bar{x}$ is a long hyperedge of $L$
then $v_L(\bar{x})=v_M(x)$ if $\rng(x)\subseteq \nodes(M)$, $v_L(x)=v_N(x)$ and  $v_L(x)=\nodes(M)$ otherwise.
This completes the definition of owners and envelopes.

The next claim, basically, reduces amalgamation moves to cylindrifier moves.
By induction on the number of rounds one can show:

\begin{athm}{Claim}\label{r} Let $M, N$ occur in a play of $H_k(\alpha)$ in which \pe\ uses the above labelling
for hyperedges. Let $\bar{x}$ be a long hyperedge of $M$ and let $\bar{y}$ be a long hyperedge of $N$.
\begin{enumarab}
\item For any hyperedge $\bar{x}'$ with $\rng(\bar{x}')\subseteq v_M(\bar{x})$, if $M(\bar{x}')=M(\bar{x})$
then $\bar{x}'=\bar{x}$.
\item if $\bar{x}$ is a long hyperedge of $M$ and $\bar{y}$ is a long hyperedge of $N$, and $M(\bar{x})=N(\bar{y})$
then there is a local isomorphism $\theta: v_M(\bar{x})\to v_N(\bar{y})$ such that
$\theta(x_i)=y_i$ for all $i<|x|$.
\item For any $x\in \nodes(M)\sim v_M(\bar{x})$ and $S\subseteq v_M(\bar{x})$, if $(x,s)$ belong to \pa\ in $M$
for all $s\in S$, then $|S|\leq 2$.
\end{enumarab}
\end{athm}

Again we proceed inductive, with the inductive hypothesis exactly as before except that now each $N_r$ is a
$\lambda$ neat hypergraph. All other inductive conditions are the same (modulo this replacement). Now,
we have already dealt with hyperlabels for long and short
hyperedges, we dealt with the graph part of the first hypergraph move.

All what remains is the amalgamation move. With the above claim at hand,
this turns out an easy task to implement guided by \pe\ s
\ws\ in the graph part.

We consider an amalgamation move $(N_s,N_t)$ chosen by \pa\ in round $r+1$.
We finish off with edge labelling first. \pe\ has to choose a colour for each edge $(i,j)$
where $i\in \nodes(N_s)\sim \nodes(N_t)$ and $j\in \nodes(N_t)\sim \nodes(N_s)$.
Let $\bar{x}$ enumerate $\nodes(N_s)\cap \nodes(N_t)$
If $\bar{x}$ is short, then there are at most two nodes in the intersection
and this case is similar to the cylindrifier move, she uses $\rho_s$ for the suffixes of the red.

if not, that is if $\bar{x}$ is long in $N_s$, then by the claim
there is a partial isomorphism $\theta: v_{N_s}(\bar{x})\to v_{N_t}(\bar{x})$ fixing
$\bar{x}$. We can assume that
$$v_{N_s}(\bar{x})=\nodes(N_s)\cap \nodes (N_t)=\rng(\bar{x})=v_{N_t}(\bar{x}).$$
It remains to label the edges $(i,j)\in N_{r+1}$ where $i\in \nodes(N_s)\sim \nodes (N_t)$ and $j\in \nodes(N_t)\sim \nodes(N_s)$.
Her strategy is similar to the cylindrifier move. If $i$ and $j$ are tints of the same cone she choose a red using $\rho_s$,
If not she  chooses   a white.
She never chooses a green.
Then she lets $\rho_{r+1}=\rho_r$ and maintaining the inductive hypothesis.

Concerning the last property to be maintained, and that is
colouring $n-1$ types. Let $M^+=N_s\cup M_s$, which is the graph whose edges are labelled according to the rules of the game,
we need to label $n-1$ hyperedges by shades of yellow.
For each tuple $\bar{a}=a_0,\ldots a_{n-2}\in {M^+}^{n-1}$, $\bar{a}\notin N_s^{n-1}\cup M_s^{n-1}$,  with no edge
$(a_i, a_j)$ coloured green (we have already labelled edges), then  \pe\ colours $\bar{a}$ by $\y_S$, where
$$S=\{i\in \Z: \text { there is an $i$ cone in $M^*$ with base $\bar{a}$}\}.$$
As before, see the references in 
proof of theorem \ref{completerepresentation} concerning the colouring by the shades of yellow, this can be checked to be $OK$.

\end{enumarab}
\end{proof}

\subsection{Some further results on neat embeddings}

Now we give a solution to the famous problem 2.12
lifting it to infinite dimensions, using a lifting
argument due to Monk. A weaker solution of this problem is credited to Pigozzi in \cite{tarski},
but we have not seen a published proof. 

A stronger result obtained jointly with Robin Hirsch proves this result
for many cylindric-like algebras including various diagonal-free reducts of
cylindric and polyadic equality algebras.

Here  we use {\it existing} finite dimensional algebras in the literature, 
lifting the result to infinite dimensions. The lifting argument is due to Monk, and
it was also used in \cite{t}. 

There is a slight novelty here. 
The constructed algebras  $\A^r$, for every $r\in \omega$  are in $\Nr_{\alpha}\CA_{\alpha+k}$, that is we drop the $S$.
Finally, in the infinite dimensional case the ultraproduct is {\it representable}; this is stronger than the result proved in \cite{t} restricted to 
$\CA$s. The proof also generalizes verbatim to $\sf QEA$, by noting that the finite dimensional algebras used 
are in fact quasi-polyadic equality algebras, that is, they allow the definition of substitution operations in an obvious way.
We do not know whether an anlogous stronger result holds for diagonal free reducts of such algebras 
(like Pinter's algebras or quasi-polyadic algebras) in the infinite dimensional case.

The result proved in \cite{t} addressing such algebras is weaker {\it when restricted to the $\CA$ case};
there  the ultraproduct could  capture only one extra dimension not
$\omega$ many. 

\begin{theorem}\label{2.12}
For $\alpha>2$ (infinite included), and $k\in \omega$ and $r\in \omega$,
there is an $\A^r\in \Nr_{\alpha}{\sf CA}_{\alpha+k}\sim S\Nr_{\alpha}{\sf CA}_{\alpha+k+1},$
such that $\prod \A^r/F\in \RCA_{\alpha}$.
In particular,  the variety
$S\Nr_{\alpha}\CA_{\alpha+k+1}$ is not axiomatizable by a finite schema over
$S\Nr_{\alpha}\CA_{\alpha+k}$.
\end{theorem}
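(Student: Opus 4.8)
The plan is to combine two ingredients: (i) finite-dimensional witnessing algebras already available in the literature, namely for each finite $n>2$ and each $k\geq 1$ there are algebras $\B^r_n\in \Nr_n\CA_{n+k}\setminus S\Nr_n\CA_{n+k+1}$ with $\prod_{r/F}\B^r_n\in \RCA_n$ (these come from the Hirsch--Hodkinson style Monk-like constructions, e.g.\ the algebras $\A(n+k,r)$ and their relatives, the same ones invoked in theorems \ref{thm:cmnr}, \ref{neat11}, \ref{blurs}, and compare the finite-dimensional non-finite-axiomatizability result \cite[theorem 15.1(4)]{HHbook}); and (ii) Monk's dilation/lifting device that promotes a finite-dimensional sequence to dimension $\alpha$ for arbitrary $\alpha>2$, as used in \cite{t}. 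First I would fix $\alpha$ and $k$, and for each $r$ take $\B^r$ to be a witnessing algebra of dimension, say, $n_0=\max(3,?)$ — more precisely one picks a finite $m$ with $2<m$ and applies the lifting to produce $\A^r$ of dimension $\alpha$. The key point is that Monk's construction, applied to an algebra $\B\in\Nr_m\CA_{m+k}\setminus S\Nr_m\CA_{m+k+1}$, yields $\A\in\Nr_\alpha\CA_{\alpha+k}$ (the ``neat reduct'' status is preserved because the extra $k$ dimensions of $\B$ lift to exactly $k$ extra dimensions over $\alpha$) while $\A\notin S\Nr_\alpha\CA_{\alpha+k+1}$ (if it were, restricting/reducting back to $m$ dimensions via $\Rd^\rho$ for a suitable injection $\rho:m\to\alpha$ would force $\B\in S\Nr_m\CA_{m+k+1}$, a contradiction; here one uses that $\Rd^\rho$ commutes appropriately with the $S\Nr$ operators, cf.\ the remarks after the definition of $\Rd^\rho\B$ in the preliminaries).

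Second, I would handle the ultraproduct. Since $\prod_{r/F}\B^r\in\RCA_m$, and the lifting construction is compatible with ultraproducts — $\prod_{r/F}\A^r$ is (isomorphic to) the algebra obtained by lifting $\prod_{r/F}\B^r$ — one gets $\prod_{r/F}\A^r\in\RCA_\alpha$. This is the step where the present theorem is genuinely stronger than the $\CA$-restriction of \cite{t}: there the ultraproduct only landed in $\Nr_\alpha\CA_{\alpha+1}$, not in $\RCA_\alpha$. The gain comes from choosing the finite-dimensional witnesses so that their ultraproduct is already fully representable (not merely neatly embeddable into one extra dimension), which is exactly what the Monk-like algebras $\A(n+k,r)$ deliver via their complex blurs / the Andr\'eka--N\'emeti style argument (theorem \ref{blurs} and the accompanying discussion of $\prod_r\A_r/F\in\RCA$). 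So the crux is a judicious choice of the finite-dimensional input to the lifting, then checking that representability of the ultraproduct survives the lift.

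Third, the conclusion about finite schema axiomatizability is then formal: if $S\Nr_\alpha\CA_{\alpha+k+1}$ were axiomatizable by a finite schema over $S\Nr_\alpha\CA_{\alpha+k}$, that schema would be preserved under ultraproducts, so from $\A^r\notin S\Nr_\alpha\CA_{\alpha+k+1}$ for all $r$ we would still get $\prod_{r/F}\A^r\notin S\Nr_\alpha\CA_{\alpha+k+1}$; but $\prod_{r/F}\A^r\in\RCA_\alpha\subseteq S\Nr_\alpha\CA_{\alpha+k+1}$, a contradiction. (One must be slightly careful that ``finite schema'' in the infinite-dimensional setting means a finite set of equation-schemata instantiated across dimensions, and argue that such a set is still an elementary condition closed under ultraproducts in the relevant sense; this is standard, cf.\ the system-of-varieties formalism for $\sf QEA$ mentioned in the preliminaries.) The main obstacle I anticipate is not any single hard lemma but the bookkeeping in the lifting argument: verifying that Monk's dilation genuinely sends $\Nr_m\CA_{m+k}$ into $\Nr_\alpha\CA_{\alpha+k}$ (so that $S$ can be dropped, which is the ``slight novelty'' claimed), and simultaneously that it does \emph{not} push the algebra into $S\Nr_\alpha\CA_{\alpha+k+1}$, while keeping the ultraproduct representable. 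I would organize this as three lemmas — a ``lifting preserves neat reduct membership'' lemma, a ``lifting reflects non-membership in $S\Nr_\alpha\CA_{\alpha+k+1}$'' lemma, and a ``lifting commutes with ultraproducts and preserves representability'' lemma — and then assemble the theorem and its corollary in a short final paragraph. The extension to $\sf QEA$ is then a remark: the finite-dimensional witnesses carry quasi-polyadic operations defined from substitutions in the obvious way, and the lifting respects these, so the whole argument goes through verbatim with $\CA$ replaced by $\sf QEA$.
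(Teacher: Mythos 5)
Your proposal follows essentially the same route as the paper: the proof there takes the finite-dimensional witnesses $\C(m,m+k,r)=\Ca(H_m^{m+k+1}(\A(m+k,r),\omega))$, which are \emph{full} neat reducts not in $S\Nr_m\CA_{m+k+1}$ and have representable ultraproduct, and applies Monk's lifting over the directed system of finite subsets of $\alpha$, exchanging the two ultraproducts so that representability of $\prod_r\A^r/F$ follows from Henkin's neat embedding theorem. The one mechanism your sketch leaves implicit is how ``lifting reflects non-membership'': taking $\Rd^\rho$ of the lifted algebra does not return the finite-dimensional witness itself, so the paper uses the relativization isomorphisms $\C(m,m+k,r)\cong\Rl_{x_n}\Rd_m\C(n,n+k,r)$ to embed the witness into a relativized reduct and then invokes closure of $S\Nr_m\CA_{m+k+1}$ under these operations.
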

\begin{proof}
Assume that $3\leq m\leq n$, and let
$$\C(m,n,r)=\Ca(H_m^{n+1}(\A(n,r),  \omega)),$$
be as defined in \cite[definition 15.4]{HHbook}.
Here $\A(n,r)$ is a finite Monk-like relation algebra \cite[definition 15.2]{HHbook}
which has an $n$ dimensional hyperbasis $H_m^{n+1}(\A(n,r), \omega)$
consisting of all $m$, $n+1$ wide hypernetworks \cite[definition 12.21]{HHbook}.
We used them before in the second item of theorem  \ref{main}, proving a strong negative omiiting types theorem formulated 
conditionally in theorem \ref{OTT}. 
The hyperedges are labelled by $\omega$; it is an infinite cylindric algebra of dimension $m$, and it is a neat reduct.

In fact we have:
\begin{enumarab}
\item For any $r$ and $3\leq m\leq n<\omega$, we
have $\C(m,n,r)\in \Nr_m{\sf PEA}_n$.

\item $\C(m,n,r)\notin S\Nr_n\CA_{n+m+1}$ but $\prod_r \C(m,n,r)/F\in \RCA_n$ for any non principal ultrafilter
on $\omega$.
\item  For $m<n$ and $k\geq 1$, there exists $x_n\in \C(n,n+k,r)$ such that $\C(m,m+k,r)\cong \Rl_{x}\C(n, n+k, r).$
An analogous result holds for quasi-polyadic equality algebras.
\end{enumarab}
Let us check this properties:
\begin{enumarab}
\item  $H_n^{n+1}(\A(n,r), \omega)$ is a wide $n$ dimensional $\omega$ symmetric hyperbases, so $\Ca H\in {\sf PEA}_n.$
But $H_m^{n+1}(\A(n,r),\omega)=H|_m^{n+1}$.
Thus
$$\C_r=\Ca(H_m^{n+1}(\A(n,r), \omega))=\Ca(H|_m^{n+1})\cong \Nr_m\Ca H$$

\item \cite[theorem 15.6]{HHbook}.
Let $G^k$ be the usual atomic game defined on atomic networks with $k$ rounds \cite[definition 11.1, lemma 11. 2, definition 11.3]{HHbook}.
In this game \pa\ is allowed only a triangle move.
We first address the relation algebras on which the $\C_r$s are based.
Given $k$, then we show that for any $r\geq  k^2$, we have \pe\ has a \ws\ in $G^k$
in $\A(n,r)$. This implies using ultraproducts and an elementary chain argument that \pe\
has a \ws\ in the $\omega$ rounded game in an elementary substructure of $\Pi\A(n,r)/F$,
hence the former is representable and then so is the latter because
${\sf RRA}$ is a variety.

Consider the only move by \pa\, namely, a triangle move $N(x,y,z,a,b).$
To label the edges $(w,z)$ where $w\neq k$, $w\neq x,y,z$, \pe\ uses $a^0(i, j_{w})$ where $i<n-1$
and $a,b$ not less that $a(i)$, where
the numbers $j_w(w\in k\in \{x,y,z\}$ are distinct elements
of $\{j<r: \neg \exists u,v\in k\sim {z}(N(u,v)\leq a(i,j))\}$. Because
$r$ {\it is} large enough,  this set has size at least $|k\sim \{x,y,z\}|$ and so
it  is impossible to find $j_w$. Then
any triangle labelled by $a^k(i,j)$ $a^{k'}(i,j')$ and
$a^{k''}(i, j'')$, the indices $j', j, j''$ are distinct and so
the triangle is consistent.

To show that $\Pi \C^r/F$ is also representable, it suffices to find a representation of
an algebra $\A\prec \Pi \A(n,r)/F$  that embeds {\it all}
$m$ dimensional hypernetworks, respecting $\equiv_i$ for all $i<m$ \cite[exercise 2, p.484]{HHbook}.  

We construct such a representation in a step by step manner.
The technique was used before, in theorems \ref{step} and \ref{longer} so we we will
be sketchy. 
The details can be easily recovered from the proof of either theorem referred to.
Assume that $M$ is a relation algebra representation of $\A$. 
We know that such a representation exists. 
However, it might not embed all $m$-dimensional hyper networks.
So we make it does, in a step by step manner by scheduling all hypernetworks.

We build a chain of hypergraphs $M_t:t<\omega$ their limit (defined in a precise
sense) will as required.  Each $M_t$ will have edges labelled by
atoms of $\A$,  and hyperedges will be labelled, as well.

Let $H$ be the set of all $m$ hypernetworks; this is countable.
Let $M_0=M$.

We require inductively that $M_t$  satisfies:

Any $m$ tuple of  $M_t$ is contained in $\rng(v)$ for some $N\in H$ and some embedding $v:N\to  M_t$.
such that this embedding satisfies the following two conditions:

(a) if $i,j<m$  an  edge of $M_t$, then
$M_t(v(i) ,v(j))=N(i,j),$

(b) Whenever $a\in {}^{\leq ^m} m$ with $|a|\neq 2$, then $v(a)$ is a
hyperedge of $M_t$ and is labelled by $N(\bar{a})$.

Let $M$ be limit of the $M_t$s as defined in \ref{step}.
Let $L(A)$ be the signature obtained by adding an $n$ ary relation symbol for each element
of $\A$. Then define $M\models r(x,y)$ iff $M(x,y)\leq r.$
Then by construction $M$ is as desired.

Now define a labelled hypergraph as follows:
$\nodes(M)=\bigcup \nodes(M_t)$,
for any $\bar{x}$ that is an atom-hyperedge; then it is a one in some $M_t$
and its label is defined in $M$ by $M_t(\bar{x})$

The hyperedges are $n$ tuples $(x_0,\ldots x_{m-1})$.
For each such tuple, we let $t<\omega$,
such that $\{x_0\ldots x_{m-1}\} \subseteq  M_t$,
and we set $M(x_0,\ldots x_{m-1})$   to be the unique
$N\in H$ such that there is an embedding
$v:N\to M$ with  $\bar{x}\subseteq \rng(v).$
This can be easily checked to be well defined.

Now we we define the representation $M$ of $\C$.
Let $L(C)$ be the signature obtained by adding an $n$ ary relation symbol for each element
of $\C$.
Now define for $r\in L(C)$
$$M\models r(\bar{x})\text { iff  } M(\bar{x})\in  r.$$
(Note that $M(\bar{x})$
is a hypernetwork, while $r$ is as set of hypernetworks).
This is clearly a representation of $\C.$

\item Let $m<n$, let $$x_n=\{f\in F(n,n+k,r): m\leq j<n\to \exists i<m f(i,j)=Id\}.$$
Then $x_n\in \C(n,n+k,r)$ and ${\sf c}_ix_n\cdot {\sf c}_jx_n=x_n$ for distinct $i, j<m$.
Furthermore
\[{I_n:\C}(m,m+k,r)\cong \Rl_{x_n}\Rd_m {\C}(n,n+k, r).\]
via
\[ I_n(S)=\{f\in F(n, n+k, r): f\upharpoonright m\times m\in S,$$
$$\forall j(m\leq j<n\to  \exists i<m\; f(i,j)=Id)\}.\]
\end{enumarab}
Now we use a lifting argument that is a generalization of Monk's argument in
\cite[theorem 3.2.87]{tarski}. It was also used in \cite{t} adressing more 
algebras, namely, diagonal free cylindric-like algebras, like Pinter's substitution 
and quasi-polyadic algebras. 

However, the result obtained by 
our lifting argument is stronger by far; the ultraproducts will be representable (not just neatly embedding in algebras have 
$\alpha+k+1$ spare dimensions). This will follow from the fact that the lifted finite dimensional 
algebras have representable ultraproducts, too.

Let $k\in \omega$. Let $\alpha$ be an infinite ordinal.
We claim that 
$S\Nr_{\alpha}\CA_{\alpha+k+1}\subset S\Nr_{\alpha}\CA_{\alpha+k};$ furthermore we construct infinitely many algebras 
that witness the strictness
of the inclusion, one for each $r\in \omega$. Their ultraproduct relative to any non principal ultrafilter on $\omega$
will be representable. 

Fix such $r$.
Let $I=\{\Gamma: \Gamma\subseteq \alpha,  |\Gamma|<\omega\}$.
For each $\Gamma\in I$, let $M_{\Gamma}=\{\Delta\in I: \Gamma\subseteq \Delta\}$,
and let $F$ be an ultrafilter on $I$ such that $\forall\Gamma\in I,\; M_{\Gamma}\in F$.
For each $\Gamma\in I$, let $\rho_{\Gamma}$
be a one to one function from $|\Gamma|$ onto $\Gamma.$

Let ${\C}_{\Gamma}^r$ be an algebra similar to $\CA_{\alpha}$ such that
\[\Rd^{\rho_\Gamma}{\C}_{\Gamma}^r={\C}(|\Gamma|, |\Gamma|+k,r).\]
Let
\[\B^r=\prod_{\Gamma/F\in I}\C_{\Gamma}^r.\]
We will prove that
\begin{enumerate}
\item\label{en:1} $\B^r\in S\Nr_\alpha\CA_{\alpha+k}$ and
\item\label{en:2} $\B^r\not\in S\Nr_\alpha\CA_{\alpha+k+1}$.  
\end{enumerate}

For the first part, for each $\Gamma\in I$ we know that $\C(|\Gamma|+k, |\Gamma|+k, r) \in\K_{|\Gamma|+k}$ and
$\Nr_{|\Gamma|}\C(|\Gamma|+k, |\Gamma|+k, r)\cong\C(|\Gamma|, |\Gamma|+k, r)$.
Let $\sigma_{\Gamma}$ be a one to one function
 $(|\Gamma|+k)\rightarrow(\alpha+k)$ such that $\rho_{\Gamma}\subseteq \sigma_{\Gamma}$
and $\sigma_{\Gamma}(|\Gamma|+i)=\alpha+i$ for every $i<k$. Let $\A_{\Gamma}$ be an algebra similar to a
$\CA_{\alpha+k}$ such that
$\Rd^{\sigma_\Gamma}\A_{\Gamma}=\C(|\Gamma|+k, |\Gamma|+k, r)$.
Then, clearly
 $\Pi_{\Gamma/F}\A_{\Gamma}\in \CA_{\alpha+k}$.

We prove that $\B^r\subseteq \Nr_\alpha\Pi_{\Gamma/F}\A_\Gamma$.  Recall that $\B^r=\Pi_{\Gamma/F}\C^r_\Gamma$ and note
that $\C^r_{\Gamma}\subseteq A_{\Gamma}$
(the base of $\C^r_\Gamma$ is $C(|\Gamma|, |\Gamma|+k, r)$, the base of $A_\Gamma$ is $C(|\Gamma|+k, |\Gamma|+k, r)$).
So, for each $\Gamma\in I$,
\begin{align*}
\Rd^{\rho_{\Gamma}}\C_{\Gamma}^r&=\C((|\Gamma|, |\Gamma|+k, r)\\
&\cong\Nr_{|\Gamma|}\C(|\Gamma|+k, |\Gamma|+k, r)\\
&=\Nr_{|\Gamma|}\Rd^{\sigma_{\Gamma}}\A_{\Gamma}\\
&=\Rd^{\sigma_\Gamma}\Nr_\Gamma\A_\Gamma\\
&=\Rd^{\rho_\Gamma}\Nr_\Gamma\A_\Gamma
\end{align*}
$\Rd^{\rho_\Gamma}\A_\Gamma \in \K_{|\Gamma|}$, for each $\Gamma\in I$  then $\Pi_{\Gamma/F}\A_\Gamma\in \K_\alpha$.
Thus (using a standard Los argument) we have:
$\Pi_{\Gamma/F}\C^r_\Gamma\cong\Pi_{\Gamma/F}\Nr_\Gamma\A_\Gamma=\Nr_\alpha\Pi_{\Gamma/F}\A_\Gamma$,
proving \eqref{en:1}.

Now we prove \eqref{en:2}.
For this assume, seeking a contradiction, that $\B^r\in S\Nr_{\alpha}\CA_{\alpha+k+1}$,
$\B^r\subseteq \Nr_{\alpha}\C$, where  $\C\in \CA_{\alpha+k+1}$.
Let $3\leq m<\omega$ and  $\lambda:m+k+1\rightarrow \alpha +k+1$ be the function defined by $\lambda(i)=i$ for $i<m$
and $\lambda(m+i)=\alpha+i$ for $i<k+1$.
Then $\Rd^\lambda(\C)\in \CA_{m+k+1}$ and $\Rd_m\B^r\subseteq \Nr_m\Rd^\lambda(\C)$.

For each $\Gamma\in I$,\/  let $I_{|\Gamma|}$ be an isomorphism
\[{\C}(m,m+k,r)\cong \Rl_{x_{|\Gamma|}}\Rd_m {\C}(|\Gamma|, |\Gamma+k|,r).\]
Let $x=(x_{|\Gamma|}:\Gamma)/F$ and let $\iota( b)=(I_{|\Gamma|}b: \Gamma)/F$ for  $b\in \C(m,m+k,r)$.
Then $\iota$ is an isomorphism from $\C(m, m+k,r)$ into $\Rl_x\Rd_m\B^r$.
Then $\Rl_x\Rd_{m}\B^r\in S\Nr_m\CA_{m+k+1}$.
It follows that  $\C (m,m+k,r)\in S\Nr_{m}\CA_{m+k+1}$ which is a contradiction and we are done.
Now we prove the third part of the theorem, putting the superscript $r$ to use.

Recall that $\B^r=\Pi_{\Gamma/F}\C^r_\Gamma$, where $\C^r_\Gamma$ has the type of $\CA_{\alpha}$
and $\Rd^{\rho_\Gamma}\C^r_\Gamma=\C(|\Gamma|, |\Gamma|+k, r)$.
We know (this is the main novelty here) 
from item (2) that $\Pi_{r/U}\Rd^{\rho_\Gamma}\C^r_\Gamma=\Pi_{r/U}\C(|\Gamma|, |\Gamma|+k, r) \subseteq \Nr_{|\Gamma|}\A_\Gamma$,
for some $\A_\Gamma\in\CA_{|\Gamma|+\omega}$.

Let $\lambda_\Gamma:|\Gamma|+k+1\rightarrow\alpha+k+1$
extend $\rho_\Gamma:|\Gamma|\rightarrow \Gamma \; (\subseteq\alpha)$ and satisfy
\[\lambda_\Gamma(|\Gamma|+i)=\alpha+i\]
for $i<k+1$.  Let $\F_\Gamma$ be a $\CA_{\alpha+\omega}$ type algebra such that $\Rd^{\lambda_\Gamma}\F_\Gamma=\A_\Gamma$.
As before, $\Pi_{\Gamma/F}\F_\Gamma\in\CA_{\alpha+\omega}$.  And
\begin{align*}
\Pi_{r/U}\B^r&=\Pi_{r/U}\Pi_{\Gamma/F}\C^r_\Gamma\\
&\cong \Pi_{\Gamma/F}\Pi_{r/U}\C^r_\Gamma\\
&\subseteq \Pi_{\Gamma/F}\Nr_{|\Gamma|}\A_\Gamma\\
&=\Pi_{\Gamma/F}\Nr_{|\Gamma|}\Rd^{\lambda_\Gamma}\F_\Gamma\\
&\subseteq\Nr_\alpha\Pi_{\Gamma/F}\F_\Gamma,
\end{align*}
But, by the neat embedding theorem, we have $\Pi_{\Gamma/F}\F_{\Gamma}\in \RCA_{\alpha}$ 
and we are done.
\end{proof}
\begin{theorem}\label{infinitedistance} For $n>m$, the variety ${\sf CB}_{m,n}$ is not finitely axiomatizable over 
$S\Nr_m\CA_{n}$. In particular, the third item in theorem \ref{completerepresentation} holds. 

\end{theorem}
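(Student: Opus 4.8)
The plan is to prove the statement by a rainbow construction in the style of the proof of theorem \ref{squarerepresentation} and, more pertinently, theorem \ref{2.12}, producing for each $r\in\omega$ a $\CA_m$ algebra $\A_r$ lying in ${\sf CB}_{m,n}\smallsetminus S\Nr_m\CA_n$ whose ultraproduct $\prod_r\A_r/D$ (over any non-principal ultrafilter $D$) lies in $S\Nr_m\CA_n$; indeed we shall arrange $\prod_r\A_r/D\in{\sf CB}_{m,n}$ automatically, since ${\sf CB}_{m,n}$ is a variety by theorem \ref{can2}. Once this is done, finite axiomatizability of ${\sf CB}_{m,n}$ over $S\Nr_m\CA_n$ is impossible: any finite set of axioms true throughout ${\sf CB}_{m,n}$ would be true in each $\A_r$, hence (being first-order) in $\prod_r\A_r/D\in S\Nr_m\CA_n$, contradicting $\A_r\notin S\Nr_m\CA_n$ for a {\it non}-finitely-axiomatizable over-class; more precisely, were ${\sf CB}_{m,n}=S\Nr_m\CA_n\cap(\text{finitely many extra equations})$ then $\prod_r\A_r/D$ would satisfy those extra equations and so would lie in ${\sf CB}_{m,n}$, while simultaneously each $\A_r\in{\sf CB}_{m,n}$ yet $\A_r\notin S\Nr_m\CA_n\supseteq{\sf CB}_{m,n}$, the desired contradiction. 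The ``in particular'' clause is immediate: taking $n=m+3$ shows the class ${\sf CAF}_{m,m+3}$ of algebras with complete $m+3$ flat representations (which sits between $S\Nr_m\CA_n$-type classes and ${\sf CB}_{m,n}$) is not finitely axiomatizable over ${\sf CB}_{m,n}$, as asserted at the end of the statement of theorem \ref{completerepresentation}; alternatively one reads off directly that $S\Nr_m\CA_n$ is not finitely axiomatizable over ${\sf CB}_{m,n}$, which is the content wanted for that third item.

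For the construction itself, I would take the rainbow cylindric (better: polyadic equality, then take the $\CA$ reduct) algebras $\A^n_r=\CA_{A,B}$ built on the ordered structures $A=M[n-3,2^{r-1}]$ and $B=M[n-3,2^{r-1}-1]$ exactly as in the proof of theorem \ref{squarerepresentation}, but now I want to separate dimension $m$ from the number of nodes $n$. So I would instead work with the relativized game $G^n_\ell$ of definition \ref{basis} played on atomic $\CA_m$ networks with at most $n$ nodes. The two facts to establish, lifting the \ef\ pebble-game analysis of \cite{HHbook} through coloured graphs as done in theorems \ref{completerepresentation} and \ref{squarerepresentation}, are: (i) \pe\ has a \ws\ in $G^n_\ell(\A^n_r)$ for every finite $\ell$, whence by theorem \ref{gamebasis} and an ultraproduct/elementary-chain argument $\prod_r\A^n_r/D$ has an $n$ dimensional basis, i.e.\ lies in ${\sf CB}_{m,n}$, and in fact (arranging the games to allow amalgamation moves of the flat kind) $\prod_r\A^n_r/D\in S\Nr_m\CA_n$; and (ii) \pa\ has a \ws\ in $G^n_\omega(\A^n_r)$ — the $\omega$-rounded game with $n$ nodes — because $2^{r-1}>2^{r-1}-1$ forces, after \pa\ bombards \pe\ with cones of a common $(m-1)$-face and distinct green tints drawn from $2^{r-1}$, an inconsistent red triple inside the threatening red clique; hence $\A^n_r\notin{\sf CB}_{m,n}$... wait, that is too strong. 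I must be careful: I want $\A^n_r\in{\sf CB}_{m,n}\smallsetminus S\Nr_m\CA_n$, so in (i) I only get the {\it weaker} conclusion $\A^n_r\in{\sf CB}_{m,n}$ (via \pe\ winning the basis game $G^n_\omega$ with {\it only} cylindrifier moves — possible because ${\sf CB}_{m,n}$ demands no amalgamation), while in (ii) I need \pa\ to win the {\it flat} game $G_\omega^{n,\mathrm{flat}}$ (the one with amalgamation moves, characterizing $S\Nr_m\CA_n$, cf.\ the game sketched after theorem \ref{blowupandblur}) on $\A^n_r$, and \pe\ to win it on $\prod_r\A^n_r/D$. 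The ``one extra pair of pebbles'' phenomenon from theorem \ref{squarerepresentation} is exactly what separates these two games for the right choice of $A,B$.

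The concrete schedule I would follow: (1) recall the rainbow signature, coloured graphs $\K$, and forbidden triples, verbatim from theorem \ref{completerepresentation}; (2) define $\A^n_r=\Rd_{ca}\PEA_{A,B}$ with $A,B$ as above and show it is atomic with a countable atom structure; (3) describe the private \ef\ forth pebble game $EF^{n-1}_\omega(A,B)$ and observe \pa\ wins it (top two elements of $2^{r-1}$ covered, then a two-pebble game of length $\omega$ on the two ordinals, won since the first is longer), but \pe\ wins $EF^{n-2}_\omega(A,B)$ since with only $n-2$ pebbles $K_{n-2}\cong(n-3)\cup\{x\}$ in $B$, giving a homomorphism at every position; (4) lift \pe's winning strategies to $G^n_\ell$ for all finite $\ell$ and to the {\it square} basis game with $n$ pebbles, using the cone/red-clique labelling from theorems \ref{completerepresentation}, \ref{squarerepresentation}, to get $\A^n_r\in{\sf CB}_{m,n}$; (5) lift \pa's winning strategy in the private game to a \ws\ in the {\it flat} $n$-node $\omega$-game on $\A^n_r$ (bombarding with cones), giving $\A^n_r\notin S\Nr_m\CA_n$ by theorem \ref{gamebasis}'s flat analogue; (6) invoke \L os and an elementary-chain argument to get a countable $\B\prec\prod_r\A^n_r/D$ on which \pe\ wins the flat $n$-node $\omega$-game, whence $\prod_r\A^n_r/D\in S\Nr_m\CA_n$; (7) conclude non-finite-axiomatizability as in the first paragraph. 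The main obstacle is step (5) together with the precise bookkeeping that \pe\ can win {\it all finite rounds} of the flat game while \pa\ wins the infinite one on the {\it same} algebra $\A^n_r$: this is delicate because the flat game is harder for \pe\ than the square basis game, so one must check that the extra amalgamation moves available to \pa\ do not already let him win in finitely many rounds — here the ``widely-spaced'' auxiliary order-preserving function $\rho_s$ of the proof of theorem \ref{j}, adapted to finitely many rounds, is what keeps \pe\ alive for any prescribed finite $\ell$, and the clash $2^{r-1}>2^{r-1}-1$ only bites at round $\omega$. Getting this threshold right, uniformly in $r$ and with $m$ decoupled from $n$, is the crux; everything else is routine transcription from the cited proofs.
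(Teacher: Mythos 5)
Your overall logical skeleton (witnesses $\A_r\in{\sf CB}_{m,n}\setminus S\Nr_m\CA_n$ whose ultraproduct lands in $S\Nr_m\CA_n$, followed by a \Los\ argument) is the right one, but the construction you propose is not the paper's and, more importantly, it has a gap at exactly the point you yourself flag as the crux. The paper builds nothing new here: it reuses the Monk-like algebras $\C_r=\C(m,n,r)=\Ca(H_m^{n+1}(\A(n,r),\omega))$ of theorem \ref{2.12}, for which non-membership in $S\Nr_m\CA_{n+1}$ and representability of $\Pi_r\C_r/F$ are already established there; the only new observation needed is that the \ws\ of \pe\ in $G^k_\omega(\A(n,r))$ for $r\geq k^2$ (also already proved in the course of \ref{2.12}) lifts to the basis game of theorem \ref{gamebasis} played on $\C_r$, putting $\C_r$ into ${\sf CB}_{m,k}$. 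That is the whole proof --- two lines on top of \ref{2.12}.

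Your rainbow route is problematic. The algebras $\A^n_r$ on $M[n-3,2^{r-1}]$ and $M[n-3,2^{r-1}-1]$ were calibrated so that the discriminating parameter is the \emph{number of pebbles}: with $n$ nodes \pe\ wins the square game for all $\omega$ rounds, and \pa\ only wins once he is given $n+1$ nodes. So to exclude $\A^n_r$ from $S\Nr_m\CA_n$ you would need \pa\ to win an $n$-node game using only the extra amalgamation moves, while \pe\ survives every finite truncation of that same game; you give no argument for either, and the ordinal clash $2^{r-1}>2^{r-1}-1$ cannot supply one, since with only $n-2$ pebble pairs that clash never materializes at all (which is precisely why \pe\ wins the $n$-node square game through round $\omega$), and adding amalgamation moves does not change the pebble count. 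This is exactly the situation the paper repeatedly warns about: in the presence of amalgamation moves Monk-like algebras are the tool of choice, and the proof of theorem \ref{squarerepresentation} itself defers this separation to \ref{infinitedistance} rather than attempting it with rainbows. A minor further point: the inclusion in your first paragraph is stated backwards ($S\Nr_m\CA_n\subseteq{\sf CB}_{m,n}$, not the reverse), although your final formulation of the ultraproduct argument does place the witnesses in the correct configuration.
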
\label{nt}
\begin{proof} We have shown that \pe\ has a \ws\ in $G^k_{\omega}(\A(n,r))$ when $r\geq k^2$,
hence, $\A(n,r)\in \RA_k$. But this induces a \ws\ for $\C_r$ in the corresponding basis cylindric 
atomic game $G^k_{\omega}(\C_r)$ (with $k$ nodes and $\omega$ rounds),  so that $\C_r\in \sf CB_{m, k}$.

Hence $\C_r\in {\sf CB}_{n,n+1}\sim S\Nr_n\CA_{n+1}$, 
and $\Pi \C_r/F\in S\Nr_n\CA_{n+1}$, and we done.
\end{proof}

In  \cite{t} a partially more general result is proved meaning that it covers other diagonal-free algebras, 
but it is weaker when restricted to the cases $\CA$ and $\QEA$.
Here we are encountered by a typical situation where we cannot have our cake and eat it.
If we want an analogous result for diagonal-free algebras (baring in mind 
that usually it is very hard to transfer such results when
we do not have diagonal elements, for example Andr\'eka very strong 
splitting techniques do not apply when we do not have diagonal elements, and this indeed can be proved), 
then we may have to sacrifize the {\it representability} of the ultraproduct, though the possibility remains 
that this need not be sacrifized. 
In \cite{t} the second item in the above proof, namely, the representability of the ultraproduct
is weakened. This ultraproduct could make it only to one extra dimension
and  {\it not} to $\omega$ many.

Furthermore, the algebras constructed in \cite{t} 
are Monk- like  {\it finite} polyadic equality algebras, 
while the cylindric and polyadic equality algebras constructed before were {\it infinite} since they were constructed 
from hyperbabes having infinitely
many hyperlabels coming from $\omega$.  This makes a  huge difference.

Nevertheless,  in all cases the 
construction can be  based on {\it the same relation algebra}, analogous to  $\A(n,r)$
as specified above,  
except that now the $m$ dimensional 
polyadic equality algebra formed, consists of all 
basic matrices of dimension $m$. No labels are involved so
the algebras are finite. We know from the above that $\A(n,r)$ has an $n+1$ wide 
$m$  
dimensional hyperbasis; in particular it has 
an $m$ dimensional hyperbasis,  
and so consequently it has  an $n$ dimensional 
symmetric cylindric basis, that is, an $n$  polyadic equality basis $(m<n).$ 

Such a basis is  the atom structures of the new finite 
$m$ dimensional polyadic equality algebras, built on a relation algebra very similar to $\A(n,r)$. 
The following is proved in \cite{t}. 

\begin{theorem}\label{thm:cmnr} Let $3\leq m\leq n$ and $r<\omega$.
\begin{enumerate} 
\renewcommand{\theenumi}{\Roman{enumi}}
\item $\C(m, n, r)\in \Nr_m\QPEA_n$,\label{en:one}
\item $\Rd_{\Sc}\C(m, n, r)\not\in S\Nr_m\Sc_{n+1}$, \label{en:two}
\item $\Pi_{r/U} \C(m, n, r)$ is elementarily equivalent to a countable polyadic equality algebra $\C\in\Nr_m\QPEA_{n+1}$.  \label{en:four}
\end{enumerate} 
\end{theorem}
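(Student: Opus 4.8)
The plan is to reconstruct the argument of \cite{t} in the finite, ``label--free'' incarnation sketched in the paragraph preceding the statement. Fix a finite Monk--like relation algebra $\R=\R(n,r)$ built along the lines of $\A(n,r)$ of \cite[definition 15.2]{HHbook}, with the parameter $r$ doing its usual double duty: the Monk combinatorics built into $\R$ supply an $n$--dimensional symmetric cylindric --- equivalently, $n$--dimensional polyadic equality --- basis on $\At\R$, they forbid any $(n+1)$--dimensional one for each fixed $r$, and yet they leave \pe\ enough room to survive $k$ rounds of the $(n+1)$--pebble game once $r$ is large compared with $k$ (the estimate $r\geq k^2$ of theorem \ref{infinitedistance}). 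Let $\C(m,n,r)=\Ca(\Mat_m\At\R)$ be the complex algebra of the set of $m$--dimensional basic matrices over $\At\R$; it is finite, and its transposition operators are defined from the symmetry of the basis exactly as the $\QEA$--operations were adjoined in theorem \ref{hodkinson} and in item $(1)$ of the proof of theorem \ref{2.12}. For $(I)$: since $\Mat_m\At\R$ is the restriction to the first $m$ coordinates of the $n$--dimensional symmetric basis $\Mat_n\At\R$, one obtains an isomorphism of atom structures, hence of complex algebras, \[\Ca(\Mat_m\At\R)\;\cong\;\Nr_m\,\Ca(\Mat_n\At\R),\] while $\Ca(\Mat_n\At\R)\in\QPEA_n$ because a symmetric $n$--dimensional cylindric basis is precisely an $n$--dimensional polyadic equality basis. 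This is the matrix analogue of the computation ``$\C_r=\Ca(H|_m^{n+1})\cong\Nr_m\Ca H$'' carried out inside the proof of theorem \ref{2.12}; I would transcribe it, checking only that the transposition operators are respected by the neat--reduct identification. Thus $\C(m,n,r)\in\Nr_m\QPEA_n$.

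For $(II)$: I would exhibit a winning strategy for \pa\ in the $(n+1)$--pebble $\omega$--rounded game characterising (the atom structures of) the members of $S\Nr_m\Sc_{n+1}$ --- the basis/hyperbasis game of definition \ref{usualbasis} and theorem \ref{gamebasis}, or the neat game used in theorems \ref{neat11} and \ref{main} --- played on $\C(m,n,r)$. The point is that $\R(n,r)$ embeds (completely, it being finite) into the $\Ra$--reduct of $\C(m,n,r)$ once $m\geq 3$, so that \pa's winning strategy on $\R(n,r)$ lifts through the basic matrices to the $\Sc$--level, exactly as the relation--algebra strategies are lifted to coloured graphs in the proofs of theorems \ref{completerepresentation} and \ref{squarerepresentation}; equivalently, an $(n+1)$--dimensional hyperbasis for $\C(m,n,r)$ would induce one for $\R(n,r)$, which the construction forbids. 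Since $\C(m,n,r)$ is finite we have $S=S_c$ here, so the conversion of a hypothetical neat embedding $\C(m,n,r)\subseteq\Nr_m\D$, $\D\in\Sc_{n+1}$, into an $(n+1)$--dimensional hyperbasis (as in corollary \ref{can}) yields the contradiction, whence $\Rd_{\Sc}\C(m,n,r)\notin S\Nr_m\Sc_{n+1}$.

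For $(III)$: here letting $r$ vary does the work. By the combinatorial estimate of theorem \ref{infinitedistance} (the same one behind $\A(n,r)\in\RA_k$ for $r\geq k^2$), \pe\ wins every finite--length approximation of the $(n+1)$--pebble neat game on $\C(m,n,r)$ once $r$ is large; ``\pe\ wins the $k$--round game'' being a first--order property of the algebra for each finite $k$, \Los' theorem transfers it to $\Pi_{r/U}\C(m,n,r)$. A standard elementary--chain / downward L\"owenheim--Skolem argument (as in \cite[lemma 44]{r}, already used in theorem \ref{neat11}) then produces a countable $\C\equiv\Pi_{r/U}\C(m,n,r)$ on which \pe\ wins the full $\omega$--rounded neat game; treating \pe's positions as a saturated family of $(n+1)$--wide $m$--dimensional hypernetworks and patching them step by step into a dilation, exactly as in theorem \ref{step} and in the ``$(\ast)$'' part of theorem \ref{main}, realises $\C$ as a \emph{full} neat reduct, i.e.\ $\C\in\Nr_m\QPEA_{n+1}$.

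The main obstacle is the game--theoretic bookkeeping in $(II)$ and $(III)$: one must verify simultaneously that the Monk data on $\R(n,r)$ deliver an $n$--dimensional symmetric basis, block the $(n+1)$--dimensional one for every fixed $r$, and let \pe\ persist for $k$ rounds once $r\gg k$; and one must be careful that the limiting countable algebra is a full neat reduct, not merely an element of $S\Nr_m\QPEA_{n+1}$, which forces the use of the finer neat game with its short/long hyperedge labelling discipline of theorem \ref{main} rather than the coarser basis games, and hence the delicate check that \pe's hyperlabel strategy survives the amalgamation moves. The remaining steps --- the neat--reduct identification for $(I)$, and the transfer of ``\pe\ wins $G_k$'' across the ultraproduct for $(III)$ --- are routine given the machinery already developed in the excerpt.
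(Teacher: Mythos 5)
Part (I) of your proposal is the intended routine step and matches what the paper does for the hyperbasis version inside theorem \ref{2.12}. The serious gap is in (II). The statement there concerns the $\Sc$ reduct and a hypothetical dilation $\D\in\Sc_{n+1}$, and this is exactly where your reduction to the relation algebra level breaks down: passing from $\Rd_{\Sc}\C(m,n,r)\subseteq\Nr_m\D$ to an $(n+1)$-dimensional relational (hyper)basis for the underlying relation algebra uses the $\Ra$ reduct of the dilation and diagonal-based hypernetworks as in corollary \ref{can}, none of which is available when $\D$ has no diagonal elements. Since $\Rd_{\Sc}S\Nr_m\CA_{n+1}\subseteq S\Nr_m\Sc_{n+1}$, non-membership in $S\Nr_m\Sc_{n+1}$ is \emph{strictly stronger} than the $\CA$ statement your route would at best deliver; moreover no game characterization of $S\Nr_m\Sc_{n+1}$ is available in the paper (theorem \ref{gamebasis} only captures the larger class ${\sf CB}_{m,n+1}$, and only for $\CA$s). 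The proof the paper sketches — and calls the heart of the matter — works directly inside the hypothetical $\Sc_{n+1}$ dilation: from each $s$ in a large set $S$ one extracts atoms $\alpha(s,i,j)\in Bin(n,r)$ using only cylindrifiers and the primitive substitutions, attaches index functions $I:n\to(n-1)$, $J:n\to r$ and a rank ${\sf rk}(I,J)$, and shows the rank can be strictly increased more than $\psi(n,r)$ times while $S$ stays nonempty, contradicting the bound $(n-1)(r-1)$. This Ramsey-style induction, which is what $\psi(n,r)$ is calibrated for, is entirely absent from your proposal and cannot be replaced by citing the relation-algebra result.

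For (III) two steps are unjustified. First, the $r\geq k^2$ estimate only gives \pe\ a \ws\ in the plain cylindrifier game (membership in ${\sf CB}$), not in the neat game $H$ with its amalgamation moves and hyperlabel discipline, which is what you need to land in the \emph{full} neat reduct class $\Nr_m\QPEA_{n+1}$ rather than $S_c\Nr_m\QPEA_{n+1}$. Second, as the paper itself warns, winning strategies in games with amalgamation moves are not readily coded by first order sentences, so ``\pe\ wins $k$ rounds of $H$'' does not obviously transfer across the ultraproduct. The paper's route is different and avoids games altogether: the construction is uniformly interpretable in the finite linear order $r$ (the atoms $a^k(i,j)$ and the forbidden triples refer only to $\leq$ on $r$), so $\Pi_{r/U}\C(m,n,r)\cong\C(m,n,J)$ for $J=\Pi_{r/U}r$ an infinite linear order with $\psi(n,J)$ set to $\omega$, and one proves directly that $\C(m,n,J)\in\Nr_m\QPEA_{n+1}$ for such $J$, the infinite ascending chain in $J$ supplying the room for the extra dimension; a countable elementary sub-order of $J$ then yields the countable witness.
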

We define the algebras $\C(m,n,r)$ for $3\leq m\leq n<\omega$ and $r$ 
and then give a sketch of \eqref{en:two}. To prove \eqref{en:four}, $r$ has to be a linear order, 
but we omit this part of the proof.
We start with:
\begin{definition}\label{def:cmnr}
Define a function $\kappa:\omega\times\omega\rightarrow\omega$ by $\kappa(x, 0)=0$ 
(all $x<\omega$) and $\kappa(x, y+1)=1+x\times\kappa(x, y))$ (all $x, y<\omega$).
For $n, r<\omega$ let 
\[\psi(n, r)=
\kappa((n-1)r, (n-1)r)+1.\]
All of this is simply to ensure that $\psi(n, r)$ is sufficiently big compared to $n, r$ for the proof of non-embeddability to work.  

For any  $n<\omega$ and any $r<\omega$, let 
\[Bin(n, r)=\set{Id}\cup\set{a^k(i, j):i< n-1,\;j\in r,\;k<\psi(n, r)}\] 
where $Id, a^k(i, j)$ are distinct objects indexed by $k, i, j$.
Let $3\leq m\leq n<\omega$ and let $r$ be any linear order.
Let $F(m, n, r)$ be the set of all  functions $f:m\times m\to Bin(n, r)$ 
such that $f$ is symmetric ($f(x, y)=f(y, x)$ for all $x, y<m$) 
and for all $x, y, z<m$ we have $f(x, x)=Id,\;f(x, y)=f(y, x)$, and $(f(x, y), f(y, z), f(x, z))\not\in Forb$, 
where $Forb$ (the \emph{forbidden} triples) is the following set of triples
 \[ \begin{array}{c}
 \set{(Id, b, c):b\neq c\in Bin(n, r)}\\
 \cup \\
 \set{(a^k(i, j), a^{k'}(i,j), a^{k^*}(i, j')): k, k', k^*< \psi(n, r), \;i<n-1, \; j'\leq j\in r}.
 \end{array}\]
Here $Bin(n,r)$ is an atom structure of a finite relation relation  
and $Forb$ specifies its operations by specifying forbidden triples. 
This atom structure defines a relation algebra; 
it is {\it similar but not identical to   $\A(n,r)$}.  
However, if we replace $nr^{nr}$ in the previous case  by the new Ramsey function 
$\psi(n,r)$  
then the two algebras are the same.

Furthermore, this change will not alter our previous result. The new ultraproduct 
$\Pi_r/U\A(n,r)$ is still representable; one proves that \pe\ can win the game $G^k_{\omega}$ when $r\geq k^2$
as above,  and so is the infinite 
polyadic equality algebra $\C_r=\Ca(H_m^{n+1}(\A(n,r), \omega)$ 
is also representable. Its representability, can be proved, 
by finding a representation of an elementary countable subalgebra of $\Pi_r/U \A(n,r)$
that embeds all $m$ dimensional hypernetworks, and then as above, using this representation one defines a representation 
of a countable elementary subalgebra of $\C_r$, which immediately implies that $\C_r$ 
is representable (as a polyadic equality algebra of dimension $m$).

So, without loss,  we assume that the relation algebras constructed 
in both cases,  depending on the parameters $n$ and $r$ is the same algebra which we denote 
also by $\A(n,r).$ 

Any any such $f\in F(m,n,r)$ is a basic matrix on the atom structure of $\A(n,r)$ 
in the sense of Maddux, and the whole lot of them will be a 
basis, constituting the atom structure of algebras we want.
So here instead of an infinite set of hypernetworks with hyperlabels 
from $\omega$ our cylindric-like algebras are finite.

Now accessibility relations coresponding to substitutions, cylindrifiers are defined, as expected on matrices,  
as follows.
For any $f, g\in F(m, n,  r)$ and $x, y<m$ we write $f\equiv_{xy}g$ if for all $w, z\in m\setminus\set {x, y}$ we have $f(w, z)=g(w, z)$.  
We may write $f\equiv_x g$ instead of $f\equiv_{xx}g$.  For $\tau:m\to m$ we write $(f\tau)$ for the function defined by 
\begin{equation}\label{eq:ftau}(f\tau)(x, y)=f(\tau(x), \tau(y)).
\end{equation}
Clearly $(f\tau)\in F(m, n, r)$. 
Accordingly, the universe of $\C(m, n, r)$ is the power set of $F(m, n, r)$ and the operators (lifting from the atom structure)
are
\begin{itemize}
\item  the boolean operators $+, -$ are union and set complement, 
\item  the diagonal $\diag xy=\set{f\in F(m, n, r):f(x, y)=Id}$,
\item  the cylindrifier $\cyl x(X)=\set{f\in F(m, n, r): \exists g\in X\; f\equiv_xg }$ and
\item the polyadic $\s_\tau(X)=\set{f\in F(m, n, r): f\tau \in X}$,
\end{itemize}
for $x, y<m,\;  X\subseteq F(m, n, r)$ and  $\tau:m\to m$.
 \end{definition}
\medskip
We give a sketch of proof of \ref{thm:cmnr}(\ref{en:two}), which is the heart and soul of the proof, and it is very similar 
to its $\CA$ analogue 4.69-475 in \cite{HHbook2}. We will also refer to the latter when the proofs overlap.

Assume for contradiction  that 
$\Rd_{\Sc}\C(m, n, r)\subseteq\Nr_m\C$ 
for some $\C\in \Sc_{n+1}$, some finite $m, n, r$. 
Then it can be shown inductively 
that there must be a large  set $S$ of distinct elements of $\C$, 
satisfying certain inductive assumptions, which we outline next.  
For each $s\in S$ and $i, j<n+2$ there is an element $\alpha(s, i, j)\in Bin(n, r)$ obtained from $s$ 
by cylindrifying all dimensions in $(n+1)\setminus\set{i, j}$, then using substitutions to replace $i, j$ by $0, 1$.  
Then one shows that $(\alpha(s, i, j), \alpha(s, j, k), \alpha(s, i, k))\not\in Forb$.

The induction hypothesis say, most importantly, that $\cyl n(s)$ is constant, for $s\in S$, 
and for $l<n$  there are fixed $i<n-1,\; j<r$ such that for all $s\in S$ we have $\alpha(s, l, n)\leq a(i, j)$.  
This defines, like in the proof of theorem 15.8 in \cite{HHbook2} p.471, two functions $I:n\rightarrow (n-1),\; J:n\rightarrow r$ 
such that $\alpha(s, l, n)\leq a(I(l), J(l))$ for all $s\in S$.  
The \emph{rank} ${\sf rk}(I, J)$ of $(I, J)$ - as defined in definition 15.9 in \cite{HHbook2} for the $\CA$ case  - is   
the sum (over $i<n-1$) of the maximum $j$ with $I(l)=i,\; J(l)=j$ (some $l<n$) or $-1$ if there is no such $j$.  

Next it is proved that there is a set $S'$ with index functions $(I', J')$, still relatively large 
in terms of the number of times we need to repeat the induction step
where the same induction hypotheses hold but where ${\sf rk}(I', J')>{\sf rk}(I, J)$.

By iterating  this long enough, namely, more than $\psi(n,r)$ times we obtain a non-empty set $T$ 
with index functions of rank strictly greater than $(n-1)\times(r-1)$, 
an impossibility.\footnote{See \cite{HHbook2}, where for $t<nr$, $S'$ was denoted by $S_t$
and proof of property (6) in the induction hypothesis  on p.474 of \cite{HHbook2}.}

Now we sketch the induction step.  Since $I$ cannot be injective there must be distinct $l_1, l_2<n$ 
such that $I(l_1)=I(l_2)$ and $J(l_1)\leq J(l_2)$.  We may use $l_1$ as a "spare dimension"; 
changing the index functions on $l$ will not reduce the rank.  

Since $\cyl n(s)$ is constant, we may fix $s_0\in S$ 
and choose a new element $s'$ below $\cyl l s_0\cdot \sub n l\cyl  l s$, 
with certain properties.  Let $S^*=\set{s': s\in S\setminus\set{s_0}}$.
We wish to maintain the induction hypotheses for $S^*$. Many but not all 
of these are simple to check.  

Although the required functions $I', J'$ may not exist on the whole of $S$, but $S$ remains
large enough to enable selecting a 
possibly proper subset $S'$ of $S^*$, still large in terms of the number of remaining times the induction step must be implemented.
The required functions $I', J'$ will now exist for all but one value of $l<n$ the values $I'(l), J'(l)$ are determined by $I, J$.
For  one value of $l$ there are at most $(n-1)r$ possible values, hence on a large subset the choices agree.  

Next it can be shown that $J'(l)\geq J(l)$ for all $l<n$.   Since 
$$(\alpha(s, i, j), \alpha(s, j, k), \alpha(s, i, k))\not\in Forb$$ 
and by the definition of $Forb$  
either $\rng(I')$ properly extends $\rng(I)$ or there is $l<n$ such that $J'(l)>J(l)$, hence  ${\sf rk}(I', J')>{\sf rk} (I, J)$.

Although in the previous sketch the construction is based on the same relation algebra $\A(n,r)$ as before,
we do not guarantee that the ultraproduct on $r$
of $\C(m,n,r)$ ($2<m<n<\omega)$, based on $\A(n,r)$, like in the cylindric case, is representable.
It is not all clear that we can lift the 
representability $\Pi_r\A(n,r)$ (this was proved above) 
to $\Pi_r\C(m,n,r)$ for any $m\leq n<\omega$ 
which was the case when we had diagonal elements. 

To prove the first two parts, the algebras we considered had 
an atom structure consisting of basic $m$ dimensional 
matrices, hence all the 
elements are generated by two dimensional elements.

In the third part 
elements are {\it essentially} three dimensional, 
so here we deal with three dimensional matrices, or tensors, if you like.
The third dimension is induced by a linear order on $r$.

Indeed, the second parameter $r<\omega$ may be considered as a finite linear order of length $r$.   
A standard Los argument shows that 
$\Pi_{r/U}\C(m, n, r) \cong\C(m, n, \Pi_{r/U} r)$ and $\Pi_{r/U}r$ 
contains an infinite ascending sequence 
Here we will have to extend the definition of $\psi$ 
by letting $\psi(n, r)=\omega,$ for any infinite linear order $r$.

What is proved in \cite{t} is the weaker item (III) of the above theorem.
This is done by proving that the infinite algebra 
$\C(m,n, J)\in \Nr_n\PEA_{m+1}$
when $J$ is an infinite linear order as above, and clearly $\Pi_{r/U} r$ is such. 
It is not clear at all whether this algebra is representable or not.

Now let $\K$ be a class 
of cylindric-like algebras.

Consider the statement:
\begin{athm}{Non-finite axiomatizability}\label{new} Let $\alpha >2$. Then for any $r\in \omega$, for any
$k\geq 1$, there exists $\B^{r}\in S\Nr_{\alpha}\K_{\alpha+k}\sim S\Nr_{\alpha}\K_{\alpha+k+1}$ such 
$\Pi_{r\in \omega}\B^r\in {\sf RK}_{\alpha}$.
In particular, ${\sf RK}_{\alpha}$ is not axiomatizable 
by a finite schema over $S\Nr_{\alpha}\K_{\alpha+k}$.
\end{athm}
This is true for $\CA$ and $\QEA$, but we do not know whether it is true for their diagonal free reducts $\Sc$ and $\QA$.

If we define the class of 
polyadic equality algebras corresponding to ${\sf CB}_{m,n}$ 
then we get the analogous result in theorem \ref{nt}
for polyadic equality algebras.

This  last statement has to do with {\it completeness} or rather the lack thereof. Our next theorem concerns 
definability \cite{conference}. It says that definability for infinite dimensions can be tricky, 
and indeed anti-intuitive.

We define the 
`neat reduct functor' from a
certain sub-category of $\CA_{\alpha+\omega}$
to $\RCA_{\alpha}$. More precisely, let
$$\L=\{\A\in \CA_{\alpha+\omega}: \A=\Sg^{\A}\Nr_{\alpha}\A\}.$$
Note that $\L\subseteq \RCA_{\alpha+\omega}$. The reason is that any $\A\in \L$ is generated by $\alpha$ -dimensional elements,
so is dimension complemented (that is $\Delta x\neq \alpha$ for all $x$), and such algebras are representable.
Consider $\Nr_{\alpha}$ as a functor from $\bold L$ to $\CA_{\alpha}$, but we restrict morphisms to one to one homomorphisms; that is we take only
embeddings, or injective homomorphisms.

Then the following can be easily destilled from 
\cite{conference}. Here $\sf MGR$ denotes the merry go round identities.

\begin{theorem}
\begin{enumarab}
\item  Let $\alpha$ be an infinite ordinal. Then there exists an $\RCA_{\alpha}$ that generates
two non- isomorphic algebras in $\omega$ extra dimensions, and-dually there exist
two non- isomorphic algebras that generate the same algebra. Using the definition and 
notation in \cite[definition 5.2.1, 5.2.2]{Sayedneat}, 
there are representable infinite dimensional algebras that lack both $NS$ 
nor $UNEP.$
\item $\Nr$ does not have a right adjoint
\item $\CA + \sf MGR$ does not have $AP$.
\end{enumarab}
\end{theorem}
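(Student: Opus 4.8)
The plan is to prove the three items in order, obtaining (2) and (3) as comparatively short consequences of (1) together with standard categorical and algebraic facts, and to regard the constructions underlying (1) as the real content and main obstacle. Throughout I would work inside the category $\L=\{\A\in\CA_{\alpha+\omega}:\A=\Sg^{\A}\Nr_{\alpha}\A\}$ introduced above, with injective homomorphisms as morphisms and $\Nr_\alpha\colon\L\to\RCA_\alpha$ as the structure functor, and I would cite \cite{conference} for the constructions themselves while here only indicating their shape.

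For item (1), the first step is to exhibit a representable $\A\in\RCA_\alpha$ possessing two non-isomorphic \emph{minimal} dilations $\B_1\not\cong\B_2\in\L$ with $\Nr_\alpha\B_1\cong\A\cong\Nr_\alpha\B_2$ and $\Sg^{\B_i}\Nr_\alpha\B_i=\B_i$; this is precisely the failure of $UNEP$ of \cite{Sayedneat}. Dually — the failure of $NS$ — I would build a single $\D\in\CA_{\alpha+\omega}$ (automatically in $\L$) together with two non-isomorphic representable subalgebras $\B_1\not\cong\B_2$ each generating $\D$, so that $\D$ cannot be recovered from $\Nr_\alpha\D$ in a canonical way. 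In finite dimensions the analogous separation of a neat reduct from a generating subalgebra is cheap — take $\B$ not locally finite, $\A=\Nr_n\B$, $\C=\Sg^{\B}\A$; then $\C$ is locally finite and $\A=\Nr_n\C$ while $\B\not\cong\C$, as recorded in the excerpt — but this device collapses for infinite $\alpha$, since an algebra generated by $\alpha$-dimensional elements need no longer be locally finite (its generators may carry infinite dimension sets), so ``being locally finite'' can no longer tell the two dilations apart. The new idea, which I would take from \cite{conference} (or reprove), is to exploit exactly that extra room: start from a representable $\A$ admitting two essentially different $\alpha$-dimensional representations, extend each to an $(\alpha+\omega)$-dimensional set algebra on the respective base, and verify that the two resulting minimal dilations are non-isomorphic while both neatly reduce to $\A$. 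An alternative route, in the spirit of Theorem \ref{2.12}, is to lift a finite-dimensional witness of the failure to dimension $\alpha$ by the Monk-style product-over-finite-subsets construction, checking that representability and the generation/neat-reduct relations survive the (ultra)product.

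Item (2) is then a short categorical deduction from the $UNEP$ part of (1). Suppose $\Nr_\alpha$ had a right adjoint $G$, with counit $\epsilon_\A\colon\Nr_\alpha G\A\to\A$ universal from $\Nr_\alpha$ to $\A$. Apply this to $\A\cong\Nr_\alpha\B_1\cong\Nr_\alpha\B_2$ from (1): the identity $\Nr_\alpha\B_i=\Nr_\alpha\B_i\xrightarrow{\mathrm{id}}\A$ must factor uniquely as $\epsilon_\A\circ\Nr_\alpha(h_i)$, so $\Nr_\alpha(h_i)$ is a section of $\epsilon_\A$; being both monic (an embedding) and a retraction, $\epsilon_\A$ is an isomorphism, whence we may take $\Nr_\alpha G\A=\A$, $\epsilon_\A=\mathrm{id}$, and then $h_i$ restricts to the identity on $\A$. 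Consequently $h_i[\B_i]=\Sg^{G\A}(\A)$ for both $i$, giving $\B_1\cong\Sg^{G\A}(\A)\cong\B_2$, contradicting $\B_1\not\cong\B_2$. Hence $\Nr_\alpha$ has no right adjoint.

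For item (3), I would derive the failure of the amalgamation property for $\CA_\alpha+{\sf MGR}$ from a failure of Craig interpolation in the associated $\alpha$-variable logic, which is in turn fed by the neat-reduct pathologies of (1): amalgamation in $\CA_\alpha+{\sf MGR}$ would require amalgams to be transportable through the $\omega$-dimensional dilations in a canonical way, and the non-uniqueness of dilations from (1) obstructs this. Concretely the plan is to exhibit $\A,\B,\C\in\CA_\alpha+{\sf MGR}$ with embeddings $\A\to\B$, $\A\to\C$ having no amalgam in the variety — taking, as in the classical finite-dimensional argument, two extensions of $\A$ imposing mutually inconsistent equations on a generator — and either to lift a known $\CA_n$ ($3\le n<\omega$) counterexample to dimension $\alpha$ by the method of Theorem \ref{2.12} (noting ${\sf MGR}$ is preserved, since set algebras satisfy it), or to carry out the splitting directly in $\alpha$ dimensions. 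The principal difficulty throughout is item (1): making the two-dilation and two-generator constructions go through for an arbitrary infinite $\alpha$, where the locally-finite bookkeeping that trivialises the finite-dimensional case is unavailable.
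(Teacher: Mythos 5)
Your item (2) is fine and matches the paper's one-line observation that $UNEP$ is equivalent to left adjointness of $\Nr_{\alpha}$; your counit argument is a correct expansion of that. The problem is the logical architecture of (1) and (3), which you have inverted relative to the only implication that is actually available.

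The paper's proof runs in the opposite direction to yours. It first establishes item (3) by an explicit construction: in $\D=\Fr_4\RCA_{\alpha}$ with free generators $x',y',z',w'$ it defines concrete elements $r,s,t$ (built from cylindrifications and diagonals, with $r\leq s\cdot t$), forms the ideals $I=\Ig^{\D^{(X_1)}}\{r'\}$ and $J=\Ig^{\D^{(X_2)}}\{s'\cdot t'\}$ for $X_1=\{x,y\}$, $X_2=\{x,z,w\}$, checks that these restrict to the same ideal $L=K$ on $\D^{(X_1\cap X_2)}$, and shows that $\A_0=\D^{(X_1\cap X_2)}/L$ embeds into $\A_1=\D^{(X_1)}/I$ and $\A_2=\D^{(X_2)}/J$ with \emph{no amalgam even in} $\CA_{\omega}$. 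Item (1) is then deduced by contraposition from the known implication ``$UNEP$ implies membership in the amalgamation base of $\RCA_{\alpha}$'' \cite{Sayedneat}: since $\A_0$ is not in the amalgamation base, it fails $UNEP$, i.e.\ it generates non-isomorphic minimal $\omega$-dilations. (The $NS$ half of (1) comes separately, from the algebra of item (1) of theorem \ref{SL}.) Your plan derives (3) \emph{from} (1), asserting that non-uniqueness of dilations ``obstructs'' amalgamation. That is the converse of the cited implication and is not a theorem you can appeal to: failure of $UNEP$ at some algebra does not by itself produce a triple with no amalgam. Your fallback — construct the amalgamation counterexample directly or lift a finite-dimensional one — is what would actually be needed, but at that point (3) is no longer a consequence of (1); it is the primary construction, and (1) should be harvested from it rather than built independently.

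This also exposes the gap in your item (1): you never actually produce the two non-isomorphic minimal dilations, you only describe the shape of a construction (``two essentially different $\alpha$-dimensional representations, extended to $\alpha+\omega$ dimensions'') and defer to \cite{conference}. The paper's witness $\A_0$ is not obtained that way; it falls out of the amalgamation failure for free, precisely because proving non-isomorphism of dilations directly is hard, whereas exhibiting three concrete quotients of free algebras with incompatible ideals is a finite computation. I would restructure: prove (3) first by the free-algebra construction (or by lifting a finite-dimensional AP failure, taking care that the amalgam is excluded in all of $\CA_{\omega}+{\sf MGR}$, not just in the representable class), then read off the failure of $UNEP$, then run your adjunction argument for (2).
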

\begin{proof} The second part of the first item follows from the first item in theorem \ref{SL}.
Now we prove the rest. We will be sketchy referring for details 
to \cite{conference}.

Let $\A=\Fr_4\CA_{\alpha}$ with $\{x,y,z,w\}$ its free generators. Let $X_1=\{x,y\}$ and $X_2=\{x,z,w\}$.
Let $r, s$ and $t$ be defined as follows:
$$ r = {\sf c}_0(x\cdot {\sf c}_1y)\cdot {\sf c}_0(x\cdot -{\sf c}_1y),$$
$$ s = {\sf c}_0{\sf c}_1({\sf c}_1z\cdot {\sf s}^0_1{\sf c}_1z\cdot -{\sf d}_{01}) + {\sf c}_0(x\cdot -{\sf c}_1z),$$
$$ t = {\sf c}_0{\sf c}_1({\sf c}_1w\cdot {\sf s}^0_1{\sf c}_1w\cdot -{\sf d}_{01}) + {\sf c}_0(x\cdot -{\sf c}_1w),$$
where $ x, y, z, \text { and } w$ are the first four free generators
of $\A$.
Then $r\leq s\cdot t$.
Let $\D=\Fr_4\RCA_{\alpha}$ with free generators $\{x', y', z', w'\}$.
Let  $\psi: \A\to \D$ be defined by the extension of the map $t\mapsto t'$, for $t\in \{x,y,x,w\}$.
For $i\in \A$, we denote $\psi(i)\in \D$ by $i'$.
Let $I=\Ig^{\D^{(X_1)}}\{r'\}$ and $J=\Ig^{\D^{(X_2)}}\{s'.t'\}$, and let
$$L=I\cap \D^{(X_1\cap X_2)}\text { and }K =J\cap \D^{(X_1\cap X_2)}.$$
Then $L=K$, and $\A_0=\D^{(X_1\cap X_2)}/L$  can be embedded into
$\A_1=\D^{(X_1)}/I$ and $\A_2=\D^{(X_2)}/J$,
but there is no amalgam even in $\CA_{\omega}.$ In particular,
$\CA_{\omega}$ with $\sf MGR$ does not have $AP$.

A piece of helpful terminology. 
If $\A\subseteq \Nr_{\alpha}\B$ and $\A$ generates $\B$, then $\B$ is called a minimal dilation 
of $\B$.

We claim that $\A_0$ genetates non-isomorphic algebras if we restrict isomorphisms to 
those that fixes $\A$ pointwise. (In principal, these minimal dilations can be isomorphic by an isomorphism that moves an
element of  $\A$).

Suppose for contradiction that 
$\A_0$ has the unique neat embeding property, that is, any two minimal $\omega$ 
dilations of $\A$ are isomorphic by an isomorphism that fixes
$\A$ pointwise. Then $\A_0$ lies in the amalgamation 
base of $\RCA_{\alpha}$ \cite{Sayedneat}.
We conclude that $\A_0$ does not have the unique neat embedding property, and so it fullfils the first part of the first 
item.

More generally, if $\D_{\beta}$ is taken as the free 
$\RCA_{\alpha}$ on $\beta$ generators, so that our algebra in the previous theorem is just $\D_4$, 
where $\beta\geq 4$, then the algebra constructed from $\D_{\beta}$ as above, 
will not have the unique neat embedding property, that is it generates non isomorphic algebras in extra 
dimensions.  
This immediately gives that the neat reduct functor does
not have a right adjoint.
In fact, the unique neat embedding property is equivalent 
to the left adjointness of the neat reduct 
functor.
\end{proof}

\subsection{Yet some more on neat games and atom structures}

In the following definition for a class of algebras $\K$ having a Boolean reduct $\K\cap \At$ denotes the atomic
algebras in $\K$. We also are loose about  what we mean by an atomic game, but in all cases it is a game played on finite atomic  networks,
and is a variant of the usual Lyndon game. This is still quite ambiguous, but is sharp enough for our purposes.
The game $F^m$ encountered in theorem \ref{neat11}, is an example of such games; we will encounter stronger games below,
like the game $J$, where \pa\ is allowed more moves, other than the normal cylindrifier
move. 

But we will also allow not necesarily atomic games, 
like games characterizing the class $\RCA_n$, here \pa\ can play elements in the algebra in question that are not necessarily atoms.

We do not require that \ws\ s are codable in first all logic 
(this is hard to do when we have amalgamation moves like the hyperbasis games played in our previous 
theorem \cite{HHbook2}),
but, in all cases, all our games will be deterministic. Only one player wins, and there are 
no
draws. 

Our definition is motivated by the following phenomena. 

There are classes of algebras $\K$ 
such that there are two algebras having the same atom structure, one in $\K$ and the other is not, 
example the clas $\RCA_n$ for finite $n>2$ and the class $\Nr_n\CA_m$ for all $m>n$.

Conversely, there are classes of algebras, like the class of completely representable
algebras in any finite dimension, and also $S_c\Nr_n\CA_{m}$ for any $m>n>2$ $n$ finite,  that do not
have this property. This is {\it not} marked by first
order definability, for the class of neat reducts and last two  classes not elementary, while
$\RCA_n$ is a variety for any $n$.

\begin{definition}\label{grip}
\begin{enumarab} 

\item A class $\K$ is gripped by its atom structures, if whenever $\A\in \K\cap \At$, and $\B$ is atomic such that
$\At\B=\At\A$, then $\B\in \K$.

\item A class $\K$ is strongly gripped by its atom structures, if whenever $\A\in \K\cap \At$, and $\B$ is atomic such that
$\At\B\equiv \At\A$, then $\B\in \K$.

\item A class $\K$ of atom structures is infinitary gripped
by its atom structures if whenever $\A\in \K\cap \At$ and $\B$ is atomic, such that $\At\B\equiv_{\infty,\omega}\At\B$,
then $\B\in \K$.
\item An atomic  game is strongly gripping for $\K$ if whenever $\A$ is atomic, and
\pe\ has a \ws\ for all finite rounded games on $\At\A$, then  $\A\in \K$

\item An atomic  game is gripping if whenever \pe\ has a \ws\ in the $\omega$ rounded game on $\At\A$, then $\A\in \K$.
\end{enumarab}
\end{definition}

Notice that infinitary gripped implies strongly gripped implies gripped (by its atom structures).
For the sake of brevity, we write only (strongly) gripped, without referring to atom structures.

In the next theorem, all items except the first applies to all algebras considered.
The first applies to any class $\K$ between $\sf Sc$ and
$\PEA$ (where the notion of neat reducts is not trivial).
The $n$th  Lyndon condition is a first order sentences that codes a \ws\ for \pe\ in $n$ rounds.

The elementary class satisfying all such sentences is denoted by ${\sf LCA_m}$.
It is not hard to show that ${\sf UpUr}\sf CRA_m={\sf LCA_m}$
for any $n>2$, to be proved below in a while. This basically follows from the simple observation that if \pe\ has a \ws\ in all finite rounded
atomic games on an atom structure of a $\CA_m$,
then this algebra is necessarily elementary equivalent to a countable completely representable
algebra \cite[lemma 4.3]{HH}.

\begin{theorem}\label{SL}
\begin{enumarab}
\item The class of neat reducts for any dimension $>1$ is not gripped,
hence is neither strongly gripped nor infinitary gripped.
\item The class of completely representable algebras is gripped but not strongly gripped.
\item The class of algebras satisfying the Lyndon conditions is strongly gripped.
\item The class of representable algebras is not gripped.
\item The Lyndon usual atomic game is gripping but not strongly gripping
for completely representable algebras, it is strongly gripping for ${\sf LCA_m}$, when $m>2$.
\item There is a game, that is {\it not} atomic,  that is strongly gripping for $\Nr_n\CA_{\omega}$, call it $H$. 
In particular, if there exists an algebra $\A$ and $k\geq 1$ such \pa\ has a \ws\ in $F^{n+k}$ and \pe\ has 
a \ws\ in $H_m$, the game $H$ truncated to $m$ rounds, for every finite $m$, 
then for any class $\K$ such that  $\Nr_n\CA_{\omega}\subseteq \K\subseteq  S_c\Nr_n\CA_{n+k},$ 
$\K$ will not be elementary.
\end{enumarab}
\end{theorem}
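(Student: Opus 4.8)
\textbf{Proof proposal for Theorem \ref{SL}.}

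The plan is to handle the six items more or less independently, since each one pairs a known construction from elsewhere in the paper with a short structural observation. For item (1), I would produce two atomic algebras with the \emph{same} atom structure, one a neat reduct and one not. The cleanest source is the rainbow algebra $\PEA_{\Z,\N}$: its term algebra $\Tm\At\PEA_{\Z,\N}$ has the same atom structure as $\PEA_{\Z,\N}$ itself, and by theorem \ref{j} together with the result (*) proved in item (4) of theorem \ref{main}, \pe\ has a \ws\ in the neat game $J$ on this atom structure, so there is an algebra $\B$ with that atom structure lying in $\Nr_n\CA_\omega$; but, as noted in the proof of \ref{neat11} and in item (9) of theorem \ref{maintheorem}, $\Rd_{sc}\PEA_{\Z,\N}\notin S_c\Nr_n\Sc_{n+3}$, \emph{a fortiori} it is not in $\Nr_n\CA_{n+1}$, while it shares the atom structure of $\B$. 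This witnesses that $\Nr_n\CA_m$ (any $m>n$) is not gripped; since infinitary-gripped $\Rightarrow$ strongly gripped $\Rightarrow$ gripped, it is none of these. For item (2), grippedness of the class of completely representable algebras is immediate from the fact that a complete representation is built from a \ws\ of \pe\ in the $\omega$-rounded atomic game, which depends only on the atom structure (\cite[theorem 3.3.3]{HHbook2}); failure of strong grippedness is witnessed again by $\PEA_{\Z,\N}$, which is elementarily equivalent to a countable completely representable algebra (ultrapower plus elementary chain argument) but is not itself completely representable, having countably many atoms and \pa\ winning the $\omega$-rounded game.

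For item (3), strong grippedness of ${\sf LCA_m}$ is essentially definitional: the Lyndon conditions are first order sentences true of an atomic algebra exactly when \pe\ wins every finite-rounded atomic game on its atom structure, and these games are determined by the atom structure alone; so if $\At\B\equiv\At\A$ and $\A\in{\sf LCA}_m$, then $\B$ satisfies the same first order atom-structure sentences, hence $\B\in{\sf LCA}_m$. Item (4) is the easiest: by the corollary following theorem \ref{hodkinson} (or by corollary \ref{sah}), there are two atomic algebras sharing one atom structure, one representable and one whose $\Df$ reduct is not even representable, so $\RCA_n$ is not gripped. Item (5): the usual Lyndon atomic game is gripping for complete representability because a \ws\ for \pe\ in the full $\omega$-rounded game yields a complete representation \cite[theorem 3.3.3]{HHbook2}; it is \emph{not} strongly gripping because winning all finite-rounded games only forces membership in ${\sf UpUr}$ of the completely representable algebras, which (again via $\PEA_{\Z,\N}$) strictly contains the completely representable ones; and it \emph{is} strongly gripping for ${\sf LCA}_m$ by the very definition of the Lyndon conditions.

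Item (6) is the substantive one and where the real work lies. I would take $H$ to be the stronger neat game described in item (4) of theorem \ref{main} (the game denoted $H$ there, with transformation and unrestricted amalgamation moves and with \pe\ allowed to label long hyperedges by arbitrary algebra elements), and invoke the cylindric analogue of \cite[theorem 39]{r} that a countable atomic algebra on which \pe\ wins $H_m$ for all finite $m$ is elementarily equivalent to a countable algebra lying in $\Nr_n\CA_\omega$ — this is the ``gripping for $\Nr_n\CA_\omega$'' claim, and it is exactly the strengthening of (*) from ``$\At\B\in\At\Nr_n\CA_\omega$'' to ``$\B\in\Nr_n\CA_\omega$'' alluded to in the discussion after theorem \ref{main}. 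Granting this, the final assertion follows by the standard elementarity argument: given $\A$ with \pa\ winning $F^{n+k}$ and \pe\ winning every $H_m$, pass to a countable elementary subalgebra of a non-principal ultrapower of $\A$ on which \pe\ still wins each $H_m$ (elementary chain), obtaining $\B\equiv\A$ with $\B\in\Nr_n\CA_\omega\subseteq\K$; since $\A\notin S_c\Nr_n\Sc_{n+k}\supseteq\K$ as witnessed by \pa\ winning $F^{n+k}$ (this implication is the content of the last item of theorem \ref{maintheorem}, referred to in \ref{neat11}), we get $\A\notin\K$ while $\B\in\K$, so $\K$ is not elementary. The main obstacle is carrying out the proof that $H$ is strongly gripping for $\Nr_n\CA_\omega$ rather than merely for $\At\Nr_n\CA_\omega$: one must show that the arbitrary-label moves on long hyperedges, combined with the transformation and amalgamation moves, force the dilation $\C$ constructed step-by-step (as in the proof of (*)) to satisfy $\A=\Nr_n\C$ on the nose, i.e. that every element of $\Nr_n\C$ is already named by an $n$-variable formula over $\A$ — this is where the ``short hyperedges constantly labelled $\lambda$'' device does its job, ensuring atoms of the neat reduct are no smaller than atoms of the dilation, and it needs to be checked carefully, closely following \cite[theorem 39 and §4]{r} but with $n$-ary rather than binary predicate symbols.
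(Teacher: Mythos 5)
Your treatment of items (2)--(6) follows essentially the paper's own route: grippedness of complete representability because the $\omega$-rounded atomic game is determined by the atom structure alone, failure of strong grippedness via $\PEA_{\Z,\N}$, item (3) read off from the definition of the Lyndon conditions, item (4) from a weakly but not strongly representable atom structure, and for item (6) the same augmented neat game $H$ together with the claim --- left at roughly the same level of sketch in the paper itself --- that the extra non-atomic moves force $\A\cong\Nr_n\C$ rather than merely $\At\A\cong\At\Nr_n\C$, followed by the standard ultrapower and elementary chain argument.

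Item (1), however, has a genuine gap. You propose to witness non-grippedness of the class of neat reducts by exhibiting, on the atom structure of $\PEA_{\Z,\N}$, one atomic algebra in $\Nr_n\CA_\omega$ and one not. But no algebra based on $\At\PEA_{\Z,\N}$ lies in $\Nr_n\CA_\omega$ at all: \pa\ has a \ws\ in $F^{n+3}$ on that atom structure, $F^{n+3}$ is an atomic game, and a \ws\ for \pa\ there excludes every atomic algebra sharing that atom structure from $S_c\Nr_n\CA_{n+3}\supseteq \Nr_n\CA_{\omega}$. Theorem \ref{j} only gives \pe\ a \ws\ in the finite truncations $J_k$, not in the $\omega$-rounded game $J$, so the implication (*) from item (4) of theorem \ref{main} does not apply to $\At\PEA_{\Z,\N}$ itself; the algebra $\B$ with $\At\B\in\At\Nr_n\CA_\omega$ that one extracts lives on a different, merely elementarily equivalent, atom structure. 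At best this bears on strong grippedness, and even then it places no algebra on either atom structure inside the class of neat reducts. The paper instead uses the splitting example over a field $\F$ of characteristic $0$: the full set algebra $\wp(V)$ lies in $\Nr_\alpha\QEA_{\alpha+\omega}$, its subalgebra $\A=\Sg^{\wp(V)}\{y, y_s: s\in y\}$ contains all the atoms (the singletons), so the two share the same atom structure, yet $\A\notin\Nr_\alpha\QEA_{\alpha+1}$ because the set $Y=\{\tau(y_r,y_s):r,s\in V\}$ has no supremum in $\A$ while it must acquire one, namely $\tau_\alpha(y,y)$, in any full neat reduct containing $\A$. Note also that this example works for every dimension $\alpha>1$, including $\alpha=2$ and infinite $\alpha$, as the statement of item (1) requires, whereas the rainbow machinery is confined to finite dimensions $>2$.
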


\begin{proof}
\begin{enumarab}

\item This example is an adaptation of an example used in \cite{SL} to show that that for any pair of ordinals
$1<\alpha<\beta$, the class of $\Nr_{\alpha}\CA_{\beta}$ is not closed under
forming subalgebras, and it was also used in other contexts proving negative results on various amalgamation properties for
cylindric-like algebras \cite{STUD}, \cite{Sayedneat}. It proves to be a nut cracker, so it
will be used several times below.

Here we slightly generalize the example by allowing an arbitrary field to rather than $\mathbb{Q}$
to show that there  is an atom structure that carries simultaneously an algebra in $\Nr_{\alpha}\CA_{\alpha+\omega}$
and an algebra not in $\Nr_{\alpha}\CA_{\alpha+1}$. This
works for all $\alpha>1$ (infinite included) and other cylindric-like algebras as will be  clear from
the proof. Indeed, the proof works for any class of algebras whose signature is between $\Sc$ and $\sf QEA$.
(Here we are using the notation $\QEA$ instead of $\PEA$ because we are
allowing infinite dimensions).

Let $\alpha$ be an ordinal $>1$; could be infinite. Let $\F$ is field of characteristic $0$.
$$V=\{s\in {}^{\alpha}\F: |\{i\in \alpha: s_i\neq 0\}|<\omega\},$$
$${\C}=(\wp(V),
\cup,\cap,\sim, \emptyset , V, {\sf c}_{i},{\sf d}_{i,j}, {\sf s}_{\tau})_{i,j\in \alpha, \tau\in FT_{\alpha}}.$$
Then clearly $\wp(V)\in \Nr_{\alpha}\sf QPEA_{\alpha+\omega}$.
Indeed let $W={}^{\alpha+\omega}\F^{(0)}$. Then
$\psi: \wp(V)\to \Nr_{\alpha}\wp(W)$ defined via
$$X\mapsto \{s\in W: s\upharpoonright \alpha\in X\}$$
is an isomorphism from $\wp(V)$ to $\Nr_{\alpha}\wp(W)$.
We shall construct an algebra $\A$, $\A\notin \Nr_{\alpha}{\sf QPEA}_{\alpha+1}$.
Let $y$ denote the following $\alpha$-ary relation:
$$y=\{s\in V: s_0+1=\sum_{i>0} s_i\}.$$
Let $y_s$ be the singleton containing $s$, i.e. $y_s=\{s\}.$
Define as before
${\A}\in {\sf QPEA}_{\alpha}$
as follows:
$${\A}=\Sg^{\C}\{y,y_s:s\in y\}.$$

Now clearly $\A$ and $\wp(V)$ share the same atom structure, namely, the singletons.
Then we claim that
$\A\notin \Nr_{\alpha}{\sf QPEA}_{\beta}$ for any $\beta>\alpha$.
The first order sentence that codes the idea of the proof says
that $\A$ is neither an elementary nor complete subalgebra of $\wp(V)$.
Let $\At(x)$ be the first order formula asserting that $x$ is an atom.
Let $$\tau(x,y) ={\sf c}_1({\sf c}_0x\cdot {\sf s}_1^0{\sf c}_1y)\cdot {\sf c}_1x\cdot {\sf c}_0y.$$
Let $${\sf Rc}(x):=c_0x\cap c_1x=x,$$
$$\phi:=\forall x(x\neq 0\to \exists y(\At(y)\land y\leq x))\land
\forall x(\At(x) \to {\sf Rc}(x)),$$
$$\alpha(x,y):=\At(x)\land x\leq y,$$
and  $\psi (y_0,y_1)$ be the following first order formula
$$\forall z(\forall x(\alpha(x,y_0)\to x\leq z)\to y_0\leq z)\land
\forall x(\At(x)\to \At(\sf c_0x\cap y_0)\land \At(\sf c_1x\cap y_0))$$
$$\to [\forall x_1\forall x_2(\alpha(x_1,y_0)\land \alpha(x_2,y_0)\to \tau(x_1,x_2)\leq y_1)$$
$$\land \forall z(\forall x_1 \forall x_2(\alpha(x_1,y_0)\land \alpha(x_2,y_0)\to
\tau(x_1,x_2)\leq z)\to y_1\leq z)].$$
Then
$$\Nr_{\alpha}{\sf QPEA}_{\beta}\models \phi\to \forall y_0 \exists y_1 \psi(y_0,y_1).$$
But this formula does not hold in $\A$.
We have $\A\models \phi\text {  and not }
\A\models \forall y_0\exists y_1\psi (y_0,y_1).$
In words: we have a set $X=\{y_s: s\in V\}$ of atoms such that $\sum^{\A}X=y,$ and $\A$
models $\phi$ in the sense that below any non zero element there is a
{\it rectangular} atom, namely a singleton.

Let $Y=\{\tau(y_r,y_s), r,s\in V\}$, then
$Y\subseteq \A$, but it has {\it no supremum} in $\A$, but {\it it does have one} in any full neat reduct $\B$ containing $\A$,
and this is $\tau_{\alpha}^{\B}(y,y)$, where
$\tau_{\alpha}(x,y) = {\sf c}_{\alpha}({\sf s}_{\alpha}^1{\sf c}_{\alpha}x\cdot {\sf s}_{\alpha}^0{\sf c}_{\alpha}y).$

In $\wp(V)$ this last is $w=\{s\in {}^{\alpha}\F^{(\bold 0)}: s_0+2=s_1+2\sum_{i>1}s_i\},$
and $w\notin \A$. The proof of this can be easily distilled from \cite[main theorem]{SL}.
For $y_0=y$, there is no $y_1\in \A$ satisfying $\psi(y_0,y_1)$.
Actually the above proof proves more. It proves that there is a
$\C\in \Nr_{\alpha}{\sf QEA}_{\beta}$ for every $\beta>\alpha$ (equivalently $\C\in \Nr_n\QEA_{\omega}$), and $\A\subseteq \C$, such that
$\Rd_{\Sc}\A\notin \Nr_{\alpha}\Sc_{\alpha+1}$.
See \cite[theorems 5.1.4-5.1.5]{Sayedneat} for an entirely different example.

\item The algebra $\PEA_{\Z,\N}$, and its various reducts down to $\Sc$s,
shows that the class of completely representable algebras is not strongly gripped.
Indeed, it can be shown that \pe\ can win all finite rounded atomic games but \pa\ can win the
$\omega$ rounded game. It is known that this class is gripped. An atom structure is completely representable iff one,
equivalently, all atomic algebras, sharing
this atom structure are completely representable.

\item This is straightforward from the definition of Lyndon conditions \cite{HHbook}.

\item Any weakly representable atom structure that is not strongly representable detects this, see e.g.
the main results in \cite[theorem 1.1, corolary 1.2, corolary 1.3]{Hodkinson},
\cite{weak}, \cite[theorems 1.1, 1.2]{ANT} and theorems \ref{can}, and theorem \ref{hodkinson}.
For a potential stronger result, see theorem \ref{blurs} above.
\item From item (2) and the rest follows directly from the definition.

\item  We allow more moves for \pa\ in the neat game. 

We define a game that is strictly stronger than the game $J$, played on hypergraphs, 
which are coloured graphs endowed with arbitrary long labelled hyperedges  
in which \pa\ has even more moves and this allows is to remove $\At$ 
from both sides of the above equation formulated in (*), obtaining a much stronger result
The main play of the stronger game $H(\A)$ is a play of the game $J(\A).$ 

Recall that $J(A)$ was played on $\lambda$ neat hypernetworks. 
The base of the main board at a certain point will be the the neat $\lambda$ hypernework, call its 
network part $X$ and we write 
$X(\bar{x})$ for the atom that labels the edge $\bar{x}$ on the main board. 
But now \pa\ can make other moves too, 
which makes it harder for \pe\ to win and so a \ws\ for \pe\ in this new $\omega$ rounded game 
will give a stronger result. 

A definition. 
An $n$  network is a finite complete graph with nodes including $0, \ldots, n-1$ 
with all edges labelled by {\it arbitrary elements} of $\A$. No consistency properties are assumed. 

\pa\ can play an arbitrary $n$ network $N$, \pe\ must replace $N(0, \ldots, n-1),$  by 
some element $a\in A$. The idea, is that the constraints represented by $N$ correspond to an element of the $\RCA_\omega$ being constructed on $X$, 
generated by $A$.

The final move is that \pa\ can pick a previously played $n$ network $N$ and pick any  tuple $\bar{x}$ 
on the main board whose atomic label is below $N(0,1\ldots, n-1)$. 

\pe\ must respond by extending the main board from $X$ to $X'$ such that there is an embedding $\theta$ of $N$ into $X'$
 such that $\theta(0)=x_0\ldots , \theta(n-1)=x_{n-1}$ and for all $i_0, \ldots i_{n-1} \in N,$ we have 
$X(\theta(i_0)\ldots, \theta(i_{n-1}))\leq N(i_0,\ldots, i_{n-1})$. 

This ensures that in the limit, the constraints in 
$N$ really define the element $a$. If \pe\ has a \ws\ in $H(A)$ then the extra moves mean that every $n$ dimensional element generated by 
$\A$ in the $\B\in \RCA_\omega$ 
constructed in the play is an element of $\A$, so that $\A$ exhausts all $n$ dimensional elements 
of $\B$, so we actually have $\A\cong \Nr_n\B$.

The rest will follow from the standard reasoning. The algebra $\A$ will be elementary equivalent to a countable $\B\in \Nr_n\CA_{\omega}$,
but it will not be in $S_c\Nr_n\CA_{n+3}$

\end{enumarab}
\end{proof}

Concerning the last item, we can say more. Indeed, more generally the following.
Assume that If  ${\sf L}$ is a class of algebras, $G$ be a game and  $\A\in {\sf L}$, then \pe\ has a \ws\ in $G$. 
Asume also that $\Nr_n\CA_{\omega}\subseteq \sf L$. 
If there is an atomic algebra $\A$ such that \pe\ can win all finite rounded games of $J$ 
and \pa\ has a \ws\ in $G$ then any class between 
$\Nr_n\CA_{\omega}$ and $\sf L$ is not elementary.

\subsection{Neat embeddings and various notions of representability}

We shall repeatedly use the example of item (1) in \ref{SL}. In what follows
$\K\in \{\Sc, \PA, \PEA, \CA\}$,  ${\sf RK}_n$ denotes
the class of representable algebras of dimension $n$ while $\sf CRK_n$ denotes the class of completely representable
$\sf RK_n$s. 
The class of algebras satisfying the Lyndon conditions is denoted by 
${\sf LCK}_n$. 
Recall the definition of $S_c\K$ introduced in definition \ref{s}; $\A\in S_c\K$ if there is an algebra in $\K$ such that $\A$ embeds completely into 
this algebra.

For the well known definitions of pseudo universal and pseudo elementary classes,
the reader is referred  to \cite[definition 9.5, definition 9,6]{HHbook2}.
In fact all our results below hold for any class whose signature is between
$\Sc$ and $\PEA$. (Here $\Df$ is not counted in
because the notion of neat reducts for this class is trivial \cite[theorem 5.1.31]{tarski}).
A small minority of results in the next theorem partially intersect 
with results proved in \cite{recent}.

\begin{theorem}\label{maintheorem}
\begin{enumarab}

\item For $n>2$, the inclusions $\Nr_n\K_{\omega}\subseteq S_c\Nr_n\K_{\omega}\subseteq S\Nr_n\K_{\omega}$ are proper.
The first strict inclusion can be witnessed by a finite algebra for $n=3$, while the second cannot by witnessed by a finite
algebra for any $n$.
In fact, $m>n>1$, the inclusion $\Nr_n\K_m\subseteq S_c\Nr_n\K_m$ is proper
and for $n>2$ and  $m\geq n+3$ the inclusion $S_c\Nr_n\K_m\subseteq S\Nr_n\K_m$ is also proper.

\item For any pair of ordinals $1<\alpha<\beta$ the class $\Nr_{\alpha}\K_{\beta}$ is not elementary.
In fact, there exists an uncountable atomic algebra $\A\in \Nr_{\alpha}\QEA_{\alpha+\omega}$, hence $\A\in \Nr_{\alpha}\QEA_{\beta}$
for every $\beta>\alpha$, and $\B\subseteq_c \A$,
such that $\B$ is completely representable, $\A\equiv \B$,  so that $\B$ is also atomic,
but $\Rd_{sc}\B\notin \Nr_{\alpha}\Sc_{\alpha+1}$.
For finite dimensions, we have $\At\A\equiv_{\infty} \At\B$.

\item For finite $n$, the elementary theory of $\Nr_n\K_{\omega}$ is recursively enumerable.

\item For $n>1$, the class ${\sf S}_c\Nr_n\K_{\omega}$ is not elementary (hence not pseudo-universal)
but it is pseudo-elementary, and the  elementary theory of $S_c\Nr_n\K_{\omega}$ is  recursively enumerable.

\item For $n>1$, the class ${\sf S}_c\Nr_n\K_{\omega}$ is closed under forming
strong subalgebras but is
not closed under forming subalgebras

\item For $n>2$, the class ${\sf UpUr}{\sf S}_c\Nr_n\K_{\omega}$ is not finitely axiomatizable. For any $m\geq n+2,$
$S\Nr_n\K_m$
is not finitely axiomatizable and for $p\geq 2$, $\alpha>2$ (infinite included) and $\K\in \{\CA, \sf QEA\}$
the variety $S\Nr_{\alpha}\K_{\alpha+2}$ cannot be finitely axiomatized by a universal set of
formulas containing only finitely many variables.

\item For $n>1$, the class consisting of atomic algebras in  ${\sf UpUr}{\sf S}_c\Nr_n\K_{\omega}$
coincides with the class  of algebras satisfying the  Lyndon conditions
so that: $${\sf UpUr}{\sf S}_c\Nr_n\K_{\omega}\cap \At={\sf UpUr}[{\sf S}_c\Nr_n\K_{\omega}\cap \At]={\sf LCK_n}={\sf UpUr CRK_n}.$$

\item For $n>2$, both ${\sf UpUr}\Nr_n\K_{\omega}$ and ${\sf UpUr}S_c\Nr_n\K_{\omega}$ are properly contained in $\sf RK_n$;
by closing under forming subalgebras both
resulting classes coincide with $\sf RK_n.$
Furthermore, the former is properly contained in the latter.

\item For $n\geq 1$, the class of completely representable ${\sf CRK_n}$ algebras is
closed under ${\sf S}_c$, but the class $\Nr_n\K_{\omega}$, for $n>1$, is not.
If $\A\in {\sf CRK_n}$, and $\At\B=\At\A$, then $\B\in {\sf CRK_n}$.
(That is ${\sf CRA_n}$ is gripped by its atom structures). This is not the case with $\Nr_n\K_{\omega}.$

\item \label{robin} ${\sf CRK}_n\subseteq S_c\Nr_n\K_{n+k}$ for any finite $k$. For $n>2$, any class $\K$ that contains
the class $S_c\Nr_n\K_{\omega}$ and is contained in $S_c\Nr_n\K_{n+3}$ is not elementary.
Furthermore, if there is an atom structure such that \pe\ has a \ws\ in the game $H_n$ defined  in last item of theorem \ref{main}, 
for every $n\in \omega$, and \pa\ can win $F^3$, then we can replace the class $S_c\Nr_n\K_{\omega}$ by the smaller $\Nr_n\K_{\omega}$. 
(By the first item of \cite{SL}, and second item above, 
we know that it is {\it strictly} smaller).  

\item Any countable atomic algebra in $\Nr_n\K_{\omega}$
is completely representable, but there are uncountable
algebras in $\Nr_n\CA_{\omega}$  that are not completely representable. Such algebras exist also in infinite dimension.
In particular, for finite $n>2$, the classes  ${\sf UpUr}\Nr_n\K_{\omega}$ and ${\sf CRAS_n}$ are not related both ways.

\item In contrast, every algebra in $\Nr_n\CA_{\omega}$ 
has an $\omega$ complete relativized representation, 
and is strongly representable, that is its \d\ completion 
is representable.
\end{enumarab}
\end{theorem}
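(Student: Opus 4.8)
\emph{Reduction.} Write $\A=\Nr_n\B$ with $\B\in\CA_\omega$ and replace $\B$ by $\Sg^\B\A$; since $\A$ is generated by $n$-dimensional elements the new $\B$ is locally finite, still satisfies $\Nr_n\B=\A$, and — as $\A$ generates $\B$ — we have $\A\subseteq_c\B$, i.e. $\sum^\A X=1$ implies $\sum^\B X=1$. So we may assume $\B\in\Lf_\omega$, $\A=\Nr_n\B\subseteq_c\B$; in particular $\B$ is representable (Henkin's neat embedding theorem) and, being locally finite of dimension $\omega$, $\B=\Fm_T$ for a first order theory $T$ with $|T|=\max(|A|,\omega)$. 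With this in place the two assertions are handled by two essentially independent arguments.

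\emph{The $\omega$ complete relativized representation (for atomic $\A$).} By theorem \ref{longer} it suffices to show that \pe\ has a \ws\ in the $\omega$-pebble, $\omega$-round atomic game $G_\omega^\omega(\A)$: a \ws\ there produces an $\omega$ relativized smooth complete representation, in particular an $\omega$ complete relativized representation. \pe's strategy is to build her atomic networks inside a sufficiently saturated model $M$ of $T$ (an $\omega$-saturated one when $A$ is countable, and in general the model produced in the proof of theorem \ref{smoothcompleteness2}): she maintains along the play an assignment of the current network's nodes to tuples of $M$ realising the corresponding atoms of $\A$, and answers a cylindrifier move by locating the demanded witness inside $M$. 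The point is that the game is played on \emph{cliques}, i.e. finite consistent pieces, so \pe\ never has to omit the co-atom type of $\A$ globally — which is exactly why the relativized representation survives even when $\A$ is uncountable and has no ordinary complete representation (item (11)); at each step she only needs to steer into tuples below an atom of $\A$, and this is guaranteed, inside $\B$, by Lemma \ref{lem:atoms2} together with $\A\subseteq_c\B$. When $\A$ is countable one may bypass the game: the set of co-atoms of $\A$, read in $\B=\Fm_T$, has infimum $0$ by $\A\subseteq_c\B$, hence is a non-principal type of $T$; omitting it by the Orey--Henkin theorem gives a representation $h$ of $\B$ on a weak space ${}^\omega M$ with $\bigcup_{\alpha\in\At\A}h(\alpha)$ equal to the unit, and restricting $h$ to all finite neat reducts $\Nr_k\B$ $(k<\omega)$ and gluing gives an $\omega$-dimensional relativized representation of $\A$ which is complete by the choice of $h$ (this also recovers theorem \ref{rel}(1) on countable atomic algebras).

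\emph{Strong representability (for every $\A\in\Nr_n\CA_\omega$).} Let $\bar\B$ be the \d\ completion of $\B$. Since $\CA_\omega$ is a completely additive Sahlqvist variety, it is closed under \d\ completions, so $\bar\B\in\CA_\omega$; moreover $\Nr_n\bar\B$ is a complete subalgebra of $\bar\B$ because the cylindrifiers are completely additive, hence $\Nr_n\bar\B$ is complete. I claim $\A=\Nr_n\B$ is dense in $\Nr_n\bar\B$: given $0\neq y\in\Nr_n\bar\B$, density of $\B$ in $\bar\B$ yields $0\neq b\in\B$ with $b\leq y$; since $\B\in\Lf_\omega$ the set $\{j\in\Delta b:\ j\geq n\}$ is finite, and cylindrifying $b$ on these finitely many coordinates produces $c$ with $0<b\leq c\leq y$ (the last step since $y\in\Nr_n\bar\B$) and $\Delta c\subseteq n$, so $c\in\Nr_n\B=\A$. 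Thus $\Nr_n\bar\B$ is a complete algebra in which $\A$ is dense, so it is (isomorphic to) the \d\ completion $\bar\A$ of $\A$. Hence $\bar\A\in S\Nr_n\CA_\omega=\RCA_n$ (Henkin, equivalently $\bigcap_{k}S\Nr_n\CA_{n+k}=\RCA_n$), i.e. $\A$ is strongly representable. No atomicity is used, and the same argument works verbatim in every infinite dimension $\alpha$ via $\RCA_\alpha=S\Nr_\alpha\CA_{\alpha+\omega}$; it applies to any $\A$ that is dense in some $\Nr_n\B$ with $\B\in\Lf_\omega$, and a variant of it (using Lemma \ref{lem:atoms2} for the atomic case) is what is needed to push the relativized-representation half to $S_c\Nr_n\CA_\omega$, matching the abstract.

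\emph{Main obstacle.} The delicate part is the first assertion: converting the locally finite dilation into a genuine $\omega$ complete relativized representation of $\A$. The cleanest route is to verify carefully \pe's \ws\ in $G_\omega^\omega(\A)$ — precisely, that the clique-guarded nature of the game lets her keep each newly introduced tuple below an atom of $\A$ without ever realising or omitting the co-atom type in the ambient model, Lemma \ref{lem:atoms2} applied inside $\B$ being the mechanism that makes this possible. Everything else reduces to routine bookkeeping once the normalisation $\B\in\Lf_\omega$, $\A=\Nr_n\B\subseteq_c\B$ is fixed.
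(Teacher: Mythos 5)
Your proposal addresses only the final item of Theorem \ref{maintheorem}: the normalisation $\B\in\Lf_{\omega}$, the game argument for the $\omega$ complete relativized representation, and the \d\ completion argument for strong representability all concern item (12) (with a side remark on item (11)). Items (1)--(11) --- the strictness of the three inclusions, non-elementarity of $\Nr_{\alpha}\K_{\beta}$ and of $S_c\Nr_n\K_{\omega}$, pseudo-elementarity and recursive enumerability, non-finite axiomatizability, the identification of ${\sf UpUr}S_c\Nr_n\K_{\omega}\cap\At$ with the Lyndon conditions, the gripping statements, and the uncountable non-completely-representable algebra in $\Nr_n\CA_{\omega}$ --- are proved in the paper by a battery of separate constructions (the algebra of item (1) of theorem \ref{SL}, the rainbow algebras $\PEA_{\Z,\N}$ and $\PEA_{{\bf K}_{\omega},{\bf K}}$, a three-sorted defining theory, Monk-style ultraproduct arguments, and an uncountable $\R\in\Ra\CA_{\omega}$ with no complete representation), none of which your text touches. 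If the target is the whole theorem, that is the gap.

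For item (12) itself your argument is essentially sound. The relativized-representation half coincides with the paper's route: after normalising so that $\A=\Nr_n\B\subseteq_c\B$ with $\B\in\Lf_{\omega}$, one shows \pe\ has a \ws\ in the $\omega$-pebbled, $\omega$-rounded game by keeping every played network $N$ with $\widehat{N}=\prod{\sf s}_{i_0\ldots i_{n-1}}N(i_0,\ldots,i_{n-1})\neq 0$, the key step being Lemma \ref{lem:atoms2}; theorem \ref{longer} then converts the \ws\ into an $\omega$ complete relativized representation. (The paper does this purely algebraically; your detour through a saturated model of the theory of $\B$ is harmless but unnecessary.) Your strong-representability half is, however, a genuinely different route: you identify the \d\ completion of $\A$ with $\Nr_n\bar{\B}$, where $\bar{\B}$ is the \d\ completion of the locally finite dilation --- complete additivity gives closure of $\Nr_n\bar{\B}$ under suprema, and local finiteness gives density of $\A$ in it --- so the completion is itself a full neat reduct and hence representable. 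The paper instead deduces strong representability from the fact that \pe's \ws\ yields the Lyndon conditions together with the inclusion ${\sf LCK}_n\subseteq{\sf SRCA}_n$. Your route is cleaner in that it needs neither atomicity nor that nontrivial inclusion, and for atomic $\A$ it yields the sharper conclusion $\Cm\At\A=\Nr_n\bar{\B}$ (also recorded at the end of item (4) of theorem \ref{main}); the paper's route has the advantage of applying to the strictly larger class $S_c\Nr_n\K_{\omega}\cap\At$ claimed in the abstract, where $\A$ need not be dense in the ambient neat reduct and your MacNeille argument does not directly go through --- a point you acknowledge but do not resolve.
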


\begin{proof}
\begin{enumarab}

\item The  inclusions are obvious.

The strictness of the inclusion follows from item (1)  of theorem \ref{SL}, since
$\A$ is dense in $\wp(V)$, hence it is in $S_c\Nr_n\K_{\omega}$, but it is not in $\Nr_n\K_{n+1}$, {\it a fortiori} it is not
in $\Nr_n\K_{\omega}$ for the latter is clearly contained in the former.

For the strictness of the second  inclusion, let $\A$ be the rainbow algebra $\PEA_{\Z, \N}$. Then $\PEA_{\Z, \N}$ is representable,
in fact, it satisfies the Lyndon conditions, hence is also strongly representable, but it is  {\it not} completely
representable, in fact its $\Df$ reduct is not completely representable, and its $\Sc$ reduct  is not
in $S_c\Nr_n\Sc_{\omega}$, for had it been in this class, then it would be completely representable, inducing a complete
representation of $\A$ by the following standard `omitting types' argument. 

With abuse of notation we write $\A$ for $\Rd_{sc}\A$.
Assume that $\A\subseteq_c\Nr_n\B$, with $\B\in \Sc_{\omega}$. Then  we can assume that $\A$ is countable, since $\Tm\At\A$
is countable and is dense
in $\Cm\At\A$; so a complete representation of $\Tm\At\A$ 
will give a complete representation of $\A$ itself, because the class of completely representable algebras is
gripped.
Furthermore, we can assume that $A$ generates $\B$ so that $\B$ is also countable
$\A$ is a complete  subalgebra of $\B$ and $\B\in {\sf LSc}_{\omega}$ (a locally finite Pinter's algebra)

Let $X$ be the set of co-atoms of $\A$ (a co-atom is the complement of an atom).
Then $\prod^{\A}X=0$, because $\A$ is atomic;  by completeness it follows
that $\prod ^{\Nr_nB}X=0$, hence $\prod ^{\B}X=0$,
so by the usual omitting types theorem for first order logic without equality
(which does not apply to the uncountable case, but here our algebra is countable)
there is a generalized set countable algebra $\C$ and
an injective homomorphism
$f: \B\to \C$ such that $\bigcap_{x\in X} f(x)=\emptyset$.
The restriction of $f$ to $\A$ in  the obvious way, gives an atomic,
hence, a complete representation of $\A$.
If $\A\in \sf RK_n$ is finite, then any representation is complete, hence it is in $S_c\Nr_n\K_{\omega}$.

We have proved that there is a  representable algebra, namely, $\PEA_{\Z, \N}$
such that  $\Rd_{Sc}\PEA_{\Z, \N}\notin S_c\Nr_n\Sc_{\omega}$. The required now follows
by using the neat embedding theorem that says that ${\sf RK}_n=S\Nr_n\K_{\omega}$
for any $\K$ as specified above \cite{tarski}, indeed we have ${\Rd_K}{\sf PEA}_{\Z, \N}$ is not in $S_c\Nr_n\K_{\omega}$ but it is
(strongly) representable, that is, it is  in $S\Nr_n\K_{\omega}$.

Note that the \d\ completion of $\PEA_{\Z, \N}$ is not representable, for if it were,
then this induces a complete representation
of $\PEA_{\Z, \N}$. In fact, we can go further stipulating that
its $\Df$ reduct is not  representable, because by the same token this induces a complete representation of $\Rd_{df}\PEA_{\Z, \N}$ which
in turn induces a complete representation of $\PEA_n$ itself, since the latter is generated by elements whose dimension sets $<n$.
On the other hand, its canonical extension
is completely representable a result of Monk \cite[corollary 2.7.24]{tarski}.

For the second part concerning finite $3$ dimensional algebras witnessing the strictness of inclusions.
Take the finite polyadic equality 
algebra $\A$ consisting of three dimensional matrices (as defined by Monk)
over any integral non-permutational relation algebra $\R$.
Such relation algebras exist  \cite[theorem 36]{r}. The algebra $\A$ is finite, hence completely representable, 
hence  $\A\in S_c\Nr_3\QEA_{\omega}.$ Suppose for contradiction that $\Rd_{Sc}\A\in \Nr_n\Sc_{\omega}$, 
so that $\At\A\in \At\Nr_n\Sc_{\omega}.$
Then we claim that $\A$ has a $3$-homogeneous complete representation, which is impossible, because $\R$
does not have a homogeneous representation.

It can be shown, using arguments similar to \cite[theorem 33]{r},
that \pe\ has a \ws\ in an $\omega$ rounded game $K$ but played on atomic networks (not $\lambda$ neat hypernetworks)
with moves similar (but not identical) to $J$, so \pa\ s moves are like above, except that in amalgamation moves on networks there is an additional
restriction. The networks he chooses can overlap only on
at most $3$ nodes.
\pe\ uses her \ws\  to define a sequence
of networks $N_0\subseteq \ldots N_r$ such  that this sequence respects the cylindrifier
move in the sense that if $N_r(\bar{x})\leq {\sf c}_ia$ for $\bar{x}\in \nodes(N_r)$, then there
exists $N_s\supseteq N_r$ and a node $k\in \omega\sim N_r$ such that $N_s(\bar{y})=a$;
and also respects the partial isomorphism move, in the sense that if
if $\bar{x}, \bar{y}\in \nodes(N_r)$ such that $N_r(\bar{x})=N_r(\bar{y})$,
then there is a finite surjective map extending $\{(x_i, y_i): i<n\}$ mapping onto $\nodes(N)$
such that $\dom(\theta)\cap \nodes(N_r)=\bar{y}$,
and we seek an extension $N_s\supseteq N_r$, $N_r\theta$ (some
$s\geq r$).
Then if $\tau$ is a partial isomorphism
of $N_a$ and $X$ is any finite
subset of $\nodes(N_a)$ then there is a
partial isomorphism $\theta\supset \tau, \rng(\theta)\supset X$.

Define the representation  $\cal N$ of $\A$ with domain $\bigcup_{a\in A}\nodes(N_a)$, by
$$S^{\cal N}=\{\bar{x}: \exists a\in A, \exists s\in S, N_a(\bar{x})=s\},$$
for any subset $S$ of $\At\A$. (This is similar to the proof of item (3) in theorem \ref{main}).
Then this representation of $\A$,
is obviously complete, and by the definition of the game is $n$ homogeneous.

The strictness of the last inclusion cannot be witnessed by a finite representable
algebra for any finite dimension $n>2$, because any such algebra, is atomic (of course)
and completely representable, hence it will be necessarily in $S\Nr_n\K_{\omega}$.

Concerning the second strictness of inclusions (concerning neat embeddability in finitely many extra dimensions),
the first is witnessed by $\A$ constructed in \ref{SL} or the $\B$ constructed in the next item,
while
the second follows by noting  $S_c\Nr_n\K_m$ is not elementary by theorem \ref{nofinite}
while $S\Nr_n\K_m$ is a
variety.

\item  \cite{IGPL, quasi, MLQ, Fm, Note, Sayedneat}.
In \cite{IGPL} it is proved that for any $1<\alpha<\beta$, there exists $\A\in \Nr_{\alpha}\CA_{\beta}$ and
$\B\notin \Nr_{\alpha}\CA_{\alpha+1}$ such that both are atomic, $\A\equiv \B$. 
Furthermore, $\A$ does not depend on $\beta$ and $\B$ is completely representable.

We now show that $\At\A\equiv_{\infty}\At\B$. We work only with $\CA$s.
We need to give more detailed information about the constrction of $\A$ and $\B$.
We consider the case when $\A$ and $\B$ are cylindric algebras as constructed in \cite[theorem 5.1.3]{Sayedneat}.

A finite  atom structure, 
namely, an $n$ dimensional cartesian square,  with accessibility relations corresponding to the concrete interpretations of 
cylindrfiers and diagonal elements, is fixed in advance.

Then its atoms are
split twice.  Once, each atom is split into uncountably many, and once each into uncountably many except for one atom which
is only split  into {\it countably} many atoms. These atoms are called big atoms, which mean that they are cylindrically equivalent to their
original. This is a general theme in splitting arguments.
The first splitting gives an algebra $\A$ that is a full neat reduct of an algebra in arbitrary extra dimensions;
the second gives an algebra $\B$ that is not a full neat reduct
of an algebra in just one extra dimensions, hence in any higher
dimensions. Both algebras are representable, and elementary equivalent
because first order logic cannot se this cardinality twist. 
We will show in a minute that their atom structures are $L_{\infty, \omega}$ equivalent. 

We, henceforth, work with dimension $3$. The proof for higher finite dimensions 
is the same. However, we make a slight perturbation 
to the construction in {\it op.cit}; we require that the interpretation of the uncountably tenary 
relation symbols in the signature of $\sf M$ on which the set algebras $\A$ and $\B$ are based
are {\it disjoint}, not only distinct \cite[Theorems 5.3.1. 5.3.2]{Sayedneat}.

The Boolean reduct of $\A$ can be viewed as a finite direct product of disjoint Boolean relativizations of $\A$,  
denoted in \cite[theorem 5.3.2]{Sayedneat} by $\A_u$; $\A_u$ is the finite-cofinite algebra on a set having the same cardinality as the signature;
it is relativized to $1_u$ as defined in {\it opcit}, $u\in {}^33$. 

Each component will be atomic by our further restriction on $\sf M$,
so that $\A$ itself, a product of atomic algebras is also atomic. The language of Boolean algebras can now be expanded 
so that $\A$ is interpretable in an expanded structure $\P$, 
based on the  same atomic Boolean product. Now $\B$ can be viewed as obtained from $\P$, 
by replacing one of the components of the product with an elementary
{\it countable} Boolean subalgebra, and then giving it the same interpretation.
By the Fefferman Vaught theorem (which says that replacing in a product one of its components by an elementary 
equivalent one, the resulting product remains elementary equivalent to the original product) we have $\B\equiv \A$.
In particular, $\B$ is also atomic.

First order logic will not see this cardinality twist, but a suitably chosen term
not term definable in the language of 
$\CA_3$, namely, the substitution operator, $_3{\sf s}(0,1)$  will 
witnessing that the twisted algebra $\B$ is not a neat reduct.

The Boolean structure of both algebras are in fact very simple. For a set $X$, let  ${\sf Cof} (X)$ denote 
the finite co-finite Boolean algebra on $X$, that is ${\sf Cof}(X)$ has universe $\{a\in \wp(X): |a|<\omega, \text { or } |X\sim a|<\omega\}$.
Let $J$ be any set having the same cardinality as the signature of $\sf M$  
so that $J$ can simply be ${\sf M}$. 
Then $\A\cong \prod_{u\in {}^33}\A_u$, where $\A_u={\sf Cof}(J)$ 
and $\B=\prod_{u\in {}^33}\B_u$, where $\B_{Id}={\sf Cof(\sf N)}$ ($\sf N$ can in fact 
be any set such that  $|\sf N|<|J|$ but for definiteness let it be the least infinite cardinal) 
and otherwise $\B_u=\A_u$.

To prove that $\At\A\equiv_{\infty}\At\B$, we devise a pebble game similar to the rainbow pebble game
but each player has the option to choose an element from {\it both} structures, 
and not just stick to one so that it is {\it a back and forth game} not just a forth game.  

Pairs of pebbles are outside the board.
\pa\ as usual starts the game by placing a pebble on an element of one of the structures. \pe\
responds by placing the other pebble on the an element on the other structure.
Between them they choose an atom $a_i$ of $\At\A$
and an atom  $b_i$ of $\At\B$, under the restriction that player \pe\
must choose from the other structure from player \pa\ at each step.
A win for \pe\ if the binary relation resulting from the choices of the two players $R=\{(a,b): a\in \At(\A), b\in \At(\B)\}$ is a partial isomorphism.

At each step, if the play so far $(\bar{a}, \bar{b})$ and \pa\ chooses an atom $a$
in one of the structures, we have one of two case.
Either $a.1_u=a$ for some $u\neq Id$
in which case
\pe\ chooses the same atom in the other structure.
Else $a\leq 1_{Id}$
Then \pe\ chooses a new atom below $1_{Id}$
(distinct from $a$ and all atoms played so far.)
This is possible since there finitely many atoms in
play and there are infinitely many atoms below
$1_{u}$.
This strategy makes \pe\ win, since atoms below $1_u$ are cylindrically equivalent to $1_u$.
Let $J$ be a back and forth system which exists.
Order $J$ by reverse inclusion, that is $f\leq g$
if $f$ extends $g$. $\leq$ is a partial order on $J$.
For $g\in J$, let $[g]=\{f\in J: f\leq g\}$. Then $\{[g]: g\in J\}$ is the base of a
topology on
$J.$ 

Let $\C$ be the complete
Boolean algebra of regular open subsets of $J$ with respect to the topology
defined on $J.$
Form the Boolean extension $\M^{\C}.$
We want to define an isomorphism in $\M^{\C}$ of $\breve{\A}$ to
$\breve{\B}.$
Define $G$ by
$||G(\breve{a},\breve{b})||=\{f\in {J}: f(a)=b\}$
for $c\in \A$ and $d\in \B$.
If the right-hand side,  is not empty, that is it contains a function $f$, then let
$f_0$ be the restriction of $f$ to the substructure of $\A$ generated by $\{a\}$.
Then $f_0\in J.$ Also $\{f\in J:  f(c)=d\}=[f_0]\in \C.$
$G$ is therefore a $\C$-valued relation. Now let $u,v\in \M$.
Then
$||\breve{u}=\breve{v}||=1\text { iff }u=v,$
and
$||\breve{u}=\breve{v}||=0\text { iff } u\neq v$
Therefore
$||G(\breve{a},\breve{b})\land G(\breve{a},\breve{c})||\subseteq ||\breve{b}=\breve{c}||.$
for $a\in \A$ and $b,c\in \B.$
So ``$G$ is a function." is valid.
It is one to one because its converse is also a function.
(This can be proved the same way).
Also $G$ is surjective.

One can alternatively show that $\A\equiv_{\infty\omega}\B$ using "soft model theory" as follows:
Form a Boolean extension $\M^*$ of the universe $\M$
in which the cardinalities of $\A$ and $\B$ collapse to
$\omega$.  Then $\A$ and $\B$ are still back and forth equivalent in $\M^*.$
Then $\A\equiv_{\infty\omega}\B$ in $\M^*$, and hence also in $\M$
by absoluteness of $\models$.
\footnote{Consider the two relations $\cong$ (isomorphism) and $\equiv$ (elementary equivalence)
between
structures. In one sense isomorphism is a more intrinsic property of structures,
because it is defined directly in terms of
structural properties. $\equiv$, on the other hand, involves a (first order) language.
But in another sense elementary equivalence is more
intrinsic because the existence of an isomorphism can depend on some
subtle questions about the surrounding universe of sets.

For example if $\M$ is a transitive model  of set theory containing vector
spaces $V$ and $W$ of dimensions $\omega$ and $\omega_1$ over
the same countable field, then $V$ and $W$ are not isomorphic in $\M$,
but they are isomorphic in an extension of $\M$
obtained by collapsing the cardinal $\omega_1$ to $\omega.$
By contrast, the question whether structures $\A$ and $\B$ are
elementary equivalent depends only on
$\A$ and $\B$, and not on sets around them.
Then $\equiv_{\infty}$ relation which coincides with so called Boolean-isomorphisms
as ilustrated above.  This notion hovers between these two notions, and is
purely structural. It can be characterized by back and forth systems.}

\item We first show that $\Nr_n\CA_{\omega}$ is pseudo-elementay. This is similar to the proof of \cite[theorem 21]{r} using a three sorted first order theory.
from which we can infer the elementary theory 
$\Nr_n\CA_{\omega}$ is recursively  enumerable for any finite $n$.
 
To show that $\Nr_n\K_{\omega}$  is pseudo-elementary, we use a three sorted defining theory, with one sort for a cylindric algebra of dimension $n$
$(c)$, the second sort for the Boolean reduct of a cylindric algebra $(b)$
and the third sort for a set of dimensions $(\delta)$; the argument is analogous to that of Hirsch used for relation algebra reducts \cite[theorem 21]{r}.
We use superscripts $n,b,\delta$ for variables
and functions to indicate that the variable, or the returned value of the function,
is of the sort of the cylindric algebra of dimension $n$, the Boolean part of the cylindric algebra or the dimension set, respectively.
We do it for $\CA$s.  The other cases can be dealt with in exactly the sme way.

The signature includes dimension sort constants $i^{\delta}$ for each $i<\omega$ to represent the dimensions.
The defining theory for $\Nr_n{\sf CA}_{\omega}$ includes sentences stipulating
that the constants $i^{\delta}$ for $i<\omega$
are distinct and that the last two sorts define
a cylindric algebra of dimension $\omega$. For example the sentence
$$\forall x^{\delta}, y^{\delta}, z^{\delta}(d^b(x^{\delta}, y^{\delta})=c^b(z^{\delta}, d^b(x^{\delta}, z^{\delta}). d^{b}(z^{\delta}, y^{\delta})))$$
represents the cylindric algebra axiom ${\sf d}_{ij}={\sf c}_k({\sf d}_{ik}.{\sf d}_{kj})$ for all $i,j,k<\omega$.
We have have a function $I^b$ from sort $c$ to sort $b$ and sentences requiring that $I^b$ be injective and to respect the $n$ dimensional
cylindric operations as follows: for all $x^r$
$$I^b({\sf d}_{ij})=d^b(i^{\delta}, j^{\delta})$$
$$I^b({\sf c}_i x^r)= {\sf c}_i^b(I^b(x)).$$
Finally we require that $I^b$ maps onto the set of $n$ dimensional elements
$$\forall y^b((\forall z^{\delta}(z^{\delta}\neq 0^{\delta},\ldots (n-1)^{\delta}\rightarrow c^b(z^{\delta}, y^b)=y^b))\leftrightarrow \exists x^r(y^b=I^b(x^r))).$$

In all cases, it is clear that any algebra of the right type is the first sort of a model of this theory.
Conversely, a model for this theory will consist of an $n$ dimensional cylindric algebra type (sort c),
and a cylindric algebra whose dimension is the cardinality of
the $\delta$-sorted elements, which is at least $|m|$.
Thus the three sorted theory defines the class of neat reduct, furthermore, it is clearly recursive.

Finally, if $\K$ be a pseudo elementary class, that is
$\K=\{M^a|L: M\models U\}$ of $L$ structures, and $L, L^s, U$ are recursive.
Then there a set of first order recursive theory  $T$ in $L$, 
so that for any $\A$ an $L$ structure, we have
$\A\models T$ iff there is a $\B\in \K$ with $\A\equiv \B$. In other words, 
$T$ axiomatizes the closure of $\K$ under elementary equivalence, see 
\cite[theorem 9.37]{HHbook} for unexplained notation and proof.
\item Now we prove that $S_c\Nr_n\K_{\omega}$ is not elementary.
One possible proof goes as follows. Consider the rainbow algebra $\PEA_{\bf K_{\omega}, \bf K}$, where
$\K$ is the disjoint union of the graphs $\K_n$ $(1\leq n<\omega$), and (as defined before)
$\bf K_{\omega}$ is the complete irreflexive graph with $\omega$ nodes.
Then \pe\ has a \ws\ for all finite rounded games, but \pa\ can win
the $\omega$ rounded game.
Indeed, \pe\ has a \ws\ in  the private \ef\ game ${\sf EF}_n^{\omega}(\bf K_{\omega}, \bf K)$ for each $n\in \omega$.
She can always respond to a move by \pa\ by placing a pebble in $\bf \K_n\subseteq \bf K$. When there are only $n$ rounds this
is possible. But \pa\ can win the $\omega$ rounded game by placing successive pebbles on distinct elements of $\bf K_{\omega}$.

At the outset \pe\ must
choose which $\bf K_n$ in $\bf K$ to respond in, and from there on
she must stick to it.  After $n$ rounds she will have pebbled every element
of this copy and in the next round she loses. Now let $\A=\PEA_{\bf K_{\omega}, \bf K}$; as above
\ws\ s for either player can be lifted to the rainbow algebra.
Then we claim that $\Rd_{sc}\A\notin S_c\Nr_n\Sc_{\omega}$, for if it were then it would have a complete representation by the above
argument which impossible. Now let $\B$ be an ultrapower of $\A$,
using an elementary chain argument, let $\D$ be an elementary countable subalgebra
of $\B$ in which \pe\ has a \ws\ in $\omega$ rounds, hence $\D$ is completely representable, and so  also by the above argument
it is in $S_c\Nr_n\PEA_{\omega}$. We have shown that $\Rd_{sc}\A\notin S_c\Nr_n\Sc_{\omega}$, $\D\in S_c\Nr_n\PEA_{\omega}$
and $\A\equiv \D$. It thus follow that $S_c\Nr_n\K_{\omega}$ is not elementary.
Here the rainbow algebras $\PEA_{\Z,\N}$ or $\PEA_{\omega, \omega}$ can be also used.

It is pseudo-elementary because of the following reasoning.
For brevity, let ${\sf L}= S_c\Nr_n\K_{\omega}\cap \At$, and let $\sf  CRK_n$ denote the class of completely representable algebras.
Let $T$ be the first order theory that axiomatizes ${\sf Up Ur CRK}_n={\sf LCK_n}$.  
It suffices to show, since $\sf CRK_n$ is pseudo-elementary \cite{HHbook}, that
$T$ axiomatizes ${\sf UpUr  L}$ as well.
First, note that $\sf CRA_n\subseteq \sf L$.
Next assume that $\A\in {\sf UpUr L}$, then $\A$ has a countable elementary (necessarily atomic)
subalgebra in $\sf L$  which is completely representable by the above argument, and we
are done.

\item We show that $S_c\Nr_n\K_{\omega}$ is not closed under forming subalgebras, hence it is not pseudo-universal.
That it is closed under $S_c$ follows directly from the definition.
Consider the  $\Sc$ reduct of either ${\sf PEA}_{\Z, \N}$ or $\PEA_{\omega, \omega}$ or $\PEA_{\K,_{\omega},\K}$ of the previous item.
Fix one of them, call it $\A$. Because $\A$ has countably many atoms, $\Tm\A\subseteq \A\subseteq \Cm\At\A$,
and all three are completely representable or all three not completely representable sharing the same atom structure $\At\A$,
we can assume without loss that $\A$ is countable. 
Now $\A$ is not  completely representable, hence by the above 'omitting types argument' in item (1),
its $\Sc$ reduct is not in $S_c\Nr_n\Sc_{\omega}$.

On the other hand, $\A$ is strongly
representable, so its canonical extension
is representable, indeed completely representable, hence as claimed
this class is not closed under
forming subalgebras, because the $\Sc$  reduct of the canonical extension of $\A$
is in $S_c\Nr_n\sf Sc_{\omega}$, $\Rd_{sc}\A$ is not in $S_c\Nr_n\Sc_{\omega}$ and $\A$ embeds into its canonical 
extension. The first of these statements follow from the fact that if $\D\subseteq \Nr_n\B$, 
then $\D^+\subseteq \Nr_n\B^+$.

\item To prove non finite axiomatizability we give two entirely different proofs.

(a) First we use the flexible construction in \cite{ANT}.
There are several parameters used to define the relation algebra denoted by $\M$ defined on p. 84 in {\it op.ct.}, 
and denoted by $\R$, in theorem \ref{blurs}.
Both algebras were not representable, 
because they are based on graphs having finite colouring, expressed otherwise, the number of blurs (non principal ultrafilters)
were finite.

Now let $l\in \omega$, $l\geq 2$, and let $\mu$ be a non-zero cardinal. Let $I$ be a finite set,
$|I|\geq 3l.$ Let
$J=\{(X,n): X\subseteq I, |X|=l,n<\mu\}.$
Here  $I$ is the atoms of $\M$. $J$ is the set of blurs, consult  \cite[definition 3.1]{ANT} for the definition of blurs.
Pending on $l$ and $\mu$, let us call these atom structures ${\cal F}(I, l,\mu).$
If $\mu\geq \omega$, then $J$ would be infinite,
and $\Uf$, the set of non principal ultrafilters corresponding to the blurs, will be a proper subset of the ultrafilters.
It is not difficult to show that if $l\geq \omega$
(and we relax the condition that $I$ be finite), then
$\Cm{\cal F}(I, l,\mu)$ is completely representable,
and if $l<\omega$, then $\Cm{\cal F}(I, l,\mu)$ is not representable.

Let ${\D}$ be a non-trivial ultraproduct of the atom structures ${\cal F}(I, i,1)$, $i\in \omega$. Then $\Cm{\D}$
is completely representable.
Thus $\Tm{\cal F}(I, i,1)$ are ${\sf RRA}$'s
without a complete representation while their ultraproduct has a complete representation.

Also the sequence of complex algebras $\Cm{\cal F}(I, i,1)$, $i\in \omega$
consists of algebras that are non-representable with a completely representable ultraproduct.
This implies using the above that this ultraproduct (though uncountable) is in $S_c\Nr_n\CA_{\omega}$.
Then because our algebras posses
$n$ dimensional cylindric basis, the result lifts easily to cylindric algebras.

(b) Alternatively, one can prove the cylindric case directly as follows.Take $\G_i$ to be the disjoint union of cliques of size $(n(n-1)/2)+i$, or
let $\G_i$ be the graph with nodes $\N$ and edge relation $(i,j)\in E$ if $0<|i-j|<n(n-1)/2+i$.
Let $\alpha_i$ be the corresponding atom structure, as defined in \cite{weak} and $\A_i$ be the term algebra based on $\alpha_i$.
Then, as shown in \cite{weak}, $\Cm \A_i$ is not representable because, as proved in \cite{weak},
$\alpha$ is weakly but not strongly representable, see theorem \ref{hodkinson}.

But $\prod_{i\in \omega}\Cm\A_i=\Cm(\prod_{i\in \omega}\A_i)$, so
the one  graph is based on the disjoint union of the cliques which is arbitrarily large, and the second on graphs
which have arbitrary large chromatic
number, hence both are completely representable.

The rest follows from theorems \ref{nofinite}, \cite[theorem 2]{Andreka} and  \cite{c}.

\item Let $\A\in S_c\Nr_n\K_{\omega}$. Then \pe\ can win $G_n$ for all $n$ hence it satisfies the Lyndon conditions; this will be proved
in the last item.
Conversely, let $\A$ be an algebra satisfying the Lyndon conditions.
We can assume that it is countable and atomic, using the Tarski \ls\ downward theorem, 
 by taking an elementary countable subalgebra.
Recall that the class of algebras satisfying the Lyndon conditions is elementary by definition.

Then \pe\ has a \ws\ for all finite rounded games on $\A$. Take an ultrapower of $\A$, then \pe\ has an \ws\ in this
ultrapower; by an elementary chain argument again, one can extract an elementary countable
subalgebra of the ultrapower, $\D$ say,  where \pe\ still has a \ws\ in the $\omega$ rounded game on $\D$.
$\D$ is then countable and completely representable, hence
$\D\in S_c\Nr_n\K_{\omega}$, and $\A\in {\sf UpUr}\{\D\}$, hence $\A\in {\sf Up Ur}S_c\Nr_n\K_{\omega}$.

That ${\sf Up Ur}{\sf CRK}_n={\sf LCK}_n$ is almost the same argument.
One side is obvious. For the other side,
let $\A\in \sf LCK_n$. Then \pe\ has a \ws\ in the $k$ rounded game for all $k$, so as usual by ultrapowers and
an elementary chain argument the algebra is elementary equivalent to $\B$, where
$\B$ is countable atomic and \pe\ has a \ws\ in the $\omega$ rounded game on $\At\B$, hence
$\B$ is completely representable by \cite[theorem 3.3.3]{HHbook2},
and we are done.

Note that we can prove the previous item, in view of the hitherto established equality,
by showing that the rainbow $\CA_{m+3, m+2}$ witnesses that ${\sf LCA_n}$ is not finitely
axiomatizable.

\item The algebra $\A$ in item (1) in theorem \ref{SL}, witnesses the strictness of the stated  last inclusion.
The strictness of second inclusion from the fact that the class in question coincides 
with the class of algebras satisfying the Lyndon conditions, and we have proved in
that this class this is properly contained in even the class of  (strongly) representable algebras.

Indeed,  let $\Gamma$ be any graph with infinite chromatic number,
and large enough finite girth. Let $\rho_k$ be the $k$ the Lyndon condition for $\Df$s.
Let $m$ be also large enough so that any $3$ colouring of the edges of a complete graph
of size $m$ must contain a monochromatic triangle; this $m$ exists by Ramseys's theorem.
Then $\M(\Gamma)$, the complex algebra constructed on $\Gamma$, as defined in \cite[definition sec 6.3,  p.78]{HHbook}
will be representable as a polyadic equality algebra but it will fail $\rho_k$ for all $k\geq m$. The idea is
that \pa\ can win in the $m$ rounded atomic game coded by $\sigma_m$, by forcing a forbidden monochromatic triangle.

The last required from from the fact that the resulting classes coincide with $S\Nr_n\K_{\omega}$
which, in turn,  coincides with the class of representable algebras by the
neat embedding theorem of Henkin.

\item $\Nr_n\K_{\omega}$ is not closed under $S_c$ by item (1) theorem \ref{SL} and second item.
But ${\sf CRK_n}$ is closed under $S_c$ because
of the following straightforward argument.
If $\B$ is completely representable, and $\A\subseteq_c \B$, then $\At\A=\At\B$,
and $\sum ^{\B}\At\A=1$, hence if $M$ is a complete representation of $\B$, then $\bigcup_{x\in At\A} f(x)=1^M$,
so $f\upharpoonright \A$ is an atomic, hence, a complete representation of
$\A$.  Let $\At$ be an atom structure that carries a completely representable algebra $\B$, and assume
that $g:\At\to \wp(V)$, where $V$ is a generalized set algebra  is the complete representation.  Assume that $\A$ is atomic and $\At\A=\At\B$.
Then we define  a complete representation
of $\A$ as follows. Let $a\in A$, then $a=\sum\{x\in \At : x\leq a\}$. Let $J=\{x\in \At: x\leq a\}\subseteq \At$.
Now define $f$ with universe $\A$ by $f(a)=\bigcup\{g(x): x\in J\}$. This can be easily checked to be a complete representation.
The rest says that ${\sf CRA_n}$ is gripped while $\Nr_n\K_{\omega}$ is not.
Indeed, we  have yet again by item(1) theorem \ref{SL}, that
$\At\A=\At\C$, $\C\in \Nr_n\QEA_{\omega}$
but $\Rd_{sc}\A\notin \Nr_n\Sc_{n+1}.$
(Recall that the latter is completely representable).

\item \label{neat} First  we have ${\sf CRK}_n\subseteq S_c\Nr_n\K_{\omega}\subseteq S_c\Nr_n\K_{n+3}$.
The first inclusion was proved above, the second is obvious.

Second part immediately from theorem \ref{neat11}, 
by noting that the class of completely representable algebras coincide with the class $S_c\Nr_n\K_{\omega}$ on atomic
countable algebras. It readily follows that $\K$ is not
elementary. The last part follows from the argument in item (4) of theorem \ref{main}. 

\item For the  statement that ${\sf CRA_n}$ and ${\sf UpUr}\Nr_n\CA_{\omega}$ are not related both ways, see \cite[remark 31]{r} and \cite{BSL}.
In \cite{r} a sketch  of constructing  an uncountable relation algebra 
$\R\in \Ra\CA_{\omega}$ (having an $\omega$ dimensional cylindric basis)
with no complete representation is given. In \cite{BSL} it is the special case of this example when $\kappa=\omega$ 
but the idea in all three proofs is essentialy the same,
using a variant of the rainbow  relation algebra 
$\R_{\omega_1, \omega}$. 

Although \pe\ has a \ws\ in the 
$\omega$ usual rounded atomic game played on networks, this is not enough to build a complete representation, her \ws\ can only
be used to build an $\omega$ relattivized complete representatio, as indicated below.
Because the algebra is uncountable these two last notions
are distinct, the latter strictly stronger than the former, witness the proof of theorem 
\ref{longer}.

Assume that $\R=\Ra\B$ and $\B\in \PEA_{\omega}$, then 
$\Rd_{K}\Nr_n\B$ is as required, for a complete representation of it, induces easily a complete
representation of $\Ra\CA_{\omega}$.
The latter example shows that ${\sf UpUr} \Nr_n\K_{\omega}\nsubseteq {\sf CRA}_n$,

On the other hand,  the example in item (1) of theorem \ref{SL}
shows that the other inclusion does not
hold as well.
So neither class is contained in the other proving the
required.

Now give the details of the construction in \cite[remark 31]{r}.
We pove more, namely,  there exists an {\it uncountable} neat reduct, that is, an algebra in $\Nr_n\sf QEA_{\omega}$,
such that its $\Df$ reduct is not completely representable; however, it has an $\omega$ relativized representation, as a $\PEA_n$.

This example also shows that the condition of countability in characterizing complete representations via neat embeddings
as well as a the maximality condition in \cite[theorem 3.2.9]{Sayed}
(imposed on $< {}2^{\omega}$ non-principal types by taking $\kappa$ to be $\omega$) 
in an  omitting types theorem proved by Shelah, addressing first order logic, 
restricted to $L_n$, formulated in theorem \ref{Shelah1}.

Using the terminology of rainbow constructions, 
we allow the greens to be of cardinality $2^{\kappa}$ for any
infinite cardinal $\kappa$, and the reds to be of cardinality $\kappa$.
Here a \ws\ for \pa\ witnesses that  the algebra has {\it no} complete
representation. But this is not enough because we want our algebra to be
in $\Ra\CA_{\omega}$; we will show that it will be.

As usual we specify the atoms and forbidden triples.
The atoms are $\Id, \; \g_0^i:i<2^{\kappa}$ and $\r_j:1\leq j<
\kappa$, all symmetric.  The forbidden triples of atoms are all
permutations of $(Id, x, y)$ for $x \neq y$, \/$(\r_j, \r_j, \r_j)$ for
$1\leq j<\kappa$ and $(\g_0^i, \g_0^{i'}, \g_0^{i^*})$ for $i, i',
i^*<2^{\kappa}.$  In other words, we forbid all the monochromatic
triangles.

Write $\g_0$ for $\set{\g_0^i:i<2^{\kappa}}$ and $\r_+$ for
$\set{\r_j:1\leq j<\kappa}$. Call this atom
structure $\alpha$.

Let $\A$ be the term algebra on this atom
structure; the subalgebra of $\Cm\alpha$ generated by the atoms.  $\A$ is a dense subalgebra of the complex algebra
$\Cm\alpha$. We claim that $\A$, as a relation algebra,  has no complete representation.

Indeed, suppose $\A$ has a complete representation $M$.  Let $x, y$ be points in the
representation with $M \models \r_1(x, y)$.  For each $i< 2^{\kappa}$, there is a
point $z_i \in M$ such that $M \models \g_0^i(x, z_i) \wedge \r_1(z_i, y)$.

Let $Z = \set{z_i:i<2^{\kappa}}$.  Within $Z$ there can be no edges labeled by
$\r_0$ so each edge is labelled by one of the $\kappa$ atoms in
$\r_+$.  The Erdos-Rado theorem forces the existence of three points
$z^1, z^2, z^3 \in Z$ such that $M \models \r_j(z^1, z^2) \wedge \r_j(z^2, z^3)
\wedge \r_j(z^3, z_1)$, for some single $j<\kappa$.  This contradicts the
definition of composition in $\A$ (since we avoided monochromatic triangles).

Let $S$ be the set of all atomic $\A$-networks $N$ with nodes
 $\omega$ such that $\{\r_i: 1\leq i<\kappa: \r_i \text{ is the label
of an edge in N}\}$ is finite.
Then it is straightforward to show $S$ is an amalgamation class, that is for all $M, N
\in S$ if $M \equiv_{ij} N$ then there is $L \in S$ with
$M \equiv_i L \equiv_j N.$
Hence the complex cylindric algebra $\Ca(S)\in \CA_\omega$.

Now let $X$ be the set of finite $\A$-networks $N$ with nodes
$\subseteq\omega$ such that
\begin{enumerate}
\item each edge of $N$ is either (a) an atom of
$\c A$ or (b) a cofinite subset of $\r_+=\set{\r_j:1\leq j<\kappa}$ or (c)
a cofinite subset of $\g_0=\set{\g_0^i:i<2^{\kappa}}$ and
\item $N$ is `triangle-closed', i.e. for all $l, m, n \in \nodes(N)$ we
have $N(l, n) \leq N(l,m);N(m,n)$.  That means if an edge $(l,m)$ is
labeled by $1'$ then $N(l,n)= N(mn)$ and if $N(l,m), N(m,n) \leq
\g_0$ then $N(l,n).\g_0 = 0$ and if $N(l,m)=N(m,n) =
\r_j$ (some $1\leq j<\omega$) then $N(l,n).\r_j = 0$.
\end{enumerate}
For $N\in X$ let $N'\in\Ca(S)$ be defined by
\[\set{L\in S: L(m,n)\leq
N(m,n) \mbox{ for } m,n\in nodes(N)}\]
For $i,\omega$, let $N\restr{-i}$ be the subgraph of $N$ obtained by deleting the node $i$.
Then if $N\in X, \; i<\omega$ then $\cyl i N' =
(N\restr{-i})'$.
The inclusion $\cyl i N' \subseteq (N\restr{-i})'$ is clear.

Conversely, let $L \in (N\restr{-i})'$.  We seek $M \equiv_i L$ with
$M\in N'$.  This will prove that $L \in \cyl i N'$, as required.
Since $L\in S$ the set $X = \set{\r_i \notin L}$ is infinite.  Let $X$
be the disjoint union of two infinite sets $Y \cup Y'$, say.  To
define the $\omega$-network $M$ we must define the labels of all edges
involving the node $i$ (other labels are given by $M\equiv_i L$).  We
define these labels by enumerating the edges and labeling them one at
a time.  So let $j \neq i < \omega$.  Suppose $j\in \nodes(N)$.  We
must choose $M(i,j) \leq N(i,j)$.  If $N(i,j)$ is an atom then of
course $M(i,j)=N(i,j)$.  Since $N$ is finite, this defines only
finitely many labels of $M$.  If $N(i,j)$ is a cofinite subset of
$a_0$ then we let $M(i,j)$ be an arbitrary atom in $N(i,j)$.  And if
$N(i,j)$ is a cofinite subset of $\r_+$ then let $M(i,j)$ be an element
of $N(i,j)\cap Y$ which has not been used as the label of any edge of
$M$ which has already been chosen (possible, since at each stage only
finitely many have been chosen so far).  If $j\notin \nodes(N)$ then we
can let $M(i,j)= \r_k \in Y$ some $1\leq k < \kappa$ such that no edge of $M$
has already been labeled by $\r_k$.  It is not hard to check that each
triangle of $M$ is consistent (we have avoided all monochromatic
triangles) and clearly $M\in N'$ and $M\equiv_i L$.  The labeling avoided all
but finitely many elements of $Y'$, so $M\in S$. So
$(N\restr{-i})' \subseteq \cyl i N'$.

Now let $X' = \set{N':N\in X} \subseteq \Ca(S)$.
Then the subalgebra of $\Ca(S)$ generated by $X'$ is obtained from
$X'$ by closing under finite unions.
Clearly all these finite unions are generated by $X'$.  We must show
that the set of finite unions of $X'$ is closed under all cylindric
operations.  Closure under unions is given.  For $N'\in X$ we have
$-N' = \bigcup_{m,n\in \nodes(N)}N_{mn}'$ where $N_{mn}$ is a network
with nodes $\set{m,n}$ and labeling $N_{mn}(m,n) = -N(m,n)$. $N_{mn}$
may not belong to $X$ but it is equivalent to a union of at most finitely many
members of $X$.  The diagonal $\diag ij \in\Ca(S)$ is equal to $N'$
where $N$ is a network with nodes $\set{i,j}$ and labeling
$N(i,j)=1'$.  Closure under cylindrification is given.
Let $\C$ be the subalgebra of $\Ca(S)$ generated by $X'$.
Then $\A = \Ra(\C)$.
Each element of $\A$ is a union of a finite number of atoms and
possibly a co-finite subset of $a_0$ and possibly a co-finite subset
of $a_+$.  Clearly $\A\subseteq\Ra(\C)$.  Conversely, each element
$z \in \Ra(\C)$ is a finite union $\bigcup_{N\in F}N'$, for some
finite subset $F$ of $X$, satisfying $\cyl i z = z$, for $i > 1$. Let $i_0,
\ldots, i_k$ be an enumeration of all the nodes, other than $0$ and
$1$, that occur as nodes of networks in $F$.  Then, $\cyl
{i_0} \ldots
\cyl {i_k}z = \bigcup_{N\in F} \cyl {i_0} \ldots
\cyl {i_k}N' = \bigcup_{N\in F} (N\restr{\set{0,1}})' \in \A$.  So $\Ra(\C)
\subseteq \A$.
$\A$ is relation algebra reduct of $\C\in\CA_\omega$ but has no
complete representation; in fact $\C\in \QEA_{\omega}$, because the basis is obviosly symmetric.

Let $n>2$. Let $\B=\Nr_n \C$. Then
$\B\in \Nr_n\QEA_{\omega}$, is atomic, but has no complete representation; in fact because it is binary generated its 
$\Df$ reduct is not completelt representable (witness the argument in the last part of the proof of theorem \ref{hodkinson}.)

However, this algebra is strongly representable, that is $\Cm\At\A$ is representable, 
because it is full neat reduct (see the last part of the proof).

Now we show that it is $\omega$ completely representable, namely, it has  a representation $M$ that respects all finite cliques.
Recall that clique, as in usual graph theory, 
is a sequence $s$ of length $m\geq 2$, such that for any $i,j\in \rng(s)$, we have $M(i, j)$.

For $\A$ we play the usual $\omega$ rounded atomic game on atomic networks.
It suffices to show that \pe\ has a \ws\ in the $\omega$ rounded game.
(Because the algebra is uncountable this does not guarantee that the algebra itself is completely representable; in fact, we
know that  it is not).

Assume that \pe\ survives till the $r$ round $r<\omega$, and that the current play is in round $r+1$. \pa\ chooses a previously played
network $N_s$, the edge $x,y\in N_t$ and atoms $a,b\in A$ such that $N_t(x,y)\leq a;b.$
Then \pa\ has to choose a witness for this composition and enlarge the network such that $N_{t+1}(x,z)=a$ and
$N_t+1)(x,y)=b$, and we can assume that
$N_{t+1}$ has just one extra node. If there is a witness in $N_t$ there
is nothing to prove so assume not.
Let \pa\ play the triangle move $(N_s, i, j, k,a, b)$
in round $r+1$. \pe\ has to choose labels for the edges $\{(x, k), (k,x)\}$,
$x\in \nodes(N_s)\sim \{i,j\}$.
She chooses the labels for  the edges $(x,k)$ one at a time
and then determines the labels of the reverse edge $(k, x)$
uniquely. We give the uncountably many atoms the green colour $\g$,
and the countably many the colour $\r$.

We have several cases:
If it is not the case that $N_s(x, i)$ and $a$ are both green,
and it is not the case that $N_s(x, j)$ and $b$ are both green, \pe\ lets $N_{s+1}(x,k)$ a new green $\g$.

If $N_s(i,j)=\r$, $N_s(x, i)=\g,$ $N_s(x, j)=\g$ and $a=b$, then \pe\ lets $N_{s+1}(x,k)$ a new $\r$.
If neither, then $N(x, i)=\g$, $a=\g$ and $N_s(x, j)=b$ or $N_s(x, i)=a$ and $N_{s}(x,j)=\g$
and $b=\g$, she lets
$N_{s+1}(x, k)$ a new $\r$. This implies that \pa\ can win $G_{\omega}$ and we are done.
Note that the $\Df$ reduct of this algebra is not completely representable because it is binary generated.

For the second part, we lift our finite dimensional examples to the transfinite like we did in theorem \ref{2.12}.
Let $\alpha$ be an infinite ordinal.
Then we claim that there exists $\B\in \Nr_{\alpha}\QEA_{\alpha+\omega}$ that is atomic, uncountable and its $\Df$ reduct 
is not completely representable.
For $k\geq 3$, let $\C(k)\in \Nr_k\QEA_{\omega}$ be an atomic cuncountble $\sf PEA_k$ 
whose $\Df$ reduct is not completely representable; exists by the above.
Let $I=\{\Gamma: \Gamma\subseteq \alpha,  |\Gamma|<\omega\}$.
For each $\Gamma\in I$, let $M_{\Gamma}=\{\Delta\in I: \Gamma\subseteq \Delta\}$,
and let $F$ be an ultrafilter on $I$ such that $\forall\Gamma\in I,\; M_{\Gamma}\in F$.
For each $\Gamma\in I$, let $\rho_{\Gamma}$
be a one to one function from $|\Gamma|$ onto $\Gamma.$
Let ${\C}_{\Gamma}$ be an algebra similar to $\CA_{\alpha}$ such that
$\Rd^{\rho_\Gamma}{\C}_{\Gamma}={\C}(|\Gamma|)$. In particular, ${\C}_{\Gamma}$ has an atomic Boolean reduct.
Let $\B=\prod_{\Gamma/F\in I}\C_{\Gamma}$
We will prove that
$\B\in \Nr_\alpha\QEA_{\alpha+\omega},$  $\B$ is atomic and $\B$ is not completely representable. The last two requirements are
easy. $\B$ is atomic, because it is an ultraproduct of atomic algebras.
$\B$ is not completely representable, even on weak units, because any such  representation induces
a complete (square) representation of its $k$ neat reducts, $k\geq 3$, which we know do not have even a $\Df$ complete
representation.

For each $\Gamma\in I$, we know that $\C(|\Gamma|+k) \in\PEA_{|\Gamma|+k}$ and
$\Nr_{|\Gamma|}\C(|\Gamma|+k)\cong\C(|\Gamma|)$.
Let $\sigma_{\Gamma}$ be an injective map
 $(|\Gamma|+\omega)\rightarrow(\alpha+\omega)$ such that $\rho_{\Gamma}\subseteq \sigma_{\Gamma}$
and $\sigma_{\Gamma}(|\Gamma|+i)=\alpha+i$ for every $i<\omega$. Let
$\A_{\Gamma}$ be an algebra similar to a
$\QEA_{\alpha+\omega}$ such that
$\Rd^{\sigma_\Gamma}\A_{\Gamma}=\C(|\Gamma|+k)$.  Then $\Pi_{\Gamma/F}\A_{\Gamma}\in \QEA_{\alpha+\omega}$.
We now prove that $\B= \Nr_\alpha\Pi_{\Gamma/F}\A_\Gamma$.
Using the fact that neat reducts commute with forming ultraproducts, for each $\Gamma\in I$, we have
\begin{align*}
\Rd^{\rho_{\Gamma}}\C_{\Gamma}&=\C(|\Gamma|)\\
&\cong\Nr_{|\Gamma|}\C(|\Gamma|+k)\\
&=\Nr_{|\Gamma|}\Rd^{\sigma_{\Gamma}}\A_{\Gamma}\\
&=\Rd^{\sigma_\Gamma}\Nr_\Gamma\A_\Gamma\\
&=\Rd^{\rho_\Gamma}\Nr_\Gamma\A_\Gamma
\end{align*}
We deduce that

$$\B=\Pi_{\Gamma/F}\C_\Gamma\cong\Pi_{\Gamma/F}\Nr_\Gamma\A_\Gamma=\Nr_\alpha\Pi_{\Gamma/F}\A_\Gamma\in \Nr_{\alpha}\QEA_{\alpha+\omega}.$$

\item Now for the last part.
We write $Id_{-i}$ for the function $\{(k,k): k\in n\sim\{i\}\}.$
For a network $N$ and a partial map $\theta$ from $n$ to $n$, that is $\dom\theta\subseteq n$, recall that 
$N\theta$ is the network whose labelling is defined by $N\theta(\bar{x})=N(\tau(\bar{x}))$
where for $i\in n$, $\tau(i)=\theta(i)$ for $i\in \dom\theta$
and $\tau(i)=i$ otherwise.
Recall too that $F^m$ is the usual atomic game
on networks, except that the nodes are $m$ and \pa\ can re use nodes.

Let $\A\in S_c\Nr_n\K_{\omega}$ (possibly uncountable) be atomic. 
We know that if $\A$ is countable then it
is completely representable, hence strongly representable, but as just shown, if $\A$ is uncountable, and is in the strictly 
smaller class  $\Nr_n\CA_{\omega}$ it might not be completely representable.
Howecer in all cases it has an $\omega$ relativized representation, and it is also {\it  strongly} representable.

Suppose that $\A\subseteq_c \Nr_n\C$, $\C\in \K_{\omega}$.
Define (see definition \ref{subs}, for unexplained notation here, namely, the substitution operator) 
for an  $\A$-network $N$ with $\nodes(N)\subseteq \N$,
$\widehat N\in\C$ by
\[\widehat N =
 \prod_{i_0,\ldots, i_{n-1}\in\nodes(N)}{\sf s}_{i_0, \ldots, i_{n-1}}N(i_0\ldots, i_{n-1})\]

The following is not hard to check \cite[lemma 26]{r}:
\begin{enumerate}
\item For any $x\in\C\setminus\set0$ and any
finite set $I\subseteq m$ there is a network $N$ such that
$\nodes(N)=I$ and $x\;.\;\widehat N\neq 0$.
\item
For any networks $M, N$ if
$\widehat M\;.\;\widehat N\neq 0$ then $M\equiv^{\nodes(M)\cap\nodes(N)}N$.
\item\label{it:-i}
If $i\not\in\nodes(N)$ then ${\sf c}_i\widehat N=\widehat N$.

\item \label{it:-j} $\widehat{N Id_{-j}}\geq \widehat N$.

\item\label{it:ij} If $i\not\in\nodes(N)$ and $j\in\nodes(N)$ then
$\widehat N\neq 0 \rightarrow \widehat{N[i/j]}\neq 0$
where $N[i/j]=N\circ [i|j]$

\item\label{it:theta} If $\theta$ is any partial, finite map $n\to n$
and if $\nodes(N)$ is a proper subset of $n$,
then $\widehat N\neq 0\rightarrow \widehat{N\theta}\neq 0$.
\end{enumerate}

We prove only the first two items. 
The proof of the first part is based on repeated use of
lemma ~\ref{lem:atoms2}. We define the edge labelling of $N$ one edge
at a time. Initially no hyperedges are labelled.  Suppose
$E\subseteq\nodes(N)\times\nodes(N)\ldots  \times\nodes(N)$ is the set of labelled hyper
edges of $N$ (initially $E=\emptyset$) and
$x\;.\;\prod_{\bar c \in E}{\sf s}_{\bar c}N(\bar c)\neq 0$.  Pick $\bar d$ such that $\bar d\not\in E$.
By lemma~\ref{lem:atoms2} there is $a\in\At(\c A)$ such that
$x\;.\;\prod_{\bar c\in E}{\sf s}_{\bar c}N(\bar c)\;.\;{\sf s}_{\bar d}a\neq 0$.
Include the edge $\bar d$ in $E$.  Eventually, all edges will be
labelled, so we obtain a completely labelled graph $N$ with $\widehat
N\neq 0$.
it is easily checked that $N$ is a network.
For the second part, if it is not true that
$M\equiv^{\nodes(M)\cap\nodes(N)}N$ then there are is
$\bar c \in^{n-1}\nodes(M)\cap\nodes(N)$ such that $M(\bar c )\neq N(\bar c)$.
Since edges are labelled by atoms we have $M(\bar c)\cdot N(\bar c)=0,$
so $0={\sf s}_{\bar c}0={\sf s}_{\bar c}M(\bar c)\;.\; {\sf s}_{\bar c}N(\bar c)\geq \widehat M\;.\;\widehat N$.
We leave the easy proof of rest of the items to the reader.

We show that \pe\ has a \ws\ in $G_{\omega}.$
In the game \pe\ always
plays networks $N$ with $\nodes(N)\subseteq n$ such that
$\widehat N\neq 0$. In more detail, in the initial round, let \pa\ play $a\in \At\A$.
\pe\ plays a network $N$ with $N(0, \ldots, n-1)=a$. Then $\widehat N=a\neq 0$.
At a later stage suppose \pa\ plays the cylindrifier move
$(N, \langle f_0, \ldots, f_{n-2}\rangle, k, b, l)$
by picking a
previously played network $N$ and $f_i\in \nodes(N), \;l<n,  k\notin \{f_i: i<n-2\}$,
and $b\leq {\sf c}_kN(f_0,\ldots,  f_{i-1}, x, f_{i+1}, \ldots f_{n-2})$.
Let $\bar a=\langle f_0.\ldots, f_{i-1}, k, f_{i+1}, \ldots f_{n-2}\rangle.$
Then ${\sf c}_k\widehat N\cdot {\sf s}_{\bar a}b\neq 0$.
Then by the above and lemma \ref{lem:atoms2}, 
there is a network  $M$ such that
$\widehat{M}\cdot \widehat{{\sf c}_kN}\cdot {\sf s}_{\bar a}b\neq 0$. Then $M$ is the required
response.

It is now clear that \pe\ can win $F^{\omega}$, hence $G_{\omega},$ hence $G_k$ for all finite $k\geq n$.
Thus  $\A$ satisfies the Lyndon conditions and so
it is strongly representable, that is, its \d\ completion, namely, $\Cm\At\A$
is representable. This also proves the missing part in theorem \ref{neat11}; the proof there refers to this last item.

\end{enumarab}
\end{proof}
Henceforth  we carry out our discussions and  formulate our theorems for cylindric algebras.
Everything said about cylindric algebras carries over to the other cylindric-like algebras 
approached in our previous investigations like $\Sc, \PA$ and $\PEA$.

However, $\Df$s does not count here, for the notion of neat reducts
for such algebras is trivial.
Lifting from atom structures, we define several classes of atomic representable algebras.
${\sf CRA_n}$ denotes the class of completely representable algebras of dimension $n$.
Let ${\sf SRCA_n}$ be the class of strongly representable atomic algebras of dimension $n$,
$\A\in {\sf SRCA_n}$ iff $\Cm\At\A\in \RCA_n$.  ${\sf WRCA_n}$ denotes the class of weakly representable algebras of dimension $n$,
and this is just $\RCA_n\cap \At$.  We  have the following strict inclusions lifting them up from atom structures \cite{HHbook2} (*):
$${\sf CRA}_n\subset {\sf LCA_n}\subset {\sf SRCA_n}\subset {\sf WCRA}_n$$  

The second  and fourth classes are elementary but not finitely axiomatizable, bad Monk  algebras converging to a good one (Monk's original algebras
are like that),
can witness this, while ${\sf SRCA_n}$ is not closed under both ultraroots and ultraproducts, 
good Monks algebras converging to a bad one witnesses 
this. Rainbow algebras, in fact all three given in item (5) of the last theorem witness that ${\sf CRA_n}$ is not elementary,
since they are not completely represebntable bur are elementary equivalent to (countable) algebras 
that are. From this we readily conclude that ${\sf CRCA_n}$ is properly contained in ${\sf LCA}_n$.

For a cylindric algebra atom structure $\F$ the first order algebra over $\F$ is the subalgebra
of $\Cm\F$ consisting of all sets of atoms that are first order
definable with parameters from $S$. ${\sf FOCA_n}$ denotes the class of atomic such algebras of dimension $n$.

This class is strictly bigger than ${\sf SRCA_n}$.
Indeed, let $\A$ be any of the two Monk algebra constructed in theorem \ref{hodkinson} or the algebra based
or the rainbow term construction obtained by blowing up and blurring a finite rainbow algebra, proving that
$S\Nr_{n}\CA_{n+4}$ is not atom canonical.
Recall that such algebras were defined using first order formulas, the first in a Monk's signature, the second in the rainbow signature
(the latter is first order since we had only
finitely many greens).  Though the usual semantics was perturbed, first order logic did not see the relativization, only 
infinitary formulas saw it,
and thats why the complex algebras could not be represented. 
These examples all show that ${\sf SRCA_n}$ is properly contained in ${\sf FOCA_n}.$
Another way to view this is to notice that ${\sf FOCA_n}$ is elementary, that ${\sf SRA_n}\subseteq {\sf FOCA_n}$, but
${\sf SRCA_n}$ is not elementary.

Let $\K$ be the class of atomic representable algebras having $NS$, and ${\sf L}$ be the class of atomic
representable algebras having  $NS$ the unique neat embedding property; these are defined 
in \cite[definitions, 5.2.1, 5.1.2]{Sayedneat}.
Obviously, the latter is contained in the former, and both are contained in $\Nr_n\CA_{\omega}$ by definition.

There is a  plathora of very interesting classes between the
atomic algebras in the amalgamation base of ${\sf RCA_n}$ and atomic algebras in
${\sf RCA_n}.$ Some are elementary, some are not.
Some can be explicitly defined in terms of the (strength of) neat embeddings, some are not, at least its not obvious
how they can.
Recall that for a class $\K$ with a Boolean reduct, $\K\cap \At$ denotes the class of atomic algebras in $\K$; the former is elementary iff the
latter is. 
 
Let $n>2$ be finite. Then we have the following inclusions (note that $\At$ commutes with ${\sf UpUr})$:
$${\sf L}\cap \At\subset \K\cap \At\subset \Nr_n\CA_{\omega}\cap \At\subset {\sf UpUr}\Nr_n\CA_{\omega}\cap \At$$
$$\subset {\sf Up Ur}S_c\Nr_n\CA_{\omega}\cap \At={\sf UpUr}{\sf CRA_n}={\sf LCA}_n\subset {\sf SRCA_n}\subset
{\sf UpUr}{\sf SRCA}_n\subseteq {\sf FOCA_n}$$
$$\subset S\Nr_n\CA_{\omega}\cap \At={\sf WRCA_n}= \RCA_n\cap \At.$$

The majority of  inclusions, and indeed their strictness, 
can be destilled without much difficulty from our previous work.
However, we feel 
that  a brief discussion and some comment are in order.

\begin{itemize}
\item The first two strict inclusions are witnessed in \cite{recent}

\item   The third inclusion is witnessed by a slight modification 
of the algebra $\B$ used in the proof of \cite[theorem 5.1.4 ]{Sayedneat}, 
showing that for any pair of ordinals $1<n<m\cap \omega$, the class $\Nr_n\CA_m$ is not elementary. 
This was done in item (2) in the previous theorem. 
In the constructed model $\sf M$ \cite[lemma 5.1.3]{Sayedneat} on which (using the notation in {\it op.cit}), 
the two algebras $\A$ and $\B$  are based, 
one requires (the stronger) that the interpretation of the $3$ ary relations symbols in the signature 
in $\sf M$ are {\it disjoint} not only distinct as above.
This can be indeed done without much difficulty, and indeed such a model $\sf M$ was obtained 
in \cite{IGPL} using a more basic step-by-step construction.
Atomicity of $\B$ follows immediately,  since its Boolean reduct  is now a product of atomic algebras. 
These are denoted by $\A_u$ except for one countable component $\B_{Id}$, $u\in {}^33\sim \{Id\}$, cf. \cite{Sayedneat} 
p.113-114. Fourth inclusion from item (1) in theorem \ref{SL}. Fifth from item (8) in previous theorem.

Last inclusion follows from the following  $\RA$ to $\CA$ adaptation of an example of Hirsch and Hodkinson
which we use to show that 
${\sf FOCA_n}\subset  {\sf WCA_n}$. This is not at all obvious because they are both elementary.
Take an $\omega$ copy of the  $3$ element graph with nodes $\{1,2,3\}$ and edges
$1\to 2\to\ 3$. Then of course $\chi(\Gamma)<\infty$. Now  $\Gamma$  has a three first order definable colouring.
Since $\M(\Gamma)$ as defined in \cite{HHbook2} is not representable, then the algebra of first order 
definable sets is not representable because $\Gamma$ is interpretable in
$\rho(\Gamma)$, the atom structure constructed from $\Gamma$ as defined in \cite{HHbook2}.
However, it can be shown that the term algebra is representable. (This is not so easy to prove).

\end{itemize}

Call an atomic representable algebra strongly Lyndon if it atomic and \pe\ can win 
the neat game introduced in item (6) in \ref{main}, namely, $H$ restricted to $k$ rounds.
Call this game $H_k$ and let ${\sf SLCA_n}$ denote the elementary closure of this  class.

Then $H_k$ coded by $\rho_k$ is strictly harder than that coded by
$\sigma_k$ (the $k$ th Lynodon condition)  as far as \pa\ is concerned.
A \ws\ for \pe\ in $H_k$ for every $k\geq n$, forces the algebra to be in
${\sf UpUr}\Nr_n\CA_{\omega}$ while a \ws\ for $\sigma_k$ for every $k\geq n$, forces the algebra to
be in ${\sf Up Ur }S_c\Nr_n\CA_{\omega}$ and these classes are distinct.

On the other hand if $J_k$ denotes the $k$ rounded atomic game introduced in item (4) of theorem \ref{main},
Then a \ws\ for \pe\ in $H_k\implies $ a \ws\ for \pe\ in $J_k$ $\implies$ a \ws\ for \pe\ in $G_k.$
We  know that one of the two arrows cannot be reversed; it is possible that they both cannot be reversed.

We do not know whether the elementary closure of ${\sf SRCA_n}$ coincides with ${\sf FOCA_n}.$ We know only 
the ${\sf UpUr SRA_n}$
is contained in ${\sf FOCA_n}$ since ${\sf SRCA_n}\subseteq {\sf FOCA_n}$ 
(this  follows immediately from the definitions)
and that  ${\sf FOCA_n}$ is elementary.

Now that we have three classes in (*) connected in quite a satisfactory way to neat embedding properties, 
it is only natural to get a possibly similar characterization for ${\sf SRCA}_n.$
Here by satisfactory we mean that the classes in question can be obtained by applying `natural' operations on 
$\Nr_n\CA_{\omega}\cap \At$, like complete subalgebras and 
elementary closure, possibly restricting to the countable algebras 
(as is the case with complete representability). We are happy to keep `natural' at this level of ambiguity; for 
in our next theorem we introduce another 
new `natural' operator.

We characterize the class of strongly representable atom structures via neat embeddings, modulo an 
{\it inverse} of Erdos' theorem.

For an atomic algebra $\A$, by an atomic subalgebra we mean a subalgebra of $\A$ containing all its atoms, equivalently a superalgebra of 
$\Tm\At\A$. 
We write  $S_{at}$ to denote this operation applied to an algebra or to a 
class of algebras. $\M(\Gamma)$ denotes the Monk algebra based on the graph 
$\Gamma$ as in \cite{HHbook2}.
Notice that although we have ${\sf UpUr}{\sf SRCA}_n\subseteq {\sf FOCA_n}$, the latter is not closed under forming atomic subalgebras, since
forming subalgebras does not preserve first order sentence.
Notice too that ${\sf Up}{\sf SRCA_n}={\sf Ur}{\sf SRCA_n}$, and hence both are elementary.
The next characterization therefore seems to be plausible.

\begin{theorem} Assume that for every atomic representable algebra that is not strongly representable, there exists a graph $\Gamma$ with finite
chromatic number such that $\A\subseteq \M(\Gamma)$ and $\At\A=\rho(\M(\Gamma))$. 
Assume also that for every graph $\Gamma$ with $\chi(\Gamma)<\infty$, there exists
$\Gamma_i$ with  $i\in \omega$, such that $\prod_{i\in F}\Gamma_i=\Gamma$, for some non principal ultrafilter $F$. 
Then $S_{at}{\sf Up}{\sf SRCA}_n={\sf WRCA_n}=S\Nr_n\CA_{\omega}\cap \At.$ 
\end{theorem}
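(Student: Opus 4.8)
The plan is to prove the two claimed equalities by establishing two chains of inclusions, the first being essentially routine and the second being where the stated hypotheses come into play. For the equality $\mathsf{WRCA}_n=S\Nr_n\CA_\omega\cap\At$, nothing new is needed: this is the atomic instance of Henkin's neat embedding theorem, since $\RCA_n=S\Nr_n\CA_\omega$ (cf. the preliminaries and the neat embedding theorem cited throughout), and $\mathsf{WRCA}_n$ is by definition $\RCA_n\cap\At$. So the whole content is in showing $S_{at}\mathsf{Up}\,\mathsf{SRCA}_n=\mathsf{WRCA}_n$.

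First I would prove the easy inclusion $S_{at}\mathsf{Up}\,\mathsf{SRCA}_n\subseteq\mathsf{WRCA}_n$. Every strongly representable atomic algebra is representable, so $\mathsf{SRCA}_n\subseteq\mathsf{WRCA}_n=\RCA_n\cap\At$. Since $\RCA_n$ is a variety it is closed under ultraproducts, so $\mathsf{Up}\,\mathsf{SRCA}_n\subseteq\RCA_n$; and ultraproducts of atomic algebras need not be atomic in general, but here we only need that the members of $\mathsf{Up}\,\mathsf{SRCA}_n$ that we subsequently take atomic subalgebras of are being handled correctly — in fact the cleaner route is to note $S_{at}$ applied to a representable algebra gives a representable algebra (subalgebras of representable algebras are representable), and $S_{at}$ by definition returns atomic algebras. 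Hence $S_{at}\mathsf{Up}\,\mathsf{SRCA}_n\subseteq\RCA_n\cap\At=\mathsf{WRCA}_n$. One subtlety to address is whether $\mathsf{Up}\,\mathsf{SRCA}_n$ already contains non-atomic members; since $S_{at}$ is only meaningfully applied to atomic algebras, I would either restrict $\mathsf{Up}$ here to those ultraproducts that remain atomic, or simply observe that the outer $S_{at}$ forces atomicity of the final output regardless, and that representability is all that survives and all that is needed.

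The hard direction is $\mathsf{WRCA}_n\subseteq S_{at}\mathsf{Up}\,\mathsf{SRCA}_n$, and this is exactly where the two hypotheses are used. Let $\A$ be atomic and representable. If $\A$ is already strongly representable we are done (it lies in $\mathsf{SRCA}_n\subseteq S_{at}\mathsf{Up}\,\mathsf{SRCA}_n$, taking a trivial ultrapower and $\A$ itself as the atomic subalgebra). Otherwise, by the first hypothesis there is a graph $\Gamma$ with $\chi(\Gamma)<\infty$, with $\A\subseteq\M(\Gamma)$ and $\At\A=\rho(\M(\Gamma))$, i.e. $\A$ is an atomic subalgebra of $\M(\Gamma)$. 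By the second hypothesis, $\Gamma=\prod_{i/F}\Gamma_i$ for some graphs $\Gamma_i$ and some non-principal ultrafilter $F$ on $\omega$; the intended use is that we may choose the $\Gamma_i$ to have \emph{infinite} chromatic number (this is the natural reading of an ``inverse of Erd\H{o}s'': a bad graph is an ultraproduct of good graphs, the very reverse of the Monk phenomenon in theorem \ref{new}). Then by theorem \ref{new} each $\M(\Gamma_i)$ is strongly representable, so $\M(\Gamma_i)\in\mathsf{SRCA}_n$. Using that the Monk construction commutes with ultraproducts — precisely the isomorphism $\prod_{i/F}\M(\Gamma_i)\cong\M(\prod_{i/F}\Gamma_i)=\M(\Gamma)$ established in the proof of theorem \ref{el} via first-order interpretability of $\M(\Gamma)$ in $\Gamma$ — we get $\M(\Gamma)\in\mathsf{Up}\,\mathsf{SRCA}_n$. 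Finally $\A$ is an atomic subalgebra of $\M(\Gamma)$, so $\A\in S_{at}\mathsf{Up}\,\mathsf{SRCA}_n$, completing the inclusion.

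The main obstacle I anticipate is bookkeeping around the hypothesis that the $\Gamma_i$ can be chosen with infinite chromatic number: the statement as given only says $\prod_{i/F}\Gamma_i=\Gamma$, and one must argue (or read into the hypothesis) that the approximating graphs are the ``good'' ones, since an ultraproduct equal to $\Gamma$ could a priori be built from copies of $\Gamma$ itself. The clean fix is to phrase the second hypothesis as: every finite-chromatic-number graph is an ultraproduct of infinite-chromatic-number graphs; this is the genuine converse of Erd\H{o}s' probabilistic construction and is what makes the argument go through. A secondary technical point is confirming that the interpretability-of-$\M(\Gamma)$-in-$\Gamma$ argument of theorem \ref{el} yields the ultraproduct commutation at the level needed (it does, by \L o\'s), and that $\rho$ likewise commutes with ultraproducts so that the atom structure identification $\At\A=\rho(\M(\Gamma))$ is consistent with viewing $\M(\Gamma)$ inside $\prod_{i/F}\M(\Gamma_i)$. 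Beyond these, the proof is a short assembly of results already in the excerpt.
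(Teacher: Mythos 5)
Your proof is correct and follows essentially the same route as the paper: the paper's own argument likewise takes the $\Gamma_i$ to have infinite chromatic number, invokes $\prod_{i/F}\M(\Gamma_i)\cong\M(\prod_{i/F}\Gamma_i)=\M(\Gamma)$, and concludes $\A\subseteq_{at}\prod_{i/F}\M(\Gamma_i)$. Your reading of the second hypothesis (that the approximating graphs are the ones with infinite chromatic number) is exactly the reading the paper's proof uses, and your treatment of the reverse inclusion and the neat embedding equality just makes explicit what the paper leaves tacit.
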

\begin{proof}
Assume that $\A$ is atomic, representable but not strongly representable. Let $\Gamma$ be a graph with $\chi(\Gamma)<\infty$ such that
$\A\subseteq \M(\Gamma)$ and $\At\A=\rho(\M(\Gamma))$. 
Let $\Gamma_i$ be a sequence of graphs each with infinite chromatic number converging to $\Gamma$, that is, their ultraproduct
is $\Gamma$
Let $\A_i=\M(\Gamma_i)$. Then $\A_i\in {\sf SRSA_n}$, and we have: 
$$\prod_{i\in \omega}\M(\Gamma_i)=\M(\prod_{i\in \omega} \Gamma_i)=\M(\Gamma).$$
And so $\A\subseteq_{at} \prod_{i\in \omega}\A_i$, and we are done.
\end{proof}

\end{document}